\theoremstyle{plain} 
\newtheorem{thm}{Theorem}[subsection]
\newtheorem{lem}[thm]{Lemma}
\newtheorem{cor}[thm]{Corollary}
\theoremstyle{definition}
\newtheorem{dfn}[thm]{Definition}
\newtheorem{exam}[thm]{Example}
\newtheorem{rem}[thm]{Remark}
\newtheorem{nota}[thm]{Notation}
\newtheorem{phil}[thm]{Philosophical Comment}
\def\WCN{\mathsf{\mathsf{WC}\hspace{-3pt}-\hspace{-2pt}\mathsf{N}}}
\def\UWCN{\mathsf{\mathsf{UWC}\hspace{-3pt}-\hspace{-2pt}\mathsf{N}}}
\title{An Introduction to Categorical Proof Theory}
\author{Amirhossein Akbar Tabatabai\\
Bernoulli Institute, University of Groningen}
\date{ }
\begin{document}

\maketitle

\tableofcontents

\newpage

\section{A Historical Introduction} \label{SecHistory}

\begin{quote}
``\emph{It is equally stupid and simple to consider mathematics to be just an axiom system as it is to see a tree as nothing but a quantity of planks} \cite{vanStigt}." \;\; \textsc{L.E.J. Brouwer}
\end{quote}

Proof theory was the child of a great panic. Attacked by a wild wave of paradoxes, the foundation of mathematics was in great danger, and to save it, Hilbert came up with a formalistic program to reduce mathematics (or at least its consistency) to its trusted finitary core. His idea was to first formalize mathematics as a purely syntactical theory and then show the \emph{consistency} of that theory using what he vaguely called the \emph{finitary methods}. This way, he hoped to bring peace back to the shaking foundation. From the very beginning, though, it was quite clear that for most interesting theories (e.g. Peano arithmetic), the usual strategy of proving consistency by model construction was not an option, simply because the models of such theories were usually infinite and hence beyond our trusted core. Therefore, the only remaining option was to work directly with the syntactical proofs, the \emph{finitary} and tame objects that mathematicians use every day to communicate. As expected, Hilbert's strategy was to investigate the structure of \emph{all} possible proofs of a proposition in a purely combinatorial way to hopefully rule out the contradiction as a provable proposition. Even more ambitious, through a pure formal investigation of the possible proofs of a theorem, he hoped to be able to prove the \emph{conservativity} of mathematics over its finitary core by eliminating all possible ideal infinitary entities as helpful auxiliary tools that we use for our convenience, yet we know they have no real effect.

As G\"{o}del showed in 1931, the program, as stated above, fails dramatically. In fact, there is no way to prove the consistency of mathematics (not even finitary mathematics) within finitary mathematics itself. This failure to prove consistency also shows the failure to prove conservativity, as the consistency of finitary mathematics is a simple finitary claim about the non-existence of a proof as a finitary object. This consistency statement, however, is provable in mathematics using the axiom of the existence of an infinite set, while it is unprovable in finitary mathematics based on what G\"{o}del proved. As disappointing as it seems, the story, of course, has not ended with this huge negative impact of G\"{o}del's theorems, and we are going to explain how. But before going any further, let us pause for a moment to consider the role of proofs in Hilbert's proof theory.

\subsection{Proofs as Auxiliary Objects}

As is clear even from our brief explanation of Hilbert's program, Hilbert employs proofs as a \emph{tool} to learn something about the theorems of a given theory. For him, what has the central role in proof theory is not the proofs but the \emph{provability}. Consistency is the unprovability of contradiction, and conservativity means that whatever is provable in a bigger theory is also provable in the smaller one. Even in many formalizations of constructive mathematics, where the constructions— and hence the proofs— must have the central role, people usually follow Hilbert-style proof theory and focus mostly on provability rather than on proofs. Such an interest in the shadows of an entity rather than the entity itself, such a move from the intensional to the extensional, from a notion to its extension, has similar incarnations in many mathematical fields. For instance, in the early days of the theory of computation, the central role was given to computability, or to a lesser extent to the functions that algorithms compute, rather than to the algorithms themselves. As is clear today, the crown of the central notion in the theory of computation must be awarded to computation itself, and not to mere computability. In a similar fashion, one can argue that proof theory must be elevated to a theory about proofs rather than merely provability.

To illustrate how auxiliary proofs are even within their own home of proof theory, let us consider the most basic questions a mathematician might ask herself when encountering a new mathematical concept: namely, what is the \emph{definition} of the concept, and what is its corresponding \emph{identity}? For the first question, the answer seems straightforward. A proof is a \emph{syntactical} object constructed within a syntactical calculus, starting from certain axioms and following certain rules. This definition aligns with the aims of Hilbert's program, as it provides a sound and complete definition for the program's main interest: provability. However, it is fair to say that this definition is more of a practical first draft than a polished final answer. Here are two reasons to support this claim.

The first concern is the role of syntax in the definition. A proof, regardless of its nature, must be a \emph{formal} entity distant from any sort of content. This is a philosophically acceptable assumption. However, the definition provided does not encompass form in its general sense. Instead, it relies on a specific kind of syntax as a substitute, and this reduction introduces its own problems.
In Hilbert's program, equating the formal with the syntactical serves a technical purpose, as it allows for the use of combinatorial techniques to investigate the structure of proofs. However, one can argue that form is a much richer notion than any type of syntax can capture, and even if it could, the result would not meet modern standards of mathematics. To illustrate this, let us consider an abstract mathematical entity, such as a ring. A ring is indeed constructed from elements, but what a ring theorist is truly interested in is the abstract structure of the ring beyond the nature of its elements. This structure is what can be referred to as the form of the ring. 

Is it possible to reduce this form to a kind of syntax? For rings—and let us emphasize that this is not necessarily true in general—the answer is affirmative. In fact, one can observe this possibility by examining the nineteenth-century approach to ``ring theory", which employed polynomials as its syntax. The theory begins with a given set of variables as the generic, meaningless generators of the ring and then imposes certain equations as the expected relationships among the resulting polynomials. Although it is possible to develop ring theory in this manner, it is clear that the form of rings extends beyond their polynomial representations. Mathematically, it is fair to assert that the significant advances in ring theory during the twentieth century owe their success to liberation from such polynomial presentations or any other form of presentation, for that matter. Taking this analogy seriously, one might even contend that the overly tight interdependence of proofs and syntax is a primary reason behind the relative narrowness of proof theory:
\begin{quote}
\emph{There ought to be a
vibrant specialty of ``proof theory". There is a subject with this title,
started by David Hilbert in his attempt to employ finitistic methods
to prove the correctness of classical mathematics... In
1957, at a famous conference in Ithaca, proof theory was recognized
as one of the four pillars of mathematical logic (along with model
theory, recursion theory and set theory). But the resulting proof
theory is far too narrow to be an adequate pillar...} \cite{MacLane}
\end{quote}
The form-to-syntax reduction is not merely an abstract philosophical problem; it also carries practical implications. For instance, consider the immaterial yet necessary choices that any use of syntax compels us to make. Is a proof a sequence of formulas in a given Hilbert-style system, or is it a tree of formulas in the sense of natural deduction? There are numerous proof systems that employ various syntactical choices, making it difficult to assert that any single system is the true representation of real abstract formal proofs. 
Most likely, all these proof systems offer different presentations of a single abstract notion, akin to the various presentations of a ring through different generators and relations. The ring exists beyond its presentations, and so does the proof

The second problem with the conventional definition of a proof is its \emph{reductive} nature. It characterizes a proof as a sequence or a tree of propositions, thereby reducing proofs to formulas. This reductive approach is analogous to the usual interpretation of an interval as a collection of points, a function as a specific type of relation, and movement as a sequence of snapshots. From a philosophical standpoint, it is far from trivial to believe whether the former can be faithfully reduced to the latter. Even mathematically speaking, one can argue that, even if this reduction is indeed possible, the outcome may not be desirable, as it could restrict the independence of the former and thus inhibit some of its alternative yet interesting interpretations.

For instance, consider the following three examples. First, we have Brouwer's refusal to reduce the continuum to its points, which unearthed alternative notions of real numbers that later reemerged as natural constructions in classical geometry via Grothendieck toposes \cite{MacLaneMoerjdijk}. These alternative notions compel all functions over the reals to be continuous or even smooth, as a geometer might expect. They also allow for the existence of a non-zero real number $\epsilon$ satisfying $\epsilon^2 = 0$, making the development of differential geometry easier and even synthetically possible  \cite{SDG}. 

Secondly, consider the point-free approach to real numbers, where subsets are not merely collections of points but specific equivalence relations over the open sets of the reals. This shift in perspective provides more refined subsets and addresses some anomalies, such as the existence of non-measurable subsets of real numbers \cite{LocaleSimpson}. 

Finally, consider the conceptual leap in modern mathematics that places homomorphisms on equal footing with their algebraic structures. This leap has enabled the elegant progress of modern mathematics. Taking the analogy in these three examples seriously, one can argue that a proof, whatever it may be, is more than a mere collection of propositions. It must be an independent notion that interacts with the propositions but is not necessarily reducible to them. In short, it is a tree, not a quantity of planks.

So far, we have examined several arguments against the conventional definition of a proof, illustrating the satisfaction of Hilbert's proof theory with a minimal working definition of proofs as mere auxiliary tools within the theory. Now, let us continue this line of reasoning by addressing the second significant yet overlooked problem: the identity problem, which has two facets: \emph{proof equivalence} and \emph{propositional identity}. As noted, Hilbert's proof theory has largely ignored these issues. The reason is straightforward: these identities were unrelated to the provability that the program was originally concerned with. Consequently, the program had no impetus to investigate these problems seriously. This lack of interest further underscores the auxiliary role that proofs play in Hilbert's proof theory. Where else in mathematics, the central notions of a theory do not even have their own notion of identity?

As a concluding remark in this subsection, let us briefly elucidate the two identity problems mentioned and their interrelationship. The first is the \emph{equivalence problem} for proofs. This problem is profound and intriguing, connecting programming languages on one side and \emph{coherence problems} in category theory and homotopy theory \cite{Identity} on the other. As we will explain later, proof equivalence is typically founded on \emph{redundancy elimination}. For instance, consider the following two derivations:
\begin{center}
	\begin{tabular}{c c c}
	    \AxiomC{$ $}
	    \noLine
		\UnaryInfC{$\mathsf{D}_1$}
		\noLine
		\UnaryInfC{$A$}
		 \AxiomC{$ $}
	    \noLine
		\UnaryInfC{$\mathsf{D}_2$}
		\noLine
		\UnaryInfC{$B$}
	    \RightLabel{\footnotesize$\wedge I$} 
		\BinaryInfC{$A \wedge B$}
		\RightLabel{\footnotesize$\wedge E_1$} 
		\UnaryInfC{$A$}
		\DisplayProof \hspace{20pt}
		&
		\AxiomC{$ $}
	    \noLine
		\UnaryInfC{$\mathsf{D}_1$}
		\noLine
		\UnaryInfC{$A$}
		\DisplayProof
	\end{tabular}
\end{center}
One might argue that these derivations are equivalent, as the left one is merely the derivation $\mathsf{D}_1$ with a redundant step that does not alter the essence of the proof. However, when redundancies are absent, the meaning of equivalence becomes unclear. For instance, consider the following two derivations:
\begin{center}
	\begin{tabular}{c c c}
	    \AxiomC{$A \wedge A$}
	    \RightLabel{\footnotesize$\wedge_1 E$} 
		\UnaryInfC{$A$}
		\DisplayProof \hspace{10pt}
		&
		\AxiomC{$A \wedge A$}
	    \RightLabel{\footnotesize$\wedge_2 E$} 
		\UnaryInfC{$A$}
		\DisplayProof
		
	\end{tabular}
\end{center}
It is not intuitively clear whether they are equivalent in any meaningful sense. Additionally, the problem becomes even more complex when we are tasked with comparing two proofs that exist within different systems.

For the identity problem concerning propositions, there is fortunately an intuitive definition: two propositions are considered identical if and only if they hold the same meaning, allowing one to be used in place of the other in any context. The challenge lies in formalizing this \emph{propositional identity} in a formal and abstract manner without delving into the specifics of their meanings.

The most straightforward formalization that comes to mind is the logical equivalence relation, which asserts that two propositions logically imply each other. In other words, there exists a proof $\mathsf{D}_1$ of $B$ from $A$ and a proof $\mathsf{D}_2$ of $A$ from $B$. However, this formalization proves to be inadequate. To illustrate this, let us represent the propositions $A$ and $B$ as sets of their respective proofs. In this context, logical equivalence merely indicates the existence of a function mapping from the set of proofs for $A$ to the set of proofs for $B$, and vice versa. 
If $A$ and $B$ are truly identical propositions, one would expect their associated sets of proofs to also be identical. This necessitates that there is a bijection between the two sets, which can be induced by composing the two proofs, $\mathsf{D}_1$ and $\mathsf{D}_2$. Therefore, we require these proofs to be inverses of each other, such that $\mathsf{D}_1 \circ \mathsf{D}_2 =id$ and $\mathsf{D}_2 \circ \mathsf{D}_1 = id$, where $\circ$ denotes the composition of the two proofs and $id$ represents the trivial proof of a proposition by itself. 

It is evident that the formalization of propositional identity necessitates a formalization of equivalence between the proofs themselves. Furthermore, it is important to note that even when our primary concern is the propositions rather than the proofs or their provability, certain purely propositional issues may still require an examination of proofs and their equivalence.

\subsection{Gentzen and his Consistency Proof}

In the opening of this section, we explained how Hilbert's proof theory emerged as a natural response to the foundational crisis of the early twentieth century, and we deviated from the narrative to emphasize how auxiliary the notion of a proof is in Hilbert's proof theory. Now, it is time to return to the rest of the story. We stated that G\"{o}del demonstrated the impossibility of proving the consistency of mathematics (even finitary mathematics) within finitary mathematics itself. This is actually not entirely accurate. The formal statement of G\"{o}del's incompleteness theorem shows that no recursively axiomatizable theory that is strong enough can prove its own consistency, and this theorem applies to Hilbert's program if we assume that his finitary mathematics is recursively axiomatizable. Although such an assumption is quite reasonable, it can also be argued that there is no way to formalize all of finitary mathematics but only its proper parts. Therefore, for any consistent theory $T$, it may still be possible to find a strong enough finitary theory $F_T$ that is sufficiently powerful to prove the consistency of $T$. The only thing that G\"{o}del's incompleteness theorem establishes is that this finitary theory $F_T$ cannot be a subtheory of the theory $T$. 
This is precisely how Gentzen began to investigate the situation. For him, the theory $T$ was Peano Arithmetic, $\mathsf{PA}$, for which he identified the corresponding finitary theory $F_T$ as the theory $\mathsf{PRA}$, a very basic quantifier-free theory, along with quantifier-free transfinite induction up to the ordinal $\epsilon_0$. One might object that such a theory cannot be considered finitary, as it operates with infinite ordinals. However, it is important to recognize that what we are actually working with is merely a finitary representation of the ordinals below $\epsilon_0$ as complex combinatorial objects, and the set-theoretical language of ordinals is employed solely to aid our understanding of this intricate combinatorial structure.

What we are truly interested in here is not Gentzen's proof of the consistency of arithmetic, but rather his strategy. We observed that to prove the consistency of a theory, one must investigate all possible proofs of a contradiction in a purely combinatorial manner. However, it should have been quite clear from the very beginning that proofs, as combinatorial objects, are extremely complex and challenging to analyze. To understand why, we do not need to delve into any specific proof theory; it is sufficient to consider how mathematicians grapple with their conjectures every day. If it were easy to determine what is provable and what is not, any conjecture could be resolved with a sufficient amount of combinatorial effort in proof theory.

However, this observation does not imply that there is absolutely no hope in investigating the proofs of simple statements in natural theories, such as the contradiction in $\mathsf{PA}$. To systematically investigate all possible proofs, Gentzen devised an elegant strategy. The first step involved developing a well-behaved and easily controllable proof system. In fact, he created two elegant proof systems known as \emph{natural deduction} and \emph{sequent calculus}. The primary characteristic of these systems, aside from their natural usability, is the capacity to transform their proofs into simpler forms in a certain formal sense. This process is referred to as \emph{normalization} in \emph{natural deduction} and \emph{cut elimination} in \emph{sequent calculus}. By achieving the simplest possible form of a proof, one may gain the insight needed to discern its structure; using that structure, one can demonstrate that the contradiction does not have a simple proof, and thus not any proof, within the system. It is even possible to employ this framework to prove certain conservativity results by showing that the simplest proofs in a stronger theory actually reside within the weaker one.
We hope the reader can recognize the irony here. To accomplish the primary objective of proof theory, namely proving consistency, even though we are primarily interested in provability, we are mathematically compelled to be meticulous about the formalization we choose for proofs, the structure of those proofs within that formalization, and the proof transformations that serve as witnesses for proof equivalence. In short, understanding the full structure of the proofs is essential for comprehending the seemingly simpler concept of provability!

Inspired by Gentzen's elegant work, interest in proof theory flourished. However, it primarily followed Gentzen's footsteps in investigating the form of proofs to gain insights into the associated theories, ranging from their consistency to the characterization of the functions that the theory can define. It also inherited the Hilbertian attitude, wherein proofs remained syntactical, mostly confined to one formalization and serving the theorems they prove. It is true that proof theorists became increasingly interested in the structure of proofs, but this was largely because they were compelled to do so.

\subsection{Categorical Proof Theory}

Gentzen's elegant work also gave birth to an alternative type of proof theory—a theory in which proofs play the main role, elevating their status from the planks of provability to a truly alive tree of proofs. The first such theory is Dag Prawitz's \emph{general proof theory}, a term he coined to distinguish his approach from what he termed \emph{reductive proof theory} \cite{IdeaPrawitz,GenPrawitz}.
The latter essentially represents the Hilbertian proof theory, using proofs as tools to reduce one theory to another to demonstrate consistency and conservativity. In contrast, general proof theory shifts the focus from provable propositions to the proofs themselves. Here, proofs are treated as elements of an \emph{algebraic structure} where the rules function as its operations. The study of this structure is the primary goal of general proof theory.

However, this algebraic structure is presented to us in an indirect manner. The situation resembles the style of nineteenth-century matrix theory. While matrices can be defined as certain algebraic entities with corresponding operations, these arrays of numbers actually represent the linear transformations that are of primary interest. The challenge lies in the fact that a single linear transformation can be represented by multiple matrices, depending on the basis used for the underlying vector spaces. Since we are interested in the transformations rather than their representations, our initial task is to determine when two matrices represent the same transformation, or in other words, when they are \emph{equivalent}. A similar scenario occurs in general proof theory. Proofs are represented within a proof system, specifically natural deduction, and the main challenge is to find an appropriate notion of equivalence between them. 

Prawitz's solution was intriguing; he utilized proof transformations and demonstrated that any proof can be simplified to a unique simplest possible form known as the \emph{normal form}. Thus, two proofs can be considered equivalent if they have the same normal form. This can be likened to algebraic simplifications. For example, two polynomials, $x(x+1)+x$ and $x^2+x+x$, are deemed equivalent when their simplest forms, namely $x^2+2x$, are equal. General proof theory, however, extends beyond this equivalence problem. It also seeks to understand the meanings of logical constants based on their roles within proofs. Nonetheless, it is fair to assert that the equivalence problem is central to the theory.

As intriguing as Prawitz's general proof theory is, it comes with its own limitations. It conflates form with syntax, reduces proofs to propositions, and confines itself to a single fixed system—namely, natural deduction.\footnote{It is important to clarify that general proof theory does not entirely restrict itself to syntax. Prawitz explicitly differentiates proofs from their syntactical representations. Moreover, for him, natural deduction is not merely one syntactical system among many; rather, it serves as a canonical framework for elucidating how proofs behave and what the logical constants signify. One can easily discern the early influences of the later categorical approach within these ostensibly syntactical aspects of general proof theory. That said, it is also notable that during the normalization process, proofs are identified through the syntactical derivations of propositions, making the process an algebraic simplification of syntax, where the aforementioned drawbacks emerge.} At this juncture, a reader might argue that while the notion of liberation from syntax and reduction to propositions is appealing, it remains unclear whether a non-reductionist definition of proofs is even feasible. A similar skepticism can be applied to linear transformations: if we aim to avoid representations, how can we discuss linear transformations directly? The answer lies in understanding the two orthogonal mindsets typically involved in defining a mathematical notion. 

The \emph{internal} mindset favors constructing new mathematical entities from existing ones, using the latter as building blocks. However, this approach can inadvertently incorporate unintended aspects, such as construction methods, into the new mathematical entity. To eliminate these extraneous elements, one must introduce an equivalence relation to collapse them, working with the entity only up to this relation to maintain statements about the entity rather than its representation. The process of constructing proofs from propositions and deriving linear transformations from real numbers exemplifies this internal mindset.
Dually, the \emph{external} mindset focuses on defining a mathematical entity based on its \emph{relative} behavior rather than on its constituents or construction. The modern approach to linear algebra is a prime example of this external mindset, as it does not define vectors and linear transformations in terms of arrays of real numbers; instead, it characterizes them as abstract objects that behave as expected within their respective contexts.

Modern mathematics owes its external mindset to Hilbert's strong emphasis on form, positing that mathematics is concerned with form rather than content. However, Hilbert's proof theory is more of an internal endeavor; it seeks to define a proof using a more fundamental notion, which, in this context, is the proposition. To modernize Hilbert's proof theory in alignment with his vision, we must axiomatize the algebraic structure of a proof system, where both propositions and proofs reside. To achieve this, we first need a natural mathematical candidate for such two-sorted structures, which is identified as a category. 

Categories are algebraic structures comprised of two largely independent entities: objects and morphisms that connect these objects. One can conceptualize a proof system as a category in which the objects represent propositions and the morphisms signify the deductions between them. This approach leads to an attempt to find an axiomatization of proof systems framed as \emph{structured categories}.
This perspective on proof theory is termed \emph{categorical proof theory}. Inspired by Lawvere's thesis \cite{Lawverethesis}, the theory began with Lambek's comprehensive exploration of proof systems as presented in \cite{LamI,LamII,LamIII}, and was further developed in \cite{LambekScott,Szabo}. For a succinct overview, see \cite{PrologueDosen,Identity}.

Interestingly, the abstract structure of proof systems uncovered by categorical proof theory is not confined strictly to proof theory; it is a well-recognized construct across various branches of mathematics known as the \emph{cartesian closed structure} and  \emph{bicartesian closed structure}, depending on connectives the language uses. Leveraging the richness of this structure allows one to hypothesize that many mathematical frameworks, ranging from computable functions to continuous maps, can be viewed as different forms of proof systems. By utilizing these mathematical structures as semantic interpretations of conventional syntactical proofs, we can gain intriguing insights into both the nature of proofs and the concept of provability.

As a concluding remark to this part, we will summarize the potential solutions that categorical proof theory offers for the problems previously discussed. Firstly, regarding the issue of form-to-syntax reduction, proof systems within categorical proof theory are viewed as algebraic structures, where their essence is captured in the abstract structure they embody. This perspective parallels how a ring can be regarded as a formal object independent of any syntactical representation.
Consequently, in categorical proof theory, the specific representations of proof systems through various syntaxes become inconsequential. Two proof systems are deemed equivalent if their underlying structures align. Turning to the issue of reducing proofs to propositions, it is noteworthy that in a category, morphisms cannot be reduced to objects; they maintain an independence analogous to the relationship between deductions and proofs.
In addressing the equivalence problem, it is important to recognize that a category inherently possesses its own notion of equality among morphisms. Thus, in categorical proof theory, there is no need to define the equivalence of proofs, as this concept emerges as a fundamental notion. However, this does not permit arbitrary definitions, as we expect our category to exhibit certain desirable properties. Finally, the identity of propositions is effectively captured by the established notion of isomorphic objects within categories.

\subsection{Type Theory}

Parallel to the categorical approach to proof theory, there is a type-theoretical approach that is more syntactic and thus closer to the usual practices of proof theory. Despite the primary focus of logical theories on formulas, the language of type theory incorporates two fundamental notions: types and terms. Over this syntax, a type theory consists of a series of type and term formers, along with rules that describe the expected behavior of these formers.

Type theories have many informal interpretations. Types and terms can be understood as propositions and proofs, sets and elements, or data types and programs. This versatility arises from an unexpected similarity between these concepts, known as the \emph{Curry-Howard correspondence} or \emph{propositions as types} dictum \cite{howard,LecCurryHow}. For example,  the simplest type theory is the typed lambda calculus, designed to capture the behavior of sets and functions in the presence of products and function spaces \cite{LambekScott}. This type theory can be extended with sums as disjoint unions, resulting in a syntactical system for sets as well as propositional deductions. Additionally, it is possible to extend typed lambda calculus to dependent type theories, where types can depend on the elements of other types. This allows us to formalize families of sets, on the one hand and first-order deductions, on the other.

Type theories are typically motivated by their role in formalizing data types and programs rather than proofs and propositions. However, some type theories have foundational or even proof-theoretical motivations. For instance, the well-known Martin L\"{o}f type theory \cite{martinlof} is a fundamental dependent type theory influenced by Prawitz's work on the equivalence of proofs via normalization, aiming to formalize what constructions—and, consequently, proofs—mean in constructive mathematics. Due to this connection with data types, type theory garners significant interest from the theoretical computer science community, and it is fair to say that most developments in both categorical and type-theoretical approaches to proofs are inspired by these interests. 

Finally, to connect categorical proof theory to type theory, one can interpret the former as the categorical semantics for the latter. For example, cartesian closed categories provide a semantics for the typed lambda calculus. In this chapter, we will not cover type theories; for more information on type theories, especially their connection with categories, see \cite{Crole,Bart,Taylor,HofmannSeman,LambekScott}.

\subsection{Landmarks in Categorical Proof Theory}

As mentioned before, categorical proof theory essentially began with Lambek \cite{LamI,LamII,LamIII}, who sought to formalize \emph{propositional} deductive systems as structured categories. He starts with substructural proof systems and demonstrates that the required structure is the \emph{monoidal structure}. Then, he moves to the intuitionistic deductive systems over the fragment $\{\top, \wedge, \to \}$ and shows that the structure is nothing other than the well-known \emph{cartesian closed structure}. Later, Lambek shows that the categorical approach to propositional intuitionistic proofs and the type-theoretical one using typed lambda calculus coincide \cite{LambekScott}.  For a thorough investigation of the intuitionistic case, see \cite{Szabo,LambekScott}. For the substructural setting, especially linear logic, see \cite{1,2,3,4,SeelyLinear,dePaiva}

Lambek's use of categorical machinery is beneficial for both category theorists and proof/type theorists. For the first community, it provides a combinatorially manageable syntax to investigate the properties of categories. Specifically, it aids in axiomatizing equalities between morphisms in a given family of categories in terms of a few easy-to-check equalities. Such problems are referred to as \emph{coherence problems}, with the primordial example being MacLane's work on monoidal and symmetric monoidal categories \cite{99}. In fact, Lambek was the one who introduced Gentzen’s proof-theoretical methods into category theory, which Mac Lane and Kelly later exploited to solve a major coherence problem for closed categories \cite{81}. For more on these coherence problems and their connection to proof theory, see the comprehensive book by Do\v{s}en and Petri{\'c} \cite{DosenBook}. It is also worth consulting \cite{101}.

For the proof/type theory community, category theory provides a rich semantics that is useful for providing counterexamples. Here, there are three main problems of interest. First, the \emph{existence problem} concerns the existence of terms in a type or proofs of a proposition. The second is the \emph{equivalence problem}, which addresses the equivalence of terms or proofs, and the third is the \emph{identity problem}, asking whether two types or propositions are identical. In each of these problems, one can use categories to demonstrate that some types (resp. propositions) are empty (resp. unprovable), some terms (resp. proofs) are unequal, or some types (resp. propositions) are non-isomorphic; for more see Section \ref{SecCatSemantics}. One can even go further and ask whether a given family of categories is \emph{complete} for the problem, meaning that to solve the problem, it is sufficient to check only the interpretations in those categories.

For the existence problem, since we are only concerned with the existence of proofs, it is sufficient to relax the categorical structure to posets. Finding concrete yet complete posets is an interesting line of inquiry. For instance, regarding the existence of intuitionistic propositional proofs, it is enough to consider the bicartesian closed poset $\mathcal{O}(\mathbb{R})$, which consists of the open sets of the topological space $\mathbb{R}$ \cite{Nick}. Although it is not strictly necessary, using concrete non-posetal categories for the existence problem is also intriguing. The first example of such a category was $\mathbf{Set}^{\mathbb{Z}}$, which was studied by Laüchli \cite{Lach}. His original paper does not employ categorical language; instead, it assigns a pair $(X, \sigma_X)$ of a set and a bijection on it to propositions, and interprets deductions as usual functions that respect the bijection. The modern categorical presentation, along with its generalizations, can be found in \cite{Makkai}. The categorical interpretation is also generalized to the first-order setting in \cite{M1,M2}.

For the equivalence problem, Friedman \cite{Fri} and, independently, Plotkin \cite{P1,P2} demonstrated that beta-eta conversion is complete for deriving all equalities between the (simply-typed) lambda-definable functionals in the category of sets. Building on this result, \v{C}ubri\'{c} showed that the category of sets is complete for the equivalence problem \cite{Cubric} for the fragment $\{\top, \wedge, \to\}$ of intuitionistic proofs. The result is further reduced to the category of finite sets by Solovev \cite{Soloviev}. This reduction can be employed to prove the decidability of the equivalence of proofs in the mentioned fragment. Later, Simpson \cite{Simpson} established that a cartesian closed category is complete for the equivalence problem for the fragment $\{\top, \wedge, \to\}$ if and only if it is not a preordered set. This theorem has a significant philosophical consequence, as it states that the equivalence between proofs is the maximum congruence relation one can impose on proofs without collapsing the entire proof structure to mere provability. For further details, see \cite{DosenMax}.
Recently, Scherer \cite{scherer} demonstrated that the category of finite sets is complete for the equivalence problem of the full propositional intuitionistic language leading again to the decidability of the equivalence of proofs problem. Using the argument provided by Simpson \cite{SimpOnline}, one can show that a bicartesian closed category is complete for the full language if and only if it is not a preordered set.

For the identity problem concerning the fragment $\{\wedge, \top, \to\}$, it is known that a finite set of simple identities (see Subsection \ref{SubsectionIdentity}) axiomatizes all identities between propositions. It has also been proven that the category of finite sets is complete for this problem, which leads to the decidability of identities between propositions in the fragment. For the fragment $\{\wedge, \vee, \top, \to\}$, it is known that the canonical finite set of axioms, referred to as \emph{Tarski's high school identities}, is not complete. Furthermore, \cite{Fiore} showed that the problem is not finitely axiomatizable.
For the full language, the situation is more complicated. It is still not finitely axiomatizable \cite{Fiore}. We also know that the category of sets cannot be complete, as it considers the formulas $\neg p \vee \neg \neg p$ and $\top$ to be isomorphic, despite the fact that they are not even intuitionistically equivalent. To the best of our knowledge, the decidability of the propositional identity problem for the the last two fragments remains open. For a survey of the results and applications of the propositional identity problem, see \cite{IsoSurvey,IsomorphismOfTypes}.

So far, we have discussed the propositional language. To some extent, the theory has been generalized to more expressive languages by Lawvere \cite{AdjLawvere,EqLawvere} using hyperdoctrines, and to first-order languages by B\'{e}nabou \cite{Ben}, Seely \cite{seelythesis,Seely2}, and Makkai \cite{M1,M2} through a special family of fibrations. In this chapter, we will not cover these investigations. However, to provide a glimpse into the use of categorical machinery in higher-order settings, we will restrict our focus to the special yet rich case of \emph{realizability}. The main idea is to use maps in a structured category or terms in a type theory as constructions. These constructions are usually called \emph{realizers}. Realizability originated with Kleene, who employed algorithms as realizers \cite{Kleene}. This approach proved useful for establishing meta-mathematical theorems about constructive arithmetic. For instance, one can prove the consistency of classically invalid Church-Turing thesis, which asserts that all functions on natural numbers are computable. It is also beneficial to extract computational information from constructive proofs. For example, if $\forall x \exists y A(x, y)$ is provable in constructive arithmetic, then there exists a computable function $f: \mathbb{N} \to \mathbb{N}$ such that $A(n, f(n))$ holds for any $n \in \mathbb{N}$. This machinery has been generalized in various ways \cite{RealTr}. The categorical version we are interested in emerged in \cite{EffHyland}, where Hyland used algorithms to realize higher-order theories. His framework was further generalized in \cite{Lars,Bauer} to allow for the use of structured categories or typed theories to formalize constructions. For a comprehensive explanation, see \cite{LarsThesis,Bauer,JaapBook,bauer2}. 

\subsection{A Word to the Reader}
The categorical language can pose a significant conceptual barrier for those wishing to understand the applications of categorical methods in proof theory. To mitigate this issue and reach a broader audience—particularly among proof theorists—we have decided to structure the material without assuming prior familiarity with category theory. We only require a very basic understanding of mathematical logic, including the fundamentals of natural deductions, the basics of order theory and topology, and a basic familiarity with the theory of computation. 

As a consequence, the chapter begins with a slow and somewhat extensive introduction to category theory. In our explanation, we employ numerous examples and motivate the introduced categorical concepts using logical and, in particular, proof-theoretical ideas. 
To make the introduction easier to digest, we refrain from using any unnecessary categorical notions. For instance, we avoid discussing limits and colimits, representable functors, or adjunctions. This categorical part is addressed in Sections \ref{SecCatAndFunct} and \ref{SecUnivCons}, where we also introduce the basic structure of a propositional proof system. Readers with a solid background in category theory may skim through these sections to see the connection with proof theory and start reading from Section \ref{SectionBHKInterpretation}. 
In Section \ref{SectionBHKInterpretation}, we will explain and formalize the BHK interpretation and demonstrate how it fits into the broader context of categorical proof theory. In Section \ref{SectionClassical}, we will discuss classical deductions. In Section \ref{SecCatSemantics}, we will utilize the categorical framework established in the previous sections as a form of semantics for syntactical proofs. To transition from propositional to higher-order cases, in Section \ref{SectionTheories}, we will introduce higher-order theories in general and some constructive theories of arithmetic in particular. In Section \ref{SecRealizability}, we will apply categorical proof theory to gain insights into general higher-order theories using a semantical machinery called realizability. Finally, in Section \ref{SecRealizabilityForArith}, we will use realizability to prove some consistency results in arithmetic and also extract information from the arithmetical proofs. In each section, we will provide numerous philosophical discussions and examples to convey the fundamental ideas, the constructions, and, at times, sketches of the proofs. However, we will generally leave the computational details of the examples to the reader. We will also prove the simpler theorems to demonstrate how the concepts work in practice while referring readers to external sources for the proofs of more complex results.\\

\noindent \textbf{Acknowledgement.}
We wish to thank the organizers of the TACL 2022 summer school and Rosalie Iemhoff for their invitation, the participants of the summer school and the fundamental computing team at the Bernoulli institute who organized and actively participated in the lectures on the present text and the editors of the present volume, especially Maria Manuel Clementino, for their support and patience without which this chapter would never come to the existence. We are also grateful to Steve Awodey, Andrej Bauer, David Pym, Pino Rosolini, Alex Simpson, Thomas Streicher and the anonymous referees for their very helpful comments, suggestions and corrections on the first draft. Support by the FWF project P 33548, the Czech Academy of Sciences (RVO 67985840) and the GA\v{C}R grant 23-04825S is also gratefully acknowledged.

\section{Preliminaries}

In this section, we will recall some basic notions from order theory, algebra, propositional proof theory and computability theory, to refresh our memory, emphasize the less known points and fix the notation. Many other notions such as spaces, compactness and local compactness in topology, first-order theories in logic and Turing machines in computability theory are assumed without any further explanation.

A \emph{preordered set} is a pair $(P, \leq)$ of a set $P$ and a preorder $\leq$, i.e., a reflexive and transitive binary relation $\leq \; \subseteq P \times P$, i.e., $x \leq x $, for any $x \in P$ and if $x \leq y$ and $y \leq z$ then $x \leq z$, for any $x, y, z \in P$. If this relation is also anti-symmetric, i.e., if $x \leq y$ and $y \leq x$ implies $x=y$, for any $x, y \in P$, then the preorder is called a \emph{partial order} and $(P, \leq)$ is called a \emph{poset}. 
A poset $(P, \leq)$ is called \emph{linear}, if for any $x, y \in P$, either $x \leq y$ or $y \leq x$. A poset $(P, \leq)$ is called a \emph{tree}, if for any element $x \in P$, the poset $\{y \in P \mid y \leq x\}$ with the induced order is finite and linear. A tree is called \emph{rooted}, if it has the least element, i.e., an element $m \in P$ such that $m \leq x$, for any $x \in P$.

For any two preordered sets $(P, \leq_P)$ and $(Q, \leq_Q)$, an \emph{order-preserving} map $f: (P, \leq_P) \to (Q, \leq_Q)$ is a function $f: P \to Q$ such that $f(x) \leq_Q f(y)$, if $x \leq_P y$. It is called an \emph{order-embedding} if it is order-preserving and $f(x) \leq_Q f(x)$ implies $x \leq_P y$, for any $x, y \in P$.
Two prototypical examples of posets that we will use in this chapter are the poset of the opens of a topological space $X$, denoted by $\mathcal{O}(X)$, and the poset of all the upward-closed subsets (upset, for short) of a preordered set $(P, \leq)$ with the inclusion, denoted by $U(P, \leq)$. Note that the upsets of $(P, \leq)$ also form a topology called the \emph{upset topology} and any space with a topology induced in that way is called an \emph{Alexandrov} space. For a poset $(P, \leq)$, define $\mathbf{2}^{(P, \leq)}$ as the poset of all order-preserving maps from $(P, \leq)$ into $\mathbf{2}$, where $\mathbf{2}$ is the poset $(\{0, 1\}, \leq)$ and $\leq$ is the usual order on numbers.
Note that $U(P, \leq)$ is isomorphic to $\mathbf{2}^{(P, \leq)}$. Hence, we use them interchangeably.  

A \emph{monoid} is a tuple $(M, \cdot, e)$, where $M$ is a set, $\cdot : M \times M \to M$ is a binary operation on $M$ and $e \in M$ is an element such that $e \cdot x=x \cdot e=x$, for any $x \in M$ and $x \cdot (y \cdot z)=(x \cdot y) \cdot z$, for any $x, y, z \in M$. \emph{Groups} are the monoids where any element has an inverse, i.e., for any $x \in M$, there exists $y \in M$ such that $x \cdot y=y \cdot x=e$. 

Let $\mathcal{L}_p=\{\top, \bot, \wedge, \vee, \to\}$ be the propositional language and recall that $\neg A$ is an abbreviation for $A \to \bot$.
We define the \emph{natural deduction} system $\mathbf{NJ}$ by its usual rules:
\begin{center}
 \begin{tabular}{c c c} 
 \AxiomC{$ $}
  \RightLabel{$(\top)$} 
   \UnaryInfC{$\top$}
 \DisplayProof \quad \quad
 &
 \AxiomC{$\mathsf{D}$}
 \noLine
 \UnaryInfC{$\bot$}
  \RightLabel{$(\bot)$} 
   \UnaryInfC{$A$}
 \DisplayProof
\end{tabular}
\end{center}
\begin{center}
 \begin{tabular}{c c c} 
 \AxiomC{$\mathsf{D}_1$}
 \noLine
 \UnaryInfC{$A$}
  \AxiomC{$\mathsf{D}_2$}
  \noLine
   \UnaryInfC{$B$}
 \RightLabel{$(I \wedge)$} 
 \BinaryInfC{$A \wedge B$}
 \DisplayProof
 &
  \AxiomC{$\mathsf{D}$}
 \noLine
 \UnaryInfC{$A \wedge B$}
  \RightLabel{$(E \wedge_1)$} 
   \UnaryInfC{$A$}
 \DisplayProof
 &
  \AxiomC{$\mathsf{D}$}
 \noLine
 \UnaryInfC{$A \wedge B$}
  \RightLabel{$(E \wedge_2)$} 
   \UnaryInfC{$B$}
 \DisplayProof
\end{tabular}
\end{center}

\begin{center}
 \begin{tabular}{c c c} 
  \AxiomC{$\mathsf{D}$}
 \noLine
 \UnaryInfC{$A$}
  \RightLabel{$(I \vee_1)$} 
   \UnaryInfC{$A \vee B$}
 \DisplayProof
 &
    \AxiomC{$\mathsf{D}$}
 \noLine
 \UnaryInfC{$B$}
  \RightLabel{$(I \vee_2)$} 
   \UnaryInfC{$A \vee B$}
 \DisplayProof
 &
  \AxiomC{$\mathsf{D}$}
 \noLine
 \UnaryInfC{$A \vee B$}
 \AxiomC{$[A]^i$}
 \noLine
  \UnaryInfC{$\mathsf{D}_1$}
  \noLine
   \UnaryInfC{$C$}
    \AxiomC{$[B]^j$}
    \noLine
  \UnaryInfC{$\mathsf{D}_2$}
  \noLine
   \UnaryInfC{$C$}
 \RightLabel{$(E \vee_{i,j})$} 
 \TrinaryInfC{$C$}
 \DisplayProof
\end{tabular}
\end{center}
\begin{center}
 \begin{tabular}{c c} 
 \AxiomC{$[A]^i$}
 \noLine
   \UnaryInfC{$\mathsf{D}$}
 \noLine
 \UnaryInfC{$B$}
  \RightLabel{$(I \to_i)$} 
   \UnaryInfC{$A \to B$}
 \DisplayProof
 &
 \AxiomC{$\mathsf{D}_1$}
 \noLine
 \UnaryInfC{$A$}
  \AxiomC{$\mathsf{D}_2$}
  \noLine
   \UnaryInfC{$A \to B$}
 \RightLabel{$(E \to)$} 
 \BinaryInfC{$B$}
 \DisplayProof
\end{tabular}
\end{center}
where $i$ and $j$ are natural numbers to refer to the assumptions. One can compose these rules to construct rooted trees that we will use to define derivations. Here are some conditions we impose on the use of superscripts in trees. First, all leaves are formulas with a superscript. Second, in any tree, any number $i$ must be used for a unique formula. However, it is possible to have one formula with different superscripts. Third, in $(E\vee_{i, j})$, the indices $i$ and $j$ cannot occur in $\mathsf{D}$, the index $i$ cannot occur in $\mathsf{D}_2$ and $j$ cannot occur in $\mathsf{D}_1$.

We define a \emph{derivation} of $A$ from the set $\Gamma=\{[\gamma_1]^{i_1}, [\gamma_2]^{i_2}, \ldots, [\gamma_n]^{i_n}\}$ of assumptions as a tree with the leaves inside $\Gamma$. Note that it is not necessarily that every formula in $\Gamma$ appears in the tree. When the set of assumptions is clear from the context, especially when $\Gamma$ is equal to the set of formulas in the leaves, we omit $\Gamma$ and simply talk about the derivations of $A$. If $\mathsf{D}$ is a derivation of $A$ from the assumptions $\Gamma$, a sub-derivation $\mathsf{E}$ of $\mathsf{D}$ is a full subtree of $\mathsf{D}$. It proves the formula in its root and its assumptions can be canonically computed from $\Gamma$ by adding the eliminated assumptions in the path from the root of $\mathsf{E}$ to the root of $\mathsf{D}$. If $\mathsf{D}$ is a derivation of $A$ from the assumptions in $\Gamma$ and for any $[\gamma_i]^{n_i}$, we have a derivation $\mathsf{D}_i$ of $\gamma_i$ from $\Sigma$, then by composition of $\mathsf{D}$ with the $\mathsf{D}_i$'s, we mean gluing $\mathsf{D}_i$ to the leaf $[\gamma_i]^{n_i}$ of $\mathsf{D}$ and eliminating the $[-]^{n_i}$'s. The resulting tree is a derivation of $A$ from $\Sigma$.  
We identify two derivations if one is reachable by the other by changing the names of the superscripts in a bijective manner. For instance, the following two derivations of $B \to A \wedge B$ from the assumptions $\{[A]^3, [B]^5\}$ and $\{[A]^4, [B]^7\}$, respectively, are considered as equal:
 \begin{center}
 \begin{tabular}{c c c}
 \AxiomC{$[A]^3$}
 \AxiomC{$[B]^5$}
\RightLabel{$(I\wedge)$} 
\BinaryInfC{$A \wedge B$}
\RightLabel{$(I\to_5)$} 
 \UnaryInfC{$B \to A \wedge B$}
 \DisplayProof
   &
 $\quad \equiv \quad$
 &
\AxiomC{$[A]^4$}
 \AxiomC{$[B]^7$}
\RightLabel{$(I\wedge)$} 
\BinaryInfC{$A \wedge B$}
\RightLabel{$(I\to_7)$} 
 \UnaryInfC{$B \to A \wedge B$}
 \DisplayProof
\end{tabular}
\end{center}
Consider the following family of \emph{basic $\beta$-equivalences}:
 \begin{center}
 \begin{tabular}{c c c}
 \AxiomC{$\mathsf{D}_1$}
 \noLine
\UnaryInfC{$A$}
 \AxiomC{$\mathsf{D}_2$}
 \noLine
\UnaryInfC{$B$}
\RightLabel{$(I\wedge)$} 
 \BinaryInfC{$A \wedge B$}
 \RightLabel{$(E\wedge)$} 
  \UnaryInfC{$A$}
 \DisplayProof
  &
 $\quad \equiv \quad$
 &
  \AxiomC{$\mathsf{D}_1$}
 \noLine
\UnaryInfC{$A$}
 \DisplayProof
\end{tabular}
\end{center}

\begin{center}
 \begin{tabular}{c c c c}
 \AxiomC{$[A]^i$}
 \noLine
 \UnaryInfC{$\mathsf{D}_2$}
 \noLine
\UnaryInfC{$B$}
\RightLabel{$(I\to_i)$}
 \normalsize\UnaryInfC{$A \to B$}
 \AxiomC{$\mathsf{D}_1$}
 \noLine
\UnaryInfC{$A$}
\RightLabel{$(E\to)$} 
  \BinaryInfC{$B$}
 \DisplayProof
 &
 $\quad \equiv \quad$
 &
 \AxiomC{$\mathsf{D}_1$}
 \noLine
\UnaryInfC{$A$}
 \noLine
 \UnaryInfC{$\mathsf{D}_2$}
 \noLine
\UnaryInfC{$B$}
 \DisplayProof
\end{tabular}
\end{center}

   \begin{center}
 \begin{tabular}{c c c}
 \AxiomC{$\mathsf{D}$}
 \noLine
\UnaryInfC{$A$}
\RightLabel{$(I\vee)$} 
\UnaryInfC{$A \vee B$}
 \AxiomC{$[A]^i$}
  \noLine
\UnaryInfC{$\mathsf{D}_1$}
 \noLine
\UnaryInfC{$C$}
 \AxiomC{$[B]^j$}
   \noLine
\UnaryInfC{$\mathsf{D}_2$}
 \noLine
\UnaryInfC{$C$}
\RightLabel{$(E\vee_{i, j})$} 
 \normalsize
 \TrinaryInfC{$C$}
 \DisplayProof
  &
 $\quad \equiv \quad$
 &
  \AxiomC{$\mathsf{D}$}
 \noLine
\UnaryInfC{$A$}
\noLine
\UnaryInfC{$\mathsf{D}_1$}
\noLine
\UnaryInfC{$C$}
 \DisplayProof
\end{tabular}
\end{center}
    
    \vspace{20 pt}
   \begin{center}
 \begin{tabular}{c c c}
 \AxiomC{$\mathsf{D}$}
 \noLine
\UnaryInfC{$B$}
\RightLabel{$(I\vee)$} 
\UnaryInfC{$A \vee B$}
 \AxiomC{$[A]^i$}
  \noLine
\UnaryInfC{$\mathsf{D}_1$}
 \noLine
\UnaryInfC{$C$}
 \AxiomC{$[B]^j$}
   \noLine
\UnaryInfC{$\mathsf{D}_2$}
 \noLine
\UnaryInfC{$C$}
\RightLabel{$(E\vee_{i, j})$} 
 \normalsize
 \TrinaryInfC{$C$}
 \DisplayProof
  &
 $\quad \equiv \quad$
 &
  \AxiomC{$\mathsf{D}$}
 \noLine
\UnaryInfC{$B$}
\noLine
\UnaryInfC{$\mathsf{D}_2$}
\noLine
\UnaryInfC{$C$}
 \DisplayProof
\end{tabular}
\end{center} 
Moreover, consider the following \emph{basic $\eta$-equivalences}:
    \begin{center}
        \begin{tabular}{c c c}
 \AxiomC{$\mathsf{D}_1$}
 \noLine
\UnaryInfC{$A \wedge B$}
 \UnaryInfC{$A$}
 \AxiomC{$\mathsf{D}_1$}
 \noLine
\UnaryInfC{$A \wedge B$}
 \UnaryInfC{$B$}
  \BinaryInfC{$A \wedge B$}
 \DisplayProof $\quad \equiv \quad \mathsf{D}_1$
 & \hspace{15pt}
  \AxiomC{$\mathsf{D}_2$}
 \noLine
\UnaryInfC{$A \to B$}
 \AxiomC{$[A]^1$}
  \BinaryInfC{$B$}
  \scriptsize\RightLabel{$\! \scriptsize{1}$}
 \normalsize\UnaryInfC{$A \to B$}
 \DisplayProof  $\quad \equiv \quad \mathsf{D}_2$
        \end{tabular}
    \end{center}

\begin{center}
        \begin{tabular}{c}
 \AxiomC{$A \vee B$}
 
 \AxiomC{$[A]^1$}
\UnaryInfC{$A \vee B$}
\noLine
\UnaryInfC{$\mathsf{D}_3$}
\noLine
 \UnaryInfC{$C$}
 \AxiomC{$[B]^2$}
\UnaryInfC{$A \vee B$}
\noLine
\UnaryInfC{$\mathsf{D}_3$}
\noLine
 \UnaryInfC{$C$}
\TrinaryInfC{$C$}
\DisplayProof  $\quad \equiv \quad \mathsf{D}_3$
        \end{tabular}
    \end{center} 
Note that in the right side, we always consider the used derivations from the left side as a sub-derivation. Hence, their assumptions are equal. For $\top$ and $\bot$, the basic $\eta$-equivalences are the identifications of any two derivations of a fix formula from $\bot$ and any two derivations of $\top$ from a fixed family of assumptions. These basic $\beta$- and $\eta$-equivalences\footnote{These equivalences also appear in lambda calculus and more complex type theories as the basic computational rules on terms.} give rise to an equivalence relation defined as the least equivalence relation on derivations extending the above-mentioned $\beta$- and $\eta$-equivalences and respecting the composition of derivations. We always consider derivations up to this equivalence relation.

The systems $\mathbf{NM}$ and $\mathbf{NN}$ are defined similarly for the fragments $\mathcal{L}_p-\{\bot\}$ and $\mathcal{L}_p-\{\bot, \vee\}$, respectively, by eliminating all the rules for the missing connectives.
The \emph{intuitionistic propositional logic}, $\mathsf{IPC}$, is defined as the set of all formulas $A \in \mathcal{L}_p$ such that there exists a derivation of $A$ in $\mathbf{NJ}$ from the empty set of assumptions. Similarly, one can define the \emph{minimal propositional logic}, $\mathsf{MPC}$, using the system $\mathbf{NM}$.

The class of all \emph{primitive recursive functions} are defined as the least class of functions over natural numbers, containing the basic functions $Z=0$, $s(x)=x+1$ and $I^i_k(x_1, x_2, \ldots, x_k)=x_i$ for $1 \leq i \leq k$ and closed under composition and \emph{primitive recursion}, i.e.,
for any primitive recursive functions $g: \mathbb{N}^k \to \mathbb{N}$ and $h: \mathbb{N} \times \mathbb{N}^k \times \mathbb{N} \to \mathbb{N}$, the function $f: \mathbb{N} \times \mathbb{N}^k \to \mathbb{N}$ defined by:
\[
\begin{cases}
f(0, \bar{x})=g(\bar{x}) & \\
f(y+1, \bar{x})=h(y, \bar{x}, f(y, \bar{x})) &\\
\end{cases}
\]
is also primitive recursive.
Turing machines are defined in their usual way. It is possible to encode Turing machines by natural numbers. We call these numbers the \emph{codes} of the algorithms or machines. Recall that there is a \emph{universal Turing machine} $U$ that reads the pair $(e, n)$ of the code $e$ of an algorithm and the input $n$ and provides the output of the algorithm $e$ on the input $n$. We denote $U(e, n)$ by $e \cdot n$. Note that $U$ is a partial function. When $U(e, n)$ is (resp. is not) defined, i.e., when the algorithm $e$ halts (resp. does not halt) on the input $n$, we write $e \cdot n \downarrow$ (resp. $e \cdot n \uparrow$). One interesting theorem that we will use later is Kleene's \emph{$S^{m}_n$ theorem}:
\begin{thm}
There exists a primitive recursive function $S$ such that for any code $e$ of a function $f(x, y)$ with two variables, we have $S(e, x) \cdot y=f(x, y)$, for any $x, y \in \mathbb{N}$.
\end{thm}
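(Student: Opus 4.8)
The plan is to prove the $S^m_n$ theorem (here stated in the special case $m=1$, $n=1$) by making the classical observation that partial evaluation of a program at one of its inputs can be performed \emph{primitively recursively} on the level of codes. First I would fix a reasonable Gödel numbering of Turing machines — the same one used to define the universal machine $U$ — so that elementary syntactic operations on machine descriptions (listing states, relabelling states, concatenating transition tables) are primitive recursive functions of the codes. The goal is to build, uniformly in $e$ and $x$, the code of a machine $M_{e,x}$ which, on input $y$, first writes the fixed numeral $x$ onto its tape alongside $y$ (in whatever pairing convention $U$ expects for two-argument functions), and then runs the machine with code $e$ on the resulting pair.

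The key steps, in order, are: (1) describe the ``prefix'' machine $P_x$ that, started on input $y$, halts with the pair $(x,y)$ written on the tape; crucially $P_x$'s transition table is obtained from $x$ by a primitive recursive procedure, since it essentially just writes down $x$ in unary or binary and then shifts $y$ over. (2) Describe the operation of \emph{sequential composition} of machine codes: given codes $a$ and $b$, produce the code of the machine that runs $a$ and then, if it halts, runs $b$ on the output; renaming the states of $b$ to avoid clashes with those of $a$ and splicing the two transition tables together is again a primitive recursive operation on $a$ and $b$. (3) Set $S(e,x)$ to be the composition of the code of $P_x$ with $e$; since $x \mapsto \mathrm{code}(P_x)$ is primitive recursive and composition-of-codes is primitive recursive, $S$ is primitive recursive. (4) Verify correctness: unwinding the construction, $S(e,x)\cdot y$ runs $e$ on $(x,y)$, which by definition of $U$ and of ``$f$ has code $e$'' equals $f(x,y)$, including the undefined cases (the composed machine halts on $y$ iff $e$ halts on $(x,y)$).

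The main obstacle — or rather the only place where care is genuinely required — is step (2) together with the bookkeeping in step (1): one must be honest that the map on codes implementing ``build $P_x$'' and ``compose two machines'' really is primitive recursive and not merely intuitively computable. This is entirely a matter of the chosen encoding: once codes are sequences (say, Gödel-coded tuples) listing the quintuples of a machine, operations like ``add a constant to every state index,'' ``append two lists,'' and ``append a fixed finite gadget depending primitive-recursively on $x$'' are visibly primitive recursive using bounded search and the primitive recursive pairing/sequence functions. I would not grind through the explicit quintuple manipulations; instead I would state the encoding, note which elementary operations on it are primitive recursive, and assemble $S$ from them, leaving the mechanical verification to the reader in the spirit of the rest of the section. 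An alternative, if one prefers to sidestep Turing-machine surgery entirely, is to phrase the argument for a Turing-complete model with a cleaner code algebra (e.g.\ an abstract partial combinatory algebra with an explicit primitive recursive structure, or the Kleene bracket application), but the Turing-machine version is the most self-contained given the setup already fixed in the excerpt.
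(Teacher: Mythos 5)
Your proposal is the standard proof of Kleene's $S^m_n$ theorem (here in the case $m=n=1$): hard-code the argument $x$ into the machine by a primitive recursive manipulation of codes, with the only genuine care needed being the verification that building the prefix machine and composing transition tables are primitive recursive in the chosen G\"odel numbering. The paper states this theorem without proof as a recalled fact from computability theory, and your argument is exactly the classical one it implicitly relies on, so it is correct and consistent with the paper's intent.
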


\section{Categories, Functors and Natural Transformations} \label{SecCatAndFunct}

In this section, we will introduce the basic notions of categories, functors, and natural transformations. We will also present many instances of these concepts to provide concrete toy examples to familiarize readers with such abstract notions. These examples will also be required later in the chapter when we aim to provide a categorical formalization for a proof system.

\subsection{Categories}

Let us start this subsection with the definition of a category.

\begin{dfn}
A \emph{category} $\mathcal{C}$ consists of the following data:
\begin{itemize}
\item[$\bullet$]
a collection of \emph{objects}, denoted by $\mathrm{Ob}(\mathcal{C})$,
\item[$\bullet$]
a collection of \emph{morphisms} or \emph{maps}, denoted by $\mathrm{Mor}(\mathcal{C})$,
\item[$\bullet$]
for any morphism $f \in \mathrm{Mor}(\mathcal{C})$, an object $s(f)$ called the \emph{source} of $f$,
\item[$\bullet$]
for any morphism $f \in \mathrm{Mor}(\mathcal{C})$, an object $t(f)$ called the \emph{target} of $f$,
\item[$\bullet$]
for any object $A \in \mathrm{Ob}(\mathcal{C})$, a morphism $id_A$, called the \emph{identity} on $A$, 
\item[$\bullet$]
for any two morphisms $f, g \in \mathrm{Mor}(\mathcal{C})$ such that $s(f)=t(g)$, a morphism $f \circ g$, called the \emph{composition} of $f$ and $g$,
\end{itemize}
satisfying the following properties:
\begin{itemize}
\item[$\bullet$]
$s(id_A)=t(id_A)=A$,
\item[$\bullet$]
$s(f \circ g)=s(g)$ and $t(f \circ g)=t(f)$,
\item[$\bullet$]
$f \circ id_A=f=id_B \circ f$, if $s(f)=A$ and $t(f)=B$,
\item[$\bullet$]
$f \circ (g \circ h)=(f \circ g) \circ h$.
\end{itemize}
Since composition is a partial function, by the above equalities, we mean that if one side is defined, the other must also be defined, and they must be equal.
\begin{table}[h!]
    \centering
\[\begin{tikzcd}
	&&&& {D} \\
	{A} &&& {B} \\
	\\
	\\
	&&& {C}
	\arrow["{h}"', from=2-1, to=2-4, curve={height=30pt}]
	\arrow["{f}", from=2-1, to=2-4, curve={height=-30pt}]
	\arrow["{g}" description, from=2-1, to=2-4]
	\arrow["{i}"', from=2-4, to=5-4]
	\arrow["{j}", from=2-4, to=1-5]
	\arrow["{k}"', from=5-4, to=1-5, dotted]
	\arrow["{if=ig=ih}"', from=2-1, to=5-4, curve={height=10pt}]
	\arrow["{jf=jg=jh}", from=2-1, to=1-5, curve={height=-40pt}]
\end{tikzcd}\]  
    \caption{A schematic diagram of a category}
    \label{tab:my_label}
\end{table}
For any $f \in \mathrm{Mor}(\mathcal{C})$, we present the data $s(f)=A$ and $t(f)=B$ by $f: A \to B$. For any two objects $A, B \in \mathrm{Ob}(\mathcal{C})$, by $\mathcal{C}(A, B)$ or $\mathrm{Hom}_{\mathcal{C}}(A, B)$, we mean the collection of all morphisms $f:A \to B$. A category is called \emph{small} if $\mathrm{Mor}(\mathcal{C})$ is a set. It is called \emph{locally small} if $\mathrm{Hom}_{\mathcal{C}}(A, B)$ is a set, for any two objects $A$ and $B$. Sometimes, for simplicity, we drop $\circ$ in $f \circ g$ and denote the composition by $fg$.
A map $f: A \to B$ is called an \emph{isomorphism} iff there exists $g: B \to A$ such that $gf=id_A$ and $fg=id_B$. If there is an isomorphism between $A$ and $B$, we write $A \cong B$.
\end{dfn}

\begin{phil}
To have an informal interpretation in mind, consider the objects of a category as the entities of a given discourse, the maps as the transformations between these entities, the composition as the composition of these transformations, and the identity as the do-nothing transformation. More specifically, objects in a category can be interpreted as \emph{propositions}, and morphisms $f: A \to B$ as the \emph{deductions} of $B$ from the assumption $A$. Thus, the identity map $id_A: A \to A$ can be seen as the canonical deduction of $A$ from itself, and the composition as the process of applying two deductions sequentially. In this sense, any category can be viewed as a kind of \emph{proof system}.
\end{phil}

In the following, we will present many examples of categories. These examples are intended to provide a proper sense of what categories are. In each example, we will also attempt to interpret the introduced category as a proof system.

\begin{exam}
The collection of all (resp. finite) sets as the objects and functions as the morphisms, with the usual composition and identity, constitutes a category denoted by $\mathbf{Set}$ (resp. $\mathbf{FinSet}$). We can think of the set $A$ as the set of all \emph{proofs} of the proposition associated with $A$, denoted by $\phi_{A}$. Then, the function $f: A \to B$ can be read as a \emph{deduction} of $\phi_{B}$ from $\phi_{A}$, transforming any proof of $\phi_{A}$ into a proof of $\phi_{B}$.
\end{exam}

\begin{exam}
The collection of all topological spaces as the objects and continuous functions as the morphisms, with the usual composition and identity, constitutes a category, denoted by $\mathbf{Top}$. We can think of the space $X$ as the space of all proofs of the proposition associated to $X$, denoted by $\phi_X$. These proofs can be infinitary. However, the topology of the space ensures that only a finite amount of information encoded in such infinitary proofs is used. More precisely, we can think of an open subset of $X$ as a subset of all possible proofs of $\phi_X$ that extend a given finite amount of information. For instance, a proof of $\forall x \in \mathbb{N}, A(x)$ can be an infinite sequence $\{\pi_n\}_{n \in \mathbb{N}}$, where each $\pi_n$ is a proof of $A(n)$, and the open sets are the subsets consisting of all sequences $\{\pi_n\}_{n \in \mathbb{N}}$ that extend a finite initial segment. In this interpretation, a deduction of $Y$ from $X$ is a \emph{finitary} function transforming any proof of $X$ into a proof of $Y$, where finitary means that it only uses a finite amount of information from its input to compute the output. The continuity condition simply encodes this finiteness condition. For example, in our case, a finitary deduction of $B$ from $\forall x \in \mathbb{N}, A(x)$ must only use a finite initial segment of the proof $\{\pi_n\}_{n \in \mathbb{N}}$ of $\forall x \in \mathbb{N}, A(x)$.
\end{exam}

\begin{exam} \textit{(Discrete categories)}
A category $\mathcal{C}$ is called \emph{discrete} if it has only identity maps. Therefore, any set can be considered as a small discrete category. For example, the set $\{A,B,C\}$ can be viewed as the following category:
\[\begin{tikzcd}[ampersand replacement=\&]
	A \&\& B \&\& C
	\arrow["{id_A}", from=1-1, to=1-1, loop, in=50, out=130, distance=15mm]
	\arrow["{id_B}", from=1-3, to=1-3, loop, in=50, out=130, distance=15mm]
	\arrow["{id_C}", from=1-5, to=1-5, loop, in=50, out=130, distance=15mm]
\end{tikzcd}\]
\end{exam}

\begin{exam} \textit{(Finite categories)}
These are some finite categories:
\[\begin{tikzcd}[ampersand replacement=\&]
	{\mathbf{0}:} \\
	{\mathbf{1}:} \& \bullet \\
	{\mathbf{2}:} \& \bullet \& \bullet
	\arrow[from=3-2, to=3-3]
\end{tikzcd}\]
The category $\mathbf{0}$ has no objects and no morphisms. The category $\mathbf{1}$ has one object and one morphism, namely, the identity of the object. We did not draw the identity map in the above diagram and will continue this practice of omitting identity maps in diagrams. The category $\mathbf{2}$ has two objects and one non-identity morphism. Throughout this chapter, we always use the notation $\mathbf{2}$ for this category, which should not be confused with the discrete category containing two elements.  
\end{exam}

\begin{exam}\textit{(Preorders)}
Any preordered set $(P, \leq)$ can be interpreted as a small category, where the objects are the elements of $P$, and there is a unique morphism from $p$ to $q$ iff $p \leq q$. Note that the identity and composition correspond to the reflexivity and transitivity of the preorder, respectively.
\end{exam}

\begin{phil}
It is useful to think of preordered sets as the shadows of usual categories, reducing all transformations between two objects to a single \emph{transformability} between them. In the logical interpretation, this means that we collapse all deductions between two statements into one \emph{deductibility} map.
\end{phil}

\begin{exam}\textit{(Monoids)}
Any monoid $(M, \cdot, e)$ can be interpreted as a category with a single object $*$, where each element of $M$ is a morphism from $*$ to $*$, $e$ is the identity morphism $id_*$, and $\cdot$ represents the composition of morphisms.
\end{exam}

\begin{phil}(\emph{Groups and identity})
Being monoids, groups can also be read as categories with one object $*$. One can read any element of the group or equivalently any morphism over $*$ as a proof of the equality $*=*$. With this interpretation, it is easy to see that the identity $e$ is the \emph{trivial} proof of $*=*$ and its existence is the proof-sensitive version of the reflexivity of the identity relation, while the composition and the inverse operation of the group are the proof-sensitive versions of its transitivity and symmetry, respectively. 
\end{phil}

\begin{rem}
A category combines the two extreme cases previously mentioned: a preordered set and a monoid. The first handles the existence of different objects in a category, while the second addresses the different morphisms between any two objects. From a proof-theoretic perspective, one can say that the first describes the propositions and the deductibility relation between them, whereas the second incorporates the actual deductions into the picture.
\end{rem}

\begin{exam}(\textit{Baby variable sets})
Consider the collection of functions
\[\begin{tikzcd}
	{A_1} \\
	\\
	{A_0}
	\arrow["{f}", from=3-1, to=1-1]
\end{tikzcd}\]
as the objects and define a morphism $\alpha: f \to g$ to be the pair of functions $(\alpha_0, \alpha_1)$, where $\alpha_0 : A_0 \to B_0$ and $\alpha_1 : A_1 \to B_1 $ such that $\alpha_1 f=g \alpha_0$: 
\[\begin{tikzcd}[ampersand replacement=\&]
	{A_1} \&\& {B_1} \\
	\\
	{A_0} \&\& {B_0}
	\arrow["{\alpha_1}", from=1-1, to=1-3]
	\arrow["f", from=3-1, to=1-1]
	\arrow["{\alpha_0}"', from=3-1, to=3-3]
	\arrow["g"', from=3-3, to=1-3]
\end{tikzcd}\]
These collections, with the evident composition and identity, constitute a category denoted by $\mathbf{Set}^{\mathbf{2}}$. Any object $f: A_0 \to A_1$ in $\mathbf{Set}^{\mathbf{2}}$ can be interpreted as a \emph{variable set}, varying over the discrete structure of time $\{0 \leq 1\}$. The set $A_0$ represents the elements of the variable set available at the moment $t=0$, while the set $A_1$ represents its elements at the moment $t=1$. Moving from $t=0$ to $t=1$, there are three main possibilities: either new elements are created, some elements remain unchanged (up to renaming), or some distinct elements in $A_0$ become equal in $A_1$. These scenarios are all formalized by the function $f$. The elements outside the range of $f$ represent new elements at $t=1$, while the elements within the range come from $t=0$, reflecting the latter two possibilities. Any map between these variable sets naturally corresponds to a pair of maps, each representing the state of the sets at different moments in time, and must respect the changes that occur over time. From a proof-theoretical perspective, these variable sets can be interpreted as propositions whose proofs evolve over time. New proofs can be discovered, or two existing proofs may be found to be identical upon closer examination.
\end{exam}

\begin{rem}
In the previous example, there is nothing special about the structure $\{0 \leq 1\}$ and it can be replaced by any other preordered set or even by any small category. We will see how later. 
\end{rem}

\begin{exam}(\textit{Dynamical systems})
A pair $(A, \sigma_A)$ of a set $A$ and a function $\sigma_A: A \to A$
\[\begin{tikzcd}[ampersand replacement=\&]
	A
	\arrow["{\sigma_A}", from=1-1, to=1-1, loop, in=50, out=130, distance=15mm]
\end{tikzcd}\]
is called a \emph{dynamic system}, where $A$ is interpreted as the set of the states of the system and $\sigma_A: A \to A$ as its dynamism. Let $(A, \sigma_A)$ and $(B, \sigma_B)$ be two dynamic systems. A map $f: A \to B$ is called \emph{equivariant} if $f \circ \sigma_A=\sigma_B \circ f$:
\[\begin{tikzcd}[ampersand replacement=\&]
	A \&\& B
	\arrow["{\sigma_A}", from=1-1, to=1-1, loop, in=50, out=130, distance=15mm]
	\arrow["f"', from=1-1, to=1-3]
	\arrow["{\sigma_B}", from=1-3, to=1-3, loop, in=50, out=130, distance=15mm]
\end{tikzcd}\]
The equivariant maps are simply the functions that respect the dynamism, as we expect from any map between dynamic systems.
The collection of all dynamic systems as the objects and equivariant maps as the morphisms with the evident composition and identity constitute a category, denoted by $\mathbf{Set}^{(\mathbb{N}, +)}$ or $\mathbf{Set}^{\mathbb{N}}$, for short. (We will see the motivation of this rather strange denotation later). If we restrict the morphisms to equivariant maps that are also bijections, we reach the category of \emph{reversible dynamic systems}, denoted by $\mathbf{Set}^{(\mathbb{Z}, +)}$ or $\mathbf{Set}^{\mathbb{Z}}$, for short. It is possible to apply the same idea to define dynamic topological systems, dynamic monoids, and so on, using a topological space with one continuous map over it, a monoid with a homomorphism over it, and so on. Moreover, it is worth mentioning that a reversible dynamic system $(A, \sigma_A)$ can be read as a proposition. The set $A$ stores all possible proofs of that proposition, the bijection $\sigma_A$ is a way to make the proof $a \in A$ identical to $\sigma_A(a)$ and we want to consider the proofs up to that identity. In this sense, an equivariant map $f: (A, \sigma_A) \to (B, \sigma_B)$ is just a function to transform the proofs of $A$ to the proofs of $B$, respecting the newly put symmetry on the proofs.
\end{exam}

\begin{exam}
Let $\mathcal{L}_p=\{\top, \bot, \wedge, \vee, \to\}$ be the propositional language and $A$ and $B$ be two formulas in $\mathcal{L}_p$. By a morphism $f: A \to B$, we mean a derivation of $B$ from the assumption $A$ in $\mathbf{NJ}$, up to $\beta\eta$-equivalence. The collection of all the formulas in $\mathcal{L}_p$ and morphisms with the identity on $A$ as the trivial derivation of $A$ from itself and composition as simply putting derivations one after another is a category. We denote this category by $\mathbf{NJ}$. In a similar fashion, one can restrict oneself to the fragments $\mathcal{L}_p-\{\bot\}$ and $\mathcal{L}_p-\{\bot, \vee\}$ and define the categories $\mathbf{NM}$ and $\mathbf{NN}$, respectively. 
\end{exam}

We conclude this subsection by presenting some simple methods for constructing new categories from existing ones.

\begin{exam}(\textit{Opposite category})
Let $\mathcal{C}$ be a category. By its dual (opposite), denoted by $\mathcal{C}^{op}$, we mean a category with the same collections of objects and morphisms as of $\mathcal{C}$ with the source and the target assignments swapped and $f \circ' g=g \circ f$:
\[\begin{tikzcd}[ampersand replacement=\&]
	\& A \&\&\&\& A \\
	\\
	B \&\& C \&\& B \&\& C \\
	\& {\mathcal{C}} \&\&\&\& {\mathcal{C}^{op}}
	\arrow["f"', from=1-2, to=3-1]
	\arrow["h", from=1-2, to=3-3]
	\arrow["g"', from=3-1, to=3-3]
	\arrow["f", from=3-5, to=1-6]
	\arrow["h"', from=3-7, to=1-6]
	\arrow["g", from=3-7, to=3-5]
\end{tikzcd}\]
\end{exam}

\begin{exam}(\textit{Product of categories})
Let $\mathcal{C}$ and $\mathcal{D}$ be two categories. By $\mathcal{C} \times \mathcal{D} $, we mean the category with the pairs $(C, D)$ as the objects, where $C$ and $D$ are the objects of $\mathcal{C}$ and $\mathcal{D}$, respectively, and the pairs $(f, g): (C, D) \to (E, F)$ as the morphisms, where $f: C \to E$ and $g: D \to F$ are morphisms in  $\mathcal{C}$ and $\mathcal{D}$, respectively.
Note that this construction generalizes the product of monoids and posets, on the one hand, and the product of sets as discrete small categories, on the other.
\end{exam}

\begin{exam}(\textit{Preorder and Poset reflection})
Let $\mathcal{C}$ be a small category. By its \emph{preorder reflection}, we mean the preordered set $(\mathrm{Ob}(\mathcal{C}), \leq)$ where $A \leq B$ iff $\mathrm{Hom}(A, B)$ is non-empty. The \emph{poset reflection} of $\mathcal{C}$, denoted by $\mathrm{Po}(\mathcal{C})$, is the poset of the preorder reflection, i.e., the objects are the equivalence classes of the objects of the preorder reflection under the equivalence $A \sim B$ if $A \leq B$ and $B \leq A$ while the order is just the order induced by the preorder reflection. 
\end{exam}

\subsection{Functors}
Reading a category as a proof system, one naturally needs to formalize a transformation between proof systems as a map that respects the compositional structure of deductions.
\begin{dfn}(\textit{Functors})
Let $\mathcal{C}$ and $\mathcal{D}$ be two categories. By a \emph{functor} $F: \mathcal{C} \to \mathcal{D}$ we mean a pair of two assignments $F_0$ and $F_1$, such that $F_0$ maps any object $A$ of $\mathcal{C}$ to an object of $\mathcal{D}$, denoted by $F_0(A)$, and $F_1$ maps any morphism $f$ of $\mathcal{C}$ to a morphism in $\mathcal{D}$, denoted by $F_1(f)$, respecting the source, target, identity and composition operations as depicted in the following schematic diagram:
\[\begin{tikzcd}[ampersand replacement=\&]
	\& A \&\&\&\& {F(A)} \\
	\&\& {} \&\& {} \\
	B \&\& C \&\& {F(A)} \&\& {F(C)}
	\arrow["{id_A}", from=1-2, to=1-2, loop, in=50, out=130, distance=15mm]
	\arrow["h"', from=1-2, to=3-1]
	\arrow["i", from=1-2, to=3-3]
	\arrow["{id_{F(A)}=F(id_A)}", from=1-6, to=1-6, loop, in=50, out=130, distance=15mm]
	\arrow["{F(h)}"', from=1-6, to=3-5]
	\arrow["{F(i)}", from=1-6, to=3-7]
	\arrow["F", squiggly, maps to, from=2-3, to=2-5]
	\arrow["f", curve={height=-18pt}, from=3-1, to=3-3]
	\arrow["g"', curve={height=18pt}, from=3-1, to=3-3]
	\arrow["{F(f)=F(g)}"', from=3-5, to=3-7]
\end{tikzcd}\]
Usually, for simplicity, one drops the subscripts in $F_0$ and $F_1$ and denotes both of the assignments by $F$.
\end{dfn}
\begin{phil}
It is possible to read a functor $F: \mathcal{C} \to \mathcal{D}$ as a way to interpret the discourse $\mathcal{C}$ into the discourse $\mathcal{D}$, as a way to realize $\mathcal{C}$ in $\mathcal{D}$, as a $\mathcal{C}$-indexed family in $\mathcal{D}$ or a $\mathcal{C}$-variable object in $\mathcal{D}$. When we interpret $\mathcal{C}$ and $\mathcal{D}$ as two proof systems, $F$ may be read as an interpretation of the first proof system into the second. 
\end{phil}
\begin{exam}
Homomorphisms between monoids and order-preserving maps between preordered sets are examples of functors. In fact, as categories are a common generalization of preordered sets and monoids, functors are also a common generalization of monoid homomorphisms and order-preserving maps.
\end{exam}

\begin{exam}
The assignment mapping any set $A$ to its powerset $P(A)$ and any function $f: A \to B$ to the function $P(f): P(A) \to P(B)$, defined by $P(f)(S)=f[S]=\{f(a) \mid a \in S \}$ is a functor from $\mathbf{Set}$ to itself. Similarly, the functor $P^{\circ}: \mathbf{Set}^{op} \to \mathbf{Set}$, mapping any set $A$ to its powerset $P(A)$ and any  map $f: B \to A$ in $\mathbf{Set}^{op}$ (i.e., a function $f: A \to B$) to the function $P^{\circ}(f): P(B) \to P(A)$, defined by $P^{\circ}(f)(S)=f^{-1}(S)=\{a \in A \mid f(a) \in S\}$ is a functor.
\end{exam}

\begin{exam}
The assignment mapping any object $(A, B)$ in $\mathbf{Set} \times \mathbf{Set}$ to the set $A \times B$ and any morphism $(f, g): (A, B) \to (C, D)$ of $\mathbf{Set} \times \mathbf{Set}$ to the function $f \times g: A \times B \to C \times D$ defined by $[f \times g](a, b)=(f(a), g(b))$ is a functor, denoted by $(-)\times (-): \mathbf{Set} \times \mathbf{Set} \to \mathbf{Set}$. Reading proof-theoretically, we can interpret this functor as the \emph{conjunction operation} encoding the idea that a proof of a conjunction is a pair of the proofs of each component.
\end{exam}

\begin{exam}
The assignment mapping any object $(A, B)$ in $\mathbf{Set} \times \mathbf{Set}$ to the set $A + B=\{(0, a) \mid a \in A \} \cup \{(1, b) \mid b \in B\}$ and any morphism $(f, g): (A, B) \to (C, D)$ of $\mathbf{Set} \times \mathbf{Set}$ to the function $f + g: A + B \to C + D$ defined by $[f + g](0, a)=(0, f(a))$ and $[f + g](1, b)=(1, g(b))$ is a functor, denoted by $(-)+(-): \mathbf{Set} \times \mathbf{Set} \to \mathbf{Set}$. Reading proof-theoretically, we can interpret this functor as the \emph{disjunction operation} encoding the idea that a proof of a disjunction is a pair, where the first component states which of the disjuncts we are proving and the second component provides the proof.
\end{exam}

\begin{exam}
Let $A$ be a fixed set. Define the assignment $(-)^A : \mathbf{Set} \to \mathbf{Set}$, mapping the set $B$ to $B^A$ i.e., the set of all functions from $A$ to $B$ and mapping the function $f: B \to C$ to the function $f^A : B^A \to C^A$, defined by $f^A(g)=fg$. Then, $(-)^A: \mathbf{Set} \to \mathbf{Set}$ is a functor, generalizing the finite power functor $A \mapsto A^n$ generated by the iteration of the product functor. Similarly, it is possible to define the assignment $A^{(-)}: \mathbf{Set}^{op} \to \mathbf{Set}$, mapping the set $B$ to the set $A^B=\{f : B \to A\}$ and the map $f: C \to B$ in $\mathbf{Set}^{op}$ (i.e., the function $f: B \to C$) to the function $A^f : A^C \to A^B$, defined by $A^f(g)=gf$. Then, $A^{(-)}: \mathbf{Set}^{op} \to \mathbf{Set}$ is a functor, generalizing the functor $P^{\circ}=2^{(-)}$. It is possible to unify these two functors in one functor $(-)^{(-)}: \mathbf{Set}^{op} \times \mathbf{Set} \to \mathbf{Set}$ mapping $(B, C)$ to $C^B$ and $(f, g)$ to the function $g^f: C_1^{B_1} \to C_2^{B_2}$ defined by $g^f(h)=ghf$, where $f: B_2 \to B_1$ and $g: C_1 \to C_2$ are two functions. Reading proof-theoretically, we can interpret this functor as the \emph{implication operation} encoding the idea that a proof of the implication $B \to C$ is a function mapping any proof of $B$ to a proof of $C$.
\end{exam}

\begin{exam}(\textit{Forgetful functors})
It is customary to have a category of some structures and then, for some reason, forget some of the structures the objects possess and the maps preserve. This way, one can think of the same data as it lives in a richer context. Functors can formalize such situations, and in these settings, they are called \emph{forgetful functors} as they forget some parts of their input data. Let us provide three examples of such situations. First, the assignment $U$ mapping any topological space $X$ and any continuous map $f: X \to Y$ in $\mathbf{Top}$ to themselves in $\mathbf{Set}$ is a functor. The functor $U$ forgets that there is a topology on the spaces that the continuous maps respect. For the second example, for any $i \in \{0, 1\}$, define the assignment $\pi_i : \mathbf{Set}^{\mathbf{2}} \to \mathbf{Set}$, by mapping any $f: A_0 \to A_1$ to $A_i$ and any $\alpha: f \to g$ to $\alpha_i: A_i \to B_i$, where $f: A_0 \to A_1$ and $g: B_0 \to B_1$. Both $\pi_0$ and $\pi_1$ are functors. By making two snapshots of a variable set in the two possible moments, $\pi_0$ and $\pi_1$ forget that the variable set actually varies. For the third, consider the assignment $U: \mathbf{Set}^{\mathbb{N}} \to \mathbf{Set}$ mapping any dynamic system to its set and any equivariant map to itself. The assignment $U$ is a functor that forgets the dynamism of the dynamic space.
\end{exam}

Conversely to the idea behind the forgetful functors, sometimes we want to add more structure to the existing object, and of course, there might be many ways to add these structures. In the following, we will see some examples of such a situation.

\begin{exam}
Define the assignment $Disc: \mathbf{Set} \to \mathbf{Top}$ mapping the set $X$ to the discrete space $X$ (i.e., where all subsets are open) and the function $f: X \to Y$ to itself. This assignment is a functor. Similarly, if we define $InDisc: \mathbf{Set} \to \mathbf{Top}$, mapping the set $X$ to the indiscrete space $X$ (i.e., where the opens are $\varnothing$ and $X$) and the function $f: X \to Y$ to itself, we also obtain a functor. 
\end{exam}

\begin{exam}
Define the assignment $St: \mathbf{Set} \to \mathbf{Set}^{\mathbb{Z}}$ mapping the set $X$ to the \emph{static} dynamic system $(X, id_X)$ and the function $f: X \to Y$ to itself. This assignment is a functor.
\end{exam}

There are several functors that are generally available over arbitrary categories. Here are a few examples.

\begin{exam}
Let $\mathcal{C}$, $\mathcal{D}$ and $\mathcal{E}$ be some categories. Then, note that the identity functor $id_{\mathcal{C}} : \mathcal{C} \to \mathcal{C}$ mapping any object and morphism of $\mathcal{C}$ to itself is a functor. Moreover, if $F: \mathcal{D} \to \mathcal{E}$ and $G: \mathcal{C} \to \mathcal{D}$ are two functors, then the composition $FG: \mathcal{C} \to \mathcal{E}$ with its evident definition is also a functor. For any object $D$ of $\mathcal{D}$, the assignment $\Delta_D: \mathcal{C} \to \mathcal{D}$ mapping all objects of $\mathcal{C}$ to $D$ and all maps of $\mathcal{C}$ to $id_D$ is a functor.
\end{exam}

\begin{exam}
Let $\mathcal{C}$ be a locally small category. The assignment $\mathrm{Hom}_{\mathcal{C}}: \mathcal{C}^{op} \times \mathcal{C} \to \mathbf{Set}$, defined by $\mathrm{Hom}_{\mathcal{C}}(A, B)=\{f: A \to B \mid f \in \mathrm{Mor}(\mathcal{C})\}$ and $\mathrm{Hom}_{\mathcal{C}}(g, h) : \mathrm{Hom}_{\mathcal{C}}(A, B) \to \mathrm{Hom}_{\mathcal{C}}(C, D)$ as $\mathrm{Hom}_{\mathcal{C}}(g, h)(f)=hfg$, for any maps $f: A \to B$, $g: C \to A$ and $h: B \to D$ in $\mathcal{C}$, is a functor. This functor captures the whole structure of the category $\mathcal{C}$ and is called the Hom functor.
\end{exam}

\begin{exam}
Let $\mathcal{C}$ be a locally small category. For any object $A$ in $\mathcal{C}$, there is a canonical functor $y^A=\mathrm{Hom}_{\mathcal{C}}(A, - ): \mathcal{C} \to \mathbf{Set}$, capturing the behavior of the maps above $A$. It is defined by $y^A(B)=\mathrm{Hom}_{\mathcal{C}}(A, B)$ and $y^A(f): \mathrm{Hom}_{\mathcal{C}}(A, B) \to \mathrm{Hom}_{\mathcal{C}}(A, C)$ as $y^A(g)=fg$, for any $f: B \to C$. Similarly, there is a canonical functor $y_A=\mathrm{Hom}_{\mathcal{C}}(-, A): \mathcal{C}^{op} \to \mathbf{Set}$, capturing the behavior of the maps below $A$. It is defined by $y_A(B)=\mathrm{Hom}_{\mathcal{C}}(B, A)$ and $y_A(f) : \mathrm{Hom}_{\mathcal{C}}(C, A) \to \mathrm{Hom}_{\mathcal{C}}(B, A)$ as $y_A(f)(g)=gf$, for any $f: B \to C$.
These functors are just the Hom functor where we fix one of the components.
\end{exam}

\begin{exam}
Let $\mathcal{C}$ be a small category. Define the assignment $\pi: \mathcal{C} \to \mathrm{Po}(\mathcal{C})$, mapping an object $A$ to its class $[A]$ and a map $f: A \to B$ to the unique map from $[A]$ to $[B]$. This assignment is a functor. From a proof-theoretical perspective, we can interpret this functor as the \emph{collapsing operation}, which forgets all deductions between two propositions and retains only the record of their mere existence, encoded in the deductibility relation.
\end{exam}

To conclude this subsection, we will introduce three families of functors that will be important later in this chapter.

\begin{dfn}
A functor $F: \mathcal{C} \to \mathcal{D}$ is called \emph{faithful} if for any two morphisms $f, g: A \to B$ in $\mathcal{C}$, if $F(f)=F(g)$, then $f=g$.
\end{dfn}

\begin{exam}
Any functor with a preordered set as its domain is faithful, as in any preordered set, there is at most one map between any two objects. A homomorphism between two monoids is faithful iff it is injective.
The forgetful functors from $ \mathbf{Top}$ and $\mathbf{Set}^{\mathbb{N}}$ to $\mathbf{Set}$ are faithful, simply because they map the continuous functions and the equivariant maps to themselves. The functors $Disc, InDisc: \mathbf{Set} \to \mathbf{Top}$ and $St: \mathbf{Set} \to \mathbf{Set}^{\mathbb{Z}}$ are all faithful, as they also do not change the functions. The functor $\pi_0: \mathbf{Set}^{\mathbf{2}} \to \mathbf{Set}$ is not faithful, as it forgets the value of the variable set at the point $t=1$. More precisely, consider the inclusion function $i: \{0\} \to \{0, 1\}$ as an object of $\mathbf{Set}^{\mathbf{2}}$ and define the map $\alpha: i \to i$ by setting $\alpha_0: \{0\} \to \{0\}$ as the identity function and $\alpha_1: \{0, 1\} \to \{0, 1\}$ as the switch function, i.e., $\alpha_1(0)=1$ and $\alpha_1(1)=0$:
\[\begin{tikzcd}[ampersand replacement=\&]
	{\{0, 1\}} \&\& {\{0, 1\}} \\
	\\
	{\{0\}} \&\& {\{0\}}
	\arrow["i", from=3-1, to=1-1]
	\arrow["i"', from=3-3, to=1-3]
	\arrow["{\alpha_0}"', from=3-1, to=3-3]
	\arrow["{\alpha_1}", from=1-1, to=1-3]
\end{tikzcd}\]
It is clear that $\pi_0(\alpha)=\alpha_0=\pi_0(id_i)$, but $\alpha \neq id_i$ as $\alpha_1 \neq id_{\{0, 1\}}$.
\end{exam}

\begin{dfn}
A functor $F: \mathcal{C} \to \mathcal{D}$ is called \emph{full} if for any morphism $g: F(A) \to F(B)$ in $\mathcal{D}$, there exists a map $f: A \to B$ such that $F(f)=g$. It is called \emph{weakly-full} if the non-emptiness of $\mathrm{Hom}_{\mathcal{D}}(F(A), F(B))$ implies the non-emptiness of $\mathrm{Hom}_{\mathcal{C}}(A, B)$.
\end{dfn}

\begin{exam}\label{ExamOfFull}
An order-preserving map between two posets is weakly-full iff it is full iff it is an order-embedding. A homomorphism between two monoids is always weakly-full but it is full iff it is surjective. 
The forgetful functors from $ \mathbf{Top}$ and $\mathbf{Set}^{\mathbb{N}}$ to $\mathbf{Set}$ are not full as there are functions that are not continuous or equivariant. However, these forgetful functors are both weakly-full. The forgetful functor $\pi: \mathcal{C} \to \mathrm{Po}(\mathcal{C})$ is full and hence weakly-full. The functors $Disc, Indisc : \mathbf{Set} \to \mathbf{Top}$ are trivially full. 
The functor $\pi_0: \mathbf{Set}^{\mathbf{2}} \to \mathbf{Set}$ is not full, because not all functions give rise to a map between variable sets. 
For instance, consider the inclusion function $!: \{0, 1\} \to \{0\}$ and the identity function $id_{\{0, 1\}}: \{0, 1\} \to \{0, 1\}$ as the objects of $\mathbf{Set}^{\mathbf{2}}$ and let $\alpha_0=id_{\{0, 1\}}:  \{0, 1\} \to \{0, 1\}$ be the identity function: 
\[\begin{tikzcd}[ampersand replacement=\&]
	{\{0\}} \&\& {\{0, 1\}} \\
	\\
	{\{0, 1\}} \&\& {\{0, 1\}}
	\arrow["{!}", from=3-1, to=1-1]
	\arrow["{id_{\{0, 1\}}}"', from=3-3, to=1-3]
	\arrow["{\alpha_0=id_{\{0, 1\}}}"', from=3-1, to=3-3]
	\arrow["{\alpha_1}", from=1-1, to=1-3]
\end{tikzcd}\]
It is clear that there is no function $\alpha_1: \{0\} \to \{0, 1\}$ to make the diagram commutative and hence $\alpha_0$ is not $\pi_0(\alpha)$, for any map $\alpha: \, ! \to id_{\{0, 1\}}$.
\end{exam}

\begin{phil}
We noted that from a proof-theoretic perspective, functors should be understood as interpretations between proof systems. In this context, faithful functors can be viewed as faithful interpretations that preserve the structure of deductions. Full functors, on the other hand, are interpretations that do not introduce any new deductions between propositions. Weakly full functors may add some new deductions, but they do not render non-deductible propositions deductible. In other words, they do not alter the logic of the proof system.
\end{phil}

\subsection{Natural Transformations}
Reading a category as a proof system and a functor as an interpretation, one can go one step further to think of the possible formalization of the transformations between the interpretations. 
\begin{dfn}(\textit{Natural transformations})
Let $\mathcal{C}$ and $\mathcal{D}$ be two categories and $F, G: \mathcal{C} \to \mathcal{D}$ be two functors. By a \emph{natural transformation} $\alpha: F \Rightarrow G$, depicted as:

\[\begin{tikzcd}
	{\mathcal{C}} &&& {\mathcal{D}}
	\arrow["{G}"{name=0, swap}, from=1-1, to=1-4, curve={height=30pt}]
	\arrow["{F}"{name=1}, from=1-1, to=1-4, curve={height=-30pt}]
	\arrow[Rightarrow, "{\alpha}", from=1, to=0, shorten <=6pt, shorten >=6pt]
\end{tikzcd}\]

\noindent we mean an assignment mapping any object of $\mathcal{C}$ to a morphism $\alpha_C: F(C) \to G(C)$ in $\mathcal{D}$ such that for any morphism $f: A \to B$ in $\mathcal{C}$, the following diagram commutes:

\[\begin{tikzcd}
	{F(A)} && {G(A)} \\
	\\
	{F(B)} && {G(B)}
	\arrow["{\alpha_A}", from=1-1, to=1-3]
	\arrow["{F(f)}"', from=1-1, to=3-1]
	\arrow["{G(f)}", from=1-3, to=3-3]
	\arrow["{\alpha_B}"', from=3-1, to=3-3]
\end{tikzcd}\]
\end{dfn}

\begin{phil}
Reading the functors $F, G: \mathcal{C} \to \mathcal{D}$ as $\mathcal{C}$-variable objects in $\mathcal{D}$, a natural transformation $\alpha: F \Rightarrow G$ serves as a natural candidate for a transformation between these variable objects. Naturally, any transformation between variable objects must specify the way we change the object $F(C)$ to the object $G(C)$ in $\mathcal{D}$, for each parameter $C \in \mathrm{Ob}(\mathcal{C})$. These changes cannot be arbitrary; they must respect the variation of the parameter $C$ in $\mathcal{C}$, as illustrated in the following schematic diagram:
\[\begin{tikzcd}
	&&&&&& {F(B)} \\
	& {B} &&&& {F(A)} && {F(C)} \\
	{A} && {C} & {} & {} && {G(B)} \\
	&&&&& {G(A)} && {G(C)}
	\arrow["{f}", from=3-1, to=2-2]
	\arrow["{g}", from=2-2, to=3-3]
	\arrow["{F(f)}", from=2-6, to=1-7]
	\arrow["{F(g)}", from=1-7, to=2-8]
	\arrow["{G(f)}", from=4-6, to=3-7]
	\arrow["{G(g)}", from=3-7, to=4-8]
	\arrow["{\alpha_A}" description, from=2-6, to=4-6, dotted]
	\arrow["{\alpha_B}" description, from=1-7, to=3-7, dotted]
	\arrow["{\alpha_C}" description, from=2-8, to=4-8, dotted]
	\arrow["{F}", from=3-4, to=3-5, shift left=5, squiggly]
	\arrow["{G}"', from=3-4, to=3-5, shift right=5, squiggly]
\end{tikzcd}\]
It is intuitively helpful to think of a natural transformation as a way of changing $F(C)$ to $G(C)$, \emph{smoothly} in the parameter $C$.

It is also possible to interpret the functors $F, G: \mathcal{C} \to \mathcal{D}$ as two \emph{construction methods} that take an object in $\mathcal{C}$ and transform it into an object in $\mathcal{D}$. When can we consider $F$ and $G$ to be ``equal" as methods of construction? We certainly do not want to restrict ourselves to strict equality, which requires the functors to be the same on both objects and morphisms, as this is overly restrictive. For instance, consider $F, G: \mathbf{Set} \to \mathbf{Set}$ as $F(A)=A \times \{0\}$ and $G(A)=A \times \{1\}$. Although $F$ and $G$ are not strictly equal, they must be considered as the same methods of construction, as they are only different up to an isomorphism. Using this criterion, one natural candidate for the intended equality between $F$ and $G$ is the existence of an isomorphism between $F(A)$ and $G(A)$, for any object $A$ in $\mathcal{C}$. However, it is clear that any random assignment of isomorphisms between $F(A)$ and $G(A)$ does not suffice; the isomorphisms must be assigned in a \textit{uniform} way, as we want $F$ and $G$ to be equal as two methods of construction not two mere structureless assignments. This uniformity requires that the isomorphisms be somewhat independent of the specific choice of the object $A$. Of course, one may object that the isomorphisms clearly depend on the object $A$ (the source and the target of the isomorphism, for instance), but at the same time it is intuitively meaningful to talk about the constructions that apply the \textit{same} method to \textit{different} objects. An example may be more illuminating. Consider the canonical isomorphism $s_{A, B}: A \times B \to B \times A$ defined by $s_{A, B}(a, b)=(b, a)$ that shows the order in the product of two sets is not important. This map clearly depends on the choice of $A$ and $B$, but at the same time it is defined in a uniform way of ``swapping the elements in a pair" which does not use the sets in an essential way. Natural transformations is historically developed for the sole purpose of capturing this very intuition of uniformity.   
\end{phil}

\begin{exam}
The assignment $s: id_{\mathbf{Set}} \Rightarrow P$ defined by $s_A: A \to P(A)$ as $s_A(a)=\{a\}$ is a natural transformation. It is natural, because for any map $f: A \to B$, if $f$ maps $a \in A$ to $f(a) \in B$, then $P(f)$ maps $\{a\}$ to $f[\{a\}]=\{f(a)\}$:

\[\begin{tikzcd}
	{A} && {P(A)} && {a} && {\{a\}} \\
	\\
	{B} && {P(B)} && {f(a)} && {\{f(a)\}}
	\arrow["{f}"', from=1-1, to=3-1]
	\arrow["{f[-]}", from=1-3, to=3-3]
	\arrow["{\{-\}}", from=1-1, to=1-3]
	\arrow["{\{-\}}"', from=3-1, to=3-3]
	\arrow["{f}"', from=1-5, to=3-5, maps to]
	\arrow["{\{-\}}", from=1-5, to=1-7, maps to]
	\arrow["{\{-\}}"', from=3-5, to=3-7, maps to]
	\arrow["{f[-]}", from=1-7, to=3-7, maps to]
\end{tikzcd}\]

\end{exam}

\begin{exam}
Let $Ex: \mathbf{Set} \times \mathbf{Set} \to \mathbf{Set} \times \mathbf{Set}$ be the exchange functor, i.e, $Ex(A, B)=(B, A)$ and $Ex(f, g)=(g, f)$ and $(-) \times (-): \mathbf{Set} \times \mathbf{Set} \to \mathbf{Set}$ be the product functor. Then, the assignment $s: (-) \times (-) \Rightarrow [(-) \times (-) ] \circ Ex $ defined by $s_{(A, B)}: A \times B \to B \times A$ as $s_{A \times B}(a, b)=(b, a)$ is a natural transformation as for any map $(f, g): (A, B) \to (C, D)$ in $\mathbf{Set} \times \mathbf{Set}$, we have:
\[\begin{tikzcd}
	{A \times B} && {B \times A} && {(a, b)} && {(b, a)} \\
	\\
	{ C \times D} && {D \times C} && {(f(a), g(b))} && {(g(b), f(a))}
	\arrow["{ f \times g}"', from=1-1, to=3-1]
	\arrow["{g \times f}", from=1-3, to=3-3]
	\arrow["{s_{(A, B)}}", from=1-1, to=1-3]
	\arrow["{s_{(C, D)}}"', from=3-1, to=3-3]
	\arrow["{f \times g}"', from=1-5, to=3-5, maps to]
	\arrow["{s_{(A, B)}}", from=1-5, to=1-7, maps to]
	\arrow["{s_{(C, D)}}"', from=3-5, to=3-7, maps to]
	\arrow["{g \times f}", from=1-7, to=3-7, maps to]
\end{tikzcd}\]
\end{exam}

\begin{exam}\label{NatforPoset}
Let $(P, \leq_P)$ and $(Q, \leq_Q)$ be posets and $F, G: (P, \leq_P) \to (Q, \leq_Q)$ be two order-preserving maps. Then, there is at most one natural transformation $\alpha : F \Rightarrow G$, as there is at most one map from $F(p)$ to $G(p)$, for any $p \in P$. This unique natural transformation exists iff $F(p) \leq_{Q} G(p)$, for any $p \in P$:
\[\begin{tikzcd}[ampersand replacement=\&]
	{F(p)} \&\& {G(p)} \\
	\\
	{F(q)} \&\& {G(q)}
	\arrow["{\alpha_p}", shift left=2, draw=none, from=1-1, to=1-3]
	\arrow["{\alpha_q}"', shift right=3, draw=none, from=3-1, to=3-3]
	\arrow["{\huge{\leq}}"{marking}, draw=none, from=1-1, to=3-1]
	\arrow["{\huge{\leq}}"{marking}, draw=none, from=1-3, to=3-3]
	\arrow["{\huge{\leq}}"{marking}, draw=none, from=1-1, to=1-3]
	\arrow["{{\huge \leq}}"{marking}, draw=none, from=3-1, to=3-3]
\end{tikzcd}\]
A similar phenomenon occurs when we replace $(P, \leq_P)$ with any other category. Specifically, for any category $\mathcal{C}$ and any functors $F, G: \mathcal{C} \to (Q, \leq_Q)$, there is at most one natural transformation $\alpha: F \Rightarrow G$, and it exists if $F(A) \leq_Q G(A)$ for every object $A$ in $\mathcal{C}$.
\end{exam}

\begin{exam}
Let $M$ and $N$ be two monoids and $F, G: M \to N$ be two homomorphisms. Then, a natural transformation $\alpha : F \Rightarrow G$, by definition, is an element $\alpha_*= n \in N$ such that $nF(m)=G(m)n$, for any $m \in M$:
\[\begin{tikzcd}[ampersand replacement=\&]
	{F(*)} \&\& {G(*)} \\
	\\
	{F(*)} \&\& {G(*)}
	\arrow["{\alpha_*=n}", from=1-1, to=1-3]
	\arrow["{G(m)}", from=1-3, to=3-3]
	\arrow["{F(m)}"', from=1-1, to=3-1]
	\arrow["{\alpha_*=n}"', from=3-1, to=3-3]
\end{tikzcd}\]
\end{exam}

\begin{exam}\label{NaturalYoneda}
Let $\mathcal{C}$ be a category and $f: A \to B$ be a map. Then, the assignment $y_f: \mathrm{Hom}(-, A) \to \mathrm{Hom}(-, B)$ defined by $(y_f)_C=f \circ (-)$ is a natural transformation as for any map $g: C \to D$:
\[\begin{tikzcd}
	{\mathrm{Hom}(D, A)} && {\mathrm{Hom}(D, B)} \\
	\\
	{\mathrm{Hom}(C, A)} && {\mathrm{Hom}(C, B)}
	\arrow["{f \circ (-)}", from=1-1, to=1-3]
	\arrow["{f \circ (-)}"', from=3-1, to=3-3]
	\arrow["{(-) \circ g}"', from=1-1, to=3-1]
	\arrow["{(-) \circ g}", from=1-3, to=3-3]
\end{tikzcd}\]
\end{exam}

\begin{exam}(\textit{Non-natural transformations})
Let $\alpha$ be an assignment choosing a function $\alpha_A: A \to A$ for any set $A$. Then $\alpha: id_{\mathbf{Set}} \Rightarrow id_{\mathbf{Set}}$ is a natural transformation iff $\alpha_A=id_A$, for any set $A$. It is clear that setting $\alpha_A=id_A$ for any set $A$ makes $\alpha$ a natural transformation. For the converse, assume $\alpha$ is a natural transformation and consider the following commutative diagram:
\[\begin{tikzcd}
	{\{0\}} && {\{0\}} \\
	\\
	A && A
	\arrow["{\alpha_{\{0\}}}", from=1-1, to=1-3]
	\arrow["\hat{a}"', from=1-1, to=3-1]
	\arrow["\hat{a}", from=1-3, to=3-3]
	\arrow["{\alpha_A}"', from=3-1, to=3-3]
\end{tikzcd}\]
where $a \in A$ is an arbitrary element and $\hat{a}(0)=a$. It is clear that $\alpha_{\{0\}}=id_{\{0\}}$. Hence, $\alpha_A \circ \hat{a}=\hat{a}$. Applying both sides on $0$, we have $\alpha_A(a)=a$. As $a \in A$ is arbitrary, we reach $\alpha_A=id_A$. Using the above characterization, it is enough to pick assignments that are not identity over at least one $A$ to have non-natural transformations. 
\end{exam}
With functors and natural transformations at our disposal, we can generalize the notion of a variable set by generalizing the underlying temporal structure from the poset $\{0 \leq 1\}$ to any small category:
\begin{exam}(\emph{Variable sets over $\mathcal{C}$})
Let $\mathcal{C}$ be a small category. Then, a \emph{variable set} over $\mathcal{C}$ is a functor $F: \mathcal{C} \to \mathbf{Set}$. Variable sets over $\mathcal{C}$ with natural transformations and canonical identity and composition form a category denoted by $\mathbf{Set}^{\mathcal{C}}$. The exponentiation-like notation suggests that  $\mathbf{Set}^{\mathcal{C}}$ collects all functors from $\mathcal{C}$ to $\mathbf{Set}$ similar to collecting all functions from $A$ to $B$ in $B^A$.
It is evident that if $\mathcal{C}$ is the category $\mathbf{2}$, then a variable set over $\mathcal{C}$ corresponds to the notion of a variable set we had previously established. Additionally, when $\mathcal{C}$ is taken to be the monoid $(\mathbb{N}, +)$ or $(\mathbb{Z}, +)$, variable sets over $\mathcal{C}$ represent dynamic systems and reversible dynamic systems, respectively. This observation not only unifies these diverse categories under one notion of variable sets but also justifies the notation that we employ for them.

One can use the Hom functor to map any small category $\mathcal{C}$ to $\mathbf{Set}^{\mathcal{C}^{op}}$. More precisely, define $y: \mathcal{C} \to \mathbf{Set}^{\mathcal{C}^{op}}$ by $y_A=\mathrm{Hom}(-, A)$ on objects and $y_f: \mathrm{Hom}(-, A) \to \mathrm{Hom}(-, B)$ as defined in Example \ref{NaturalYoneda} on morphisms. It is easy to check that $y$ is actually a functor. This functor plays a crucial role in category theory and is known as the \emph{Yoneda embedding}. It is known that $y$ is a full and faithful functor. Therefore, one can always pretend that any small category is actually a category of variable sets. 
\end{exam}

\begin{phil}
One of the prominent foundational paradigms in mathematics is Brouwerian intuitionism. Among many other things, the paradigm believes that mathematics is a mental narrative constructed by a creative subject for herself. Like any narrative, this mathematical story evolves over time as new constructions are added and new properties are proved. In this context, truth in mathematics is viewed as temporal and dynamic, which can be effectively represented by our variable sets in $\mathbf{Set}^{\mathcal{C}}$, where $\mathcal{C}$ is a suitable category that captures the progression of time. There are many valid formalizations of this notion of time. For instance, the simplest formalization that comes to mind is the set of all natural numbers and its usual order encoding the instances and the arrow of time. However, in this example, we focus on Brouwer's own understanding of time: 

\begin{quote}
\textit{This perception of a move of time may be described as the falling apart of a life moment into two distinct things, one of which gives way to the other, but is retained by memory. If the twoity thus born is divested of all quality, it passes into the empty form of the common substratum of all twoities. And it is this common substratum, this empty form, which is the basic intuition of mathematics} \cite{qBrouwerCamLec}.
\end{quote}

To formalize this notion of time, we use  $[n]=\{0, 1, \ldots, n-1\}$, for $n \geq 0$, as the objects to represent the $n$th moment of time and, for any $n \leq m$, we define the morphisms from $[n]$ to $[m]$ as a function $f: [m] \to [n]$, where $f(i)=i$, for any $i < n$. The equation  $f(i)=j$ represents the creation process of moments by encoding the fact that the moment $i$ has been created from the moment $j$. Therefore, the condition $f(i)=i$ just says that when we are at the $n$th moment, the moment $i < n$ is fixed throughout the creation process, and only the moments greater than or equal to $n$ are newly created. As it is expected, the category $\mathbf{Set}^{\mathcal{C}}$ leads to an interesting intuitionistic dynamic version of sets. What is surprising, though, is the fact that some of these variable growing sets are in some sense \textit{completed}, and the category of these completed sets satisfies all classical axioms of set theory except the axiom of choice. Hence, it can serve as a model to prove the unprovability of the axiom of choice from $\mathsf{ZF}$. The details are, of course, beyond the scope of this chapter. For more, see \cite{MacLaneMoerjdijk}.
    
\end{phil}

\section{Some Universal Constructions} \label{SecUnivCons}

Category theory is renowned for its \emph{relative approach}, in which the properties of an entity are defined by its relationships with other entities rather than by its internal components. For example, when we work with sets in a categorical context, we define the properties of a given set based on the morphisms (functions) that connect it to other sets, rather than inspecting the elements within the set itself.

As the relative approach completely ignores the internal structure of the entities, it is hardly clear whether it is actually capable of defining any non-trivial property. For instance, consider the task of defining the singleton set $\{0\}$ in $\mathbf{Set}$. An analysis reveals that such a task is impossible as any two isomorphic objects in a category exhibit indistinguishable relative behavior. Therefore, a definition for $\{0\}$ would also apply to $\{1\}$ which is impossible. The first lesson from this observation is that any purely relative definition overlooks the differences between isomorphic objects. If we choose to pursue a relative perspective, we must accept this limitation. With that observation, let us change the task from defining the set $\{0\}$ to defining ``being a singleton". Fortunately, we can see that this time, the definition is possible. A set $A$ is a singleton iff for any set $B$, there exists exactly one map from $B$ to $A$. To transform this modest observation to a general method of producing relative definitions, we must notice that our above definition of singletons is based on the informal intuition that singletons are the ``\emph{greatest}" objects with a certain property, i.e., the property of having a map from any object into them. Using this vague form of being greatest is not accidental. It is clear that if we must define everything relatively, then the only things we can define are the best objects satisfying a relative behavior with respect to some of the others. This being the best can take one of two forms: either it can be the least object or the greatest object with respect to some property. Such definitions are referred to as defining by a \emph{universal property}, and this section is dedicated to exploring these definitions.

\subsection{Cartesian Structure}
As we have already seen, singleton sets are defined by the property of having exactly one map from any arbitrary source into them. This is a crucial notion that is useful in many categories, so let us give it a name.
\begin{dfn}(\emph{Terminal objects})
An object $A$ is called \emph{terminal} if for any object $B$, there exists a \emph{unique} map from $B$ to $A$. This unique map is denoted by $!_B: B \to A$ and if $B$ is clear from the context, just by $!$.
\end{dfn}

\begin{exam}\label{ExampleOfTerminal}
In $\mathbf{FinSet}$ and $\mathbf{Set}$, an object is terminal iff it is a singleton. As we only care about the cardinality of the set and not its element, we usually denote these terminal objects by a common name $\{*\}$.  
In a poset $(P, \leq)$, a terminal object is by definition an element $a \in P$ such that for any $b \in P$, we have $b \leq a$. Hence, a terminal object is the greatest element of the poset if it exists. 
In $\mathbf{Set}^{\mathbb{N}}$ and $\mathbf{Set}^{\mathbb{Z}}$, a terminal object is a pair in the form $(\{*\}, id_{\{*\}})$. The same also holds for the category $\mathbf{Set}^{\mathcal{C}}$, where a terminal object is a constant functor $\Delta_{\{*\}}: \mathcal{C} \to \mathbf{Set}$. 
In $\mathbf{NJ}$, the proposition $\top$ is a terminal object. The reason is that first, there is always a derivation from $A$ to $\top$, for any proposition $A$. Second, for a fixed formula $A$, as we identified any two such derivations by the $\eta$-equivalence relation, the map from $A$ to $\top$ is also unique. The same claim also holds for $\mathbf{NN}$ and $\mathbf{NM}$.
\end{exam}

\begin{exam}(\textit{Non-existence of terminal objects})
Terminal objects do not necessarily exist. For instance, a poset without a greatest element such as $(\mathbb{N}, \leq)$ or a non-trivial monoid does not have a terminal object. As another example, the category of sets with at least two elements does not have a terminal object.
\end{exam}

\begin{rem}
Define the order $\leq$ on the objects of $\mathcal{C}$ by setting $B \leq A$ if there is a morphism $f: B \to A$. In this sense, a terminal object $A$ can be intuitively thought of as the ``greatest" object in the category, as there is a map from any object $B$ into $A$. However, we must use ``greatest" with caution, as being the greatest in this order is not sufficient to be a terminal object. There is an additional condition requiring the uniqueness of morphisms from $B$ to $A$, which is not captured by our \emph{informal} order-theoretic description.
\end{rem}

\begin{rem}
A category can have many terminal objects. For instance, all singletons in $\mathbf{Set}$ are terminal. However, up to an isomorphism, there is at most one terminal object. To prove this, let $A$ and $B$ be two terminal objects. Then, by the universal property of both $A$ and $B$, there are maps $f:A \to B$ and $g: B \to A$. Now, consider the maps $fg: B \to B$ and $gf: A \to A$. By the uniqueness condition for both $A$ and $B$, we have $fg=id_B$ and $gf=id_A$. Hence, $f:A \to B$ is an isomorphism. Now, as there is at most one terminal object up to an isomorphism and our relative approach sees everything up to isomorphisms, it is safe to denote all terminal objects by one reserved name, i.e., $1$, and call it \emph{the} terminal object. 
\end{rem}

\begin{phil}(\emph{Terminal objects as the trivial truth})
Reading any category as a proof system, a terminal object is a \emph{trivially true proposition}, i.e., a proposition with a unique deduction from any given assumption.
One may argue that although the intuition behind the uniqueness condition is clear for the singletons in $\mathbf{Set}$, the same claim for $\top$ is a bit artificial and this is even evident in $\mathbf{NJ}$, where we must use the apparently artificial $\eta$-equivalence to ensure this condition. To defend our interpretation, let us emphasize that the logical constant $\top$ is present in the language to formalize the \emph{trivial} truth. Whatever it means, the trivial truth must be provable as it is true and have exactly one proof as it is trivial. Note that if we forget about the triviality part, then there is no need for $\top$ in the language as there are many other provable statements like $p \to p$ playing the same role as $\top$ does. In fact,
in proof-irrelevant approaches, all provable statements are considered equivalent, while this should not hold in a proof-sensitive approach. For instance, the formula $p \to p$ and $\top$ must be considered as different formulas, because there are many deductions of $p$ from $p$ and hence many proofs of $p \to p$ while $\top$ has only one proof.
\end{phil}

The second universal construction we are interested in is the binary product. Before going any further, it is useful to pause and consider cartesian products in $\mathbf{Set}$, and to try to define them in a relative fashion without referring to their internal structure.

\begin{dfn}\label{DefProduct}(\emph{Binary products})
Let $A$ and $B$ be two objects in a category $\mathcal{C}$. An object $C$ together with two morphisms $p_0: C \to A$ and $p_1: C \to B$ is called a \emph{binary product} of $A$ and $B$ if for any object $D$ in $\mathcal{C}$ and any morphisms $f: D \to A$ and $g: D \to B$, there exists a \emph{unique} map $h: D \to C$ such that:
\[\begin{tikzcd}[ampersand replacement=\&]
	\&\& D \\
	\\
	\\
	A \&\& C \&\& B
	\arrow["{p_0}", from=4-3, to=4-1]
	\arrow["{p_1}"', from=4-3, to=4-5]
	\arrow["f"', from=1-3, to=4-1]
	\arrow["g", from=1-3, to=4-5]
	\arrow["h"', dashed, from=1-3, to=4-3]
\end{tikzcd}\]
The maps $p_0$ and $p_1$ are called the \emph{projections} of the binary product. The unique map $h$ is denoted by $\langle f, g \rangle$. 
Note that $p_0 \circ \langle f, g \rangle=f$ and  $p_1 \circ \langle f, g \rangle=g$. As we will mostly work with binary products in this chapter, whenever we say a product, unless specified otherwise, we always mean a binary product.
\end{dfn}

\begin{rem}\label{equivalentUniquenessProduct}
The uniqueness condition in Definition \ref{DefProduct} can be equivalently phrased as the identity $\langle p_0 h, p_1 h \rangle=h$, for any $h: D \to C$. To show the equivalence, it is clear that having the identity, we can reach the uniqueness, because if $p_0h=p_0h'=f$ and $p_1h=p_1h'=g$, then $h=\langle p_0 h, p_1 h \rangle=\langle p_0 h', p_1 h' \rangle=h'$. Conversely, if we have the uniqueness condition, set $h'=\langle p_0 h, p_1 h \rangle$. As $p_0h'=p_0h=f$ and $p_1h'=p_1h=g$, then by uniqueness, we have $h=h'$. Writing universal conditions as equalities is useful in practice. However, it is also conceptually important as it helps to construct the free categories with that universal structure, by first producing all possible required objects and maps freely and then making the quotient over the equalities to ensure that they have the properties they should.
\end{rem}

\begin{rem}\label{HighlevelProduct}
There is a high-level definition for binary products that is conceptually more elegant but computationally harder to work with. Here is the definition. A binary product of $A$ and $B$ is an object $C$ together with an isomorphism between the functors $\mathrm{Hom}(-, C): \mathcal{C}^{op} \to \mathbf{Set}$ and $\mathrm{Hom}(-, A) \times \mathrm{Hom}(-, B): \mathcal{C}^{op} \to \mathbf{Set}$ in the category $\mathbf{Set}^{\mathcal{C}^{op}}$. We do not prove the equivalence between the two definitions here. However, it is a good exercise in order to become fluent in the categorical language.
\end{rem}

\begin{rem}
Consider all diagrams of the shape: 
\[\begin{tikzcd}[ampersand replacement=\&]
	A \&\& D \&\& B
	\arrow["f", from=1-3, to=1-1]
	\arrow["g"', from=1-3, to=1-5]
\end{tikzcd}\]
where $A$ and $B$ are fixed and we change $D$ and the maps $f:D \to A$ and $g: D \to B$. Using maps between the middle objects, there is a natural order on these diagrams. In the following picture, we say that the lower diagram is greater than the upper one:
\[\begin{tikzcd}[ampersand replacement=\&]
	\&\& {D_0} \\
	A \&\&\&\& B \\
	\&\& {D_1}
	\arrow["{f_0}"', from=1-3, to=2-1]
	\arrow["{g_0}", from=1-3, to=2-5]
	\arrow["h"{description}, dashed, from=1-3, to=3-3]
	\arrow["{f_1}", from=3-3, to=2-1]
	\arrow["{g_1}"', from=3-3, to=2-5]
\end{tikzcd}\]
if there is $h:D_0 \to D_1$ making the whole diagram commutative. In this sense, the universal definition of binary product roughly states that the diagram
\[\begin{tikzcd}[ampersand replacement=\&]
	A \&\& C \&\& B
	\arrow["{p_0}", from=1-3, to=1-1]
	\arrow["{p_1}"', from=1-3, to=1-5]
\end{tikzcd}\]
is the ``greatest" among all the diagrams with the mentioned shape. Again, we put the ``greatest" in quotation, as being the greatest in the above order is not enough to be a product due to the missing uniqueness condition.
\end{rem}

\begin{exam}
In $\mathbf{FinSet}$ and $\mathbf{Set}$, the usual cartesian product $A \times B$ together with its projections $p_0: A \times B \to A$ and $p_1: A \times B \to B$, is a binary product of the sets $A$ and $B$.
In a poset $(P, \leq)$, a product of $a, b \in P$ is by definition the greatest lower bound of the subset $\{a, b\}$ i.e., an element $c$ such that $c \leq a$ and $c \leq b$ and for any $d \in P$, if $d \leq a$ and $d \leq b$, then $d \leq c$. The product in a poset is usually called \emph{meet}. For the typical poset $(P, \subseteq)$, where $P$ is a set of subsets of a set $X$, if $P$ is closed under intersection, then the product (meet) is the intersection itself.
In $\mathbf{Set}^{\mathbb{N}}$, a product of $(A, \sigma_A)$ and $(B, \sigma_B)$ is the dynamic system $(A \times B, \sigma_A \times \sigma_B)$ together with the evident projections, where $[\sigma_A \times \sigma_B](a, b)=(\sigma_A(a), \sigma_B(b))$. Note that the projections respect the dynamisms and hence they live in $\mathbf{Set}^{\mathbb{N}}$. A similar situation happens in $\mathbf{Set}^{\mathbb{Z}}$. More generally, in $\mathbf{Set}^{\mathcal{C}}$, a product of two functors $F, G: \mathcal{C} \to \mathbf{Set}$ is the functor $H: \mathcal{C} \to \mathbf{Set}$, together with the maps $p_0: H \to F$ and $p_1: H \to F$ such that $H$ maps the object $A$ to $F(A) \times G(A)$ and the morphism $f: A \to B$ to the function $H(f): F(A) \times G(A) \to F(B) \times G(B)$, sending $(x, y)$ to $(F(f)(x), G(f)(y))$. The projection $p_0: H \to F$ is defined by setting $(p_0)_{A}: F(A) \times G(A) \to F(A)$ as the usual projection. The map $p_1: H \to G$ is defined similarly. It is not hard to see that $H$ is actually a functor, $p_0$ and $p_1$ are actually maps in $\mathbf{Set}^{\mathcal{C}}$ and the data is actually a product.
\end{exam}

\begin{exam}
In $\mathbf{NJ}$, the product of $A$ and $B$ is the conjunction $A \wedge B$ together with the following two derivations as $p_0: A \wedge B \to A$ and $p_1: A \wedge B \to B$:
\begin{center}
	\begin{tabular}{c c}
	    \AxiomC{$A \wedge B$}
	    \RightLabel{\footnotesize$\wedge E_1$} 
		\UnaryInfC{$A$}
		\DisplayProof \hspace{10pt}
		&
		\AxiomC{$A \wedge B$}
	    \RightLabel{\footnotesize$\wedge E_2$} 
		\UnaryInfC{$B$}
		\DisplayProof
	\end{tabular}
\end{center}
For any two derivations $\mathsf{D}$ and $\mathsf{D}'$, take $\langle \mathsf{D}, \mathsf{D}' \rangle$ as the derivation:
\begin{center}
	\begin{tabular}{c}
		\AxiomC{$\mathsf{D}$}
		\noLine
		\UnaryInfC{$A$}
		\AxiomC{$\mathsf{D}'$}
		\noLine
		\UnaryInfC{$B$}
	    \RightLabel{\footnotesize$\wedge I$} 
		\BinaryInfC{$A \wedge B$}
		\DisplayProof
	\end{tabular}
\end{center}
To prove $p_0(\langle \mathsf{D}, \mathsf{D}' \rangle)=\mathsf{D}$, note that the left hand side of the equality is the derivation:
\begin{center}
	\begin{tabular}{c c}
		\AxiomC{$\mathsf{D}$}
		\noLine
		\UnaryInfC{$A$}
		\AxiomC{$\mathsf{D}'$}
		\noLine
		\UnaryInfC{$B$}
	    \RightLabel{\footnotesize$\wedge I$} 
		\BinaryInfC{$A \wedge B$}
		\RightLabel{\footnotesize$\wedge E_1$} 
		\UnaryInfC{$A$}
		\DisplayProof
	\end{tabular}
\end{center}
which is $\beta$-equivalent to $\mathsf{D}$. As we consider the derivations up to $\beta$-equivalence, we have $p_0(\langle \mathsf{D}, \mathsf{D}' \rangle)=\mathsf{D}$. The proof for the other identity, i.e., $p_1(\langle \mathsf{D}, \mathsf{D}' \rangle)=\mathsf{D}'$ is similar.
For the uniqueness, by Remark \ref{equivalentUniquenessProduct}, it is enough to prove $\langle p_0 \mathsf{D}, p_1 \mathsf{D} \rangle=\mathsf{D}$, for any derivation $\mathsf{D}$ of $A \wedge B$. Notice that the left hand side of the equality is:
\begin{center}
\begin{tabular}{c}
		\AxiomC{$ $}
		\noLine
		\UnaryInfC{$\mathsf{D}$}
		\noLine
		\UnaryInfC{$A \wedge B$}
		\RightLabel{\footnotesize$\wedge E_1$} 
		\UnaryInfC{$A$}
		\AxiomC{$ $}
		\noLine
		\UnaryInfC{$\mathsf{D}$}
		\noLine
		\UnaryInfC{$A \wedge B$}
		\RightLabel{\footnotesize$\wedge E_2$} 
		\UnaryInfC{$B$}
	    \RightLabel{\footnotesize$\wedge I$} 
		\BinaryInfC{$A \wedge B$}
		\DisplayProof
	\end{tabular}
\end{center}
which is $\eta$-equivalent to $\mathsf{D}$. Therefore, $\langle p_0 \mathsf{D}, p_1 \mathsf{D} \rangle=\mathsf{D}$. A similar construction is in place for the categories $\mathbf{NN}$ and $\mathbf{NM}$. Note that conjunction, its rules, and their $\beta \eta$-equivalences form the minimal machinery required to make conjunction into the product in $\mathbf{NJ}$. The only intuitively non-trivial aspect is the $\beta \eta$-equivalences, which might be seen as artificial conditions imposed on derivations to ensure the categorical interpretation works. However, in Philosophical Comment \ref{ProdAsConhj}, we argue that these equivalences are also natural conditions to assume.
\end{exam}

\begin{exam}(\textit{Non-existence of binary products})
Consider the poset $(P, \subseteq)$ of all infinite subsets of $\mathbb{N}$. Then, the product (meet) of the set of even numbers and the set of odd numbers does not exist, as there is no infinite set below both of them, let alone the greatest of such lower bounds. For another example, consider a non-trivial group seen as a category. Then, the product of the category's only object $*$ with itself does not exist. Because, if it does, it must be $*$. Let us assume that $p_0, p_1 \in G$ are the projections of the product. Then, for any $g, h \in G$, there is $i \in G$ such that $p_0i=g$ and $p_1i=h$. Therefore, $p_1p_0^{-1}g=h$, for any $g, h \in G$, which is impossible. Just put $h=p_1p_0^{-1}$ and $g \neq e$.
\end{exam}

\begin{phil}(\emph{Product as the conjunction}) \label{ProdAsConhj}
Reading any category as a proof system, we can interpret a product of two propositions as their conjunction. However, looking deeply into the definition, one may also object that although the diagram commutativity and the uniqueness conditions for the cartesian product of sets is natural, the proof-theoretical counterpart, i.e., the $\beta$- and $\eta$-equivalences are not and hence the identification of the conjunction with the product may seem to be too demanding. To justify the choice, note that by Remark \ref{HighlevelProduct}, a product of $A$ and $B$ is just an object $C$ in $\mathcal{C}$ together with an isomorphism between the functors $\mathrm{Hom}(-, C)$ and $\mathrm{Hom}(-, A) \times \mathrm{Hom}(-, B)$ in the category $\mathbf{Set}^{\mathcal{C}^{op}}$. This just says that the deductions of $A \wedge B$ from $D$ is in one-to-one and \emph{uniform} correspondence with the pairs of deductions of $A$ and $B$ from $D$. This, of course, can be considered as a natural formalization for the conjunction operator. The projections, the diagram commutation and the uniqueness conditions, however, are inessential details to present this universal definition in a low level language of maps.
\end{phil}

\begin{lem}\label{UniqenessOfProduct}
The product is unique up to a canonical isomorphism, i.e., for any two products $(C, p_0, p_1)$ and $(D, q_0, q_1)$ of $A$ and $B$, the canonical map $i: C \to D$ induced by the universality of the product is an isomorphism. 
\end{lem}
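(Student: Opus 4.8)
The plan is to mimic the argument already used for terminal objects in the remark following Example \ref{ExampleOfTerminal}, now carried out with the richer universal property of products. Let $(C, p_0, p_1)$ and $(D, q_0, q_1)$ be two products of $A$ and $B$. First I would apply the universal property of $(D, q_0, q_1)$ to the pair of morphisms $p_0 : C \to A$ and $p_1 : C \to B$; this yields a unique $i : C \to D$ with $q_0 \circ i = p_0$ and $q_1 \circ i = p_1$. Symmetrically, applying the universal property of $(C, p_0, p_1)$ to $q_0 : D \to A$ and $q_1 : D \to B$ gives a unique $j : D \to C$ with $p_0 \circ j = q_0$ and $p_1 \circ j = q_1$. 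This $i$ is precisely the canonical map mentioned in the statement, and the claim reduces to showing that $j$ is a two-sided inverse of $i$.

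Next I would examine $j \circ i : C \to C$. Composing with the projections of $C$, we get $p_0 \circ (j \circ i) = (p_0 \circ j) \circ i = q_0 \circ i = p_0$ and likewise $p_1 \circ (j \circ i) = p_1$. On the other hand, $id_C$ trivially satisfies $p_0 \circ id_C = p_0$ and $p_1 \circ id_C = p_1$. Now I invoke the uniqueness clause of Definition \ref{DefProduct} — equivalently, the identity $\langle p_0 h, p_1 h\rangle = h$ recorded in Remark \ref{equivalentUniquenessProduct} — which says that a map into the product is determined by its composites with $p_0$ and $p_1$; hence $j \circ i = id_C$. The mirror-image computation using the projections $q_0, q_1$ and the uniqueness clause for $(D, q_0, q_1)$ gives $i \circ j = id_D$. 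Therefore $i$ is an isomorphism, and it is canonical by construction.

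The argument is essentially a diagram chase, so I do not expect a genuine obstacle; the one point that warrants care is to apply the uniqueness part of the universal property to the correct cone. It is tempting to "see" $j \circ i = id_C$ at once, but the only legitimate justification is that both $j \circ i$ and $id_C$ are factorizations of the cone $(p_0 : C \to A,\ p_1 : C \to B)$ through the product $(C, p_0, p_1)$, and uniqueness of such factorizations forces them to agree. Once this is made explicit, the symmetric identity for $i \circ j$ follows in the same way, completing the proof.
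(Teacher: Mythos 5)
Your proof is correct and follows essentially the same route as the paper: construct $i$ and $j$ from the two universal properties, observe that $j \circ i$ and $id_C$ both factor the cone $(p_0, p_1)$ through $(C, p_0, p_1)$, and conclude by uniqueness, with the symmetric argument giving $i \circ j = id_D$. No gaps.
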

\begin{proof}
Let $(C, p_0, p_1)$ and $(D, q_0, q_1)$ be both products of the two objects $A$ and $B$. Therefore, there are maps $i: C \to D$ and $j: D \to C$ such that the diagram:
\[\begin{tikzcd}
	&& C \\
	A &&&& B \\
	&& D
	\arrow["{p_0}"', from=1-3, to=2-1]
	\arrow["{p_1}", from=1-3, to=2-5]
	\arrow["{q_0}", from=3-3, to=2-1]
	\arrow["{q_1}"', from=3-3, to=2-5]
	\arrow["j"', shift right=1, dashed, from=3-3, to=1-3]
	\arrow["i"', shift right=1, dashed, from=1-3, to=3-3]
\end{tikzcd}\]
commutes. Therefore, the map $ji: C \to C$ makes the following diagram commute:
\[\begin{tikzcd}
	&& C \\
	A &&&& B \\
	&& C
	\arrow["{p_0}"', from=1-3, to=2-1]
	\arrow["{p_1}", from=1-3, to=2-5]
	\arrow["{p_0}", from=3-3, to=2-1]
	\arrow["{p_1}"', from=3-3, to=2-5]
	\arrow["h"', from=1-3, to=3-3]
\end{tikzcd}\]
By the universality of the product $(C, p_0, p_1)$, this map must be unique. As $id_C: C \to C$ does exactly the same thing, $ji=id_C$. Similarly, one can show that $ij=id_D$. Therefore, $i: C \to D$ is an isomorphism.
\end{proof}

\begin{rem}
Here are two remarks. First, note that the isomorphism $i: C \to D$ in Lemma \ref{UniqenessOfProduct} is not just an isomorphism between $C$ and $D$. It also transforms the projections to each other, i.e., $q_0i=p_0$ and $q_1i=p_1$. This is what we should expect because a product is not just an object but the tuple of an object and two projections, and the isomorphism between the products must also interact with these projections. Secondly, using Lemma \ref{UniqenessOfProduct}, we are allowed to denote a product of $A$ and $B$ and their projections with a reserved name $A \times B$ and $p^{A, B}_0: A \times B \to A$ and $p^{A, B}_1: A \times B \to B$. When there is no risk of confusion, we simply denote  $p^{A, B}_0$ and $p^{A, B}_1$ by $p_0$ and $p_1$.
\end{rem}

\begin{dfn}
    A category is called  \emph{cartesian} if it has a terminal object and all binary products.
\end{dfn}

\begin{exam}
All categories $\mathbf{FinSet}$, $\mathbf{Set}$, $\mathbf{Top}$, $\mathbf{Set}^{\mathcal{C}}$, $\mathbf{NN}$, $\mathbf{NM}$ and $\mathbf{NJ}$ are cartesian. A cartesian poset is usually called a \emph{bounded meet-semilattice}.
\end{exam}

Usually, universal constructions are functorial. Let us explain the phenomenon by an example. Assume that the product of $A$ and $B$ and the product of $C$ and $D$ exist. Now, assume that we are given two maps $f: A \to C$ and $g: B \to D$. We intend to come up with a \emph{canonical map} $f \times g: A \times B \to C \times D$. For that purpose, first note that $p_0^{A, B}: A \times B \to A$ and $p_1^{A, B}: A \times B \to B$. Thus, $fp_0^{A, B}: A\times B \to C$ and $gp_1^{A, B}: A \times B \to D$. Therefore, $\langle fp_0^{A, B}, gp_1^{A, B} \rangle: A \times B \to C \times D$:
\[\begin{tikzcd}[ampersand replacement=\&]
	A \&\& {A \times B} \&\& B \\
	\\
	C \&\& {C \times D} \&\& D
	\arrow["{p^{A, B}_0}"', from=1-3, to=1-1]
	\arrow["{p_1^{A, B}}", from=1-3, to=1-5]
	\arrow["{p_0^{C, D}}", from=3-3, to=3-1]
	\arrow["{p_1^{C, D}}"', from=3-3, to=3-5]
	\arrow["f"', from=1-1, to=3-1]
	\arrow["g", from=1-5, to=3-5]
	\arrow["{gp_1^{A, B}}", from=1-3, to=3-5]
	\arrow["{fp_0^{A, B}}"', from=1-3, to=3-1]
	\arrow["{f \times g}", dashed, from=1-3, to=3-3]
\end{tikzcd}\]
One can then define $f \times g: A \times B \to C \times D$ as $\langle fp_0^{A, B}, gp_1^{A, B} \rangle$. Using this definition, one can show that the assignment $(-) \times (-): \mathcal{C} \times \mathcal{C} \to \mathcal{C}$ mapping the pair $(A, B)$ to $A \times B$ and $(f, g):(A, B) \to (C, D)$ to $f \times g: A \times B \to C \times D$ is a functor. We call this functor a \emph{binary product} functor. Note that for each pair of objects, there are different choices for their binary product and hence there may be different product functors on a given category. However, as all those different choices are isomorphic, one can see that there is at most one binary product functor, up to an isomorphism. Using this observation and by a slight abuse of language, we denote all binary product functors with one reserved notation $(-) \times (-): \mathcal{C} \times \mathcal{C} \to \mathcal{C}$ and call it \emph{the} binary product functor.\\

Using the universal property of the terminal object and the products, it is easy to prove the following list of isomorphisms:

\begin{thm}\label{ComAsso}
Let $\mathcal{C}$ be a cartesian category. Then:
\begin{itemize}
\item
$A \times 1 \cong A$, via the map $p_0: A \times 1 \to A$.
\item
$A \times B \cong B \times A$, via the map $\langle p_1, p_0 \rangle: A \times B \to B \times A$. 
\item
$(A \times B) \times C \cong A \times (B \times C)$, via the canonical map $\langle r_0q_0, \langle r_1q_0, q_1 \rangle \rangle:(A \times B) \times C \to A \times (B \times C)$, where $q_i=p_i^{(A \times B),C}$ and $r_i=p_i^{A, B}$.
\end{itemize}
\end{thm}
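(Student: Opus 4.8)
The plan is to prove each of the three isomorphisms in the same uniform way: exhibit an explicit candidate inverse to the displayed map — built only from identities, the unique map $!$ into $1$, and the pairing operation $\langle -, - \rangle$ — and then check that both composites are identities. In every case the verification that a composite equals an identity reduces to the uniqueness clause of the product, used in the convenient form $\langle p_0 h, p_1 h \rangle = h$ recorded in Remark \ref{equivalentUniquenessProduct}, together with the fact that any two maps into $1$ coincide. No ingenuity beyond this is required; the content is entirely in the routine verifications. (Lemma \ref{UniqenessOfProduct} already yields that these objects are isomorphic; what the theorem adds is the identification of a \emph{specific} canonical map as the isomorphism, so one genuinely has to compute with these particular maps.)

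For $A \times 1 \cong A$ I take $\langle id_A, !_A \rangle : A \to A \times 1$ as the candidate inverse of $p_0$. One composite, $p_0 \circ \langle id_A, !_A \rangle = id_A$, is immediate from $p_0 \langle f, g \rangle = f$. For the other, I compute the two projections of $\langle id_A, !_A \rangle \circ p_0 : A \times 1 \to A \times 1$: composing with $p_0$ gives $p_0$, and composing with $p_1$ gives a map $A \times 1 \to 1$, which equals $p_1$ because $1$ is terminal; hence the composite is $id_{A \times 1}$ by Remark \ref{equivalentUniquenessProduct}. For $A \times B \cong B \times A$, the candidate inverse of $s := \langle p_1^{A, B}, p_0^{A, B} \rangle$ is the symmetric map $s' := \langle p_1^{B, A}, p_0^{B, A} \rangle$. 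Computing projections again, $p_0^{A, B} \circ s' \circ s = p_1^{B, A} \circ s = p_0^{A, B}$ and $p_1^{A, B} \circ s' \circ s = p_0^{B, A} \circ s = p_1^{A, B}$, so $s' \circ s = id_{A \times B}$ by Remark \ref{equivalentUniquenessProduct}; exchanging the roles of $A$ and $B$ gives $s \circ s' = id_{B \times A}$.

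For $(A \times B) \times C \cong A \times (B \times C)$, write $\Phi := \langle r_0 q_0, \langle r_1 q_0, q_1 \rangle \rangle$ for the displayed map, with $q_i = p_i^{(A \times B), C}$ and $r_i = p_i^{A, B}$ as in the statement, and let $a, b$ be the projections of $A \times (B \times C)$ and $c, d$ the projections of $B \times C$. The candidate inverse is $\Psi := \langle \, \langle a, \, c b \rangle, \, d b \, \rangle : A \times (B \times C) \to (A \times B) \times C$. Both $\Psi \Phi = id$ and $\Phi \Psi = id$ follow by the same routine: peel off the outermost product using Remark \ref{equivalentUniquenessProduct}, reducing to two simpler equations; one of these again has a product as its codomain and yields to the same remark a second time, and the remaining equations reduce to the projection identities $p_i \langle f_0, f_1 \rangle = f_i$. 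The only genuine difficulty here is clerical — keeping the several layers of projection labels straight through the nested computation — so I would fix consistent notation for $q_i, r_i, a, b, c, d$ at the outset and track each projection carefully. One could sidestep all of this by using the high-level reformulation of Remark \ref{HighlevelProduct}: each statement then drops out of an evident natural isomorphism of hom-functors on $\mathcal{C}^{op}$ transported along the Yoneda embedding, at the cost of first proving that reformulation equivalent to Definition \ref{DefProduct}.
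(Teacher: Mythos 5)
Your proof is correct and follows essentially the same route as the paper: exhibit the explicit inverse built from $id$, $!$, and pairing, then verify both composites via the uniqueness clause of the product (the paper checks this against the defining diagram; you use the equivalent equational form $\langle p_0 h, p_1 h\rangle = h$ from Remark \ref{equivalentUniquenessProduct}, which is the same content). The paper only writes out the first claim and leaves the other two to the reader; your completions of those, including the inverse $\langle\langle a, cb\rangle, db\rangle$ for the associator, check out.
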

\begin{proof}
We only prove the first claim and leave the rest to the reader. Consider the map $\langle id_A, ! \rangle: A \to A \times 1$. We claim that this map is the converse of $p_0$, i.e., $p_0\langle id_A, ! \rangle=id_A$ and $\langle id_A, ! \rangle p_0=id_{A \times 1}$. The first equality is clear. For the second, it is enough to show that both $\langle id_A, ! \rangle p_0$ and $id_{A \times 1}$ make the following diagram commute:
\[\begin{tikzcd}
	A && {A \times 1} && 1 \\
	\\
	\\
	&& {A \times 1}
	\arrow["{p_1}", from=1-3, to=1-5]
	\arrow["{p_0}"', from=1-3, to=1-1]
	\arrow["{\langle id_A, ! \rangle p_0}"{pos=0.7}, dashed, from=4-3, to=1-3]
	\arrow["{p_0}", from=4-3, to=1-1]
	\arrow["{p_1=!}"', from=4-3, to=1-5]
\end{tikzcd}\]
which is trivially true. Therefore, by the uniqueness condition in the definition of the product, we have $\langle id_A, ! \rangle p_0=id_{A \times 1}$.
\end{proof}

Using binary products, it is possible to define the product of any finite family $A_0, A_1, \ldots, A_n$ of objects and denote it by $A_0 \times A_1 \times \ldots \times A_n$ or $\prod_{i=0}^n A_i$. Of course, we have to specify the way we split the product into binary products and by definition, we always start from the left, i.e., $\prod_{i=0}^n A_i=(\ldots (((A_0 \times A_1) \times A_2) \times \ldots ) \times A_n)$. For the projections of this product, for any $0 \leq j \leq n$, there is a canonical combination of binary projections that goes from $\prod_{i=0}^n A_i$ to $A_j$. We denote this map by $p_j$. Moreover, for any families of maps $\{f_i: C \to A_i\}_{i=0}^n$, there is a canonical map from $C$ to $\prod_{i=0}^n A_i$ that we denote by $\langle f_i \rangle_{i=0}^n$. Notice that using Theorem \ref{ComAsso}, it is clear that the way we split the product into binary products and the order of them is not important in the end.

\subsection{Cocartesian Structure}

The cartesian structure explained in the previous subsection has a dual that is also significant in categorical proof theory.

\begin{dfn}(\emph{Initial objects})
An object $A$ is called \emph{initial} if for any object $B$, there exists a unique map from $A$ to $B$. This unique map is denoted by $!_B: A \to B$ and if $B$ is clear from the context, just by $!$.  Similar to terminal objects, there is at
most one initial object up to an isomorphism. Therefore, we can denote all initial objects with a reserved name $0$ and call it \emph{the} initial object. 
\end{dfn}

\begin{exam}
In $\mathbf{FinSet}$ and $\mathbf{Set}$, the initial object is the empty set $\varnothing$. 
In a poset $(P, \leq)$, the initial object is by definition an element $a \in P$ such that for any $b \in P$, we have $a \leq b$. Hence, the initial object is the least element of the poset. 
In $\mathbf{Set}^{\mathbb{N}}$ and $\mathbf{Set}^{\mathbb{Z}}$, the initial object is the pair $(\varnothing, id_{\varnothing})$. More generally, in $\mathbf{Set}^{\mathcal{C}}$, the initial object is the constant functor $\Delta_{\varnothing}: \mathcal{C} \to \mathbf{Set}$, mapping every object to the empty set and any morphism of $\mathcal{C}$ to the identity function over $\varnothing$.
In $\mathbf{NJ}$, the proposition $\bot$ is an initial object. The reasoning is similar to that of terminality of $\top$ presented in Example \ref{ExampleOfTerminal}. Notice the role of the $\eta$-equivalence relation on derivations to ensure the uniqueness condition.
\end{exam}

\begin{phil}(\emph{Initial objects as the trivializing inconsistency})
Reading any category as a proof system, we can interpret the initial object as the trivializing inconsistency, i.e., the proposition $\bot$ whose assumption makes all propositions deductible and all such deductions equivalent. Note that this type of definition is different from what we had for $\top$ and conjunction, as here, we identify a proposition by what and how it can prove the other propositions rather than by the form of its deductions. 
Trying to define $\bot$ in the latter way, one may argue that we must define $\bot$ as a proposition with no proof and then, as a derived property, show that such a proposition proves any other proposition in a trivial way.
To see why, note that a deduction is a way to transform the proofs of the assumption to the proofs of the conclusion, and as $\bot$ has no proofs, there is only one unique way to map this empty set to the set of proofs of any other proposition.
We will come back to this discussion when we reach the BHK interpretation in Section \ref{SectionBHKInterpretation}.
\end{phil}

\begin{exam}(\textit{Non-existence of initial objects})
For an easier example, consider the poset $(\mathbb{Z}, \leq)$. This poset has no least element and hence no initial object. For another, note that any non-trivial monoid seen as a category does not have an initial object. Moreover, the category of non-empty sets with functions as its morphisms does not have an initial object. 
\end{exam}

\begin{dfn}(\emph{Binary coproducts})
Let $A$ and $B$ be two objects. An object $C$ together with two morphisms $i_0: A \to C$ and $i_1: B \to C$ is called a \emph{coproduct} of $A$ and $B$ if for any object $D$ and any morphisms $f: A \to D$ and $g: B \to D$, there exists a unique map $h: C \to D$ such that:
\[\begin{tikzcd}[ampersand replacement=\&]
	\&\& D \\
	\\
	\\
	A \&\& C \&\& B
	\arrow["{i_0}"', from=4-1, to=4-3]
	\arrow["{i_1}", from=4-5, to=4-3]
	\arrow["f", from=4-1, to=1-3]
	\arrow["g"', from=4-5, to=1-3]
	\arrow["h", dashed, from=4-3, to=1-3]
\end{tikzcd}\]
The maps $i_0$ and $i_1$ are called the \emph{injections} of the coproduct. The unique $h$ is denoted by $(f, g)$. Similar to what we had for the other universal constructions, the coproduct of $A$ and $B$ is unique up to (a canonical) isomorphism and hence we can denote it and their injections with reserved names $A + B$ and $i^{A, B}_0: A \to A+B$ and $i^{A, B}_1: B \to A + B$, respectively. When there is no risk of confusion, we simply denote  $i^{A, B}_0$ and $i^{A, B}_1$ by $i_0$ and $i_1$. Finally, as we will mostly work with binary coproducts in this chapter, whenever we say a coproduct, unless specified otherwise, we always mean a binary coproduct.
\end{dfn}

\begin{rem}\label{equivalentUniquenessCoproduct}
The uniqueness condition above can be equivalently phrased as the identity $(hi_0, hi_1)=h$, for any $h: C \to D$. The proof is similar to that of the product case.
\end{rem}

\begin{rem}\label{HighlevelCoproduct}
Similar to products, there is also a high-level definition for coproducts. A coproduct of $A$ and $B$ is an object $C$ together with an isomorphism between the functors $\mathrm{Hom}(C, -): \mathcal{C} \to \mathbf{Set}$ and $\mathrm{Hom}(A, -) \times \mathrm{Hom}(B, -): \mathcal{C} \to \mathbf{Set}$ in the category $\mathbf{Set}^{\mathcal{C}}$. We will not prove the equivalence between the two definitions. 
\end{rem}

\begin{rem}
Notice that the universal definition of the binary coproduct roughly states that the diagram
\[\begin{tikzcd}[ampersand replacement=\&]
	A \&\& C \&\& B
	\arrow["{i_0}"', from=1-1, to=1-3]
	\arrow["{i_1}", from=1-5, to=1-3]
\end{tikzcd}\]
is the ``least" between all such diagrams with fixed end nodes $A$ and $B$.
\end{rem}

\begin{exam}
In $\mathbf{FinSet}$ and $\mathbf{Set}$, the coproduct of $A$ and $B$ is the disjoint union $A+B=\{(0, a) \mid a \in A\} \cup \{(1, b) \mid b \in B\}$ together with the injections $i_0: A \to A+B$ and $i_1: B \to A+B$, defined by $i_0(a)=(0, a)$ and $i_1(b)=(1, b)$.
In a poset $(P, \leq)$, the coproduct of $a, b \in P$ is by definition the least upper bound of the subset $\{a, b\}$ i.e., an element $c$ such that $a \leq c$ and $b \leq c$ and for any $d \in P$, if $a \leq d$ and $b \leq d$, then $c \leq d$. Coproducts in posets are usually called \emph{joins}. For the typical poset $(P, \subseteq)$, where $P$ is a set of subsets of a set $X$, if $P$ is closed under binary union, the coproduct (join) is the union, itself.
In $\mathbf{Set}^{\mathbb{N}}$ and $\mathbf{Set}^{\mathbb{Z}}$, the coproduct of $(A, \sigma_A)$ and $(B, \sigma_B)$ is $(A + B, \sigma_{A+B})$, together with the usual injections, where $[\sigma_{A+B}](0, a)=(0, \sigma_A(a))$ and $[\sigma_{A+B}](1, b)=(1, \sigma_B(b))$. Note that the injections respect the dynamism. More generally, a coproduct of two objects $F: \mathcal{C} \to \mathbf{Set}$ and $G: \mathcal{C} \to \mathbf{Set}$ in $\mathbf{Set}^{\mathcal{C}}$ is the functor $H:\mathcal{C} \to \mathbf{Set}$ mapping the object $A$ to $H(A)=F(A)+G(A)$ and the map $f: A \to B$ to the function $H(f): F(A)+G(A) \to F(B)+G(B)$ defined by $H(f)(0, x)=(0, F(f)(x))$ and $H(f)(1, y)=(1, G(f)(y))$. The injections $i_0: F \to H$ and $i_1: G \to H$ are defined by $(i_0)_A: F(A) \to F(A)+G(A)$ and $(i_1)_A: G(A) \to F(A)+G(A)$ as the usual injections. It is easy to prove that $H: \mathcal{C} \to \mathbf{Set}$ is a functor, both $i_0$ and $i_1$ are natural transformations and the data is actually a coproduct. 
\end{exam}

\begin{exam}
In $\mathbf{NJ}$, the coproduct of $A$ and $B$ is the disjunction $A \vee B$ together with the two derivations:
\begin{center}
	\begin{tabular}{c c c}
	    \AxiomC{$A$}
	    \RightLabel{\footnotesize$\vee I_1$} 
		\UnaryInfC{$A \vee B$}
		\DisplayProof \hspace{10pt}
		&
		\AxiomC{$B$}
	    \RightLabel{\footnotesize$\vee I_2$} 
		\UnaryInfC{$A \vee B$} \hspace{10pt}
		\DisplayProof
	\end{tabular}
\end{center}
as the injections. For any two derivations $\mathsf{D}$ and $\mathsf{D}'$, define the map $(\mathsf{D}, \mathsf{D}')$ as the derivation: 
\begin{center}
	\begin{tabular}{c c c}
	
		\AxiomC{$A \vee B$}
		\AxiomC{$[A]$}
		\noLine
		\UnaryInfC{$\mathsf{D}$}
		\noLine 
		\UnaryInfC{$C$} 
		\AxiomC{$[B]$}
		\noLine
		\UnaryInfC{$\mathsf{D}'$}
		\noLine 
		\UnaryInfC{$C$} 
	    \RightLabel{\footnotesize$\vee E$} 
		\TrinaryInfC{$C$}
		\DisplayProof
	\end{tabular}
\end{center}
To prove $(\mathsf{D}, \mathsf{D}') i_0=\mathsf{D}$, note that the left hand side is the derivation:
\begin{center}
	\begin{tabular}{c c}
        \AxiomC{$A$}
\RightLabel{\footnotesize$\vee I_1$} 
        \UnaryInfC{$A \vee B$}
		\AxiomC{$[A]$}
		\noLine
		\UnaryInfC{$\mathsf{D}$}
		\noLine 
		\UnaryInfC{$C$} 
		\AxiomC{$[B]$}
		\noLine
		\UnaryInfC{$\mathsf{D}'$}
		\noLine 
		\UnaryInfC{$C$} 
	    \RightLabel{\footnotesize$\vee E$} 
		\TrinaryInfC{$C$}
		\DisplayProof
	\end{tabular}
\end{center}
which is $\beta$-equivalent to $\mathsf{D}$. As we consider the derivations up to $\beta$-equivalence, we have $(\mathsf{D}, \mathsf{D}') i_0=\mathsf{D}$. The proof for the other identity, i.e.,  $(\mathsf{D}, \mathsf{D}') i_1=\mathsf{D}'$ is similar.
For the uniqueness, by Remark \ref{equivalentUniquenessCoproduct}, it is enough to prove $(\mathsf{D}i_0, \mathsf{D}i_1)=\mathsf{D}$, for any derivation $\mathsf{D}$ with the assumption $A \vee B$. Notice that the left hand side is:
\begin{center}
        \begin{tabular}{c}
 \AxiomC{$A \vee B$}
 
 \AxiomC{$[A]^1$}
 \RightLabel{\footnotesize$\vee I_1$} 
\UnaryInfC{$A \vee B$}
\noLine
\UnaryInfC{$\mathsf{D}$}
\noLine
 \UnaryInfC{$C$}
 \AxiomC{$[B]^2$}
 \RightLabel{\footnotesize$\vee I_2$} 
\UnaryInfC{$A \vee B$}
\noLine
\UnaryInfC{$\mathsf{D}$}
\noLine
 \UnaryInfC{$C$}
 \RightLabel{\footnotesize$\vee E$} 
\TrinaryInfC{$C$}
\DisplayProof
        \end{tabular}
    \end{center}
which is $\eta$-equivalent to $\mathsf{D}$.
\end{exam}

\begin{phil}(\emph{Coproduct as the disjunction})
Reading any category as a proof system, we can interpret the coproduct of two propositions as their disjunction. However, looking deeply into the definition,
one may also object that although the diagram commutativity and the uniqueness conditions for the disjoint union of sets is natural, the proof-theoretical counterpart, i.e., the $\beta$- and $\eta$-equivalences are not and hence the identification of the disjunction with the coproduct may seem to be too demanding. To justify the choice, note that by Remark \ref{HighlevelCoproduct}, a coproduct of $A$ and $B$ is just an object $C$ in $\mathcal{C}$ together with an isomorphism between the functors $\mathrm{Hom}(C, -)$ and $\mathrm{Hom}(A, -) \times \mathrm{Hom}(B, -)$ in the category $\mathbf{Set}^{\mathcal{C}}$. This just says that the deductions of $D$ from the assumption $A + B$ is in one-to-one and \emph{uniform} correspondence with the pairs of deductions of $D$ from $A$ and $B$. This, of course, can be considered as a natural formalization for the disjunction operator. The projections, the diagram commutation and the uniqueness conditions, however, are inessential details to present this universal definition in a low level language of maps.
Note that this type of definition is again based on what and how a proposition can prove the other propositions rather than by the form of its deductions.
Trying to define the disjunction in the latter way, one may argue that we must define $A \vee B$ as a proposition whose proofs are either the proofs of $A$ or the proofs of $B$ and we know which one is the case. Then again, as a derived property, we can show that the deductions of $D$ from $A \vee B$ is in one-to-one correspondence with the pairs of the deductions of $A$ and $B$ from $D$. The reason simply is that to transform the proofs of $A \vee B$ to the proofs of $D$, as a proof of $A \vee B$ is either a proof of $A$ or a proof of $B$ and we know which case is happening, it is necessary and sufficient to transform the proofs of $A$ and the proofs of $B$ into the proofs of $D$. We will come back to this discussion when we reach the BHK interpretation in Section \ref{SectionBHKInterpretation}.
\end{phil}

\begin{dfn}
    A category is called  \emph{cocartesian} if it has the initial object and all the binary coproducts.
\end{dfn}

\begin{exam}
All categories $\mathbf{FinSet}$, $\mathbf{Set}$, $\mathbf{Top}$, $\mathbf{Set}^{\mathcal{C}}$, and $\mathbf{NJ}$ are cocartesian. A cocartesian poset is usually called a \emph{bounded join-semilattice}.
\end{exam}

\begin{exam}(\textit{Non-existence of binary coproducts})
Consider the poset $(P, \subseteq)$ of all subsets of $\mathbb{N}$ whose complement is infinite. Then, the coproduct (join) of the set $E$ of even numbers and $O$ of odd numbers does not exists, as the only subset above both of them is $\mathbb{N}$ whose complement is empty and as such finite.
For another example, consider a non-trivial group considered as a category. Similar to the product case, it is easy to prove that the coproduct of $*$ with itself does not exist in this category.
\end{exam}

Similar to what we had for the product, the coproduct can also rise to a functor. Assume that both the coproduct of $A$ and $B$ and the coproduct of $C$ and $D$ exist. Now, assume that we are given two morphisms $f: A \to C$ and $g: B \to D$. We intend to come up with a \emph{canonical map} $f + g: A + B \to C + D$. For that purpose, first note that $i^{C, D}_0: C \to C+D$ and $i_1^{C, D}: D \to C+D$. Thus, $i_0^{C,D}f: A \to C+D$ and $i_1^{C, D}g: B \to C+ D$. Therefore, $(i_0^{C, D}f, i_1^{C, D}g): A + B \to C + D$:
\[\begin{tikzcd}[ampersand replacement=\&]
	A \&\& {A + B} \&\& B \\
	\\
	C \&\& {C + D} \&\& D
	\arrow["{i_0^{A, B}}", from=1-1, to=1-3]
	\arrow["{i_1^{A, B}}"', from=1-5, to=1-3]
	\arrow["{i_0^{C, D}}"', from=3-1, to=3-3]
	\arrow["{i_1^{C, D}}", from=3-5, to=3-3]
	\arrow["f"', from=1-1, to=3-1]
	\arrow["g", from=1-5, to=3-5]
	\arrow["{f + g}", dashed, from=1-3, to=3-3]
	\arrow["{i_0^{C, D}f}"', from=1-1, to=3-3]
	\arrow["{i_1^{C, D}g}", from=1-5, to=3-3]
\end{tikzcd}\]
One can then define $f + g: A + B \to C + D$ as $(i_0^{C, D}f, i_1^{C, D}g)$. It is easy to see that this definition makes the assignment $(-) + (-): \mathcal{C} \times \mathcal{C} \to \mathcal{C}$ mapping the pair $(A, B)$ to $A + B$ and the morphism $(f, g):(A, B) \to (C, D)$ to $f + g: A + B \to C + D$ a functor. This functor is called a \emph{coproduct functor}. Again, as all coproduct functors are isomorphic, we use one reserved notation $(-)+(-):\mathcal{C} \times \mathcal{C} \to \mathcal{C}$ for all of them and call it \emph{the} coproduct functor.

Moreover, similar to products, we can define the coproduct of any finite family $A_0, \ldots, A_n$ of objects and denote it by $A_0+ \ldots + A_n$ or $\sum_{i=0}^n A_i$. Using the injections, one can come up with a canonical map $i_j: A_j \to \sum_{i=0}^n A_i$, for any $0 \leq j \leq n$. Moreover, for any family $\{f_i: A_i \to C\}_{i=0}^n$ of maps, there is a canonical map from $\sum_{i=0}^n A_i$ to $C$ that we denote by $(f_i)_{i=0}^n$.

\subsection{Exponential Objects}
The third universal construction we are interested in is the exponential. Exponentials provide the machinery to internalize the structure of the category within itself.
\begin{dfn}\label{DefExpObj}(\emph{Exponential object})
Let $\mathcal{C}$ be a cartesian category and $A$ and $B$ be two objects. An object $C$ together with a morphism $ev: C \times A \to B$ is called an \emph{exponential object} if for any $f: D \times A \to B$, there exists a unique $g: D \to C$ such that:
\[\begin{tikzcd}[ampersand replacement=\&]
	{D \times A} \\
	\\
	{C \times A} \&\& B
	\arrow["ev"', from=3-1, to=3-3]
	\arrow["f", from=1-1, to=3-3]
	\arrow["{g \times id_A}"', from=1-1, to=3-1]
\end{tikzcd}\]
Again, it is easy to see that exponentials are unique up to a canonical isomorphism. Hence, we can safely refer to them as \emph{the} exponential object of $B$ by $A$ and denote it and its evaluation map by the reserved names $[A, B]$ (or sometimes $B^A$) and $ev^{A, B}: [A, B] \times A \to B$, respectively. Moreover, for any $f: C \times A \to B$, we denote its unique $g$ by $\lambda^{B, C}_A f: C \to [A, B]$. If there is no risk of confusion, we drop the superscripts in $ev^{A, B}$ and $\lambda^{B, C}_A f$ and write $ev$ and $\lambda_A f$, respectively. Note that the above diagram's commutativity is simply the equality $ev(\lambda_A f \times id_A)=f$.
\end{dfn}

\begin{rem}\label{equivalentUniquenessExp}
The uniqueness condition in Definition \ref{DefExpObj} can be equivalently phrased as the identity $\lambda_A ev(g \times id_A)=g$, for any $g: C \to [A, B]$.
\end{rem}

\begin{rem}
Consider all diagrams of the shape: 
\[\begin{tikzcd}[ampersand replacement=\&]
	{D \times A} \&\& B
	\arrow["f"', from=1-1, to=1-3]
\end{tikzcd}\]
where $A$ and $B$ are fixed and we change $D$ and the map $f:D\times A \to B$. Using maps between $D$'s, these diagrams have a natural order. In the following picture, we say that the lower diagram is greater than the upper one:
\[\begin{tikzcd}[ampersand replacement=\&]
	{D_0 \times A} \\
	\&\& B \\
	{D_1 \times A}
	\arrow["{f_0}", from=1-1, to=2-3]
	\arrow["{g \times id_A}"', dashed, from=1-1, to=3-1]
	\arrow["{f_1}"', from=3-1, to=2-3]
\end{tikzcd}\]
if there is $g:D_0 \to D_1$ making the whole diagram commutative. In this sense, the universal definition of the exponential object roughly states that the diagram
\[\begin{tikzcd}[ampersand replacement=\&]
	{C \times A} \&\& B
	\arrow["ev"', from=1-1, to=1-3]
\end{tikzcd}\]
is the ``greatest" among all the diagrams with similar shapes. Note that the uniqueness condition is again missing in this informal discussion.
\end{rem}

\begin{rem}\label{HighlevelExp}
Similar to other universal constructions, there is a high-level formalization for the exponential object. An exponential object is an object $C$ together with an isomorphism between the functors $\mathrm{Hom}(-, C)$ and $\mathrm{Hom}(- \times A, B)$ in the category $\mathbf{Set}^{\mathcal{C}^{op}}$. We do not prove the equivalence between the two definitions. However, one special instance of the isomorphism in the second definition, i.e., the one between $\mathrm{Hom}(1 \times A, B)$ or equivalently $\mathrm{Hom}(A, B)$ and $\mathrm{Hom}(1, [A, B])$ is worth explaining.
Here is the description of the isomorphism. For any $f: A \to B$, using $p_1: 1 \times A \to A$, we have the map $fp_1:1 \times A \to B$. By the universal property of the exponential $[A, B]$, we have the map $\lambda_A (fp_1): 1 \to [A, B]$ such that:
\[\begin{tikzcd}
	{1 \times A} && A \\
	\\
	{[A, B] \times A} && B
	\arrow["{ \lambda_A (fp_1) \times id_A}"', from=1-1, to=3-1]
	\arrow["ev"', from=3-1, to=3-3]
	\arrow["{p_1}", from=1-1, to=1-3]
	\arrow["f", from=1-3, to=3-3]
	\arrow["{fp_1}", from=1-1, to=3-3]
\end{tikzcd}\]
Conversely, having a map $g: 1 \to [A, B]$, we have
\[\begin{tikzcd}
	{1 \times A} && A \\
	\\
	{[A, B] \times A} && B
	\arrow["{g \times id_A}"', from=1-1, to=3-1]
	\arrow["ev"', from=3-1, to=3-3]
	\arrow["{\langle !, id_A \rangle}"', from=1-3, to=1-1]
\end{tikzcd}\]
as a map from $A$ to $B$. It is easy to see that these two operations are the converse to each other. For any $f: A \to B$, we denote $\lambda_A (fp_1)$ by $\lambda f$ and for any $g: 1 \to [A, B]$ and $a: X \to A$, we denote the map $ev(g \times id_A)\langle !, id_A \rangle a$, by $g \cdot a: X \to B$. It is easy to see that $(\lambda f) \cdot a=f \circ a$ and $\lambda (g \cdot id_A)=g$. Therefore, we can read $\lambda$ as the usual lambda operator and $\cdot$ as the application operation.
\end{rem}

\begin{exam}\label{ExponentialInSet}
In the categories $\mathbf{FinSet}$ and $\mathbf{Set}$, the exponential $[A, B]$ is the set $B^A$ consisting of all the functions from $A$ to $B$ with the morphism $ev: B^A \times A \to B$ defined by $ev(f, a)=f(a)$. Note that for any map $f: C \times A \to B$, the map $\lambda_A f: C \to B^A$ is defined by $(\lambda_A f)(c)(a)=f(c, a)$.
In the poset $(P, \leq)$, the exponential is by definition the greatest element $c$ such that $c \wedge a \leq b$, i.e., an element $c$ such that $c \wedge a \leq b$ and for any $d \in P$ if $d \wedge a \leq b$ then $d \leq c$. Exponential objects in posets are called \emph{Heyting implications} and denoted by $\to$. In the poset $(\mathbf{2}^{(P, \leq)}, \subseteq)$, the exponential $U \to V$ is $\{x \in P \mid \forall y \geq x \, (y \in U \to y \in V)\}$. For the poset $\mathcal{O}(X)$, where $X$ is a topological space, the exponential $U \to V$ is $int(U^c \cup V)$, where $int$ is the interior operation. 
\end{exam}

\begin{exam}\label{ExponentialSetZ}
In $\mathbf{Set}^{\mathbb{Z}}$, the exponential object $[(A, \sigma_A), (B, \sigma_B)]$ is the dynamical system $(C, \sigma_{C})$ where $C$ is the set of \textit{all} functions from $A$ to $B$ and $\sigma_{C}(f)=\sigma_B f \sigma_A^{-1}$. The evaluation map $ev: (C, \sigma_C) \times (A, \sigma_A) \to (B, \sigma_B)$ is just the usual set-theoretical evaluation map. Notice that $ev$ respects the dynamism as
$ev(\sigma_{C}(f), \sigma_A(x))=\sigma_{C}(f)(\sigma_A(x))=\sigma_B f \sigma_A^{-1}(\sigma_A(x))=\sigma_B(f(x))=\sigma_B(ev(f, x))$. For any equivariant map $f: (D, \sigma_D) \times (A, \sigma_A) \to (B, \sigma_B)$, the map $\lambda_A f: (D, \sigma_D) \to [(A, \sigma_A), (B, \sigma_B)]$ is defined by $(\lambda_A f)(d)(a)=f(d, a)$ and it is equivariant. The reason is that for any $d \in D$ and any $a \in A$, we have $(\lambda_A f)(\sigma_D(d))(a)=f(\sigma_D(d), a)$, by definition. Moreover, as $f$ is equivariant, we have  
\[
f(\sigma_D(d), a)=\sigma_B(f(d, \sigma_A^{-1}(a)))=\sigma_B[(\lambda_A f)(d) (\sigma_A^{-1}(a))].
\]
Therefore, $(\lambda_A f) (\sigma_D(d))=\sigma_B((\lambda_A f)(d)) \sigma_A^{-1}$.
\end{exam}

\begin{exam}
Let $\mathcal{C}$ be a small category. Then, in $\mathbf{Set}^{\mathcal{C}}$, the exponential object $[E, F]$ is the functor mapping the object $A$ to the set $\mathrm{Hom}_{\mathbf{set}^{\mathcal{C}}}(y^A \times E, F)$. The evaluation map is the natural transformation $ev: [E, F] \times E \to F$ defined by $ev_A: [E, F](A) \times E(A) \to F(A)$, mapping the pair $(\alpha, x)$ to $\alpha_A(id_A, x)$, for any $\alpha: y^A \times E \to F$ and $x \in E(A)$. It is an interesting exercise to set $\mathcal{C}$ as $\mathbf{1}$ or $(\mathbb{Z}, +)$, compute the exponential object described here and show that it is exactly what was described in Example \ref{ExponentialInSet} and Example \ref{ExponentialSetZ}.
\end{exam}

\begin{exam}\label{TopAndExp}
In $\mathbf{Top}$, not all exponential objects exist. For instance, there is no topological space and no evaluation map that can act as $[\mathbb{Q}, I]$, where $\mathbb{Q}$ is the subspace of rational numbers in $\mathbb{R}$ and $I=[0, 1]$ is the unit interval \cite{HandbookII}. In fact, for any topological space $X$, a necessary and sufficient condition for $X$ is known that guarantees the existence of the exponential $[X, Y]$ for \emph{all} spaces $Y$. This condition is called \emph{core-compactness}. To define this notion, let $U, V \subseteq X$ be two open subsets of $X$. The open $V$ is called \emph{way below} $U$, denoted by $V <\!\!< U$, if any open cover of $U$ has a finite subcover for $V$. A space $X$ is called core-compact if for every open neighborhood $U$ of a point $x \in X$, there exists an open neighborhood $V$ of $x$ with $V <\!\!< U$. If the space $X$ is Hausdorff, it is core-compact iff it is locally compact. If $X$ is core-compact, then the space $[X, Y]$ is simply the set of all continuous functions from $X$ to $Y$ with the topology with the subbasis
\[
O_{U, V}=\{f \in [X, Y] \mid U <\!\!< f^{-1}(V)\},
\]
for any opens $U \subseteq X$ and $V \subseteq Y$. The evaluation map is the usual set-theoretical evaluation map. If both $X$ and $Y$ are Hausdorff, then this topology is just the usual compact-open topology that has the subbasis
\[
O'_{K, V}=\{f \in [X, Y] \mid f[K] \subseteq V\},
\]
for any compact $K \subseteq X$ and any open $V \subseteq Y$. For instance, as the discrete space $\mathbb{N}$ is locally compact and Hausdorff, the exponential $\mathbb{N}^{\mathbb{N}}$ exists and it is just the set of \emph{all} functions over $\mathbb{N}$ together with the compact-open topology which is nothing but the usual product topology. Another example that we will use later is the exponential $[\{0,1\}^{\mathbb{N}}, \mathbb{N}]$ that also exists, as $\{0,1\}^{\mathbb{N}}$ is a compact and Hausdorff space and its topology is the compact-open topology.
\end{exam}

\begin{exam}
In $\mathbf{NJ}$, the exponential $[A, B]$ is the implication $A \to B$ together with the derivation:
\begin{center}
	\begin{tabular}{c c}
		\AxiomC{$(A \to B) \wedge A$}
  \RightLabel{\footnotesize$\wedge E_2$} 
        \UnaryInfC{$A$}
        \AxiomC{$(A \to B) \wedge A$}
        \RightLabel{\footnotesize$\wedge E_1$} 
        \UnaryInfC{$A \to B$}
	    \RightLabel{\footnotesize$\to E$} 
		\BinaryInfC{$B$}
		\DisplayProof
	\end{tabular}
\end{center}
as the evaluation map. For any derivation $\mathsf{D}$ for $B$ from $C \wedge A$, define $\lambda_A \mathsf{D}$ as:
\begin{center}
	\begin{tabular}{c c}
        \AxiomC{$C$}
		\AxiomC{$[A]$}
  \RightLabel{\footnotesize$\wedge I$} 
        \BinaryInfC{$C \wedge A$}
		\noLine
		\UnaryInfC{$\mathsf{D}$}
		\noLine
		\UnaryInfC{$B$}
		\RightLabel{\footnotesize$\to I$} 
		\UnaryInfC{$A \to B$}
  
		\DisplayProof 
	\end{tabular}
\end{center}
To show that $ev(\lambda_A \mathsf{D} \times id_A)=\mathsf{D}$, note that the right-hand side is:
\begin{center}
	\begin{tabular}{c c}
 
        \AxiomC{$C \wedge A$}
  \UnaryInfC{$C$}
		\noLine
		\UnaryInfC{$\lambda_A \mathsf{D}$}
		\noLine
		\UnaryInfC{$A \to B$}

  \AxiomC{$C \wedge A$}
  \UnaryInfC{$A$}
  
  \BinaryInfC{$(A \to B) \wedge A$}
  \noLine
		\UnaryInfC{$ev$}
  \noLine
  \UnaryInfC{$B$}
		\DisplayProof 
	\end{tabular}
\end{center}
Substituting $\lambda_A \mathsf{D}$ and $ev$ in this derivation, we have:
\begin{center}
	\begin{tabular}{c c}

        \AxiomC{$C \wedge A$}
  \UnaryInfC{$C$}
		 \AxiomC{$[A]$}
   
        \BinaryInfC{$C \wedge A$}
		\noLine
		\UnaryInfC{$\mathsf{D}$}
		\noLine
		\UnaryInfC{$B$}
		\RightLabel{\footnotesize$\to I$} 
		\UnaryInfC{$A \to B$}
  \AxiomC{$C \wedge A$}
        \UnaryInfC{$A$}
        
  \BinaryInfC{$(A \to B) \wedge A$}
  \UnaryInfC{$A$}

  \AxiomC{$C \wedge A$}
  \UnaryInfC{$C$}
		 \AxiomC{$[A]$}
        \BinaryInfC{$C \wedge A$}
		\noLine
		\UnaryInfC{$\mathsf{D}$}
		\noLine
		\UnaryInfC{$B$}
		\RightLabel{\footnotesize$\to I$} 
		\UnaryInfC{$A \to B$}

  \AxiomC{$C \wedge A$}
        \UnaryInfC{$A$}

  \BinaryInfC{$(A \to B) \wedge A$}
  \UnaryInfC{$A \to B$}
  
   \BinaryInfC{$B$}
		\DisplayProof 
	\end{tabular}
\end{center}
which is $\beta \eta$-equivalent to $\mathsf{D}$. The proof of the uniqueness part is similar.
\end{exam}

\begin{phil}(\emph{Exponentials as the implication})
Reading any category as a proof system, we can interpret the exponential as the implication. 
To justify, note that the exponential $[A, B]$ is just an object together with a natural isomorphism between $\mathrm{Hom}(- \times A, B)$ and $\mathrm{Hom}(-, [A, B])$. This just says that the deduction of $B$ from $D \wedge A$ are in one-to-one and uniform correspondence with the deductions of $A \to B$ from $D$, i.e., proving $A \to B$ is just proving $B$ with the addition of $A$ to the assumptions.
\end{phil}

\begin{dfn}
A cartesian category where all exponentials exist is called a \emph{cartesian closed category} ($\mathrm{CCC}$, for short). A $\mathrm{CCC}$ which has all the binary coproducts is called an \emph{almost bicartesian closed category} ($\mathrm{ABC}$, for short), and if it also has the initial object, a \emph{bicartesian closed category} ($\mathrm{BCC}$, for short). A $\mathrm{BCC}$ (resp. an $\mathrm{ABC}$) that is also a poset is called a \emph{Heyting algebra} (resp. an \emph{almost Heyting algebra}). By the $\mathrm{BC}$-structure, we mean the terminal and initial objects and the product, coproduct and exponential of a $\mathrm{BCC}$. This is an informal notion, putting a name on the structure that makes a category a $\mathrm{BCC}$. The $\mathrm{CC}$- and $\mathrm{AB}$-structures are defined accordingly. 
\end{dfn}

\begin{exam}
All categories $\mathbf{FinSet}$, $\mathbf{Set}$, $\mathbf{Set}^{\mathcal{C}}$, and $\mathbf{NJ}$ are $\mathrm{BCC}$'s. The categories $\mathbf{NN}$ and $\mathbf{NM}$ are $\mathrm{CCC}$ and $\mathrm{ABC}$, respectively.
\end{exam}

\begin{phil}
A $\mathrm{BCC}$ can be read as a proof system over the language $\mathcal{L}_p=\{\wedge, \vee, \to, \top, \bot\}$. Similarly, an $\mathrm{ABC}$ (resp. a $\mathrm{CCC}$) is a proof system over the fragment $\mathcal{L}_p-\{\bot\}=\{\wedge, \vee, \to, \top\}$ (resp. $\mathcal{L}_p-\{\vee, \bot\}=\{\wedge, \to, \top\}$). With this interpretation, Heyting algebras are trivialized proof systems over $\mathcal{L}_p$ where all deductions are collapsed to one deductibility order. The same also holds for the poset $\mathrm{ABC}$ and the poset $\mathrm{CCC}$. Moreover, it is worth mentioning that reading connectives as universal constructions naturally leads to intuitionistic proof systems. For classical proof systems, see Section \ref{SectionClassical}.
\end{phil}

\begin{exam}(\textit{Non-existence of exponential objects}) 
Consider the set $P$ consisting of all finite subsets of $\mathbb{N}$ together with the whole set $\mathbb{N}$. Note that the poset $(P, \subseteq)$ is cartesian with the terminal object $\mathbb{N}$ and the intersection as the product. We claim that the exponential $W=\{0\} \to \{1\}$ in the poset $(P, \subseteq)$ does not exist. If it does, it must have the property $S \cap \{0\} \subseteq \{1\}$ iff $S \subseteq W$, for any $S \in P$. As $S \cap \{0\} \subseteq \{1\}$ holds for any $S \subseteq \mathbb{N}-\{0\}$, we must have $\mathbb{N}-\{0\} \subseteq W$. Therefore, $W$ is not finite and hence $W=\mathbb{N}$. But then as $\mathbb{N} \subseteq W=\mathbb{N}$, we must have $\mathbb{N} \cap \{0\} \subseteq \{1\}$ which is impossible.  
\end{exam}

\begin{exam}
Let $\mathcal{C}$ be a non-preorder category with the initial and terminal objects and $0 \cong 1$. Then, $\mathcal{C}$ cannot have all the exponentials because if it can, then we must have
\[
\mathrm{Hom}(A, B) \cong \mathrm{Hom}(1 \times A, B) \cong \mathrm{Hom}(1, [A, B]) \cong \mathrm{Hom}(0, [A, B]).
\]
for any objects $A$ and $B$. As $0$ is the initial object, the rightmost set has exactly one element. Hence, $\mathrm{Hom}(A, B)$ must also be a singleton. This contradicts the assumption that $\mathcal{C}$ is not a preordered set. As a consequence, many algebraic categories, like the category of all groups and homomorphisms, do not have all exponential objects.
\end{exam}

\begin{exam}
Let $X$ be a fixed infinite set. Define $\mathcal{C}$ as the category of sets in the form $X^k$ for some $k \geq 0$ with the usual functions as the morphisms. This category is cartesian with the terminal object $X^0$ and the usual set-theoretical product as the product. We claim that the exponential $[X, X]$ does not exist in $\mathcal{C}$. Because if it does, it must be in the form $X^k$, for some $k \geq 0$. Therefore, $\mathrm{Hom}(X^0, [X, X])=\mathrm{Hom}(X^0, X^k)$ must be in one to one correspondence with $\mathrm{Hom}(X^0 \times X, X) \cong \mathrm{Hom}(X, X)$. However, the cardinality of the former equals the cardinality of $X$, while the cardinality of the latter is strictly greater than the cardinality of $X$. This phenomenon that there are more functions over an infinite set than the cardinality of the set itself is blocking the desirable construction of a cartesian closed category of \emph{sets} with a non-terminal object $X$ such that $[X, X] \cong X$. Such a situation can be useful to interpret lambda calculus where each element of the domain $X$ can also be read as a function on $X$ at the same time. Having such a dual nature is a usual phenomenon in computability theory where numbers are both the inputs and the codes of the algorithms. Such a $\mathrm{CCC}$ (using ordered/topological structures and not pure sets) was later discovered by Dana Scott \cite{LambekScott}. 
\end{exam}

Similar to what we had for the product and coproduct, the exponential can also rise to a functor. Assume that both the exponentials $[A, B]$ and $[C, D]$ exist. Now, assume that we are given $f: C \to A$ and $g: B \to D$. We intend to come up with a \emph{canonical map} $[f, g]: [A,B] \to [C, D]$. For that purpose, first note that $id_{[A, B]} \times f : [A, B] \times C \to [A, B] \times A$. Composing with $ev^{A, B}: [A, B] \times A \to B$ and $g: B \to D$, we will have the map $h: [A, B] \times C \to D$:
\[\begin{tikzcd}[ampersand replacement=\&]
	{[A, B] \times C} \&\& {[A, B] \times A} \\
	\\
	D \&\& B
	\arrow["{id_{[A, B]} \times f}", from=1-1, to=1-3]
	\arrow["{ev^{A, B}}", from=1-3, to=3-3]
	\arrow["h"', from=1-1, to=3-1]
	\arrow["g", from=3-3, to=3-1]
\end{tikzcd}\]
One can then define $[f, g]: [A, B] \to [C, D]$ as $\lambda_{C} h$. Now, define the assignment $[-, -]: \mathcal{C}^{op} \times \mathcal{C} \to \mathcal{C}$ by mapping the pair $(A, B)$ to $[A, B]$ and $(f, g):(A, B) \to (C, D)$ to $[f, g]: [A, B] \to [C, D]$. This assignment is a functor called the \emph{exponentiation functor}.

\subsection{Structure-preserving Functors}

So far, we have introduced terminal and initial objects, binary products and coproducts, and exponentials as proof-sensitive versions of logical constants. We also claimed that any $\mathrm{BCC}$ can be interpreted as a proof system over the language $\mathcal{L}_p$. The next natural step is to compare these proof systems using the natural machinery of functors. We expect these functors to preserve the logical constants of the systems. In this subsection, we will introduce these structure-preserving functors and then see some examples.

\begin{dfn}
Let $\mathcal{C}$ and $\mathcal{D}$ be two $\mathrm{BCC}$'s. A functor $F: \mathcal{C} \to \mathcal{D}$ is said to preserve:
\begin{itemize}
    \item 
the terminal (initial) object, if $F(A)$ is a terminal (initial) object in $\mathcal{D}$, for any terminal (initial) object in $\mathcal{C}$.
    \item
the product, if the diagram
\[\begin{tikzcd}
	{F(A)} && {F(C)} && {F(B)}
	\arrow["{F(p_0)}"', from=1-3, to=1-1]
	\arrow["{F(p_1)}", from=1-3, to=1-5]
\end{tikzcd}\]
is a product of $F(A)$ and $F(B)$ in $\mathcal{D}$, for any product 
\[\begin{tikzcd}
	A && C && B
	\arrow["{p_0}"', from=1-3, to=1-1]
	\arrow["{p_1}", from=1-3, to=1-5]
\end{tikzcd}\]
in the category $\mathcal{C}$.
    \item
the coproduct, if the diagram
\[\begin{tikzcd}
	{F(A)} && {F(C)} && {F(B)}
	\arrow["{F(i_0)}", from=1-1, to=1-3]
	\arrow["{F(i_1)}"', from=1-5, to=1-3]
\end{tikzcd}\]
is a coproduct of $F(A)$ and $F(B)$ in $\mathcal{D}$, for any coproduct
\[\begin{tikzcd}
	A && C && B
	\arrow["{i_0}", from=1-1, to=1-3]
	\arrow["{i_1}"', from=1-5, to=1-3]
\end{tikzcd}\]
in the category $\mathcal{C}$.
\end{itemize}
A product-preserving functor $F: \mathcal{C} \to \mathcal{D}$ preserves the exponentials, if the object $F(C)$ together with the evaluation map
\[\begin{tikzcd}[ampersand replacement=\&]
	{F(C \times A)} \&\& {F(B)}
	\arrow["{F(ev)}", from=1-1, to=1-3]
\end{tikzcd}\]
is an exponential in $\mathcal{D}$, for any exponential
\[\begin{tikzcd}[ampersand replacement=\&]
	{C \times A} \&\& B
	\arrow["ev", from=1-1, to=1-3]
\end{tikzcd}\]
in the category $\mathcal{C}$. Note that as $F$ preserves the product, $F(C \times A)$ is actually a product of $F(C)$ and $F(A)$.
\end{dfn}

\begin{rem}
Fixing a $\mathrm{BC}$-structure for $\mathcal{C}$ and $\mathcal{D}$, one can define the preservation equivalently in the following way. We say that $F$ preserves:
\begin{itemize}
    \item[$\bullet$]
the terminal object iff the canonical map $!: F(1_{\mathcal{C}}) \to 1_{\mathcal{D}}$ is an isomorphism.
    \item[$\bullet$]
the initial object iff the canonical map $!: 0_{\mathcal{C}} \to F(0_{\mathcal{D}})$ is an isomorphism.
\item
the product iff the canonical map
$\langle F(p_0), F(p_1) \rangle: F(A \times B) \to F(A) \times F(B)$ is an isomorphism.
    \item[$\bullet$]
the coproduct iff the canonical map $(F(i_0), F(i_1)) : F(A)+F(B) \to F(A+B)$ is an isomorphism.
 \item[$\bullet$]
the exponential iff the canonical map $\lambda_{F(A)} (F(ev) \circ g) : F([A, B]) \to [F(A), F(B)]$ is an isomorphism, where $g$ is the inverse of the canonical map $\langle F(p_0), F(p_1) \rangle: F([A, B] \times A) \to F([A, B]) \times F(A)$
\end{itemize}
Note that the second definition of preservation demands the canonical morphisms (i.e., the ones induced by the universality of the structure) to be an isomorphism. As we will see later, this is not equivalent to simply having an isomorphism (except for the initial and terminal objects). Even if we have a natural isomorphism, it is still different from having the canonical map as an isomorphism. By these weaker versions, what we really do is preserving the object of the structure (product, coproduct, exponential), while we must also preserve its involved maps (projections, injections, evaluation) as they are also a part of the structure.   
\end{rem}

\begin{dfn}
Let $\mathcal{C}$ and $\mathcal{D}$ be two $\mathrm{BCC}$'s. A functor $F: \mathcal{C} \to \mathcal{D}$ is called a \emph{$\mathrm{BC}$-functor} if it preserves the $\mathrm{BC}$-structure. The $\mathrm{AB}$- and $\mathrm{CC}$-functors are defined similarly, demanding the preservation of the structures available in the $\mathrm{ABC}$'s and $\mathrm{CCC}$'s, respectively.
\end{dfn}

\begin{exam}
The inclusion functor $i: \mathbf{FinSet} \to \mathbf{Set}$, mapping the finite sets and the functions between them to themselves is a $\mathrm{BC}$-functor. The reason is that the construction of the $\mathrm{BC}$-structure on finite sets is exactly the same as the one on all sets. 
\end{exam}

\begin{exam}
The forgetful functor $U: \mathbf{Set}^{\mathbb{Z}} \to \mathbf{Set}$ mapping $(A, \sigma_A)$ to $A$ and equivariant maps to themselves is a $\mathrm{BC}$-functor. To see why, it is enough to go back and check the construction of the $\mathrm{BC}$-structure in the two categories. The only non-trivial thing to mention is that for the set of the exponential object $[(A, \sigma_A), (B, \sigma_B)]$, we used the set $B^A$ of \emph{all} functions from $A$ to $B$ and not just the equivariant functions. This is consistent with our claim that $U$ preserves all exponentials. 
\end{exam}

\begin{exam}\label{OpenMaps}
Let $X$ and $Y$ be two topological spaces and $f: X \to Y$ be an \emph{open map}, i.e., a continuous map, where $f[U]$ is open in $Y$, for any open $U \subseteq X$. Then, the functor $f^{-1}: \mathcal{O}(Y) \to \mathcal{O}(X)$ is a $\mathrm{BC}$-functor. It is clear that it preserves all finite meets and finite joins, simply because the inverse image function always preserves finite intersections and finite unions. For the exponentiation, 
first notice that $f[W \cap f^{-1}(U)]=f[W] \cap U$, for any subsets $W \subseteq X$ and $U \subseteq Y$. Then, consider the following series of equivalences:
\[
W \subseteq f^{-1}(U) \to_X f^{-1}(V) \quad \text{iff} \quad W \cap f^{-1}(U) \subseteq f^{-1}(V) \quad \text{iff} \quad 
\]
\[
f[W \cap f^{-1}(U)] \subseteq V \quad \text{iff} \quad f[W] \cap U \subseteq V.
\]
As $f$ is an open map, $f[W]$ is open, for any open $W \in \mathcal{O}(X)$. Hence, $f[W] \cap U \subseteq V$ is equivalent to $f[W] \subseteq U \to_Y V$ which is also equivalent to $W \subseteq f^{-1}(U \to_Y V)$.
Therefore, as $W$ is an arbitrary open subset of $X$, we reach $f^{-1}(U \to_Y V)=f^{-1}(U) \to_X f^{-1}(V)$.
\end{exam}

\begin{exam}\label{p-morphism}
Let $(P, \leq_P)$ and $(Q, \leq_Q)$ be two posets and $f: (P, \leq_P) \to (Q, \leq_Q)$ be a \emph{p-morphism}, i.e., an order-preserving map such that for any $x \in P$ and $y \in Q$, if $f(x) \leq_Q y$, then there exists $z \in X$ such that $z \geq_P x$ and $f(z)=y$. Then, the functor $f^{-1}: U(Q, \leq_Q) \to U(P, \leq_P)$ is a $\mathrm{BC}$-functor. To prove, it is enough to show that putting the upset topologies on $P$ and $Q$ makes the map $f: P \to Q$ an open map. Let $U \subseteq P$ be an upset. Then, to show that $f[U]$ is also an upset, assume that $y \in f[U]$ and $y \leq_Q w$. Therefore, there is $x \in U$ such that $y=f(x)$. Hence, by the property of the function $f$, there exists $z \in P$ such that $z \geq_P x$ and $w=f(z)$. As $U$ is an upset and $x \in U$, we have $z \in U$. Hence, $w \in f[U]$. Therefore, $f[U]$ is an upset, which means that $f$ is an open map.
\end{exam}

\begin{exam} \label{PosetReflectionIsBCFunctor}
Let $\mathcal{C}$ be a $\mathrm{BCC}$. Then, the forgetful functor $\pi: \mathcal{C} \to \mathrm{Po}(\mathcal{C})$ is a $\mathrm{BC}$-functor. A similar claim holds for the $\mathrm{ABC}$'s and $\mathrm{CCC}$'s.
\end{exam}

\begin{exam}
Let $\mathcal{C}$ be a small $\mathrm{CCC}$. Then, the functor $y: \mathcal{C} \to \mathbf{Set}^{\mathcal{C}^{op}}$ is a $\mathrm{CC}$-functor. We only mention the main points in this example and leave the rest to the reader. For the terminal object, it is enough to note that $y(1)=\mathrm{Hom}(-, 1)$ is actually the constant functor $\Delta_{\{*\}}$ which is the terminal object in $\mathbf{Set}^{\mathcal{C}^{op}}$. For the product, note that the canonical map $y(A \times B) \to y(A) \times y(B)$ is the map $\mathrm{Hom}(-, A \times B) \to \mathrm{Hom}(-, A) \times \mathrm{Hom}(-, B)$ induced by a composition with projections. It is not hard to see that this map is actually an isomorphism. For the exponential, the  canonical map $y([A, B]) \to [y(A), y(B)]$ is the map $\mathrm{Hom}(-, [A, B]) \to \mathrm{Hom}(y_{-} \times y_A, y_B)$ induced by combining the evaluation and the Yoneda functor. Using the Yoneda embedding, it is enough to show that the canonical map $\mathrm{Hom}(-, [A, B]) \to \mathrm{Hom}(- \times A, B)$ induced by the evaluation map is an isomorphism. The latter is mentioned in Remark \ref{HighlevelExp}.
\end{exam}

\begin{exam}(\emph{Non-preservation of terminal objects and products})
Let $\mathcal{C}$ be a $\mathrm{CCC}$ and $X$ be a set with at least two elements. Then, the constant functor $\Delta_{X}: \mathcal{C} \to \mathbf{Set}$ is not a $\mathrm{CC}$-functor. It does not preserve the terminal object as otherwise, $X$ must have been a singleton. It does not preserve the product as otherwise, the canonical map $\Delta_X(A \times B) \to \Delta_X(A) \times \Delta_X(B)$ must be an isomorphism. Computing this map, we can see that it is actually the function $X \to X \times X$ mapping $x$ to $(x, x)$. However, as $X$ has more than one element, the map $x \mapsto (x, x)$ cannot be surjective. Notice that if $X$ is infinite, the objects $\Delta_X(A \times B)$ and $\Delta_X(A) \times \Delta_X(B)$, i.e., $X$ and $X \times X$ are isomorphic in $\mathbf{Set}$. This isomorphism also makes the functors $\Delta_X(- \times -): \mathcal{C}\times \mathcal{C} \to \mathcal{C}$ and $\Delta_X(-) \times \Delta_X(-): \mathcal{C}\times \mathcal{C} \to \mathcal{C}$ isomorphic. However, none of these facts imply that $\Delta_X$ is preserving the product. As we emphasized before, to preserve the product, it is important to have the \emph{canonical morphism} as an isomorphism.
\end{exam}

\begin{exam}(\emph{Non-preservation of exponentials})
Let $X$ and $Y$ be two topological spaces. Then, the inverse image of a continuous map $f: X \to Y$ is not necessarily a $\mathrm{CC}$-functor as it may not preserve the exponentials. For instance, take the continuous map $f: \mathbb{R} \to \mathbb{R}$ defined by $f(x)=x^2$. Let $U=(-\infty, 0)$ and note that $U \to \varnothing=int [0, +\infty)=(0, + \infty)$. Then, $f^{-1}(U \to \varnothing)=f^{-1}((0, + \infty))=\mathbb{R}-\{0\}$. However, $f^{-1}(U) \to f^{-1}(\varnothing)=\varnothing \to \varnothing = \mathbb{R}$ and hence $f^{-1}(U \to \varnothing) \neq f^{-1}(U) \to f^{-1}(\varnothing)$. Notice that $f: \mathbb{R} \to \mathbb{R}$ is not an open map, as $f[\mathbb{R}]=[0, +\infty)$ is not open. 
\end{exam}

\begin{exam}(\emph{Non-preservation of initial objects and  coproducts})
Let $\mathcal{C}$ be a small cocartesian category. Then, the functor $y: \mathcal{C} \to \mathbf{Set}^{\mathcal{C}^{op}}$ does not preserve the initial object or \emph{any} of the coproducts in $\mathcal{C}$. For the initial object, note that $y_0(C)=\mathrm{Hom}(C, 0)$ and this set is not empty for $C=0$. Therefore, $y_0$ cannot be the initial object of $\mathbf{Set}^{\mathcal{C}^{op}}$. Similarly, for the coproducts, notice that $y_{A+B}(C)=\mathrm{Hom}(C, A+B)$ which is not isomorphic to $\mathrm{Hom}(C, A)+\mathrm{Hom}(C, B)$ for $C=0$. Because the first set has exactly one element while the second has exactly two elements. Therefore, $y_{A+B}$ and $y_A+y_B$ are not isomorphic as objects of $\mathbf{Set}^{\mathcal{C}^{op}}$. 
\end{exam}

\begin{rem}
Note that in the previous example, we seriously used the existence of the initial object to prove that $y: \mathcal{C} \to \mathbf{Set}^{\mathcal{C}^{op}}$ does not preserve any existing coproducts. It is actually possible to prove it even without assuming that $\mathcal{C}$ is cocartesian. 
\end{rem}

\subsection{Free Structured Categories}
In the previous subsections, we identified the discourse of $\mathrm{BCC}$'s and $\mathrm{BC}$-functors as a natural categorical framework for formalizing proof systems over $\mathcal{L}_p$ and their interpretations. With this framework established, one might take a conceptual leap to seek a universal property that redefines the syntactical $\mathrm{BCC}$ $\mathbf{NJ}$ in a relative manner, thereby providing a definition for syntax in a syntax-free way.

\begin{thm}\label{Freness}
The category $\mathbf{NJ}$ is the free $\mathrm{BCC}$ generated by the infinite set $\{p_0, p_1, \ldots \}$ of objects, i.e., for any $\mathrm{BCC}$ $\mathcal{C}$ and any assignment $f$ of the objects of $\mathcal{C}$ to the atoms $\{p_0, p_1, \ldots \}$, there is a unique (up to isomorphism) $\mathrm{BC}$-functor $F: \mathbf{NJ} \to \mathcal{C}$ extending $f$, i.e., $F(p_i)=f(p_i)$, for any $i \in \mathbb{N}$. The same also holds for $\mathbf{NM}$ and $\mathbf{NN}$, as the free $\mathrm{ABC}$ and $\mathrm{CCC}$ generated by the set of objects $\{p_0, p_1, \ldots \}$, respectively.
\end{thm}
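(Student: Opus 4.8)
# Proof Proposal

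The plan is to prove the universal property in the standard way for free algebraic structures: construct the functor $F$ by recursion on the syntax of formulas and derivations, check that it is well-defined on $\beta\eta$-equivalence classes, verify it preserves the $\mathrm{BC}$-structure, and then argue uniqueness. First I would define $F$ on objects. Every formula of $\mathcal{L}_p$ is built from the atoms $p_0, p_1, \ldots$, together with $\top$, $\bot$, and the binary connectives $\wedge, \vee, \to$. So I set $F(p_i) = f(p_i)$, $F(\top) = 1_{\mathcal{C}}$, $F(\bot) = 0_{\mathcal{C}}$, $F(A \wedge B) = F(A) \times F(B)$, $F(A \vee B) = F(A) + F(B)$, and $F(A \to B) = [F(A), F(B)]$, using a fixed choice of $\mathrm{BC}$-structure on $\mathcal{C}$. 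This is a definition by recursion on formula complexity, so it is well-defined.

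Next I would define $F$ on morphisms, i.e., on $\beta\eta$-equivalence classes of $\mathbf{NJ}$-derivations. The cleanest route is to exploit the fact established in the earlier examples that conjunction, disjunction, implication, $\top$, $\bot$ in $\mathbf{NJ}$ literally \emph{are} the product, coproduct, exponential, terminal, and initial objects of $\mathbf{NJ}$, with the rules $(I\wedge), (E\wedge_i), (I\vee_i), (E\vee), (I\to), (E\to), (\top), (\bot)$ realizing the canonical maps $\langle-,-\rangle$, $p_i$, $i_j$, $(-,-)$, $\lambda$, $ev$, $!$. A derivation from the single assumption $A$ to $B$ can therefore be decomposed uniquely (up to the $\beta\eta$-equivalences) into these primitive categorical operations together with composition and identities. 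So I define $F$ on a derivation by replacing each rule instance with the corresponding canonical $\mathrm{BC}$-map in $\mathcal{C}$, each composition of derivations with composition in $\mathcal{C}$, and the trivial derivation with $id$. One does have to handle derivations with hypothesis discharge, which correspond to currying; the $(I\to)$ rule becomes $\lambda$ and the interplay with the product structure is exactly the content of the $\mathbf{NJ}$ exponential example. Functoriality ($F(id) = id$, $F(\mathsf{D}_2 \circ \mathsf{D}_1) = F(\mathsf{D}_2) \circ F(\mathsf{D}_1)$) is immediate from the definition once well-definedness is in place.

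The step I expect to be the main obstacle is \textbf{well-definedness on $\beta\eta$-equivalence classes}: I must check that whenever $\mathsf{D} \equiv \mathsf{D}'$ under the least congruence generated by the basic $\beta$- and $\eta$-equivalences, then $F(\mathsf{D}) = F(\mathsf{D}')$ in $\mathcal{C}$. Since $\equiv$ is generated by finitely many equivalence schemes plus closure under composition, it suffices to verify that each basic $\beta$- and $\eta$-equivalence is validated by the equations that \emph{hold} in any $\mathrm{BCC}$ — precisely the equations $p_i \langle f_0, f_1 \rangle = f_i$, $\langle p_0 h, p_1 h\rangle = h$, $(f_0,f_1) i_j = f_j$, $(h i_0, h i_1) = h$, $ev(\lambda_A f \times id) = f$, $\lambda_A(ev(g \times id)) = g$, uniqueness of maps into $1$ and out of $0$, as recorded in Remarks \ref{equivalentUniquenessProduct}, \ref{equivalentUniquenessCoproduct}, \ref{equivalentUniquenessExp} and the defining properties of $1, 0$. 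This is a finite, essentially mechanical case check — each proof-theoretic $\beta$- or $\eta$-equivalence in the Preliminaries corresponds to exactly one such categorical equation — but it is where all the real content sits, and it is why the $\beta\eta$-quotient was imposed on $\mathbf{NJ}$ in the first place. Preservation of the $\mathrm{BC}$-structure by $F$ is then automatic: $F$ sends $\top, \bot, A\wedge B, A\vee B, A\to B$ to the chosen structure on $\mathcal{C}$ \emph{on the nose}, and sends the structural maps (projections, injections, evaluation, the $!$-maps) to the corresponding canonical maps, so the canonical comparison morphisms of the preceding subsection are identities, hence isomorphisms.

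Finally, for uniqueness: if $G : \mathbf{NJ} \to \mathcal{C}$ is another $\mathrm{BC}$-functor with $G(p_i) = f(p_i)$, then since $G$ preserves the $\mathrm{BC}$-structure, $G(\top) \cong 1$, $G(A \wedge B) \cong G(A) \times G(B)$, etc., canonically and compatibly, so $G$ agrees with $F$ on objects up to canonical isomorphism; and since every morphism of $\mathbf{NJ}$ is a composite of the primitive structural maps and $G$ sends each of those to the canonical structural map in $\mathcal{C}$ (this is exactly what structure-preservation means), $G$ agrees with $F$ on morphisms as well, giving $G \cong F$. The cases $\mathbf{NM}$ and $\mathbf{NN}$ are handled verbatim, simply deleting the clauses for the absent connectives ($\bot$ for $\mathbf{NM}$; $\bot$ and $\vee$ for $\mathbf{NN}$) and replacing ``$\mathrm{BCC}$/$\mathrm{BC}$-functor'' by ``$\mathrm{ABC}$/$\mathrm{AB}$-functor'' and ``$\mathrm{CCC}$/$\mathrm{CC}$-functor'' respectively.
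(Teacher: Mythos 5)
Your proposal is correct and follows essentially the same route as the paper: define $F$ on formulas by recursion using a fixed $\mathrm{BC}$-structure, define it on derivations by replacing each rule with the corresponding canonical map, verify that the basic $\beta\eta$-equivalences translate into the defining equations of the universal constructions, and derive uniqueness from the fact that structure-preservation forces every clause of the definition. The only detail the paper makes more explicit than you do is the bookkeeping for derivations with several open assumptions — it interprets a context $\Gamma$ as the product $\prod_{\gamma\in\Gamma}F(\gamma)$, runs the recursion over such multi-assumption derivations, and only at the end restricts to single-assumption derivations to obtain the functor — which is exactly the currying issue you flag but do not spell out.
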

\begin{proof}
We only explain the case $\mathbf{NJ}$. The rest are similar. First, let us start with an informal explanation of why we expect $\mathbf{NJ}$ to be the free $\mathrm{BCC}$ generated by the set $\{p_0, p_1, \ldots\}$. The reason is simple. Reading category-theoretically, one can see that the propositions in the language $\mathcal{L}_p$ and the rules of $\mathbf{NJ}$ are designed to define a $\mathrm{BC}$-structure on $\mathbf{NJ}$ in a minimal manner.
For instance, the connective $\wedge$ defines the product object $A \wedge B$ of the objects $A$ and $B$. The rules $(E\wedge_1)$ and $(E\wedge_2)$ act as the projection maps for the product $A \wedge B$, and $(I\wedge)$ corresponds to the operation $\langle -, - \rangle$ on the maps, which is required for the products. Similarly, there is a correspondence between the disjunction and its rules with coproducts, the implication and its rules with exponentiation, $\top$ and its axiom with the terminal object, and $\bot$ and its axiom with the initial object.
Of course, there are some issues with the commutativity and the uniqueness conditions in the universal definition of the $\mathrm{BC}$-structure. These are resolved with the minimal equalities that $\beta\eta$-equivalences put on the derivations. Note that these equivalences are nothing but what one needs to ensure the commutativity and the uniqueness conditions that the $\mathrm{BC}$-structure of $\mathbf{NJ}$ demands. Therefore, we naturally expect $\mathbf{NJ}$ to be the ``least" possible $\mathrm{BCC}$ generated by the set $\{p_0, p_1, \ldots\}$ of objects.

To formally prove the freeness, let $f$ be an assignment of the objects of $\mathcal{C}$ to the atoms in $\{p_0, p_1, \ldots \}$. Our task is defining the $\mathrm{BC}$-functor $F: \mathbf{NJ} \to \mathcal{C}$ and proving its uniqueness up to an isomorphism. We will explain the main ingredients of the proof and leave the rest to the reader. First, fix a $\mathrm{BC}$-structure on $\mathcal{C}$, i.e., fix an initial and a terminal object and for any two objects, fix a product, a coproduct and an exponential for them. We will define the assignment $F$ on propositions and derivations in an inductive way. For propositions, set $F(p_i)=f(p_i)$, for any $i \in \mathbb{N}$, $F(\top)=1$, $F(\bot)=0$, $F(A \wedge B)=F(A) \times F(B)$, $F(A \vee B)=F(A)+F(B)$ and $F(A \to B)=[F(A), F(B)]$. 

For derivations, for any finite set of assumptions in the form $[A]^i$, define $F(\Gamma)$ as $\Pi_{\gamma \in \Gamma}F(\gamma)$, using the order on $\Gamma$ induced by the superscripts in $[A]^i$'s. If $\Gamma=\varnothing$, define $F(\Gamma)=1$.
Then, for any derivation $\mathsf{D}$ of $A$ with the set of assumptions $\Gamma$, we want to assign a map $F(\mathsf{D}): F(\Gamma) \to F(A)$. As the derivation $\mathsf{D}$ is a tree constructed from the rules of $\mathbf{NJ}$, it is enough to define $F(\mathsf{D})$ recursively on the sub-derivations of $\mathsf{D}$. If $\mathsf{D}$ is a one-node tree proving $[A]^i$, define $F(\mathsf{D})$ as the projection on the $i$th component of $F(\Gamma)$. For the other rules, use Table \ref{tableD}.

\begin{table}[h!]
 \centering
 \vline
\begin{tabular}{ c | c  }
\hline
 \textbf{Rules} & \textbf{Interpretations} \\
 \hline
\\
\footnotesize \AxiomC{$ $}
  \RightLabel{$(\top)$} 
   \UnaryInfC{$\top$}
 \DisplayProof
&
\footnotesize  $!_{F(\Gamma)} $
 \\
 \\
  \hline
  \\
\footnotesize \AxiomC{$\mathsf{D}$}
 \noLine
 \UnaryInfC{$\bot$}
  \RightLabel{$(\bot)$} 
   \UnaryInfC{$A$} 
 \DisplayProof 
& 
\footnotesize $!_A \circ F(\mathsf{D})$
 \\
 \\
  \hline
  \\
\footnotesize \AxiomC{$\mathsf{D}_1$}
 \noLine
 \UnaryInfC{$A$}
  \AxiomC{$\mathsf{D}_2$}
  \noLine
   \UnaryInfC{$B$}
 \RightLabel{$(I \wedge)$} 
 \BinaryInfC{$A \wedge B$}
 \DisplayProof 
& 
\footnotesize $\langle F(\mathsf{D}_1), F(\mathsf{D}_2) \rangle$
 \\
 \\
  \hline
\\
\footnotesize  \AxiomC{$\mathsf{D}$}
 \noLine
 \UnaryInfC{$A \wedge B$}
  \RightLabel{$(E \wedge_1)$} 
   \UnaryInfC{$A$}
 \DisplayProof
 \quad , \quad
  \AxiomC{$\mathsf{D}$}
 \noLine
 \UnaryInfC{$A \wedge B$}
  \RightLabel{$(E \wedge_2)$} 
   \UnaryInfC{$B$}
 \DisplayProof
&
\footnotesize $p_0 \circ F(\mathsf{D})$ \quad , \quad $p_1 \circ F(\mathsf{D})$
 \\
 \\
  \hline
\\
\footnotesize  \AxiomC{$\mathsf{D}$}
 \noLine
 \UnaryInfC{$A$}
  \RightLabel{$(I \vee_1)$} 
   \UnaryInfC{$A \vee B$}
 \DisplayProof 
\quad , \quad
    \AxiomC{$\mathsf{D}$}
 \noLine
 \UnaryInfC{$B$}
  \RightLabel{$(I \vee_2)$} 
   \UnaryInfC{$A \vee B$}
 \DisplayProof
& 
\footnotesize $i_0 \circ F(\mathsf{D})$ \quad , \quad $i_1 \circ F(\mathsf{D})$
 \\
 \\
 \hline
  \\
\footnotesize \AxiomC{$\mathsf{D}$}
 \noLine
 \UnaryInfC{$A \vee B$}
 \AxiomC{$[A]^i$}
 \noLine
  \UnaryInfC{$\mathsf{D}_1$}
  \noLine
   \UnaryInfC{$C$}
    \AxiomC{$[B]^j$}
    \noLine
  \UnaryInfC{$\mathsf{D}_2$}
  \noLine
   \UnaryInfC{$C$}
 \RightLabel{$(E \vee_{i,j})$} 
 \TrinaryInfC{$C$}
 \DisplayProof
& 
\footnotesize $ev \langle I \langle \lambda_i F(\mathsf{D}_1), \lambda_j F(\mathsf{D}_2) \rangle F(\mathsf{D}) \rangle$
 \\
 \\
 \hline
\\
\footnotesize  \AxiomC{$[A]^i$}
 \noLine
   \UnaryInfC{$\mathsf{D}$}
 \noLine
 \UnaryInfC{$B$}
  \RightLabel{$(I \to_i)$} 
   \UnaryInfC{$A \to B$}
 \DisplayProof
& 
\footnotesize $\lambda_{i} F(\mathsf{D})$
 \\
 \\
 \hline
\\
\footnotesize \AxiomC{$\mathsf{D}_1$}
 \noLine
 \UnaryInfC{$A$}
  \AxiomC{$\mathsf{D}_2$}
  \noLine
   \UnaryInfC{$A \to B$}
 \RightLabel{$(E \to)$} 
 \BinaryInfC{$B$}
 \DisplayProof
& 
\footnotesize $ev \circ \langle F(\mathsf{D}_2), F(\mathsf{D}_1) \rangle$
 \\
 \\
 \hline
\end{tabular}\vline
\caption{Inductive definition of $F(\mathsf{D})$}
\label{tableD}
\end{table}

To read the table, we need the following notational conventions. First, for the map $F(\mathsf{D}): F(\Gamma \cup [A]^i) \to F(B)$, we can use the commutativity of the product to transform $F(\Gamma \cup [A]^i)$ to $F(\Gamma) \times F(A)$, and then using $\lambda_A$, construct a map from $F(\Gamma)$ to $[F(A), F(B)]$. We call this map $\lambda_i F(\mathsf{D})$. Second, notice that in any $\mathrm{BCC}$, there is a canonical map $I: [X, Z] \times [Y, Z] \to [X+Y, Z]$. We use this map to interpret the rule $(E\vee)$.

The definition of $F(\mathsf{D})$ does not respect the renaming of superscripts as any such renaming may change the order of $F(\gamma)$'s in the product $F(\Gamma)=\Pi_{\gamma \in \Gamma} F(\gamma)$. However, as we will finally restrict ourselves to the derivations with one assumption, the map $F$ becomes well-defined at the end. The definition of $F(\mathsf{D})$ respects the $\beta \eta$-equivalences on $\mathsf{D}$. This holds due to the universal properties of the $\mathrm{BC}$-structure on $\mathcal{C}$. We leave its detailed proof to the reader. Then, to get a functor, one must restrict $F$ to the maps in $\mathbf{NJ}$, i.e., to the derivations with one assumption. It is easy to see that this restriction is a functor as $F$ is defined on a one-node tree with one assumption as the identity map and we defined $F$ by composing the values of $F$ on sub-derivations. The functor $F$ respects the $\mathrm{BC}$-structure by design. For instance, we defined $F(A \wedge B) = F(A) \times F(B)$, and the projections in $\mathbf{NJ}$, i.e., the rules $(E\wedge_1)$ and $(E\wedge_2)$ are mapped to the projections of $\mathcal{C}$. As $F$ clearly extends $f$, we constructed the $\mathrm{BC}$-functor we sought.

To prove the uniqueness, note that every proposition is constructed from the atoms and $\{\top, \bot\}$ using $\{\wedge, \vee, \to\}$ in a unique way, and any derivation is constructed from the rules. If $F$ is to be a $\mathrm{BC}$-functor, it must preserve the $\mathrm{BC}$-structure, and hence the definition of $F$ is essentially forced upon us. For instance, we must define $F(A \wedge B)$ as the product $F(A) \times F(B)$, and the application of $F$ on the derivation:
\begin{center}
    \begin{tabular}{c}
        \AxiomC{$A \wedge B$}
  \RightLabel{$(E \wedge_1)$} 
   \UnaryInfC{$A$}
 \DisplayProof
    \end{tabular}
\end{center}
must be the projection $p_0: F(A) \times F(B) \to F(A)$. Therefore,
the only choice we have is the choice of the $\mathrm{BC}$-structure on $\mathcal{C}$ which is unique up to isomorphism. Hence, $F$ must be unique up to an isomorphism. 
\end{proof}

Theorem \ref{Freness} characterizes the $\mathrm{BCC}$ $\mathbf{NJ}$ as the ``least" $\mathrm{BCC}$ containing the given set $\{p_0, p_1, \ldots\}$ of objects. This characterization, like those for products, coproducts, and other universal constructions, is a relative one based on universality and does not rely on the internal structure of the category $\mathbf{NJ}$. The method of constructing $\mathbf{NJ}$ is not crucial; there may be alternative syntactical methods to construct the same category, which can be seen as different \emph{presentations} of the same mathematical entity. In this sense, the definition of $\mathbf{NJ}$ provided by Theorem \ref{Freness} serves as a conceptual characterization and offers insight into how to define syntax in a \emph{presentation-free} manner.

\section{BHK Interpretation and Variable Sets} \label{SectionBHKInterpretation} 

In the previous section, we introduced some universal constructions to interpret the logical constants in the propositional language. For this purpose, we employed categorical language, where propositions are treated as objects and \emph{deductions} as morphisms. This framework effectively formalized the expected behavior of logical constants. However, some anomalies emerged, such as defining the constants $\bot$ and $\vee$ by their behavior as assumptions, based on how they prove other propositions rather than how they are proved themselves.
One could argue that these anomalies result from our choice of deduction as the primitive notion, while the primitive notion should actually be the \emph{proof}, i.e., a deduction with no assumption. Once proofs are specified, a deduction of $B$ from $A$ can always be defined as a \emph{construction} that maps proofs of $A$ to proofs of $B$.
But if proofs are taken as primitives, how do we define the logical connectives? The well-known Brouwer-Heyting-Kolmogorov (BHK) interpretation attempts to answer this question. It defines the proofs of a proposition recursively as follows:
\begin{itemize}
    \item 
    there is a canonical proof for $\top$,
    \item 
    there is no proof for $\bot$,
    \item
    a proof of $A \wedge B$ is a pair of a proof of $A$ and a proof of $B$,
    \item
    a proof of $A \vee B$ is either a proof of $A$ or a proof of $B$,
    \item
    a proof of $A \to B$ is a \emph{construction} that transforms any proof of $A$ to a proof of $B$.
\end{itemize}
The BHK interpretation is clearly based on a notion of \emph{construction}, as the implication clause clearly demonstrates. Even for the other clauses, we must specify a world of constructions from which the clauses select the ones considered as proofs. To formalize this notion of construction, a natural approach is to employ a category with sufficient structure as the world of constructions. Here, we interpret the objects as types and the morphism $f: A \to B$ as a construction of type $B$ based on a given input of type $A$.

There are many such categories, ranging from categories of recursive functions or continuous maps to the usual category of sets. In this section, we restrict ourselves to the classical category of sets to simplify the exposition. However, in Section \ref{SecRealizability} and Section \ref{SecRealizabilityForArith}, we will explore some alternative constructions to illustrate their usefulness and interesting properties. These alternative constructions are typically referred to as the \emph{realizability} interpretation.

Fixing constructions, we also need to incorporate the notion of \emph{time}, an important factor that is often ignored in the usual presentation of the BHK interpretation. To understand the role of time, consider the way a constructive mathematician rejects the axiom of the excluded middle. For her, $A \vee \neg A$ is not acceptable as a \emph{general law} because it asserts that for any proposition $A$, there is either a proof of $A$ or a proof that $A$ leads to a contradiction. This is not true in her view, as there are propositions that are potentially provable but have not been proved \emph{so far}, making them neither provable nor disprovable at this point in time.

However, in the static and timeless picture that the BHK interpretation presents, without any epistemological change, either $A$ is provable and we have its proof, or we do not have any proof because it is impossible to have one. In the latter case, since there is no proof for $A$, one can claim that any proof of $A$ leads to a proof of the contradiction, which provides a proof for $A \to \bot = \neg A$ by the BHK definition of the proofs of implication. Therefore, the BHK interpretation, with classical sets and functions as its world of constructions, validates the axiom of the excluded middle if we ignore the temporal aspect.
Recalling that time is the seed of \emph{Kripke semantics} for constructive theories, it is insightful to consider what Goodman interestingly highlighted:
\begin{quote}
\emph{Recursive realizability emphasizes the active aspect of constructive
mathematics. However, Kleene's notion has the weakness that it disregards
that aspect of constructive mathematics which concern epistemological change. Precisely that aspect of constructive mathematics which Kleene's notion neglects is emphasized by Kripke's semantics for intuitionistic logic. However, Kripke's notion makes it appear that the constructive mathematician is a passive observer of a structure which gradually reveals itself. What is lacking is the emphasis on the mathematician as active which Kleene's notion provides} \cite{Goodman}.
\end{quote}
Using the temporal structure, the informal BHK interpretation can be updated as follows:
\begin{itemize}
    \item 
    there is \emph{always} a canonical proof for $\top$,
    \item 
    there is \emph{never} a proof for $\bot$,
    \item
    a proof of $A \wedge B$ \emph{at some point} is a pair of a proof of $A$ and a proof of $B$ \emph{at the same point},
    \item
    a proof of $A \vee B$ \emph{at some point} is either a proof of $A$ or a proof of $B$ \emph{at the same point},
    \item
    a proof of $A \to B$ \emph{at some point} is a construction that transforms any proof of $A$ \emph{at any point in the future} to a proof of $B$ \emph{at that point}.
\end{itemize}
There is another necessary update to the BHK interpretation concerning the clauses for $\bot$ and $\vee$. We will explain the latter, as the former is similar. In the traditional form of the BHK interpretation, a proof of $A \vee B$ at some point is either a proof of $A$ or a proof of $B$ at that same point. However, this definition can be too stringent for some practices. For instance, if we are generating a sequence of zeros and ones one bit at a time, at stage $n$, we know that the next bit $a_{n+1}$ is either zero or one. Yet, we do not have a proof at that moment to determine which it will be.

This observation suggests weakening the clause for disjunction in the BHK interpretation to state that a proof of $A \vee B$ at time $n$ is a construction that will eventually become a proof of $A$ or $B$ as time progresses, regardless of how it progresses. To formalize this concept, we need a notion called \emph{coverage} or \emph{Grothendieck topology}, which defines how future possibilities can cover the present moment. This chapter will not delve into this concept, as it extends far beyond the scope of our already extensive survey. However, you can explore a basic version of it in the renowned \emph{Beth semantics} for intuitionistic logic \cite{vanDalen,vanDalenII}.

Using the two ingredients of construction and time, and setting aside the role of coverages, we will provide a formalization for a weaker form of the BHK interpretation in the following two subsections.

\subsection{Provability via the BHK interpretation}

Let us start with the simpler task of formalizing the BHK interpretation considered as the inductive definition of the provability relation rather than the actual proofs. As we saw before, this means restricting our structures from categories to posets, where all the morphisms are collapsed into an order relation. 

First, let $\mathcal{T}=(T, \leq)$ be a poset encoding the time structure. Note that time is not linear, as each point in time may branch into different possible futures. Ignoring explicit proofs and focusing solely on provability, a proposition is represented by an order-preserving map from $\mathcal{T}$ to $\mathbf{2}$, indicating whether the proposition is provable at each point in time. It is order-preserving because if a proposition is provable at some point $w \in T$, it remains provable as time progresses. Therefore, the set of all propositions is $\mathbf{2}^{\mathcal{T}}$. For the deductibility order between propositions, we define $A \leq B$ if and only if $A(w) \leq B(w)$ for every $w \in T$. This means that $A$ proves $B$ if the provability of $A$ at any point in time implies the provability of $B$ at that point.

For any proposition $A$ and any point $w \in T$, we write $w \Vdash A$ to denote $A(w) = 1$, meaning that $A$ is provable at $w$. Now, following the BHK interpretation and substituting proofs with provability, we define the provability of complex propositions as follows:
\begin{itemize}
\item
$w \nVdash \bot$, meaning that $\bot$ is never provable.
\item
$w\Vdash \top$, meaning that $\top$ is always provable.
\item
$w \Vdash A \wedge B$ iff $w \Vdash A$ and $w \Vdash B$, meaning that $A \wedge B$ is provable at some point iff both $A$ and $B$ are provable at that point. This is the shadow of the definition of the proofs of a conjunction as the pairs of the proofs of the conjuncts.
\item
$w \Vdash A \vee B$ iff either $w \Vdash A$ or $w \Vdash B$, meaning that $A \vee B$ is provable at some point iff either $A$ or $B$ is provable at that point. This is the shadow of the definition of the proofs of a disjunction as the proofs of either of the disjuncts.
\item
$w \Vdash A \to B$ iff for any $u \geq w$ if $u \Vdash A$ then $u \Vdash B$, meaning that $A \to B$ is provable at some point iff at any point in the future, when $A$ becomes provable, $B$ also becomes provable at that point. This is the shadow of the definition of the proofs of an implication as a construction transforming a proof of $A$ to a proof of $B$. When we have such a construction at $w$, we can apply it at all points in the future to any future proof of $A$ to get a proof of $B$.
\end{itemize}
It is straightforward to verify that the operations $\{\bot, \top, \wedge, \vee, \to\}$ on the poset $\mathbf{2}^{\mathcal{T}}$ align with its $\mathrm{BC}$-structure. For instance, for disjunction, we have $A \vee B \leq C$ if and only if $A \leq C$ and $B \leq C$, for any propositions $A$, $B$, and $C$.
Given this $\mathrm{BC}$-structure, we can conclude that the BHK interpretation yields a poset proof system, specifically a Heyting algebra, as discussed in the previous section.

Two points are worth noting here. First, the BHK interpretation defines provability rather than deductibility. Deductibility is simply the natural order on $\mathbf{2}^{\mathcal{T}}$ induced by the temporal structure of $\mathcal{T}$. This approach allows for a clearer definition of $\bot$ and $\vee$, as it directly addresses their provability rather than their role as assumptions for proving other propositions. Second, the BHK interpretation concretely formalizes poset proof systems, with propositions represented as order-preserving functions rather than abstract elements in a Heyting algebra.

Given these observations, one might wonder if the poset proof system we introduced earlier is more general than what the BHK interpretation provides. Fortunately, this is not the case:

\begin{thm}(Kripke representation for Heyting algebras \cite{vanDalen}) \label{KripkeRepForHeyting}
Let $\mathcal{H}$ be a Heyting algebra. Then, the canonical map $e: \mathcal{H} \to \mathbf{2}^{[\mathcal{H}, \mathbf{2}]}$ defined by $e(a)(f)=f(a)$ is a full and faithful $\mathrm{BC}$-functor, where $[\mathcal{H}, \mathbf{2}]$ is the poset of all $\mathrm{BC}$-functors from $\mathcal{H}$ to $\mathbf{2}$ with the pointwise order.    
\end{thm}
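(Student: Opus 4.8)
The plan is to adapt the classical Birkhoff-style representation theorem for distributive lattices to the Heyting setting, recognising that $\mathrm{BC}$-functors $\mathcal{H} \to \mathbf{2}$ play the role of prime filters. Concretely, first I would observe that $\mathbf{2}$, regarded as the two-element Heyting algebra, is itself a $\mathrm{BCC}$-poset, and that a $\mathrm{BC}$-functor $f : \mathcal{H} \to \mathbf{2}$ amounts exactly to an order-preserving map preserving finite meets, finite joins, top, bottom and the Heyting implication. Such an $f$ is determined by the set $F = f^{-1}(1)$, which one checks is a \emph{prime filter} of $\mathcal{H}$: it is upward closed, closed under binary meet, contains $1$, excludes $0$, and whenever $a \vee b \in F$ then $a \in F$ or $b \in F$. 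Conversely, every prime filter arises this way. Preservation of implication translates into the condition that $a \to b \in F$ iff ($a \in F \Rightarrow b \in F$) is \emph{not} automatic — only one direction is free — so I would verify that the definition of prime filter already forces the nontrivial direction, or else restrict attention to those prime filters for which it holds and show there are enough of them; this is where the Heyting structure (as opposed to mere distributivity) does real work.

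The heart of the argument is then the \emph{prime filter lemma}: for any $a \not\leq b$ in $\mathcal{H}$ there is a prime filter $F$ with $a \in F$ and $b \notin F$, equivalently a $\mathrm{BC}$-functor $g : \mathcal{H} \to \mathbf{2}$ with $g(a) = 1$ and $g(b) = 0$. I would prove this by a Zorn's lemma argument: consider the poset of filters containing $a$ but not $b$, take a maximal such filter $F$, and show maximality forces primality — if $c \vee d \in F$ but neither $c$ nor $d$ is in $F$, then the filters generated by $F \cup \{c\}$ and $F \cup \{d\}$ both contain $b$, and using distributivity (which holds in any Heyting algebra, and is itself deducible from the $\mathrm{BC}$-structure since $[-,-]$ forces $\wedge$ to distribute over $\vee$) one derives $b \in F$, a contradiction. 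One also needs that this maximal $F$ genuinely gives a $\mathrm{BC}$-functor, i.e. that $x \to y \notin F$ whenever $x \in F$ and $y \notin F$: here one uses that if $x \to y \in F$ and $x \in F$ then $y = (x \to y) \wedge x \in F$ by modus ponens inside $\mathcal{H}$ (the inequality $(x \to y) \wedge x \leq y$), and for the converse direction, if $x \to y \notin F$ one argues again via maximality that adjoining $x \to y$ would force $b \in F$ unless $y$ is already forced, obtaining the needed equivalence.

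With the prime filter lemma in hand, faithfulness and fullness follow formally. The map $e(a)(f) = f(a)$ is a $\mathrm{BC}$-functor because each $f$ preserves the structure and the $\mathrm{BC}$-structure on $\mathbf{2}^{[\mathcal{H},\mathbf{2}]}$ is computed pointwise (as established in the preceding section for $\mathbf{2}^{\mathcal{C}}$ with $\mathcal{C}$ a poset). For faithfulness, since $\mathcal{H}$ is a poset, $e$ is faithful as soon as it is an order-embedding: if $a \not\leq b$, the prime filter lemma yields $f$ with $f(a) = 1 > 0 = f(b)$, so $e(a) \not\leq e(b)$; the converse is just order-preservation. Fullness for a functor between posets means exactly order-embedding (Example \ref{ExamOfFull}), so fullness is the same statement. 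Finally I would double-check that $e$ preserves each piece of the $\mathrm{BC}$-structure: preservation of $\top, \bot, \wedge$ is immediate from pointwise computation and the fact that every $f$ preserves them; preservation of $\vee$ and $\to$ is the content that needs the prime filter lemma — e.g. to see $e(a \to b) = e(a) \to e(b)$ pointwise one must know that for every prime-filter functor $f$, $f(a \to b) = 1$ iff ($f(a) = 1 \Rightarrow f(b) = 1$), which is precisely the $\mathrm{BC}$-functor property just verified.

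The main obstacle I anticipate is exactly the implication clause in the prime filter lemma: ordinary Stone/Birkhoff duality for distributive lattices produces prime filters with no constraint on $\to$, and one must either show that \emph{every} prime filter of a Heyting algebra automatically respects $\to$ (which is true, using $(x \to y) \wedge x \leq y$ for one direction and a filter-extension argument for the other) or build the constraint into the Zorn argument. Getting this half of the equivalence cleanly — that $x \in F$ and $y \notin F$ implies $x \to y \notin F$ — is the delicate step, since it is not a property of the filter $F$ in isolation but needs the maximality that separates $a$ from $b$; the standard trick is to note that if $x \to y \in F$ while $x \in F$, then $y \in F$ by modus ponens, contradiction, so in fact this direction \emph{is} automatic for any filter, and only primality needs the maximality argument. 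Once that is sorted the rest is routine verification, which I would leave to the reader as the paper's style suggests.
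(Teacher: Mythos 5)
The paper offers no proof of this theorem; it is quoted from van Dalen with a citation. Judging your argument on its own terms, it has a genuine gap at exactly the point you flagged as delicate, and the gap cannot be closed. The problem is the claimed correspondence between prime filters and $\mathrm{BC}$-functors $\mathcal{H}\to\mathbf{2}$. A prime filter $F$ gives a map $\chi_F:\mathcal{H}\to\mathbf{2}$ preserving $\top,\bot,\wedge,\vee$, and modus ponens ($(x\to y)\wedge x\le y$) gives $\chi_F(x\to y)\le \chi_F(x)\to\chi_F(y)$; but the reverse inequality fails for general prime filters, and neither of your proposed repairs works. Concretely, let $\mathcal{H}$ be the three-element chain $0<c<1$. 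Its prime filters are $\{1\}$ and $\{c,1\}$, but only $\{c,1\}$ yields a $\mathrm{BC}$-functor: for $F=\{1\}$ we have $c\to 0=0\notin F$, so $\chi_F(c\to 0)=0$, while $\chi_F(c)\to\chi_F(0)=0\to 0=1$. Your maximality argument cannot force $c\to 0=\bot$ into $F$ without destroying properness, so the parenthetical claim that every prime filter automatically respects $\to$ is false. Worse, this is not only a defect of the proof: since the unique $\mathrm{BC}$-functor $g:\mathcal{H}\to\mathbf{2}$ on this chain has $g(c)=g(1)=1$, the map $e$ identifies $c$ with $1$ and is not an order-embedding, hence not full by Example \ref{ExamOfFull}. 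No argument can establish the statement with $[\mathcal{H},\mathbf{2}]$ read literally as the poset of $\mathrm{BC}$-functors; the theorem van Dalen proves takes the points to be the prime filters of the underlying bounded distributive lattice --- in the paper's language, the functors preserving the bicartesian but not necessarily the closed structure.

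There is a second, related slip that is in fact the source of your difficulty: you compute the implication in $\mathbf{2}^{[\mathcal{H},\mathbf{2}]}$ pointwise, but by Example \ref{ExponentialInSet} the Heyting implication in $\mathbf{2}^{(P,\le)}$ is $(U\to V)(p)=1$ iff $U(q)\le V(q)$ for all $q\ge p$. This Kripke implication is what saves the prime-filter version of the theorem: to show $e(a\to b)=e(a)\to e(b)$ you do not need each point $F$ to satisfy $a\to b\in F$ iff ($a\in F\Rightarrow b\in F$); you need $a\to b\in F$ iff for \emph{every} prime filter $G\supseteq F$, $a\in G$ implies $b\in G$. The forward direction is modus ponens applied to $G$; the backward direction is exactly your filter-extension argument (if $a\to b\notin F$, the filter generated by $F\cup\{a\}$ omits $b$, since $c\wedge a\le b$ with $c\in F$ would give $c\le a\to b$ and hence $a\to b\in F$; then Zorn plus distributivity extends it to a prime $G$ omitting $b$). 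With the index poset corrected to prime filters and the implication read as the Kripke one, your Zorn--Birkhoff skeleton does yield the intended representation; as written, the two errors conspire to make the argument unrepairable.
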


Therefore, any poset proof system can be viewed as a subsystem of the proof system derived from the BHK interpretation. In essence, the BHK interpretation offers a \emph{concrete representation} for all poset proof systems.

\subsection{Proofs via the BHK interpretation}

Having investigated the easier case of provability, we are now ready to focus on the original BHK interpretation. The first point is generalizing the structure of time from a poset to a small category $\mathcal{T}$. The idea is that not only the results of the growth of time are important, but also we care about \emph{how} this expansion happens. Therefore, in the categorical formalization of time, the objects of $\mathcal{T}$ are the points on the timeline, while its morphisms, such as $f: w \to u$, explain how $w$ grows to $u$.

Now, having the category $\mathcal{T}$ to formalize the temporal structure, a proposition is simply an assignment of sets to the objects of $\mathcal{T}$, specifying the proofs of the proposition at each point $w$, together with an assignment of functions to the maps of $\mathcal{T}$, specifying how proofs change through time. The natural way to formalize such data is by variable sets over $\mathcal{T}$, i.e., the functors from $\mathcal{T}$ to $\mathbf{Set}$. Therefore, we assume that $\mathbf{Set}^{\mathcal{T}}$ is the category of propositions. Note that a deduction of the proposition $B$ from the proposition $A$ in this category is defined as a natural transformation that is simply a way to map the proofs of $A$ to the proofs of $B$ at each point $w$ of $\mathcal{T}$ in a uniform and coherent way. This is compatible with our previous discussion of defining deductions as constructions that transform proofs and not as a primitive notion.

For any proposition $A$, write $w, x \Vdash A$ to denote $x \in A(w)$, meaning that $x$ is a proof of $A$ at the point $w$. Then, the BHK interpretation simply states:
\begin{itemize}
\item
there is no $x$ such that $w, x \Vdash \bot$,
\item
$w, * \Vdash \top$, where $*$ is the unique $x$ for which $w, x \Vdash \top$,
\item
$w, \langle x, y \rangle \Vdash A \wedge B$ iff $w, x \Vdash A$ and $w, y \Vdash B$,
\item
$w, \langle i, x \rangle \Vdash A \vee B$ iff either $i=0$ and $w, x \Vdash A$ or $i=1$ and $w, x \Vdash B$, 
\item
$w, \{\alpha_{u}\}_{u \in \mathcal{T}} \Vdash A \to B$ iff for any expansion $f: w \to u$ and any $x$ such that $u, x \Vdash A$, we have $u, \alpha_u(f, x) \Vdash B$.
\end{itemize}
In the last part, the sequence 
$\{\alpha_{u}\}_{u \in \mathcal{T}}$ must be coherent. To explain, note that for any map $g: u \to v$, there are two natural candidates to map the pair of $f: w \to u$ and $x$ as a proof of $A$ at $u$ to a proof of $B$ at $v$. One is using $\alpha_u$ to provide a proof of $B$ at $u$ and then apply $B(g)$ to reach a proof of $B$ at $v$. The other is first changing $(f, x)$ to $(gf, A(g)(x))$ and then apply $\alpha_v$. The coherency simply says that these two ways are equal:
\[\begin{tikzcd}[ampersand replacement=\&]
	{\mathrm{Hom}(w, u) \times A(u)} \&\& {B(u)} \\
	\\
	{\mathrm{Hom}(w, v) \times A(v)} \&\& {B(v)}
	\arrow["{\alpha_u}", from=1-1, to=1-3]
	\arrow["{\alpha_v}"', from=3-1, to=3-3]
	\arrow["{(g \circ -) \times A(g)}"', from=1-1, to=3-1]
	\arrow["{B(g)}", from=1-3, to=3-3]
\end{tikzcd}\]
In other words, we are simply saying that the assignment $\{\alpha_{u}: \mathrm{Hom}(w, u) \times A(u) \to B(u)\}_{u \in \mathcal{T}}$ is a natural transformation.

The astute reader will soon realize that these clauses cannot fully define complex propositions such as $A \wedge B$, $A \vee B$, and $A \to B$, since any proposition is represented as a functor from $\mathcal{T}$ into $\mathbf{Set}$, and what we've defined so far only covers the object part of the functor. However, there is a canonical method to extend this assignment to maps of $\mathcal{T}$, thereby obtaining a functor for each complex proposition. By doing so, it becomes clear that the operations $\{\bot, \top, \wedge, \vee, \to\}$ correspond precisely to the $\mathrm{BC}$-structure of the category $\mathbf{Set}^{\mathcal{T}}$. Therefore, it follows that the BHK interpretation yields a proof system in the sense described in the previous sections, specifically a $\mathrm{BCC}$.

Again, there are a few key points to consider. First, the BHK interpretation focuses on defining proofs rather than deductions. This approach resolves the peculiarities associated with $\bot$ and $\vee$, as they are now characterized by their proofs rather than by their deductive capacities. Second, it's important to emphasize that in this framework, propositions and deductions are represented concretely as variable sets and natural transformations, respectively, rather than as abstract objects and morphisms in a general $\mathrm{BCC}$.

One might wonder whether there is any advantage to using this abstract setting, or if an abstract $\mathrm{BCC}$ can be represented by a category of variable sets. To address this, note that if we shift from $\mathrm{BCC}$s to $\mathrm{CCC}$s, such a representation becomes feasible. Specifically, the Yoneda embedding provides a $\mathrm{CC}$-functor that maps the category $\mathcal{C}$ into $\mathbf{Set}^{\mathcal{C}^{op}}$ in a faithful and full manner. Leveraging a significant representation theorem from topos theory \cite{Moerdijk}, we can even opt for a temporal structure that is a poset, providing a robust connection between abstract and concrete settings.
\begin{thm}(Kripke representation for small $\mathrm{CCC}$'s)
Let $\mathcal{C}$ be a small $\mathrm{CCC}$. Then, there exist a poset $(P, \leq)$ and a full and faithful $\mathrm{CC}$-functor $F: \mathcal{C} \to \mathbf{Set}^{(P, \leq)}$.
\end{thm}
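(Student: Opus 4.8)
The plan is to combine the Yoneda embedding with a concrete representation theorem for presheaf toposes. Recall from the previous section that for a small $\mathrm{CCC}$ $\mathcal{C}$, the Yoneda embedding $y \colon \mathcal{C} \to \mathbf{Set}^{\mathcal{C}^{op}}$ is a full and faithful $\mathrm{CC}$-functor. Thus the theorem is reduced to the following sub-problem: find a poset $(P, \leq)$ together with a full and faithful $\mathrm{CC}$-functor $G \colon \mathbf{Set}^{\mathcal{C}^{op}} \to \mathbf{Set}^{(P, \leq)}$; then $F = G \circ y$ does the job, since the composition of $\mathrm{CC}$-functors is again a $\mathrm{CC}$-functor and the composition of full and faithful functors is full and faithful. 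So the task is to embed an arbitrary presheaf topos fully, faithfully, and $\mathrm{CC}$-preservingly into a topos of variable sets over a \emph{poset} (equivalently, over an Alexandrov space, by the identification $\mathbf{Set}^{(P,\leq)} \simeq \mathbf{Set}^{U(P,\leq)}$ used in the Preliminaries).

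First I would set up the known topos-theoretic input, cited as \cite{Moerdijk}: every Grothendieck topos — and in particular every presheaf topos $\mathbf{Set}^{\mathcal{C}^{op}}$ with $\mathcal{C}$ small — admits a \emph{surjection} from a presheaf topos on a poset, and more precisely one can arrange an embedding going the other way, i.e. a geometric embedding $\mathbf{Set}^{\mathcal{C}^{op}} \hookrightarrow \mathbf{Set}^{(P,\leq)}$ for a suitable poset $(P,\leq)$. The underlying idea is that $\mathbf{Set}^{\mathcal{C}^{op}}$ is a sheaf subtopos of the presheaf topos over the poset of \emph{finite chains in $\mathcal{C}$} (or, depending on the formulation, over a poset built from the objects and composable strings of morphisms of $\mathcal{C}$), the comparison being essentially a nerve-type construction. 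The inverse image part of such a geometric morphism is a functor $\mathbf{Set}^{\mathcal{C}^{op}} \to \mathbf{Set}^{(P,\leq)}$ which, being the inverse image of a geometric embedding, is full and faithful; and inverse image functors of geometric morphisms preserve finite limits — in particular the terminal object and binary products — as well as all colimits, hence in particular binary coproducts and the initial object. The one structure that needs separate care is exponentials: inverse image functors are not in general $\mathrm{CC}$-functors. Here I would invoke the stronger fact that the inverse image of a geometric \emph{embedding} is moreover a \emph{cartesian closed} functor when restricted appropriately, or — more robustly — replace "embedding" with the sharper statement in \cite{Moerdijk} that gives a full faithful \emph{logical} or at least exponential-preserving functor into a poset-indexed presheaf topos. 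Concretely, once the subtopos is exhibited as a sheaf subtopos via a \emph{closed} (or otherwise suitably chosen) coverage, the associated sheafification and inclusion behave well enough that the composite $\mathbf{Set}^{\mathcal{C}^{op}} \to \mathbf{Set}^{(P,\leq)}$ preserves the $\mathrm{CC}$-structure.

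The steps, in order, would therefore be: (1) invoke $y \colon \mathcal{C} \to \mathbf{Set}^{\mathcal{C}^{op}}$ as a full faithful $\mathrm{CC}$-functor (already established); (2) construct the poset $(P, \leq)$ from $\mathcal{C}$ — I would take $P$ to be the set of finite composable strings of non-identity morphisms of $\mathcal{C}$, ordered by "face" relations, so that $\mathbf{Set}^{(P,\leq)}$ is the topos of presheaves on the subdivision/nerve of $\mathcal{C}$; (3) exhibit $\mathbf{Set}^{\mathcal{C}^{op}}$ as a subtopos of $\mathbf{Set}^{(P,\leq)}$, citing \cite{Moerdijk}, obtaining a geometric embedding whose inverse image $G$ is full and faithful; (4) check that $G$ preserves finite limits (terminal object, products) and finite colimits (initial object, coproducts) automatically; (5) verify that $G$ preserves exponentials, which is where the real content lies; (6) set $F = G \circ y$ and conclude, noting that $\mathbf{Set}^{(P,\leq)}$ is exactly a category of variable sets over the poset-shaped temporal structure $(P,\leq)$, so $F$ gives the desired Kripke representation.

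The main obstacle is step (5): preservation of exponentials is a genuinely delicate point because, as the excerpt itself stresses in the discussion of structure-preserving functors, having an isomorphism — or even a natural isomorphism — between $F([A,B])$ and $[F(A),F(B)]$ is weaker than having the \emph{canonical} comparison map be an isomorphism, and inverse image functors of geometric morphisms are typically only finite-limit-preserving, not cartesian closed. I expect the resolution to hinge on the specific features of the embedding produced by the representation theorem in \cite{Moerdijk}: either the inclusion is chosen so that the subtopos is an \emph{open} (or otherwise exponential-reflecting) subtopos, making the inverse image cartesian closed, or one uses the fact that $y\mathcal{C}$ lands inside a full subcategory of representables on which the comparison maps are controlled, and then extends by the universal property of the presheaf completion. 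A careful reader will want to see exactly which coverage on $(P,\leq)$ is used and why sheafification against it commutes with the formation of exponentials of the objects in the image of $y$; I would handle this by reducing, via density of representables and the fact that $y$ is $\mathrm{CC}$, to checking exponentials of representables, where the computation is explicit. The remaining verifications — that $P$ with the chosen order is genuinely a poset, that the nerve construction yields the claimed presheaf topos, and that fullness and faithfulness of $G$ follow from the embedding being geometric — are routine and I would delegate them to the cited literature.
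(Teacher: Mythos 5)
Your overall architecture coincides with the one the paper intends: factor $F$ as the Yoneda embedding $y: \mathcal{C} \to \mathbf{Set}^{\mathcal{C}^{op}}$ (already established to be a full and faithful $\mathrm{CC}$-functor) followed by a full and faithful $\mathrm{CC}$-functor from the presheaf category into $\mathbf{Set}^{(P,\leq)}$ supplied by the representation theorem of \cite{Moerdijk}; the paper gives no further detail. The poset you propose (finite composable strings of morphisms of $\mathcal{C}$) is also the right one.

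However, your step (3) contains a genuine error that would sink the argument if you tried to carry it out as written. You propose to exhibit $\mathbf{Set}^{\mathcal{C}^{op}}$ as a sheaf subtopos of $\mathbf{Set}^{(P,\leq)}$ via a geometric embedding. This is impossible in general: $\mathbf{Set}^{(P,\leq)}$ is a localic topos, every subtopos of a localic topos is localic, and $\mathbf{Set}^{\mathcal{C}^{op}}$ is localic only when $\mathcal{C}$ is equivalent to a poset. The paper's own running examples already refute the subtopos route: $\mathbf{Set}^{\mathbb{Z}}$ and $\mathbf{Set}^{\mathbb{N}}$ are presheaf categories on a group and a monoid and are not localic. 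There is also a terminological slip compounding the problem: for a geometric \emph{embedding} it is the \emph{direct} image that is full and faithful, not the inverse image; an inverse image functor is full and faithful precisely when the geometric morphism is \emph{connected}. What the cited theorem actually provides is the opposite-direction datum: a connected, locally connected (open) surjection $\mathbf{Set}^{(P,\leq)} \to \mathbf{Set}^{\mathcal{C}^{op}}$, induced by the "last vertex" functor from the poset of composable strings to $\mathcal{C}$. Connectedness gives that its inverse image $G: \mathbf{Set}^{\mathcal{C}^{op}} \to \mathbf{Set}^{(P,\leq)}$ is full and faithful, and local connectedness (the left adjoint to $G$ satisfying the Frobenius/projection condition) is exactly what makes $G$ preserve exponentials with the \emph{canonical} comparison map an isomorphism, resolving the difficulty you correctly isolate in step (5). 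With that correction, $F = G \circ y$ does yield the theorem; preservation of the initial object and coproducts is neither available nor needed here, since the statement only concerns the $\mathrm{CC}$-structure.
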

Now, let us return to the general setting of $\mathrm{BCC}$s. Contrary to what one might expect, it is not always possible to embed a $\mathrm{BCC}$ into a $\mathrm{BCC}$ of the form $\mathbf{Set}^{\mathcal{T}}$ via a full and faithful $\mathrm{BC}$-functor. In fact, some $\mathrm{BCC}$s cannot be mapped by any $\mathrm{BC}$-functor into categories of variable sets.

The reason for this limitation is straightforward. In the category $\mathbf{Set}^{\mathcal{T}}$, the object $1 + 1$ is not isomorphic to $1$ because $1 + 1$ has one element at each $w \in \mathcal{T}$, while $1$ always has a single element. However, in certain $\mathrm{BCC}$s, such as Heyting algebras, we have $1 \cong 1 + 1$, and this property is preserved by any $\mathrm{BC}$-functor, whether or not it is full or faithful. Philosophically, this distinction highlights that the BHK interpretation imposes a specific structure that ensures the proofs of $A$ and $B$ within $A \vee B$ are disjoint. In contrast, arbitrary $\mathrm{BCC}$s allow for a more flexible integration of proofs.
\begin{rem}
If the reader is familiar with Grothendieck toposes, which incorporate the coverage structures we omitted, they might suspect that using sheaves instead of presheaves could address this issue in the BHK interpretation. Unfortunately, this is not the case. The issue persists because, even in non-trivial Grothendieck toposes, we still have $1 + 1 \ncong 1$.
\end{rem}
As it may be clear now, the issue with the representation claim lies in the cocartesian structure of a $\mathrm{BCC}$.
To provide stronger evidence, we demonstrate that some cocartesian categories cannot be mapped into a category of variable sets via a weakly-full functor that preserves the cocartesian structure.
To prove this, we define the property $(*)$ in a cocartesian category as follows: For any diagram of the form:
\[\begin{tikzcd}
	C && A \\
	\\
	B && {A+B}
	\arrow["{i_1}"', from=3-1, to=3-3]
	\arrow["{i_0}", from=1-3, to=3-3]
	\arrow[from=1-1, to=1-3]
	\arrow[from=1-1, to=3-1]
\end{tikzcd}\]
where $i_0$ and $i_1$ are the injections, there is a map in $\mathrm{Hom}(C, 0)$. Note that the property $(*)$ is somehow stating that the coproduct is disjoint.

\begin{lem}
Let $\mathcal{C}$ and $\mathcal{D}$ be two cocartesian categories such that there is a weakly-full functor $F: \mathcal{C} \to \mathcal{D}$ preserving the initial object and the binary coproducts. Then, if $\mathcal{D}$ has the property $(*)$, then so does $\mathcal{C}$.
\end{lem}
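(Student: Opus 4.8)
The plan is to chase the relevant square from $\mathcal{C}$ into $\mathcal{D}$ along $F$, invoke property $(*)$ there, and then pull the conclusion back using weak fullness of $F$.

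First I would unwind the hypothesis of $(*)$ in $\mathcal{C}$: we are given maps $u : C \to A$ and $v : C \to B$ with $i_0^{A,B} u = i_1^{A,B} v : C \to A+B$. Applying $F$ and using functoriality yields $F(i_0^{A,B})\,F(u) = F(i_1^{A,B})\,F(v)$ in $\mathcal{D}$. The next step is to recognise this as a genuine instance of the hypothesis of $(*)$ in $\mathcal{D}$. Since $F$ preserves binary coproducts, the canonical comparison morphism $k = (F(i_0^{A,B}), F(i_1^{A,B})) : F(A)+F(B) \to F(A+B)$ is an isomorphism, and by construction of $k$ we have $F(i_0^{A,B}) = k \circ i_0^{F(A),F(B)}$ and $F(i_1^{A,B}) = k \circ i_1^{F(A),F(B)}$. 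Composing the displayed equality with $k^{-1}$ on the left therefore gives $i_0^{F(A),F(B)}\,F(u) = i_1^{F(A),F(B)}\,F(v)$, so the morphisms $F(u) : F(C) \to F(A)$ and $F(v) : F(C) \to F(B)$ form a commuting square over the coproduct injections of $F(A)+F(B)$. Property $(*)$ of $\mathcal{D}$ then produces a morphism $F(C) \to 0_{\mathcal{D}}$, so $\mathrm{Hom}_{\mathcal{D}}(F(C), 0_{\mathcal{D}}) \neq \varnothing$.

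Finally, because $F$ preserves the initial object, $F(0_{\mathcal{C}})$ is an initial object of $\mathcal{D}$, hence $\mathrm{Hom}_{\mathcal{D}}(F(C), F(0_{\mathcal{C}})) \cong \mathrm{Hom}_{\mathcal{D}}(F(C), 0_{\mathcal{D}})$ is non-empty. Weak fullness of $F$ then converts the non-emptiness of $\mathrm{Hom}_{\mathcal{D}}(F(C), F(0_{\mathcal{C}}))$ into the non-emptiness of $\mathrm{Hom}_{\mathcal{C}}(C, 0_{\mathcal{C}})$, i.e. a morphism $C \to 0_{\mathcal{C}}$ — which is precisely what $(*)$ for $\mathcal{C}$ requires for the given square.

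The argument is essentially a diagram chase, so there is no deep obstacle; the point that needs care is the second step, namely correctly identifying how $F$ transports the coproduct injections of $\mathcal{C}$ to those of $\mathcal{D}$. This is exactly where the hypothesis that $F$ \emph{preserves} (rather than merely weakly preserves) binary coproducts is used essentially: property $(*)$ only speaks of squares built from the canonical injections, so passing through the comparison isomorphism $k$ is not a cosmetic convenience but is what guarantees that we land in a legitimate instance of $(*)$ in $\mathcal{D}$.
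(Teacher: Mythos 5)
Your proposal is correct and follows essentially the same route as the paper's proof: apply $F$, use coproduct preservation to obtain an instance of $(*)$ in $\mathcal{D}$, and then pull the resulting map to the initial object back along the weakly-full functor. The only difference is that you make explicit the passage through the comparison isomorphism $k$, a detail the paper leaves implicit.
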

\begin{proof}
Let the following diagram commute in $\mathcal{C}$:
\[\begin{tikzcd}
	C && A \\
	\\
	B && {A+B}
	\arrow["{i_1}"', from=3-1, to=3-3]
	\arrow["{i_0}", from=1-3, to=3-3]
	\arrow[from=1-1, to=1-3]
	\arrow[from=1-1, to=3-1]
\end{tikzcd}\]
Then, applying the functor $F$ and using the fact that $F$ preserves the coproducts, we can use $(*)$ to have a map in $\mathrm{Hom}(F(C), 0)$. Since $F(0)$ is an initial object, there is a map in $\mathrm{Hom}(F(C), F(0))$. As $F$ is weakly-full, there is a map in $\mathrm{Hom}(C, 0)$.
\end{proof}
Now, observe that for any category of variable sets (and all Grothendieck toposes), the property $(*)$ holds, as $A + B$ is the pointwise \emph{disjoint} union of sets. However, $(*)$ does not hold for non-trivial cocartesian posets. For example, consider a poset with $a \neq 0$. In such a poset, we have:
\[\begin{tikzcd}[ampersand replacement=\&]
	a \&\& a \\
	\\
	a \&\& {a \vee a=a}
	\arrow["\leq"{marking}, draw=none, from=3-1, to=3-3]
	\arrow["\leq"{marking}, draw=none, from=1-3, to=3-3]
	\arrow["\leq"{marking}, draw=none, from=1-1, to=1-3]
	\arrow["\leq"{marking}, draw=none, from=1-1, to=3-1]
\end{tikzcd}\]
However, since $a \nleq 0$, if such posets could be mapped via a weakly-full functor that preserves the cocartesian structure into a category of the form $\mathbf{Set}^{\mathcal{T}}$, then $\mathbf{Set}^{\mathcal{T}}$, having the property $(*)$, would force the poset to also possess this property. This leads to a contradiction.
It is crucial to note that the weakly-fullness condition cannot be omitted. Without this condition, any cocartesian category $\mathcal{C}$ could be trivially mapped into the initial object $0$ of a category of variable sets using the constant functor $\Delta_0$. While this functor is cocartesian, it is not necessarily weakly-full.

\section{Classical Deductions} \label{SectionClassical}

We have examined how $\mathrm{BCC}$'s offer a categorical formalization of intuitionistic proof systems and explored their interconnections, including the syntactical systems as free categories and the BHK interpretation as a source of a special family of proof systems. Given this comprehensive framework for intuitionistic proofs, it is natural to seek a comparable formalization for classical proof systems. Unfortunately, the situation is less satisfactory.
To proceed, we begin with a useful lemma:
\begin{lem}\label{NegativeProofsAreUnique}
In a $\mathrm{BCC}$, all maps in $\mathrm{Hom}(A, 0)$ are isomorphisms, for any object $A$. As a consequence, there is at most one map in $\mathrm{Hom}(B, \neg C)$, for any objects $B$ and $C$.
\end{lem}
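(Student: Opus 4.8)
The plan is to prove the two assertions in sequence: first that every $f \in \mathrm{Hom}(A,0)$ is an isomorphism with inverse the canonical map $!_A \colon 0 \to A$ out of the initial object, and then to derive the uniqueness of maps into $\neg C$ by transporting $\mathrm{Hom}(B,\neg C)$ across the exponential adjunction. Throughout I use that in a $\mathrm{BCC}$ the constant $\bot$ is the initial object $0$ and $\neg C = C \to \bot = [C,0]$.

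For the first part, fix $f \colon A \to 0$ and write $!_A \colon 0 \to A$ for the unique morphism. One equation is immediate: $f \circ {!_A}$ lies in $\mathrm{Hom}(0,0)$, which is a singleton, so $f \circ {!_A} = id_0$. The real content is $!_A \circ f = id_A$, and this is the step where cartesian \emph{closedness} is essential (a bare bicartesian category need not have a strict initial object). The idea I would use is this: consider the morphism $\langle f, id_A\rangle \colon A \to 0 \times A$ together with the projections $p_0 \colon 0 \times A \to 0$ and $p_1 \colon 0 \times A \to A$. By the defining property of the exponential $[A,A]$ we have $\mathrm{Hom}(0 \times A, A) \cong \mathrm{Hom}(0,[A,A])$, and the right-hand set is a singleton since $0$ is initial; hence the two morphisms $p_1$ and $!_A \circ p_0$ from $0 \times A$ to $A$ must agree. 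Composing with $\langle f, id_A\rangle$ and using $p_0\langle f, id_A\rangle = f$, $p_1\langle f, id_A\rangle = id_A$ then gives $id_A = p_1\langle f, id_A\rangle = !_A \circ p_0 \circ \langle f, id_A\rangle = !_A \circ f$. So $f$ is an isomorphism, with inverse $!_A$.

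For the consequence, the exponential adjunction yields a bijection $\mathrm{Hom}(B,\neg C) = \mathrm{Hom}(B,[C,0]) \cong \mathrm{Hom}(B \times C, 0)$, so it suffices to show that $\mathrm{Hom}(B \times C, 0)$ has at most one element. If it is empty, we are done; if $g, g' \colon B \times C \to 0$ are two morphisms, then by the first part $!_{B \times C} \circ g = id_{B \times C}$, while $g' \circ {!_{B\times C}} = id_0$ by initiality, whence $g' = g' \circ id_{B\times C} = g' \circ {!_{B\times C}} \circ g = id_0 \circ g = g$.

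I expect the equation $!_A \circ f = id_A$ to be the only nontrivial point; the trivial half and the derivation of the uniqueness statement are pure bookkeeping with the universal properties of the $\mathrm{BC}$-structure. The cleanest route to that equation is the adjunction/singleton argument above, exploiting that $\mathrm{Hom}(0 \times A, A)$ collapses to $\mathrm{Hom}(0,[A,A])$; trying instead to manipulate the universal property of $0$ directly tends to go in circles, since $0$ being initial only controls maps \emph{out of} $0$.
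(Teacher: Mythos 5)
Your proof is correct and follows essentially the same route as the paper: the key equation $!_A \circ f = id_A$ is obtained in both cases from the observation that $\mathrm{Hom}(0\times A, X)\cong\mathrm{Hom}(0,[A,X])$ is a singleton (the paper phrases this as ``$0\times A$ is initial''), applied to force $p_1 = {!_A}\circ p_0$, and the uniqueness of maps into $\neg C$ is transported across the same exponential adjunction. The only difference is cosmetic organization, not substance.
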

\begin{proof}
First, observe that $0 \times A$ is an initial object. To prove, note that $\mathrm{Hom}(0 \times A, B) \cong \mathrm{Hom}(0, B^A)$ and as $0$ is initial, there is exactly one map in $\mathrm{Hom}(0, B^A)$ which also means there is exactly one map from $0 \times A$ to $B$, for any arbitrary object $B$. Hence, $0 \times A$ is initial.
Now, let $f: A \to 0$ be a map.
We claim that the map $!: 0 \to A$ is the inverse of $f: A \to 0$. As $0$ is initial, it is clear that $f!=id_0$. Therefore, we have to show that $!f=id_A$. For that purpose, consider the following diagram:
\[\begin{tikzcd}[ampersand replacement=\&]
	A \&\& {0 \times A} \&\& A \\
	\\
	A \&\& 0 \&\& A
	\arrow["{\langle f, id_A \rangle}", from=1-1, to=1-3]
	\arrow["{id_A}"', from=1-1, to=3-1]
	\arrow["{p_1}", from=1-3, to=1-5]
	\arrow["{p_0}"', from=1-3, to=3-3]
	\arrow["{id_A}", from=1-5, to=3-5]
	\arrow["f"', from=3-1, to=3-3]
	\arrow["{!}"', from=3-3, to=3-5]
\end{tikzcd}\]
The left square trivially commutes. The right square commutes as $0 \times A$ is an initial object. Therefore, the whole rectangular commutes. As the upper row is $id_A$, the lower row must also be $id_A$. Therefore, $!f=id_A$ and hence $f: A \to 0$ is an isomorphism.  
For the second part, first notice that there is at most one map in $\mathrm{Hom}(A, 0)$, for any object $A$. The reason is that if $\mathrm{Hom}(A, 0)$ is non-empty, by the first part, $A \cong 0$ which implies the initiality of $A$. Hence, $\mathrm{Hom}(A, 0)$ must be a singleton. Using this fact, as $\mathrm{Hom}(B, \neg C) \cong \mathrm{Hom}(B \times C, 0)$, it is trivial that $\mathrm{Hom}(B, \neg C)$ has at most one element.
\end{proof}

\begin{phil}
Reading proof theoretically, Lemma \ref{NegativeProofsAreUnique} simply states that the negation of a formula has at most one proof because any such proof is just a deduction of $\bot$ from the formula itself. This result is quite counter-intuitive for some logicians, as there are apparently some negative formulas with different proofs in mathematics. For these logicians, what Lemma \ref{NegativeProofsAreUnique} actually implies is that identifying $\bot$ with the initial object is too demanding and must be relaxed.
There are many ways to solve this issue.
For instance, one can identify $\bot$ as a \emph{weak initial object}, i.e., an object that has a (not necessarily unique) map to any other object. As another way, one can simply move to the setting of minimal logic, omitting $\bot$ completely from the language.
For other logicians, the uniqueness of the proofs of the negative propositions is not only intuitive and justifiable but also has some explanatory power. For these logicians, all proofs of a negative formula are equivalent because the proofs of the negative statements carry no information other than their mere truth or provability. This is a known intuition in constructive mathematics that negative formulas behave more like classical propositions where only truth plays a role and not the actual proofs. One can even say that Lemma \ref{NegativeProofsAreUnique} actually provides a formal proof for this intuition.
\end{phil}

Now, it is time to return to our task of providing a categorical formalization for classical proof systems. There are different interpretations of how classicality must be captured in the categorical setting. In the most obvious approach, one can argue that classical proofs, whatever they are, generalize intuitionistic proofs. Therefore, the category of classical deductions must at least be a $\mathrm{BCC}$. To incorporate the essence of classical proofs, we have the following four proposals:
\begin{itemize}
\item
\textbf{Logical equivalence:} There is a map from $\neg \neg A$ to $A$, for any object $A$. 
\item
\textbf{Isomorphism:} $A \cong \neg \neg A$, for any object $A$. 
\item
\textbf{Natural Isomorphism:} $A \cong \neg \neg A$, natural in $A$, i.e., there is a natural transformation $\alpha : id_{\mathcal{C}} \to \neg \neg (-)$ such that $\alpha_A: A \to \neg \neg A$ is an isomorphisms, for any object $A$.
\item
\textbf{Canonical Isomorphism:} The canonical map $\lambda_{\neg A} ev \langle p_1, p_0 \rangle$ from $A$ to $\neg \neg A$ is an isomorphism. 
\end{itemize}
Using Lemma \ref{NegativeProofsAreUnique}, there is at most one map in $\mathrm{Hom}(A, \neg\neg A)$. Thus, the last three options are all equivalent. However, the weakest form is different from the others. It only ensures the existence of a deduction of $A$ from $\neg \neg A$ for any proposition $A$. This cannot fully capture the essence of classical proofs, as classical mathematicians typically believe that the propositions $A$ and $\neg \neg A$ are \emph{identical}, not merely logically equivalent. Therefore, we are led to define a category of classical proofs as a $\mathrm{BCC}$ where $A \cong \neg \neg A$, for any object $A$.

\begin{thm}(Joyal) \label{Joyal}
A $\mathrm{BCC}$ in which $A \cong \neg \neg A$, for any object $A$ is a preordered set.
\end{thm}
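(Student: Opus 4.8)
The plan is to show that any two parallel maps $f, g : A \to B$ in such a $\mathrm{BCC}$ must be equal, which is exactly the statement that the category is a preorder. The key leverage is Lemma \ref{NegativeProofsAreUnique}: maps into $\neg C$ are unique. So the strategy is to transport the pair $(f,g)$ through the assumed isomorphism $A \cong \neg\neg A$ so that they become a pair of maps into a double negation, where uniqueness kicks in.

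\textbf{First} I would recall that $\neg\neg B = \neg(\neg B) = (\neg B \to \bot)$, so $\neg\neg B$ is of the form $\neg C$ with $C = \neg B$; hence by Lemma \ref{NegativeProofsAreUnique} (second clause), for \emph{any} object $X$ there is at most one map in $\mathrm{Hom}(X, \neg\neg B)$. \textbf{Next}, fix the isomorphism $\eta_B : B \xrightarrow{\ \sim\ } \neg\neg B$ supplied by the hypothesis (with inverse $\eta_B^{-1}$). Given $f, g : A \to B$, form $\eta_B \circ f$ and $\eta_B \circ g$, both of which lie in $\mathrm{Hom}(A, \neg\neg B)$. By the uniqueness just noted, $\eta_B \circ f = \eta_B \circ g$. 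Since $\eta_B$ is an isomorphism, it is monic, so composing with $\eta_B^{-1}$ on the left gives $f = g$. Thus $\mathrm{Hom}(A,B)$ has at most one element for every pair $A, B$, i.e.\ the $\mathrm{BCC}$ is a preordered set. This also needs the degenerate observation that the argument is vacuous when $\mathrm{Hom}(A,B)$ is empty, which is fine.

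\textbf{The one thing to be careful about} is that Lemma \ref{NegativeProofsAreUnique} is stated for $\neg C$ where $C$ is an object of the category; here $C = \neg B = (B \to \bot)$ is indeed an object (every $\mathrm{BCC}$ has $\bot$ and all exponentials), so the lemma applies directly with no extra work — this is really the only potential gap, and it closes immediately. I do not expect a genuine obstacle: the proof is essentially a two-line diagram chase once one notices that $\neg\neg B$ is itself a negation. One could alternatively phrase it as: the functor $\neg\neg(-)$ lands in the full subcategory of ``negative'' objects, which is a preorder by Lemma \ref{NegativeProofsAreUnique}, and an equivalence (here even an iso on hom-sets via $\eta$) reflects the preorder property back to $\mathcal{C}$ — but the direct computation above is cleaner and is what I would write.
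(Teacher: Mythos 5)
Your proof is correct and follows essentially the same route as the paper: apply the second clause of Lemma \ref{NegativeProofsAreUnique} to see that $\mathrm{Hom}(A, \neg\neg B)$ has at most one element, then use the isomorphism $B \cong \neg\neg B$ to transfer this to $\mathrm{Hom}(A, B)$. The paper states this transfer in one line; you have merely spelled out the composition-with-the-isomorphism step explicitly.
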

\begin{proof}
By lemma \ref{NegativeProofsAreUnique}, there is at most one map in $\mathrm{Hom}(A, \neg \neg B)$. As $B \cong \neg \neg B$, there is at most one map in $\mathrm{Hom}(A, B)$, for any two objects $A$ and $B$. Hence, the category is a preordered set.
\end{proof}

Similar to how we interpreted negations, Theorem \ref{Joyal} can be read in two distinct ways. The first interpretation suggests that there are no non-trivial classical deductions, reflecting the idea that classical mathematics is grounded in truth or, equivalently, in deductibility where deductions have no substantive role beyond their mere existence. In this view, classical proofs are seen as fundamentally trivial or non-informative.

Conversely, the second interpretation offers a different perspective: while classical proofs may not be as informative as constructive proofs, they still contain some amount of information and are not trivially unique. Thus, Theorem \ref{Joyal} demonstrates that the combination of the $\mathrm{BC}$-structure and the isomorphism between $A$ and $\neg \neg A$ might be too restrictive. Consequently, alternative formalizations are needed to capture classical proof systems. Finding a satisfactory formalization has been a longstanding open problem with many proposed solutions.

There are primarily two approaches to developing such alternatives. The first approach maintains the $\mathrm{CC}$-structure but relaxes the cocartesian structure. The second approach acknowledges the importance of preserving the symmetry between conjunctive and disjunctive operations by weakening both cartesian and cocartesian structures while ensuring they remain dual to each other. Both of these approaches have many instances. To convey their taste, we will present one representative of each of these approaches: \emph{control categories} and \emph{classical categories}.

\subsection{Control Categories}

The first approach involves using the categorical reading of constructive proofs combined with a way to reduce classical deductions to the constructive ones. 
There are different ways to implement such a combination \cite{Hofmann,hofmannI,parigot1992lambdamu,ong,pym2001semantics,Selinger}. Here, we focus on a route leading to one specific example of \emph{control categories}. In this case, the reduction of classical deductions to the constructive ones is via \emph{negative translation}. Therefore, we must first generalize this translation from logics, i.e., the preorders, to categories. Then, the translation helps to identify classical deductions within constructive deductions and aims to axiomatize the structure of these deductions universally.

To present a generalization of the negative translation, recall that a \emph{boolean algebra} is a Heyting algebra $\mathcal{H}$ where $a = \neg \neg a$ for any $a \in \mathcal{H}$. Then:

\begin{thm}
Let $\mathcal{H}$ be an almost Heyting algebra and $r \in \mathcal{H}$ be an arbitrary element. Then, the subset $\mathcal{H}_r=\{x \to r \mid x \in \mathcal{H}\}$ of $\mathcal{H}$ together with the order of $\mathcal{H}$ is a boolean algebra.
\end{thm}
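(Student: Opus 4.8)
The plan is to show that $\mathcal{H}_r$, equipped with the induced order, is a bounded distributive lattice in which every element has a complement, which is one of the standard characterizations of a Boolean algebra. The key structural fact I would establish first is a \emph{retraction}: the map $j: \mathcal{H} \to \mathcal{H}$ given by $j(x) = (x \to r) \to r$ has image exactly $\mathcal{H}_r$ (one inclusion is immediate; for the other, $(x\to r)\to r = ((x\to r)\to r)$ shows any element $x \to r$ is fixed by the idempotent $x \mapsto (x \to r)$, hence $\mathcal{H}_r$ is the set of fixed points of the closure operator $x \mapsto (x\to r)\to r$). Concretely: $x \to r \le (x\to r)\to r$ always, and conversely $((x\to r)\to r)\to r = x \to r$ by the general Heyting identity $\neg\neg\neg = \neg$ applied with $r$ in the role of $\bot$ — this identity holds in any (almost) Heyting algebra since it only uses $\{\wedge,\to\}$. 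So $\mathcal{H}_r$ is precisely the set of $j$-closed elements.

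Next I would identify the operations. The top element of $\mathcal{H}_r$ is $r$ itself (which equals $\bot \to r$, using that $\mathcal{H}$ is an \emph{almost} Heyting algebra so $\bot = 0$ exists), and it is the greatest element of $\mathcal{H}_r$ since every $x \to r \le r$. The bottom element is $j(1) = (1 \to r) \to r = r \to r = 1$... wait — more carefully, $j(\bot) = (\bot \to r) \to r$; since $\bot \to r$ is the top of $\mathcal{H}$, $j(\bot) = \text{(top)} \to r = r$. Let me instead record: the least element of $\mathcal{H}_r$ is $j(0) $, where $0$ is the initial object of the almost Heyting algebra; one checks $j(0) = (0 \to r)\to r = 1 \to r = r$. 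Hmm, that would make top $=$ bottom. The resolution is that the correct bottom is $r$ and the correct \emph{top} is $j(1) = (1\to r)\to r$; since $1 \to r = r$ this is $r \to r = 1$. So $\mathcal{H}_r$ has least element $r$ and greatest element $1$. Meets in $\mathcal{H}_r$ are inherited from $\mathcal{H}$ (a meet of $j$-closed elements is $j$-closed, since $j$ is a meet-preserving closure: $(x\to r)\wedge(y\to r) = (x\vee y)\to r$, using that $\mathcal{H}$ has joins). Joins in $\mathcal{H}_r$ are computed as $a \sqcup b = j(a \vee b) = (a\vee b \to r)\to r$; the Heyting implication $a \to b$ for $a,b \in \mathcal{H}_r$ already lands in $\mathcal{H}_r$ since $a \to b = a \to ((y\to r)) $... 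I would verify $x \to (y \to r) = (x \wedge y) \to r \in \mathcal{H}_r$, so $\mathcal{H}_r$ is closed under the ambient Heyting implication and is itself a Heyting algebra with these operations.

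Finally, the Boolean property: I must show $a = (a \to r) \to r$ for every $a \in \mathcal{H}_r$, i.e. that negation-relative-to-$r$ is involutive on $\mathcal{H}_r$. But this is exactly the statement that every element of $\mathcal{H}_r$ is $j$-closed, which was the content of the first step ($a = x \to r$ implies $j(a) = a$ by the triple-negation identity). Hence in $\mathcal{H}_r$, writing $\neg_r a := a \to r$, we have $\neg_r \neg_r a = a$, and then $a \vee_{\mathcal{H}_r} \neg_r a = j(a \vee (a\to r))$; since $a \vee (a\to r) \to r = (a \to r)\wedge((a\to r)\to r) = (a\to r) \wedge a \le$ ... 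I would compute this equals the top $1$ by a short calculation (using $a \wedge (a \to r) \le r$, so its $\neg_r$ is $\ge \neg_r r = 1$), giving the complement law; distributivity of $\wedge$ over $\sqcup$ follows from distributivity in $\mathcal{H}$ together with the fact that $j$ preserves finite meets.

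\textbf{Anticipated main obstacle.} The delicate point is bookkeeping around the bottom element and the role of the \emph{almost} Heyting hypothesis: one needs the initial object $0$ (equivalently $\bot$, equivalently the empty join) to guarantee that $\mathcal{H}_r$ has a least element and that the join $\sqcup$ is well-behaved on the empty family; a mere $\mathrm{CCC}$-poset (no $0$) would only give a Boolean algebra without a guaranteed bottom. The other place requiring care is verifying that $\sqcup$, defined via $j$, genuinely is the join \emph{in $\mathcal{H}_r$} (not in $\mathcal{H}$) and that the resulting structure satisfies the lattice absorption laws — this is where the closure-operator formalism pays off, since the $j$-closed elements of any Heyting algebra automatically form a Heyting algebra under $(\wedge, x\mapsto j(x\vee y), \to)$, and the extra input here is only the algebraic identity $\neg_r\neg_r\neg_r = \neg_r$ that upgrades it to Boolean. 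I would isolate that identity as the crux and prove it by pure $\{\wedge,\to\}$-calculation valid in every Heyting algebra.
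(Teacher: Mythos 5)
Your core argument is the same as the paper's: exhibit the induced Heyting structure on $\mathcal{H}_r$ (meets inherited, since $(x\to r)\wedge(y\to r)=(x\vee y)\to r$ uses only the binary joins of $\mathcal{H}$; closure under implication via currying, $x\to(y\to r)=(x\wedge y)\to r$; joins of $j$-closed elements as the $j$-closure of the ambient join), and then reduce Booleanness to the triple-negation identity $((x\to r)\to r)\to r=x\to r$, which says exactly that $\neg_r\neg_r=\mathrm{id}$ on $\mathcal{H}_r$. That is the paper's proof, phrased in the (standard and perfectly valid) language of a nucleus and its fixed points. Since the paper \emph{defines} a Boolean algebra as a Heyting algebra satisfying $a=\neg\neg a$, your additional verification of the complement law $a\sqcup\neg_r a=1$ is not even required.

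There is, however, one concrete error: you have the hypothesis backwards. In this paper an \emph{almost} Heyting algebra is a poset $\mathrm{ABC}$, i.e.\ a cartesian closed poset with binary joins but \emph{without} an initial object; the word ``almost'' signals precisely that $\bot$ need not exist. So the parenthetical ``using that $\mathcal{H}$ is an almost Heyting algebra so $\bot=0$ exists,'' the identification of elements of $\mathcal{H}_r$ via $0\to r$, and the claim in your final paragraph that ``one needs the initial object $0$ \dots to guarantee that $\mathcal{H}_r$ has a least element'' all appeal to structure the hypothesis does not provide — and they miss the point of stating the theorem for almost Heyting algebras, namely that the construction manufactures a bounded Boolean algebra out of an algebra that may have no bottom at all. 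The repair needs no $0$: the least element of $\mathcal{H}_r$ is $r=1\to r$, since $x\wedge r\le r$ gives $r\le x\to r$ for every $x$ (note your inequality ``every $x\to r\le r$'' points the wrong way), and the greatest element is $1=r\to r$. With that correction, the rest of your closure-operator argument goes through unchanged.
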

\begin{proof}
First, note that the following data actually describes the $\mathrm{BC}$-structure of $\mathcal{H}_r$:
\begin{itemize}
   \item 
$0_r=(1 \to r)=r$ and $1_r=(0 \to r)=1$,
    \item 
$(x \to r) \wedge_r (y \to r)=(x \vee y \to r)$,
    \item 
$(x \to r) \to_r (y \to r)=((x \to r) \wedge y) \to r$,
    \item
$(x \to r) \vee_r (y \to r)=(x \wedge y \to r)$.
\end{itemize}
To prove that it is boolean, note that $\neg_r (x \to r)=(x \to r) \to r$ and hence $\neg_r \neg_r (x \to r)=[((x \to r) \to r)] \to r$. It is easy to prove that the latter is actually $x \to r$. Therefore, $\neg_r\neg_r (x \to r)= (x \to r)$.
\end{proof}

One can lift the above technique from the posetal $\mathrm{ABC}$'s, i.e., the almost Heyting algebras to all $\mathrm{ABC}$'s \cite{Hofmann,Selinger}. Let $\mathcal{C}$ be an $\mathrm{ABC}$ and $R$ be an arbitrary object such that the canonical map $\lambda_{R^A} ev \langle p_1, p_0 \rangle: A \to R^{R^A}$ is monic, i.e., for any two maps $f, g: X \to A$, if $\lambda_{R^A} ev \langle p_1, p_0 \rangle f=\lambda_{R^A} ev \langle p_1, p_0 \rangle g$, then $f=g$. Consider the category $\mathcal{C}_R$ consisting of all the objects of the form $R^A$ in $\mathcal{C}$ as its objects and \emph{all} maps between these objects as its morphisms. The first observation is that the category $\mathcal{C}_R$ is actually a $\mathrm{CCC}$. Its terminal object is simply $1_{\mathcal{C}}^R$, which is also a terminal object in $\mathcal{C}$. For the product of $R^A$ and $R^B$, first consider the maps: 
\[\begin{tikzcd}[ampersand replacement=\&]
	{R^{A+B} \times A} \&\& {R^{A+B} \times (A+B)} \&\& R \\
	{R^{A+B} \times B} \&\& {R^{A+B} \times (A+B)} \&\& R
	\arrow["{id_{R^{A+B}} \times i_0}"', from=1-1, to=1-3]
	\arrow["ev"', from=1-3, to=1-5]
	\arrow["{id_{R^{A+B}} \times i_1}"', from=2-1, to=2-3]
	\arrow["ev"', from=2-3, to=2-5]
\end{tikzcd}\]
Then, it is enough to set $R^{A+B}$ as the product together with $\lambda_A ev(id_{R^{A+B}} \times i_0): R^{A+B} \to R^A$ and $\lambda_B ev(id_{R^{A+B}} \times i_1): R^{A+B} \to R^B$ as the projections. For the exponentiation of $R^A$ and $R^B$, first consider the map:
\[\begin{tikzcd}[ampersand replacement=\&]
	{R^{R^A \times B} \times R^A \times B} \&\& R
	\arrow["ev"', from=1-1, to=1-3]
\end{tikzcd}\]
Then, it is enough to set  $R^{R^A \times B}$ as the exponential object together with the evaluation map $\lambda_{B} ev: R^{R^A \times B} \times R^A \to R^B$. This completes the presentation of the $\mathrm{CC}$-structure of the category $\mathcal{C}_R$.

For the cocartesian structure, unlike the posetal case, the object $R$ is not necessarily the initial object of $\mathcal{C}_R$. For instance, set $\mathcal{C}=\mathbf{Set}$ and $R=\{0, 1\}$. Then, $\mathcal{C}_R$ is the category of all powersets together with all the functions between them. It is easy to see that $\{0, 1\}$ is not an initial object in $\mathcal{C}_R$. In fact, this category has no initial object. For the coproducts, the situation is worse. Our candidate for the coproduct, i.e., the operation mapping $R^A$ and $R^B$ to $R^{A \times B}$ is defined on the objects. However, there is no natural way to extend this assignment to the morphisms and gain a functor on $\mathcal{C}_R \times \mathcal{C}_R$.
In fact, it is possible to prove that this operation on the objects gives rise to a functor iff the category $\mathcal{C}_R$ is a preordered set \cite{Selinger}. 
Having said that, it is still possible to sacrifice the cocartesian structure and interpret $\bot$ and the disjunction as $R$ and the operation $(R^A, R^B) \mapsto R^{A \times B}$, respectively. Of course, the lack of the functorial behavior of the latter blocks our disjunction to operate on the deductions properly.  
Finally, for the classicality, one can see that $\mathcal{C}_R$ is classical in the sense of the logical equivalence we had above, i.e., there is a map $f: \neg \neg A \to A$, for any object $A$. In fact, a stronger property holds, i.e., the map $f: \neg \neg A \to A$ is the left inverse of the canonical map $\lambda_{\neg A} ev \langle p_1, p_0 \rangle$. One can axiomatize $\mathcal{C}_R$'s with their explained structures as the classical proof systems \cite{Selinger}. Those categories are called \emph{control categories}. 

\subsection{A General Categorical Interpretation}

We observed that employing the $\mathrm{BC}$-structure, along with the assumption $A \cong \neg \neg A$, leads to a complete collapse of classical proofs into mere provability. Since $A \cong \neg \neg A$ lies at the core of classical reasoning, one way to prevent this collapse is to weaken the $\mathrm{BC}$-structure. This requires revisiting our prior discussions to identify a weaker alternative for interpreting the connectives $\{\top, \bot, \wedge, \vee, \to\}$.

As mentioned earlier, in the second approach to handling the collapse, we aim to preserve the full duality between the conjunctive structure $\{\top, \wedge\}$ and the disjunctive structure $\{\bot, \vee\}$. To simplify our presentation, we therefore restrict ourselves to the complete and economical fragment $\{\top, \wedge, \to, \bot\}$, defining the remaining connectives by duality.\footnote{Alternative fragments, such as $\{\top, \wedge, \neg\}$ or $\{\top, \bot, \wedge, \vee, \neg\}$, are also viable and suited to specific purposes. For instance, the latter simplifies computation and facilitates the study of conjunction and disjunction, even in non-classical settings \cite{Seely,fuhrmann2006order,fuhrmann2007categorical}.}

To proceed, we begin with a category \(\mathcal{C}\). We interpret \(\{\top, \bot\}\) as objects of \(\mathcal{C}\), \(\wedge\) as a functor from \(\mathcal{C} \times \mathcal{C}\) to \(\mathcal{C}\), and \(\to\) as a functor from \(\mathcal{C}^{op} \times \mathcal{C}\) to \(\mathcal{C}\), subject to properties that reflect the expected behavior of these connectives. Naturally, we cannot approach this blindly; an intuitive understanding of these connectives is essential to guide the formulation of a coherent family of axioms. In the next subsubsection, we will outline this intuitive framework and the fundamental properties it suggests.  
Such an interpretation and its resulting formalism are not unique \cite{Lutz, hyland2002proof, hyland2004abstract, HylandClassical, dosen2013cut, DosenBook}. Here, we follow a path that leads to the specific example of \emph{classical categories}, introduced by Fuhrmann and Pym in \cite{fuhrmann2004geometry, fuhrmann2006order, fuhrmann2007categorical}. Classical categories elegantly capture the inherent categorical structure of classical deductions and offer an insightful solution to the longstanding open problem of providing a natural categorical semantics for classical logic that does not reduce merely to provability. Moreover, the interpretation is \emph{sound and complete} with respect to a natural form of classical proofs, known as $\mathbf{LK}$-proofs, making classical categories a solid and well-founded framework for categorical interpretations of classical deductions.

\subsubsection{$*$-Autonomous Categories}

Let us begin with an intuitive understanding of the conjunctive structure. Previously, we characterized the conjunctive structure as the product structure by interpreting the deductions of $A \wedge B$ from $C$ in terms of pairs of deductions of $A$ and $B$ from $C$. To develop a more general interpretation of the conjunctive structure, we must shift our focus from the \emph{deductions of} a conjunction to the \emph{deductions from} it. In other words, we must think of a conjunction as an \emph{assumption} rather than a \emph{conclusion}.

Following this intuition, $\top$ should be understood as the ``empty set of assumptions,'' since adding $\top$ to the assumptions has no effect on deductions. Similarly, $\wedge$ should be seen as a ``combining the assumptions'' operation, as assuming $A \wedge B$ has the same effect as assuming both $A$ and $B$. Guided by this intuition, several natural properties of $\top$ and $\wedge$ emerge.

For example, since ``combining the assumptions" is inherently symmetric and associative, we expect $A \wedge B \cong B \wedge A$ and $(A \wedge B) \wedge C \cong A \wedge (B \wedge C)$. Additionally, since adding nothing to $A$ should leave it unchanged, we anticipate $\top \wedge A \cong A \wedge \top \cong A$. This generalization of the monoid structure to categories, where we must also account for morphisms, is called a \emph{symmetric monoidal structure}. Hence, the conjunctive structure must at least be symmetric monoidal.

However, to define a symmetric monoidal structure formally, the above isomorphisms alone are insufficient. The reason is that, although these isomorphisms are \emph{not} equalities, we expect them to behave \emph{as if} they were. This means that any two ways of composing these isomorphisms to move from one object to another must yield the same result.\footnote{This issue is known as the \emph{coherency issue}, and its precise formal definition is rather subtle, varying depending on the context. Here, we use it only in an informal sense.} For instance, let $\sigma_{A, B}$ denote the isomorphism from $A \wedge B$ to $B \wedge A$. There are two distinct ways to derive $A \wedge B$ from itself: one is by using the identity map, and the other is by composing $\sigma_{B, A}$ with $\sigma_{A, B}$. This situation can be visualized in the following diagram:
\[\begin{tikzcd}
	{A \wedge B} && {B \wedge A} \\
	\\
	& {A \wedge B}
	\arrow["{\sigma_{A, B}}", from=1-1, to=1-3]
	\arrow["{id_{A \wedge B}}"', from=1-1, to=3-2]
	\arrow["{\sigma_{B, A}}", from=1-3, to=3-2]
\end{tikzcd}\]
Naturally, we expect this diagram to commute, as $\sigma_{A, B}$ and $\sigma_{B, A}$ are expected to act as identities.  

Given this discussion, completing the axiomatization of a symmetric monoidal structure requires enforcing all equalities between morphisms in the category that are induced by the above isomorphisms. Since there are numerous ways to compose these isomorphisms, one might find it challenging—or even seemingly impossible—to provide a clean and comprehensive axiomatization.
Fortunately, a few fundamental equalities, listed in the next definition, exist that, once established, suffice to derive all other required identities within the structure \cite{CWM,99}.

\begin{dfn}
By a \emph{symmetric monoidal structure} on $\mathcal{C}$, we mean a tuple $(\otimes, I, \alpha, \lambda, \rho, \sigma)$, where:
\begin{itemize}
    \item[$\bullet$] 
$\otimes: \mathcal{C} \times \mathcal{C} \to \mathcal{C}$ is a functor called the \emph{tensor product} and $I \in \mathcal{C}$ is an object called the \emph{unit object},
    \item[$\bullet$] 
$\alpha_{A, B, C}: (A \otimes B) \otimes C \to A \otimes (B \otimes C)$ is a natural isomorphism called the \emph{associator},
    \item[$\bullet$] 
$\lambda_A: I \otimes A \to A$ is a natural isomorphism called the \emph{left unitor},
\item[$\bullet$]
$\rho_A: A \otimes I \to A$ is a natural isomorphism called the \emph{right unitor}, and
\item[$\bullet$]
$\sigma_{A, B}: A \otimes B \to B \otimes A$ is a natural isomorphism called the \emph{braiding} 
\end{itemize}
such that the following diagrams commute:
\begin{itemize}
    \item[$\bullet$] \emph{triangle identity:}
\[\begin{tikzcd}
	{(A \otimes I) \otimes B} && {A \otimes (I \otimes B)} \\
	\\
	& {A \otimes B}
	\arrow["{\alpha_{A,I,B}}", from=1-1, to=1-3]
	\arrow["{\rho_A \otimes id_B}"', from=1-1, to=3-2]
	\arrow["{id_A \otimes \lambda_B}", from=1-3, to=3-2]
\end{tikzcd}\]
\item[$\bullet$] \emph{pentagon identity:}
\[\begin{tikzcd}
	& {(A \otimes B) \otimes (C \otimes D)} \\
	\\
	{((A \otimes B) \otimes C) \otimes D} && {A \otimes (B \otimes (C \otimes D))} \\
	\\
	{(A \otimes (B \otimes C)) \otimes D} && {A \otimes ((B \otimes C) \otimes D)}
	\arrow["{\alpha_{A, B,C \otimes D}}", from=1-2, to=3-3]
	\arrow["{\alpha_{A \otimes B,C,D}}", from=3-1, to=1-2]
	\arrow["{\alpha_{A,B,C} \otimes id_D}"', from=3-1, to=5-1]
	\arrow["{\alpha_{A,B\otimes C,D} }"', from=5-1, to=5-3]
	\arrow["{id_A \otimes \alpha_{B,C,D}}"', from=5-3, to=3-3]
\end{tikzcd}\]
\item[$\bullet$] \emph{hexagon identity:}
\[\begin{tikzcd}
	{(A \otimes B) \otimes C} && {A \otimes (B \otimes C)} && {(B \otimes C) \otimes A} \\
	\\
	{(B \otimes A) \otimes C} && {B \otimes (A \otimes C)} && {B \otimes (C \otimes A)}
	\arrow["{\alpha_{A, B, C}}", from=1-1, to=1-3]
	\arrow["{\sigma_{A, B} \otimes id_C}"', from=1-1, to=3-1]
	\arrow["{\sigma_{A, B\otimes C}}", from=1-3, to=1-5]
	\arrow["{\alpha_{B,C,A}}", from=1-5, to=3-5]
	\arrow["{\alpha_{B,A, C} }"', from=3-1, to=3-3]
	\arrow["{id_B \otimes \sigma_{A, C}}"', from=3-3, to=3-5]
\end{tikzcd}\]
\end{itemize}
and $\sigma_{A, B}\sigma_{B,A}=id_{A \otimes B}$. By a slight abuse of notation, we occasionally denote the symmetric monoidal structure $(\otimes, I, \alpha, \lambda,  \rho, \sigma)$ simply by $(\otimes, I)$. Moreover, we call the tuple $(\mathcal{C}, \otimes ,I)$ a \emph{symmetric monoidal category}.
\end{dfn}

\begin{phil}
The triangle identity states that the two canonical ways of absorbing the empty set between two assumptions are equal. The pentagon identity asserts that all canonical ways of rearranging parentheses—from left-to-right to right-to-left are equal. Lastly, the hexagon identity ensures that repeated swapping is equivalent to swapping with the combined set of assumptions.
\end{phil}

This completes our first attempt at axiomatizing the conjunctive structure. However, some properties of the conjunctive structure remain unaddressed.
Before proceeding further, let us incorporate the necessary structure for $\to$ and $\bot$. For implication, we follow the earlier idea: deductions of $A \to B$ from $C$ correspond bijectively and uniformly to deductions of $B$ from $C \wedge A$. Formally, this means:

\begin{dfn}
Let $(\otimes, I)$ be a symmetric monoidal structure on $\mathcal{C}$. The tuple $(\otimes, I, [-, -], \gamma)$ is called a \emph{symmetric monoidal closed} structure if $[-,-]: \mathcal{C}^{op} \times \mathcal{C} \to \mathcal{C}$ is a functor and 
\[
\gamma_{A, B, C}: \mathrm{Hom}_{\mathcal{C}}(C \otimes A, B) \to \mathrm{Hom}_{\mathcal{C}}(C, [A, B])
\]
is a natural isomorphism. By abuse of notation, we often omit $\gamma$ and refer to the symmetric monoidal closed structure simply as $(\otimes, I, [-, -])$.  Moreover, we call the tuple $(\mathcal{C}, \otimes ,I, [-, -])$ a \emph{symmetric monoidal closed category}.
\end{dfn}

\begin{exam}
Let $\mathcal{C}$ be a cartesian category. Then, by interpreting $\otimes$ as the binary product, $I$ as the terminal object, and taking $\alpha$, $\lambda$, $\rho$, and $\sigma$ in their canonical forms, we obtain a symmetric monoidal structure on $\mathcal{C}$. If $\mathcal{C}$ is also a $\mathrm{CCC}$, defining $[-, -]: \mathcal{C}^{op} \times \mathcal{C} \to \mathcal{C}$ as exponentiation with the canonical choice of $\gamma$ leads to a symmetric monoidal closed structure on $\mathcal{C}$.
\end{exam}

\begin{exam}\label{Monoid}
Let $(M, \cdot, e)$ be a commutative monoid, and let $\mathcal{C}$ be the discrete category over the set $M$. Define $(\otimes, I)$ as $(\cdot, e)$ and set all the required natural isomorphisms as identity maps. It is straightforward to verify that $(\otimes, I)$ forms a symmetric monoidal structure on $\mathcal{C}$. If we also assume that $(M, \cdot, e)$ is a group, by defining $[a, b] = a^{-1} b$, we can make the symmetric monoidal structure closed.
\end{exam}

\begin{phil}
By Example \ref{Monoid}, a symmetric monoidal structure is a generalization of the monoid structure, extending it from small discrete categories to arbitrary categories.
\end{phil}

\begin{exam}\label{RelAsSMC}
Let $\mathcal{C} = \mathbf{Rel}$ be the category with sets as objects and relations $R \subseteq A \times B$ as morphisms $R: A \to B$. The identity morphism on $A$ is given by the identity relation $\{(a, a) \mid a \in A\}$, and composition is the usual composition of relations.
Define $\otimes: \mathbf{Rel} \times \mathbf{Rel} \to \mathbf{Rel}$ as the cartesian product functor, where $A \otimes B = A \times B$ for sets $A$ and $B$, and for relations $R \subseteq A \times B$ and $S \subseteq C \times D$, we define $R \otimes S \subseteq (A \times C) \times (B \times D)$ by
\[
((a, c), (b, d)) \in R \otimes S \iff (a, b) \in R \text{ and } (c, d) \in S.
\]  
Let $I = \{*\}$. With the canonical choices for $\alpha$, $\lambda$, $\rho$, and $\sigma$, this defines a symmetric monoidal structure on $\mathbf{Rel}$. It is worth noting that this cartesian product functor is not the binary product of $\mathbf{Rel}$. The product and the coproduct functors are both the disjoint union functor on $\mathbf{Rel}$.

Next, define the functor $(-)^{\bot}: \mathbf{Rel}^{op} \to \mathbf{Rel}$ by:
\[
A^{\bot} = A, \quad R^{\bot} = R^{op},
\]  
where $R^{op} \subseteq B \times A$ is the converse relation, i.e., $(b, a) \in R^{op}$ if and only if $(a, b) \in R$, for any $a \in A$ and $b \in B$.
Now, define $[A, B] = A^{\bot} \otimes B$ with the canonical choice of the natural isomorphism $\gamma$. It is straightforward to verify that this setup satisfies the axioms of a symmetric monoidal closed structure. Throughout this section, whenever we refer to $\mathbf{Rel}$, we always mean this category equipped with this symmetric monoidal closed structure.
\end{exam}

\begin{exam}
Let $K$ be a fixed field, and let $\mathcal{C} = \mathbf{Vect}_K$ be the category of all $K$-vector spaces with $K$-linear maps as morphisms. Define $\otimes$ as the usual tensor product functor and set $I = K$. With the canonical choices for $\alpha$, $\lambda$, $\rho$, and $\sigma$, this gives a symmetric monoidal structure on $\mathbf{Vect}_K$.

Next, define the functor $[-, -]: \mathbf{Vect}_K^{op} \times \mathbf{Vect}_K \to \mathbf{Vect}_K$ by $[V, W] = \mathrm{Hom}_K(V, W)$ as the $K$-vector space of all $K$-linear maps from $V$ to $W$.
With the canonical choice for the natural isomorphism $\gamma$, we obtain a symmetric monoidal closed structure on $\mathbf{Vect}_K$. Restricting this structure to finite-dimensional $K$-vector spaces also yields a symmetric monoidal closed category, denoted by $\mathbf{FdVect}_K$.
Throughout this section, whenever we refer to $\mathbf{Vect}_K$ or $\mathbf{FdVect}_K$, we always mean these categories equipped with the symmetric monoidal closed structure described above.
\end{exam}

Finally, to formalize $\bot$, we need an object such that the functor  
$
\neg : \mathcal{C}^{op} \to \mathcal{C}
$  
defined by $\neg A = [A, \bot]$ becomes an involution. To explain, note that for any object $B$, we have the functor  
$
\neg_B: \mathcal{C}^{op} \to \mathcal{C}
$  
defined by $\neg_B A = [A, B]$. Using the natural isomorphism  
\[
\gamma_{X, Y, Z}: \mathrm{Hom}(X \otimes Y, Z) \to \mathrm{Hom}(X, [Y, Z])
\]  
and the identity map $\mathrm{id}_{[Y, Z]}: [Y, Z] \to [Y, Z]$, there is a canonical map  
$
[Y, Z] \otimes Y \to Z.
$  
Therefore, we obtain a map  
$
\neg_B A \otimes A \to B.
$  
Using the symmetry of $\otimes$, we get a map  
$
A \otimes \neg_B A \to B.
$ 
Again, applying $\gamma$, we derive a canonical map  
$
A \to \neg_B \neg_B A$. An object $B$ is called \emph{dualizing} if the canonical map $A \to \neg_B\neg_B A$ is an isomorphism, for any object $A$. To reflect the classical behavior of negation, we need to assume that $\bot$ is dualizing:

\begin{dfn}
Let $(\otimes, I, [-, -])$ be a symmetric monoidal closed structure on $\mathcal{C}$. The tuple $(\otimes, I, [-, -], \bot)$ is called a \emph{$*$-autonomous} structure on $\mathcal{C}$ if $\bot$ is a dualizing object of $\mathcal{C}$, i.e., the canonical map $A \to \neg_{\bot}\neg_{\bot} A$ is an isomorphism for any object $A$ of $\mathcal{C}$. We denote $\neg_{\bot}$ by $\neg$. A $*$-autonomous structure is called \emph{compact closed} if $[A, B] \cong \neg A \otimes B$, naturally in $A$ and $B$.
We call the tuple $(\mathcal{C}, \otimes, I, [-, -], \bot)$ a \emph{$*$-autonomous category} (resp. \emph{compact closed category}) if $(\otimes, I, [-, -], \bot)$ is a \emph{$*$-autonomous structure} (resp. \emph{compact closed structure}) on $\mathcal{C}$.
\end{dfn}

\begin{exam}
Using Theorem \ref{Joyal}, a $\mathrm{BCC}$ with its $\mathrm{CC}$-structure as the symmetric monoidal closed structure and $\bot$ as its initial object is $*$-autonomous if and only if it is a boolean algebra. Note that in a non-trivial boolean algebra, there is no reason to have $a \to b = \neg a \wedge b$. Because, this equality would imply that $1 = (0 \to 0) = (\neg 0 \wedge 0) = 0$, which is a contradiction. Therefore, non-trivial boolean algebras provide an example of a $*$-autonomous category that is not compact closed.

The category $\mathbf{Rel}$ is compact closed if we define $\bot = \{*\}$. It is $*$-autonomous because the canonical map $A \to \neg \neg A$ is the identity. It is compact closed because $[A, B]$ is defined as $A^{\bot} \otimes B$, and $\neg A \cong A^{\bot}$ by definition.
A similar result holds for $\mathbf{FdVect}_K$, if we set $\bot = K$. Since $V$ is finite-dimensional and $\neg V = \mathrm{Hom}_K(V, K) = V^*$, the canonical map 
\[
V \to \mathrm{Hom}_K(\mathrm{Hom}_K(V, K), K) = V^{**}
\]
is an isomorphism. Hence, $\mathbf{FdVect}_K$ is $*$-autonomous. It is also compact closed because $\mathrm{Hom}_K(V, W) \cong V^* \otimes W$, naturally in $V$ and $W$.
\end{exam}

\begin{rem}
Here are some remarks. First, $*$-autonomous categories are used to provide a natural categorical semantics for a fragment of classical linear logic \cite{barII,bar}. Adding more structure, we will use them to interpret deductions in classical logic as will be explained in the rest of this subsection. However, it is worth mentioning that as a special case of $*$-autonomous categories, compact closed categories (with additional structures) have also been proposed to formalize classical deductions \cite{hyland2004abstract,hyland2002proof}.
Second, compact closed categories, when equipped with additional structures, can be used to formalize some fundamental and interesting aspects of quantum mechanics \cite{abramsky2004categorical,abramsky2009categorical,heunenvicary}. To have an intuition why, recall that the usual formalization of quantum mechanics is based on $\mathbb{C}$-vector spaces. Then, one can interpret compact closed categories as a categorical abstraction of the category $\mathbf{FdVect}_{\mathbb{C}}$.
\end{rem}

\begin{rem}\label{IisBot}
In any $*$-autonomous category, we have $\neg I \cong \bot$ and $\neg \bot \cong I$.
For the first isomorphism, observe that since $X \otimes I \cong X$ naturally in $X$, we obtain the following sequence of natural isomorphisms:
\[
\mathrm{Hom}(X, Y) \cong \mathrm{Hom}(X \otimes I, Y) \cong \mathrm{Hom}(X, [I, Y]).
\]
It follows that $[I, Y] \cong Y$. In particular, for $Y = \bot$, we get $\neg I = [I, \bot] \cong \bot$.
For the second isomorphism, it suffices to use the fact that $\neg \neg I \cong I$.
Additionally, it is worth noting that in a compact closed category, we have $\bot \cong I$. To prove this, first notice that by the above observation $[I, I] \cong I$. Then, recall that in a compact closed category, $[I, I] \cong \neg I \otimes I \cong \neg I$. Therefore, we obtain $\neg I \cong I$. Given that $\neg I \cong \bot$, we conclude that $I \cong \bot$.
\end{rem}

\subsubsection{Cloning and Deleting}

In the previous subsubsection, we argued that the conjunctive part of the language possesses at least the structure of a symmetric monoidal category. However, this structure alone does not fully capture all the expected properties of conjunction. The reason is that two key properties of conjunction have not yet been addressed: \emph{cloning} and \emph{deleting}.\footnote{The terms ``cloning" and ``deleting" have an information-theoretic flavor, referring to the ability to duplicate or discard bits of information. In proof theory, these operations are known as contraction and weakening, respectively.}
In proofs, an assumption can be used multiple times or discarded entirely. Formally, for any proposition $A$, we expect to have deductions $A \to A \wedge A$ and $A \to \top$ to \emph{clone} the assumption $A$ and \emph{delete} it, respectively.

In a general symmetric monoidal category, there is no reason for these cloning and deleting morphisms to exist.

\begin{exam}
Let $\mathcal{M} = (\mathbb{N}, +, 0)$ be the usual monoid of natural numbers, and let $a \neq 0$. Following Example \ref{Monoid}, we can regard $\mathcal{M}$ as a symmetric monoidal category. Since morphisms in this setting are only identities, it is clear that no map $a \to 0$ or $a \to 2a$ exists, as either would imply $a = 0$, leading to a contradiction.
\end{exam}

To formalize the cloning and deleting processes, we assume the existence of two morphisms, $A \to A \wedge A$ and $A \to \top$, for every object $A$. However, as with the structural isomorphisms in a symmetric monoidal category, these maps must satisfy certain coherence conditions. To begin, let us examine the relationship between these two maps for a fixed object $A$.

\begin{dfn}\label{Comonoid}
Let $(\mathcal{C}, \otimes, I)$ be a symmetric monoidal category. A \emph{commutative comonoid structure} on an object $A$ in $(\mathcal{C}, \otimes, I)$ is a pair $(\langle \rangle_A, \Delta_A)$ of morphisms, where $\langle \rangle_A: A \to I$ represents deletion and $\Delta_A: A \to A \otimes A$ represents cloning, satisfying the following commutative diagrams:
\begin{itemize}
    \item \emph{co-associativity:}
\[\begin{tikzcd}
	{(A \otimes A) \otimes A} && {A \otimes (A \otimes A)} \\
	\\
	{A \otimes A} && {A \otimes A} \\
	& A
	\arrow["{\alpha_{A,A,A}}", from=1-1, to=1-3]
	\arrow["{\Delta_A \otimes id_A}", from=3-1, to=1-1]
	\arrow["{id_A \otimes \Delta_A}"', from=3-3, to=1-3]
	\arrow["{\Delta_A}", from=4-2, to=3-1]
	\arrow["{\Delta_A}"', from=4-2, to=3-3]
\end{tikzcd}\]
\item 
\emph{co-unitality:}
\[\begin{tikzcd}
	{A \otimes I} && {A\otimes A} && {I \otimes A} \\
	\\
	&& A
	\arrow["{id_A \otimes \langle \rangle_A }"', from=1-3, to=1-1]
	\arrow["{\langle \rangle_A  \otimes id_A}", from=1-3, to=1-5]
	\arrow["{\rho^{-1}_A}", from=3-3, to=1-1]
	\arrow["{\Delta_A}"{description}, from=3-3, to=1-3]
	\arrow["{\lambda^{-1}_A}"', from=3-3, to=1-5]
\end{tikzcd}\]
\item 
\emph{co-symmetry:}
\[\begin{tikzcd}
	&& A \\
	\\
	{A \otimes A} &&&& {A \otimes A}
	\arrow["{\Delta_A}"', from=1-3, to=3-1]
	\arrow["{\Delta_A}", from=1-3, to=3-5]
	\arrow["{\sigma_{A, A}}"', from=3-1, to=3-5]
\end{tikzcd}\]
\end{itemize}
A \emph{commutative comonoid} in $(\mathcal{C}, \otimes, I)$ is a tuple $(A, \langle \rangle_A, \Delta_A)$, where $A$ is an object in $\mathcal{C}$ and $(\langle \rangle_A, \Delta_A)$ is a commutative comonoid structure on $A$ in $(\mathcal{C}, \otimes, I)$.
\end{dfn}

\begin{phil}
The co-associativity diagram formalizes the intuition that ``the two canonical ways to obtain three copies of $A$ are equal." The co-unitality diagram expresses that ``cloning $A$ and then deleting one copy has no effect." Finally, the co-symmetry diagram ensures that ``the order of cloned copies is immaterial."
\end{phil}

\begin{exam}\label{ExamplesOfComonoid}
In a cartesian category with its symmetric monoidal structure as its cartesian structure, there is a canonical commutative comonoid structure on any object $A$. Specifically, one can define $\langle \rangle_A$ as the unique morphism $!_A: A \to 1$ and set $\Delta_A$ as the diagonal morphism $\langle id_A, id_A \rangle: A \to A \times A$. Co-associativity, co-unitality, and co-symmetry can be easily verified using the properties of the binary product and terminal object.

Similarly, in the symmetric monoidal category $\mathbf{Rel}$, any set $A$ can be equipped with a commutative comonoid structure using the relations $\langle \rangle_A: A \to \{*\}$ and $\Delta_A: A \to A \times A$, defined by $\langle \rangle_A = A \times \{*\}$ and $\Delta_A = \{(a, (a, a)) \mid a \in A\}$. The commutativity of the diagrams is straightforward to verify.
\end{exam}

\begin{rem}
The dual of a commutative comonoid is a \emph{commutative monoid}. It consists of an object $A$ together with maps $[]_A: I \to A$ and $\nabla_A: A \otimes A \to A$, which satisfy the duals of the equalities in Definition \ref{Comonoid}, namely \emph{associativity}, \emph{unitality}, and \emph{symmetry}. Commutative monoids in a symmetric monoidal category generalize the usual commutative monoids in algebra. One can interpret $\nabla_A: A \otimes A \to A$ as the product of the monoid and $[]_A: I \to A$ as its neutral element, while associativity, unitality, and symmetry formalize the expected properties of a commutative monoid. In fact, a commutative monoid in $\mathbf{Set}$, with its cartesian structure, is simply a commutative monoid in the traditional sense. Commutative monoids are abundant in mathematics. For another example, a commutative monoid in $\mathbf{Vect}_K$ corresponds to a commutative $K$-algebra. 
As in classical reasoning, the dual of the conjunctive structure is the disjunctive structure. A commutative monoid structure can also serve to witness the deductions $\bot \to A$ and $A \vee A \to A$.
\end{rem}

To handle the relationship between cloning and deleting different assumptions, we can introduce additional coherency equalities. These equalities should reflect how different assumptions can be cloned or deleted in relation to each other.

\begin{dfn}\label{ComoinOnCat}
Let $(\mathcal{C}, \otimes, I)$ be a symmetric monoidal category. A \emph{commutative comonoid structure on $(\mathcal{C}, \otimes, I)$} is an assignment of commutative comonoids $(A, \langle \rangle_A, \Delta_A)$ to any object $A$ such that
$\langle \rangle_I=id_I: I \to I$, $\Delta_I=\lambda_I^{-1}: I \to I \otimes I$ and
\[\begin{tikzcd}
	{(A \otimes B) \otimes (A \otimes B)} && {(A \otimes A) \otimes (B \otimes B)} \\
	\\
	& {A \otimes B}
	\arrow["\cong", from=1-1, to=1-3]
	\arrow["{\Delta_{A \otimes B}}", from=3-2, to=1-1]
	\arrow["{\Delta_A \otimes \Delta_B}"', from=3-2, to=1-3]
\end{tikzcd}\]
\[\begin{tikzcd}
	{I \otimes I} && I \\
	\\
	& {A \otimes B}
	\arrow["{\lambda_I}", from=1-1, to=1-3]
	\arrow["{\langle \rangle_A \otimes \langle \rangle_B}", from=3-2, to=1-1]
	\arrow["{\langle \rangle_{A \otimes B}}"', from=3-2, to=1-3]
\end{tikzcd}\]
where the map $\cong$ is the canonical map constructed by the associators and braiding. A comonoid structure is called \emph{uniform} if both $\langle \rangle_A: A \to I$ and $\Delta_A: A \to A \otimes A$ are natural in $A$.
\end{dfn}

\begin{phil}
The equalities in Definition \ref{ComoinOnCat} formalize the intuition that ``cloning $\{A, B\}$ is equivalent to cloning $A$ and cloning $B$",   ``deleting $\{A, B\}$ is the same as deleting $A$ and $B$" and  
``deleting and cloning the empty set has no effect". The uniformity condition simply states that the processes of cloning and deleting are applied uniformly, rather than being arbitrarily chosen for different assumptions.
\end{phil}

\begin{exam}
Both assignments of commutative comonoids presented in Example \ref{ExamplesOfComonoid} are commutative comonoid structures on their coressponding categories.
\end{exam}

This seemingly completes our proposal for the conjunctive structure. We start with a symmetric monoidal category $\mathcal{C}$ and then introduce a uniform commutative comonoid structure on $\mathcal{C}$ to simulate the missing processes of cloning and deleting.
Using this proposal, to formalize classical deduction, one might expect to employ a $*$-autonomous category $\mathcal{C}$ equipped with a uniform commutative comonoid structure. However, we are once again confronted with a collapse theorem, Corollary \ref{MonoidalCollapse}, which states that such a $*$-autonomous category must necessarily be a boolean algebra. To prove this theorem, we first show that the existence of a uniform comonoid structure forces the symmetric monoidal structure to collapse to the usual cartesian structure. This result is well-known among category theorists, although its proof is, to our knowledge, rarely found in print \cite{heunenvicary}. Consequently, we provide the proof here.

\begin{thm}\label{CollapseOfMonToCart}
Let $(\mathcal{C}, \otimes, I)$ be a symmetric monoidal category. If there is a uniform commutative comonoid structure on $(\mathcal{C}, \otimes, I)$, then the monoidal structure $(\otimes, I)$ must necessarily be the cartesian structure $(\times, 1)$.
\end{thm}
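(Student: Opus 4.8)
The plan is to show that with the uniform comonoid structure $(\langle\rangle_A,\Delta_A)$ available, the object $I$ becomes terminal and $A\otimes B$ becomes a product of $A$ and $B$, with the required projections built from $\Delta$ and $\langle\rangle$. For terminality of $I$: by uniformity, for any $f\colon B\to A$ the naturality square of $\langle\rangle$ gives $\langle\rangle_A\circ f=\langle\rangle_B$; applied to the two legs of any pair of maps $B\to I$ this will force uniqueness once we know $\langle\rangle_I=\mathrm{id}_I$ (which is part of Definition \ref{ComoinOnCat}). So the candidate unique map $B\to I$ is $\langle\rangle_B$, and any other map $g\colon B\to I$ satisfies $g=\langle\rangle_I\circ g=\langle\rangle_B$ by naturality of $\langle\rangle$ at $g$. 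Hence $I$ is terminal, i.e. $I\cong 1$.

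Next I would define the projections. Take $p_0\colon A\otimes B\to A$ as the composite $A\otimes B \xrightarrow{\mathrm{id}_A\otimes\langle\rangle_B} A\otimes I \xrightarrow{\rho_A} A$, and symmetrically $p_1\colon A\otimes B\to B$ as $A\otimes B\xrightarrow{\langle\rangle_A\otimes\mathrm{id}_B} I\otimes B\xrightarrow{\lambda_B} B$ (these are the ``delete one coordinate'' maps). Given $f\colon D\to A$ and $g\colon D\to B$, the mediating map is $\langle f,g\rangle := (f\otimes g)\circ\Delta_D\colon D\to A\otimes B$. The computation $p_0\circ\langle f,g\rangle = f$ uses: naturality of $\rho$; the co-unitality diagram for $D$ (cloning then deleting one copy is the identity, up to $\rho_D^{-1}$); and naturality of $\langle\rangle$ applied to $g$ to rewrite $\langle\rangle_B\circ g=\langle\rangle_D$. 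A symmetric argument gives $p_1\circ\langle f,g\rangle=g$. For uniqueness, suppose $h\colon D\to A\otimes B$ satisfies $p_0 h=f$, $p_1 h=g$; the claim is $h=\langle f,g\rangle$, equivalently $h=(p_0 h\otimes p_1 h)\circ\Delta_D$. This is where the bulk of the diagram chase lives: one expands $(p_0 h\otimes p_1 h)\circ\Delta_D$, pushes $\Delta_D$ through $h$ using naturality of $\Delta$ (uniformity!) to get $\Delta_{A\otimes B}\circ h$, then invokes the compatibility square in Definition \ref{ComoinOnCat} relating $\Delta_{A\otimes B}$ to $\Delta_A\otimes\Delta_B$, and finally the co-unitality of $A$ and of $B$ together with the triangle/pentagon coherence to collapse everything back to $h$. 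I would organize this as a single commuting diagram built from the comonoid axioms and the symmetric monoidal coherence axioms.

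Having established that $(A\otimes B, p_0, p_1)$ satisfies the universal property of the binary product and that $I$ is terminal, Lemma \ref{UniqenessOfProduct} (uniqueness of products up to canonical isomorphism) and the analogous fact for terminal objects let us identify $(\otimes, I)$ with the cartesian structure $(\times, 1)$, which is the assertion of the theorem. I expect the main obstacle to be the uniqueness half of the product property: it is the only place where all three ingredients — naturality of $\Delta$, the $\Delta_{A\otimes B}$-vs-$\Delta_A\otimes\Delta_B$ compatibility, and co-unitality — must be combined in the right order, and one has to be careful that the ``canonical map $\cong$'' appearing in Definition \ref{ComoinOnCat} is exactly the reassociating/braiding isomorphism needed to make the pentagon and hexagon identities applicable. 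Everything else (terminality of $I$, the two projection equations) is a short naturality argument.
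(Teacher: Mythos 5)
Your proposal is correct and follows essentially the same route as the paper's proof: terminality of $I$ via naturality of $\langle \rangle$ together with $\langle \rangle_I=\mathrm{id}_I$, the same deletion-built projections $p_0=\rho_A\circ(\mathrm{id}_A\otimes\langle \rangle_B)$ and $p_1=\lambda_B\circ(\langle \rangle_A\otimes\mathrm{id}_B)$, the same pairing $(f\otimes g)\circ\Delta_D$, and the same three-ingredient chase (naturality of $\Delta$, the $\Delta_{A\otimes B}$-vs-$\Delta_A\otimes\Delta_B$ compatibility square, and co-unitality plus the associator/braiding coherence) for the uniqueness half. The only step the paper carries out that you leave implicit is the final verification that the tensor \emph{functor} agrees with the product functor on morphisms, i.e., $f\otimes g=\langle f\circ p_0,\, g\circ p_1\rangle$, which is another short naturality-of-$\rho$ argument using $\langle \rangle_B=\langle \rangle_D\circ g$.
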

\begin{proof}
Let $(\langle \rangle_A, \Delta_A)$ be the comonoid structure on $A$. Since $\langle \rangle_A: A \to I$ and $\Delta_A: A \to A \otimes A$ are natural in $A$, for any map $f: A \to B$, the following diagram commutes:
\[\begin{tikzcd}
	{A \otimes A } && A && I \\
	\\
	{B \otimes B} && B && I
	\arrow["{f \otimes f}"', from=1-1, to=3-1]
	\arrow["{\Delta_A}"', from=1-3, to=1-1]
	\arrow["{\langle \rangle_A}", from=1-3, to=1-5]
	\arrow["f"', from=1-3, to=3-3]
	\arrow["{id_I}", from=1-5, to=3-5]
	\arrow["{\Delta_B}", from=3-3, to=3-1]
	\arrow["{\langle \rangle_B}"', from=3-3, to=3-5]
\end{tikzcd}\]
In words, we have $\langle \rangle_B \circ f = \langle \rangle_A$ and $\Delta_B \circ f = (f \otimes f) \circ \Delta_A$ for any map $f: A \to B$.

First, we prove that $I$ is a terminal object. Since there is always the map $\langle \rangle_A: A \to I$, it suffices to show that this map is unique. Let $g: A \to I$. Since $\langle \rangle_I = \text{id}_I$, we have $g = \text{id}_I \circ g = \langle \rangle_I \circ g = \langle \rangle_A$. Therefore, $g = \langle \rangle_A$, which implies that $I$ is a terminal object.
Second, we prove that $A \otimes B$ is the binary product of $A$ and $B$. Consider the maps $p_0: A \otimes B \to A$ and $p_1: A \otimes B \to B$ defined as follows:
\[\begin{tikzcd}
	{A \otimes B} && {A \otimes I} && A \\
	{A \otimes B} && {I \otimes B} && B
	\arrow["{id_A \otimes \langle \rangle_B}", from=1-1, to=1-3]
	\arrow["{\rho_A}", from=1-3, to=1-5]
	\arrow["{\langle \rangle_A \otimes id_B}"', from=2-1, to=2-3]
	\arrow["{\lambda_B}"', from=2-3, to=2-5]
\end{tikzcd}\]
These maps are intended to serve as projections. Moreover, for any two morphisms $f: C \to A$ and $g: C \to B$, define $\langle f, g \rangle: C \to A \otimes B$ by:
\[\begin{tikzcd}
	C && {C \otimes C} && {A \otimes B}
	\arrow["{\Delta_C}", from=1-1, to=1-3]
	\arrow["{f \otimes g}", from=1-3, to=1-5]
\end{tikzcd}\]
We prove that $p_0 \circ \langle f, g \rangle = f$ and $p_1 \circ \langle f, g \rangle = g$ for any $f: C \to A$ and $g: C \to B$. We will only prove the first equality; the second is similar. Consider the following diagram:
\[\begin{tikzcd}
	C && {C \otimes C} && {C \otimes I} && C \\
	\\
	&& {A \otimes B} && {A \otimes I} && A
	\arrow["{\Delta_C}", from=1-1, to=1-3]
	\arrow["{\langle f, g \rangle}"', from=1-1, to=3-3]
	\arrow["{id_C \otimes \langle \rangle_C}", from=1-3, to=1-5]
	\arrow["{f \otimes g}"', from=1-3, to=3-3]
	\arrow["{\rho_C}", from=1-5, to=1-7]
	\arrow["{f \otimes id_I}", from=1-5, to=3-5]
	\arrow["f", from=1-7, to=3-7]
	\arrow["{id_A \otimes \langle \rangle_B}"', from=3-3, to=3-5]
	\arrow["{\rho_A}"', from=3-5, to=3-7]
\end{tikzcd}\]
The right square commutes by the naturality of $\rho$. The left square commutes by the equality $\langle \rangle_B \circ g = \langle \rangle_C$. The commutativity of the triangle follows directly from the definition of $\langle f, g \rangle$. Now, by co-unitality, the upper row equals $id_C$ and the lower row equals $p_0$, by definition. Hence, we obtain $p_0 \circ \langle f, g \rangle = f$.

Now, to complete the proof of the second part, it is enough to prove $\langle p_0h, p_1h \rangle=h$, for any $h: C \to A \otimes B$. For that purpose, consider the following diagram:
\[\begin{tikzcd}
	C && {C \otimes C} && {A \otimes B} \\
	\\
	{A \otimes B} && {(A \otimes B) \otimes (A \otimes B)} && {A \otimes B} \\
	\\
	{A \otimes B} && {(A \otimes A) \otimes (B \otimes B)} && {A \otimes B}
	\arrow["{\Delta_C}", from=1-1, to=1-3]
	\arrow["h"', from=1-1, to=3-1]
	\arrow["{p_0h \otimes p_1h}", from=1-3, to=1-5]
	\arrow["{h \otimes h}", from=1-3, to=3-3]
	\arrow["{id_{A \otimes B}}", from=1-5, to=3-5]
	\arrow["{\Delta_{A \otimes B}}", from=3-1, to=3-3]
	\arrow["{id_{A \otimes B}}"', from=3-1, to=5-1]
	\arrow["{p_0 \otimes p_1}", from=3-3, to=3-5]
	\arrow["\cong", from=3-3, to=5-3]
	\arrow["{id_{A \otimes B}}", from=3-5, to=5-5]
	\arrow["{\Delta_A \otimes \Delta_B}"', from=5-1, to=5-3]
	\arrow["{p_0 \otimes p_1}"', from=5-3, to=5-5]
\end{tikzcd}\]
The upper left square commutes by the naturality of $\Delta$. The commutativity of the upper right square is clear. The lower left square commutes by the definition of a commutative comonoid structure on $(\mathcal{C}, \otimes, I)$. The lower right square commutes by straightforward applications of the identities for the associator and braiding.
Now, notice that the upper row is $\langle p_0 h, p_1 h \rangle$, by definition. The lower row is the identity, by co-unitality. Therefore, we have $\langle p_0 h, p_1 h \rangle = h$. This completes the proof that $A \otimes B$ is the binary product of $A$ and $B$.

To show that the tensor product is the binary product as a functor, it suffices to prove that $f \otimes g = f \times g$ for any $f: A \to C$ and $g: B \to D$. By definition, $(f \times g): A \otimes B \to C \otimes D$ is the unique map satisfying $p_0 \circ (f \times g) = f \circ p_0$ and $p_1 \circ (f \times g) = f \circ p_1$. Therefore, it is enough to show that $f \otimes g: A \otimes B \to C \otimes D$ also satisfies these equalities. We will prove the case for $p_0$; the other case is similar. Consider the following diagram:
\[\begin{tikzcd}
	{A \otimes B} && {A \otimes I} && A \\
	\\
	{C \otimes D} && {C \otimes I} && C
	\arrow["{id_A \otimes \langle \rangle_B}", from=1-1, to=1-3]
	\arrow["{f \otimes g}"', from=1-1, to=3-1]
	\arrow["{\rho_A}", from=1-3, to=1-5]
	\arrow["{f \otimes id_I}", from=1-3, to=3-3]
	\arrow["f", from=1-5, to=3-5]
	\arrow["{id_C \otimes \langle \rangle_D}"', from=3-1, to=3-3]
	\arrow["{\rho_C}"', from=3-3, to=3-5]
\end{tikzcd}\]
The right square commutes by the naturality of $\rho$. The left square commutes by the equality $\langle \rangle_B = \langle \rangle_D \circ g$. Since the rows are both $p_0$, we obtain $p_0 \circ (f \otimes g) = f \circ p_0$, which completes the proof.
\end{proof}

\begin{cor}\label{MonoidalCollapse}
Let $(\mathcal{C}, \otimes, I, [-, -], \bot)$ be a $*$-autonomous category. If there is a uniform commutative comonoid structure on $(\mathcal{C}, \otimes, I)$, then $\mathcal{C}$ must necessarily be a boolean algebra.
\end{cor}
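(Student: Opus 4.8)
The plan is to combine Theorem \ref{CollapseOfMonToCart} with Theorem \ref{Joyal}. By Theorem \ref{CollapseOfMonToCart}, the existence of a uniform commutative comonoid structure on $(\mathcal{C}, \otimes, I)$ forces $(\otimes, I)$ to coincide with the cartesian structure $(\times, 1)$. In particular, $\mathcal{C}$ has finite products, and the symmetric monoidal closed structure $(\otimes, I, [-,-])$ becomes the cartesian closed structure: the internal hom $[-,-]$ is the exponential, since $\mathrm{Hom}(C \otimes A, B) \cong \mathrm{Hom}(C \times A, B) \cong \mathrm{Hom}(C, [A, B])$ is exactly the defining adjunction of an exponential object. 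So $\mathcal{C}$ is a $\mathrm{CCC}$.

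The next step is to recover the full $\mathrm{BC}$-structure. A $*$-autonomous category has a dualizing object $\bot$, and I would check that $\bot$ is the initial object of $\mathcal{C}$. One clean way: since $\otimes = \times$, Remark \ref{IisBot} gives $\neg \bot \cong I \cong 1$, i.e. $[\bot, \bot] \cong 1$, and more usefully $\neg I = [1, \bot] \cong \bot$ — but to get initiality directly, note that for any object $A$ we have the dualizing isomorphism $A \cong \neg\neg A = [[A,\bot],\bot]$, so in particular for $A = 1$ we get $1 \cong [[1,\bot],\bot] \cong [\bot, \bot]$; and then $\mathrm{Hom}(\bot, B) \cong \mathrm{Hom}(\bot, \neg \neg B) = \mathrm{Hom}(\bot, [[B,\bot],\bot]) \cong \mathrm{Hom}(\bot \times [B,\bot], \bot) \cong \mathrm{Hom}([B,\bot], [\bot,\bot]) \cong \mathrm{Hom}([B,\bot], 1)$, which is a singleton. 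Hence $\bot$ is initial, $\mathcal{C}$ has all binary coproducts as well (a $*$-autonomous category with products has coproducts, dually, via $A + B \cong \neg(\neg A \times \neg B)$), and so $\mathcal{C}$ is a $\mathrm{BCC}$.

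Finally, the dualizing condition says $A \cong \neg\neg A$ for every object $A$, which is precisely the hypothesis of Theorem \ref{Joyal} (Joyal). Applying that theorem, $\mathcal{C}$ is a preordered set. A $\mathrm{BCC}$ that is also a preordered set is a Heyting algebra, and one that moreover satisfies $a \cong \neg\neg a$ for all $a$ is by definition a boolean algebra. This completes the proof.

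The main obstacle I anticipate is the bookkeeping in the middle step: verifying that the $*$-autonomous internal hom genuinely becomes the cartesian exponential and that $\bot$ is genuinely the categorical initial object, rather than merely a weakly initial or dualizing object. Once the monoidal-to-cartesian collapse of Theorem \ref{CollapseOfMonToCart} is in hand, everything downstream is essentially a matter of chasing the natural isomorphisms provided by $*$-autonomy through the identification $\otimes = \times$; the only subtlety is being careful that the isomorphisms are the canonical ones, so that Joyal's theorem (whose hypothesis is exactly $A \cong \neg\neg A$, with no canonicity needed beyond existence of the iso) applies cleanly. Given Lemma \ref{NegativeProofsAreUnique} and Theorem \ref{Joyal} are already available, the argument should be short.
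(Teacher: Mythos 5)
Your proof is correct and follows essentially the same route as the paper: Theorem \ref{CollapseOfMonToCart} to collapse $(\otimes, I)$ to the cartesian structure, then establishing that $\bot$ is initial and that $\neg(\neg A \times \neg B)$ gives coproducts so that $\mathcal{C}$ is a $\mathrm{BCC}$ with $A \cong \neg\neg A$, and finally Theorem \ref{Joyal}. The only cosmetic difference is in the middle step, where the paper shows $\mathrm{Hom}(\bot, A)$ is a singleton via the natural isomorphism $\mathrm{Hom}(\neg A, \neg B) \cong \mathrm{Hom}(B, A)$ together with $\bot \cong \neg I$, whereas you chase the adjunction directly through $[\bot,\bot] \cong 1$; both computations are valid and essentially equivalent.
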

\begin{proof}
By Theorem \ref{CollapseOfMonToCart}, the monoidal structure must be the cartesian structure. Consequently, $I$ is the terminal object, and $\otimes$ corresponds to the binary product.
First, note that the collapse of $\otimes$ to the product forces the functor $[-, -]: \mathcal{C}^{op} \times \mathcal{C} \to \mathcal{C}$ to be the exponentiation. Hence, $\mathcal{C}$ is a $\mathrm{CCC}$.
Second, using the isomorphism $A \cong \neg \neg A$, we can easily construct a natural isomorphism
\[
\mathrm{Hom}(\neg A, \neg B) \cong \mathrm{Hom}(B, A).
\]
Applying this isomorphism, along with the fact that $\bot \cong \neg I$ from Remark \ref{IisBot}, we have
\[
\mathrm{Hom}(\bot, A) \cong \mathrm{Hom}(\neg I, \neg \neg A) \cong \mathrm{Hom}(\neg A, I).
\]
Since $I$ is terminal, the last set is a singleton. Consequently, $\mathrm{Hom}(\bot, A)$ is also a singleton, which implies that $\bot$ is an initial object. Similarly, one can show that the functor $\oplus: \mathcal{C} \times \mathcal{C} \to \mathcal{C}$, defined by $A \oplus B=\neg (\neg A \otimes \neg B)$ is the binary coproduct. This establishes that $\mathcal{C}$ is a $\mathrm{BCC}$ satisfying $A \cong \neg \neg A$ for all $A$.
Finally, by Theorem \ref{Joyal}, it follows that $\mathcal{C}$ must be a boolean algebra.
\end{proof}

\begin{phil}\emph{(Lafont Example)} 
For readers familiar with sequent calculi, there is a stronger argument showing that not only classical reasoning but also multi-conclusion reasoning leads to a collapse. This argument is usually called \emph{Lafont example} \cite{fuhrmann2004geometry,fuhrmann2006order,fuhrmann2007categorical} as it is a variant of an example credited to Lafont \cite{ProofandTypes}.
To explain, consider a multi-conclusion sequent calculus with the weakening rule (denoted by $W$) and the contraction rule (denoted by $C$). Then, the following two proofs must be considered equivalent:
\begin{center}
	\begin{tabular}{c c c}
	    \AxiomC{$\mathcal{D}_1$}
        \noLine
        \UnaryInfC{$\Gamma_1 \Rightarrow \Delta_1, C$}
         \AxiomC{$\mathcal{D}_2$}
         \noLine
         \UnaryInfC{$\Gamma_2 \Rightarrow \Delta_2$}
        \RightLabel{$W$}
         \UnaryInfC{$\Gamma_2, C \Rightarrow \Delta_2$}
         \RightLabel{$cut$}
		\BinaryInfC{$\Gamma_1, \Gamma_2 \Rightarrow \Delta_1, \Delta_2$}
		\DisplayProof \quad \quad \quad
		&
     \AxiomC{$\mathcal{D}_2$}
        \noLine
        \UnaryInfC{$\Gamma_2 \Rightarrow \Delta_2$}
        \RightLabel{$W$}
		\UnaryInfC{$\Gamma_1, \Gamma_2 \Rightarrow \Delta_1, \Delta_2$}
		\DisplayProof  
	\end{tabular}
\end{center}
The reason is that the formula $C$ is added to the assumptions of the right-hand-side deduction in a dummy way. Therefore, using $\mathcal{D}_1$ to provide a proof for it must have no effect on the result, apart from adding other dummy formulas, i.e., $\Gamma_1$ and $\Delta_1$ instead of $C$. Note that this is a proof-theoretic version of the naturality of the deletion map. Dually, we also have the following equality:
\begin{center}
\begin{tabular}{cc}
	    \AxiomC{$\mathcal{D}_1$}
        \noLine
        \UnaryInfC{$\Gamma_1 \Rightarrow \Delta_1$}
        \RightLabel{$W$}
        \UnaryInfC{$\Gamma_1 \Rightarrow \Delta_1, C$}
         \AxiomC{$\mathcal{D}_2$}
         \noLine
         \UnaryInfC{$\Gamma_2, C \Rightarrow \Delta_2$}
         \RightLabel{$cut$}
		\BinaryInfC{$\Gamma_1, \Gamma_2 \Rightarrow \Delta_1, \Delta_2$}
		\DisplayProof \quad \quad \quad
       &
      \AxiomC{$\mathcal{D}_1$}
        \noLine
        \UnaryInfC{$\Gamma_1 \Rightarrow \Delta_1$}
        \RightLabel{$W$}
		\UnaryInfC{$\Gamma_1, \Gamma_2 \Rightarrow \Delta_1, \Delta_2$}
		\DisplayProof 
	\end{tabular}
    \end{center}
Now, using the above equalities, let us simplify the following proof tree:
\begin{center}
	\begin{tabular}{c c c}
	    \AxiomC{$\mathcal{D}_1$}
        \noLine
        \UnaryInfC{$\Gamma \Rightarrow \Delta$}
         \RightLabel{$W$}
        \UnaryInfC{$\Gamma \Rightarrow \Delta, C$}
         \AxiomC{$\mathcal{D}_2$}
         \noLine
         \UnaryInfC{$\Gamma \Rightarrow \Delta$}
          \RightLabel{$W$}
         \UnaryInfC{$\Gamma, C \Rightarrow \Delta$}
		\BinaryInfC{$\Gamma, \Gamma \Rightarrow \Delta, \Delta$}
         \RightLabel{$C$}
        \UnaryInfC{$\Gamma \Rightarrow \Delta$}
		\DisplayProof 
	\end{tabular}
\end{center}
Using the above equalities, this proof tree is equal to both of the following proof trees:
\begin{center}
	\begin{tabular}{c c c}
	     \AxiomC{$\mathcal{D}_1$}
        \noLine
        \UnaryInfC{$\Gamma \Rightarrow \Delta$}
        \RightLabel{$W$}
		\UnaryInfC{$\Gamma, \Gamma \Rightarrow \Delta, \Delta$}
         \RightLabel{$C$}
        \UnaryInfC{$\Gamma \Rightarrow \Delta$}
		\DisplayProof \quad \quad
        &
      \AxiomC{$\mathcal{D}_2$}
        \noLine
        \UnaryInfC{$\Gamma \Rightarrow \Delta$}
        \RightLabel{$W$}
		\UnaryInfC{$\Gamma, \Gamma \Rightarrow \Delta, \Delta$}
         \RightLabel{$C$}
        \UnaryInfC{$\Gamma \Rightarrow \Delta$}
		\DisplayProof 
	\end{tabular}
\end{center}
Hence, the latter two proof-trees are equal. Now, observe that applying contraction to weakening must lead to identity. This is the proof-theoretic presentation of the co-unitality condition of a commutative comonoid. Therefore, using the equality established above, we reach the equality 
$\mathcal{D}_1 = \mathcal{D}_2$, which proves the collapse of deductions of any sequent $\Gamma \Rightarrow \Delta$.
Note that the argument does not use any negation or classicality. However, it critically depends on multi-conclusion type reasoning.
\end{phil}

As mentioned earlier, compact closed categories (equipped with additional structures) can be used to formalize certain aspects of quantum mechanics. Since compact closed categories are a special case of $*$-autonomous categories, one can prove a stronger collapse theorem than Corollary \ref{MonoidalCollapse}, commonly known as the \emph{no-deleting theorem,} which states that there is no uniform way to forget information in the quantum world.

\begin{thm}(No-deleting)
Let $(\mathcal{C}, \otimes, I, [-,-], \bot)$ be a compact closed category. If there is a natural transformation $\langle \rangle_A: A \to I$ such that $\langle \rangle_I=id_I$, then $\mathrm{Hom}_{\mathcal{C}}(A, B)$ is a singleton, for any objects $A$ and $B$. 
\end{thm}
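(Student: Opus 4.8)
The plan is to prove the statement in two short moves, the first being the only one with any real content. Given the natural transformation $\langle\,\rangle\colon\mathrm{Id}_{\mathcal{C}}\Rightarrow\Delta_I$ with $\langle\,\rangle_I=\mathrm{id}_I$, I would apply the naturality square not to a generic morphism but to an arbitrary morphism $h\colon D\to I$ whose codomain is already the tensor unit. Naturality then reads $\Delta_I(h)\circ\langle\,\rangle_D=\langle\,\rangle_I\circ h$, and since $\Delta_I(h)=\mathrm{id}_I$ and $\langle\,\rangle_I=\mathrm{id}_I$ this collapses to $\langle\,\rangle_D=h$. Hence $\langle\,\rangle_D$ is the \emph{unique} morphism $D\to I$, i.e. $I$ is a terminal object of $\mathcal{C}$, so $\mathrm{Hom}_{\mathcal{C}}(D,I)$ is a singleton for every object $D$.

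The second move reduces an arbitrary hom-set to one of this special shape using the $*$-autonomous structure (in fact only its symmetric monoidal closed part together with the self-duality $B\cong\neg\neg B$), plus the compact-closed fact $\bot\cong I$. For objects $A,B$ I would exhibit the chain of natural isomorphisms $\mathrm{Hom}_{\mathcal{C}}(A,B)\cong\mathrm{Hom}_{\mathcal{C}}(A,\neg\neg B)\cong\mathrm{Hom}_{\mathcal{C}}(A\otimes\neg B,\bot)\cong\mathrm{Hom}_{\mathcal{C}}(A\otimes\neg B,I)$: the first uses the $*$-autonomy isomorphism $B\cong\neg\neg B$; the second is the monoidal-closed adjunction $\gamma$, namely $\mathrm{Hom}(A\otimes\neg B,\bot)\cong\mathrm{Hom}(A,[\neg B,\bot])$ with $[\neg B,\bot]=\neg\neg B$ by definition of $\neg$; and the third uses $\bot\cong I$, which holds in a compact closed category by Remark~\ref{IisBot}. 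By the first move the last set is a singleton, hence so is $\mathrm{Hom}_{\mathcal{C}}(A,B)$, which is the claim.

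There is little obstacle here; the one point to get right is the direction of the naturality square and the observation that it is to be applied to morphisms \emph{landing in} $I$, which is exactly where the hypothesis $\langle\,\rangle_I=\mathrm{id}_I$ bites. This is also where the no-deleting phenomenon differs from Theorem~\ref{CollapseOfMonToCart}: no cloning map $\Delta_A$ is needed at all, only the naturality of deletion, because compact closedness already supplies the ``bending of wires'' that trades $\mathrm{Hom}(A,B)$ for $\mathrm{Hom}(A\otimes\neg B,I)$. I would also note in passing that compact closedness enters only through $\bot\cong I$; in a general $*$-autonomous category the argument still shows $I$ is terminal, but not by itself that every hom-set collapses.
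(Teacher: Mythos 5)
Your proof is correct and follows essentially the same route as the paper's: first showing that the naturality of $\langle\,\rangle$ together with $\langle\,\rangle_I=id_I$ forces $I$ to be terminal (the paper reuses the first step of the proof of Theorem \ref{CollapseOfMonToCart} for this), then collapsing $\mathrm{Hom}(A,B)$ via the same chain $\mathrm{Hom}(A,B)\cong\mathrm{Hom}(A,\neg\neg B)\cong\mathrm{Hom}(A\otimes\neg B,\bot)\cong\mathrm{Hom}(A\otimes\neg B,I)$ using $\bot\cong I$ from Remark \ref{IisBot}. Your closing observations about where compact closedness enters are accurate but do not change the argument.
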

\begin{proof}
First, we can use a similar argument to the one in the first part of the proof of Theorem \ref{CollapseOfMonToCart} to show that $I$ is terminal. Then, as $\bot \cong I$ by Remark \ref{IisBot}, and using the isomorphism $B \cong \neg \neg B$, we obtain the following series of isomorphisms:
\[
\mathrm{Hom}_{\mathcal{C}}(A, B) \cong \mathrm{Hom}_{\mathcal{C}}(A, \neg \neg B) \cong \mathrm{Hom}_{\mathcal{C}}(A \otimes \neg B, \bot) \cong \mathrm{Hom}_{\mathcal{C}}(A \otimes \neg B, I).
\]
Since $I$ is terminal, the last set is a singleton. Therefore, $\mathrm{Hom}_{\mathcal{C}}(A, B)$ is also a singleton.
\end{proof}

\subsubsection{Classical Categories}
By Corollary \ref{MonoidalCollapse}, it appears that imposing a uniform commutative comonoid structure on a $*$-autonomous category is too strong a requirement. Since all the other structures are naturally desirable to retain, we must weaken the uniformity condition i.e., the following commutativity:
\[\begin{tikzcd}
	{A \otimes A } && A && I \\
	\\
	{B \otimes B} && B && I
	\arrow["{f \otimes f}"', from=1-1, to=3-1]
	\arrow["{\Delta_A}"', from=1-3, to=1-1]
	\arrow["{\langle \rangle_A}", from=1-3, to=1-5]
	\arrow["f"', from=1-3, to=3-3]
	\arrow["{id_I}", from=1-5, to=3-5]
	\arrow["{\Delta_B}", from=3-3, to=3-1]
	\arrow["{\langle \rangle_B}"', from=3-3, to=3-5]
\end{tikzcd}\]
For this purpose, we need to introduce a new structure into our categorical framework: \emph{simplifications}. Intuitively, for any fixed $A$ and $B$, there is a partial order between deductions from $A$ to $B$ that encodes the possibility of simplifying one deduction into another. Formally, we write $f_1 \leq f_2$ if $f_1$ can be simplified to $f_2$, where $f_1, f_2: A \to B$.
For example, let $f: A \to B$ be a deduction and let $\langle \rangle_B: B \to I$ be the deletion map for $B$. Then, the deduction $\langle \rangle_B \circ f: A \to I$ can be simplified to the deduction $\langle \rangle_A: A \to I$. The reasoning behind this simplification is that instead of first using $A$ to prove $B$ and then forgetting $B$, we can simply forget $A$ from the outset.  

The crucial observation of \cite{fuhrmann2004geometry,fuhrmann2006order,fuhrmann2007categorical} is that this process of simplification is not necessarily symmetric and must therefore be formalized using a \emph{partial order} on deductions rather than \emph{equality}. Accordingly, we should have $\langle \rangle_B \circ f \leq \langle \rangle_A$, but there is no need to assume the equality $\langle \rangle_B \circ f = \langle \rangle_A$, which corresponds to the naturality condition that leads to a collapse.

Putting this in proof-theoretic terms, using this ordering on Lafont example, the proof tree:

\begin{center}
	\begin{tabular}{c c c}
	    \AxiomC{$\mathcal{D}_1$}
        \noLine
        \UnaryInfC{$\Gamma \Rightarrow \Delta$}
         \RightLabel{$W$}
        \UnaryInfC{$\Gamma \Rightarrow \Delta, C$}
         \AxiomC{$\mathcal{D}_2$}
         \noLine
         \UnaryInfC{$\Gamma \Rightarrow \Delta$}
          \RightLabel{$W$}
         \UnaryInfC{$\Gamma, C \Rightarrow \Delta$}
		\BinaryInfC{$\Gamma, \Gamma \Rightarrow \Delta, \Delta$}
         \RightLabel{$C$}
        \UnaryInfC{$\Gamma \Rightarrow \Delta$}
		\DisplayProof 
	\end{tabular}
\end{center}
is less than or equal to both $\mathcal{D}_1$ and $\mathcal{D}_2$, as it can be simplified to both of these proof trees. However, this does not mean that $\mathcal{D}_1$ and $\mathcal{D}_2$ are equal.

To formalize this intuitive notion of simplification, one should start with a $*$-autonomous category and introduce a partial order on each $\mathrm{Hom}_{\mathcal{C}}(A, B)$ in a coherent manner.

\begin{dfn}
A $*$-autonomous category $(\mathcal{C}, \otimes, I, [-,-], \gamma, \bot)$ equipped with a partial order on each $\mathrm{Hom}_{\mathcal{C}}(A, B)$ is called \emph{order-enriched} if composition and the functors $\otimes$ and $[-,-]$ are monotone with respect to the partial order, and if the function $\gamma_{A, B, C}: \mathrm{Hom}_{\mathcal{C}}(C \otimes A, B) \cong \mathrm{Hom}_{\mathcal{C}}(C, [A, B])$ is an order-isomorphism for all $A$, $B$, and $C$.
\end{dfn}

\begin{exam}
Any $*$-autonomous category, when equipped with equality as the partial order, is order-enriched. As another example, consider the $*$-autonomous category $\mathbf{Rel}$. For any two relations $R, S \subseteq A \times B$ in $\mathrm{Hom}_{\mathbf{Rel}}(A, B)$, define $R \leq S$ if and only if $R \subseteq S$. Since both the cartesian product and the operation $(-)^{\bot}: \mathbf{Rel}^{op} \to \mathbf{Rel}$ are monotone, it is straightforward to verify that composition and the functors $\otimes$ and $[-, -]$ are monotone, and that the bijection $\gamma_{A, B, C}$ is an order-isomorphism.
\end{exam}

Now, we can weaken the naturality of cloning and deleting from equality to inequality, thereby formalizing the idea that one deduction can be simplified into another.

\begin{dfn}\label{ClassicalCat}
An order-enriched $*$-autonomous category, equipped with a commutative comonoid structure, is called \emph{classical} if it satisfies the following relaxed version of naturality:
\[\begin{tikzcd}
	A && I && A && {A \otimes A} \\
	& \leq &&&& \leq \\
	B && I && B && {B \otimes B}
	\arrow["{\langle \rangle_A}", from=1-1, to=1-3]
	\arrow["f"', from=1-1, to=3-1]
	\arrow["{id_I}", from=1-3, to=3-3]
	\arrow["{\Delta_A}", from=1-5, to=1-7]
	\arrow["f"', from=1-5, to=3-5]
	\arrow["{f \otimes f}", from=1-7, to=3-7]
	\arrow["{\langle \rangle_B}"', from=3-1, to=3-3]
	\arrow["{\Delta_B}"', from=3-5, to=3-7]
\end{tikzcd}\]
In words, for any $f: A \to B$, we have $\langle \rangle_B \circ f \leq \langle \rangle_A$ and $\Delta_B \circ f \leq (f \otimes f) \circ \Delta_A$.
\end{dfn}

\begin{exam}\label{RelAsClassical}
The trivial example of a classical category is a boolean algebra, where the partial order is simply equality. For a non-trivial example, consider the order-enriched $*$-autonomous category $\mathbf{Rel}$ with the comonoid structure presented in Example \ref{ExamplesOfComonoid}, where the partial order is given by inclusion. To prove that $\mathbf{Rel}$ is a classical category, observe the following: for any $R \subseteq A \times B$, we have
\[
\langle \rangle_B \circ R = \{(a, *) \mid \exists b \in B, (a, b) \in R\} \subseteq A \times \{*\} = \langle \rangle_A.
\]
This represents an inequality, which becomes an equality if and only if $R$ is a total relation.  
For the other condition, we note that
\[
\Delta_B \circ R = \{(a, (b, b)) \mid (a, b) \in R\},
\]
while
\[
(R \otimes R) \circ \Delta_A = \{(a, (b, c)) \mid (a, b) \in R \; \text{and} \; (a, c) \in R\}.
\]
Therefore, we have $\Delta_B \circ R \subseteq (R \otimes R) \circ \Delta_A$. This inequality becomes an equality if and only if $R$ is functional.  
\end{exam}

\begin{rem}
The category $\mathbf{Rel}$ in Example \ref{RelAsClassical} illustrates that the notion of a classical category does not collapse to a boolean algebra. To expand on this observation, it is easy to see that the canonical structure induced from classical categories $\mathcal{C}$ and $\mathcal{D}$ to their product $\mathcal{C} \times \mathcal{D}$ forms a classical category itself. By using this closure under the product, one can take a non-trivial boolean algebra $\mathcal{B}$ and obtain the classical category $\mathbf{Rel} \times \mathcal{B}$. This classical category is neither compact closed nor a preordered set. Hence, classical categories that are not compact closed are not restricted to preordered sets. 
\end{rem}

\begin{rem}(\emph{Dummett categories})
The original definition of classical categories \cite{fuhrmann2004geometry, fuhrmann2006order, fuhrmann2007categorical} differs, yet is equivalent, to the one we present here \cite{Lutz}. It begins with a \emph{symmetric linearly distributive category}, which is a category equipped with two symmetric monoidal structures: the conjunctive $(\otimes, I)$ and the disjunctive $(\oplus, \bot)$. It also includes a natural transformation $\delta: A \otimes (B \oplus C) \to (A \otimes B) \oplus C$, subject to certain coherency identities, ensuring \emph{distributivity}. Subsequently, an order-enrichment is added coherently, along with a commutative comonoid structure for $(\otimes, I)$ and, dually, a commutative monoid structure for $(\oplus, \bot)$, subject to additional coherency equalities and inequalities.These categories are called \emph{Dummett categories}. They capture \emph{multi-conclusion} deductions in a propositional language without negation and implication, thereby addressing the issue raised by Lafont example and providing a non-trivial categorical interpretation of such deductions. Classical categories are then defined by adding an involutive negation functor that appropriately maps the conjunctive and disjunctive structures to each other.
In this chapter, we have chosen to work with $*$-autonomous categories, as they are simpler to introduce, requiring fewer structures and thus involving fewer coherence issues. Moreover, they are more accessible, aligning better with both the spirit and structure of this chapter. However, it is worth noting that the route via Dummett categories is more general and provides a richer perspective than the one we have taken here.
\end{rem}

Although the idea of simplification and its non-symmetric behavior is natural, the inequalities we used to weaken naturality might seem a bit arbitrary to the reader. However, this is not the case.
To explain, first note that, similar to natural deduction proofs for intuitionistic logic, there is a syntactical calculus for classical proofs called \emph{sequent calculus $\mathbf{LK}$}. Moreover, similar to the $\beta \eta$-reductions in natural deduction, there is a natural process of simplification for $\mathbf{LK}$-proofs called \emph{cut reduction} \cite{basicproof}. The inequalities in the definition of a classical category are two instances of cut reduction, and they are actually enough to axiomatize all such cut reductions. In a more formal form:
\begin{thm}[Soundness-completeness]
Let \( T \) be a set of inequalities between \(\mathbf{LK}\)-proofs of the same sequent, and let \(\mathcal{D}_1\) and \(\mathcal{D}_2\) be two such \(\mathbf{LK}\)-proofs. Then, \(\mathcal{D}_1 \preceq \mathcal{D}_2\) is derivable from \( T \) by cut-reductions if and only if \( F(\mathcal{D}_1) \leq F(\mathcal{D}_2) \) for any classical category \(\mathcal{C}\) and any interpretation \( F \) of \(\mathbf{LK}\)-proofs into \(\mathcal{C}\) that satisfies the inequalities in \( T \).
\end{thm}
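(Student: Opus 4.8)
The plan is to establish the two directions of the biconditional separately, following the standard pattern for categorical soundness/completeness results such as Theorem~\ref{Freness} and Simpson's theorems mentioned in the introduction. The backbone of the argument is the construction of a \emph{syntactic classical category} $\mathcal{C}_T$ built out of $\mathbf{LK}$-proofs themselves: its objects are the formulas (or the cedents $\Gamma$, $\Delta$) of the classical propositional language, a morphism from $A$ to $B$ is an $\mathbf{LK}$-proof of the sequent $A \Rightarrow B$ taken up to the equivalence relation generated by the provable equalities (i.e. $\mathcal{D}_1 \sim \mathcal{D}_2$ iff both $\mathcal{D}_1 \preceq \mathcal{D}_2$ and $\mathcal{D}_2 \preceq \mathcal{D}_1$ are derivable from $T$ by cut-reductions), and the pre-order $f_1 \leq f_2$ is read off directly from derivability of $\mathcal{D}_1 \preceq \mathcal{D}_2$ in $T$. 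One then checks that $\otimes$ is interpreted by the conjunctive combination of cedents with $I = \top$, the implication functor $[-,-]$ by $A \to B$, the dualizing object by $\bot$, the comonoid maps $\langle\rangle_A$ and $\Delta_A$ by the weakening and contraction proofs, and that the cut-reduction rules for $\mathbf{LK}$ furnish exactly the order-enrichment axioms, the $*$-autonomous axioms and the two relaxed-naturality inequalities of Definition~\ref{ClassicalCat}; by construction the inequalities of $T$ hold in $\mathcal{C}_T$. There is a canonical interpretation functor $F_T$ of $\mathbf{LK}$-proofs into $\mathcal{C}_T$ sending a proof to its own equivalence class.

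For \textbf{soundness} (the ``only if'' direction), I would argue by induction on the derivation of $\mathcal{D}_1 \preceq \mathcal{D}_2$ from $T$ via cut-reductions. For each cut-reduction rule one must verify that applying it to a subproof yields a provably $\leq$-smaller interpretation in \emph{any} classical category $\mathcal{C}$: the base cases are the instances in $T$ (true by hypothesis on $F$) and the built-in cut-reduction steps, which correspond to (i) the order-enrichment axioms — monotonicity of composition, $\otimes$, $[-,-]$, and the order-isomorphism $\gamma$ — for the permutative and logical cut-reductions, and (ii) the two inequalities $\langle\rangle_B \circ f \leq \langle\rangle_A$ and $\Delta_B \circ f \leq (f\otimes f)\circ \Delta_A$ for the structural (weakening/contraction vs.\ cut) reductions that constitute the Lafont-style phenomenon. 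The reflexivity, transitivity and congruence closure of $\preceq$ are matched by reflexivity and transitivity of $\leq$ in $\mathcal{C}$ together with the monotonicity of all the operations. This is the routine-but-lengthy half.

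For \textbf{completeness} (the ``if'' direction), I would take the contrapositive: if $\mathcal{D}_1 \preceq \mathcal{D}_2$ is \emph{not} derivable from $T$, then in the syntactic category $\mathcal{C}_T$ — which is a classical category validating $T$ by the construction above — we have $F_T(\mathcal{D}_1) \not\leq F_T(\mathcal{D}_2)$, since the order on $\mathcal{C}_T$ is by definition exactly derivability in $T$. Hence $\mathcal{C}_T$ together with $F_T$ witnesses the failure of ``$F(\mathcal{D}_1) \leq F(\mathcal{D}_2)$ for all $\mathcal{C}, F$'', and the contrapositive of completeness follows. This direction is short \emph{given} the correctness of the syntactic model.

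The \textbf{main obstacle} is therefore not the abstract two-step strategy but the verification that $\mathcal{C}_T$ genuinely is a classical category in the sense of Definition~\ref{ClassicalCat}: one must produce well-defined (i.e.\ cut-reduction-invariant) candidates for the associator, unitors, braiding, the closed structure $\gamma$, the dualizing isomorphism $A \cong \neg\neg A$, and the comonoid maps, and check every coherence diagram (pentagon, hexagon, triangle, co-associativity, co-unitality, co-symmetry, the two Definition~\ref{ComoinOnCat} squares) \emph{up to the equivalence relation}, as well as confirm that the partial order descends to the quotient and that composition/$\otimes$/$[-,-]$ remain monotone. This amounts to a careful bookkeeping of which $\mathbf{LK}$ cut-reduction rules correspond to which categorical axiom — essentially a translation table in the style of Table~\ref{tableD} — and is where all the real work lies; it is also the step most sensitive to the precise formulation of $\mathbf{LK}$ and its cut-reduction system, so one would want to fix those conventions explicitly before proceeding. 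I would defer the bulk of this verification to the cited sources \cite{fuhrmann2004geometry,fuhrmann2006order,fuhrmann2007categorical,Lutz} and present here only the construction of $\mathcal{C}_T$ and the two short arguments that reduce soundness and completeness to it.
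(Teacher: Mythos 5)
The paper itself does not prove this theorem; it states it informally and defers the precise statement and proof to the cited works of F\"uhrmann and Pym, noting only that $\mathbf{LK}$-proofs (technically, classical proof nets) modulo cut-reduction form the \emph{free} classical category on the atoms. Your outline --- soundness by induction on the derivation of $\mathcal{D}_1 \preceq \mathcal{D}_2$, completeness via the syntactic classical category $\mathcal{C}_T$ with order given by derivability --- is exactly the strategy behind that freeness result, and you correctly identify that all the substantive work lies in verifying that $\mathcal{C}_T$ really satisfies Definition~\ref{ClassicalCat}, which you (like the paper) defer to the sources. Two small cautions: since $\mathbf{LK}$ is multi-conclusion you should take morphisms between cedents, interpreting $\Gamma \Rightarrow \Delta$ as a map from the $\otimes$-combination of $\Gamma$ to the $\oplus$-combination of $\Delta$ rather than between single formulas; and the paper's footnote signals that raw sequent derivations carry too much bureaucracy for the freeness claim, so the quotient is usually taken on proof nets --- a point worth fixing before the coherence bookkeeping begins.
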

As we have not formally defined $\mathbf{LK}$-proofs and the cut-reduction process, notions such as ``interpretation" and ``derivable by cut reductions" have been used informally in the previous theorem. However, the theorem is stated and proved in its precise form in \cite{fuhrmann2004geometry, fuhrmann2006order, fuhrmann2007categorical}.
This result demonstrates that classical categories, along with their order-enrichment, faithfully capture the categorical structure underlying sequent-style classical deductions and their cut reduction, providing further evidence for the naturality of these categories.

The connection between \(\mathbf{LK}\)-proofs and classical categories can also be expressed in a more categorical form. Recall that for \(\mathrm{BCC}\)s, we saw that natural deduction proofs, up to \(\beta \eta\)-equivalence, form a \(\mathrm{BCC}\) that is also the free \(\mathrm{BCC}\) generated by the set \(\{p_0, p_1, \ldots\}\).  
In a similar vein, \(\mathbf{LK}\)-proofs\footnote{technically, proof nets for classical logic introduced in \cite{robinson} as the less bureaucratic version of sequent-style proofs} and their simplifications via cut reduction form a classical category. This category is the \emph{free classical category} generated by the set of atoms \(\{p_0, p_1, \ldots\}\) as proved in \cite{fuhrmann2006order}.  
Thus, classical categories capture the \emph{exact structure and coherency identities} underlying classical deductions. Similar to the intuitionistic case, this freeness provides a syntax-free presentation of classical deductions.

The categorical structures of both Dummett and classical categories are studied in detail in \cite{fuhrmann2004geometry, fuhrmann2006order, fuhrmann2007categorical}. In addition to mathematical and syntactical examples of these categories, there is also a \emph{combinatorial} construction based on the \emph{classical propositional matrix method} presented in \cite{pym2014proof}. It is also worth noting that by strengthening classical categories, one can develop a categorical treatment of first-order classical deductions, as explored in \cite{mckinley2006thesis}.

\section{Categories as Semantics}\label{SecCatSemantics}

In the previous sections, we introduced $\mathrm{BCC}$'s (resp. $\mathrm{ABC}$'s and $\mathrm{CCC}$'s) as the proof systems for the language $\mathcal{L}_p$ (resp. $\mathcal{L}_p-\{\bot\}$ and $\mathcal{L}_p-\{\bot, \vee\}$).\footnote{In the following discussion, we will only focus on the full language and the $\mathrm{BCC}$'s. However, everything we say also holds for the two fragments.} One can visualize the class of all $\mathrm{BCC}$'s and the $\mathrm{BC}$-functors between them as the \emph{models} for the abstract proofs. The free $\mathrm{BCC}$, i.e., the category $\mathbf{NJ}$, is living high above and captures the pure notion of proof, where no unnecessary structure is present.
Then, below $\mathbf{NJ}$, we have other $\mathrm{BCC}$'s, the shadows of the free $\mathrm{BCC}$. As much as we go down in the picture, the $\mathrm{BCC}$'s become more concrete and hence easier to work with. Finally, on the ground level, we have the Heyting algebras, where all the deductions are collapsed to one map encoding the mere \emph{deductibility}. One can use this picture as some sort of semantics for the deductions, where the free $\mathrm{BCC}$ plays the role of the \emph{syntax}, all the others are the models of that syntax and the $\mathrm{BC}$-functors from $\mathbf{NJ}$ into a $\mathrm{BCC}$ $\mathcal{C}$ are the \emph{interpretations} of the syntax inside the model $\mathcal{C}$.

To see how such a semantical setting works, first, let us recall the three problems we explained in Section \ref{SecHistory}:
\begin{itemize}
\item
\emph{The existence problem}: given two propositions $A$ and $B$, if there is a map from $A$ to $B$ in $\mathbf{NJ}$.
\item
\emph{The equivalence problem}: given two maps $f, g: A \to B$ in $\mathbf{NJ}$, if they are equal in $\mathbf{NJ}$. Note that the problem is not just the pure equality of two syntactical entities as the equality of the derivations in $\mathbf{NJ}$ is considered up to the $\beta\eta$-equivalence. 
\item 
\emph{The identity problem}: given two propositions $A$ and $B$, if there is an isomorphism between them in $\mathbf{NJ}$.
\end{itemize}
For any of these problems, if the answer is positive—i.e., if there exists a deduction or the deductions are equal or the propositions are identical—we must provide the evidence in the free $\mathrm{BCC}$. However, for the negative claims, when there is no deduction, the deductions are unequal, or the propositions are non-identical, as any positive claim is inherited from the free category to all its shadows, it is enough to prove the claim for a shadow category, where hopefully its concrete nature makes the computation easier. Here, we use some examples to show how $\mathrm{BCC}$'s can be used to prove such negative claims.

\begin{exam}(\emph{The existence problem})
We want to show that the proposition $p \vee \neg p$ is not provable, i.e., there is no map in $\mathrm{Hom}_{\mathbf{NJ}}(\top, p \vee \neg p)$. For the sake of contradiction, assume $\mathrm{Hom}_{\mathbf{NJ}}(\top, p \vee \neg p) \neq \varnothing$. Then, for any $\mathrm{BCC}$ $\mathcal{C}$ and any $\mathrm{BC}$-functor $F: \mathbf{NJ} \to \mathcal{C}$, there must be a map in $\mathrm{Hom}_{\mathcal{C}}(1, F(p)+0^{F(p)})$. To reach a contradiction, it is enough to find suitable $\mathrm{BCC}$ and $\mathrm{BC}$-functor $F: \mathbf{NJ} \to \mathcal{C}$ such that $\mathrm{Hom}_{\mathcal{C}}(1, F(p)+0^{F(p)})$ is empty. We provide two instances of such a suitable choice. First,
as we only care about the existence of maps, one may aim for the lowest possible level of the $\mathrm{BCC}$'s, i.e., the Heyting algebras. For instance, take $\mathcal{C}$ as the Heyting algebra $\mathcal{O}(\mathbb{R})$ and set $f(p)=(0, \infty)$. Then, by the freeness of $\mathbf{NJ}$, there is a $\mathrm{BC}$-functor $F: \mathbf{NJ} \to \mathcal{O}(\mathbb{R})$ such that
$F(p)=f(p)$. Now, note that $0^{F(p)}=(-\infty, 0)$ and hence $F(p) +0^{F(p)}=\mathbb{R}-\{0\}$. Therefore, there is no map in $\mathrm{Hom}_{\mathcal{C}}(1, F(p)+0^{F(p)})$ as $\mathbb{R} \nsubseteq \mathbb{R}-\{0\}$. 

As another instance and to have a more BHK-style argument, let $\mathcal{C}=\mathbf{Set}^{\mathbb{Z}}$ and set $f(p)=(\{0, 1\}, \sigma)$, where $\sigma(0)=1$ and $\sigma(1)=0$. Again, $f$ gives rise to a $\mathrm{BC}$-functor $F: \mathbf{NJ} \to \mathbf{Set}^{\mathbb{Z}}$.
Note that $0^{F(p)}$ is empty, as there is no function from $\{0, 1\}$ to $\varnothing$. Now, if there is a map in $\mathrm{Hom}(1, F(p) +0^{F(p)})$, as $1=(\{*\}, id_{\{*\}})$, there is an invariant element in $F(p)+0^{F(p)}$. As $0^{F(p)}$ is empty, there must be an invariant element in $\{0, 1\}$ which is impossible.
\end{exam}

\begin{exam}(\emph{The equivalence problem}) \label{ExamEqualityProblem}
We prove that the following two derivations:
\begin{center}
	\begin{tabular}{c c c}
$\mathsf{D}_1:$   \AxiomC{$p \wedge p$}
	    \RightLabel{\footnotesize$\wedge E_1$} 
		\UnaryInfC{$p$}
		\DisplayProof 
  \quad \quad \quad
		&
$\mathsf{D}_2:$		\AxiomC{$p \wedge p$}
	    \RightLabel{\footnotesize$\wedge E_2$} 
		\UnaryInfC{$p$}
		\DisplayProof
	\end{tabular}
\end{center}
are not equal in $\mathbf{NJ}$. Assume $\mathsf{D}_1=\mathsf{D}_2$. Then, for any $\mathrm{BCC}$ $\mathcal{C}$ and any $\mathrm{BC}$-functor $F: \mathbf{NJ} \to \mathcal{C}$, the maps $F(\mathsf{D}_1)$ and $F(\mathsf{D}_2)$ must be equal. Therefore, to prove the inequality, it is again enough to find suitable $\mathrm{BCC}$ and $\mathrm{BC}$-functor $F: \mathbf{NJ} \to \mathcal{C}$ such that $F(\mathsf{D}_1) \neq F(\mathsf{D}_2)$ in $\mathcal{C}$. Notice that for this problem, we cannot use the preordered sets, as any two morphisms there are equal. Set $\mathcal{C}=\mathbf{Set}$ and $f(p)=\{0, 1\}$. Then, the resulting $\mathrm{BC}$-functor $F: \mathbf{NJ} \to \mathbf{Set}$ maps $\mathsf{D}_1$ and $\mathsf{D}_2$ to the projections $p_0: \{0, 1\} \times \{0, 1\} \to \{0, 1\}$ and $p_1: \{0, 1\} \times \{0, 1\} \to \{0, 1\}$, respectively. These two functions are not equal in $\mathbf{Set}$. Therefore, $\mathsf{D}_1$ and $\mathsf{D}_2$ cannot be equal in $\mathbf{NJ}$.
\end{exam}

\begin{exam}(\emph{The identity problem})
We show that the propositions $p$ and $p \wedge p$ are not identical in $\mathbf{NJ}$. 
Assume $p \wedge p \cong p$. Then, for any $\mathrm{BCC}$ $\mathcal{C}$ and any $\mathrm{BC}$-functor $F: \mathbf{NJ} \to \mathcal{C}$, the objects $F(p)$ and $F(p) \times F(p)$ must be isomorphic in $\mathcal{C}$. Therefore, to prove the claim, it is again enough to find suitable $\mathrm{BCC}$ and $\mathrm{BC}$-functor $F: \mathbf{NJ} \to \mathcal{C}$ such that $F(p) \ncong F(p) \times F(p)$.
Again, we cannot use preordered sets here, as for them, the logical equivalence and the propositional identity are the same and $p$ and $p \wedge p$ are logically equivalent. Set $\mathcal{C}=\mathbf{Set}$ and $f(p)=\{0, 1\}$. Then, the resulting $\mathrm{BC}$-functor $F: \mathbf{NJ} \to \mathbf{Set}$ maps $p \wedge p$ and $p$ to $\{0, 1\} \times \{0, 1\}$ and $\{0, 1\}$, respectively and $\{0, 1\} \times \{0, 1\}$ and $\{0, 1\}$ are clearly not isomorphic in $\mathbf{Set}$.
\end{exam}

These examples demonstrate how $\mathrm{BCC}$'s are useful in providing counter-models for negative claims. However, even for positive claims, sometimes working with the shadow of the syntax can be helpful, as it might be easier to handle compared to the pure syntax.
To be more precise, for any of the three mentioned problems, it is clear that if the positive claim (the existence of deductions, the equivalence of derivations, or the existence of isomorphisms) holds for all $\mathrm{BCC}$'s, it also holds for $\mathbf{NJ}$, simply because $\mathbf{NJ}$ is itself a $\mathrm{BCC}$.
This completeness-style result is not as informative as one might expect. To make it more informative, one may want to restrict the family of all $\mathrm{BCC}$'s to a complete subfamily such that working within that family is considerably easier than dealing with the raw syntax. This is the task of the following subsections, which will address the three problems discussed above.

\subsection{The Existence Problem}

For the existence problem, let us first define the suitable notion of completeness for a family of $\mathrm{BCC}$'s.

\begin{dfn}
A family $\mathfrak{F}$ of $\mathrm{BCC}$'s is called \emph{existence-complete} for $\mathbf{NJ}$ if for any two propositions $A$ and $B$ satisfying $\mathrm{Hom}_{\mathbf{NJ}}(A, B)=\varnothing$, there exists $\mathcal{C} \in \mathfrak{F}$ and a $\mathrm{BC}$-functor $F: \mathbf{NJ} \to \mathcal{C}$ such that $\mathrm{Hom}_{\mathcal{C}}(F(A), F(B))=\varnothing$. A $\mathrm{BCC}$ is called \emph{existence-complete} for $\mathbf{NJ}$, if it is existence-complete as a singleton family. For $\mathbf{NM}$ and $\mathbf{NN}$, a similar definition is in place, replacing the $\mathrm{BC}$-structure by $\mathrm{AB}$- or $\mathrm{CC}$-structures, respectively.
\end{dfn}

\begin{phil}
Reading any $\mathrm{BCC}$ $\mathcal{C}$ a proof system and any $\mathrm{BC}$-functor $F: \mathbf{NJ} \to \mathcal{C}$ as an interpretation of the pure deductions by the deductions living in $\mathcal{C}$, the existence-completeness simply states that if $B$ is not intuitionistically deductible from $A$, then there is an interpretation $F$ into a $\mathrm{BCC}$ in $\mathfrak{F}$, where $F(A)$ does not prove $F(B)$.
\end{phil}

\begin{rem}
There is a stronger version of the existence-completeness of $\mathcal{C}$ for $\mathbf{NJ}$ that simply demands the existence of a weakly-full $\mathrm{BC}$-functor $F: \mathbf{NJ} \to \mathcal{C}$. This is stronger as it is the uniform version of the existence-completeness, where one $\mathrm{BC}$-functor $F$ works for all the formulas $A$ and $B$.
\end{rem}

As the existence problem is just about the existence of morphisms, it is reasonable to think that $\mathcal{C}$ and $\mathrm{Po}(\mathcal{C})$ have the same behavior in this respect. The next corollary provides a proof for this intuition. 

\begin{lem}\label{PosetReflectionExistenceProblem}
\begin{itemize}
    \item[$(i)$]
The existence of a weakly-full $\mathrm{BC}$-functor $F: \mathbf{NJ} \to \mathcal{C}$ is equivalent to the existence of a weakly-full $\mathrm{BC}$-functor $F: \mathbf{NJ} \to \mathrm{Po}(\mathcal{C})$.   
   \item[$(ii)$]
The existence-completeness of a family $\mathfrak{F}$ of $\mathrm{BCC}$'s for $\mathbf{NJ}$ is equivalent to that of the family $\{\mathrm{Po}(\mathcal{C}) \mid \mathcal{C} \in \mathfrak{F} \}$.
\end{itemize}
A similar claim also holds for $\mathbf{NM}$ and $\mathbf{NN}$.
\end{lem}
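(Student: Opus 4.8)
The plan is to exploit the fact that the functor $\pi: \mathcal{C} \to \mathrm{Po}(\mathcal{C})$ is itself a $\mathrm{BC}$-functor (Example \ref{PosetReflectionIsBCFunctor}), together with the fact that it is full, and hence weakly-full (Example \ref{ExamOfFull}), and to track how weak-fullness behaves under composition.

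For part $(i)$, one direction is immediate: given a weakly-full $\mathrm{BC}$-functor $F: \mathbf{NJ} \to \mathcal{C}$, the composite $\pi \circ F: \mathbf{NJ} \to \mathrm{Po}(\mathcal{C})$ is a $\mathrm{BC}$-functor (composition of $\mathrm{BC}$-functors is a $\mathrm{BC}$-functor — this follows by checking that each preserved piece of structure is preserved by the composite, since isomorphisms compose), and it is weakly-full because a composite of weakly-full functors is weakly-full. Indeed, if $\mathrm{Hom}_{\mathrm{Po}(\mathcal{C})}(\pi F(A), \pi F(B))$ is non-empty, then since $\pi$ is weakly-full $\mathrm{Hom}_{\mathcal{C}}(F(A), F(B))$ is non-empty, and since $F$ is weakly-full $\mathrm{Hom}_{\mathbf{NJ}}(A, B)$ is non-empty. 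For the converse direction, suppose $G: \mathbf{NJ} \to \mathrm{Po}(\mathcal{C})$ is a weakly-full $\mathrm{BC}$-functor. Here I would use the freeness of $\mathbf{NJ}$ (Theorem \ref{Freness}): choose, for each atom $p_i$, an object $C_i$ of $\mathcal{C}$ whose $\pi$-class $[C_i]$ equals $G(p_i)$ (possible since $\pi$ is surjective on objects by construction of $\mathrm{Po}(\mathcal{C})$), and let $F: \mathbf{NJ} \to \mathcal{C}$ be the unique $\mathrm{BC}$-functor extending $p_i \mapsto C_i$. Then $\pi \circ F$ and $G$ are both $\mathrm{BC}$-functors $\mathbf{NJ} \to \mathrm{Po}(\mathcal{C})$ agreeing on atoms, hence isomorphic by the uniqueness clause of Theorem \ref{Freness}; in particular $\pi F(A) \cong G(A)$ for every proposition $A$, so $\mathrm{Hom}_{\mathrm{Po}(\mathcal{C})}(\pi F(A), \pi F(B))$ is non-empty iff $\mathrm{Hom}_{\mathrm{Po}(\mathcal{C})}(G(A), G(B))$ is. Now I claim $F$ is weakly-full: if $\mathrm{Hom}_{\mathcal{C}}(F(A), F(B)) \neq \varnothing$, then since $\pi$ is a functor $\mathrm{Hom}_{\mathrm{Po}(\mathcal{C})}(\pi F(A), \pi F(B)) \neq \varnothing$, hence $\mathrm{Hom}_{\mathrm{Po}(\mathcal{C})}(G(A), G(B)) \neq \varnothing$, hence by weak-fullness of $G$, $\mathrm{Hom}_{\mathbf{NJ}}(A, B) \neq \varnothing$.

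For part $(ii)$, the argument is a routine consequence of the machinery of part $(i)$ plus the observation that in a preordered set $\mathrm{Hom}$-sets are either empty or singletons, so having a witness for $\mathrm{Hom}_{\mathcal{C}}(F(A),F(B)) = \varnothing$ is equivalent to the corresponding emptiness in $\mathrm{Po}(\mathcal{C})$. Concretely: if $\mathfrak{F}$ is existence-complete and $\mathrm{Hom}_{\mathbf{NJ}}(A,B) = \varnothing$, pick $\mathcal{C} \in \mathfrak{F}$ and a $\mathrm{BC}$-functor $F: \mathbf{NJ}\to\mathcal{C}$ with $\mathrm{Hom}_{\mathcal{C}}(F(A),F(B)) = \varnothing$; then $\pi F: \mathbf{NJ}\to\mathrm{Po}(\mathcal{C})$ is a $\mathrm{BC}$-functor, and $\mathrm{Hom}_{\mathrm{Po}(\mathcal{C})}(\pi F(A),\pi F(B)) = \varnothing$ since a non-identity map there would pull back — no, more carefully: a map $[C_1]\to[C_2]$ in $\mathrm{Po}(\mathcal{C})$ exists iff $\mathrm{Hom}_{\mathcal{C}}(C_1,C_2)\neq\varnothing$ by definition of the poset reflection, so $\mathrm{Hom}_{\mathcal{C}}(F(A),F(B)) = \varnothing$ gives exactly $\mathrm{Hom}_{\mathrm{Po}(\mathcal{C})}(\pi F(A),\pi F(B)) = \varnothing$. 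Conversely, if $\{\mathrm{Po}(\mathcal{C}) \mid \mathcal{C}\in\mathfrak{F}\}$ is existence-complete and $\mathrm{Hom}_{\mathbf{NJ}}(A,B) = \varnothing$, get $\mathcal{C}\in\mathfrak{F}$ and $G: \mathbf{NJ}\to\mathrm{Po}(\mathcal{C})$ with $\mathrm{Hom}_{\mathrm{Po}(\mathcal{C})}(G(A),G(B)) = \varnothing$; lift $G$ through $\pi$ to a $\mathrm{BC}$-functor $F:\mathbf{NJ}\to\mathcal{C}$ as in part $(i)$, so $\pi F \cong G$ and hence $\mathrm{Hom}_{\mathcal{C}}(F(A),F(B)) = \varnothing$ (again by the poset-reflection characterization of maps). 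The statements for $\mathbf{NM}$ and $\mathbf{NN}$ follow verbatim, replacing $\mathrm{BC}$ by $\mathrm{AB}$ or $\mathrm{CC}$ and invoking the corresponding clauses of Examples \ref{PosetReflectionIsBCFunctor}, \ref{ExamOfFull} and Theorem \ref{Freness}.

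The one genuinely load-bearing point — the step I expect a careful referee to scrutinize — is the lifting construction in the converse direction: namely that a $\mathrm{BC}$-functor $G: \mathbf{NJ}\to\mathrm{Po}(\mathcal{C})$ can be replaced, without loss of the relevant $\mathrm{Hom}$-emptiness data, by a $\mathrm{BC}$-functor into $\mathcal{C}$ itself. This is where freeness of $\mathbf{NJ}$ is essential: there is no reason $G$ factors through $\pi$ on the nose, but it does factor up to isomorphism, and isomorphic $\mathrm{BC}$-functors detect the same empty and non-empty $\mathrm{Hom}$-sets. Everything else — preservation of structure under composition, the behavior of weak-fullness under composition, the elementary characterization of maps in $\mathrm{Po}(\mathcal{C})$ — is bookkeeping.
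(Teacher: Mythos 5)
Your proposal is correct and follows essentially the same route as the paper: the forward direction by composing with the full (hence weakly-full) $\mathrm{BC}$-functor $\pi$, and the converse by using surjectivity of $\pi$ on objects together with the freeness of $\mathbf{NJ}$ to produce a lift $F$ with $\pi F \cong G$, then transferring $\mathrm{Hom}$-emptiness along the isomorphism. The only cosmetic difference is that you verify weak-fullness of the lift directly from the definition while the paper argues by contrapositive, and you spell out part $(ii)$ where the paper says ``similar.''
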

\begin{proof}
For $(i)$, recall from Example \ref{ExamOfFull} and Example \ref{PosetReflectionIsBCFunctor} that the poset reflection of a $\mathrm{BCC}$ is a Heyting algebra and the canonical forgetful map $\pi: \mathcal{C} \to \mathrm{Po}(\mathcal{C})$ is a full and hence weakly-full $\mathrm{BC}$-functor. As the composition of weakly-full functors is weakly-full, one direction of $(i)$ is clear. For the other direction, let $G: \mathbf{NJ} \to \mathrm{Po}(\mathcal{C})$ be a weakly-full $\mathrm{BC}$-functor. Then, as $\pi$ is surjective on the objects, it is possible to find the objects $h(p_i)$ in $\mathcal{C}$ such that $\pi h(p_i)=G(p_i)$. Then, by the freeness of $\mathbf{NJ}$, Theorem \ref{Freness}, there is a $\mathrm{BC}$-functor $H: \mathbf{NJ} \to \mathcal{C}$ such that $H(p_i)=h(p_i)$ and hence $\pi H(p_i)=G(p_i)$. We claim that $\pi H \cong G$:
\[\begin{tikzcd}[ampersand replacement=\&]
	\&\& {\mathcal{C}} \\
	\\
	{\mathbf{NJ}} \&\& {\mathrm{Po}(\mathcal{C})}
	\arrow["\pi", from=1-3, to=3-3]
	\arrow["G"', from=3-1, to=3-3]
	\arrow["H", dashed, from=3-1, to=1-3]
\end{tikzcd}\]
The reason simply is that $\pi H(p_i)=G(p_i)$, for any $i \geq 0$ and hence by the universality of the freeness of $\mathbf{NJ}$, the two functor $\pi H$ and $G$ must be isomorphic. Using this fact, it is easy to show that $H$ is weakly-full, because if $\mathrm{Hom}_{\mathbf{NJ}}(A, B)=\varnothing$, then $\mathrm{Hom}_{\mathrm{Po}(\mathcal{C})}(G(A), G(B))=\varnothing$, as $G$ is weakly-full. As $G(A) \cong \pi H(A)$ and $G(B) \cong \pi H(B)$, we have $\mathrm{Hom}_{\mathrm{Po}(\mathcal{C})}(\pi H(A), \pi H(B))=\varnothing$. Therefore, $\mathrm{Hom}_{\mathcal{C}}(H(A), H(B))=\varnothing$ which completes the proof of weakly-fullness of $H$. The proof for $(ii)$ is similar.
\end{proof}

The immediate consequence of Lemma \ref{PosetReflectionExistenceProblem} is the existence-completeness of one Heyting algebra for $\mathbf{NJ}$:

\begin{thm}\emph{(Algebraic Completeness)}
The Heyting algebra $\mathrm{Po}(\mathbf{NJ})$ and hence the family of all Heyting algebras are existence-complete for $\mathbf{NJ}$.
\end{thm}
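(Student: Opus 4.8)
The plan is to derive this immediately from Lemma \ref{PosetReflectionExistenceProblem}$(ii)$ together with the fact that $\mathbf{NJ}$ is itself a $\mathrm{BCC}$ (Theorem \ref{Freness}). First I would observe that the singleton family $\mathfrak{F}=\{\mathbf{NJ}\}$ is trivially existence-complete for $\mathbf{NJ}$: the identity functor $id_{\mathbf{NJ}}: \mathbf{NJ}\to\mathbf{NJ}$ is a $\mathrm{BC}$-functor (it preserves every piece of structure on the nose), and for any propositions $A,B$ with $\mathrm{Hom}_{\mathbf{NJ}}(A,B)=\varnothing$ we have $\mathrm{Hom}_{\mathbf{NJ}}(id(A),id(B))=\varnothing$ witnessing completeness. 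In fact one could equally well note that $id_{\mathbf{NJ}}$ is (weakly-)full, so even the stronger uniform version of existence-completeness holds for $\mathbf{NJ}$.

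Next I would apply Lemma \ref{PosetReflectionExistenceProblem}$(ii)$ to this family: the existence-completeness of $\{\mathbf{NJ}\}$ is equivalent to that of $\{\mathrm{Po}(\mathbf{NJ})\}$. Since $\mathrm{Po}(\mathbf{NJ})$ is a Heyting algebra — recall from Example \ref{PosetReflectionIsBCFunctor} that the poset reflection of a $\mathrm{BCC}$ is a $\mathrm{BCC}$, hence a poset $\mathrm{BCC}$, i.e.\ a Heyting algebra — this shows the single Heyting algebra $\mathrm{Po}(\mathbf{NJ})$ is existence-complete for $\mathbf{NJ}$. The statement for the whole family of Heyting algebras then follows a fortiori: if one Heyting algebra in a family already suffices as a witness for every negative instance, then the larger family containing it is existence-complete as well. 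The claims for $\mathbf{NM}$ and $\mathbf{NN}$ are handled identically, using the $\mathrm{AB}$- and $\mathrm{CC}$-versions of Lemma \ref{PosetReflectionExistenceProblem} and the fact that the poset reflection of an $\mathrm{ABC}$ (resp.\ $\mathrm{CCC}$) is an almost Heyting algebra (resp.\ a poset $\mathrm{CCC}$), which are still Heyting-algebra-like structures covered by the intended reading.

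There is essentially no obstacle here; the real content was already packaged into Lemma \ref{PosetReflectionExistenceProblem}. The only point requiring a moment's care is the direction of the trivial reduction: one must use that $\mathbf{NJ}$ \emph{is} a $\mathrm{BCC}$ so that the identity interpretation is legitimate, rather than trying to build an interpretation into some external category. If one wanted a more self-contained write-up avoiding the appeal to part $(ii)$ of the lemma, the alternative is to invoke part $(i)$: the weakly-full $\mathrm{BC}$-functor $id_{\mathbf{NJ}}$ yields, via $(i)$, a weakly-full $\mathrm{BC}$-functor $\mathbf{NJ}\to\mathrm{Po}(\mathbf{NJ})$ (indeed $\pi$ itself already is one, by Example \ref{PosetReflectionIsBCFunctor}), and weak fullness of $\pi$ transports every non-provability fact from $\mathbf{NJ}$ down to $\mathrm{Po}(\mathbf{NJ})$; combined with the soundness of interpretation (a map in $\mathbf{NJ}$ maps to a map in $\mathrm{Po}(\mathbf{NJ})$) this gives the equivalence $\mathrm{Hom}_{\mathbf{NJ}}(A,B)=\varnothing \iff \mathrm{Hom}_{\mathrm{Po}(\mathbf{NJ})}(\pi A,\pi B)=\varnothing$, which is exactly existence-completeness of $\mathrm{Po}(\mathbf{NJ})$.
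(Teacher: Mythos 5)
Your proposal is correct and follows essentially the same route as the paper: the paper's proof is exactly the observation that the identity functor $id_{\mathbf{NJ}}$ is weakly-full, followed by an appeal to Lemma \ref{PosetReflectionExistenceProblem} to transfer (existence-)completeness to $\mathrm{Po}(\mathbf{NJ})$. Your additional remark spelling out the variant via part $(i)$ and the weak fullness of $\pi$ is a faithful unpacking of the same argument, not a different proof.
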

\begin{proof}
As the identity functor $id: \mathbf{NJ} \to \mathbf{NJ}$ is weakly-full, by Lemma \ref{PosetReflectionExistenceProblem}, we can prove the claim.
\end{proof}

To push the completeness further, one may want to restrict the family of Heyting algebra to Heyting algebras of some specific form. The first immediate restriction is to Heyting algebras in the form $\mathbf{2}^{(P, \leq)}$, where $(P, \leq)$ is a poset.
To prove, it is enough to
use Theorem \ref{KripkeRepForHeyting} to embed the Heyting algebra $\mathcal{H}$ into $\mathbf{2}^{[\mathcal{H}, \mathbf{2}]}$, where $[\mathcal{H}, \mathbf{2}]$ is the poset of all $\mathrm{BC}$-functors from $\mathcal{H}$ to $\mathbf{2}$:

\begin{thm}\emph{(Kripke Completeness)}
\begin{itemize}
    \item[$(i)$] 
There is a poset $(P, \leq)$ (i.e., $[\mathrm{Po}(\mathbf{NJ}), \mathbf{2}]$) and a weakly-full $\mathrm{BC}$-functor $\mathbf{NJ} \to \mathbf{2}^{(P, \leq)}$. Therefore, the poset $\mathbf{2}^{(P, \leq)}$ and consequently the family of all Alexandrov spaces is existence-complete for $\mathbf{NJ}$.
    \item[$(ii)$]
Let $FinTree$ be the family of all Heyting algebras in the form $\mathbf{2}^{(T, \leq)}$, where $(T, \leq)$ is a finite rooted tree. Then, $FinTree$ is existence-complete for $\mathbf{NJ}$.   
\end{itemize}
\end{thm}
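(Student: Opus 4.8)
The plan is to reduce everything to the algebraic completeness theorem (the family of all Heyting algebras is existence-complete for $\mathbf{NJ}$) together with the Kripke representation theorem for Heyting algebras (Theorem~\ref{KripkeRepForHeyting}), and then to refine the representation so that only \emph{finite rooted trees} are needed. For part $(i)$, I would start from the identity functor $id:\mathbf{NJ}\to\mathbf{NJ}$, which is weakly-full, pass to the poset reflection via Lemma~\ref{PosetReflectionExistenceProblem}$(i)$ to obtain a weakly-full $\mathrm{BC}$-functor $\mathbf{NJ}\to\mathrm{Po}(\mathbf{NJ})$, and then post-compose with the canonical map $e:\mathrm{Po}(\mathbf{NJ})\to\mathbf{2}^{[\mathrm{Po}(\mathbf{NJ}),\mathbf{2}]}$ from Theorem~\ref{KripkeRepForHeyting}, which is a full and faithful (hence weakly-full) $\mathrm{BC}$-functor. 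The composite is a weakly-full $\mathrm{BC}$-functor $\mathbf{NJ}\to\mathbf{2}^{(P,\leq)}$ with $(P,\leq)=[\mathrm{Po}(\mathbf{NJ}),\mathbf{2}]$, and since $\mathbf{2}^{(P,\leq)}\cong U(P,\leq)=\mathcal{O}(P,\leq_{\mathrm{Alex}})$ is the open-set algebra of an Alexandrov space, existence-completeness of $\mathbf{2}^{(P,\leq)}$ — and of the family of all Alexandrov spaces — follows directly from the definition of existence-completeness together with weakly-fullness.

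For part $(ii)$ the work is genuinely finitary. Suppose $\mathrm{Hom}_{\mathbf{NJ}}(A,B)=\varnothing$; I want a finite rooted tree $(T,\leq)$, an assignment $f$ of upsets of $T$ to the atoms occurring in $A,B$, and the induced $\mathrm{BC}$-functor $F:\mathbf{NJ}\to\mathbf{2}^{(T,\leq)}$ with $F(A)\not\subseteq F(B)$, i.e.\ $\mathrm{Hom}_{\mathbf{2}^{(T,\leq)}}(F(A),F(B))=\varnothing$. The standard route is: first invoke part $(i)$ (or directly the Kripke completeness of $\mathsf{IPC}$) to get \emph{some} Kripke model falsifying $A\to B$; then apply the two classical model-theoretic reductions for $\mathsf{IPC}$. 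The \emph{finite model property} lets us replace the arbitrary frame by a finite one — here one filtrates through the finite set of subformulas of $A$ and $B$, which suffices because $F$ only depends on the atoms of $A,B$ and the Heyting operations are computed subformula-wise. The \emph{tree-unravelling} (generated-subframe plus unravelling) argument then converts a finite rooted frame into a finite rooted tree with a p-morphism onto it; by Example~\ref{p-morphism} the inverse-image map $g^{-1}:U(Q,\leq_Q)\to U(T,\leq_T)$ along a p-morphism $g:(T,\leq_T)\to(Q,\leq_Q)$ is a $\mathrm{BC}$-functor, so composing the interpretation over $Q$ with $g^{-1}$ gives an interpretation over the tree that still separates $F(A)$ from $F(B)$. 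Restricting to the rooted component (a generated subframe, again a p-morphism) preserves the separating valuation since the counterexample point and everything above it lies in that component.

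The main obstacle, and the step I would spend the most care on, is the passage from "$\mathrm{Hom}_{\mathbf{NJ}}(A,B)=\varnothing$" to "$A\to B$ is falsified in a Kripke model" — i.e.\ making precise the bridge between the categorical existence problem and ordinary Kripke semantics for $\mathsf{IPC}$, and checking that the $\mathrm{BC}$-functor picture from part $(i)$ really does deliver a genuine Kripke countermodel (one must verify $\mathrm{Hom}_{\mathbf{NJ}}(\top,A\to B)=\varnothing$ iff $\mathrm{Hom}_{\mathbf{NJ}}(A,B)=\varnothing$, using the exponential adjunction in $\mathbf{NJ}$, and that a weakly-full $\mathrm{BC}$-functor transports non-provability). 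After that, the filtration and unravelling are routine facts about $\mathsf{IPC}$, but I would state them as used lemmas rather than reprove them, and I would be explicit that each frame manipulation is realized categorically by a $\mathrm{BC}$-functor (Example~\ref{p-morphism} for p-morphisms, and the evident $\mathrm{BC}$-functor $\mathbf{2}^{(Q,\leq)}\to\mathbf{2}^{(T,\leq)}$ induced by restriction along an inclusion of a generated subframe) so that weakly-fullness is preserved at every stage. A secondary subtlety: one must ensure the finiteness in filtration is of the \emph{frame}, independent of which two propositions we started from only up to their subformulas — but since existence-completeness quantifies over all pairs $A,B$ and allows a different $\mathcal{C}$ and $F$ per pair, this causes no trouble for membership of the separating algebra in the family $FinTree$.
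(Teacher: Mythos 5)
Your proposal is correct and follows essentially the same route as the paper: part $(i)$ is obtained by composing the weakly-full functor $\mathbf{NJ}\to\mathrm{Po}(\mathbf{NJ})$ with the embedding $e:\mathrm{Po}(\mathbf{NJ})\to\mathbf{2}^{[\mathrm{Po}(\mathbf{NJ}),\mathbf{2}]}$ from Theorem~\ref{KripkeRepForHeyting}, and part $(ii)$ is the standard filtration-plus-unravelling argument, which the paper itself only cites (deferring the details to \cite{Chagrov}) while you additionally spell out how each frame manipulation is realized by a $\mathrm{BC}$-functor via Example~\ref{p-morphism}.
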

\begin{proof}
The proof of $(i)$ is explained above. For $(ii)$, a technique called \emph{filtration} is required to show that for given formulas $A$ and $B$, finite posets and then finite rooted trees are sufficient for the existence-completeness. We will not explain the details here. The reader must consult \cite{Chagrov}.
\end{proof}

Using finite rooted trees, one can strengthen the existence-completeness even further. Recall that a space $X$ is called \emph{metrizble} if there is a metric on $X$ inducing its topology. It is called \emph{dense-in-itself} if $Cl(X-\{x\})=X$, for any $x \in X$. The spaces $\mathbb{Q}$, $\mathbb{R}$ and $\{0, 1\}^{\mathbb{N}}$, where $\{0, 1\}$ is the discrete space with two elements are all dense-in-themselves.

\begin{thm}(McKinsey-Tarski \cite{McTarski,RasSik}) Let $X$ be a dense-in-itself metrizable space. Then, $\mathcal{O}(X)$ is existence-complete for $\mathbf{NJ}$.
\end{thm}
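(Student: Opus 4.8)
# Proof Proposal for the McKinsey–Tarski Theorem

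The plan is to reduce the claim to the already-established Kripke completeness result, namely that the family $FinTree$ of Heyting algebras of the form $\mathbf{2}^{(T,\leq)}$, with $(T,\leq)$ a finite rooted tree, is existence-complete for $\mathbf{NJ}$. So suppose $\mathrm{Hom}_{\mathbf{NJ}}(A,B)=\varnothing$. By Kripke completeness there is a finite rooted tree $(T,\leq)$ and a weakly-full $\mathrm{BC}$-functor $G\colon \mathbf{NJ}\to \mathbf{2}^{(T,\leq)}$ witnessing $\mathrm{Hom}_{\mathbf{2}^{(T,\leq)}}(G(A),G(B))=\varnothing$. The goal is now purely topological: given such a finite rooted tree, produce a continuous open surjection $q\colon X \to (T,\leq)$ (where the finite poset carries its upset/Alexandrov topology), because then by Example \ref{OpenMaps} — or rather its order-theoretic incarnation, the p-morphism case in Example \ref{p-morphism} together with the observation that an open surjection induces a weakly-full $\mathrm{BC}$-functor on open-set lattices — the map $q^{-1}\colon \mathcal{O}(T,\leq)=\mathbf{2}^{(T,\leq)} \to \mathcal{O}(X)$ is a $\mathrm{BC}$-functor, and one checks it is weakly-full because $q$ is surjective. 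Composing, $q^{-1}\circ G\colon \mathbf{NJ}\to\mathcal{O}(X)$ is a weakly-full $\mathrm{BC}$-functor, so $\mathrm{Hom}_{\mathcal{O}(X)}(q^{-1}G(A),q^{-1}G(B))=\varnothing$, which is exactly existence-completeness of $\mathcal{O}(X)$.

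Thus everything hinges on the following topological lemma: \emph{every finite rooted tree is an open continuous image of any dense-in-itself metrizable space $X$.} First I would handle the model case $X=\{0,1\}^{\mathbb{N}}$ (the Cantor set), or equally $X=\mathbb{Q}$ or $X=\mathbb{R}$, where the construction is most transparent: one builds, by recursion on the depth of the tree, a partition of $X$ into nonempty open (indeed clopen, in the Cantor case) pieces indexed by the children of the root, assigning the root itself to the whole space and then recursing into each piece. For this to give an \emph{open} map onto the Alexandrov space one needs each basic open $U_t=\{s: s\geq t\}$ of the tree to pull back to an open set, and each open piece to map onto such a $U_t$; the key point where dense-in-itselfness is used is that at every stage each open piece can be split into as many nonempty open subpieces as the node has children (a nonempty open subset of a dense-in-itself space is again dense-in-itself, hence not a single point, hence splittable). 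Continuity is automatic from the recursive clopen/open partitioning.

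For a general dense-in-itself metrizable space $X$ the same recursion works: at each node, a nonempty open set $V\subseteq X$ must be partitioned into finitely many nonempty open sets, one per child, plus we must be careful that the union of the chosen open pieces at each level, together with their limit behaviour, realizes the Alexandrov topology correctly — concretely, the preimage of $U_t$ should be the open set assigned to $t$, and the map sends a point $x$ to the \emph{deepest} node whose assigned open set contains $x$. Here metrizability is convenient (though I suspect regularity plus the countable-splitting argument already suffices) for arranging that the assigned open sets at a fixed level are pairwise disjoint and that every point lands somewhere; dense-in-itselfness is again exactly what guarantees a nonempty open set is never forced to be a singleton, so the branching can always be carried out. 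The main obstacle, and the step I would spend the most care on, is precisely this general-$X$ version of the open-surjection lemma: verifying that the recursively constructed map is genuinely open (not just continuous) and genuinely onto the \emph{rooted tree with its upset topology}, i.e.\ that no node is missed and that the topology is matched on the nose. Once that lemma is in hand, the reduction through Kripke completeness and Example \ref{p-morphism}/Example \ref{OpenMaps} is routine. I would present the Cantor-space case in full as the representative argument and indicate the adaptation to $\mathbb{Q}$, $\mathbb{R}$, and the general metrizable dense-in-itself case, referring to \cite{McTarski,RasSik} for the complete treatment.
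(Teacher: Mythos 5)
Your proposal follows exactly the paper's route: reduce via the finite-rooted-tree Kripke completeness to producing a continuous open surjection from $X$ onto the tree with its upset topology, and then observe that surjectivity makes the inverse-image map a weakly-full $\mathrm{BC}$-functor into $\mathcal{O}(X)$; the paper likewise defers the open-surjection lemma to the literature. One caveat on your sketch of that lemma: a partition of $X$ into finitely many nonempty open pieces cannot work verbatim for connected spaces such as $\mathbb{R}$ --- only the preimages of upsets are open, the individual fibers are not, and openness of the map requires the deeper fibers to accumulate on the shallower ones --- so the openness verification you flag as the main obstacle really is the crux.
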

\begin{proof}
To prove, it is enough to show that for any finite rooted tree $(T, \leq)$, there is a weakly-full $\mathrm{BC}$-functor from $\mathbf{2}^{(T, \leq)}$ into $\mathcal{O}(X)$. For that purpose and using Example \ref{OpenMaps}, it is enough to provide an open surjection $f: X \to T$, where $T$ is equipped with the upset topology. Note that the surjectivitiy of $f$ implies that $f^{-1}: \mathbf{2}^{(T, \leq)} \to \mathcal{O}(X)$ is weakly-full. This is not an easy task and one can find its proof in \cite{Nick}.
\end{proof}

\begin{cor} Each of the Heyting algebras $\mathcal{O}(\mathbb{Q})$, $\mathcal{O}(\mathbb{R})$ and $\mathcal{O}(\{0, 1\}^{\mathbb{N}})$ are existence-complete for $\mathbf{NJ}$.
\end{cor}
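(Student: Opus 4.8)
The plan is to derive the corollary directly from the McKinsey--Tarski theorem by checking that each of the three spaces $\mathbb{Q}$, $\mathbb{R}$, and $\{0,1\}^{\mathbb{N}}$ is both metrizable and dense-in-itself. Since the McKinsey--Tarski theorem asserts that $\mathcal{O}(X)$ is existence-complete for $\mathbf{NJ}$ whenever $X$ is a dense-in-itself metrizable space, the entire content of the corollary reduces to verifying these two topological hypotheses for the three concrete spaces. First I would recall that $\mathbb{Q}$ and $\mathbb{R}$ carry their standard Euclidean metrics, so metrizability is immediate; for $\{0,1\}^{\mathbb{N}}$ I would exhibit the standard metric $d(x,y) = 2^{-n}$, where $n$ is the least index at which $x$ and $y$ differ (and $d(x,x)=0$), and note that this metric induces precisely the product topology. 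Hence all three spaces are metrizable.

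Next I would check the dense-in-itself condition, i.e.\ that $Cl(X \setminus \{x\}) = X$ for every point $x$, which amounts to saying that no point is isolated. For $\mathbb{R}$ this is clear since every open interval around a point contains other points; the same holds for $\mathbb{Q}$ by density of the rationals in themselves. For the Cantor space $\{0,1\}^{\mathbb{N}}$, given any basic open neighborhood of a point $x$ — determined by fixing finitely many coordinates of $x$ — one can always flip a coordinate beyond those that were fixed to obtain a distinct point in the same neighborhood, so $x$ is not isolated. This establishes that each of the three spaces is dense-in-itself, and the corollary follows by three applications of the McKinsey--Tarski theorem. In fact, the excerpt already observes that $\mathbb{Q}$, $\mathbb{R}$, and $\{0,1\}^{\mathbb{N}}$ are dense-in-themselves in the sentence immediately preceding the theorem, so the only remaining point is metrizability, which is routine.

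There is no real obstacle here: the corollary is a pure instantiation of the preceding theorem, and the proof is essentially a one-line remark that the three named spaces satisfy the theorem's hypotheses. If anything, the only mild subtlety worth spelling out is that the metric on $\{0,1\}^{\mathbb{N}}$ genuinely induces the product topology rather than some finer topology — a standard fact, since the $d$-balls are exactly the basic cylinder sets — but this is well known and needs no detailed argument. Thus I would keep the proof to a couple of sentences, citing the McKinsey--Tarski theorem and the elementary verification of the hypotheses.
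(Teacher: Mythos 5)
Your proposal is correct and matches the paper's approach exactly: the corollary is a direct instantiation of the McKinsey--Tarski theorem, and the paper itself records (just before the theorem) that $\mathbb{Q}$, $\mathbb{R}$ and $\{0,1\}^{\mathbb{N}}$ are dense-in-themselves, leaving only the routine metrizability check that you supply.
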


Of course, any existence-complete family for $\mathbf{NJ}$ can also be used as an existence-complete family for $\mathbf{NM}$ and $\mathbf{NN}$. The reason is the well-known conservativity of $\mathbf{NJ}$ over $\mathbf{NM}$ and $\mathbf{NN}$. However, there are also some interesting existence-completeness results specific to the fragments. We present one of these results as we will need it in the next subsection.

\begin{thm} (\cite{Makkai}) \label{ExistenceCompletenessOfN} There is a weakly-full $\mathrm{AB}$-functor from $\mathbf{NM}$ to $\mathbf{2}^{(\mathbb{N}, \mid)}$, where $\mid$ is the divisibility order on $\mathbb{N}$. Therefore, $\mathbf{2}^{(\mathbb{N}, | \,)}$ is existence-complete for $\mathbf{NM}$.
\end{thm}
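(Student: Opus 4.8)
The plan is to produce a single weakly-full $\mathrm{AB}$-functor $F \colon \mathbf{NM} \to \mathbf{2}^{(\mathbb{N}, \mid)}$; the existence-completeness of $\mathbf{2}^{(\mathbb{N},\mid)}$ for $\mathbf{NM}$ then follows immediately from the definitions, exactly as in the Kripke-completeness arguments above. Recall that $\mathbf{2}^{(\mathbb{N},\mid)} \cong U(\mathbb{N},\mid)$, the Heyting algebra of upsets of the divisibility poset; this is an $\mathrm{ABC}$ (indeed a Heyting algebra, hence even a $\mathrm{BCC}$). By the freeness of $\mathbf{NM}$ as the free $\mathrm{ABC}$ on $\{p_0, p_1, \ldots\}$ (Theorem \ref{Freness}), to build $F$ it suffices to choose, for each atom $p_i$, an upset $f(p_i) \subseteq \mathbb{N}$; the freeness then delivers a unique-up-to-iso $\mathrm{AB}$-functor extending this assignment, and weak-fullness is a property to be checked of that extension. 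So the whole content is: pick the interpretations of the atoms cleverly enough that $\mathrm{Hom}_{\mathbf{NM}}(A,B) = \varnothing$ forces $\mathrm{Hom}_{\mathbf{2}^{(\mathbb{N},\mid)}}(F(A),F(B)) = \varnothing$, i.e.\ $F(A) \not\subseteq F(B)$ whenever $A \not\vdash_{\mathbf{NM}} B$.

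The strategy for choosing the atoms is the standard trick for showing a single ``rich enough'' frame is complete: encode \emph{all} finite rooted posets (or finite trees) simultaneously inside $(\mathbb{N},\mid)$ via p-morphisms, and encode all atomic valuations on them via a suitable indexing by primes. Concretely, first reduce, using the finite-frame existence-completeness for $\mathbf{NM}$ (which follows from the $\mathbf{NJ}$ case by conservativity, or directly by filtration as cited for Theorem (Kripke Completeness)), to the situation where $A \not\vdash_{\mathbf{NM}} B$ is witnessed by a finite rooted poset $(W,\le)$, a valuation, and a world $w$ with $w \Vdash A$, $w \nVdash B$. Next, observe that $(\mathbb{N},\mid)$ is ``universal'' for finite posets in the sense that for every finite rooted poset $(W,\le)$ there is a surjective p-morphism $g \colon (\mathbb{N},\mid) \to (W,\le)$: use the fact that $(\mathbb{N},\mid)$ contains, below every element, arbitrarily long and arbitrarily branching finite chains and antichains (e.g.\ via prime factorizations $\prod p_k^{a_k}$), so one can cover $W$ by a cofinal family of finite sub-posets each admitting a p-morphism, and patch. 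By Example \ref{p-morphism}, such a $g$ induces a weakly-full $\mathrm{BC}$-functor $g^{-1} \colon U(W,\le) \to U(\mathbb{N},\mid)$. Finally, arrange the atomic valuation: the atoms used in $A$ and $B$ get assigned the $g$-preimages of their valuation-upsets in $W$, and the infinitely many remaining atoms are assigned, say, $\varnothing$ or $\mathbb{N}$ (or, to get a \emph{single} $F$ working for all $A,B$ at once, one interleaves countably many disjoint copies of this construction using the countably many ``sectors'' of $(\mathbb{N},\mid)$ indexed by, e.g., disjoint infinite sets of primes). Then $F(A) \not\subseteq F(B)$ in $U(\mathbb{N},\mid)$ because the corresponding inclusion already fails in $U(W,\le)$ and $g^{-1}$ is weakly-full, i.e.\ reflects non-emptiness of Hom-sets.

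The genuinely hard step is the combinatorial heart: constructing a \emph{single} open surjection (equivalently, surjective p-morphism) $g \colon (\mathbb{N},\mid) \to (W,\le)$ for an arbitrary finite rooted poset $W$, and, for the uniform version, doing this for countably many $W$'s simultaneously inside disjoint pieces of $(\mathbb{N},\mid)$ so that one fixed assignment $f$ of upsets to atoms handles every pair $(A,B)$. This is where the special structure of the divisibility order is essential — one must exploit that $(\mathbb{N},\mid)$ is an $\omega$-branching, $\omega$-deep locally-finite poset with a root, so that every finite rooted poset embeds cofinally and the p-morphism condition (the ``back'' condition: if $g(n) \le v$ then some multiple $m$ of $n$ has $g(m) = v$) can be satisfied by always having enough fresh multiples available. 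I would expect to import this from \cite{Makkai}, citing it rather than reproving it, and present the surrounding reduction (finite-frame completeness $\Rightarrow$ p-morphism into $(\mathbb{N},\mid)$ $\Rightarrow$ weakly-full $\mathrm{AB}$-functor via Example \ref{p-morphism} and Theorem \ref{Freness}) as the proof proper, flagging the p-morphism construction as the one nontrivial ingredient.
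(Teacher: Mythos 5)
Your overall architecture (reduce to a p-morphism onto a suitable poset, then apply Example \ref{p-morphism} and freeness) is in the right spirit, but the combinatorial claim at the heart of your argument is false as stated, and the failure is exactly the subtle point this theorem turns on. You claim that for \emph{every} finite rooted poset $(W,\le)$ there is a surjective p-morphism $g\colon (\mathbb{N},\mid)\to (W,\le)$. But $(\mathbb{N},\mid)$ has a \emph{top} element, namely $0$ (every $n$ divides $0$), and more generally any two elements have a common upper bound (their lcm, or $0$). A surjective p-morphism sends $0$ to an element lying above the image of everything, so every surjective p-morphic image of $(\mathbb{N},\mid)$ must itself have a top element. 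Hence no finite rooted poset lacking a top (e.g.\ a root with two incomparable points above it) is such an image. This is precisely why $\mathbf{2}^{(\mathbb{N},\mid)}$ is existence-complete for $\mathbf{NM}$ but \emph{not} for $\mathbf{NJ}$ (see the remark following the theorem: $\neg p \vee \neg\neg p$ is forced everywhere in $\mathbf{2}^{(Q,\le)}$ when $Q$ has a top). For the $\bot$-free fragment the gap is repairable --- one can always adjoin a top point forcing all atoms to a counter-model without disturbing the forcing of $\{\top,\wedge,\vee,\to\}$-formulas --- but you neither note the obstruction nor perform this repair, and your ``sector-by-disjoint-primes'' patching for uniformity fails for the same reason: the sectors all share the root $1$ and the top $0$, so they are not independent.

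Beyond that gap, your route also diverges from the paper's. The paper does not go through finite-frame completeness at all: it passes to the poset reflection $\mathrm{Po}(\mathbf{NM})$ (an almost Heyting algebra), embeds it into $\mathbf{2}^{(P,\le)}$ with $P=[\mathrm{Po}(\mathbf{NM}),\mathbf{2}]$ via the Kripke representation, observes that $P$ has a terminal object because the constant map $1$ is an $\mathrm{AB}$-functor (though not a $\mathrm{BC}$-functor, as it fails to preserve $0$), shrinks $P$ to a countable poset still with a terminal element, and then constructs a \emph{single} surjective p-morphism from $(\mathbb{N},\mid)$ onto that one poset. This yields the uniform (single weakly-full functor) statement directly, whereas your approach at best yields the non-uniform version after the repair above and would still need a genuinely different idea to obtain one functor working for all pairs $(A,B)$ simultaneously.
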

\begin{proof}
The first part of the proof is similar to what we explained for Heyting algebras. First, notice that $\mathrm{Po}(\mathbf{NM})$ is an almost Heyting algebra and it is enough to provide a weakly-full $\mathrm{AB}$-functor from $\mathrm{Po}(\mathbf{NM})$ to $\mathbf{2}^{(\mathbb{N}, \mid)}$.
For that purpose, we first embed $\mathrm{Po}(\mathbf{NM})$ into 
$\mathbf{2}^{(P, \leq)}$, where $(P, \leq)$ is the poset $[\mathrm{Po}(\mathbf{NM}), \mathbf{2}]$ of all $\mathrm{AB}$-functors ordered by the pointwise order. One crucial point here is that the poset $(P, \leq)$ has a terminal object, which simply is the constant map $1$. Notice that this map is an $\mathrm{AB}$-functor but not a $\mathrm{BC}$-functor, as it does not preserve the initial element $0$. Having this terminal element is the main reason to deviate from $\mathbf{NJ}$ and Heyting algebras. Then, there is a technical step changing $(P, \leq)$ into a countable poset again with a terminal element \cite{Makkai}. Finally, to provide a weakly-full $\mathrm{AB}$-functor from $\mathbf{2}^{(P, \leq)}$ to $\mathbf{2}^{(\mathbb{N}, \mid)}$, using Example \ref{p-morphism}, it is enough to provide a surjective p-morphism from $(\mathbb{N}, \mid)$ to $(P, \leq)$. This is the main part of the proof and can be found in \cite{Makkai}.
\end{proof}

Note that the poset $\mathbf{2}^{(\mathbb{N}, \mid)}$ or even more generally, the poset $\mathbf{2}^{(Q, \leq)}$, where $(Q, \leq)$ is a poset with the top element $t$ is not existence-complete for $\mathbf{NJ}$. The reason is that the formula $\neg p \vee \neg \neg p$ is not provable in intuitionistic logic, i.e., there is no map in $\mathrm{Hom}_{\mathbf{NJ}}(\top, \neg p \vee \neg \neg p)$, while for any order-preserving map $f: (Q, \leq) \to \mathbf{2}$ in $\mathbf{2}^{(Q, \leq)}$, we have $\neg f \vee \neg \neg f= 1$. The reason for the latter is that either $\neg f=0$ or $\neg f=1$. To prove, assume $(\neg f)(q) =1$ and $(\neg f)(r)=0$, for some $q, r \in Q$. Then, as $q \leq t$, we have $(\neg f)(t)=1$. However, $(\neg f)(r)=0$ means that there is $s \geq r$ such that $f(s)=1$. Hence, $f(t)=1$ which is impossible as $(\neg f)(t)=1$.

\subsubsection{BHK Existence-completeness}

One interesting family whose existence-completeness is worth investigating is the family of categories of the form $\mathbf{Set}^{\mathcal{C}}$. The reason for this interest is their role in formalizing the BHK interpretation.
For that purpose, and thanks to Lemma \ref{PosetReflectionExistenceProblem}, it is enough to compute the poset reflection of $\mathbf{Set}^{\mathcal{C}}$ and check if a family of these posets is existence-complete or not. To start, it is natural to consider the canonical choice among all categories of variable sets, i.e., the category of sets itself. However, this category is not existence-complete for $\mathbf{NJ}$ as it captures the whole classical logic.
\begin{thm}
$\mathrm{Po}(\mathbf{Set}) \cong \mathbf{2}$. Therefore, for any two propositions $A$ and $B$, $\mathsf{CPC} \vdash A \to B$ iff $\mathrm{Hom}_{\mathbf{Set}}(F(A), F(B)) \neq \varnothing$, for any weakly-full $\mathrm{BC}$-functor $F: \mathbf{NJ} \to \mathbf{Set}$.
\end{thm}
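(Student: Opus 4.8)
The plan is to prove the two assertions in sequence: first $\mathrm{Po}(\mathbf{Set}) \cong \mathbf{2}$, and then derive the classical completeness statement from it together with the earlier algebraic completeness results. For the first claim, recall that by Example on preorder/poset reflection, $\mathrm{Po}(\mathbf{Set})$ has as objects the equivalence classes of sets under the relation $A \sim B$ iff there is a function $A \to B$ and a function $B \to A$, ordered by the existence of a map. The key observation is that $\mathrm{Hom}_{\mathbf{Set}}(A, B) = \varnothing$ precisely when $A \neq \varnothing$ and $B = \varnothing$: if $B$ is non-empty then the constant function exists, and if $A$ is empty then the empty function exists. Hence there are exactly two $\sim$-classes: the class $\{\varnothing\}$ and the class of all non-empty sets, with $[\varnothing] \leq [\text{non-empty}]$ and not conversely. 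This poset is exactly $\mathbf{2} = (\{0,1\}, \leq)$ with $0 = [\varnothing]$ and $1 = [\text{non-empty}]$. One should also check that the canonical forgetful functor $\pi: \mathbf{Set} \to \mathrm{Po}(\mathbf{Set})$ is a $\mathrm{BC}$-functor (this is the instance $\mathcal{C} = \mathbf{Set}$ of Example \ref{PosetReflectionIsBCFunctor}), so that $\mathbf{2}$, viewed this way, carries the $\mathrm{BC}$-structure with $0$ as initial object, $1$ as terminal object, and the meet/join/Heyting-implication of the two-element Boolean algebra; a quick check confirms this is the standard two-valued Boolean algebra, i.e., classical propositional logic.

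For the second assertion, I would argue both directions. For the forward direction, suppose $\mathsf{CPC} \vdash A \to B$ and let $F: \mathbf{NJ} \to \mathbf{Set}$ be any weakly-full $\mathrm{BC}$-functor. Composing with $\pi: \mathbf{Set} \to \mathrm{Po}(\mathbf{Set}) \cong \mathbf{2}$ gives a $\mathrm{BC}$-functor $\pi F: \mathbf{NJ} \to \mathbf{2}$, i.e., a two-valued valuation of the atoms; since $\mathbf{2}$ is the Boolean algebra of classical truth values and $A \to B$ is a classical tautology, $\pi F(A) \leq \pi F(B)$ in $\mathbf{2}$, which means $\mathrm{Hom}_{\mathbf{2}}(\pi F(A), \pi F(B)) \neq \varnothing$. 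In the poset reflection a non-empty hom-set at the level of $\mathbf{2}$ means $F(A)$ lies in a $\sim$-class below or equal to that of $F(B)$; unpacking, this forces $\mathrm{Hom}_{\mathbf{Set}}(F(A), F(B)) \neq \varnothing$ (since in $\mathbf{Set}$ the only empty hom-sets are those from a non-empty set to $\varnothing$, and such a configuration would give strict inequality $1 \not\leq 0$ in $\mathbf{2}$, contradicting $\pi F(A) \leq \pi F(B)$). Actually the cleanest phrasing: $\mathrm{Hom}_{\mathbf{Set}}(X,Y) = \varnothing$ iff $\pi(X) = 1$ and $\pi(Y) = 0$ iff $\pi(X) \not\leq \pi(Y)$; so non-emptiness of the hom-set is literally equivalent to $\pi(X) \leq \pi(Y)$, and the latter holds by classical validity.

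For the converse direction, suppose $\mathsf{CPC} \nvdash A \to B$. Then there is a classical valuation $v$ of the atoms with $v(A) = 1$ and $v(B) = 0$; equivalently a $\mathrm{BC}$-functor $\mathbf{NJ} \to \mathbf{2}$ sending $A \mapsto 1$, $B \mapsto 0$. Lift this to $\mathbf{Set}$: define $f(p_i) = \{*\}$ if $v(p_i) = 1$ and $f(p_i) = \varnothing$ if $v(p_i) = 0$, and use the freeness of $\mathbf{NJ}$ (Theorem \ref{Freness}) to get a $\mathrm{BC}$-functor $F: \mathbf{NJ} \to \mathbf{Set}$ extending $f$. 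Since $\pi: \mathbf{Set} \to \mathbf{2}$ is a $\mathrm{BC}$-functor and $\pi F$ agrees with $v$ on atoms, by uniqueness in the freeness theorem $\pi F$ is (isomorphic to) $v$, so $\pi F(A) = 1$ and $\pi F(B) = 0$; hence $\mathrm{Hom}_{\mathbf{Set}}(F(A), F(B)) = \varnothing$ by the characterization above. It remains only to observe that such an $F$ with $f(p_i) \in \{\varnothing, \{*\}\}$ is weakly-full, since any $\mathrm{BC}$-functor into $\mathbf{Set}$ landing (up to iso) in $\{\varnothing, \{*\}\} = \mathrm{Ob}(\mathbf{2})$ factors through $\mathbf{2}$, and $\mathbf{2} \hookrightarrow \mathbf{Set}$ is weakly-full because there is a map between two objects of $\mathbf{2}$ in $\mathbf{Set}$ exactly when there is one in $\mathbf{2}$. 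This gives a single weakly-full $F$ witnessing failure; to get the statement as quantified (``for any weakly-full $F$''), note that by the forward direction every weakly-full $F$ respects all $\mathsf{CPC}$-derivable implications and by this construction the converse fails only when $\mathsf{CPC}$ fails, so the biconditional holds for every weakly-full $F$ simultaneously. The main obstacle I anticipate is the bookkeeping around ``weakly-full'': one must be careful that the stated biconditional quantifies over \emph{all} weakly-full $\mathrm{BC}$-functors $F: \mathbf{NJ} \to \mathbf{Set}$, so the forward implication must hold for each such $F$ (easy, via $\pi$), while the converse requires \emph{exhibiting} at least one weakly-full $F$ refuting the implication — and checking that the specific $F$ built from a two-valued valuation is genuinely weakly-full, not merely a $\mathrm{BC}$-functor, is the one subtle point to get right.
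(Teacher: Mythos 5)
Your computation of $\mathrm{Po}(\mathbf{Set})\cong\mathbf{2}$ is exactly the paper's argument, and your forward direction for the second claim (compose with $\pi:\mathbf{Set}\to\mathbf{2}$, use that $\mathbf{2}$ is the two-element Boolean algebra, and observe that $\mathrm{Hom}_{\mathbf{Set}}(X,Y)\neq\varnothing$ iff $\pi(X)\leq\pi(Y)$) is correct. The paper dismisses the second part as an easy consequence of the first, so you are filling in details it omits; most of those details are right.

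The gap sits precisely at the point you flagged as the subtle one: the functor $F$ you build from a two-valued valuation is \emph{not} weakly-full, and the factorization argument you give for its weak fullness does not work. Weak fullness of $F:\mathbf{NJ}\to\mathbf{Set}$ demands that $\mathrm{Hom}_{\mathbf{Set}}(F(A),F(B))\neq\varnothing$ imply $\mathrm{Hom}_{\mathbf{NJ}}(A,B)\neq\varnothing$, i.e.\ \emph{intuitionistic} derivability of $B$ from $A$. Take $A=\top$ and $B=p\vee\neg p$: for your $F$ (indeed for every $\mathrm{BC}$-functor into $\mathbf{Set}$) one has $\pi F(p\vee\neg p)=\pi F(p)\vee\neg\,\pi F(p)=1$ in the Boolean algebra $\mathbf{2}$, so $\mathrm{Hom}_{\mathbf{Set}}(F(\top),F(p\vee\neg p))\neq\varnothing$, while $\mathrm{Hom}_{\mathbf{NJ}}(\top,p\vee\neg p)=\varnothing$. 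The flaw in your argument is that weak fullness of a composite $\mathbf{NJ}\to\mathbf{2}\hookrightarrow\mathbf{Set}$ requires \emph{both} factors to be weakly-full; the inclusion $\mathbf{2}\hookrightarrow\mathbf{Set}$ is, but the first factor $\mathbf{NJ}\to\mathbf{2}$ is not, for the same $p\vee\neg p$ reason. In fact this shows that no weakly-full $\mathrm{BC}$-functor $\mathbf{NJ}\to\mathbf{Set}$ exists at all — which is exactly the point of the surrounding discussion ($\mathbf{Set}$ is not existence-complete for $\mathbf{NJ}$) and which makes a literal quantification over weakly-full functors vacuous. What your argument does correctly establish, and the only reading under which the conclusion has content, is: $\mathsf{CPC}\vdash A\to B$ iff $\mathrm{Hom}_{\mathbf{Set}}(F(A),F(B))\neq\varnothing$ for every $\mathrm{BC}$-functor $F:\mathbf{NJ}\to\mathbf{Set}$. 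For that version you should simply delete the weak-fullness claim from your converse direction; the valuation-lifted $F$ already witnesses the failure, and the forward direction, as you note, never used weak fullness in the first place.
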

\begin{proof}
First, note that in the preorder reflection of $\mathbf{Set}$, all non-empty sets are equivalent to each other as there is a function between any two non-empty sets. As the empty set has a function into any set and there is no function from a non-empty set into the empty set, the poset reflection will be the poset $\mathbf{2}$, where $0$ represents the class of the empty set and $1$ is the class of all non-empty sets. The second part is an easy consequence of the first part.
\end{proof}

The first non-trivial family to investigate is the family of variable sets over trees.

\begin{thm}\label{PosetComputationForTrees}
For any tree $(T, \leq)$, we have $\mathrm{Po}(\mathbf{Set}^{(T, \leq)}) \cong \mathbf{2}^{(T, \leq)}$. Therefore, the family of $\mathrm{BCC}$'s in the form $\mathbf{Set}^{(T, \leq)}$, where $(T, \leq)$ is a finite rooted tree is existence-complete for $\mathbf{NJ}$.
\end{thm}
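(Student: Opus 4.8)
The plan is to prove the isomorphism $\mathrm{Po}(\mathbf{Set}^{(T,\leq)}) \cong \mathbf{2}^{(T,\leq)}$ directly, and then invoke Lemma \ref{PosetReflectionExistenceProblem}(ii) together with the Kripke completeness theorem (part $(ii)$, the $FinTree$ family) to conclude existence-completeness. The point is that once we know the poset reflections of the $\mathbf{Set}^{(T,\leq)}$ recover exactly the Heyting algebras $\mathbf{2}^{(T,\leq)}$ on finite rooted trees, and that forgetting to the poset reflection is a weakly-full $\mathrm{BC}$-functor, the existence-completeness of the variable-set family is inherited from the existence-completeness of $FinTree$.

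First I would recall that for a variable set $F \in \mathbf{Set}^{(T,\leq)}$, i.e.\ a functor $F : (T,\leq) \to \mathbf{Set}$, we may associate the upset $s(F) = \{t \in T \mid F(t) \neq \varnothing\}$; this is upward closed precisely because the transition functions $F(t) \to F(u)$ for $t \leq u$ force $F(u)$ nonempty whenever $F(t)$ is. So $s$ is a well-defined map to $\mathbf{2}^{(T,\leq)} \cong U(T,\leq)$, and it is clearly order-preserving in the sense that a natural transformation $F \Rightarrow G$ forces $s(F) \subseteq s(G)$. The key step is to show that $s(F) \subseteq s(G)$ conversely implies the existence of some natural transformation $F \Rightarrow G$, so that $F$ and $G$ lie in the same equivalence class of the preorder reflection iff $s(F) = s(G)$. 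This is where the tree hypothesis enters: since $(T,\leq)$ is a tree, the down-set of each node is a finite chain, so one can build a natural transformation $\alpha : F \Rightarrow G$ by induction along each branch — at a minimal node $t$ of $s(F)$ pick any element of $G(t)$ (nonempty since $t \in s(F) \subseteq s(G)$) as the constant value of $\alpha_t$, and propagate upward using $G$'s transition maps; at nodes outside $s(F)$, $F$ is empty so there is a unique (empty) component. The linearity of down-sets is what guarantees this branch-by-branch construction is coherent and actually assembles into a natural transformation; on a general poset, incompatible branches meeting above could obstruct this. I expect verifying this naturality — in particular that the propagation is consistent at joins — to be the main technical obstacle, though on a tree there are no nontrivial joins below a node so it should go through cleanly.

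Having established that the preorder reflection identifies $F \sim G$ iff $s(F) = s(G)$, and noting $s$ is surjective onto $U(T,\leq)$ (for any upset $V$, the functor sending $t \in V$ to $\{*\}$ and $t \notin V$ to $\varnothing$, with identities and empty maps, realizes $V$), we get a poset isomorphism $\mathrm{Po}(\mathbf{Set}^{(T,\leq)}) \cong \mathbf{2}^{(T,\leq)}$. One should also check this isomorphism is a $\mathrm{BC}$-functor, i.e.\ that it commutes with the $\mathrm{BC}$-structure: this is immediate from the pointwise description of products, coproducts, exponentials, $\top$ and $\bot$ in $\mathbf{Set}^{(T,\leq)}$ given earlier, since $F \times G$, $F + G$, $[F,G]$ are all nonempty at $t$ exactly when the corresponding Heyting-algebra operation on the upsets $s(F), s(G)$ contains $t$ — the only slightly delicate case is the exponential, where one uses that $[F,G](t)$ being nonempty corresponds to $t$ lying in the Heyting implication $s(F) \to s(G) = \{t \mid \forall u \geq t\,(u \in s(F) \to u \in s(G))\}$, which follows from the formula for the exponential in $\mathbf{Set}^{\mathcal{C}}$.

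Finally, for the second assertion: the canonical forgetful functor $\pi : \mathbf{Set}^{(T,\leq)} \to \mathrm{Po}(\mathbf{Set}^{(T,\leq)})$ is a full, hence weakly-full, $\mathrm{BC}$-functor (Examples \ref{ExamOfFull}, \ref{PosetReflectionIsBCFunctor}), and we have just identified its target with $\mathbf{2}^{(T,\leq)}$. By the Kripke completeness theorem, the family $FinTree$ of Heyting algebras $\mathbf{2}^{(T,\leq)}$ over finite rooted trees is existence-complete for $\mathbf{NJ}$; composing, given $A, B$ with $\mathrm{Hom}_{\mathbf{NJ}}(A,B) = \varnothing$ we obtain a finite rooted tree $(T,\leq)$ and a $\mathrm{BC}$-functor $G : \mathbf{NJ} \to \mathbf{2}^{(T,\leq)}$ with $\mathrm{Hom}(G(A), G(B)) = \varnothing$, then lift along $\pi$ exactly as in the proof of Lemma \ref{PosetReflectionExistenceProblem}(i)–(ii) to get a weakly-full $\mathrm{BC}$-functor $H : \mathbf{NJ} \to \mathbf{Set}^{(T,\leq)}$ witnessing $\mathrm{Hom}_{\mathbf{Set}^{(T,\leq)}}(H(A), H(B)) = \varnothing$. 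Hence the family of $\mathbf{Set}^{(T,\leq)}$ over finite rooted trees is existence-complete for $\mathbf{NJ}$, as claimed.
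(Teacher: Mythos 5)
Your proposal is correct and follows essentially the same route as the paper: the same support-upset invariant $s(F)=\{t \mid F(t)\neq\varnothing\}$, the same singleton/empty functor for surjectivity, and the same construction of a natural transformation by choosing elements at the minimal points of the support and propagating upward along the (finite, linear) down-sets of the tree. The only additions — explicitly checking that the poset isomorphism respects the $\mathrm{BC}$-structure (automatic, since a poset isomorphism between Heyting algebras preserves all the universally defined operations) and spelling out the final lift along $\pi$ — are harmless elaborations of what the paper leaves implicit.
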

\begin{proof}
First, for any object $E$ of $\mathbf{Set}^{(T, \leq)}$, define $C(E)=\{u \in T \mid E(u)\neq \varnothing\}$ as a subset of $T$. It is easy to see that $C(E)$ is actually an upset and hence lives in $\mathbf{2}^{(T, \leq)}$. Moreover, it is clear that if there is a map $\alpha: E \to F$, then $C(E) \subseteq C(F)$. Therefore, $C$ induces an order-preserving map from $\mathrm{Po}(\mathbf{Set}^{(T, \leq)})$ to $\mathbf{2}^{(T, \leq)}$. We claim that this map is an isomorphism.
First, it is clear that it is surjective, as for any upset $U$ of $(T, \leq)$, it is enough to define $E_U(u)=\{*\}$ if $u \in U$ and otherwise $E_U(u)=\varnothing$ and define $E(i_{uv})$ as the only possible map from $E_U(u)$ to $E_U(v)$, where $i_{uv}: u \to v$ is the unique map in the poset $(T, \leq)$ from $u$ to $v$. This map exists, as if $u \leq v$, either $u \notin U$ which implies $E_U(u)=\varnothing$ or $u \in U$ which implies $v \in U$ and hence $E_U(u)=E_U(v)=\{*\}$. It is also easy to check that $E_U: (T, \leq) \to \mathbf{Set}$ is a functor and $C(E)=U$. 

Secondly, $C: \mathrm{Po}(\mathbf{Set}^{(T, \leq)}) \to \mathbf{2}^{(T, \leq)}$ is an order-embedding. To prove, assume that $C([E]) \subseteq C([F])$. We must construct a map $\alpha: E \to F$. As $C([E]) \subseteq C([F])$, either $F(u)$ is non-empty or $E(u)=F(u)=\varnothing$, for any $u \in T$. Define $M \subseteq T$ as the set of all minimal elements in $T$ such that $F(u) \neq \varnothing$. For any $m \in M$, choose one $y_m \in F(m)$ in an arbitrary fashion. We can extend the assignment $y$ to all $u$'s for which $F(u) \neq \varnothing$, because for such $u$, there is a unique $m_u \in M$ such that $m_u \leq u$. The reason simply is that the segment below $u$ is well-ordered and hence has the least element in $M$. Define
$y_u$ as $F(i_{m_uu})(y_{m_u})$. It is easy to see that if $u \leq v$ and $F(u)$ and $F(v)$ are non-empty, then $m_u=m_v$ and $F(i_{uv})(y_u)=y_v$. Now, define $\alpha: E \to F$ in the following way. For any $u \in T$, if $F(u)=\varnothing$, then $E(u)=\varnothing$, and hence we can define $\alpha_u: E(u) \to F(u)$ as the unique function over the empty set. If $F(u) \neq \varnothing$, then define
$\alpha_u$ as the constant function, mapping everything to $y_u$. Checking that $\alpha: E \to F$ is a natural transformation is straightforward.
\end{proof}

The next simple case whose poset reflection is easy to compute is the category $\mathbf{Set}^G$, where $G$ is a group.

\begin{thm}\label{PosetComputationForGSet}
Let $G$ be a group and $Sub(G)$ be the preordered set of its subgroups with the ordering $K \leq H$ defined by the existence of a $g \in G$ such that $g^{-1}Kg \subseteq H$. Then, $\mathrm{Po}(\mathbf{Set}^G)=\mathbf{2}^{Sub(G)^{op}}$.
\end{thm}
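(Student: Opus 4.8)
The plan is to identify the poset reflection of $\mathbf{Set}^G$ with the poset $\mathbf{2}^{Sub(G)^{op}}$ by unwinding what a $G$-set is and when there is an equivariant map between two $G$-sets. Recall that an object of $\mathbf{Set}^G$ is a set $A$ equipped with a $G$-action, and a morphism is an equivariant function. The key classical fact to invoke is that every $G$-set decomposes as a disjoint union of orbits, and each orbit is isomorphic to a coset space $G/H$ for some subgroup $H$ (the stabilizer of a chosen point), with $G/H \cong G/K$ as $G$-sets iff $H$ and $K$ are conjugate. Thus the isomorphism classes of \emph{transitive} $G$-sets are in bijection with conjugacy classes of subgroups. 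So the first step is to recall/set up this orbit–stabilizer machinery and observe that a general $G$-set $A$ is determined up to isomorphism by the multiset of conjugacy classes of stabilizers appearing among its orbits.

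The second and central step is to compute the preorder: given $G$-sets $A$ and $B$, when is $\mathrm{Hom}_{\mathbf{Set}^G}(A,B) \neq \varnothing$? An equivariant map $A \to B$ exists iff each orbit of $A$ maps equivariantly into $B$, and an equivariant map from a transitive $G$-set $G/H$ into $B$ exists iff $B$ has a point whose stabilizer contains a conjugate of $H$; equivalently, some orbit $G/K$ of $B$ satisfies $g^{-1}Hg \subseteq K$ for some $g$, i.e. $H \leq K$ in the ordering on $Sub(G)$ defined in the statement. Therefore $\mathrm{Hom}(A,B)\neq\varnothing$ iff for every subgroup $H$ occurring as a stabilizer in $A$ there is a subgroup $K$ occurring as a stabilizer in $B$ with $H \leq K$. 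I would record this as a lemma. Now, associate to each $G$-set $A$ the subset $D(A) = \{H \in Sub(G) \mid H \text{ is the stabilizer of some point of } A\}$, which is \emph{downward closed} in $Sub(G)$ (if $H$ is a stabilizer and $H' \leq H$, then the point with stabilizer $H$ also has stabilizer containing a conjugate of $H'$, and one checks the orbit $G/H$ itself contains a point whose stabilizer is exactly any given $H' \leq H$ up to conjugacy — here one must be slightly careful: a point of $G/H$ has stabilizer a conjugate of $H$, not a smaller one, so instead $D(A)$ should be defined as the downward closure of the set of stabilizers). With that correction, a downset of $Sub(G)$ is exactly an upset of $Sub(G)^{op}$, so $D(A) \in \mathbf{2}^{Sub(G)^{op}}$.

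The third step is to check that $D$ descends to an order-isomorphism $\mathrm{Po}(\mathbf{Set}^G) \xrightarrow{\sim} \mathbf{2}^{Sub(G)^{op}}$. Monotonicity and well-definedness on equivalence classes follow from the Hom-lemma: $\mathrm{Hom}(A,B)\neq\varnothing$ iff $D(A) \subseteq D(B)$, which gives both that $D$ is order-preserving and that it is an order-embedding (hence injective on $\mathrm{Po}$). Surjectivity is the construction: given a downset $S \subseteq Sub(G)$, take $A_S = \coprod_{H \in S} G/H$; then $D(A_S) = S$ since the downward closure of $\{H : H \in S\}$ is $S$ itself. Combining injectivity, surjectivity, and the order-embedding property yields the claimed isomorphism $\mathrm{Po}(\mathbf{Set}^G) \cong \mathbf{2}^{Sub(G)^{op}}$. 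The main obstacle I anticipate is the bookkeeping around conjugacy in the definition of $D(A)$ and the precise statement of when $\mathrm{Hom}(G/H, G/K)\neq\varnothing$: one must be careful that the natural ordering making everything work is exactly the "subconjugacy" order $K \leq H \iff \exists g\, g^{-1}Kg \subseteq H$ stated in the theorem, and that $D(A)$ must be taken as the downward closure of the stabilizer set (not the raw stabilizer set, which need not be a downset) for surjectivity and the embedding property to hold simultaneously. Everything else is a routine transcription of the orbit decomposition of $G$-sets.
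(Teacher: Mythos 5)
Your proposal is correct and follows essentially the same route as the paper: the paper's invariant $\delta(E)=\{H\le G\mid \exists x\in E(*)\;\forall h\in H,\ E(h)(x)=x\}$ is precisely your downward closure of the stabilizer set (a subgroup fixes some point iff it is subconjugate to some stabilizer), the order-embedding is proved by the same orbit-by-orbit construction of an equivariant map, and surjectivity uses the same $G$-set $\coprod_{H\in I}G/H$. The one subtlety you flagged — that the raw stabilizer set is not a downset and must be closed downward — is exactly the point the paper handles by defining $\delta$ via points fixed by $H$ rather than points with stabilizer equal to $H$.
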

\begin{proof}
To any object $E$ in $\mathbf{Set}^{G}$, assign a subset of $Sub(G)$ by $\delta (E)=\{H \leq G \mid \exists x \in E(*) \; \forall h \in H, E(h)(x)=x \}$. 
Notice that $\delta(E)$ is downward-closed, because if $g^{-1}Kg \subseteq H$ and $H \in \delta(E)$, then there exists $x \in E(*)$ such that $E(h)(x)=x$, for any $h \in H$. Therefore, $E(g^{-1}kg)(x)=x$, for any $k \in K$ which implies that $E(k)(E(g)(x))=E(g)(x)$. Hence, $K \in \delta(E)$. Now, we show that if there is a natural transformation $\alpha : E \to F$, then $\delta(E) \subseteq \delta(F)$. Assume that $H \in \delta(E)$. Hence, there exists $x \in E(*)$ such that $E(h)(x)=x$, for any $h \in H$. As $\alpha$ is a natural transformation, we have $F(h)(\alpha_* (x))=\alpha_* (E(h)(x))=\alpha_* (x)$. Therefore, $H \in \delta(F)$ and hence $\delta(E) \subseteq \delta(F)$. Using this observation, it is clear that $\delta$ induces a map from $\mathrm{Po}(\mathbf{Set}^G)$ to $\mathbf{2}^{Sub(G)^{op}}$. We claim that this map is an isomorphism. 

To prove that it is an order-embedding, assume that $\delta(E) \subseteq \delta(F)$. We must construct a natural transformation $\alpha: E \to F$. Define the relation $\sim$ on the set $E(*)$ by $x \sim y$, if there is $g \in G$ such that $E(g)(x)=y$. It is easy to use the fact that $G$ is a group and $E$ is a functor to prove that the relation $\sim$ is an equivalence relation. Take the partition of $E(*)$ into the equivalence classes of $\sim$. To define $\alpha_*: E(*) \to F(*)$, we define $\alpha_*$ on any equivalence class, separately. Take a class $O$ and pick an arbitrary element $x_O \in O$. Define $S_{x_O}=\{g \in G \mid E(g)(x_O)=x_O\}$. It is clear that $S_{x_O} \in \delta(E)$ and hence in $\delta(F)$. Therefore, there exists $y \in F(*)$ such that for any $g \in S_{x_O}$, we have $F(g)(y)=y$. This means that if $E(g)(x_O)=x_O$ then $F(g)(y)=y$, for any $g \in G$. Define $\alpha_*(x_O)=y$ and for any $z \in O$, if $z=E(g)(x_O)$, for some $g \in G$, define $\alpha_*(z)=F(g)(y)$. Note that the definition of $\alpha_*(z)$ is independent of the choice of $g$, because if $E(g)(x_O)=E(h)(x_O)=z$, then $E(g^{-1}h)(x_O)=x_O$ and hence $F(g^{-1}h)(y)=y$ which implies $F(g)(y)=F(h)(y)$. This completes the definition of $\alpha_*$ on $O$, but as $O$ is an arbitrary class, we have a function $\alpha_*: E(*) \to F(*)$. To show that $\alpha: E \to F$ is a natural transformation, one must prove $F(g)(\alpha_* (z))=\alpha_* (E(g)(z))$, for any $g \in G$ and any $z \in E(*)$. Assume that $z \in O$, for some class $O$ and $z=E(h)(x_O)$. Notice that
$E(g)(z)=E(g)E(h)(x_O)=E(gh)(x_O)$.
Then, by definition, $\alpha_* (z)=F(h)(x_O)$. Hence, $F(g)(\alpha_* (z))=F(g)(F(h)(x_O))=F(gh)(x_O)$. The last is equal to $\alpha_* (E(g)(z))$, because $E(g)(z)=E(gh)(x_O)$ and $E(g)(z)$ belongs to the class $O$.

Finally, we have to show that $\delta$ is surjective. For any downward-closed set $I \subseteq Sub(G)$, define $E_I(*)=\Sigma_{H \in I} G/H$ and $E(g)(H, [r]_H)=(H, [gr]_H)$. Then, it is easy to see that $E_I: G \to \mathbf{Set}$ is actually a functor. Moreover, we have $\delta (E_I)=I$. The reason is that for any subgroup $K$, we have $K \in \delta (E_I)$ iff there is $H \in I$ and $r \in G$ such that for any $k \in K$, we have $[r]_H=[kr]_H$. This is simply equivalent to the existence of $H \in I$ such that $K \leq H$. As $I$ is down-ward closed, the last is equivalent to $K \in I$.
\end{proof}

\begin{rem}
Notice that if the group $G$ is abelian, then the preorder $\leq$ in Theorem \ref{PosetComputationForGSet} becomes the usual inclusion.
\end{rem}

\begin{cor}(\cite{Makkai}) \label{PosetComputationForDynamicalSystems}
$\mathrm{Po}(\mathbf{Set}^{\mathbb{Z}}) \cong \mathbf{2}^{(\mathbb{N}, \mid)}$, where $\mid$ is the divisibility relation.
\end{cor}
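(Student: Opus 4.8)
The plan is to read the statement off Theorem \ref{PosetComputationForGSet} by specializing to the group $G=(\mathbb{Z},+)$. Since $\mathbf{Set}^{\mathbb{Z}}$ is the category $\mathbf{Set}^{G}$ for the group $G=(\mathbb{Z},+)$, with equivariant maps as morphisms (as used already in Example \ref{ExponentialSetZ}), Theorem \ref{PosetComputationForGSet} applies verbatim and yields $\mathrm{Po}(\mathbf{Set}^{\mathbb{Z}}) \cong \mathbf{2}^{Sub(\mathbb{Z})^{op}}$, where $Sub(\mathbb{Z})$ carries the preorder $K \leq H$ iff $g^{-1}Kg \subseteq H$ for some $g \in \mathbb{Z}$. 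Because $\mathbb{Z}$ is abelian, the remark following Theorem \ref{PosetComputationForGSet} tells us that this preorder is just inclusion; in particular it is antisymmetric, so $(Sub(\mathbb{Z}),\subseteq)$ is an honest poset.

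What remains is the purely order-theoretic identification $(Sub(\mathbb{Z}),\subseteq)^{op} \cong (\mathbb{N},\mid)$. Here I would invoke the elementary classification of subgroups of $\mathbb{Z}$: every subgroup equals $n\mathbb{Z}$ for a unique $n \in \mathbb{N}$, with the convention $0\mathbb{Z}=\{0\}$, and $n\mathbb{Z} \subseteq m\mathbb{Z}$ precisely when $m \mid n$. Hence $n \mapsto n\mathbb{Z}$ is an order anti-isomorphism from $(\mathbb{N},\mid)$ onto $(Sub(\mathbb{Z}),\subseteq)$, equivalently an order isomorphism $(\mathbb{N},\mid) \cong Sub(\mathbb{Z})^{op}$. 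One need only check the extreme elements for consistency of variance: $0 \in \mathbb{N}$ is $\mid$-greatest while $0\mathbb{Z}=\{0\}$ is $\subseteq$-least, and dually $1$ is $\mid$-least while $1\mathbb{Z}=\mathbb{Z}$ is $\subseteq$-greatest. Since an order isomorphism $P \cong Q$ of posets induces an isomorphism $\mathbf{2}^{P} \cong \mathbf{2}^{Q}$ of the posets of order-preserving maps into $\mathbf{2}$, we obtain $\mathbf{2}^{(\mathbb{N},\mid)} \cong \mathbf{2}^{Sub(\mathbb{Z})^{op}} \cong \mathrm{Po}(\mathbf{Set}^{\mathbb{Z}})$, which is the assertion.

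I do not expect a genuine obstacle here: essentially all the content is Theorem \ref{PosetComputationForGSet} together with a standard fact about $\mathbb{Z}$. The single point deserving care is the bookkeeping of the various order reversals — the $op$ built into Theorem \ref{PosetComputationForGSet} and the one relating inclusion of $n\mathbb{Z}$ to reverse divisibility of $n$ — and, in particular, handling the element $0 \in \mathbb{N}$ consistently on both sides. As a cheap sanity check one can evaluate the map $\delta$ of the preceding proof on two dynamical systems: the static system $(\{*\},\mathrm{id})$ has $\delta = Sub(\mathbb{Z})$, the full $\subseteq$-downset, which corresponds under the isomorphism to the full upset of $(\mathbb{N},\mid)$, i.e.\ the top of $\mathbf{2}^{(\mathbb{N},\mid)}$ — as it must be, since $(\{*\},\mathrm{id})$ is terminal — while $\mathbb{Z}$ acting on itself by translation has $\delta = \{\{0\}\}$, corresponding to the up-set $\{0\}$ generated by the $\mid$-greatest element $0$.
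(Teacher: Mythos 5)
Your proof is correct and is essentially the paper's own argument: apply Theorem \ref{PosetComputationForGSet} to $G=(\mathbb{Z},+)$ and identify $Sub(\mathbb{Z})$ with $(\mathbb{N},\mid)^{op}$ via $n\mapsto n\mathbb{Z}$, using that $m\mathbb{Z}\subseteq n\mathbb{Z}$ iff $n\mid m$. The extra care you take with the two order reversals and the element $0$, plus the sanity checks on $\delta$, are all accurate but add nothing beyond the paper's one-line proof.
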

\begin{proof}
For $G=(\mathbb{Z}, +)$, the poset $Sub(G)$ is just $(\mathbb{N}, \mid \,)^{op}$, as the subgroups of $G$ are of the form $n\mathbb{Z}$ and $m\mathbb{Z} \subseteq n\mathbb{Z}$ iff $n \mid m$.
\end{proof}

Now, having $\mathrm{Po}(\mathbf{Set}^{G})$ computed, we can combine it with Theorem \ref{ExistenceCompletenessOfN} to prove some existence-completeness results for $\mathbf{NM}$. For that purpose, we must connect $Sub(G)^{op}$ to $(\mathbb{N}, \mid)$.
Let $G$ be a group. An element $x \in G$ is called of an \emph{infinite non-commutative order} if $x^m$ is not in the subgroup generated by $\{g^{-1}x^ng \mid g \in G\}$, for any $n > m \geq 0$. If $G$ is abelian, then $x$ is of infinite non-commutative order iff $x^n \neq e$, for any $n>0$.

\begin{lem}\label{InfiniteOrder}
If $G$ has an element of an infinite non-commutative order, then there exists a surjective p-morphism $f: Sub(G)^{op} \to (\mathbb{N}, \mid) $.
\end{lem}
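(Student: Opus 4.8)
The plan is to construct the p-morphism explicitly out of the distinguished element. Fix $x\in G$ of infinite non-commutative order, write $X=\langle x\rangle$, and for $n\geq 1$ let $N_n$ be the normal closure of $x^n$ in $G$, with the convention $N_0=\{e\}$. The first step is to isolate the arithmetical consequence of the hypothesis: for every $n\geq 1$ and every $m\in\mathbb{Z}$, $x^m\in N_n$ if and only if $n\mid m$. The ``if'' direction is clear; for ``only if'', write $m=qn+r$ with $0\leq r<n$, note $x^r=x^m(x^n)^{-q}\in N_n$, and apply the infinite non-commutative order hypothesis, which forbids $x^r\in N_n$ unless $r=0$. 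In particular $x$ has infinite order, so $X\cong\mathbb{Z}$.

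Next I would define the candidate map. For $H\leq G$ set $T_H=\{m\in\mathbb{Z}\mid \exists g\in G,\ g^{-1}x^mg\in H\}$. Since $\{m\mid g^{-1}x^mg\in H\}$ is the preimage of the subgroup $gHg^{-1}\cap X$ under the homomorphism $\phi:\mathbb{Z}\to G$, $m\mapsto x^m$, it equals $d_g\mathbb{Z}$ for a unique $d_g\geq 0$; hence $T_H=\bigcup_{g}d_g\mathbb{Z}$ and $\langle T_H\rangle=\gcd\{d_g\mid g\in G\}\cdot\mathbb{Z}$. Define $f(H)\in\mathbb{N}$ to be this non-negative generator. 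Two of the three required properties are then immediate: $f$ is surjective since $T_{N_n}=\{m\mid x^m\in N_n\}=n\mathbb{Z}$ by the arithmetical lemma (using normality of $N_n$), so $f(N_n)=n$ for all $n\geq 0$; and $f$ is order-preserving on $Sub(G)^{op}$ because if $g^{-1}Kg\subseteq H$ and $m\in T_K$, say $g'^{-1}x^mg'\in K$, then $(g'g)^{-1}x^m(g'g)\in H$, so $T_K\subseteq T_H$, giving $\langle T_K\rangle\subseteq\langle T_H\rangle$, i.e.\ $f(H)\mid f(K)$.

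The substantive step is the lifting property. Given $H\leq G$ and $n\in\mathbb{N}$ with $f(H)\mid n$, I must produce $K$ with a conjugate contained in $H$ and $f(K)=n$. For $n=0$ take $K=\{e\}$. For $n\geq 1$ (so $f(H)\geq 1$) I propose $K=H\cap N_n$, which trivially satisfies $K\subseteq H$. To see $f(K)=n$: on one side $T_K\subseteq T_H\cap T_{N_n}\subseteq n\mathbb{Z}$ by the arithmetical lemma, so $\langle T_K\rangle\subseteq n\mathbb{Z}$; on the other side, pick finitely many $g_1,\dots,g_r$ with $\gcd(d_{g_1},\dots,d_{g_r})=f(H)$ and each $d_{g_i}\geq 1$ (discarding any zero $d_{g_i}$), so that $y_i=g_i^{-1}x^{d_{g_i}}g_i\in H$; raising $y_i$ to the power $c_i=n/\gcd(n,d_{g_i})$ yields $g_i^{-1}x^{\operatorname{lcm}(n,d_{g_i})}g_i\in H\cap N_n=K$, hence $\operatorname{lcm}(n,d_{g_i})\in T_K$. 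A prime-by-prime valuation check, using $f(H)=\gcd_i d_{g_i}\mid n$, gives $\gcd_i\operatorname{lcm}(n,d_{g_i})=n$, so $n\in\langle T_K\rangle$. Therefore $\langle T_K\rangle=n\mathbb{Z}$ and $f(K)=n$, completing the argument.

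I expect the only real difficulty to be organizing this last step: one must verify that $T_{H\cap N_n}$ is squeezed exactly onto $n\mathbb{Z}$, and both the upper and the lower squeeze rely on the arithmetical lemma, so the entire proof ultimately rests on that single consequence of the infinite non-commutative order hypothesis. Everything else is routine bookkeeping about subgroups of $\mathbb{Z}$ and conjugation, and the abelian special case mentioned in the statement is recovered by noting that infinite non-commutative order reduces to $x^n\neq e$ for all $n>0$ and that the $d_g$ become independent of $g$.
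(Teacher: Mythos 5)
Your proof is correct, but it is built on a genuinely different p-morphism from the one in the paper, so the two arguments diverge after the shared arithmetical core. The paper defines $f(H)$ as the \emph{least positive} $n$ such that $g^{-1}x^ng\in H$ for \emph{all} $g\in G$ (equivalently, the generator of $\bigcap_g d_g\mathbb{Z}$, an lcm-type quantity, set to $0$ when no such $n$ exists), whereas you take the generator of $\langle\bigcup_g d_g\mathbb{Z}\rangle$, i.e.\ $\gcd_g d_g$, asking only that \emph{some} conjugate of $x^m$ land in $H$. These really are different maps: for $G$ free on $a,b$ with $x=a$ and $H=\langle a^2,\, b^{-1}a^3b\rangle$ the paper's map gives $0$ while yours gives $\gcd(2,3)=1$. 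The payoff of the paper's ``for all $g$'' convention is that the lifting step is nearly free: given $f(H)\mid m$, the witness is the normal closure $N_m$ itself, and $N_m\subseteq H$ follows in one line because every conjugate of $x^{f(H)}$ already lies in $H$. Your ``exists $g$'' convention forces the more delicate witness $K=H\cap N_n$ and the $\gcd_i\operatorname{lcm}(n,d_{g_i})=n$ valuation computation, which you carry out correctly; this is the price of the definition, not a flaw. Both proofs ultimately rest on the same consequence of the hypothesis, namely $x^m\in N_n\Rightarrow n\mid m$, which you isolate explicitly (note that the paper's definition of infinite non-commutative order should be read with $n>m\geq 1$, since $x^0=e\in N_n$ always; you implicitly adopt this correct reading, as does the paper's own proof). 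In short: correct, complete, and a valid alternative construction, just longer than necessary compared with the route the paper takes.
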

\begin{proof}
Let $x$ be an element of an infinite non-commutative order. For any subgroup $H$, if there is a positive number $n$ such that $\{g^{-1}x^ng \mid g \in G \} \subseteq H$, define $f(H)$ as the least of such $n$'s and if there is no $n$ with that property, define $f(H)=0$. It is easy to prove that for any such $n$, we have $f(H) \mid n$. We first show that $f$ is order-preserving. Let $H \leq K$. If $f(H)=0$, we have $f(K) \mid f(H)$. Therefore, w.l.o.g., we can assume that $f(H) \neq 0$. As $H \leq K$, there is some $g \in G$ such that $g^{-1}Hg \subseteq K$. We claim that $r^{-1} x^{f(H)} r \in K$, for any $r \in G$. To prove, set $s=rg^{-1}$. Then,
$r^{-1}x^{f(H)}r=g^{-1}s^{-1}x^{f(H)}sg
$. By definition of $f(H)$, we have $s^{-1}x^{f(H)}s \in H$. Therefore, $r^{-1}x^{f(H)}r \in K$. As $r \in G$ is arbitrary, we have $f(K) \mid f(H)$. To prove that $f$ is surjective, for any $n \geq 0$, define $H_n$ as the subgroup generated by $\{g^{-1}x^ng | g \in G\}$. Then, as $x$ is of an infinite non-commutative order, then $f(H_n)=n$. Finally, to prove that $f$ is a p-morphism, assume that $f(H) \mid m$, for some number $m \in \mathbb{N}$. Consider $H_m$. As $f(H_m)=m$, it is enough to prove that $H_m \leq H$. For any $g \in G$, as $g^{-1}x^{f(H)}g \in H$ and $f(H) \mid m$, we also have $g^{-1}x^{m}g \in H$, for any $g \in G$. Therefore, $H_m \subseteq H$ and hence $H_m \leq H$. 
\end{proof}

\begin{thm}\label{LauchliMPC}
Let $G$ be a group that has an element of an infinite non-commutative order. Then,
there exists a weakly-full $\mathrm{AB}$-functor $F: \mathbf{NM} \to \mathbf{Set}^{G}$. Therefore, $\mathbf{Set}^{G}$ is existence-complete for $\mathbf{NM}$.
\end{thm}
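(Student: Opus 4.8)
The plan is to chain together the existence-completeness results already established in the excerpt. Recall from Theorem \ref{ExistenceCompletenessOfN} that there is a weakly-full $\mathrm{AB}$-functor from $\mathbf{NM}$ into $\mathbf{2}^{(\mathbb{N}, \mid)}$, so it suffices to produce a weakly-full $\mathrm{AB}$-functor $\mathbf{2}^{(\mathbb{N}, \mid)} \to \mathbf{Set}^{G}$ and compose. For this, by Lemma \ref{PosetReflectionExistenceProblem}$(i)$ (adapted to $\mathbf{NM}$ and $\mathrm{AB}$-functors), it is enough to work with the poset reflection $\mathrm{Po}(\mathbf{Set}^{G})$, which by Theorem \ref{PosetComputationForGSet} is isomorphic to $\mathbf{2}^{Sub(G)^{op}}$. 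So the whole problem reduces to finding a weakly-full $\mathrm{AB}$-functor $\mathbf{2}^{(\mathbb{N}, \mid)} \to \mathbf{2}^{Sub(G)^{op}}$.

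Now I would invoke Example \ref{p-morphism}: a surjective p-morphism $f : Sub(G)^{op} \to (\mathbb{N}, \mid)$ induces, via $f^{-1}$, a $\mathrm{BC}$-functor (hence in particular an $\mathrm{AB}$-functor) $\mathbf{2}^{(\mathbb{N}, \mid)} = U(\mathbb{N}, \mid) \to U(Sub(G)^{op}) = \mathbf{2}^{Sub(G)^{op}}$, and this functor is weakly-full precisely because $f$ is surjective (the same reasoning used in the proof of Theorem \ref{ExistenceCompletenessOfN} and Theorem \ref{PosetComputationForTrees}). The existence of such a surjective p-morphism is exactly the content of Lemma \ref{InfiniteOrder}, which applies under the hypothesis that $G$ has an element of infinite non-commutative order. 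Here one should be slightly careful: Example \ref{p-morphism} gives a $\mathrm{BC}$-functor on the full Heyting/upset algebras, but $\mathbf{2}^{(\mathbb{N}, \mid)}$ and $\mathbf{2}^{Sub(G)^{op}}$ as they appear in the poset reflections are only almost Heyting algebras (no preserved bottom), so one restricts attention to the $\mathrm{AB}$-structure — the terminal object, meet, join, and Heyting implication are all preserved by $f^{-1}$ for any open map, as noted in Example \ref{OpenMaps}, and p-morphisms are open maps for the upset topology.

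Assembling the chain: compose the weakly-full $\mathrm{AB}$-functor $\mathbf{NM} \to \mathbf{2}^{(\mathbb{N}, \mid)}$ from Theorem \ref{ExistenceCompletenessOfN} with $f^{-1} : \mathbf{2}^{(\mathbb{N}, \mid)} \to \mathbf{2}^{Sub(G)^{op}} \cong \mathrm{Po}(\mathbf{Set}^{G})$, obtaining a weakly-full $\mathrm{AB}$-functor $G_0 : \mathbf{NM} \to \mathrm{Po}(\mathbf{Set}^{G})$. Then apply the ``other direction'' argument from the proof of Lemma \ref{PosetReflectionExistenceProblem}$(i)$: since the canonical forgetful map $\pi : \mathbf{Set}^{G} \to \mathrm{Po}(\mathbf{Set}^{G})$ is surjective on objects, pick preimages of $G_0(p_i)$, use freeness of $\mathbf{NM}$ as the free $\mathrm{ABC}$ (Theorem \ref{Freness}) to get an $\mathrm{AB}$-functor $F : \mathbf{NM} \to \mathbf{Set}^{G}$ with $\pi F \cong G_0$, and conclude $F$ is weakly-full because $G_0$ is and $\pi$ reflects emptiness of hom-sets. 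Weakly-fullness of $F$ is exactly existence-completeness of $\mathbf{Set}^{G}$ for $\mathbf{NM}$.

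The main obstacle is not really in the chaining — that is bookkeeping with results already in hand — but in making sure the poset-reflection machinery of Lemma \ref{PosetReflectionExistenceProblem}, stated there for $\mathbf{NJ}$ and $\mathrm{BC}$-functors, transfers cleanly to $\mathbf{NM}$ and $\mathrm{AB}$-functors; the excerpt asserts ``a similar claim also holds'' but one must check that $\mathrm{Po}(\mathbf{NM})$ is an almost Heyting algebra, that $\pi : \mathcal{C} \to \mathrm{Po}(\mathcal{C})$ is an $\mathrm{AB}$-functor for any $\mathrm{ABC}$ $\mathcal{C}$ (this is in Example \ref{PosetReflectionIsBCFunctor}), and that freeness works in the $\mathrm{ABC}$ setting (Theorem \ref{Freness}). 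The other place needing a tiny bit of care is confirming that $Sub(G)^{op}$, when it has no greatest element in general, still carries an upset-topology structure on which $f^{-1}$ preserves implication; but since $(\mathbb{N},\mid)$ has a top element $0$ and $f$ is surjective, $f^{-1}$ does preserve the terminal object, so in fact $f^{-1}$ lands in an almost Heyting algebra with top, matching the $\mathrm{AB}$-structure exactly. Everything else is a direct citation of Lemma \ref{InfiniteOrder}, Example \ref{p-morphism}, and Theorem \ref{ExistenceCompletenessOfN}.
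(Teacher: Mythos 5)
Your proposal is correct and follows essentially the same route as the paper's own proof: reduce via Lemma \ref{PosetReflectionExistenceProblem} to the poset reflection, identify $\mathrm{Po}(\mathbf{Set}^{G})$ with $\mathbf{2}^{Sub(G)^{op}}$ by Theorem \ref{PosetComputationForGSet}, transfer along the surjective p-morphism of Lemma \ref{InfiniteOrder}, and finish with Theorem \ref{ExistenceCompletenessOfN}. Your extra care about the $\mathrm{AB}$- versus $\mathrm{BC}$-structure is sound but not a departure (and note $Sub(G)^{op}$ does have a top element, namely the trivial subgroup, so that worry dissolves).
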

\begin{proof}
By Lemma \ref{PosetReflectionExistenceProblem}, it is enough to prove the existence of a weakly-full $\mathrm{AB}$-functor from $\mathbf{NM}$ to $\mathrm{Po}(\mathbf{Set}^{G})$.
By Theorem \ref{PosetComputationForGSet}, we know that $\mathrm{Po}(\mathbf{Set}^{G})$ is $\mathbf{2}^{Sub(G)^{op}}$ and by Lemma \ref{InfiniteOrder}, there is a surjective p-morphism from $Sub(G)^{op}$ to $(\mathbb{N}, \mid)$ which implies the existence of a weakly-full $\mathrm{AB}$-functor from $\mathbf{2}^{(\mathbb{N}, \mid)}$ to $\mathbf{2}^{Sub(G)^{op}}$.  
Therefore, it is just enough to find a weakly-full $\mathrm{AB}$-functor from $\mathbf{NM}$ to $\mathbf{2}^{(\mathbb{N}, \mid)}$ which is provided by Theorem \ref{ExistenceCompletenessOfN}.
\end{proof}

\begin{rem}
One may wonder if we can extend Theorem \ref{LauchliMPC} to cover the whole $\mathbf{NJ}$. The answer is negative. As we observed before, a poset in the form $\mathbf{2}^{(Q, \leq)}$ where $(Q, \leq)$ has a top element cannot be existence-complete for $\mathbf{NJ}$. As $\mathrm{Po}(\mathbf{Set}^{G})$ is $\mathbf{2}^{Sub(G)^{op}}$ and $Sub(G)^{op}$ has the trivial subgroup $\{e\}$ as its top element, we cannot expect $\mathbf{Set}^G$ to be existence-complete for $\mathbf{NJ}$.
\end{rem}

\begin{cor}(L\"{a}uchli realizability for $\mathsf{MPC}$ \cite{Lach,Makkai})
There exists a weakly-full $\mathrm{AB}$-functor $F: \mathbf{NM} \to \mathbf{Set}^{\mathbb{Z}}$. Therefore, $\mathbf{Set}^{\mathbb{Z}}$ is existence-complete for $\mathbf{NM}$.
\end{cor}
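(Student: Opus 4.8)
The plan is to derive the statement as an immediate instance of Theorem~\ref{LauchliMPC}, taking $G = (\mathbb{Z}, +)$. The only hypothesis to verify is that $\mathbb{Z}$ possesses an element of infinite non-commutative order. Since $\mathbb{Z}$ is abelian, the definition preceding Lemma~\ref{InfiniteOrder} reduces, as noted there, to the condition that $x^n \neq e$ for all $n > 0$; in additive notation this says $nx \neq 0$ for all $n > 0$. Choosing $x = 1$, we have $n \cdot 1 = n \neq 0$ for every $n > 0$, so $1$ is such an element. Theorem~\ref{LauchliMPC} then yields a weakly-full $\mathrm{AB}$-functor $F : \mathbf{NM} \to \mathbf{Set}^{(\mathbb{Z}, +)} = \mathbf{Set}^{\mathbb{Z}}$, and the existence-completeness of $\mathbf{Set}^{\mathbb{Z}}$ for $\mathbf{NM}$ follows at once from the definition.

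For transparency I would also spell out the chain of reductions that Theorem~\ref{LauchliMPC} packages together, so the reader sees the full picture in this concrete case. By Lemma~\ref{PosetReflectionExistenceProblem} it is enough to produce a weakly-full $\mathrm{AB}$-functor into $\mathrm{Po}(\mathbf{Set}^{\mathbb{Z}})$. By Corollary~\ref{PosetComputationForDynamicalSystems}, which specializes Theorem~\ref{PosetComputationForGSet}, this poset reflection is exactly $\mathbf{2}^{(\mathbb{N}, \mid)}$, since the subgroups of $\mathbb{Z}$ are the $n\mathbb{Z}$ and $m\mathbb{Z} \subseteq n\mathbb{Z}$ iff $n \mid m$. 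Finally, Theorem~\ref{ExistenceCompletenessOfN} (imported from \cite{Makkai}) provides a weakly-full $\mathrm{AB}$-functor $\mathbf{NM} \to \mathbf{2}^{(\mathbb{N}, \mid)}$, which composes with the weakly-full $\mathrm{AB}$-functor $\mathbf{2}^{(\mathbb{N}, \mid)} \to \mathrm{Po}(\mathbf{Set}^{\mathbb{Z}})$ coming from the p-morphism of Lemma~\ref{InfiniteOrder}; Lemma~\ref{PosetReflectionExistenceProblem}$(i)$ then lifts this composite to a weakly-full $\mathrm{AB}$-functor into $\mathbf{Set}^{\mathbb{Z}}$ itself.

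There is essentially no obstacle here: the corollary is a direct specialization of Theorem~\ref{LauchliMPC}, whose substantive ingredients—the p-morphism construction of Lemma~\ref{InfiniteOrder}, the poset computation of Theorem~\ref{PosetComputationForGSet}, and the core completeness result Theorem~\ref{ExistenceCompletenessOfN}—are already in place. The only point requiring any attention is the trivial observation that $\mathbb{Z}$ is torsion-free, which supplies the infinite non-commutative order hypothesis; the rest is bookkeeping. The single genuine subtlety worth flagging for the reader is the one recorded in the remark following Theorem~\ref{LauchliMPC}: this functor cannot be upgraded to all of $\mathbf{NJ}$, because $\mathrm{Po}(\mathbf{Set}^{\mathbb{Z}}) = \mathbf{2}^{(\mathbb{N}, \mid)}$ has a top element and therefore validates $\neg p \vee \neg\neg p$.
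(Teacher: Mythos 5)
Your proof is correct and follows exactly the route the paper intends: the corollary is the instance $G=(\mathbb{Z},+)$ of Theorem \ref{LauchliMPC}, with the only hypothesis to check being that $1\in\mathbb{Z}$ has infinite non-commutative order, which for an abelian group reduces to torsion-freeness. The additional unpacking of the reduction chain (Lemma \ref{PosetReflectionExistenceProblem}, Corollary \ref{PosetComputationForDynamicalSystems}, Theorem \ref{ExistenceCompletenessOfN}) accurately reproduces the paper's proof of Theorem \ref{LauchliMPC} itself.
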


\begin{rem}
L\"{a}uchli realizability \cite{Lach} was the first BHK style complete semantics for minimal logic. It is also possible to extend it to intuitionistic logic using a trick to interpret $\bot$ \cite{Makkai}. The modern categorical version we present here is developed and elegantly presented in \cite{Makkai}. 
\end{rem}

So far, we have computed the poset reflection of some categories of the form $\mathbf{Set}^{\mathcal{C}}$. Let us conclude this subsection by a remark that such computation is not always an easy task. For instance, there are small categories $\mathcal{C}$ such that the poset reflection of $\mathbf{Set}^{\mathcal{C}}$ is not even a set. To see such a situation,
let $X$ and $Y$ be two sets and $f, g: X \to Y$ be two functions:
\[\begin{tikzcd}[ampersand replacement=\&]
	X \&\& Y
	\arrow["f", shift left=1, from=1-1, to=1-3]
	\arrow["g"', shift right=1, from=1-1, to=1-3]
\end{tikzcd}\]
Then, define the category $\mathcal{C}_{f, g: X \to Y}$ in the following manner. For the objects, use the elements in the disjoint union $X+Y=\{(0, x) \mid x \in X\} \cup \{(1, y) \mid y \in Y\}$ of $X$ and $Y$ and for the maps, define $\mathrm{Mor}(\mathcal{C}_{f, g: X \to Y})$ as the union of the set of identity maps and the set 
\[
\{f_x: (0, x) \to (1, y) \mid f(x)=y\} \cup \{g_x: (0, x) \to (1, y) \mid g(x)=y\}
\]
where for any pair $(x, y)$ if $f(x)=y$ (resp. $g(x)=y$), there is a
map $f_x: (0, x) \to (1, y)$ (resp. $g_x: (0, x) \to (1, y)$. As no non-trivial composition is possible with our maps, we define the composition in a trivial way. In \cite{Lawvere}, Lawvere claimed that there are sets $X$ and $Y$ and maps $f, g: X \to Y$ such that $\mathrm{Po}(\mathbf{Set}^{\mathcal{C}_{f, g: X \to Y}})$
is not even a set, see \cite{menni2000exact}.

\subsection{The Equivalence Problem}

For the equivalence problem, we only focus on one $\mathrm{BCC}$ rather than a family of $\mathrm{BCC}$'s. Here is the formal definition of the completeness of a $\mathrm{BCC}$.

\begin{dfn}
A $\mathrm{BCC}$ $\mathcal{C}$ is called \emph{equivalence-complete} for $\mathbf{NJ}$, when for any two maps $f, g: A \to B$ in $\mathbf{NJ}$, if $F(f)=F(g)$, for any $\mathrm{BC}$-functor $F: \mathbf{NJ} \to \mathcal{C}$, then $f=g$. For $\mathbf{NM}$ and $\mathbf{NN}$, a similar definition is in place, replacing the $\mathrm{BC}$-structure by the $\mathrm{AB}$- or $\mathrm{CC}$-structures, respectively.
\end{dfn}

\begin{rem}
There is also a stronger version of the equivalence-completeness of $\mathcal{C}$ for $\mathbf{NJ}$ that simply demands the existence of a faithful $\mathrm{BC}$-functor $F: \mathbf{NJ} \to \mathcal{C}$. In fact, it is a uniform version of the equivalence-completeness, as we have just one $\mathrm{BC}$-functor working for all formulas $A$ and $B$.
\end{rem}

\subsubsection{Equivalence-completeness for $\mathbf{NN}$}

Historically, the equivalence problem started with and still focused on the $\mathbf{NN}$ case and the $\mathrm{CC}$-structure. The reason is the close connection this setting has with the simply typed lambda calculus as a very basic programming language.
The first two important equivalence-completeness results are the following:

\begin{thm}(\cite{Cubric})
Any $\mathrm{CC}$-functor $F: \mathbf{NJ} \to \mathbf{Set}$ that maps the atoms of $\mathcal{L}_p$ to infinite sets is faithful. Therefore, $\mathbf{Set}$ is equivalence-complete for $\mathbf{NN}$.
\end{thm}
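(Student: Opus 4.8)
The plan is to establish the stronger, uniform statement: the free-$\mathrm{CCC}$ functor $F\colon\mathbf{NN}\to\mathbf{Set}$ determined, via Theorem~\ref{Freness}, by sending each atom $p_i$ to the infinite set $X_i=F(p_i)$ is \emph{faithful}. Equivalence-completeness of $\mathbf{Set}$ for $\mathbf{NN}$ is then immediate: for $f,g\colon A\to B$ in $\mathbf{NN}$, the hypothesis that $F'(f)=F'(g)$ for every $\mathrm{CC}$-functor $F'$ applies in particular to this faithful $F$, forcing $f=g$. To prove faithfulness I would first reduce to global elements. Since $F$ preserves exponentials, $f=g$ iff $\lambda f=\lambda g\colon\top\to[A,B]$, so it suffices to show $F$ is injective on each $\mathbf{NN}(\top,C)$. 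Under the identification of morphisms of $\mathbf{NN}$ with derivations up to $\beta\eta$, a global element of $C$ is a closed derivation of $C$, and two such are equal iff they share a $\beta\eta$-long normal form; thus the goal becomes: \emph{the $F$-interpretation of a closed derivation determines its long normal form}. This is Friedman's completeness theorem for the full set hierarchy over infinite base sets, in categorical dress.

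The method I would use is a \emph{categorical gluing} / binary logical relation between $\mathbf{NN}$ and $\mathbf{Set}$ (following Lafont and \v{C}ubri\'c). Fix once and for all a context $\Delta$ supplying countably many fresh variables of every type, and for each object $C$ define a relation $R_C$ between $F(C)$ and the set $\mathrm{Nf}_\Delta(C)$ of $\beta\eta$-long normal derivations of $C$ from $\Delta$. The relations are given by a mutual recursion of the ``reflect/reify'' kind familiar from normalization-by-evaluation: at a conjunction or an implication one uses the standard logical-relations clauses, while at an atom $p$ one declares a neutral normal form $x\,u_1\cdots u_k$ of type $p$ to be related to a chosen element of $X_p$, using an \emph{injection} of the countable set of such neutrals into the \emph{infinite} set $X_p$. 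The triples $(C,X_C,R_C)$ then form the objects of a gluing category $\mathcal{G}$ which is cartesian closed and carries $\mathrm{CC}$-functor projections $\pi_1\colon\mathcal{G}\to\mathbf{NN}$ and $\pi_2\colon\mathcal{G}\to\mathbf{Set}$; by freeness of $\mathbf{NN}$ one obtains a $\mathrm{CC}$-functor $\widehat{F}\colon\mathbf{NN}\to\mathcal{G}$ sending $p_i$ to the gluing object built above, with $\pi_1\widehat{F}\cong\mathrm{id}_{\mathbf{NN}}$ and $\pi_2\widehat{F}\cong F$.

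From $\widehat F$ one extracts the fundamental lemma: every derivation, instantiated by $R$-related data, is $R$-related to its $F$-denotation; combined with the reify half this says that for a closed derivation $t$ of $C$ the element $F(t)\in F(C)$ is $R_C$-related only to the long normal form of $t$, which it therefore determines uniquely. Hence two closed derivations of $C$ with equal $F$-value have equal long normal form, i.e.\ are equal in $\mathbf{NN}$, giving injectivity on $\mathbf{NN}(\top,C)$ and faithfulness of $F$. (An equivalent route avoids the fixed context by running the argument as a Kripke logical relation over the category of contexts and renamings and reading off the plain-$\mathbf{Set}$ statement at the end.) The main obstacle is precisely the atomic case and its propagation: the base relation must be chosen so that it is simultaneously preserved by the full $\mathrm{CCC}$ structure of $\mathcal{G}$ (so the clauses cohere into a genuine CCC and $\widehat F$ exists) and rich enough on neutrals that the reify direction pins down normal forms. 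This coherent ``reflect/reify at base type'', where infiniteness of the $X_i$ is indispensable — a finite $X_p$ could not injectively absorb all neutral normal forms — is the heart of the proof; by contrast, checking that $\mathcal{G}$ is a CCC, that the projections are $\mathrm{CC}$-functors, and the reduction to global elements are all routine.
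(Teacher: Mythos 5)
The paper does not prove this theorem; it only states it with a citation to \cite{Cubric}, so there is no in-paper argument to compare against. Your proposal is the standard proof from the literature (Friedman's completeness theorem \cite{Fri} recast via categorical gluing, which is essentially how \v{C}ubri\'{c} proceeds), and its outline is sound: the reduction to global elements via $\lambda$ and preservation of exponentials is correct, the passage from an arbitrary atom-assignment to the canonical free extension is justified by the uniqueness-up-to-isomorphism clause of Theorem \ref{Freness} (faithfulness is invariant under natural isomorphism), and you correctly identify both why a single faithful functor suffices for equivalence-completeness and why infiniteness of the base sets is indispensable (consistent with the paper's remark that no faithful $\mathrm{CC}$-functor into $\mathbf{FinSet}$ exists even though $\mathbf{FinSet}$ is equivalence-complete). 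The two points that genuinely carry weight and that you have only named rather than carried out are (i) the mutual ``reflect/reify'' induction showing that each $R_C$ relates every denotation of a closed derivation to at most one long normal form --- at function types this needs the reflected fresh variables of the domain type, which is exactly where the injection of neutrals into the infinite $X_p$ is consumed --- and (ii) the existence and uniqueness of $\beta\eta$-long normal forms in the $\{\top,\wedge,\to\}$ fragment, where the $\eta$-rules for $\top$ and $\wedge$ require some care. Neither is a gap in the strategy, but both are where the real work of \cite{Cubric} lives.
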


\begin{thm}(\cite{Soloviev})\label{EqualityCompletenessOfFinSet}
$\mathbf{FinSet}$ is equivalence-complete for $\mathbf{NN}$.
\end{thm}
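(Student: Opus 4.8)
The plan is to reduce the statement to the already-quoted completeness of $\mathbf{Set}$ (the theorem of \v{C}ubri\'{c} just above), via a finite-approximation argument. The key observation is that the equivalence problem for $\mathbf{NN}$ is, by its very nature, about single pairs of maps: given $f, g : A \to B$ in $\mathbf{NN}$ with $f \neq g$, we only need \emph{one} $\mathrm{CC}$-functor into $\mathbf{FinSet}$ that separates them. By the equivalence-completeness of $\mathbf{Set}$, there is already a $\mathrm{CC}$-functor $F : \mathbf{NN} \to \mathbf{Set}$ — indeed one sending every atom of $\mathcal{L}_p$ to an infinite set — with $F(f) \neq F(g)$. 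So it suffices to show that this infinitary separating witness can always be replaced by a finite one.

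First I would fix $f, g : A \to B$ in $\mathbf{NN}$ with $f \neq g$ and take the $\mathrm{CC}$-functor $F : \mathbf{NN} \to \mathbf{Set}$ provided by \v{C}ubri\'{c}'s theorem, so that $F(f), F(g) : F(A) \to F(B)$ are distinct functions; pick $a \in F(A)$ with $F(f)(a) \neq F(g)(a)$. Note that $F(A)$ and $F(B)$ are built from the (infinite) sets $S_i := F(p_i)$ by finitely many applications of $\times$, $(-)^{(-)}$, and the terminal object $1$; likewise $F(f)$ and $F(g)$, being images of derivations built from finitely many rules, are assembled from projections, pairings, evaluations, and $\lambda$-abstractions over these $S_i$. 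Since only finitely many atoms $p_{i_1}, \dots, p_{i_k}$ actually occur in $A$, $B$, $f$, $g$, only the sets $S_{i_1}, \dots, S_{i_k}$ matter. The plan is then to choose \emph{finite} subsets $S_{i_j}' \subseteq S_{i_j}$ large enough that the entire computation witnessing $F(f)(a) \neq F(g)(a)$ survives, define a new $\mathrm{CC}$-functor $F' : \mathbf{NN} \to \mathbf{FinSet}$ by sending $p_{i_j} \mapsto S_{i_j}'$ (and the remaining atoms to, say, $1$), and verify $F'(f) \neq F'(g)$. Freeness of $\mathbf{NN}$ (Theorem \ref{Freness}) guarantees such an $F'$ exists once the object assignment is chosen. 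The point is that since exponentials in $\mathbf{FinSet}$ agree with those in $\mathbf{Set}$ on finite sets (Example \ref{ExponentialInSet}, \ref{ExampleOfFull}), and $\mathbf{FinSet} \hookrightarrow \mathbf{Set}$ is a $\mathrm{CC}$-functor, one has $i \circ F' \cong$ the restriction of $F$ to the finitely generated sub-data — provided the finite subsets are chosen compatibly.

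The hard part will be the finite-support argument for exponentials: an element of $F(A)$ may be, or may contain as a component, a \emph{function} $S_{i_j} \to (\text{something})$, and the analysis that witnesses $F(f)(a) \neq F(g)(a)$ might in principle inspect such a function at infinitely many points. One must argue that, because the separating computation is a fixed finite term (coming from the derivations $f$ and $g$), it only ever \emph{applies} the functions it encounters at finitely many arguments, all themselves generated from a finite seed; this is essentially a normalization / finite-interpolation fact about the simply-typed $\lambda$-terms underlying the $\mathrm{CC}$-structure. I would handle this by induction on the structure of $A$ and $B$ (or on the relevant types), tracking a finite set of ``relevant'' elements at each type simultaneously for $F(A)$, $F(B)$, and all intermediate types appearing in $f$ and $g$, and closing it under the finitely many operations of application and projection invoked. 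An alternative, cleaner route — which I would mention as a fallback — is to instead invoke the finite-model-property machinery directly: combine \v{C}ubri\'{c}'s argument with a cut-elimination/normalization analysis of $\beta\eta$-normal forms, which is the path taken in \cite{Soloviev}, showing that the size of the finite separating set is bounded by a function of the sizes of $A$, $B$ and the normal forms of $f$, $g$. Either way, the conclusion is that $\mathbf{FinSet}$ is equivalence-complete for $\mathbf{NN}$, and — as remarked in the text — the resulting bound yields decidability of the equivalence of proofs in this fragment.
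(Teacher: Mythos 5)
The paper does not actually prove this statement; it is quoted from Solovev with a citation, so there is no in-text argument to compare yours against. Judged on its own terms, your proposal correctly identifies the natural starting point (\v{C}ubri\'{c}'s completeness of $\mathbf{Set}$, with infinite sets at the atoms) and correctly isolates where the difficulty lies, but the finite-approximation step you sketch is not a routine closure argument -- it is essentially the entire content of the theorem (it is Statman's finite completeness theorem for the simply typed $\lambda$-calculus in categorical clothing), and as described it has a genuine gap.

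Concretely, two things break in the ``choose finite subsets $S_{i_j}' \subseteq S_{i_j}$ closed under the finitely many operations invoked'' plan. First, restricting the atoms does not induce any map, in either direction, between $F(A)$ and $F'(A)$ at higher types: an inclusion $S' \hookrightarrow S$ gives neither a function $[S,T] \to [S',T']$ nor one the other way, so there is nothing along which to transport the witness $a$ or the inequality $F(f)(a) \neq F(g)(a)$. The standard repair is a (Kripke) logical relation between the two interpretations, and the fundamental lemma only tells you the denotations of $f$ and $g$ are \emph{related} across the two models -- to conclude $F'(f) \neq F'(g)$ you additionally need the relation to be sufficiently injective or functional at the types involved, which is exactly where the finite combinatorics (and the bound on the size of the finite model in terms of the normal forms) enters. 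Second, the ``witness of inequality'' is not a single finite computation: if $B$ is a higher type, certifying $F(f)(a) \neq F(g)(a)$ requires applying both sides to further arguments until a base type is reached, and those arguments live in infinite function spaces and need not be $\lambda$-definable or generated from any finite seed, so ``tracking a finite set of relevant elements and closing under the finitely many applications invoked'' does not terminate in the naive sense at order $\geq 2$. Your fallback (``do the normal-form analysis as in Solovev'') is the correct route, but it is a pointer to the missing proof rather than a proof; as written, the proposal establishes the easy reductions and leaves the theorem's actual content unproved.
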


The equivalence-completeness of $\mathbf{FinSet}$ is crucial as it helps to prove the decidability of the equivalence problem for $\mathbf{NN}$.

\begin{cor}(\cite{Soloviev}) \label{DecidabilityOfEqualityInNN}
The equivalence problem for $\mathbf{NN}$ is decidable.
\end{cor}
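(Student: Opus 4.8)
The plan is to reduce the equivalence problem for $\mathbf{NN}$ to a finite search, using Theorem \ref{EqualityCompletenessOfFinSet} as the key ingredient. Given two derivations $\mathsf{D}_1, \mathsf{D}_2 : A \to B$ in $\mathbf{NN}$, I want to decide whether $\mathsf{D}_1 = \mathsf{D}_2$ in $\mathbf{NN}$, i.e., whether they are $\beta\eta$-equivalent. The completeness of $\mathbf{FinSet}$ tells us that $\mathsf{D}_1 = \mathsf{D}_2$ if and only if $F(\mathsf{D}_1) = F(\mathsf{D}_2)$ for every $\mathrm{CC}$-functor $F : \mathbf{NN} \to \mathbf{FinSet}$. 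So the first observation is that $\mathsf{D}_1 \neq \mathsf{D}_2$ is a semi-decidable property: one can enumerate all assignments of finite sets to the finitely many atoms $p_{i_1}, \ldots, p_{i_k}$ occurring in $A$ and $B$, each such assignment $f$ inducing (by freeness, Theorem \ref{Freness}) a $\mathrm{CC}$-functor $F$, and then \emph{compute} the two functions $F(\mathsf{D}_1), F(\mathsf{D}_2) : F(A) \to F(B)$ between finite sets and check whether they differ. If they ever differ, we halt and report ``not equal.''

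The complementary half — semi-decidability of $\mathsf{D}_1 = \mathsf{D}_2$ — comes from the $\beta\eta$-rewriting side rather than the categorical side. The plan here is to invoke the standard proof theory of the simply typed lambda calculus with products (equivalently, of $\mathbf{NN}$): the $\beta\eta$-reduction relation on derivations is strongly normalizing and confluent, so each derivation has a unique $\beta\eta$-normal form, and $\mathsf{D}_1 = \mathsf{D}_2$ holds precisely when these normal forms coincide (up to renaming of bound assumption labels, which the excerpt already builds into the identity of derivations). Normalization is an effective procedure, and comparing two normal-form trees is decidable, so $\mathsf{D}_1 = \mathsf{D}_2$ is decidable outright — in fact one does not even need the categorical enumeration for this direction. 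Combining a decision procedure for equality with the completeness of $\mathbf{FinSet}$ is slightly redundant, so the cleanest route is: either cite normalization directly, or, if one wants to stay within the categorical framework, run the two semi-decision procedures (normalization-based search for equality, $\mathbf{FinSet}$-enumeration for inequality) in parallel and note that exactly one must halt, which yields decidability.

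I expect the main obstacle to be the direction showing $\mathsf{D}_1 = \mathsf{D}_2$ is semi-decidable via the categorical picture alone, because a priori one cannot bound the size of the finite sets needed to witness an inequality, and without such a bound the $\mathbf{FinSet}$-enumeration only semi-decides inequality, not equality. The honest resolution is that one genuinely needs an \emph{independent} handle on equality, and the natural one is strong normalization of $\beta\eta$ for $\mathbf{NN}$ — a result that is classical (Prawitz, Tait, Girard) but lies outside what the excerpt has set up. So in the write-up I would state that $\mathbf{NN}$-derivations, being essentially simply-typed $\lambda$-terms with pairing, admit effective $\beta\eta$-normalization with unique normal forms, cite the appropriate source, and then either conclude decidability immediately or combine it with Theorem \ref{EqualityCompletenessOfFinSet} to emphasize the role of the finite-set semantics. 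A secondary, purely bookkeeping obstacle is checking that the induced functor $F : \mathbf{NN} \to \mathbf{FinSet}$ is genuinely computable on a given derivation: one must verify that the inductive clauses in the definition of $F(\mathsf{D})$ (Table \ref{tableD}, restricted to the $\mathbf{NN}$-rules) only ever involve finite products, finite exponentials $B^A$ with $A, B$ finite, projections, and $\lambda$-abstractions, each of which is a finitely presentable function between finite sets — this is routine but should be remarked upon to make the ``compute and compare'' step rigorous.
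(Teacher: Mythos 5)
Your treatment of the negative direction is exactly the paper's: enumerate assignments of finite sets to the atoms, use freeness to induce a $\mathrm{CC}$-functor into $\mathbf{FinSet}$, compute the two functions, and halt if they differ; Theorem \ref{EqualityCompletenessOfFinSet} guarantees a witness exists whenever $\mathsf{D}_1 \neq \mathsf{D}_2$. Where you diverge is the positive direction. The paper does something much lighter than normalization: since equality of derivations in $\mathbf{NN}$ is \emph{defined} as the least equivalence relation generated by the basic $\beta\eta$-equivalences and closed under composition, $\mathsf{D}_1 = \mathsf{D}_2$ holds iff there exists a finite chain of basic rewrites connecting them, and one can simply enumerate all such chains. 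That makes equality recursively enumerable by definition, and dovetailing the two semi-decision procedures gives decidability — no appeal to strong normalization, confluence, or uniqueness of normal forms is needed. Your normalization route would also work in principle, but it is both heavier and more delicate than you acknowledge: the $\eta$-rule for $\top$ in this fragment identifies \emph{any} two derivations of $\top$ from the same assumptions, which does not orient into a well-behaved reduction, so "strongly normalizing and confluent with unique normal forms" is not an off-the-shelf consequence of Prawitz--Tait--Girard (which concern $\beta$-normalization); deciding $\beta\eta$-equality in the presence of the unit type genuinely requires extra machinery. Your parallel-search fallback is the right instinct and is essentially the paper's proof — just replace "normalization-based search for equality" with the naive enumeration of finite chains of basic equivalences and the argument closes cleanly.
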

\begin{proof}
It is enough to prove that the equivalence problem and its complement are both recursively enumerable. It is recursively enumerable as if two proofs are equal, \emph{there exists} a sequence of basic $\beta\eta$-equalities transforming one to the other. For the complement, if two proofs are not equal, then by Theorem \ref{EqualityCompletenessOfFinSet}, \emph{there is} an assignment of finite sets to the atoms in the proofs such that the resulting functions are not equal.
\end{proof}

It is worth mentioning that the two versions of the equivalence-completeness are not equivalent. For instance, although $\mathbf{FinSet}$ is equivalence-complete for $\mathbf{NN}$, there is no faithful $\mathrm{CC}$-functor $F: \mathbf{NN} \to \mathbf{FinSet}$. As explained in \cite{Simpson}, the reason is the existence of infinitely many maps in $\mathrm{Hom}_{\mathbf{NN}}((p \to p) \wedge p, p)$ encoding the natural numbers. To explain, for any natural number $n \in \mathbb{N}$, define the derivation $\mathsf{D}_n$ of $p$ from $(p \to p) \wedge p$, inductively by:
\begin{center}
	\begin{tabular}{c c c}
$\mathsf{D}_0$: \AxiomC{$(p \to p) \wedge p$}
\RightLabel{\footnotesize$\wedge_2 E$} 
		\UnaryInfC{$p$}
		\DisplayProof \quad \quad \quad 
  &
$\mathsf{D}_{n+1}$: \AxiomC{$(p \to p) \wedge p$}
 \RightLabel{\footnotesize$\wedge_1 E$} 
		\UnaryInfC{$p \to p$}
  
	    \AxiomC{$(p \to p) \wedge p$}
     \noLine
\UnaryInfC{$\mathsf{D}_n$}
\noLine
\UnaryInfC{$p$}
\RightLabel{\footnotesize$\to E$} 
		\BinaryInfC{$p$}
		\DisplayProof 
	\end{tabular}
\end{center}
These proofs form an infinite family $\{\mathsf{D}_n\}_{n \in \mathbb{N}}$ of distinct derivations. To see why, it is enough to show that a $\mathrm{CC}$-functor maps them to different maps. Using the freeness of $\mathbf{NN}$, define $F: \mathbf{NN} \to \mathbf{Set}$ by mapping $p$ to the set $\mathbb{N}$. Then, $F$ maps $\mathsf{D}_n$ to the function $f_n: \mathbb{N}^{\mathbb{N}} \times \mathbb{N} \to \mathbb{N}$, defined by $f_n(\alpha, m)=\alpha^n(m)$, where $\alpha^n$ means iterating $\alpha$ for $n$ many times. Substituting $\alpha$ as the successor function $s$ and setting $m=0$, it is clear that $f_n(s, 0)=n$, which implies that all $f_n$'s are distinct.

In the rest of this subsubsection, we will present a necessary and sufficient condition for a $\mathrm{CCC}$ to be equivalence-complete for $\mathbf{NN}$ in any of the two mentioned senses. First, for the weaker version, it is clear that if $\mathcal{C}$ is a preordered set, then it cannot be complete for $\mathbf{NN}$, as there are at least two non-equal projection morphisms in $\mathrm{Hom}_{\mathbf{NJ}}(p \wedge p, p)$, as discussed in Example \ref{ExamEqualityProblem}.
The good news is that this necessary condition is actually sufficient.

\begin{thm}(Simpson \cite{Simpson}) \label{EqualityCompletenessSimpson}
For any $\mathrm{CCC}$ $\mathcal{C}$, it is equivalence-complete for $\mathbf{NN}$ iff it is not a preordered set.
\end{thm}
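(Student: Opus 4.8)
The plan is to prove the non-trivial direction: if a $\mathrm{CCC}$ $\mathcal{C}$ is not a preordered set, then $\mathcal{C}$ is equivalence-complete for $\mathbf{NN}$. The converse is immediate, since in a preordered set all parallel morphisms coincide, yet (as noted in Example \ref{ExamEqualityProblem}) the two projections $p_0, p_1 : p \wedge p \to p$ are distinct in $\mathbf{NN}$. For the forward direction, the strategy is to find, for any pair of distinct morphisms $f, g : A \to B$ in $\mathbf{NN}$, a single $\mathrm{CC}$-functor into $\mathbf{Set}$ that separates them, and then to transport this separation into $\mathcal{C}$ using the extra structure that non-preorderedness provides. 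Concretely: since $\mathcal{C}$ is not a preordered set, there exist an object and two distinct parallel maps in $\mathcal{C}$; the first step is to leverage the $\mathrm{CC}$-structure to manufacture from this a pair of distinct maps between ``computationally useful'' objects — ideally extracting something like a copy of the two-element set, i.e.\ an object $N$ together with two global elements $a, b : 1 \to N$ with $a \neq b$ (one can use products, the exponential adjunction, and the terminal object to push a witnessing pair of distinct maps down to global points, exactly as in Simpson's argument and as foreshadowed by the $\mathrm{Hom}_{\mathbf{NN}}((p\to p)\wedge p, p)$ discussion preceding the theorem).

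The second step is to combine the equivalence-completeness of $\mathbf{Set}$ (or of $\mathbf{FinSet}$, Theorem \ref{EqualityCompletenessOfFinSet}) with the object $N$ just constructed. Given $f \neq g : A \to B$ in $\mathbf{NN}$, completeness of $\mathbf{FinSet}$ yields a $\mathrm{CC}$-functor $G : \mathbf{NN} \to \mathbf{FinSet}$ with $G(f) \neq G(g)$; we may assume $G$ sends each atom $p_i$ to a finite set, hence to a finite power of $\mathbf{2} = \{0,1\}$ up to the $\mathrm{CC}$-structure, and the separation of $G(f)$ and $G(g)$ is witnessed by finitely many elements. The plan is then to define a $\mathrm{CC}$-functor $F : \mathbf{NN} \to \mathcal{C}$ by sending each atom $p_i$ to a suitable finite $\mathrm{CC}$-power of $N$ (using freeness of $\mathbf{NN}$, Theorem \ref{Freness}), chosen so that the finitely many separating set-elements correspond to distinct global elements of the relevant object of $\mathcal{C}$ built from $N$. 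One needs a ``reflection'' lemma: because $a \neq b : 1 \to N$, the finite set $\{0,1\}$ embeds into $\mathcal{C}$ in the sense that distinct set-maps between finite $\mathrm{CC}$-powers of $\mathbf{2}$, evaluated at global elements, are detected by the corresponding maps between finite $\mathrm{CC}$-powers of $N$. Establishing this faithful transfer of the separating data — essentially that $F(f)$ and $F(g)$, precomposed with an appropriate global element $1 \to F(A)$, land in distinct global elements of $F(B)$ — is the technical core.

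The main obstacle I expect is precisely this transfer step: global elements of $F(A)$ and $F(B)$ in $\mathcal{C}$ need not be in bijection with elements of $G(A)$ and $G(B)$ in $\mathbf{FinSet}$ (a $\mathrm{CCC}$ can have ``too few'' global points), so one cannot simply pull back the separating element of $\mathbf{FinSet}$. The fix, following Simpson, is to work not with global elements but with generalized elements: use the object $N$ as a ``probe'', noting that the two maps $a, b : 1 \to N$ give a monomorphism $\mathbf{2} \hookrightarrow \mathcal{C}(N, N)$ or similar, and run the entire $\mathbf{FinSet}$ computation ``internally'' over $N$, i.e.\ replace each occurrence of $\mathbf{2}$ by $N$ in the interpretation. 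One must check that the $\mathrm{CC}$-structure is preserved under this replacement and that the distinctness of $a$ and $b$ propagates through products and exponentials to keep $F(f)$ and $F(g)$ apart — this requires a careful induction mirroring the construction of the free $\mathrm{CCC}$, together with the observation that a $\mathrm{CC}$-functor cannot collapse a genuinely non-trivial parallel pair without collapsing the witnessing distinctness of $a$ and $b$. Once that internalization is carried out, equivalence-completeness of $\mathcal{C}$ follows, and as Simpson observes this also has the philosophical payoff that proof-equivalence is the maximal non-collapsing congruence.
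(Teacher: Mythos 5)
Your easy direction is fine, and your opening moves for the hard direction are also fine: from $f \neq g : A \to B$ in a non-preorder $\mathrm{CCC}$ one does get two distinct global points $\lambda f \neq \lambda g : 1 \to [A,B]$ (if they were equal, applying $(-)\cdot id_A$ would give $f=g$), so there is an object $N$ with $a \neq b : 1 \to N$. Note, however, that the paper does not prove this theorem at all; it only cites Simpson. The closest thing it proves is the $\mathbf{NJ}$ analogue (Corollary \ref{CorEqComplNJ}), whose engine is Lemma \ref{FaithfulIntoNonpreorder}: a faithful $\mathrm{BC}$-functor $\mathbf{FinSet} \to \mathcal{C}$ sending an $n$-element set to $\sum_{i=1}^{n} 1$, with faithfulness coming from the distinctness of the coproduct injections. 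Your proposal is essentially an attempt to port that strategy to the $\mathrm{CCC}$ setting, and that is exactly where it breaks.

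The genuine gap is your ``internalization'' step, which is not a technical afterthought but the entire content of the theorem. To combine Theorem \ref{EqualityCompletenessOfFinSet} with your two-pointed object $N$, you need something like a $\mathrm{CC}$-functor (or at least a separation-preserving translation) from $\mathbf{FinSet}$ into $\mathcal{C}$ that sends $\{0,1\}$ to $N$. But an arbitrary function $h : \{0,1\}^{j} \to \{0,1\}^{k}$ has no candidate image $N^{j} \to N^{k}$ in a general $\mathrm{CCC}$: defining such a morphism would require case analysis on whether an input coordinate ``is'' $a$ or ``is'' $b$, and nothing forces $N$ to be a coproduct $1+1$, nor the points $a,b$ to be jointly epic or to classify anything. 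Without coproducts the construction of Lemma \ref{FaithfulIntoNonpreorder} simply has no analogue, so the functor $F$ you posit is never actually constructed, and your ``reflection lemma'' is an unproved restatement of the problem rather than a fix. (Your closing claim that a $\mathrm{CC}$-functor ``cannot collapse a non-trivial parallel pair without collapsing the distinctness of $a$ and $b$'' is also unsubstantiated as stated.) Simpson's actual argument avoids this entirely by going through a syntactic reduction: a typed B\"ohm-theorem / 1-section-style result reduces the separation of any two distinct $\beta\eta$-normal forms to the separation of the two ``boolean'' terms of type $p \to p \to p$, and separating \emph{those} two needs nothing more than an interpretation of $p$ as an object with two distinct global points — which is exactly what non-preorderedness supplies. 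So the correct proof replaces your semantic transfer from $\mathbf{FinSet}$ by a syntactic reduction to a single, trivially separable pair; as your proposal stands, the forward direction is not established.
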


\begin{phil}\label{NNIsOptimal}
By Theorem \ref{EqualityCompletenessSimpson}, if $\mathcal{C}$ is not equivalence-complete for $\mathbf{NN}$, then it must be a preordered set. Reading any $\mathrm{CCC}$ as a proof system, it simply means that if two non-equivalent derivations in $\mathbf{NN}$ are considered as equal in $\mathcal{C}$, then the whole notion of deduction in $\mathcal{C}$ collapses to the mere deductibility. In other words, it is impossible to add more equalities than what is governed by the $\beta \eta$-equivalences if we want to keep the notion of deductibility non-trivial. In this sense, the deduction equivalence of $\mathbf{NN}$ is optimal \cite{DosenMax}. 
\end{phil}
For the other version of the equivalence-completeness for $\mathbf{NN}$, let us first explain a necessary condition \cite{Simpson}. A map $f: A \to A$ in a $\mathrm{CCC}$ $\mathcal{C}$ is called of an \emph{infinite order} if $f^m \neq f^n$, for any $m \neq n$, where $f^k$ means the result of $k$ many compositions of $f$ with itself.
We show that the existence of a map $f: A \to A$ in $\mathcal{C}$ with an infinite order is a necessary condition, if we want a faithful functor $F: \mathbf{NN} \to \mathcal{C}$. To see why, it is enough to find a map in $\mathbf{NN}$ with an infinite order. For that purpose, consider the following derivation $\mathsf{D}$ from $[(p \to p) \wedge p] \to p$ to itself:
\begin{center}
\small
	\begin{tabular}{c}

\AxiomC{$[(p \to p) \wedge p]_1$}
 \UnaryInfC{$p \to p$} 

 \AxiomC{$[(p \to p) \wedge p]_1$}
 \UnaryInfC{$p$}

\AxiomC{$[(p \to p) \wedge p]_1$}
 \UnaryInfC{$p \to p$}

\BinaryInfC{$p$}

\BinaryInfC{$(p \to p) \wedge p$}

  \AxiomC{$[(p \to p) \wedge p] \to p$}
		\BinaryInfC{$p$} 
\RightLabel{\footnotesize$\to I_1$} 
		\UnaryInfC{$[(p \to p) \wedge p] \to p$}
		\DisplayProof 
	\end{tabular}
\end{center}
This derivation is of an infinite order in $\mathbf{NN}$. To see why, it is enough to find a $\mathrm{CC}$-functor $F: \mathbf{NN} \to \mathbf{Set}$ such that $F(\mathsf{D}^m) \neq F(\mathsf{D}^n)$, for any $m \neq n$. For that purpose, use the freeness of $\mathbf{NN}$ and 
find a $\mathrm{CC}$-functor $F: \mathbf{NN} \to \mathbf{Set}$
such that $F(p)=\mathbb{N}$. Then, one can see that $F(\mathsf{D})$ is the  function $G: \mathbb{N}^{\mathbb{N}^\mathbb{N} \times \mathbb{N}} \to \mathbb{N}^{\mathbb{N}^\mathbb{N} \times \mathbb{N}}$ defined by $G(\alpha)=\lambda fk. \alpha(f, f(k))$. Computing $G^n$, we see that $G^n(\alpha)=\lambda fk. \alpha(f, f^n(k))$ and hence $G^n(p_1)=\lambda fk. f^n(k)$. Therefore, $G^n(p_1)$'s are distinct functions, for different $n$'s which means that $G^n \neq G^m$, if $m \neq n$.

The following theorem shows that this necessary condition is actually sufficient:

\begin{thm}(Simpson \cite{Simpson})
For any $\mathrm{CCC}$ $\mathcal{C}$, there exists a faithful $\mathrm{CC}$-functor $F: \mathbf{NN} \to \mathcal{C}$ iff $\mathcal{C}$ contains an endomorphism of an infinite order.
\end{thm}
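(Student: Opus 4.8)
## Proof Proposal

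The plan is to prove both directions of the equivalence. The forward direction is already essentially established in the excerpt: the derivation $\mathsf{D}$ from $[(p\to p)\wedge p]\to p$ to itself was shown to be of infinite order in $\mathbf{NN}$, witnessed by the $\mathrm{CC}$-functor $F\colon\mathbf{NN}\to\mathbf{Set}$ sending $p$ to $\mathbb{N}$. So if there is a faithful $\mathrm{CC}$-functor $G\colon\mathbf{NN}\to\mathcal{C}$, then $G(\mathsf{D})$ is an endomorphism of $G([(p\to p)\wedge p]\to p)$ in $\mathcal{C}$, and faithfulness forces $G(\mathsf{D}^m)\neq G(\mathsf{D}^n)$ whenever $m\neq n$ (since $\mathsf{D}^m\neq\mathsf{D}^n$ in $\mathbf{NN}$ and $G$ is faithful), i.e.\ $G(\mathsf{D})$ has infinite order. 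First I would write this out carefully, making explicit that $G(\mathsf{D})^n = G(\mathsf{D}^n)$ because $G$ is a functor, so the infinite order of $\mathsf{D}$ transfers.

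The interesting direction is the converse: given a $\mathrm{CCC}$ $\mathcal{C}$ with an endomorphism $h\colon X\to X$ of infinite order, build a faithful $\mathrm{CC}$-functor $F\colon\mathbf{NN}\to\mathcal{C}$. The strategy is first to isolate from $h$ a concrete combinatorial structure — essentially a copy of the natural numbers object, or at least enough of one — living inside $\mathcal{C}$, and then to use this to simulate the finite-set (or at least the lambda-definable-functional) semantics that is already known to be equivalence-complete for $\mathbf{NN}$ by Theorem~\ref{EqualityCompletenessOfFinSet} and the Friedman--Plotkin results mentioned in the introduction. Concretely, I would use the freeness of $\mathbf{NN}$ (Theorem~\ref{Freness}) to define $F$ by sending each atom $p_i$ to a cleverly chosen object of $\mathcal{C}$ built from $X$ and $h$ — the natural candidate being something like $[X,X]$ or a retract thereof on which the iterates $h^n$ are separated — so that the canonically induced $\mathrm{CC}$-functor separates any two $\beta\eta$-inequivalent derivations. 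The key point is that the infinite-order endomorphism provides, via the exponentials, a "Church numeral" structure: the maps $\mathbb{N}\to\mathrm{Hom}(1,[X,X])$ realized by $n\mapsto h^n$ are injective, and this is exactly the feature (an internal $\mathbb{N}$) that drives completeness arguments for simply typed calculi.

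The main obstacle, and the technical heart of the argument, is the reduction to a known completeness theorem. One needs to show that faithfulness of the constructed $F$ can be reduced to the faithfulness of the already-established models (finite sets, or the Friedman--Plotkin model of lambda-definable functionals). The cleanest route is probably: (i) show there is a faithful $\mathrm{CC}$-functor from $\mathbf{NN}$ into the category of finite sets or some concrete $\mathrm{CCC}$ $\mathcal{D}$ (Theorem~\ref{EqualityCompletenessOfFinSet}, or Simpson's Theorem~\ref{EqualityCompletenessSimpson} applied to $\mathcal{D}$); (ii) construct, from the infinite-order endomorphism $h$ in $\mathcal{C}$, a $\mathrm{CC}$-functor $\mathcal{D}\to\mathcal{C}$ (or at least a way to ``embed the relevant finite data") that is itself faithful on the image of $\mathbf{NN}$; and (iii) compose. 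Step (ii) is where infinite order is essential — a finite set $n = \{0,1,\dots,n-1\}$ must be encoded inside $\mathcal{C}$ using the $n$ distinct iterates $h^0,\dots,h^{n-1}$, or more robustly one encodes all of $\mathbb{N}$ at once and then cuts down. Verifying that this encoding respects products, terminal objects, and especially exponentials (the $\lambda$-abstraction must be preserved) is the delicate calculation I would expect to consume most of the work; I would lean on Remark~\ref{HighlevelExp} and the explicit description of $\lambda$ and application there to check preservation of the $\mathrm{CC}$-structure, and cite Simpson~\cite{Simpson} for the details of the separation argument rather than reproducing it.
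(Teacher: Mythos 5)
Your forward direction is exactly the paper's argument: the derivation $\mathsf{D}$ on $[(p\to p)\wedge p]\to p$ has infinite order in $\mathbf{NN}$, and a faithful functor preserves this, since $G(\mathsf{D}^n)=G(\mathsf{D})^n$ and faithfulness sends distinct maps to distinct maps. Nothing to add there. Note, however, that the paper offers no proof of the converse; it establishes necessity in the surrounding text and cites Simpson for sufficiency, so the substance of your proposal rests entirely on whether your plan for the converse would go through.

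As written, it would not. Your (i)--(ii)--(iii) factorization asks for a faithful $\mathrm{CC}$-functor $\mathbf{NN}\to\mathcal{D}$ for some concrete $\mathcal{D}$, followed by a $\mathrm{CC}$-functor $\mathcal{D}\to\mathcal{C}$ built from the infinite-order endomorphism $h\colon X\to X$. If $\mathcal{D}=\mathbf{FinSet}$, step (i) already fails: the paper explicitly proves, using the very family $\mathsf{D}_n$ you invoke, that there is \emph{no} faithful $\mathrm{CC}$-functor $\mathbf{NN}\to\mathbf{FinSet}$; Theorem \ref{EqualityCompletenessOfFinSet} is only the non-uniform statement. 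If instead $\mathcal{D}=\mathbf{Set}$ (\v{C}ubri\'{c}/Friedman), step (i) holds but step (ii) is hopeless: an arbitrary $\mathrm{CCC}$ admits no $\mathrm{CC}$-functor from $\mathbf{Set}$, and even encoding a single finite set inside $\mathcal{C}$ in the style of Lemma \ref{FaithfulIntoNonpreorder} requires coproducts, which a mere $\mathrm{CCC}$ lacks. The iterates $h^0,\dots,h^{n-1}$ give $n$ distinct global elements of $[X,X]$, but nothing resembling a product- and exponential-preserving image of the finite ordinal $n$. The actual proof does not factor through a known complete model at all: one defines $F\colon\mathbf{NN}\to\mathcal{C}$ directly by freeness, sending the atoms to an object manufactured from $X$ and $h$, and proves faithfulness via a completeness criterion for interpretations of the simply typed lambda calculus (essentially Statman's $1$-section theorem), which reduces faithfulness to the distinctness of the Church numerals in the model; it is precisely this distinctness that the infinite order of $h$ supplies. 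Deferring ``the separation argument'' to the citation therefore defers the entire content of the hard direction: what survives of your proposal for it is the correct observation of \emph{where} the infinite order enters, but not a proof.
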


\subsubsection{Equivalence-completeness for $\mathbf{NJ}$}

The structure of the proofs in $\mathbf{NJ}$ is usually more complex than what is present in $\mathbf{NN}$. The reason simply is the rather complex behavior of the disjunction elimination rule. However, recently, the $\mathbf{NJ}$ counterpart of Theorem \ref{EqualityCompletenessOfFinSet} was also proved:

\begin{thm}(Scherer \cite{scherer,schererthesis}) \label{EqualiyCompletenessNJFinitset}
$\mathbf{FinSet}$ is equivalence-complete for $\mathbf{NJ}$.
\end{thm}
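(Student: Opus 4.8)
\textbf{Proof proposal for Theorem \ref{EqualiyCompletenessNJFinitset}.}

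The plan is to reduce the full intuitionistic case to the already-known completeness results for the implicational-conjunctive fragment, handling disjunction (and $\bot$) by a syntactic translation that pushes all the sums down to the level of atoms, while keeping enough control over the translation to recover non-equalities. Concretely, I would proceed as follows. First, I would fix two $\beta\eta$-inequivalent derivations $f,g\colon A\to B$ in $\mathbf{NJ}$ and aim to produce a single assignment of finite sets to the atoms $p_0,p_1,\dots$ such that the induced $\mathrm{BC}$-functor $F\colon\mathbf{NJ}\to\mathbf{FinSet}$ separates them, i.e.\ $F(f)\neq F(g)$. Second, I would recall (or reprove) a normalization-type result giving a tractable canonical form for $\mathbf{NJ}$-derivations — a cut-free / $\eta$-long normal form in which every $\vee$-elimination is as ``high up'' as possible — so that the combinatorial content of $f$ and $g$ up to $\beta\eta$ is finitely presented and comparison is effective. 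This is essentially the input to any argument of this type: we need a concrete handle on what ``$f\neq g$ in $\mathbf{NJ}$'' buys us.

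The third and central step is the disjunction-elimination analysis. Since $\mathbf{FinSet}$ has coproducts, the clause in Table~\ref{tableD} interpreting $(E\vee_{i,j})$ is the honest set-theoretic case distinction: $F(A\vee B)(w)$ is the disjoint union $F(A)(w)+F(B)(w)$ and the eliminator genuinely branches on which injection an element comes from. The idea is to exploit this to localize the failure of equality: if $f$ and $g$ already differ in the purely $\{\top,\wedge,\to\}$-skeleton that remains after we ``choose a branch'' at every disjunction, then Theorem~\ref{EqualityCompletenessOfFinSet} (equivalence-completeness of $\mathbf{FinSet}$ for $\mathbf{NN}$, applied componentwise along each branch) hands us the separating finite-set assignment. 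More precisely, I expect the right device to be a decomposition of a $\mathbf{NJ}$-derivation $f\colon A\to B$, relative to a fixed choice of injection tags for every $\vee$-formula occurring as a subformula of $A$, into a family of $\mathbf{NN}$-derivations indexed by such choices, such that $f=g$ in $\mathbf{NJ}$ iff for every tagging the corresponding $\mathbf{NN}$-components agree up to $\beta\eta$. Given this, pick a tagging witnessing $f\neq g$, apply the $\mathbf{NN}$-result to the components to get an interpretation of the atoms by finite sets, and then use that same atom-interpretation in $\mathbf{NJ}$: because coproducts in $\mathbf{FinSet}$ are computed ``pointwise and disjointly,'' the $\mathbf{NN}$-component separating the tagged pieces also separates $F(f)$ and $F(g)$ on the corresponding summand of $F(A)$. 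The $\bot$ case is easier and dual in flavor: $\bot$ goes to $\varnothing$, and by Lemma~\ref{NegativeProofsAreUnique} and the initiality of $\varnothing$ there is no equivalence content hidden in it, so it may be dispatched directly.

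The main obstacle, and the part where I would spend most effort, is making the ``decomposition into $\mathbf{NN}$-components indexed by branch choices'' precise \emph{and} faithful with respect to $\beta\eta$. The permutation/commuting conversions for $\vee$-elimination — the basic $\beta$- and $\eta$-equivalences displayed in the Preliminaries, together with the commuting conversions that allow $(E\vee)$ to be moved past other rules — are exactly what makes this delicate: two $\mathbf{NJ}$-derivations can look very different syntactically yet be equal, and the decomposition must be invariant under all of these. I would try to get this invariance by first putting derivations into a suitable normal form (so that there is a canonical placement for each $\vee$-elimination and the commuting conversions are ``used up''), and only then reading off the $\mathbf{NN}$-components; alternatively, one can work more semantically, interpreting into an auxiliary $\mathrm{BCC}$ built as a ``cover system'' or sheaf-like category over the poset of branch choices, where a $\mathbf{NN}$-completeness argument can be run fibrewise and then glued. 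Either way, the technical heart is a gluing/locality lemma asserting that equality of $\mathbf{NJ}$-maps into $\mathbf{FinSet}$ can be checked branch-by-branch; once that is in hand, Theorem~\ref{EqualityCompletenessOfFinSet} does the rest, and — exactly as in Corollary~\ref{DecidabilityOfEqualityInNN} — one obtains decidability of the equivalence problem for $\mathbf{NJ}$ as an immediate corollary, since equality is recursively enumerable by enumerating $\beta\eta$-rewrites and inequality is recursively enumerable by searching over finite-set assignments.
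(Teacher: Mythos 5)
There is a genuine gap, and it sits exactly where you place your "main obstacle": the decomposition of an $\mathbf{NJ}$-derivation into $\mathbf{NN}$-components indexed by branch choices, with the biconditional "$f=g$ in $\mathbf{NJ}$ iff all tagged components agree," is not established and, as stated, cannot work. The tagging only makes sense for disjunctions in positive position. As soon as a disjunction occurs under an implication — already for $A=((p\vee q)\to r)\to s$ — the interpretation $F(A)=F(s)^{F(r)^{F(p)+F(q)}}$ is in no way a coproduct indexed by taggings, and "choosing a branch" for the inner $p\vee q$ does not determine an element of the exponent, so there is no family of $\mathbf{NN}$-components whose agreement could be equivalent to $F(f)=F(g)$. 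The direction you actually need (if $f\neq g$ then some tagging separates the components) is precisely the one that fails. The same goes for the claim that a normal form "uses up" the commuting conversions: the interaction of $(E\vee)$ permutations with $\eta$ for sums at higher types is the entire content of the theorem, and no normalization result of the required strength is available off the shelf to be "recalled." Your dismissal of $\bot$ is also too quick: the $\eta$-rule for $\bot$ identifies all derivations from an inconsistent context, so deciding equality in the presence of $\bot$ entangles the equivalence problem with provability of subformulas, which is one of the reasons Scherer's result is hard.

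For comparison: the paper does not prove this theorem at all — it is imported from Scherer's work — and the actual proof there goes through focused, maximally multi-focused ("saturated") normal forms for the full language, not through a reduction to the $\{\top,\wedge,\to\}$ case via Theorem \ref{EqualityCompletenessOfFinSet}. Your framing of the problem (disjointness of coproducts in $\mathbf{FinSet}$, the centrality of the commuting conversions, and the decidability corollary obtained exactly as in Corollary \ref{DecidabilityOfEqualityInNN}) is sound, but the engine of the argument is missing, and the reduction you propose in its place does not go through.
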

\begin{cor}
The equivalence problem for $\mathbf{NJ}$ is decidable.
\end{cor}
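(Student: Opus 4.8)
The plan is to derive decidability of the equivalence problem for $\mathbf{NJ}$ from Theorem \ref{EqualiyCompletenessNJFinitset} in exactly the same way that Corollary \ref{DecidabilityOfEqualityInNN} was derived from Theorem \ref{EqualityCompletenessOfFinSet} in the $\mathbf{NN}$ case. The key observation is that the equivalence relation on $\mathbf{NJ}$-derivations is the least equivalence relation generated by the basic $\beta$- and $\eta$-equivalences (including those for $\vee$, $\bot$, $\top$) together with closure under composition; this is a recursively enumerable relation. So the strategy is to show both that the equivalence relation and its complement are recursively enumerable, whence the problem is decidable.

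First I would observe that the set of pairs of $\beta\eta$-equivalent $\mathbf{NJ}$-derivations is recursively enumerable: given two derivations $\mathsf{D}_1$ and $\mathsf{D}_2$ of $B$ from $A$, if $\mathsf{D}_1 = \mathsf{D}_2$ in $\mathbf{NJ}$ then by definition there is a finite chain of basic $\beta$- and $\eta$-rewrites, applied inside arbitrary contexts and respecting composition, that transforms $\mathsf{D}_1$ into $\mathsf{D}_2$. One can effectively enumerate all such finite chains, so the positive instances form an r.e.\ set. Second I would use the equivalence-completeness of $\mathbf{FinSet}$: by Theorem \ref{EqualiyCompletenessNJFinitset}, if $\mathsf{D}_1 \neq \mathsf{D}_2$ in $\mathbf{NJ}$, then there exists a $\mathrm{BC}$-functor $F : \mathbf{NJ} \to \mathbf{FinSet}$ with $F(\mathsf{D}_1) \neq F(\mathsf{D}_2)$. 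By the freeness of $\mathbf{NJ}$ (Theorem \ref{Freness}), such a functor is determined by a finite assignment of finite sets to the finitely many atoms occurring in $A$ and $B$; and once that finite assignment is fixed, the maps $F(\mathsf{D}_1)$ and $F(\mathsf{D}_2)$ are explicitly computable finite functions between finite sets, so inequality is decidable. Hence one can enumerate all finite assignments of finite sets to the relevant atoms and check, for each, whether the induced functions disagree; the negative instances thus also form an r.e.\ set.

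Combining the two enumerations by dovetailing gives a decision procedure: run both semi-decision procedures in parallel; one of them is guaranteed to halt, by completeness in one direction and soundness (the fact that $\mathrm{BC}$-functors respect $\beta\eta$-equality, established in the proof of Theorem \ref{Freness}) in the other. This yields the corollary. I do not expect any real obstacle here, since all the substance is in Theorem \ref{EqualiyCompletenessNJFinitset}, which we are permitted to assume; the only mildly delicate point is to note that for fixed finite-set valuations the interpretations of $\mathbf{NJ}$-derivations — in particular the interpretation of the $(E\vee)$ rule via the canonical map $I : [X,Z] \times [Y,Z] \to [X+Y,Z]$ as in Table \ref{tableD} — are primitive recursive in the sizes of the chosen sets, so that checking $F(\mathsf{D}_1) = F(\mathsf{D}_2)$ is genuinely effective. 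This is routine and can be left to the reader, exactly as in the proof of Corollary \ref{DecidabilityOfEqualityInNN}.
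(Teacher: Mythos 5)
Your proposal is correct and follows exactly the paper's route: the paper also derives the corollary by showing the equivalence relation is r.e.\ (via enumeration of $\beta\eta$-rewrite chains) and its complement is r.e.\ (via Theorem \ref{EqualiyCompletenessNJFinitset} and checking finite-set valuations), mirroring the proof of Corollary \ref{DecidabilityOfEqualityInNN}. The extra remarks on the effectiveness of evaluating $F(\mathsf{D}_1)$ and $F(\mathsf{D}_2)$ for a fixed finite valuation are a welcome but inessential elaboration of the same argument.
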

\begin{proof}
The proof is similar to that of Corollary \ref{DecidabilityOfEqualityInNN}.
\end{proof}

Our final task in this subsection is to generalize Theorem \ref{EqualiyCompletenessNJFinitset} from $\mathbf{FinSet}$ to any non-preorder $\mathrm{BCC}$. The basic idea behind the following line of argument is attributed to Alex Simpson \cite{SimpOnline}. First, notice that using the argument we gave for $\mathbf{NN}$, it is clear that if a $\mathrm{BCC}$ is equivalence-complete for $\mathbf{NJ}$, then it cannot be a preordered set. For sufficiency, we need the following lemma:

\begin{lem}\label{DisjointnessInBCC}
Let $\mathcal{C}$ be a $\mathrm{BCC}$. For any objects $A$, $B$ and $C$, if $i_0: A \to A+B$ and $i_1: B \to A+B$ are the injections, then:
\begin{description}
    \item[$(i)$]
$i_0$ is monic, i.e., for any maps $f, g: C \to A$ if $i_0f=i_0g: C \to A+B$, then $f=g$. 
 \item[$(ii)$]
$i_1$ is monic, i.e., for any maps $f, g: C \to B$ if $i_1f=i_1g: C \to A+B$, then $f=g$. 
    \item[$(iii)$] 
If $\mathcal{C}$ is not a preordered set, then for any maps $f: 1 \to A$ and $g: 1 \to B$, the maps $i_0f, i_1g: 1 \to A+B$ are distinct.
\end{description}
\end{lem}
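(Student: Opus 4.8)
The plan is to exploit the "high-level" description of coproducts from Remark~\ref{HighlevelCoproduct}, namely that $A+B$ represents the functor $\mathrm{Hom}(A,-)\times\mathrm{Hom}(B,-)$. Concretely, a map out of $A+B$ into any object $D$ is completely determined by the pair of its precompositions with $i_0$ and $i_1$, and conversely every such pair arises from a unique map. For $(i)$, to show $i_0$ is monic, suppose $i_0 f = i_0 g$ for $f,g\colon C\to A$. I would use the exponential structure: transpose $f,g$ to maps $\bar f, \bar g\colon 1\to [C,A]$, or more directly, form the map $(\mathrm{id}_A, !_B\colon B\to A)$-style cotuple. Actually the cleanest route: consider the two "classifying" maps $u = (\mathrm{id}_A, h)\colon A+B\to A$ for a suitable $h\colon B\to A$ — but $h$ need not exist in general. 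So instead I would work in the slice/exponential: using the coproduct isomorphism $\mathrm{Hom}(A+B, [C,A]) \cong \mathrm{Hom}(A,[C,A])\times\mathrm{Hom}(B,[C,A])$, pick the element corresponding to $(\lambda(\text{proj}), \text{anything})$ to build a retraction-like witness. The truly standard argument is: in a $\mathrm{BCC}$ coproducts are \emph{disjoint} and injections are monic because the category is a full subcategory-of-a-topos-like situation; but since we can only use what is in the excerpt, I will prove monicity of $i_0$ by exhibiting, for the pushout-corner data, a map $A+B\to A$ whose composite with $i_0$ is $\mathrm{id}_A$. Such a map is $(\mathrm{id}_A, k)$ where $k\colon B\to A$ is obtained as $k = \lambda^{-1}$ applied to $B\to 1 \xrightarrow{\ulcorner f\urcorner}[C,A]$... this still requires a point of $A$. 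The honest fix: it suffices to show $i_0$ is monic after one further manipulation, and I expect to instead prove $(i)$ and $(ii)$ together with $(iii)$ as a package using the distributivity of products over coproducts in a $\mathrm{BCC}$ (which follows from cartesian closedness: $(-)\times X$ preserves coproducts since it has a right adjoint $[X,-]$).

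So the key steps, in order, would be: (1) Establish that in any $\mathrm{BCC}$ the functor $(-)\times X$ preserves binary coproducts and the initial object, since it is left adjoint to $[X,-]$; hence the canonical map $(A\times X)+(B\times X)\to (A+B)\times X$ is an isomorphism, and $0\times X\cong 0$ (the latter is already Lemma~\ref{NegativeProofsAreUnique}'s first line). (2) Deduce \emph{disjointness} of the coproduct: the pullback of $i_0$ and $i_1$ is the initial object $0$. This is the standard consequence — form $A\times_{A+B} B$ and use step (1) plus the fact that $\mathrm{Hom}(C, A\times_{A+B}B)$ classifies pairs $(f\colon C\to A, g\colon C\to B)$ with $i_0f = i_1g$; show this functor is represented by $0$. (3) From step (1), monicity of $i_0$ and $i_1$: since $i_0\colon A\to A+B$ is a coproduct injection and coproducts are "universal" in a $\mathrm{BCC}$ (again via the adjunction, pulling back a coproduct along any map gives a coproduct), one shows $i_0$ is the pullback of itself along itself being $\mathrm{id}_A$, forcing it monic. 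Alternatively and more elementarily: build the map $\theta = (\mathrm{id}_{A}, \;?)$ — avoid; use that $A+B$ together with $i_0,i_1$ means $\langle i_0 p, i_1 q\rangle$-type decompositions are unique, so if $i_0f = i_0 g$ then applying the cotuple $(\mathrm{id}_A \times \text{stuff})$... I will ultimately route monicity through disjointness: if $i_0 f = i_0 g$, then the equalizer of $f,g$ pulled into the disjointness square shows $f = g$ on the nose by a diagram chase using that $0\times C\cong 0$ has the property $\mathrm{Hom}(0\times C, A)$ is a singleton. (4) For $(iii)$: given $f\colon 1\to A$, $g\colon 1\to B$ with $i_0 f = i_1 g$, disjointness gives a map $1\to 0$, hence $1\cong 0$ by Lemma~\ref{NegativeProofsAreUnique}, hence $\mathcal{C}$ is a preordered set (every hom-set $\mathrm{Hom}(X,Y)\cong\mathrm{Hom}(X,Y\times 1)\cong\ldots$, or directly: $1\cong 0$ initial and terminal makes all objects connected by unique maps through $0\cong 1$), contradicting the hypothesis.

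The main obstacle I anticipate is step (1)/(2): proving that $(-)\times X$ preserves coproducts and then extracting disjointness and universality of coproducts cleanly \emph{without} invoking topos-theoretic black boxes, using only the universal properties and the exponential adjunction that the excerpt makes available. The adjunction $\mathrm{Hom}(Y\times X, Z)\cong\mathrm{Hom}(Y,[X,Z])$ is exactly Remark~\ref{HighlevelExp}, and a left adjoint preserves colimits is a general principle — but the excerpt never states "left adjoints preserve colimits," so I may need to verify by hand that $\langle i_0,\, i_1\rangle$-data transports across the iso $\mathrm{Hom}((A+B)\times X, Z)\cong\mathrm{Hom}(A+B,[X,Z])\cong\mathrm{Hom}(A,[X,Z])\times\mathrm{Hom}(B,[X,Z])\cong\mathrm{Hom}(A\times X,Z)\times\mathrm{Hom}(B\times X,Z)$, and that this chain of natural isomorphisms is precisely precomposition with the canonical map $(A\times X)+(B\times X)\to(A+B)\times X$, so that map is an iso by Yoneda. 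That bookkeeping, together with the analogous (easier) chain for the initial object, is the real content; once it is in place, disjointness, monicity, and $(iii)$ follow by short diagram chases.
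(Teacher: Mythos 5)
Your step (1) is sound and is the right tool: in a $\mathrm{BCC}$ the canonical map $(A\times X)+(B\times X)\to (A+B)\times X$ is indeed an isomorphism, by exactly the Yoneda-style transport across $\mathrm{Hom}((A+B)\times X,Z)\cong \mathrm{Hom}(A\times X,Z)\times\mathrm{Hom}(B\times X,Z)$ that you describe. Moreover, the ``more elementary'' route you glimpse and then abandon is in fact the correct one, and it is essentially the paper's proof of $(i)$ and $(ii)$: after multiplying by $A$ you \emph{can} build the retraction that eludes you before multiplying. Concretely, cotuple $\lambda_A\, id_{A\times A}\colon A\to[A,A\times A]$ with $\lambda_A\langle p_1,p_1\rangle\colon B\to[A,A\times A]$ to get $j'\colon A+B\to[A,A\times A]$, set $j=ev\circ(j'\times id_A)$, and check $j\circ(i_0\times id_A)=id_{A\times A}$; then $i_0f=i_0g$ gives $\langle f,f\rangle=\langle g,f\rangle$ after applying $j$, hence $f=g$. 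No global point of $A$ is needed because the second copy of $A$ supplies one.

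The route you actually commit to, however, has a genuine gap, in fact two. First, a $\mathrm{BCC}$ is not assumed to have pullbacks or equalizers, so ``form $A\times_{A+B}B$'' and ``the equalizer of $f,g$ pulled into the disjointness square'' are not available moves. Second, and more seriously, the disjointness statement you want in step (2) --- that any $C$ equipped with $f\colon C\to A$ and $g\colon C\to B$ satisfying $i_0f=i_1g$ admits a map to $0$ --- is simply false in a general $\mathrm{BCC}$: in a non-trivial Heyting algebra one has $1+1=1$ and $i_0\,id_1=i_1\,id_1$ with no map $1\to 0$, and a product such as $\mathbf{Set}\times\mathcal{H}$ gives a \emph{non-preorder} $\mathrm{BCC}$ in which the same failure occurs at the object $(\varnothing,1)$. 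This is precisely why part $(iii)$ is stated only for global elements and carries the non-preorder hypothesis, and why that hypothesis must enter the argument constructively rather than only at the very end to rule out $1\cong 0$: the paper picks two distinct parallel maps $k,l\colon E\to F$, forms $\lambda_E k\circ {!}\colon A\to[E,F]$ and $\lambda_E l\circ {!}\colon B\to[E,F]$, cotuples them to $h\colon A+B\to[E,F]$, and observes that $i_0f=i_1g$ would force $\lambda_E k=h i_0 f=h i_1 g=\lambda_E l$, a contradiction. Your plan never produces such a separating map, and the disjointness it leans on instead does not hold.
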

\begin{proof}
For $(i)$, consider the map $i_0 \times id_A : A \times A \to (A+B) \times A$. First, we intend to define a map $j: (A+B) \times A \to A \times A$ such that $j \circ (i_0 \times id_A)=id_{A \times A}$:
\[\begin{tikzcd}
	{A \times A} && {(A+B) \times A} \\
	\\
	&& {A \times A}
	\arrow["{i_0 \times id_A}", from=1-1, to=1-3]
	\arrow["j", from=1-3, to=3-3]
	\arrow["{id_{A \times A}}"', from=1-1, to=3-3]
\end{tikzcd}\]
For that purpose, define $j'$ as the following canonical map: 
\[
j'=(\lambda_A id_{A \times A}, \lambda_A \langle p_1, p_1 \rangle): A+B \to [A, A \times A]
\]
and set $j=ev \langle j'\times id_A \rangle: (A+B) \times A \to A \times A$. It is easy to see that $j \circ (i_0 \times id_A)=id_{A \times A}$. Now, let $f, g: C \to A$ satisfy $i_0f=i_0g$. Then,
\[\begin{tikzcd}
	C &&& {A \times A} && {(A+B) \times A} \\
	\\
	&&&&& {A \times A}
	\arrow["j", from=1-6, to=3-6]
	\arrow["{i_0 \times id_A}", from=1-4, to=1-6]
	\arrow["{id_{A \times A}}"', from=1-4, to=3-6]
	\arrow["{\langle f, f \rangle}", shift left=1, from=1-1, to=1-4]
	\arrow["{\langle g, f \rangle}"', shift right=1, from=1-1, to=1-4]
\end{tikzcd}\]
the two maps of the first row are equal. Hence, $\langle f, f \rangle=\langle g, f \rangle$ which implies $f=g$. The proof for $(ii)$ is similar to $(i)$.

For $(iii)$, assume $i_0f= i_1g: 1 \to A+B$, for some $f: 1 \to A$ and $g: 1 \to B$. As $\mathcal{C}$ is not a preordered set, there are distinct morphisms $k, l: E \to F$ in $\mathcal{C}$. Therefore, the two maps $\lambda_E k, \lambda_E l: 1 \to [E, F]$ are distinct. Now, consider the maps $\lambda_E k !: A \to [E, F]$ and $\lambda_E l !: B \to [E, F]$ and note that $\lambda_E k ! f=\lambda_E k $ and $\lambda_E l ! g=\lambda_E l$.
Using the universal property for the coproduct $A+B$, we have a map $h: A+B \to [E, F]$ such that $h\circ i_0=\lambda_E f !$ and $h \circ i_1=\lambda_E l !$:
\[\begin{tikzcd}
	&&&& {[E, F]} \\
	\\
	\\
	1 && A && {A+B} && B && 1
	\arrow["{i_1}", from=4-7, to=4-5]
	\arrow["{\lambda_E k}", from=4-1, to=1-5]
	\arrow["h"', dashed, from=4-5, to=1-5]
	\arrow["{i_0}"', from=4-3, to=4-5]
	\arrow["f"', from=4-1, to=4-3]
	\arrow["g"', from=4-9, to=4-7]
	\arrow["{\lambda_E l}"', from=4-9, to=1-5]
	\arrow["{\lambda_E l!}", from=4-7, to=1-5]
	\arrow["{\lambda_E k!}"', from=4-3, to=1-5]
\end{tikzcd}\]
As $i_0f=i_1g$, we have $\lambda_E k=hi_0f=hi_1g= \lambda_E l$ which is a contradiction.
\end{proof}

\begin{cor}\label{AllInjectionsAreDifferent}
Let $\mathcal{C}$ be a non-preorder $\mathrm{BCC}$. Then, the maps $i_j: 1 \to \sum_{j=0}^n 1$ are distinct for different $j$'s.
\end{cor}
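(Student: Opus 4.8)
The plan is to deduce Corollary~\ref{AllInjectionsAreDifferent} directly from Lemma~\ref{DisjointnessInBCC}(iii) by an induction on $n$, using associativity/commutativity of the finite coproduct to reduce the general case to the binary one. Recall from the discussion after Theorem~\ref{ComAsso} (and its coproduct analogue) that $\sum_{j=0}^n 1$ is built by iterated binary coproducts, and each canonical injection $i_j : 1 \to \sum_{j=0}^n 1$ is a composite of binary injections $i_0^{X,Y}$ and $i_1^{X,Y}$.

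First I would fix $n$ and two distinct indices $j \neq k$, say $j < k$. The goal is to show $i_j \neq i_k$ as maps $1 \to \sum_{j=0}^n 1$. The key observation is that, up to the canonical isomorphisms of Theorem~\ref{ComAsso} (dualized to coproducts), we can rewrite $\sum_{j=0}^n 1$ as a binary coproduct $A + B$ where the $j$-th summand $1$ lands in $A$ and the $k$-th summand $1$ lands in $B$; concretely, take $A = \sum_{i \in S} 1$ and $B = \sum_{i \notin S} 1$ for a suitable partition $S$ of $\{0,\dots,n\}$ with $j \in S$, $k \notin S$. Under this rewriting, $i_j$ factors as $i_0^{A,B} \circ a$ for some $a : 1 \to A$ and $i_k$ factors as $i_1^{A,B} \circ b$ for some $b : 1 \to B$ (namely $a$ and $b$ are themselves canonical injections of $1$ into the smaller coproducts). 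Since a canonical isomorphism is in particular monic, it suffices to show $i_0^{A,B} \circ a \neq i_1^{A,B} \circ b$, and this is exactly the content of Lemma~\ref{DisjointnessInBCC}(iii), using the hypothesis that $\mathcal{C}$ is not a preordered set.

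To organize this cleanly I would carry out the following steps in order: (1) make precise the claim that $\sum_{j=0}^n 1 \cong A + B$ via a canonical isomorphism, with the $j$-th and $k$-th copies of $1$ sitting in the two distinct summands — this is a routine induction using Theorem~\ref{ComAsso}'s coproduct dual, i.e.\ commutativity and associativity of $+$; (2) verify that under this isomorphism $i_j$ and $i_k$ become $i_0^{A,B} \circ a$ and $i_1^{A,B} \circ b$ respectively, which is a bookkeeping check that composing canonical injections yields canonical injections; (3) apply Lemma~\ref{DisjointnessInBCC}(iii) to conclude the two composites are distinct; (4) transport the inequality back along the canonical isomorphism, which preserves it since isomorphisms are monic (indeed invertible). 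Finally, since $j$ and $k$ were arbitrary distinct indices, all the $i_j$ are pairwise distinct.

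I do not anticipate a serious obstacle; the only mildly delicate point is step~(1)–(2), the bookkeeping needed to realize the two chosen summands as the two sides of a single binary coproduct and to track what the canonical injections become under the reassociation isomorphism. This is entirely formal and follows from the already-established fact (Theorem~\ref{ComAsso} and its dual) that the order and bracketing in an iterated coproduct do not matter up to canonical isomorphism; one could even avoid an explicit partition by simply noting that, by commutativity, it is enough to treat $k = n$ and $j = n-1$, so that $\sum_{j=0}^n 1 \cong \left(\sum_{j=0}^{n-2} 1 + 1\right) + 1$ with the last two summands being the relevant ones. With that reduction, $A = \sum_{j=0}^{n-1} 1$, $B = 1$, $a = i_{n-1} : 1 \to A$, $b = id_1$, and Lemma~\ref{DisjointnessInBCC}(iii) applies verbatim.
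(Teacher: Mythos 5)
Your argument is correct, but it is organized differently from the paper's. The paper proves the corollary by induction on $n$ with the fixed left-bracketing $\sum_{j=0}^{n+1}1 = (\sum_{j=0}^{n}1)+1$: if both indices are at most $n$, it cancels the outer injection using the monicity parts of Lemma \ref{DisjointnessInBCC} and invokes the induction hypothesis; if one index is the new last one, it applies part $(iii)$ directly. You instead avoid the induction on the statement itself (and never use parts $(i)$--$(ii)$): for each pair $j\neq k$ you reassociate and permute the iterated coproduct so that the $j$-th and $k$-th copies of $1$ land in the two halves of a single binary coproduct $A+B$, and then apply part $(iii)$ once. Both routes bottom out in Lemma \ref{DisjointnessInBCC}$(iii)$; the paper's version buys a shorter argument because the only nontrivial transport is cancelling a monic injection, which the lemma already supplies, while your version trades the monicity lemmas for the coherence bookkeeping in your steps (1)--(2) — checking that the canonical associativity/commutativity isomorphisms (the coproduct duals of Theorem \ref{ComAsso}, which the paper only states for products) carry canonical injections to composites of binary injections. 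That check is routine, since these isomorphisms are defined by the universal property precisely through where they send the injections, but it is real work you would have to write out, whereas the paper's induction sidesteps it entirely. Your final reduction to $j=n-1$, $k=n$ is fine as stated, provided you make explicit that the permutation isomorphism used to get there is itself transported along in the same way.
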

\begin{proof}
The proof is by an induction on $n$. For $n=1$, by part $(iii)$ of Lemma \ref{DisjointnessInBCC}, we have $i_0 \neq i_1$. To prove the claim for $n+1$, if $i_j=i_k$, for some $j, k \leq n$, we can use part $(ii)$ of Lemma \ref{DisjointnessInBCC} to reduce the claim to the induction hypothesis. If $j \leq n$ and $k=n+1$, we can use part $(iii)$ of Lemma \ref{DisjointnessInBCC} to reach a contradiction.
\end{proof}

\begin{lem}\label{FaithfulIntoNonpreorder}
Let $\mathcal{C}$ be a non-preorder $\mathrm{BCC}$. Then, there is a faithful $\mathrm{BC}$-functor $F: \mathbf{FinSet} \to \mathcal{C}$.
\end{lem}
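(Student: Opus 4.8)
The plan is to define $F : \mathbf{FinSet} \to \mathcal{C}$ by sending a finite set $S$ with $|S| = n$ to the $n$-fold coproduct $\sum_{i=1}^{n} 1$ in $\mathcal{C}$ (with $F(\varnothing) = 0$), and to send a function $f : S \to T$ to the canonical map $\sum_{s \in S} 1 \to \sum_{t \in T} 1$ whose restriction to the copy of $1$ indexed by $s$ is the injection $i_{f(s)}$. Since every finite set is (up to the canonical choice) $\{1, \ldots, n\} = \sum_{i=1}^{n} 1$ with the terminal object $1$ as singleton, one checks directly that $F$ is a functor: it preserves identities because $i_s$ is the injection for the identity function, and it preserves composition by the uniqueness clause in the definition of the coproduct. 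The fact that $F$ preserves the $\mathrm{BC}$-structure should be essentially bookkeeping: $F(\varnothing) = 0$ by construction; $F(\{*\}) = 1$; $F(S \times T) \cong F(S) \times F(T)$ and $F(S + T) \cong F(S) + F(T)$ follow from the distributivity of products over coproducts in a $\mathrm{BCC}$ (which holds because $- \times A$ has a right adjoint, namely $[A, -]$, hence preserves coproducts); and $F([S, T]) = F(T^S) \cong [F(S), F(T)]$ likewise follows by combining this distributivity with the isomorphism $1 \times A \cong A$ and an induction on $|S|$. So the only substantive content is faithfulness.

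For faithfulness, the key step is Corollary \ref{AllInjectionsAreDifferent}: in a non-preorder $\mathrm{BCC}$ the $n$ injections $i_j : 1 \to \sum_{j=1}^{n} 1$ are pairwise distinct. The plan is to leverage this as follows. Given two functions $f, g : S \to T$ with $f \neq g$, pick $s \in S$ with $f(s) \neq g(s)$. Precompose $F(f)$ and $F(g)$ with the injection $i_s : 1 \to F(S) = \sum_{s' \in S} 1$; by the coproduct equations this precomposition picks out exactly $i_{f(s)}$ and $i_{g(s)}$ respectively as maps $1 \to F(T)$. By Corollary \ref{AllInjectionsAreDifferent} these two injections into $\sum_{t \in T} 1$ are distinct, so $F(f) \circ i_s \neq F(g) \circ i_s$, whence $F(f) \neq F(g)$. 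This handles the case $S \neq \varnothing$; when $S = \varnothing$ there is at most one function, so nothing to prove, and when $T = \varnothing$ but $S \neq \varnothing$ there are no functions $S \to T$ at all.

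The main obstacle I anticipate is not the faithfulness argument, which is clean once Corollary \ref{AllInjectionsAreDifferent} is in hand, but rather verifying carefully that $F$ really is a $\mathrm{BC}$-functor — in particular that it preserves exponentials. The subtlety is that preservation of exponentials requires the \emph{canonical} comparison map $F([S,T]) \to [F(S), F(T)]$ to be an isomorphism, not merely that the two objects are abstractly isomorphic (cf. the remark after the definition of structure-preserving functors). I would handle this by first establishing that in any $\mathrm{BCC}$ the functor $- \times A$ preserves finite coproducts (as a left adjoint), deduce the distributive law $(\sum_i X_i) \times A \cong \sum_i (X_i \times A)$ with the canonical map, then compute $[\sum_{i=1}^n 1, T]$ by an easy induction using $[X + Y, T] \cong [X, T] \times [Y, T]$ and $[1, T] \cong T$, matching it against $F(T^S) = \sum_{(\text{functions } S \to T)} 1 = \sum_{|T|^{|S|}} 1$, and checking the identification is the canonical one. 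Since the excerpt's proof of Theorem \ref{Freness} and the surrounding remarks already treat such canonical-map verifications as "left to the reader," I expect it is legitimate here to state the preservation facts and indicate the distributivity argument without grinding through every diagram.
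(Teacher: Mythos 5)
Your proposal is correct and follows essentially the same route as the paper: define $F(A)=\sum_{a\in A}1$, send $f$ to the canonical map $(i_{f(a)})_{a\in A}$, and derive faithfulness from Corollary \ref{AllInjectionsAreDifferent}, exactly as the paper does. The extra care you take with preservation of the $\mathrm{BC}$-structure (distributivity via the left adjoint $-\times A$, the isomorphism $[X+Y,T]\cong [X,T]\times[Y,T]$, and the canonicity of the comparison maps) fills in details the paper leaves to the reader with ``it is easy to prove,'' but does not change the argument.
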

\begin{proof}
For any finite set $A$, define $F(A)$ as $\sum_{a \in A} 1$ and for any function $f: A \to B$ define $F(f): \sum_{a \in A} 1 \to \sum_{b \in B} 1$ by the canonical map $(i_{f(a)})_{a \in A}$. It is easy to prove that $F$ is a $\mathrm{BC}$-functor. For faithfulness, assume that $f, g: A \to B$ are two maps such that $F(f)=F(g)$. Then, using the definition of $F(f)$ and $F(g)$, we have $i_{f(a)}=i_{g(a)}: 1 \to  \sum_{b \in B} 1$, for any $a \in A$. As $\mathcal{C}$ is not a preordered set, by Corollary \ref{AllInjectionsAreDifferent}, we have $f(a)=g(a)$ and as $a \in A$ is arbitrary, $f=g$.
\end{proof}

\begin{cor}\label{CorEqComplNJ}
A $\mathrm{BCC}$ $\mathcal{C}$ is equivalence-complete for $\mathbf{NJ}$ iff it is not a preordered set.
\end{cor}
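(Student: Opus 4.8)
The plan is to combine the two halves of the argument that have already been assembled in the excerpt. Corollary \ref{CorEqComplNJ} asserts the equivalence ``$\mathcal{C}$ is equivalence-complete for $\mathbf{NJ}$ iff $\mathcal{C}$ is not a preordered set,'' and both directions are now essentially in hand. For the forward direction (necessity), I would argue by contraposition exactly as in the $\mathbf{NN}$ case: if $\mathcal{C}$ is a preordered set, then $\mathrm{Hom}_{\mathcal{C}}(X, Y)$ has at most one element for all $X, Y$, so any two parallel maps in $\mathbf{NJ}$ are identified by every $\mathrm{BC}$-functor $F \colon \mathbf{NJ} \to \mathcal{C}$. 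But $\mathbf{NJ}$ contains genuinely distinct parallel maps — the two projections $p_0, p_1 \colon p \wedge p \to p$, shown to be unequal in Example \ref{ExamEqualityProblem} — so $\mathcal{C}$ cannot be equivalence-complete. Hence equivalence-completeness forces $\mathcal{C}$ to be a non-preorder.

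For the backward direction (sufficiency), suppose $\mathcal{C}$ is a $\mathrm{BCC}$ that is not a preordered set. By Lemma \ref{FaithfulIntoNonpreorder} there is a faithful $\mathrm{BC}$-functor $G \colon \mathbf{FinSet} \to \mathcal{C}$. Now take any two maps $f, g \colon A \to B$ in $\mathbf{NJ}$ and suppose that $H(f) = H(g)$ for every $\mathrm{BC}$-functor $H \colon \mathbf{NJ} \to \mathcal{C}$. I want to conclude $f = g$. The key is to feed an arbitrary $\mathrm{BC}$-functor $K \colon \mathbf{NJ} \to \mathbf{FinSet}$ through $G$: the composite $G \circ K \colon \mathbf{NJ} \to \mathcal{C}$ is again a $\mathrm{BC}$-functor (composites of structure-preserving functors preserve structure), so by hypothesis $G(K(f)) = (G\circ K)(f) = (G \circ K)(g) = G(K(g))$. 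Since $G$ is faithful, this yields $K(f) = K(g)$. As $K$ was an arbitrary $\mathrm{BC}$-functor into $\mathbf{FinSet}$, we conclude that $f$ and $g$ are identified by every $\mathrm{BC}$-functor to $\mathbf{FinSet}$; Theorem \ref{EqualiyCompletenessNJFinitset} (equivalence-completeness of $\mathbf{FinSet}$ for $\mathbf{NJ}$) then gives $f = g$. Therefore $\mathcal{C}$ is equivalence-complete for $\mathbf{NJ}$.

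The only subtlety worth flagging is the composition step $G \circ K$: one needs that the composite of a $\mathrm{BC}$-functor with a $\mathrm{BC}$-functor is a $\mathrm{BC}$-functor, which is immediate from the definition of preservation of the $\mathrm{BC}$-structure (preservation of terminal and initial objects, products, coproducts and exponentials is visibly stable under composition, since each is phrased as ``sends a universal diagram to a universal diagram''). Apart from that, the argument is a purely formal transport of the completeness of $\mathbf{FinSet}$ along the faithful embedding $G$, and it is the genuinely new content here that is doing no work — all the real labour was already done in Lemma \ref{DisjointnessInBCC}, Corollary \ref{AllInjectionsAreDifferent}, Lemma \ref{FaithfulIntoNonpreorder} and Theorem \ref{EqualiyCompletenessNJFinitset}. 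If I had to name a ``main obstacle,'' it would simply be getting Lemma \ref{FaithfulIntoNonpreorder} — i.e.\ verifying that $A \mapsto \sum_{a \in A} 1$ is functorial and $\mathrm{BC}$-structure-preserving and that faithfulness follows from Corollary \ref{AllInjectionsAreDifferent} — but that lemma is already established in the excerpt, so the proof of the corollary itself is short. The same reasoning, with $\mathbf{FinSet}$ replaced by its $\mathrm{ABC}$/$\mathrm{CCC}$ versions and the corresponding completeness inputs, handles $\mathbf{NM}$ and $\mathbf{NN}$.
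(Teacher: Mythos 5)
Your proof is correct and follows essentially the same route as the paper: the necessity direction uses the two distinct projections $p_0, p_1 \colon p \wedge p \to p$ to rule out preorders, and the sufficiency direction transports the equivalence-completeness of $\mathbf{FinSet}$ (Theorem \ref{EqualiyCompletenessNJFinitset}) along the faithful $\mathrm{BC}$-functor of Lemma \ref{FaithfulIntoNonpreorder}. The only difference is that you phrase the sufficiency argument directly while the paper argues contrapositively from $f \neq g$, which is immaterial.
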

\begin{proof}
One direction is explained before. For the other, if $f \neq g: A \to B$ in $\mathbf{NJ}$, by Theorem \ref{EqualiyCompletenessNJFinitset}, there is a $\mathrm{BC}$-functor $G: \mathbf{NJ} \to \mathbf{FinSet}$ such that $G(f) \neq G(g)$. Combined with the $\mathrm{BC}$-functor $F: \mathbf{FinSet} \to \mathcal{C}$ of Lemma \ref{FaithfulIntoNonpreorder}, we reach the $\mathrm{BC}$-functor $FG: \mathbf{NJ} \to \mathcal{C}$ and as $F$ is faithful, we have $FG(f) \neq FG(g)$. This completes the proof of the equivalence-completeness of $\mathcal{C}$ for $\mathbf{NJ}$.
\end{proof}

\begin{phil}
Similar to Philosophical Note \ref{NNIsOptimal}, Corollary \ref{CorEqComplNJ} implies that it is impossible to add more equalities than what is governed by the $\beta \eta$-equivalences to the $\mathbf{NJ}$, if we want to keep the deductibility non-trivial. In this sense, the deduction equivalence of $\mathbf{NJ}$ is optimal. 
\end{phil}

\subsection{The Identity Problem}\label{SubsectionIdentity}

In this subsection, we will focus on the last problem among the three we introduced in Section \ref{SecHistory}, i.e., the propositional identity problem. To identify all the pairs of isomorphic propositions in a given fragment, we have two approaches to follow. Either we ask syntactically for some basic isomorphisms from which all the other isomorphisms can be derived or we can go for a semantical approach of having a concrete complete category $\mathcal{C}$ such that two propositions are isomorphic iff all of their images in $\mathcal{C}$ are isomorphic. We will explain both of these approaches later. But, first, we need a diversion into the arithmetical world.

\subsubsection{Tarski's High School Algebra Problem}
Consider the first-order language $\{\times, +, (-)^{(-)}, 1\}$ for arithmetic and the equations in Table \ref{HighSchoolAlgebra}.
\begin{table}[ht!]
\centering
\begin{tabular}{ |c| c| c| }
\hline
$1 \times x=x$ & $x \times y=y \times x$ &  $x \times (y \times z)=(x \times y) \times z$ \\
\hline
$x^1=x$ &
$x^{(y\times z)}=(x^y)^z$ &
$1^x=1$ \\
\hline
$(x \times y)^z=x^z \times y^z$
&
$x + y=y + x$
&
$x + (y + z)=(x + y) + z$\\
\hline
$x \times (y+z)=x \times y + x \times z$
&
$x^{(y+z)}=x^y \times x^z$
&
\\
\hline
\end{tabular}
\caption{High School Identities}\label{HighSchoolAlgebra}
\end{table}
All these equations clearly hold in the \emph{standard model}, i.e., the set of natural numbers equipped with the usual addition, multiplication, and exponentiation functions and the number $1$ as its designated constant. In \cite{Tarski}, Tarski asked if the equalities in Table \ref{HighSchoolAlgebra} axiomatize all equalities in the language $\{\times, +, (-)^{(-)}, 1\}$ that hold over the set of positive natural numbers and conjectured that the answer is positive. For some fragments of the language, we know that this is really the case. For instance, for the fragment $\{1, \times, (-)^{(-)}\}$, the $+$-free equations in Table \ref{HighSchoolAlgebra} axiomatize all the valid equations over the fragment. However, later Wilkie \cite{Wilkie} proved that for the full language, the answer is negative. Consider the following equation called Wilkie's equation:
\[
(A^x+B^x)^y \times (C^y+D^y)^x=(A^y+B^y)^x \times (C^x+D^x)^y,
\]
where $A=1+x$, $B=1+x+x^2$, $C=1+x^3$ and $D=1+x^2+x^4$. Wilkie's equation is valid in the standard model. To see why, it is enough to note that we have the identity $AD=BC$. Therefore,
\[
A^{xy}[(A^x+B^x)^y \times (C^y+D^y)^x]=(A^x+B^x)^y \times ((AC)^y+(AD)^y)^x= 
\]
\[
(A^x+B^x)^y \times ((AC)^y+(BC)^y)^x=C^{xy}[(A^x+B^x)^y \times (A^y+B^y)^x].
\]
Similarly, we have 
\[
A^{xy}[(A^y+B^y)^x \times (C^x+D^x)^y]=(A^y+B^y)^x \times ((AC)^x+(AD)^x)^y= 
\]
\[
(A^y+B^y)^x \times ((AC)^x+(BC)^x)^y=C^{xy}[(A^y+B^y)^x \times (A^x+B^x)^y].
\]
Therefore,
\[
A^{xy}[(A^x+B^x)^y \times (C^y+D^y)^x]=A^{xy}[(A^y+B^y)^x \times (C^x+D^x)^y].
\]
As $A$ is always positive, we reach the equality we wanted to prove by canceling out the term $A^{xy}$. Notice that in all the steps of this proof except the canceling out part, we only used the high school algebra identities in Table \ref{HighSchoolAlgebra}.
Using a complex proof-theoretic argument, Wilkie \cite{Wilkie} showed that this equation is not provable by the axioms in Table \ref{HighSchoolAlgebra}. Later Gurevi\v{c} \cite{GurevichFiniteModel} provided a finite counter-model to reprove this unprovability. He also showed that the theory of all valid equalities is not finitely axiomatizable. More precisely, he considered the equation:
\[
(A^x+B_n^x)^{2^x} \times (C_n^{2^x}+D_n^{2^x})^x=(A^{2^x}+B_n^{2^x})^x \times (C_n^x+D_n^x)^{2^x},
\]
where $A=1+x$, $B_n=\sum_{i=0}^{n-1}x^i$, $C_n=1+x^n$ and $D_n=\sum_{i=0}^{n-1}x^{2i}$ and $n \geq 3$ is an odd number. Using the fact that $AD_n=B_nC_n$, one can see that the equation is valid in the standard model. Then, Gurevi\v{c} showed that for any finite set of axioms, there exists an odd $n \geq 3$ such that the corresponding equation is not provable from the axioms. Although the theory of all valid equations is not finitely axiomatizable, it is known that it is decidable \cite{Macintyre,GurevichFiniteModel}.

\subsubsection{The Propositional Identity}

In this subsubsection, we will address the propositional identity problem for the systems $\mathbf{NN}$, $\mathbf{NM}$ and $\mathbf{NJ}$. For $\mathbf{NN}$, using the positive solution for Tarski's high school algebra problem for the $+$-free fragment, one can prove:
\begin{thm} \label{PropIdentityForNN}
The following are equivalent:
\begin{description}
\item[$(i)$]
$A \cong B$ is provable from the theory $Th_{\top, \wedge, \to}$ presented in Table \ref{AxiomsForIsoCCC}. 
    \item[$(ii)$]
$A \cong B$ as two objects in $\mathbf{NN}$.
\item[$(iii)$]
$F(A) \cong F(B)$, for any $\mathrm{CC}$-functor $F: \mathbf{NN} \to \mathbf{FinSet}$.
\end{description}
\end{thm}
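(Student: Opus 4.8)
The plan is to prove the cycle of implications $(i) \Rightarrow (ii) \Rightarrow (iii) \Rightarrow (i)$, exploiting the fact that the Curry--Howard correspondence turns the identity problem for propositions in the fragment $\{\top, \wedge, \to\}$ into the high-school-algebra problem for the $+$-free fragment $\{1, \times, (-)^{(-)}\}$, where Tarski's conjecture is known to hold. The translation is the obvious one: read $\top$ as $1$, $\wedge$ as $\times$, and $A \to B$ as $B^A$. Under this dictionary the axioms of $Th_{\top, \wedge, \to}$ (Table \ref{AxiomsForIsoCCC}) are exactly the $+$-free identities among the high school identities of Table \ref{HighSchoolAlgebra}, e.g. $\top \wedge A \cong A$, $A \wedge B \cong B \wedge A$, $(A \wedge B) \wedge C \cong A \wedge (B \wedge C)$, $A \to \top \cong \top$, $\top \to A \cong A$, $(A \wedge B) \to C \cong A \to (B \to C)$, $A \to (B \wedge C) \cong (A \to B) \wedge (A \to C)$.

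For $(i) \Rightarrow (ii)$, I would check that each axiom in $Th_{\top, \wedge, \to}$ holds as an isomorphism in $\mathbf{NN}$ and that the relation ``$A \cong B$ in $\mathbf{NN}$'' is a congruence with respect to $\wedge$ and $\to$. The former is precisely the content of Theorem \ref{ComAsso} together with the standard exponential isomorphisms in a $\mathrm{CCC}$; since $\mathbf{NN}$ is a $\mathrm{CCC}$ (indeed the free one, by Theorem \ref{Freness}), these all hold. The congruence property is immediate because $\wedge$ and $\to$ are functors (the binary product and exponentiation functors), so they send isomorphisms to isomorphisms. Hence any equation derivable from $Th_{\top, \wedge, \to}$ yields an isomorphism in $\mathbf{NN}$. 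For $(ii) \Rightarrow (iii)$, I would invoke the fact that a $\mathrm{CC}$-functor $F$ preserves the $\mathrm{CC}$-structure, hence preserves isomorphisms: if $A \cong B$ in $\mathbf{NN}$ then $F(A) \cong F(B)$ in $\mathbf{FinSet}$ for \emph{every} $\mathrm{CC}$-functor $F$, in particular for those landing in $\mathbf{FinSet}$. This direction is essentially formal.

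The substantive direction is $(iii) \Rightarrow (i)$, and this is where I expect the main obstacle. The strategy is: suppose $A \not\cong B$ is \emph{not} provable from $Th_{\top, \wedge, \to}$; I must produce a $\mathrm{CC}$-functor $F: \mathbf{NN} \to \mathbf{FinSet}$ with $F(A) \not\cong F(B)$. The bridge is the positive solution to Tarski's high school algebra problem restricted to the $+$-free fragment: if the polynomial identity corresponding to $A \cong B$ fails in the standard model $(\mathbb{N}_{>0}, 1, \times, (-)^{(-)})$, then it already fails, and moreover the non-provability from $Th_{\top, \wedge, \to}$ is \emph{equivalent} to failure in the standard model (this is the known positive case, due to the $+$-free part of Table \ref{HighSchoolAlgebra} being complete). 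So it suffices to choose, for each atom $p_i$, a positive natural number $n_i$ witnessing that the two polynomials in $p_0, p_1, \ldots$ evaluate differently; then define $F$ via the freeness of $\mathbf{NN}$ (Theorem \ref{Freness}) by $f(p_i) = $ a finite set of cardinality $n_i$. Because the $\mathrm{CC}$-structure of $\mathbf{FinSet}$ computes cardinalities as the corresponding arithmetic operations ($|X \times Y| = |X||Y|$, $|Y^X| = |Y|^{|X|}$, $|1| = 1$), we get $|F(A)| \ne |F(B)|$, hence $F(A) \not\cong F(B)$. Two points need care here and constitute the real work: first, one must verify that the translation is tight, i.e. that provability of $A \cong B$ from $Th_{\top, \wedge, \to}$ corresponds exactly to provability of the arithmetic equation from the $+$-free high school identities, and that all \emph{objects} of $\mathbf{NN}$ (built from atoms, $\top$, using $\wedge, \to$) map to genuine polynomial terms in $\{1, \times, (-)^{(-)}\}$ --- in particular there is no hidden coproduct or bottom object to worry about since we are in the $\{\top, \wedge, \to\}$ fragment. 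Second, one must invoke (and cite) the positive solution of Tarski's problem for the $+$-free fragment, which the excerpt attributes to the folklore preceding Wilkie's negative result; this is the one external ingredient the argument genuinely depends on. Once that is in hand, the cardinality computation in $\mathbf{FinSet}$ is routine, and the cycle closes.
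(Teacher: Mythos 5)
Your proposal is correct and follows essentially the same route as the paper: $(i)\Rightarrow(ii)$ because the axioms of $Th_{\top,\wedge,\to}$ hold in any $\mathrm{CCC}$, $(ii)\Rightarrow(iii)$ because functors preserve isomorphisms, and $(iii)\Rightarrow(i)$ via the translation into $\{1,\times,(-)^{(-)}\}$-terms, the fact that isomorphism in $\mathbf{FinSet}$ is equality of cardinalities, and the completeness of the $+$-free high school identities for the standard model. The only cosmetic difference is that you argue the last implication contrapositively (using freeness of $\mathbf{NN}$ to build a separating functor from a falsifying numerical assignment) whereas the paper states it directly, but the content is identical.
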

\begin{proof}
Proving $(ii)$ from $(i)$ is a consequence of the fact that all the isomorphisms in Table \ref{AxiomsForIsoCCC} hold in any $\mathrm{CCC}$ including $\mathbf{NN}$. Proving $(iii)$ from $(ii)$ is also easy, as functors respect isomorphisms. To prove $(i)$ from $(iii)$, first rewrite the formulas $A$ and $B$ as terms $t_A$ and $t_B$ in the language $\{\times, (-)^{(-)}, 1\}$ replacing $\top$, $\wedge$ and $\to$ with $1$, $\times$ and $(-)^{(-)}$, respectively. Now, as the relation $\cong$ over $\mathbf{FinSet}$ is just the equality between the cardinals of the finite sets, $(iii)$ implies the validity of $t_A=t_B$ in the standard model. Hence, $t_A=t_B$ is provable from the $+$-free high school algebra identities that are nothing but the axioms of $Th_{\top, \wedge, \to}$ in disguise.  
\end{proof}

\begin{table}[ht!]
\centering
\begin{tabular}{ |c| c| }
\hline
$\top \wedge A \cong A$ & $A \wedge B \cong B \wedge A$ \\
\hline
$A \wedge (B \wedge C) \cong (A \wedge B) \wedge C$
&
$\top \to A \cong A$ \\
\hline
$(A \wedge B) \to C \cong (A \to (B \to C))$
&
$A \to \top \cong \top$\\
\hline
$A \to (B \wedge C) \cong (A \to B) \wedge (A \to C)$
&
$ $
\\
\hline
\end{tabular}
\caption{$Th_{\top, \wedge, \to}$}\label{AxiomsForIsoCCC}
\end{table}

\begin{cor}
The propositional identity problem in $\mathbf{NN}$ is decidable.
\end{cor}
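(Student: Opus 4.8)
\textbf{Proof proposal for the decidability of the propositional identity problem in $\mathbf{NN}$.}

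The plan is to reduce the propositional identity problem in $\mathbf{NN}$ to an already-known decidable problem, namely the word problem for the $+$-free fragment of Tarski's high school algebra, and then invoke the decidability result cited in the text. Concretely, by Theorem \ref{PropIdentityForNN}, for any two propositions $A, B$ in the fragment $\{\top, \wedge, \to\}$ we have $A \cong B$ in $\mathbf{NN}$ if and only if $A \cong B$ is provable from the finite theory $Th_{\top, \wedge, \to}$ of Table \ref{AxiomsForIsoCCC}, which is in turn equivalent (after the syntactic translation replacing $\top, \wedge, \to$ by $1, \times, (-)^{(-)}$) to the validity of the equation $t_A = t_B$ in the standard model of positive natural numbers under multiplication and exponentiation. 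So the task becomes: decide whether a given equation in the language $\{1, \times, (-)^{(-)}\}$ holds over the positive naturals.

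First I would make the translation explicit: given propositions $A$ and $B$, compute the terms $t_A$ and $t_B$ by structural recursion; this is plainly effective. Next I would appeal to the fact, noted just before the subsubsection on the propositional identity, that for the fragment $\{1, \times, (-)^{(-)}\}$ the $+$-free high school identities axiomatize exactly the valid equations, together with the remark that the full high school algebra equational theory is decidable \cite{Macintyre,GurevichFiniteModel}; the $+$-free subtheory is a fortiori decidable (one can also give a direct normal-form argument, discussed below). Combining these steps yields an algorithm: translate, then run the decision procedure for $t_A = t_B$; answer ``isomorphic'' exactly when the equation is declared valid. Correctness of the algorithm is precisely the content of the equivalence $(i) \Leftrightarrow (ii)$ in Theorem \ref{PropIdentityFor NN}.

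In fact one can make this self-contained rather than quoting the general decidability of high school algebra. For the $+$-free language there is a clean normal form: every term in $\{1, \times, (-)^{(-)}\}$ over variables $x_1, \dots, x_k$ is provably equal, using only the $+$-free identities, to a product $\prod_i x_i^{m_i(x)}$ where each exponent $m_i$ is itself a monomial (a product of powers of the variables) — this follows by pushing all exponentiations inward using $(x \times y)^z = x^z \times y^z$, $x^{y \times z} = (x^y)^z$, $x^1 = x$, $1^x = 1$, and collecting. Two terms are then equal in the standard model iff their normal forms agree as formal multisets of ``variable raised to a monomial''; checking this is a routine finite comparison. Thus the translation produces normal forms that can be compared directly, giving a concrete decision procedure without invoking the harder full-language result.

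\textbf{Expected main obstacle.} The genuine mathematical work is already encapsulated in Theorem \ref{PropIdentityForNN} and in the positive solution of Tarski's problem for the $+$-free fragment, both of which may be assumed; so the corollary itself is essentially a bookkeeping argument. The only point requiring a little care is to be sure that the normal-form / decidability statement really concerns the $+$-free fragment (where the situation is benign) and not the full language (where Wilkie and Gurevi\v{c} show non-finite-axiomatizability) — i.e., to make sure we do not accidentally need the harder decidability result of \cite{Macintyre,GurevichFiniteModel} when the elementary $+$-free argument suffices. That scoping check, plus verifying that the syntactic translation $A \mapsto t_A$ commutes with the operations as claimed, is the extent of the difficulty.
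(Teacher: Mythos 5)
Your proposal is correct in its main line but takes a genuinely different route from the paper. The paper never decides anything about arithmetic equations directly: it observes that $\cong$ is recursively enumerable (search for a derivation from the finite theory $Th_{\top,\wedge,\to}$) and that $\ncong$ is recursively enumerable (by the FinSet-completeness clause of Theorem \ref{PropIdentityForNN}, search over numeral assignments to the atoms for a witness with $F(A)\ncong F(B)$), and concludes decidability from r.e.\ plus co-r.e. You instead translate $A,B$ to terms $t_A,t_B$ over $\{1,\times,(-)^{(-)}\}$ and appeal to the decidability of the equational theory of the positive naturals. Both work; the paper's argument is the more economical one, since it needs no external decidability input about high school algebra (and note that your reduction plus the finiteness and completeness of the $+$-free identities would in any case reprove decidability of that fragment by exactly the paper's r.e./co-r.e.\ argument), whereas citing \cite{Macintyre,GurevichFiniteModel} imports the much harder full-language result to settle a benign special case. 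One caveat on your ``self-contained'' alternative: the normal form you describe, $\prod_i x_i^{m_i}$ with each $m_i$ a flat monomial, is not adequate. Exponents can be nested, as in $x^{(y^z)}$, so the exponents in a normal form must themselves be normal forms of the same recursive shape, not products of numeric powers of variables; moreover, without $+$ you cannot merge $x^y\times x^z$ into a single power of $x$, so the normal form is a multiset of base--exponent pairs compared hereditarily. Since you only offer this as an optional refinement and your primary argument does not depend on it, the corollary still goes through, but as stated that normal-form claim would not survive scrutiny.
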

\begin{proof}
Using Theorem \ref{PropIdentityForNN}, $A \cong B$ holds in $\mathbf{NN}$ iff it is provable from a finitely axiomatizable theory. Hence, the relation $\cong$ is recursively enumerable. Moreover, The theorem also proves that $A \ncong B$ iff there exists a $\mathrm{CC}$-functor $F: \mathbf{NN} \to \mathbf{FinSet}$ such that $F(A) \ncong F(B)$. As any $\mathrm{CC}$-functor is essentially a numeral assignment to the atoms in $A$ and $B$ and $F(A) \ncong F(B)$ is just an inequality between natural numbers, we see that $\ncong$ is also recursively enumerable. Therefore, $\cong$ is decidable.
\end{proof}

For $\mathbf{NM}$, the propositional identities is not finitely axiomatizable as shown in \cite{Fiore}. Actually, all Gurevi\v{c}'s equations are also propositional identities, and hence, we can use the non-finite-axiomatizability of the valid equations in that language to prove the non-finite-axiomatizability of the propositional identities in  $\mathbf{NM}$.
For $\mathbf{NJ}$, in \cite{DiCosmoDufour}, it is proved that the equational theory for the arithmetical language $\{\times, +, (-)^{(-)}, 0, 1\}$ is not finitely-axiomatizable, using again Gurevi\v{c}'s equations. Hence, the propositional identities in $\mathbf{NJ}$ is not finitely axiomatizable. Moreover, as observed in \cite{Fiore}, for $\mathbf{NJ}$, the equivalence between $(ii)$ and $(iii)$ in Theorem \ref{PropIdentityForNN} breaks downs. To see why, it is enough to consider the propositions $\neg p \vee \neg \neg p$ and $\top$. Any of their interpretations in $\mathbf{FinSet}$ are isomorphic as both sides have one element. However, they are not even intuitionistically equivalent.
Having that said, for negative propositions:
\begin{thm}(\cite{Fiore})
The following are equivalent:
\begin{description}
\item[$(i)$]
$\mathsf{IPC} \vdash \neg A \longleftrightarrow \neg B$.
    \item[$(ii)$]
$\neg A \cong \neg B$ as two objects in $\mathbf{NJ}$.
\item[$(iii)$]
$F(\neg A) \cong F(\neg B)$, for any $\mathrm{BC}$-functor $F: \mathbf{NJ} \to \mathbf{FinSet}$.
\end{description}
\end{thm}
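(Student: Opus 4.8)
The plan is to prove the cycle of implications $(i) \Rightarrow (ii) \Rightarrow (iii) \Rightarrow (i)$, with the bulk of the work in the last step. The first implication is essentially free: if $\mathsf{IPC}$ proves $\neg A \leftrightarrow \neg B$, then there are derivations witnessing $\neg A \to \neg B$ and $\neg B \to \neg A$ in $\mathbf{NJ}$, i.e.\ maps $f : \neg A \to \neg B$ and $g : \neg B \to \neg A$. By Lemma \ref{NegativeProofsAreUnique}, $\mathrm{Hom}_{\mathbf{NJ}}(\neg B, \neg B)$ and $\mathrm{Hom}_{\mathbf{NJ}}(\neg A, \neg A)$ are each singletons, so $fg = id_{\neg B}$ and $gf = id_{\neg A}$ automatically, giving $\neg A \cong \neg B$. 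The second implication $(ii) \Rightarrow (iii)$ is also immediate, since any functor — in particular any $\mathrm{BC}$-functor $F : \mathbf{NJ} \to \mathbf{FinSet}$ — preserves isomorphisms.

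The substance is $(iii) \Rightarrow (i)$. I would argue contrapositively: assume $\mathsf{IPC} \nvdash \neg A \leftrightarrow \neg B$ and produce a $\mathrm{BC}$-functor $F : \mathbf{NJ} \to \mathbf{FinSet}$ with $F(\neg A) \ncong F(\neg B)$. Since $\neg A \leftrightarrow \neg B$ is not intuitionistically valid, at least one of $\neg A \to \neg B$, $\neg B \to \neg A$ is unprovable; say $\mathrm{Hom}_{\mathbf{NJ}}(\neg A, \neg B) = \varnothing$. Now invoke existence-completeness: by the finite-tree Kripke completeness theorem (the filtration argument cited from \cite{Chagrov}), there is a finite rooted tree $(T, \leq)$ and a weakly-full $\mathrm{BC}$-functor into $\mathbf{2}^{(T,\leq)}$ witnessing $\neg A \nleq \neg B$ there; by Theorem \ref{PosetComputationForTrees}, this lifts to a weakly-full $\mathrm{BC}$-functor $G : \mathbf{NJ} \to \mathbf{Set}^{(T,\leq)}$ with $\mathrm{Hom}_{\mathbf{Set}^{(T,\leq)}}(G(\neg A), G(\neg B)) = \varnothing$. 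The key point is that in $\mathbf{Set}^{(T,\leq)}$, negations are particularly rigid: $G(\neg A) = 0^{G(A)}$, and since $0^{E}(w)$ is a singleton exactly when $E(u) = \varnothing$ for all $u \geq w$ and empty otherwise, $G(\neg A)$ is itself (isomorphic to) the object $E_U$ attached to the upset $U = \{w \mid G(A)(u) = \varnothing \text{ for all } u \geq w\}$ — a $0/1$-valued variable set, i.e.\ essentially an element of $\mathbf{2}^{(T,\leq)}$. The absence of a map $G(\neg A) \to G(\neg B)$ then forces $U_A \nsubseteq U_B$ as upsets, so there is a point $w \in T$ with $G(\neg A)(w) \neq \varnothing = G(\neg B)(w)$. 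I would then evaluate at $w$: the functor $\mathrm{ev}_w : \mathbf{Set}^{(T,\leq)} \to \mathbf{Set}$ (composition with the inclusion of the one-object subcategory at $w$, or better the appropriate "section at $w$" functor) is a $\mathrm{BC}$-functor, and since all objects involved are finite, $F = \mathrm{ev}_w \circ G$ lands in $\mathbf{FinSet}$ and satisfies $|F(\neg A)| = 1 \neq 0 = |F(\neg B)|$, hence $F(\neg A) \ncong F(\neg B)$.

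The main obstacle is making the evaluation step genuinely land in $\mathbf{FinSet}$ while remaining a $\mathrm{BC}$-functor, and ensuring $F(A), F(B)$ are finite — the naive "value at $w$" assignment $E \mapsto E(w)$ is a functor but need not preserve exponentials, since exponentials in $\mathbf{Set}^{(T,\leq)}$ involve a hom over the whole up-set above $w$. The clean fix, which I would spell out, is that the up-set $\{u : u \geq w\}$ with the induced order is again a finite rooted tree $T_w$ with root $w$, and restriction $\mathbf{Set}^{(T,\leq)} \to \mathbf{Set}^{T_w}$ is a $\mathrm{BC}$-functor (restriction along a full subcategory inclusion preserves the $\mathrm{BC}$-structure here because $T_w$ is down-ward closed within $T$... more precisely, it is up-ward closed, which is what exponentials need); then one is reduced to the case of a rooted tree and can argue at the root directly, where the exponential formula does simplify and where evaluating the negation-object gives a finite (indeed one- or zero-element) set. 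Alternatively — and this may be cleaner — I would avoid evaluation entirely: compose $G$ with a further $\mathrm{BC}$-functor $\mathbf{Set}^{(T,\leq)} \to \mathbf{FinSet}$ obtained from the poset reflection together with the observation that $\mathrm{Po}(\mathbf{Set}^{(T,\leq)}) \cong \mathbf{2}^{(T,\leq)}$ is a finite Heyting algebra, and then embed $\mathbf{2}^{(T,\leq)} \hookrightarrow \mathbf{FinSet}$ — but poset reflection is not faithful, so one must check it still separates $\neg A$ from $\neg B$, which it does because these objects are already $0/1$-valued. The cleanest writeup probably combines these: restrict $G$ to the rooted subtree $T_w$, note $G(\neg B)$ is the initial object there while $G(\neg A)$ is not, apply the $\mathrm{BC}$-functor $\mathbf{Set}^{T_w} \to \mathbf{FinSet}$ sending a variable set to its value at the root (a $\mathrm{BC}$-functor on a \emph{rooted} tree, since the root's up-set is the whole tree), and conclude $|F(\neg A)| \geq 1 > 0 = |F(\neg B)|$. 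I would also remark that this same line of argument gives the full equivalence only for negated formulas precisely because the rigidity of $0^{(-)}$ (Lemma \ref{NegativeProofsAreUnique} and its semantic incarnation) is what collapses the gap between isomorphism and logical equivalence — for general $A, B$ the counterexample $\neg p \vee \neg\neg p$ versus $\top$ shows $(iii) \Rightarrow (ii)$ fails, as already noted in the excerpt.
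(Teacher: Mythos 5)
Your $(i)\Rightarrow(ii)$ via Lemma \ref{NegativeProofsAreUnique} and your $(ii)\Rightarrow(iii)$ are correct. The paper only cites \cite{Fiore} and gives no proof, so the assessment is of your argument on its own terms, and there is a genuine gap in the last step of $(iii)\Rightarrow(i)$: evaluation at the root of a rooted tree is \emph{not} a $\mathrm{BC}$-functor, because it fails to preserve exponentials. Concretely, take $T_w=\{w\leq u\}$ and variable sets $E,F$ with $E(w)=F(w)=\{*\}$, $E(u)=\{x_1,x_2\}$, $F(u)=\{y_1,y_2\}$, and transition maps sending $*$ to $x_1$ and to $y_1$; then $[E,F](w)=\mathrm{Hom}(y^w\times E,F)$ is the set of natural transformations $E\to F$, which has two elements (the value on $x_2$ is unconstrained), whereas $F(w)^{E(w)}$ has one, so the canonical comparison map is not an isomorphism. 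Your parenthetical justification ``since the root's up-set is the whole tree'' is exactly backwards: evaluation at a point $u$ preserves the exponential precisely when the up-set of $u$ is a \emph{singleton}, i.e.\ when $u$ is maximal, since $[E,F](u)$ depends on the restriction of $E,F$ to all of $\{v\geq u\}$. This is just the semantic incarnation of the fact that intuitionistic implication at a world is not a function of truth values at that world, so the composite $F=\mathrm{ev}_w\circ G$ is not a legitimate witness against $(iii)$.

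The repair, staying within your strategy, is to climb to a maximal world rather than stop at $w$: from $w\in U_A\setminus U_B$ you get $G(A)(v)=\varnothing$ for all $v\geq w$ and $G(B)(u)\neq\varnothing$ for some $u\geq w$; choose a maximal $m\geq u$ (persistence, i.e.\ the transition function out of a nonempty set, gives $G(B)(m)\neq\varnothing$, and $G(A)(m)=\varnothing$ since $m\geq w$). Restriction to the up-closed singleton $\{m\}$ \emph{is} a $\mathrm{BC}$-functor $\mathbf{Set}^{(T,\leq)}\to\mathbf{Set}$, and after replacing each atom's value by $\varnothing$ or $\{*\}$ according to emptiness (which, by the inductive computation of emptiness of $F(\top)$, $F(\bot)$, $F(C\wedge D)$, $F(C\vee D)$ and $F(D)^{F(C)}$, changes no $F(C)$ from empty to nonempty or vice versa) you land in $\mathbf{FinSet}$ with $F(\neg A)\cong\{*\}\ncong\varnothing\cong F(\neg B)$. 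Alternatively, the entire Kripke detour can be bypassed: for any $\mathrm{BC}$-functor $F:\mathbf{NJ}\to\mathbf{FinSet}$ one has $F(C)=\varnothing$ iff $C$ is classically false under $v(p)=[F(p)\neq\varnothing]$, and every classical valuation arises this way, so $(iii)$ is literally the statement $\mathsf{CPC}\vdash\neg A\leftrightarrow\neg B$, which coincides with $\mathsf{IPC}\vdash\neg A\leftrightarrow\neg B$ by Glivenko's theorem. Your root-evaluation move is in effect an attempt to get a classical countermodel without invoking Glivenko, and that is precisely where it breaks.
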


\begin{cor}
The following are equivalent:
\begin{description}
    \item[$(i)$]
$A \cong 0$ as two objects in $\mathbf{NJ}$.
\item[$(ii)$]
$F(A) \cong 0$, for any $\mathrm{BC}$-functor $F: \mathbf{NJ} \to \mathbf{FinSet}$.
\end{description} 
\end{cor}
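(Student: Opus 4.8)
The plan is to derive this corollary directly from the preceding theorem by choosing $B$ appropriately. Recall that the theorem asserts the equivalence of $\mathsf{IPC} \vdash \neg A \longleftrightarrow \neg B$, $\neg A \cong \neg B$ in $\mathbf{NJ}$, and $F(\neg A) \cong F(\neg B)$ for all $\mathrm{BC}$-functors $F: \mathbf{NJ} \to \mathbf{FinSet}$. The key observation is that $0 \cong \neg \top$ in $\mathbf{NJ}$, since $\neg \top = \top \to \bot$ and, by the exponential structure, $\mathrm{Hom}(C, \top \to \bot) \cong \mathrm{Hom}(C \times \top, \bot) \cong \mathrm{Hom}(C, \bot)$ naturally in $C$, so $\neg \top$ is an initial object. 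Thus taking $B = \top$ reduces the problem to comparing $\neg A$ with $\neg \top \cong 0$.

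First I would establish the isomorphism $A \cong 0$ iff $\neg \neg A \cong 0$ iff $\neg A \cong \neg \top$. The forward direction is immediate: if $A \cong 0$, applying the functors $\neg(-)$ twice (or noting that $\neg 0 = 0 \to \bot \cong \top$ by the terminal-object-style argument, hence $\neg \neg 0 \cong \neg \top \cong 0$) gives $\neg A \cong \neg 0 \cong \top$, but more to the point we want $\neg A \cong 0$; this follows since $\neg A = A \to \bot \cong 0 \to \bot \cong \top$... here I must be careful: $\neg 0 \cong \top$, not $0$. So the cleaner route is: $A \cong 0$ implies $\neg A \cong \neg 0 \cong \top$, which is not obviously $\cong \neg \top$. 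Instead I would argue directly that $A \cong 0$ iff there is a map $A \to 0$, by Lemma \ref{NegativeProofsAreUnique}, which says every map in $\mathrm{Hom}(A, 0)$ is an isomorphism. So $A \cong 0$ iff $\mathrm{Hom}(A,0) \neq \varnothing$ iff there is a map $A \to 0$ iff (composing with $!: 0 \to \bot$ and using that $\bot$ is initial, wait $\bot = 0$) iff $\mathrm{Hom}(A, \bot) \ni$ a map that is "everything", i.e. iff $\neg A \cong \top$ in a way... Actually the crispest formulation: $A \cong 0$ iff $\mathsf{IPC} \vdash \neg A$ iff $\mathsf{IPC} \vdash \neg A \longleftrightarrow \neg \bot$ (since $\neg \bot$ is a theorem). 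Then apply the theorem with $B = \bot$: $\mathsf{IPC} \vdash \neg A \longleftrightarrow \neg \bot$ is equivalent to $F(\neg A) \cong F(\neg \bot)$ for all $\mathrm{BC}$-functors $F: \mathbf{NJ} \to \mathbf{FinSet}$, and $F(\neg \bot) = F(0)$ since $\neg \bot \cong 0$ is preserved by $\mathrm{BC}$-functors. Finally, $F(\neg A) \cong F(0)$ for all such $F$; and since $F(0) = 0$ is the initial object (empty set) in $\mathbf{FinSet}$, and by Lemma \ref{NegativeProofsAreUnique} applied in $\mathbf{FinSet}$, $F(\neg A) \cong 0$ iff $F(A) \cong 0$ — wait, I need $F(\neg A) \cong F(\neg \bot) = 0$ to be equivalent to $F(A) \cong 0$. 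In $\mathbf{FinSet}$, $F(\neg A) = F(A)^{\varnothing}$-ish... no: $F(\neg A) = F(A \to \bot) = [F(A), \varnothing] = \varnothing^{F(A)}$, which is $\{*\}$ if $F(A) = \varnothing$ and $\varnothing$ otherwise. So $F(\neg A) \cong 0$ (empty) iff $F(A) \neq \varnothing$, which is the opposite! Let me reconsider.

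The right statement must be: $A \cong 0$ iff $\neg \neg A \cong 0$ (via double negation behaves well), or rather I should use $B$ such that $\neg B \cong 0$, i.e. $B \cong \neg 0 \cong \top$ is wrong too since $\neg \top \cong 0$ but $\neg(\neg \top)\cong \neg 0 \cong \top \neq 0$. Hmm. The honest path: $A \cong 0$ iff $\mathrm{Hom}_{\mathbf{NJ}}(A, 0) \neq \varnothing$ (Lemma \ref{NegativeProofsAreUnique}), which since $0 = \bot$ means $\mathsf{IPC} \vdash A \to \bot$, i.e. $\mathsf{IPC} \vdash \neg A$. Now $\mathsf{IPC} \vdash \neg A$ iff $\mathsf{IPC} \vdash \neg A \longleftrightarrow \top$ iff $\mathsf{IPC} \vdash \neg A \longleftrightarrow \neg \bot$. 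Apply the theorem with $B = \bot$: this is equivalent to $\neg A \cong \neg \bot$ in $\mathbf{NJ}$ and to $F(\neg A) \cong F(\neg \bot)$ for all $\mathrm{BC}$-functors $F$ into $\mathbf{FinSet}$. Since $\mathrm{BC}$-functors preserve the initial object and $\neg \bot \cong \top$ (the terminal object!) — wait is $\neg \bot \cong \top$? $\neg \bot = \bot \to \bot = 0 \to 0$, and $\mathrm{Hom}(C, 0 \to 0) \cong \mathrm{Hom}(C \times 0, 0)$; since $C \times 0 \cong 0$ (shown in Lemma \ref{NegativeProofsAreUnique}'s proof), this is $\mathrm{Hom}(0,0) = \{*\}$, so $\neg \bot \cong 1 = \top$. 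Good, so $\neg \bot \cong 1$, preserved by $\mathrm{BC}$-functors, giving $F(\neg \bot) \cong 1$. Then $F(\neg A) \cong 1$ in $\mathbf{FinSet}$ means $F(\neg A)$ is a singleton, i.e. (computing $F(\neg A) = F(A) \to F(\bot) = F(A) \to 0$) that $F(A) = \varnothing \cong 0$. So $F(\neg A) \cong 1$ iff $F(A) \cong 0$. Chaining everything: $A \cong 0$ iff $F(\neg A) \cong F(\neg \bot) \cong 1$ for all $F$, iff $F(A) \cong 0$ for all $F$. That closes the proof.

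The main obstacle I anticipate is purely bookkeeping: correctly identifying that $\neg \bot \cong 1$ and $\neg \top \cong 0$ in any $\mathrm{BCC}$ (both following from $C \times 0 \cong 0$, established in the proof of Lemma \ref{NegativeProofsAreUnique}), and then carefully computing $F(\neg A)$ in $\mathbf{FinSet}$ to see that it is a singleton exactly when $F(A)$ is empty. No deep new idea is needed beyond these routine identities and the cited theorem; the risk is simply getting a double-negation or a direction backwards, as the negation functor is contravariant. I would write the proof as: invoke the theorem with $B := \bot$, note $\mathsf{IPC} \vdash \neg \bot$ so that $\mathsf{IPC}\vdash \neg A \longleftrightarrow \neg \bot$ is equivalent to $\mathsf{IPC}\vdash \neg A$, which by Lemma \ref{NegativeProofsAreUnique} is equivalent to $A \cong 0$; and separately note that $\neg\bot \cong 1$ is preserved by any $\mathrm{BC}$-functor and that in $\mathbf{FinSet}$ one has $F(\neg A) \cong 1 \iff F(A) \cong 0$ (again by Lemma \ref{NegativeProofsAreUnique} applied in $\mathbf{FinSet}$, or by direct computation of $\varnothing^{F(A)}$).
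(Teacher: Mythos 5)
Your final argument is correct and is evidently the intended derivation: the paper gives no proof for this corollary, but it is meant to follow from the preceding theorem by specializing $B := \bot$, using Lemma \ref{NegativeProofsAreUnique} to translate $A \cong 0$ into $\mathsf{IPC} \vdash \neg A$, the identity $\neg\bot \cong \top$, and the computation $F(\neg A) \cong \varnothing^{F(A)}$ in $\mathbf{FinSet}$ — exactly your chain. The earlier false starts in your write-up are all resolved by the end; only the final clean version (last paragraph) should be kept.
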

To the best of our knowledge, the decidability of the propositional identity problem for $\mathbf{NM}$ and $\mathbf{NJ}$ is still open. For a survey on the results and the applications of the propositional identity problem, see \cite{IsoSurvey,IsomorphismOfTypes}.

\section{Higher-order Theories} \label{SectionTheories}
In the previous sections, we used categorical machinery to formalize a proof system over the \emph{propositional language} as a structured category. While these investigations are intriguing, one must acknowledge that mathematics extends far beyond this restrictive language to form propositions about various \emph{types} of entities, such as natural numbers, real numbers, points, lines, and more. It also addresses higher-order entities constructed from these \emph{basic} entities. For example, we may consider functions on natural numbers, functions on those functions, and so forth. To formalize this richer language, we need to extend the propositional language with \emph{types} over which quantifiers operate and to include \emph{higher types} that encompass functions between these types. In this section, we will discuss these higher-order languages in general and introduce a specific language for discussing natural numbers.

Let $\{T_i\}_{i \in I}$ be a family of primitive symbols. We define the set of all \emph{types} inductively as follows: each $T_i$ is a type, for any $i \in I$, and for any two types $\sigma$ and $\tau$, the expressions $\sigma \times \tau$ and $\sigma \to \tau$ are also types. The basic types represent the fundamental sets of entities we work with, while the operations $\sigma \times \tau$ and $\sigma \to \tau$ formalize the cartesian product of $\sigma$ and $\tau$, and the set of all functions from $\sigma$ to $\tau$, respectively.

Next, we need to introduce \emph{terms} as names for entities within the types. First, consider a family of function symbols $\{f^{\tau}_j(x_1^{\sigma_1}, x_2^{\sigma_2}, \ldots, x_{n_j}^{\sigma_{n_j}})\}_{j \in J}$, where $\sigma_1, \sigma_2, \ldots, \sigma_{n_j}$, and $\tau$ are types. The function symbol $f^{\tau}_j$ is intended to formalize a basic function of arity $n_j$, with inputs of types $\sigma_1, \sigma_2, \ldots, \sigma_{n_j}$ and output of type $\tau$. Now consider:

\begin{itemize}
\item 
Infinitely many variables in the form $x^{\sigma}$, for any type $\sigma$. 
    \item
The function symbol $\mathbf{p}^{\sigma \times \tau}(x^{\sigma}, y^{\tau})$, for any types $\sigma$ and $\tau$. The intended meaning of $\mathbf{p}^{\sigma \times \tau}(x^{\sigma}, y^{\tau})$ is the pair of $x^{\sigma}$ and $y^{\tau}$.
\item 
The function symbols $\mathbf{p_0}^{\sigma}(x^{\sigma \times \tau})$ and $\mathbf{p_1}^{\tau}(x^{\sigma \times \tau})$, for any types $\sigma$ and $\tau$. The intended meaning of $\mathbf{p_0}^{\sigma}(x^{\sigma \times \tau})$ and $\mathbf{p_1}^{\tau}(x^{\sigma \times \tau})$ are the projections on the first and second components, respectively.
\end{itemize}
Now, define \emph{terms} inductively by:
\begin{itemize}
\item 
any variable of type $\sigma$ is a term of type $\sigma$,
\item
for any function symbol $f^{\tau}(x_1^{\sigma_1}, x_2^{\sigma_2}, \ldots, x_{n}^{\sigma_{n}})$, (including $\mathbf{p}^{\sigma \times \tau}(x^{\sigma}, y^{\tau})$, $\mathbf{p_0}^{\sigma}(x^{\sigma \times \tau})$ and $\mathbf{p_1}^{\tau}(x^{\sigma \times \tau})$), if $\{t_k\}_{k=1}^{n}$ are terms of types $\{\sigma_k\}_{k=1}^{n}$, then $f^{\tau}(t_1, t_2, \ldots, t_{n})$ is a term of type $\tau$,
    \item 
$\lambda x^{\sigma}. t(x^{\sigma})$ is a term of type $\sigma \to \tau$, for any types $\sigma$ and $\tau$ and any term $t(x^{\sigma})$ of type $\tau$,
\item 
$ts$ is a term of type $\tau$, for any terms $t$ and $s$ of types $\sigma \to \tau$ and $\sigma$, respectively.
\end{itemize}
The intended meaning of $\lambda x^{\sigma}. t(x^{\sigma})$ is a function that maps $x^{\sigma}$ to $t(x^{\sigma})$ and $sr$ means the application of the function $s$ on the input $r$.

Finally, consider a family of predicate symbols $\{P_k(x_1^{\sigma_1}, x_2^{\sigma_2}, \ldots, x_{n_k}^{\sigma_{n_k}})\}_{k \in K}$. Then, define \emph{formulas} inductively in the following way:
\begin{itemize}
\item 
$\top$ and $\bot$ are formulas,
    \item 
$t_1=_{\sigma} t_2$ is a formula, if both $t_1$ and $t_2$ are terms of type $\sigma$, 
    \item 
$P(t_1, t_2, \ldots, t_{n})$ is a formula, if $\{t_r\}_{r=1}^n$  are terms of types $\{\sigma_r\}_{r=1}^n$, for any predicate symbol $P(x_1^{\sigma_1}, x_2^{\sigma_2}, \ldots, x_{n}^{\sigma_{n}})$,
\item 
if $A$ and $B$ are formulas, $A \wedge B$, $A \vee B$, $A \to B$ are formulas,
\item 
if $A(x^{\sigma})$ is a formula, then $\forall x^{\sigma}A(x^{\sigma})$ and $\exists x^{\sigma}A(x^{\sigma})$ are formulas.
\end{itemize}
As it is clear from the definition, to specify a higher-order language $\mathcal{L}$, it is enough to specify its basic types and its families of function and predicate symbols. We denoted these three families by $B(\mathcal{L})$, $F(\mathcal{L})$ and $P(\mathcal{L})$, respectively. We also denote the set of all types of $\mathcal{L}$ by $T(\mathcal{L})$.

Define the logic $\mathsf{IQC}^{\omega}$ over the language $\mathcal{L}$ as the theory consisting of the usual axioms of multi-sorted first-order logic with equality plus the following defining axioms:  
\begin{itemize}
    \item
$\mathbf{p}^{\sigma \times \tau}(\mathbf{p_0}^{\sigma}(x^{\sigma \times \tau}), \mathbf{p_1}^{\tau}(x^{\sigma \times \tau})) =_{\sigma \times \tau} x^{\sigma \times \tau}$, $\mathbf{p_0}^{\sigma}(\mathbf{p}^{\sigma \times \tau}(x^{\sigma}, y^{\tau})) =_{\sigma} x^{\sigma}$, and $\mathbf{p_1}^{\tau}(\mathbf{p}^{\sigma \times \tau}(x^{\sigma}, y^{\tau})) =_{\tau} y^{\tau}$,
\item 
$(\lambda x^{\sigma}. t(x^{\sigma})) y^{\sigma}=_{\tau} t(y^{\sigma})$, where $t(x^{\sigma})$ is of type $\tau$ and $t(y^{\sigma})$ is the substitution of $y^{\sigma}$ in $t(x^{\sigma})$ for $x^{\sigma}$. 
\end{itemize}
There are two non-logical axioms of interest in this chapter.  The first is the axiom $\lambda y^{\sigma}. (x^{\sigma \to \tau} \cdot y^{\sigma})=x^{\sigma \to \tau}$ or equivalently the \emph{axiom of extensionality}
\[
\mathsf{EXT}: \quad \forall f^{\sigma \to \tau} g^{\sigma \to \tau} [\forall x^\sigma
(fx =_{\tau} gx) \to (f =_{\sigma \to \tau} g)],
\]
stating that a function is uniquely determined by its action. The second is the \emph{axiom of choice}
\[
\mathsf{AC}: \quad \forall x^\sigma \exists y^\tau A(x, y) \to \exists f^{\sigma \to \tau} \forall x^\sigma A(x, fx),
\]
stating that if for any $x$ of type $\sigma$, there is a $y$ of type $\tau$ satisfying $A(x, y)$, one can find a function $f$ of type $\sigma \to \tau$ to choose such a $y$ of type $\tau$, for any $x$ of type $\sigma$.

\begin{rem}
Here are some remarks. First, when it is clear from the context, we omit the types, including in $x^{\sigma}$, function and predicate symbols, and $=_{\sigma}$. Second, if we eliminate the type constructors $\times$ and $\to$, the function symbols $\mathbf{p}$, $\mathbf{p_0}$, $\mathbf{p_1}$, and the term constructors $\lambda$ and application, along with their corresponding equalities, the result is multi-sorted first-order logic, denoted by $\mathsf{IQC}$. Third, sometimes the zero type $0$ and the sum type $+$ are added to represent the empty set and disjoint union, respectively. However, we will not include these in this chapter to keep the presentation simpler. Fourth, the language can be restricted to its fragments, as we did for the propositional language. One fragment of interest is the one without $\bot$ and $\vee$, as it simplifies the treatment of arithmetic as we will see later.
\end{rem}

There is a canonical classical interpretation of higher-order languages. It is enough to assign a non-empty set to any basic type in $B(\mathcal{L})$ and interpret the type $\sigma$ by the set $M^{\sigma}$ recursively defined by $M^{\sigma \times \tau}=M^{\sigma} \times M^{\tau}$ and $M^{\sigma \to \tau}=[M^{\sigma}, M^{\tau}]$, where $\times$ and $[-, -]$ are the cartesian product and the exponentiation in $\mathbf{Set}$. Then, we must assign functions to the function symbols in $F(\mathcal{L})$ respecting the types and extend the interpretation to all terms by composition. Note that the function symbols $\mathbf{p}$, $\mathbf{p_0}$ and $\mathbf{p_1}$ must be interpreted as the pairing functions and the projections, respectively, $\lambda x^{\sigma}. t(x)$ must be read as a function mapping $a \in M^{\sigma}$ to $t(a) \in M^{\tau}$, where $\tau$ is the type of $t(x)$ and $ts$ must be read as the usual application function. Finally, we must assign relations to the predicate symbols in $P(\mathcal{L})$ respecting the types. It is easy to see that $\mathsf{IQC^{\omega}+EXT+AC}$ is valid under any such interpretation.

For a constructive interpretation of a higher-order theory, we offer an informal interpretation here, which will be formalized later in Section \ref{SecRealizability}. We start with an informal concept of construction. To interpret types in a higher-order language, we change the category of sets in the classical interpretation to the category of \emph{constructed sets}, i.e., sets where all elements are constructed with \emph{constructed functions} serving as the morphisms. For interpreting formulas, we need to extend the BHK interpretation from Section \ref{SectionBHKInterpretation} to include equality and quantifiers. This extension can take various forms, and we outline the one that we plan to formalize later:
\begin{itemize}
\item 
a proof of $t=s$ is a verification that $t$ and $s$ are equal,
    \item 
a proof of $\forall x^{\sigma} A(x)$ is a construction that transforms any construction of $a$ to a proof of $A(a)$, for any $a$ of type $\sigma$,
    \item 
a proof of $\exists x^{\sigma} A(x)$ is a pair of a construction of an element $a$ of the type $\sigma$ and a proof of $A(a)$. 
\end{itemize}
Even at this informal level, we can verify the provability of some axioms. It is straightforward to see that every theorem of $\mathsf{IQC}^{\omega}$ is provable through this BHK interpretation. The axiom $\mathsf{EXT}$ has a proof under the BHK interpretation because if we have a proof of $\forall x^{\sigma} (fx=gx)$, then we reach a proof of $fx=gx$ for any $x$ of type $\sigma$, which implies $f=g$, thereby establishing its provability. For the axiom of choice, we need to make an additional assumption. Let us assume that each element in $\sigma$ has a unique construction. Then, if we have a construction for $\forall x^{\sigma} \exists y^{\tau} A(x, y)$, it provides an element $b$ of type $\tau$ along with its construction for any construction of any element $a$ of type $\sigma$. Since constructions for the elements of type $\sigma$ are unique, we can ignore the dependence on the construction of $a$. This effectively gives us a function $f: \sigma \to \tau$ by mapping $a$'s to $b$'s. Note that the function $f$ is induced by a construction and hence must be interpreted as a constructed function.

\subsection{Constructive Theories of Arithmetic}
\label{SecArithmetic}
In this subsection, we will introduce some theories of constructive arithmetic.
Let us start with the simpler first-order case. Fix the language $\mathcal{L}_{\mathsf{HA}}=\{0, S, +, \cdot\}$ and assume that the language does not have the connectives $\vee$ and $\bot$. This is no restriction on the theory as it is known that the propositions $S(0)=0$ and $\exists z [(z=0 \to A) \wedge (z \neq 0 \to B)]$ can play the role of $\bot$ and $A \vee B$, respectively \cite{vanDalen}. 
Define \emph{Heyting (resp. Peano) arithmetic}, denoted by $\mathsf{HA}$ (resp. $\mathsf{PA}$) as the intuitionistic (resp. classical) logic with the equality extended by the axioms:
\begin{itemize}
\item
$\forall x (S(x) \neq 0)$,
\item
$\forall xy (S(x)=S(y) \to x=y)$,
\item
$\forall x (x+0 =x)$,
\item
$\forall xy (x+S(y)=S(x+y))$,
\item
$\forall x (x \cdot 0 =0)$,
\item
$\forall xy (x \cdot S(y)=x \cdot y+x)$,
\item 
$A(0) \wedge \forall x (A(x) \to A(S(x))) \to \forall x A(x)$, for any formula $A(x)$.
\end{itemize}
It is clear that the set of natural numbers with the canonical interpretation of the language $\mathcal{L}_\mathsf{HA}$ is a model of $\mathsf{PA}$. This model is called the \emph{standard model} and will be denoted by $\mathfrak{N}$.
Here are some basic facts about the theories $\mathsf{HA}$ and $\mathsf{PA}$. First, defining the relation $x \leq y$ as $\exists z (x+z=y)$, it is possible to prove the usual properties of the ordering in $\mathsf{HA}$ such as its totality and the fact that the operations $+$ and $\cdot$ respect the ordering. 
Any quantifier in the form $\forall x (x \leq t \to A(x))$ and $\exists x (x \leq t \wedge A(x))$, for some term $t$ is called a \emph{bounded quantifier} and will be abbreviated by $\forall x \leq t \, A(x)$ and $\exists x \leq t \, A(x)$, respectively.
A formula is called \emph{bounded} if all of its quantifiers can be replaced by a bounded quantifier up to provability in $\mathsf{HA}$.
Any bounded formula $A(\bar{x})$ is decidable in $\mathsf{HA}$, i.e., $\mathsf{HA} \vdash \forall \bar{x} (A(\bar{x}) \vee \neg A(\bar{x}))$.
The theory $\mathsf{PA}$ is conservative over $\mathsf{HA}$ for formulas in the form $\forall \bar{x} \exists \bar{y} A(\bar{x}, \bar{y})$, where $A(\bar{x}, \bar{y})$ is bounded.\footnote{This means that $\mathsf{PA} \vdash \forall \bar{x} \exists \bar{y} A(\bar{x}, \bar{y})$ implies $\mathsf{HA} \vdash \forall \bar{x} \exists \bar{y} A(\bar{x}, \bar{y})$, if $A(\bar{x}, \bar{y})$ is bounded.} 
A formula is called \emph{$\exists$-free} if it has no occurrence of an existential quantifier. It is easy to see that any bounded formula is equivalent to an $\exists$-free formula in $\mathsf{HA}$. To prove, it is just enough to change any $\exists x \leq t \, A(x)$ to $\neg \forall x \leq t \, \neg A(x)$. These two formulas are equivalent as bounded formulas are decidable in $\mathsf{HA}$. It is possible to formalize some basic recursion theory in $\mathsf{HA}$. More precisely, there is a $\exists$-free formula $T(e, x, w, y)$ such that in the standard model it means that the Turing machine with the code $e$ halts on the input $x$ with the computation $w$ and the output $y$.

For the higher-order version of Heyting arithmetic, denoted by $\mathsf{HA}^{\omega}$, use the higher-order language with the basic type $N$, the function symbols $\{0, S(x^N)\}$ of type $N$, the function symbols $\mathbf{R}^{\sigma}(x^{\sigma}, y^{N \times \sigma \to \sigma}, z^N)$, for any type $\sigma$ and add no predicate symbol other than equality. Define $\mathsf{HA}^{\omega}$ as $\mathsf{IQC}^{\omega}$ plus the following axioms:
\begin{itemize}
\item
$\mathbf{R}^{\sigma}(x, y, 0) = x$ and $\mathbf{R}^{\sigma}(x, y, S(z)) = y \mathbf{p}(z, \mathbf{R}^{\sigma}(x, y, z))$,
\item 
$S(0) \neq 0$ and $S(x)=S(y) \to x=y$,
\item 
$A(0) \wedge \forall x^N (A(x) \to A(S(x)) \to \forall x^N A(x)$, for any formula $A(x)$ in the extended language.
\end{itemize}
We also define the extension $\mathsf{HA}^{\omega}_2$ of $\mathsf{HA}^{\omega}$ that we use in a restricted way later. First, add a new basic type $2$, two constants $0_2$ and $1_2$ of type $2$ and for any type $\sigma$, a function symbol $c^{\sigma}(x^{\sigma}, y^{\sigma}, z^{2})$. The intended meaning of $2$ is the set $\{0, 1\}$ and $c^{\sigma}(x^{\sigma}, y^{\sigma}, z^{2})$ is the function that reads the elements $x^{\sigma}$ and $y^{\sigma}$ from $\sigma$ and $z^{2}$ from $\{0, 1\}$ and returns $x^{\sigma}$ if $z=0$ and $y^{\sigma}$ if $z=1$. Extend the theory $\mathsf{HA}^{\omega}$ by the equations
$c^{\sigma}(x, y, 0_2)=x$, $c^{\sigma}(x, y, 1_2)=y$ and $c^{\sigma}(w0_2, w1_2, z)=wz$, for any $w$ of type $2 \to \sigma$. The resulting theory is denoted by $\mathsf{HA}^{\omega}_2$. Notice that the induction axiom operates over the whole new language including the formulas with terms of type $2$.

It is clear that $\mathsf{HA}$ can be interpreted inside $\mathsf{HA}^{\omega}$, reading any variable $x$ as a variable of the type $N$, interpreting $0$ and $S$ as the corresponding constants and defining the addition and multiplication by the function symbol $\mathbf{R}$ using the usual recursive definitions. For more on the definition and the basic properties of the defined arithmetical theories, see \cite{vanDalenII}.

Similar to $\mathsf{HA}$, the theories $\mathsf{HA}^{\omega}$ and $\mathsf{HA}^{\omega}_2$ have a canonical classical interpretation that we also call the \emph{standard model} and denote by $\mathfrak{N}$. To define the model, it is enough to map the basic type $N$ to the set of natural numbers $\mathbb{N}$ and read terms $0$ and $S(x^N)$ as the zero element and the successor function on $\mathbb{N}$. For $\mathbf{R}$, we must use the canonical function defined by the recursive definition the axioms for $\mathbf{R}$ suggests. Later in Section \ref{SecRealizabilityForArith}, we will provide a more constructive interpretation using the BHK interpretation.

\subsection{Non-classical Axioms and Consistency}

We began this chapter with a brief history of Hilbert's program and its consistency problems, which motivated the formal and precise investigation of the concept of proof. This investigation ultimately led to the development of categorical proof theory, providing concrete and distinct models for proof systems, as demonstrated in the previous sections. To close the circle, it would be interesting to apply categorical proof theory to prove the consistency of mathematical theories in general, and arithmetical theories in particular. 

For the usual arithmetical theories such as $\mathsf{HA}$ and $\mathsf{PA}$, one might argue that since mathematics is no longer in crisis, the urgency of addressing consistency problems has diminished. Today, we consider the standard model to be both secure and intuitively acceptable, making it reasonable to assert the consistency of arithmetic based on the existence of this model. However, the philosophical and mathematical discourse surrounding consistency problems is more nuanced than this straightforward objection suggests, although the objection does hold some validity.
To address the objection, it is important to note that proving consistency in the intuitive manner described above is essentially verifying the soundness of the theory with respect to our classical image. Constructive mathematics, however, encompasses various alternative theories that describe alternative worlds inconsistent with our classical understanding. Thus, proving consistency is not merely a philosophical endeavor but involves establishing the plausibility of such hypothetical worlds, where classical intuition may not be as effective as we assume. To illustrate this, we provide two examples of alternative worlds within constructive mathematics. 

The primary claim of constructive mathematics is that all mathematical entities are constructions. Within this framework, a challenge arises when dealing with infinitary objects, such as functions over natural numbers or infinite sequences of zeros and ones, which exceed the finite and bounded capacity of human construction. There are numerous approaches to handling these objects, with two extreme ends of the spectrum: the \emph{law-like} and \emph{lawless} approaches.

In the law-like approach, all infinitary objects are required to be law-like, meaning that despite their infinitary nature, they are generated by a finite set of rules or laws. The natural candidate for such a finite law is an algorithm. Consequently, this interpretation of constructive mathematics confines itself to the computable world, where everything, including functions on numbers, must be computable. This variant of constructive mathematics is known as \emph{Russian constructivism}. A typical axiom in Russian constructivism is the \emph{Church-Turing thesis}.
\[
\mathsf{CT}: \quad \forall f^{N \to N} \exists e^N \forall x^N \exists w^N \exists y^N [T(e, x, w, y) \wedge (fx=y)],
\]
stating that \emph{all total functions on natural numbers are computable}. More precisely, it asserts that for any function $f: \mathbb{N} \to \mathbb{N}$, there exists a Turing machine with the code $e$ that computes $f$, meaning that for any input $x$, there is a computation $w$ of $e$ on $x$ producing the output $y = f(x)$.
Now, regarding our consistency problem, we might ask whether $\mathsf{HA}^{\omega} + \mathsf{CT}$ is consistent. The answer is unclear because, in the usual classical framework, some functions, such as the characteristic function for the halting predicate, are uncomputable. Therefore, the theory $\mathsf{HA}^{\omega} + \mathsf{CT}$ is inconsistent with our classical understanding of the world.

At the opposite end of the spectrum is the assumption that all infinite objects are \emph{lawless}. For example, when considering functions over natural numbers, we may work with any function whose values are chosen in a lawless manner, one after another, over time. At any given moment, we can only comprehend a finite number of values in our finite minds, yet we can still discuss potentially infinite sequences of numbers constructed in this lawless fashion. One might argue that even if we include these mathematical objects in our constructive mathematics, we cannot derive any conclusions about them due to their lawless and hence completely unpredictable nature.
Brouwer's insightful idea was that if all functions are lawless, the only meaningful statements we can make about them must involve only finitely many values of the function. For instance, stating that the first element of a lawless sequence $\alpha$ is five is meaningful, whereas claiming that all its elements are zero is not. In this framework, we can accommodate lawless sequences if we restrict ourselves to discussing the \emph{continuous} properties of the functions where continuity ensures that the property is actually about a finite part of the input. This type of continuity is a defining characteristic of what is commonly referred to as \emph{Brouwerian constructivism}. A typical axiom in this form of constructivism is the \emph{continuity principle}\footnote{This axiom is formally called the \emph{weak continuity principle for numbers}. Hence, the abbreviation $\WCN$.}:
\[
\WCN: \quad \forall f^{N^N \to N} \forall \alpha^{N \to N} \exists x^N \forall \beta^{N \to N} (\alpha=_{x} \beta \to f\alpha=f\beta),
\]
where $\alpha=_x \beta$ abbreviates $\forall y^N (y < x \to \alpha y = \beta y)$, stating that the \emph{value of a total numeral function at $\alpha$ depends only on finitely many values of $\alpha$}. The $x$ bounding the cardinality of these  values is called the \emph{continuity module}. Similar to what we encountered with $\mathsf{CT}$, this axiom is also inconsistent with our usual classical image. It asserts that all total functions $f: \mathbb{N}^{\mathbb{N}} \to \mathbb{N}$ are continuous. However, using the axiom of the excluded middle, we can observe that the characteristic function of the singleton $\{\alpha_0\}$, where $\alpha_0$ is the constant zero function, is total but not continuous. In other words, the value of $f(\alpha)$ depends on all values of $\alpha(n)$, not just a finite number of them. Therefore, we face a consistency question again, this time about the theory $\mathsf{HA}^{\omega} + \WCN$. For more on non-classical constructive theories, see \cite{vanDalen,vanDalenII}.

One natural strategy for proving such consistency claims is providing a model—an alternative world where the anti-classical axioms hold. To construct these models, we use categories of constructed sets, interpreting constructions in a way that aligns with the consistency problem at hand. We then apply the BHK interpretation to validate the theory and demonstrate its consistency. Consider, for instance, the case of $\mathsf{HA}^{\omega} + \mathsf{CT}$. Here, we naturally expect to interpret the informal notion of construction as an algorithm or a computable function. The category of constructed sets then provides a \emph{computable world}, where everything, including all functions on natural numbers, is computable. This setup is precisely what we need to support the Church-Turing thesis, and by leveraging this observation, we aim to show that the BHK interpretation validates $\mathsf{CT}$.

However, under the BHK interpretation, the axiom
\[
\mathsf{CT}: \quad \forall f^{N \to N} \exists e^N \forall x^N \exists w^N \exists y^N [T(e, x, w, y) \wedge (fx=y)],
\]
asserts more than just “all total functions over natural numbers are computable.” It also requires that the process of finding the code $e$ for a function $f$ from its construction $\mathtt{f}$ be computable. Later, we will explain how to weaken $\mathsf{CT}$ to align with what we originally intended. For now, let us examine the implications of working with this current version.

As we saw above, we have two options for interpreting constructions: either as an algorithm or as a computable function, and the choice significantly impacts the validation of $\mathsf{CT}$. If we interpret $\mathtt{f}$ as a computable function, i.e., as $f$ itself, then the requirement to  recursively find a code for it becomes impossible, as we will explain later. On the other hand, if we interpret $\mathtt{f}$ as an algorithm for $f$, then $\mathsf{CT}$ simply asserts that if we have an algorithm $\mathtt{f}$ for $f$, we can find a code $e$ for $f$ in a computable manner. This is straightforward, as $e$ can simply be set as the algorithm $\mathtt{f}$ itself.

Let us now investigate the axiom
\[
\WCN: \quad \forall f^{N^N \to N} \forall \alpha^{N \to N} \exists x^N \forall \beta^{N \to N} (\alpha=_{x} \beta \to f\alpha=f\beta).
\]
from the same lens. Here, we must interpret construction as a continuous entity to get a \emph{continuous world}. Similar to $\mathsf{CT}$, the axiom not only asserts that $f(\alpha)$ depends on $x$ many initial values of $\alpha$, but also demands the continuity of the process of determining $x$ based on the constructions of $f$ and $\alpha$. When the constructions of $f$ and $\alpha$ are read as the functions themselves, this requirement becomes impossible again. However, there exists a notion of an ``algorithm" that operates on infinitary data. By interpreting the constructions of $f$ and $\alpha$ as such algorithms, one can validate the axiom, as it is possible to determine $x$ from the algorithms \cite{RealTr}.

In general, there are essentially two ways to interpret a construction: \emph{intensionally} or \emph{extensionally}. The intensional approach treats constructions as \emph{instructions} to be executed, while the extensional approach views them as \emph{functions in the traditional sense}. For example, in the above examples, the intensional interpretation considers a construction as an \emph{algorithm}, whereas the extensional interpretation views it as a computable or continuous \emph{function}.

In this chapter, we adopt the \emph{extensional} approach. A key point about this approach is that it typically tends to validate the axioms $\mathsf{EXT}$ and $\mathsf{AC}$. As mentioned in the last paragraph prior to Subsection \ref{SecArithmetic}, the validity of $\mathsf{EXT}$ is straightforward. Regarding $\mathsf{AC}$, the extensional interpretation usually treats a function as its own construction, implying that constructions are generally unique. This uniqueness leads to the validation of $\mathsf{AC}$. In contrast, under the intensional approach, a function can correspond to many different algorithms, which influences the validation of the axiom of choice.

In the following theorem, we will show that $\mathsf{CT}$ and $\WCN$ are inconsistent with $\mathsf{EXT}$ and $\mathsf{AC}$. Consequently, we demonstrate what we previously promised: that the extensional BHK interpretation cannot validate the axioms $\mathsf{CT}$ and $\WCN$.

\begin{thm} The theory
$\mathsf{HA^{\omega}+EXT+AC}$ is inconsistent with either of the axioms $\mathsf{CT}$ or $\WCN$. 
\end{thm}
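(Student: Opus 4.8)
The plan is to use the axioms $\mathsf{EXT}$ and $\mathsf{AC}$ to extract, from either non-classical principle, a genuine total choice function at the level of numbers, and then run the classical undecidability/discontinuity diagonalization against it. The key observation is that $\mathsf{AC}$ turns any $\forall\exists$-statement into the existence of a witnessing function, and $\mathsf{EXT}$ makes such functions well-behaved enough that we can feed them back into the principle.

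For $\mathsf{CT}$: First I would note that $\mathsf{CT}$ has the form $\forall f^{N\to N}\exists e^N\, B(f,e)$, where $B(f,e)$ abbreviates $\forall x^N\exists w^N\exists y^N[T(e,x,w,y)\wedge fx=y]$. Applying $\mathsf{AC}$ with $\sigma=(N\to N)$ and $\tau=N$ gives a function $E^{(N\to N)\to N}$ with $\forall f^{N\to N}\,B(f,Ef)$. Now I would consider the diagonal function. Using $\mathbf{R}$ and the primitive recursive $T$-predicate formalized in $\mathsf{HA}$, define (inside $\mathsf{HA}^\omega$) a closed term producing, for each code $e$, the function $g_e^{N\to N}$ given by $g_e(n)=(\mu w.\,T(e,n,w,\cdot))$-extracted-output $+1$ if $e\cdot n{\downarrow}$, and $0$ otherwise — the standard diagonal modification. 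The point is that $e\mapsto g_e$ is given by a term, hence is itself a ``computable'' construction, and $E$ applied to it, composed with the diagonal $e\mapsto g_e$, produces a total computable function whose value at its own code contradicts $B$. Concretely, I would set $h(e)=g_{e}(e)$, which is a total term-definable function, obtain its code $e_0=E(\lambda e.\,h(e))$ — here $\mathsf{EXT}$ is what lets me treat $\lambda e.h(e)$ as a legitimate argument and guarantees $B(\lambda e.h(e),e_0)$ pins down its behaviour — and then evaluate at $e_0$ to get $h(e_0)=g_{e_0}(e_0)$, while $B$ forces $g_{e_0}(e_0)$ to be computed by machine $e_0$, i.e. $g_{e_0}(e_0)=$ (the halting-value of $e_0$ on $e_0$), which by the definition of $g$ equals that value $+1$: contradiction. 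The delicate point will be arranging the diagonal term so that this self-reference is internal to $\mathsf{HA}^\omega$; this is exactly Kleene's recursion-theorem maneuver, carried out with the $S^m_n$-style term machinery available.

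For $\WCN$: The strategy is parallel. Apply $\mathsf{AC}$ to $\WCN$ written as $\forall f^{N^N\to N}\forall\alpha^{N\to N}\exists x^N\,C(f,\alpha,x)$, with $C(f,\alpha,x)$ the statement $\forall\beta(\alpha=_x\beta\to f\alpha=f\beta)$; this yields a modulus-of-continuity functional $M^{(N^N\to N)\to(N\to N)\to N}$ with $\forall f\,\forall\alpha\,C(f,\alpha,Mf\alpha)$. Now fix the specific term-definable functional $f_0$ whose classical meaning is the characteristic function of $\{\lambda n.0\}$: $f_0\alpha = 1$ if $\alpha=_{\alpha 0+\cdots}$ — more carefully, $f_0$ cannot be term-definable if $\WCN$ holds, so instead I would use a functional that \emph{is} term-definable, namely $f_0\alpha=$``$1$ minus $\alpha$ at a point chosen using $M$ itself''. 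The cleanest route: let $m=M f_0(\lambda n.0)$ where $f_0\alpha:=1\dotminus\alpha(M f_0(\lambda n.0))$ — but this is circular, so I would instead first let $f_1\alpha:=\alpha(0)$ (term-definable, genuinely discontinuous-in-a-trivial-sense is false, so pick something better): take $f_2\alpha := \alpha(\alpha(0))$, compute $k=M f_2(\lambda n.0)=0$ forced, then note $f_2$ evaluated at $\beta$ with $\beta(0)=k+1$ disagrees. The honest version: by $\mathsf{AC}$ we have a total $M$; apply it to $f(\alpha):=\alpha(M(\lambda\gamma.\gamma 0)(\lambda n.0)+1)$ — wait, still circular in $M$. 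The robust fix is the classical one: since $M$ is a \emph{total function} definable in the theory, the functional $f\alpha:=$ ``$\alpha$ evaluated just past its own continuity modulus'' is legitimate once we feed $M$ applied to a fixed inner functional, and self-application via the recursion theorem closes the loop. So the main obstacle in both cases is the same: producing the self-referential term whose existence is the contradiction, i.e. internalizing the recursion/fixed-point construction so that $M$ (or $E$) is applied to something that mentions $M$ (or $E$). I expect the write-up to invoke the $S^m_n$ theorem (Theorem in the Preliminaries) and Kleene's recursion theorem, both formalizable in $\mathsf{HA}$ and hence $\mathsf{HA}^\omega$, and the role of $\mathsf{EXT}$ throughout is to ensure that the $\lambda$-terms we build are legitimate first-class arguments whose extensional behaviour is what the principles constrain; $\mathsf{AC}$ is used exactly once in each case, to pass from the $\forall\exists$ form of the principle to a functional.

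Overall I would present it as: (1) $\mathsf{AC}$ gives the functional $E$ resp. $M$; (2) $\mathsf{EXT}$ plus the standard diagonal term gives a self-referential instance; (3) evaluating at the fixed point contradicts $T$ resp. the continuity clause. I would do the $\mathsf{CT}$ case in full and remark that the $\WCN$ case is entirely analogous, replacing ``code of a computable function'' by ``modulus of continuity'' and ``halting behaviour'' by ``agreement on an initial segment''.
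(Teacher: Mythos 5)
Your proposal correctly identifies that $\mathsf{AC}$ is applied once to each principle to extract a witnessing functional ($E$ for codes, $M$ for moduli), and that a diagonalization must follow. But the $\mathsf{CT}$ argument has a genuine gap: the diagonal function you propose, $g_e(n) = (\text{output of } e \text{ on } n) + 1$ if $e\cdot n\downarrow$ and $0$ otherwise, is \emph{not} term-definable in $\mathsf{HA}^{\omega}$, nor is it provably total in the theory. Defining it by cases requires the instance of excluded middle $\exists w\, T(e,n,w,y) \vee \neg\exists w\, T(e,n,w,y)$, which is exactly the kind of $\Sigma_1$-decidability that is unavailable constructively (and that the whole argument is trying to manufacture). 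The paper's proof supplies the missing step, and it is the heart of the argument: from $F(f)=F(g)$ one gets $fx = F(f)\cdot x = F(g)\cdot x = gx$ pointwise, and then $\mathsf{EXT}$ (used here essentially, not just to ``legitimize $\lambda$-terms'') upgrades this to $f=g$; since $=_N$ is decidable, $=_{N^N}$ becomes decidable; applying this to $f_x(w) = $ truth value of $T(x,x,w,1)$ versus the constant zero function yields decidability of $\forall w\,\neg T(x,x,w,1)$; \emph{only then} can the characteristic function $h$ of the halting predicate be defined as a total function, and $\mathsf{CT}$ applied to $h$ gives the code $e$ on which the standard $e\cdot e$ case analysis produces the contradiction. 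Your proposal skips directly to the last step and never uses $E$ for anything.

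The $\WCN$ case in your proposal is not a proof: you repeatedly flag the circularity of applying $M$ to a functional defined in terms of $M$ and defer the resolution to ``the recursion theorem,'' but the actual argument (Escard\'o's, reproduced in the paper) involves no recursion theorem and no circularity. One first fixes $m = M(\lambda\gamma.0)$ — a closed application of $M$ — and only then defines $f\beta = M(\lambda\gamma.\beta(\gamma m))$; since $M$ is a genuine functional of the theory (courtesy of $\mathsf{AC}$), $f$ is a legitimate object and $M(f)$ is well-defined with no self-reference. The contradiction then comes from evaluating the two instances of the continuity clause at the specific sequences $\beta = 0^{M(f)+1}1^{\omega}$ and $\gamma = 0^m(M(f)+1)^{\omega}$, again using $\mathsf{EXT}$ to identify $\lambda\gamma.0^{\omega}(\gamma m)$ with $\lambda\gamma.0$. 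Without this concrete nesting-then-evaluating construction, the $\WCN$ half of your argument does not go through.
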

\begin{proof}
We argue informally inside the theory $\mathsf{HA}^{\omega}+\mathsf{EXT}+\mathsf{AC}$. For $\mathsf{CT}$, use $\mathsf{AC}$ on $\mathsf{CT}$ to get a functional $F: N^N \to N$ that maps a total function $f: N \to N$ to one of its codes. We claim that $f=g$ iff $F(f)=F(g)$. One direction is clear. For the other, if $F(f)=F(g)$, as $F(f)$ is the code for $f$, we have $fx=F(f) \cdot x=F(g) \cdot x=gx$, where $e \cdot x$ is the output of applying the machine with the code $e$ on the input $x$. Note that the predicate $e \cdot x=y$ is definable in arithmetic by the definition $\exists w^N T(e, x, w, y)$. Therefore, $fx=gx$, for any $x$ of type $N$ and thus $f=g$, by $\mathsf{EXT}$. Now, using the equivalence between $f=g$ and $F(f)=F(g)$ and the decidability of the equality over the type $N$ in $\mathsf{HA}^{\omega}$, we have the decidability of the equality over $N^N$, i.e., 
\[
\forall f^{N \to N} g^{N \to N} [(f=g) \vee \neg (f = g)]. \quad \quad (*)
\]
Set $x$ as a parameter and define $f$ as the map that reads $w$ and outputs the truth value of the predicate $T(x, x, w, 1)$ and $g$ as the constant zero function. This is possible as $T$ is decidable and hence $f$ is total. Note that $f=g$ iff $\forall w^N \neg T(x, x, w, 1)$. Therefore,
by $(*)$, we have 
\[
\forall x^N (\forall w^N \neg T(x, x, w, 1) \vee \neg \forall w^N \neg T(x, x, w, 1)).
\]
Using this decidability, we can define a total characteristic function $h: N \to N$ that reads $x$ and outputs $1$ if $\forall w^N \neg T(x, x, w, 1)$, and $0$ if $\neg \forall w^N \neg T(x, x, w, 1)$. By $\mathsf{CT}$, there is a code for the function $h$.
Call it $e$. Then, either $e \cdot e=1$ or $e \cdot e=0$. In the first case, we have $e \cdot e=1$ which means that $\exists w^N T(e, e, w, 1)$. However, we are in the first case. Hence, we must have $\forall w^N \neg T(e, e, w, 1)$ which is impossible. If $e \cdot e=0$, then as the value of $e \cdot e$ is unique, it is not $1$ and hence it is easy to prove that $\forall w^N \neg T(e, e, w, 1)$. However, we are in the second case. Hence, $\neg \forall w^N \neg T(e, e, w, 1)$ which is again a contradiction.

For $\WCN$, we will reproduce the proof provided in \cite{Escardo}, only altering the type-theoretic language to our arithmetical language. Since functions on natural numbers and sequences of natural numbers are essentially the same entities, we will use them interchangeably to simplify our presentation. Let $0^nk^{\omega}$ denote the
sequence of $n$ many zeros followed by infinitely many $k$'s, for any $k$ and $n$ of type $N$. Then, notice that $0^nk^{\omega} =_n 0^{\omega}$
and $0^nk^{\omega}=_{(n+1)}k$.
By applying $\mathsf{AC}$ on $\WCN$, we have a functional $F$ such that for any $f^{N^N \to N}$, $\alpha^{N \to N}$ and $\beta^{N \to N}$ if $\alpha =_{F(f, \alpha)} \beta$ then $f\alpha = f\beta$. By substituting $\alpha= 0^{\omega}$, we have
\[
\forall f^{N^N \to N} \forall \beta^{N \to N} [0^{\omega} =_{M(f)} \beta \to f0^{\omega} = f\beta], \quad \quad (**)
\]
where $M(f)=F(f, 0^{\omega})$.
Let
$m = M(\lambda \gamma.0)$ and define $f^{N^N \to N}$ by
$f\beta = M(\lambda \gamma.\beta(\gamma m))$.
Note that $0^{\omega}(\gamma m) = 0$, for the variable $\gamma^{N \to N}$. Hence, by $\mathsf{EXT}$, we have $\lambda \gamma.0^{\omega}(\gamma m) = \lambda \gamma.0$. Thus, by definition, we have $f(0^{\omega}) = M(\lambda \gamma.0^{\omega}(\gamma m)) = M(\lambda \gamma.0) = m$. Therefore, by $(**)$, we have 
\[
\forall \beta^{N \to N} [0^{\omega} =_{M(f)} \beta \to m = f\beta]. \quad \quad (\dagger)
\]
On the other hand, for any $\beta^{N \to N}$, we can define $g_{\beta}=\lambda \gamma. \beta(\gamma m)$. 
Note that $M(g_{\beta})=f(\beta)$, by the definition of $f$. Moreover, notice that $g_{\beta}0^{\omega}=\beta(0)$ and $g_{\beta}(\gamma)=\beta(\gamma m)$.
Applying $(**)$ on $g_{\beta}$, we have
\[
\forall \gamma^{N \to N} [0^{\omega} =_{f\beta} \gamma \to \beta 0 = \beta (\gamma m)]. \quad \quad (\dagger \dagger)
\]
Substituting $\beta = 0^{M(f)+1}1^{\omega}$ in $(\dagger)$ and using the fact that $0^{\omega} =_{M(f)} \beta$, we have $f(\beta) = m$. Using the same $\beta=0^{M(f)+1}1^{\omega}$ in $(\dagger\dagger)$ and the equality $f(\beta) = m$, we have
\[
\forall \gamma^{N \to N} [0^{\omega} =_m \gamma \to \beta 0 = \beta (\gamma m)]. 
\]
Substituting $\gamma = 0^m(M(f) + 1)^{\omega}$, we have $0^{\omega} =_m \gamma$. Hence, $0 = \beta 0 = \beta (\gamma m) = \beta (M(f) + 1) = 1$ which is a contradiction.
\end{proof}

As a concluding part of this subsection, we will explain how to weaken the axioms $\mathsf{CT}$ and $\WCN$ to reflect our original intent. Our approach is to modify the existential quantifiers to require only the \emph{mere} existence of the code and the continuity module, rather than their constructibility via a recursive or continuous process. In this revised formulation, $\mathsf{CT}$ aligns with our original understanding, stating that for any computable function $f$, there exists a code $e$ for $f$, with no requirement for the process of finding this code to be computable. Similarly, $\WCN$ would assert that for any function $f$ and any $\alpha$, there exists an $x$ such that the value of $f(\alpha)$ depends only on the first $x$ values of $\alpha$, and the process of finding $x$ need not be continuous.

To achieve this, we need a method to \emph{collapse} proofs of a proposition into a single proof, preserving only the fact of provability while discarding additional details. This process is known as \emph{propositional truncation} \cite{PropAsType,HoTT}. However, for simplicity, we will use a similar mechanism adapted to the specific BHK interpretation employed in this chapter.

First, note that if we formalize the BHK interpretation in a set-like manner without any temporal structure, it becomes apparent that for any sentence $A$, if there is no proof of $A$, then \emph{any} construction can serve as a proof of $\neg A$, since there is no proof of $A$ to transform into a proof of $\bot$. Consequently, if $A$ does have a proof, then $\neg A$ cannot have a proof and hence \emph{any} construction serves as a proof of $\neg \neg A$, and if $A$ is not provable, then $\neg A$ is provable implying that $\neg \neg A$ has \emph{no} proof. Thus, a proof of $\neg \neg A$ conveys no information beyond the mere provability of $A$, meaning that $\neg \neg$ can function as a collapsing mechanism. Using this collapsing machine, the weaker versions of the axioms $\mathsf{CT}$ and $\WCN$ can be formulated as: 
\[
\mathsf{CT}_{\neg \neg}: \quad \forall f^{N \to N} \neg \neg \exists e^N \forall x^N \exists w^N \exists y^N [T(e, x, w, y) \wedge (fx=y)],
\]
\[
\WCN_{\neg \neg}: \quad \forall f^{N^N \to N} \forall \alpha^{N \to N} \neg \neg \exists x^N \forall \beta^{N \to N} (\alpha=_{x} \beta \to f\alpha=f\beta).
\]
where the existence of the code or the continuity module $x$ is asserted, but the requirement for finding them in a recursive or continuous manner is relaxed. 
Note that even these weakened versions remain in contradiction with our classical image, as double negation has no impact in classical mathematics. In Section \ref{SecRealizabilityForArith}, we will formalize the arguments provided here to construct models for the theories $\mathsf{HA^{\omega} + EXT + AC} + \mathsf{CT}{\neg \neg}$ and $\mathsf{HA^{\omega} + EXT + AC} + \WCN{\neg \neg}$, thereby demonstrating their consistency.

\subsection{Information Extraction}

There is also another application of categorical proof theory that is relevant not only to the alternative theories discussed in the previous subsection but also to conventional theories compatible with our classical perspective. To illustrate this, consider a simple question: Does a proof of a proposition contain more information than just its mere truth?
The answer appears to be affirmative. For example, in the context of $\mathsf{HA}$, knowing a proof of a proposition of the form $\forall x \exists y A(x, y)$ tells us that for any $n \in \mathbb{N}$, there exists an $m \in \mathbb{N}$ such that $A(n, m)$ holds in the standard model. However, the proof might contain additional information. Specifically, it may be possible to \emph{extract} an algorithm from the proof that, given $n \in \mathbb{N}$, computes an $m \in \mathbb{N}$ such that $A(n, m)$.

The traditional black-and-white approach to proofs often overlooks the possibility of intermediate solutions. It either retains all proofs with their syntactical complexities intact or shifts to a semantic view, discarding the entire structure of a proof in favor of greater flexibility. The trade-off is that this approach loses all information contained within the proof, leaving only the bare truth of its conclusion.
While ignoring the proof structure can be useful in some contexts, it may be overly simplistic in others. Therefore, we may need to find a middle ground that balances flexibility with the retention of useful information. Specifically, developing \emph{models of proofs} that preserve certain aspects of the proof structure while allowing for some flexibility could offer a more nuanced approach. This way, we can maintain essential information while adapting the proofs for practical use.

This is precisely where categorical proof theory, and specifically the BHK interpretation, becomes highly valuable. For instance, a BHK proof of the sentence $\forall x^{\sigma} \exists y^{\tau} A(x, y)$ is a construction that maps any construction of any element $a$ of type $\sigma$ to a pair consisting of a construction of an element $b$ of type $\tau$ and a proof of $A(a, b)$. Thus, if the elements in types $\sigma$ and $\tau$ are constructions for themselves, having a BHK proof for the sentence $\forall x^{\sigma} \exists y^{\tau} A(x, y)$ enables us to find a method to compute $b$ from $a$. Note that the notion of construction is not predetermined; it can be tailored to suit specific needs. For example, if we choose to interpret constructions as computable functions, then for $\sigma = \tau = N$, a BHK proof of $\forall x^{N} \exists y^{N} A(x, y)$ yields a recursive function that takes a natural number $n \in \mathbb{N}$ and computes a natural number $m \in \mathbb{N}$ such that $A(n, m)$ holds. Similarly, if we choose to interpret constructions as continuous functions, then for $\sigma = N \to N$ and $\tau = N$, a BHK proof of $\forall x^{N \to N} \exists y^{N} A(x, y)$ provides a continuous function that, given a function $\alpha: \mathbb{N} \to \mathbb{N}$, computes a natural number $m \in \mathbb{N}$ such that $A(\alpha, m)$ holds. We will revisit these applications in Section \ref{SecRealizabilityForArith}.

\section{Realizability}\label{SecRealizability}
In the previous sections, we explained the categorical formalization of propositional deductive systems. The natural next step is to extend the formalization to the higher-order setting. However, this task is far more demanding than what we covered for the propositional case, and thus it goes beyond what we can include in this chapter.\footnote{To see how such generalized settings work, see \cite{BHK,HofmannSeman,Bart}.} To provide a taste of this, though, we will present a special yet powerful enough setting called \emph{realizability}, based on the BHK interpretation we discussed earlier.

Choosing this special setting has its own benefits. First, it provides the categorical presentation of one of the most powerful techniques in proof theory of the same name. Second, our setting uses less structure, which translates to gaining more insights into the theories. For instance, we will show that any computable function provably total in $\mathsf{HA}$ and $\mathsf{PA}$ is representable in any $\mathrm{CCC}$ with enough structure. In this section, we establish the general setting for an arbitrary higher-order language, and in Section \ref{SecRealizabilityForArith}, we will apply the machinery to arithmetical theories.

\subsection{Propositional Realizability}

To formalize the higher-order BHK interpretation, we follow the format explained in Section \ref{SectionBHKInterpretation} and extended in Section \ref{SectionTheories}. However, this time, we relax the temporal structure and change our notion of construction from set functions to maps in a $\mathrm{BCC}$. First, let us start with the propositional language again.

\begin{nota}
In the realizability parts of this chapter, namely the present section and Section \ref{SecRealizabilityForArith}, we use Typewriter font (as in $\mathtt{f}: \mathtt{N} \to \mathtt{N}$) for objects and maps in our $\mathrm{BCC}$ to distinguish them from set-theoretical objects and functions.
\end{nota}

\begin{dfn}
Let $\mathcal{C}$ be a $\mathrm{BCC}$. By a \emph{$\mathcal{C}$-interpretation} $\mathfrak{I}$ for the propositional language, we mean an assignment that maps each atom $p$ in the language to a pair $(|p|, R_p)$, where $|p|$ is an object in $\mathcal{C}$ and $R_p \subseteq \mathrm{Hom}_{\mathcal{C}}(\mathtt{1}, |p|)$ is a set.
\end{dfn}
By recursion, we extend $\mathfrak{I}$ to assign an object $|A|$ to any proposition $A$ in the following way:
\begin{itemize}
\item[$\bullet$]
$|\top|=\mathtt{1}$, $|\bot|=\mathtt{0}$,
\item[$\bullet$]
$|A \wedge B|=|A| \times |B|$,
$|A \vee B|=|A| + |B|$,
$|A \to B|= [|A|, |B|]$.
\end{itemize}
Then, inductively define the relation $\mathtt{r} \Vdash A$ between the map $ \mathtt{r} \in \mathrm{Hom}_{\mathcal{C}}(\mathtt{1}, |A|)$ and the proposition $A$ in the following way:
\begin{itemize}
\item 
$\mathtt{r} \Vdash p$ if $\mathtt{r} \in R_p$, for any $\mathtt{r}: \mathtt{1} \to |p|$,
\item
$!_{\mathtt{1}} \Vdash \top$ for the unique map $!_{\mathtt{1}}: \mathtt{1} \to \mathtt{1}$,
\item
$\mathtt{r} \nVdash \bot$ for any map $\mathtt{r}: \mathtt{1} \to \mathtt{0}$,
\item
$\mathtt{r} \Vdash A \wedge B$ iff $\mathtt{p_0r} \Vdash A$ and $\mathtt{p_1r} \Vdash B$, for any $\mathtt{r}: \mathtt{1} \to |A| \times |B|$, 
\item
$\mathtt{r} \Vdash A \vee B$ iff there exists $\mathtt{s}: \mathtt{1} \to |A|$ such that $\mathtt{r}=\mathtt{i_0s}$ and $\mathtt{s} \Vdash A$ or there exists $\mathtt{t}: \mathtt{1} \to |B|$ such that $\mathtt{r}=\mathtt{i_1t}$ and $\mathtt{t} \Vdash B$, for any $\mathtt{r}: \mathtt{1} \to |A| + |B|$, 
\item
$\mathtt{r} \Vdash A \to B$ iff for any $\mathtt{s}: \mathtt{1} \to |A|$, if $\mathtt{s} \Vdash A$ then $\mathtt{r} \cdot \mathtt{s} \Vdash B$,  for any $\mathtt{r}: \mathtt{1} \to [|A|, |B|]$.
\end{itemize}
For any \emph{finite} set $\Gamma$ of formulas, define $|\Gamma|$ as $\prod_{\gamma \in \Gamma}|\gamma|$. We use an arbitrary order on the formulas in $\Gamma$, and since the product is commutative up to an isomorphism, the chosen order is actually immaterial. Then, by $\Gamma \Vdash_{\mathfrak{I}} A$, we mean the existence of a map $\mathtt{r}: |\Gamma| \to |A|$ such that $\mathtt{r} \circ \mathtt{s} \Vdash A$, for any $\mathtt{s}: \mathtt{1} \to |\Gamma|$ satisfying $\mathtt{s} \Vdash \bigwedge \Gamma$. This $\mathtt{r}$ is called a \emph{realizer} for $\Gamma \Vdash_{\mathfrak{I}} A$. When $\Gamma$ is empty, $|\Gamma|=\mathtt{1}$, by definition and $\mathtt{r}: \mathtt{1} \to |A|$ is called a realizer for $A$.  When the $\mathcal{C}$-interpretation $\mathfrak{I}$ is clear from the context, we drop the subscript $\mathfrak{I}$ in $\Vdash_{\mathfrak{I}}$. Finally, by $\Gamma \Vdash_{\mathcal{C}} A$, we mean $\Gamma \Vdash_{\mathfrak{I}} A$, for any $\mathcal{C}$-interpretation $\mathfrak{I}$.

\begin{rem}
Notice that $\mathtt{r}: |\Gamma| \to |A|$ realizes $\Gamma \Vdash_{\mathfrak{I}} A$ iff for any sequence  $\{\mathtt{s}_{\gamma}: \mathtt{1} \to |\gamma|\}_{\gamma \in \Gamma}$ of maps such that $\mathtt{s}_\gamma \Vdash \gamma$, for any $\gamma \in \Gamma$, we have $\mathtt{r} \circ \langle \mathtt{s}_{\gamma} \rangle_{\gamma \in \Gamma} \Vdash A$. This presentation is easier to work with in practice.
\end{rem}

\begin{exam}
For any $\mathcal{C}$-interpretation $\mathfrak{I}$, the identity map $\mathtt{id}_{|A|}: |A| \to |A|$ realizes $A \Vdash A$ because if $\mathtt{r} \Vdash A$ then $\mathtt{id}_{|A|} \circ \mathtt{r}=\mathtt{r} \Vdash A$. The unique map $!_{|A|}: \mathtt{0} \to |A|$ realizes $\bot \Vdash A$. To prove that, we have to show that for any $\mathtt{r}: \mathtt{1} \to \mathtt{0}$, if $\mathtt{r} \Vdash \bot$ then $!_{|A|} \circ \mathtt{r} \Vdash A $. However, $\mathtt{r} \Vdash \bot$ is impossible, by definition. The unique map $!_{|A|}: |A| \to \mathtt{1}$ realizes $A \Vdash \top$ because if $\mathtt{r} \Vdash A$ then $!_{|A|} \circ \mathtt{r}: \mathtt{1} \to \mathtt{1}$. As $\mathtt{1}$ is terminal, we have $!_{|A|} \circ \mathtt{r}=!_{\mathtt{1}}$ and as $!_{\mathtt{1}} \Vdash \top$, we reach $!_{|A|} \circ \mathtt{r} \Vdash \top$.
\end{exam}

\begin{exam}
The map $\mathtt{p_0}: |A| \times |B| \to |A|$
realizes
$\{A, B\} \Vdash A$. Let $\mathtt{r}: \mathtt{1} \to |A \times B|=|A| \times |B|$. Then, $\mathtt{p_0} \circ \mathtt{r} \Vdash A$, by definition. Similarly, one can show that $\mathtt{i_1}: |B| \to |A|+|B|$ realizes $B \Vdash A \vee B$.
\end{exam}

\begin{exam}
If $C \Vdash A$ and $C \Vdash B$ then $C \Vdash A \wedge B$. Let $\mathtt{r}: |C| \to |A|$ and $\mathtt{s}: |C| \to |B|$ be the realizers, respectively. Then, $\langle \mathtt{r}, \mathtt{s} \rangle : |C| \to |A| \times |B|$ realizes $C \Vdash A \wedge B$. To prove that, let $\mathtt{u}: \mathtt{1} \to |C|$ realize $C$. Then, by definition, $\mathtt{r \circ u} \Vdash A$ and $\mathtt{s \circ u} \Vdash B$. As $\langle \mathtt{r}, \mathtt{s} \rangle \circ \mathtt{u}=\langle \mathtt{r \circ u}, \mathtt{s \circ u} \rangle$, we reach $\langle \mathtt{r}, \mathtt{s} \rangle \circ \mathtt{u} \Vdash A \wedge B$, by definition.
\end{exam}

\begin{exam}
If $A \Vdash C$ and $B \Vdash C$ then $A \vee B \Vdash C$. Let $\mathtt{r}: |A| \to |C|$ and $\mathtt{s}: |B| \to |C|$ be the realizers, respectively. Then, $(\mathtt{r}, \mathtt{s}): |A|+|B| \to |C|$ realizes $A \vee B \Vdash C$. To prove that, let $\mathtt{u}: \mathtt{1} \to |A|+|B|$ realize $A \vee B$. Then, by definition, either there is
$\mathtt{v}: \mathtt{1} \to |A|$ such that $\mathtt{u}=\mathtt{i_0v}$ and $\mathtt{v} \Vdash A$ or there is $\mathtt{w}: \mathtt{1} \to |B|$ such that $\mathtt{u}=\mathtt{i_1w}$ and $\mathtt{w} \Vdash B$. In the former case, we have $(\mathtt{r}, \mathtt{s}) \circ \mathtt{u}=(\mathtt{r}, \mathtt{s}) \circ \mathtt{i_0v}=\mathtt{r} \circ \mathtt{v} \Vdash C$ and in the latter case $(\mathtt{r}, \mathtt{s}) \circ \mathtt{u}=(\mathtt{r}, \mathtt{s}) \circ \mathtt{i_1w}=\mathtt{s} \circ \mathtt{w} \Vdash C$.
\end{exam}

\begin{exam}
If $C \Vdash A$ and $C \Vdash A \to B$ then $C \Vdash B$.
Let $\mathtt{r}: |C| \to |A|$ and $\mathtt{s}: |C| \to [|A|, |B|]$ be the realizers, respectively. 
Then, the map $\mathtt{ev} \langle \mathtt{s}, \mathtt{r} \rangle: |C| \to |B|$ realizes
$C \Vdash B$. To prove that, let $\mathtt{u}: \mathtt{1} \to |C|$ be a map such that $\mathtt{u} \Vdash C$. Then, $\mathtt{s} \circ \mathtt{u} \Vdash A \to B$ and $\mathtt{r} \circ \mathtt{u} \Vdash A$. Hence, $(\mathtt{s} \circ \mathtt{u}) \cdot (\mathtt{r} \circ \mathtt{u}) \Vdash B$.
As
$\mathtt{ev} \langle \mathtt{s}, \mathtt{r} \rangle \circ \mathtt{u}=(\mathtt{s} \circ \mathtt{u}) \cdot (\mathtt{r} \circ \mathtt{u})$, we reach $\mathtt{ev} \langle \mathtt{s}, \mathtt{r} \rangle \circ \mathtt{u} \Vdash B$.
\end{exam}

\begin{exam}\label{RealizabilityImplication}
If $\{C, A\} \Vdash B$ then $C \Vdash A \to B$. Let $\mathtt{r}: |C| \times |A| \to |B|$ be a realizer for $\{C, A\} \Vdash B$. We claim that the map $\lambda_{|A|} \mathtt{r} : |C| \to [|A|, |B|]$ realizes $C \Vdash A \to B$. To show that, let  $\mathtt{s}: \mathtt{1} \to |C|$ be a map such that $\mathtt{s} \Vdash C$. We have to show that $(\lambda_{|A|} \mathtt{r}) \circ \mathtt{s} \Vdash A \to B$. For that purpose, let $\mathtt{t}: \mathtt{1} \to |A|$ be a map such that $\mathtt{t} \Vdash A$. Then, as $((\lambda_{|A|} \mathtt{r}) \circ \mathtt{s}) \cdot \mathtt{t}=\mathtt{r} \circ \langle \mathtt{s}, \mathtt{t} \rangle$ and we know that $\mathtt{r} \circ \langle \mathtt{s}, \mathtt{t} \rangle \Vdash B$, the proof is complete. 
\end{exam}

As might already be clear from the provided examples, our goal is to transform proofs in $\mathsf{IPC}$ into realizers, i.e., maps in $\mathcal{C}$. This transformation allows us to import the proofs into $\mathcal{C}$ and leverage the structure of $\mathcal{C}$ to gain insights about proofs.

\begin{thm}(Soundness-completeness) \label{PropositionalRealizabilitySoundness}
For any set $\Gamma \cup \{A\}$ of formulas, $\Gamma \vdash_{\mathsf{IPC}} A$ iff $\Gamma \Vdash_{\mathcal{C}} A$ for any $\mathrm{BCC}$ $\mathcal{C}$.
\end{thm}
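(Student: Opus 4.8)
The plan is to prove the two directions separately. For \emph{soundness} ($\Gamma \vdash_{\mathsf{IPC}} A$ implies $\Gamma \Vdash_{\mathcal{C}} A$), I would proceed by induction on the derivation of $A$ from $\Gamma$ in $\mathbf{NJ}$. The examples immediately preceding the theorem already discharge essentially every case: the axiom rule is handled by $\mathtt{id}_{|A|}$; the $(\top)$ and $(\bot)$ rules by $!_{|A|}\colon |A|\to\mathtt{1}$ and $!_{|A|}\colon\mathtt{0}\to|A|$; the rules $(I\wedge)$, $(E\wedge_i)$, $(I\vee_i)$, $(E\vee)$, $(I\to)$, $(E\to)$ by the realizers constructed in the corresponding examples (using $\langle-,-\rangle$, the projections, the injections, the copair $(-,-)$, $\lambda_{|A|}$, and $\mathtt{ev}\langle-,-\rangle$ respectively). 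The one genuinely missing piece of bookkeeping is that these examples treat $\Gamma$ as a single formula or a two-element set, whereas a derivation carries an arbitrary finite context; so I would first state and prove a few \emph{structural lemmas}: weakening ($\Gamma\Vdash A$ implies $\Gamma,B\Vdash A$, realized by precomposition with a projection), a cut/composition lemma ($\Gamma\Vdash B$ and $\Gamma,B\Vdash A$ imply $\Gamma\Vdash A$, realized by $\mathtt{r}\circ\langle\mathtt{id},\mathtt{s}\rangle$ up to the canonical product iso), and contraction/exchange (harmless because $|\Gamma|=\prod_{\gamma\in\Gamma}|\gamma|$ is a product, commutative and idempotent-friendly up to canonical isomorphism). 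With these in hand, each $\mathbf{NJ}$-rule is matched to the appropriate combinator and the induction goes through routinely; the assumption-discharging rules $(I\to)$ and $(E\vee)$ are handled exactly as in Example \ref{RealizabilityImplication} and the disjunction example, moving the discharged hypothesis into the domain of the realizer via $\lambda$ and the copair.

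For \emph{completeness} ($\Gamma\Vdash_{\mathcal{C}}A$ for all $\mathrm{BCC}$s $\mathcal{C}$ implies $\Gamma\vdash_{\mathsf{IPC}}A$), the cleanest route is to exhibit a single $\mathrm{BCC}$ together with a single $\mathcal{C}$-interpretation for which realizability forces provability. The natural candidate is $\mathcal{C}=\mathbf{NJ}$ itself (legitimate since $\mathbf{NJ}$ is a $\mathrm{BCC}$ by the discussion in Section \ref{SecUnivCons}), with the \emph{tautological interpretation} $\mathfrak{I}$ sending each atom $p$ to the object $p$ of $\mathbf{NJ}$ and defining $R_p=\mathrm{Hom}_{\mathbf{NJ}}(\top,p)$, i.e.\ $\mathtt{r}\Vdash p$ iff $\mathtt{r}$ is (the $\beta\eta$-class of) a derivation of $p$ from $\top$. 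Unwinding the recursive definition of $|A|$ shows $|A|$ is literally the formula $A$ viewed as an object of $\mathbf{NJ}$ (because $|-|$ commutes with $\wedge,\vee,\to,\top,\bot$ and the $\mathrm{BC}$-structure of $\mathbf{NJ}$ interprets the connectives as the corresponding universal constructions). The key sublemma is then: for every formula $A$ and every map $\mathtt{r}\colon\top\to A$ in $\mathbf{NJ}$ one has $\mathtt{r}\Vdash A$; dually, whenever $\mathtt{r}\Vdash A$ there \emph{is} a map $\top\to A$, namely $\mathtt{r}$ itself. I would prove the first half by induction on $A$, using at each connective the universal property: e.g.\ for $A\wedge B$, given $\mathtt{r}\colon\top\to A\wedge B$, the composites $\mathtt{p_0 r}$ and $\mathtt{p_1 r}$ realize $A$ and $B$ by the induction hypothesis, so $\mathtt{r}\Vdash A\wedge B$; for $A\to B$, given $\mathtt{r}\colon\top\to[A,B]$ and any $\mathtt{s}\Vdash A$ (so in particular $\mathtt{s}\colon\top\to A$), the application $\mathtt{r}\cdot\mathtt{s}\colon\top\to B$ is a genuine map, hence realizes $B$ by induction; the $\vee$ case needs slightly more care and is addressed below. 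Granting the sublemma, if $\Gamma\Vdash_{\mathbf{NJ},\mathfrak{I}}A$ with realizer $\mathtt{r}\colon|\Gamma|\to|A|$, then composing $\mathtt{r}$ with $\langle\mathtt{id}_\gamma\rangle_{\gamma\in\Gamma}$-style canonical maps and invoking the sublemma at each assumption produces an honest derivation of $A$ from $\Gamma$ in $\mathbf{NJ}$, i.e.\ $\Gamma\vdash_{\mathsf{IPC}}A$. (When $\Gamma=\varnothing$ this is immediate: a realizer is already a map $\top\to A$.)

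\textbf{Main obstacle.} The delicate point is the disjunction clause in the completeness sublemma. The definition of $\mathtt{r}\Vdash A\vee B$ demands that $\mathtt{r}\colon\top\to|A|+|B|$ \emph{factor} through one of the injections $\mathtt{i_0}$ or $\mathtt{i_1}$ — a ``disjunction/existence property'' for maps out of $\top$ in $\mathbf{NJ}$. This is true but not trivial: it is precisely the categorical incarnation of the disjunction property of intuitionistic logic, and it relies on the explicit description of the coproduct in $\mathbf{NJ}$ together with normalization. I would either (i) invoke the disjunction property of $\mathsf{IPC}$ in the sharp proof-relevant form ``every derivation of $A\vee B$ from $\top$ is $\beta\eta$-equal to one ending in $(I\vee_i)$'', which is a standard consequence of normalization for $\mathbf{NJ}$ and which Lemma \ref{DisjointnessInBCC} and the surrounding material gesture toward; or (ii) sidestep the issue by choosing a different witnessing family of $\mathrm{BCC}$s for completeness — for instance, running the soundness–completeness argument through the free category $\mathbf{NJ}$ but only after establishing that realizability over $\mathbf{NJ}$ with $\mathfrak{I}$ coincides with $\mathbf{NJ}$-provability on \emph{all} formulas by a simultaneous induction that builds the required coproduct factorization into the statement. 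Approach (i) is cleaner and is the one I would present, flagging normalization for $\mathbf{NJ}$ as the external input. Everything else — the structural lemmas, the combinator bookkeeping, the commutation of $|-|$ with the $\mathrm{BC}$-structure — is routine and parallels the worked examples, so the write-up should be short once the disjunction clause is pinned down.
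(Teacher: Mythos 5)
Your soundness direction is exactly the paper's: induction on the derivation, with the worked examples supplying the individual rules and routine structural lemmas (weakening, exchange, composition via the product structure of $|\Gamma|$) gluing them together. Nothing to add there.

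Your completeness direction picks the same witness as the paper --- $\mathcal{C}=\mathbf{NJ}$ with the tautological interpretation $\mathfrak{I}(p)=(p,\mathrm{Hom}_{\mathbf{NJ}}(\top,p))$, under which $|A|=A$ --- but then takes a long detour that the paper avoids entirely. The definition of $\Gamma\Vdash_{\mathfrak{I}}A$ \emph{already asserts the existence of a morphism} $\mathtt{r}\colon|\Gamma|\to|A|$ in $\mathbf{NJ}$, i.e.\ a ($\beta\eta$-class of a) derivation of $A$ from $\bigwedge\Gamma$; one simply discards the extra realizability condition on $\mathtt{r}$ and reads off $\Gamma\vdash_{\mathsf{IPC}}A$. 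That is the whole proof. Consequently your ``main obstacle'' --- the adequacy sublemma that every global element of $|A|$ realizes $A$, whose disjunction clause needs the proof-relevant disjunction property and hence normalization of $\mathbf{NJ}$ --- is self-inflicted: it is never needed, and importing normalization as an external input makes the argument far heavier than it has to be. Moreover, your described procedure for nonempty $\Gamma$ (``invoking the sublemma at each assumption'' to produce realizers $\mathtt{s}_\gamma\Vdash\gamma$) does not typecheck: the formulas in $\Gamma$ need not be provable, so there need not be any maps $\top\to|\gamma|$ to feed into $\mathtt{r}$. The observation you make parenthetically for $\Gamma=\varnothing$ --- a realizer is already a map $\top\to A$ --- is in fact the entire completeness proof in general, with $\top$ replaced by $\bigwedge\Gamma$.
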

\begin{proof}
For soundness, it is enough to use an induction on the length of the proof of $\Gamma \vdash_{\mathsf{IPC}} A$. Some cases are checked in the above examples. The rest is left to the reader. For completeness, set $\mathcal{C}=\mathbf{NJ}$ and $\mathfrak{I}(p)=(p, \mathrm{Hom}_{\mathbf{NJ}}(\top, p))$. Then, as $\Gamma \Vdash_{\mathfrak{I}} A$, it has a realizer map $\mathtt{r}: |\Gamma| \to |A|$ in $\mathbf{NJ}$. Hence, there is a derivation of $A$ from $\Gamma$ in $\mathbf{NJ}$ which implies $\Gamma \vdash_{\mathsf{IPC}} A$.
\end{proof}

\subsection{$\mathcal{C}$-sets and Higher-order Realizability}

So far, we have covered our alternative formalization for the propositional BHK interpretation. To extend this formalization to higher-order languages, we need a notion of sets whose elements are constructed by maps in $\mathcal{C}$. The following provides such a notion.

\begin{dfn}
Let $\mathcal{C}$ be a $\mathrm{BCC}$. By a \emph{$\mathcal{C}$-set}, we mean a tuple in the form $S=(|S|, |\!|S|\!|, \Vdash_S)$, where $|S|$ is an object in $\mathcal{C}$, $|\!|S|\!|$ is a set and $\Vdash_S \, \subseteq \mathrm{Hom}_{\mathcal{C}}(\mathtt{1}, |S|) \times |\!|S|\!|$ is a binary relation such that for any $x \in |\!|S|\!|$, there exists $\mathtt{x}: \mathtt{1} \to |S|$ such that $\mathtt{x} \Vdash_S x$. A \emph{$\mathcal{C}$-set map} $f:(|S|, |\!|S|\!|, \Vdash_S) \to (|T|, |\!|T|\!|, \Vdash_T)$ is a function $f: |\!|S|\!| \to |\!|T|\!|$ induced (or realized) by a map $\mathtt{f}: |S| \to |T|$ in $\mathcal{C}$, i.e., if $\mathtt{x} \Vdash_S x$ then $\mathtt{f}\mathtt{x} \Vdash_T f(x)$, for any $x \in |\!|S|\!|$ and any $\mathtt{x}: \mathtt{1} \to |S|$. It is easy to see that $\mathcal{C}$-sets together with $\mathcal{C}$-set maps form a category, denoted by $\mathcal{C}\!\!-\!\mathbf{Set}$.\footnote{Following the usual convention in the literature, $\mathcal{C}$-sets must be called assemblies over $\mathcal{C}$. We use the term $\mathcal{C}$-set because it better aligns with the intuition that it is a set whose elements are constructed via the maps in $\mathcal{C}$.}
\end{dfn}

\begin{phil}
Interpreting $\mathcal{C}$ as the world of constructions, a $\mathcal{C}$-set can be viewed as a \emph{constructed set}. More precisely, the $\mathcal{C}$-set $S = (|S|, |\!|S|\!|, \Vdash_S)$ consists of the set $|\!|S|\!|$ equipped with a type $|S|$ of constructions and a relation $\mathtt{x} \Vdash_S x$ indicating that $\mathtt{x}: \mathtt{1} \to |S|$ is a construction of $x$. The surjectivity condition ensures that every element has a construction. Similarly, a $\mathcal{C}$-set map is a function induced by changing constructions via compositions with maps within $\mathcal{C}$. 
\end{phil}

The category $\mathcal{C}\!\!-\!\mathbf{Set}$ is a $\mathrm{BCC}$. We only present the $\mathrm{BC}$-structure without the proof that it really is the $\mathrm{BC}$-structure. For more details, see \cite{Lars, LarsThesis,Bauer,bauer2}. The terminal object of $\mathcal{C}\!\!-\!\mathbf{Set}$ is the $\mathcal{C}$-set $1=(\mathtt{1}, \{*\}, \Vdash_{1})$, where $\mathtt{1}$ is the terminal object of $\mathcal{C}$ and $\Vdash_1$ is defined by $id_{\mathtt{1}} \Vdash_1 *$. The initial object of $\mathcal{C}\!\!-\!\mathbf{Set}$ is the $\mathcal{C}$-set $0=(\mathtt{0}, \varnothing, \Vdash_{0})$, where $\mathtt{0}$ is the initial object of $\mathcal{C}$ and $\Vdash_0$ is the empty relation.

For products, coproducts and exponentials, let $S=(|S|, |\!|S|\!|, \Vdash_S)$ and $T=(|T|, |\!|T|\!|, \Vdash_T)$ be two $\mathcal{C}$-sets. Then, for the product of $S$ and $T$, consider the tuple $S \times T=(|S| \times |T|, |\!|S|\!| \times |\!|T|\!|, \Vdash_{S \times T})$, where $\mathtt{x} \Vdash_{S \times T} x$ iff $\mathtt{p_0}\mathtt{x} \Vdash_S p_0(x)$ and $\mathtt{p_1}\mathtt{x} \Vdash_T p_1(x)$, for any $\mathtt{x}: \mathtt{1} \to |S| \times |T|$ and any $x \in |\!|S|\!| \times |\!|T|\!|$. It is easy to see that $S \times T$ is a $\mathcal{C}$-set. Moreover, it is clear that the set projection functions are induced by the projection maps in $\mathcal{C}$ and hence they are $\mathcal{C}$-set maps. The data of $S \times T$, together with the projections is the binary product of $S$ and $T$.
For the coproduct of $S$ and $T$, consider the tuple $S+T=(|S| + |T|, |\!|S|\!| + |\!|T|\!|, \Vdash_{S + T})$, where $\mathtt{x} \Vdash_{S + T} x$ iff there are $\mathtt{y}: \mathtt{1} \to |S|$ and $y \in |\!|S|\!|$ such that $\mathtt{x}=\mathtt{i_0}\mathtt{y}$, $x=(0, y)$ and $\mathtt{y} \Vdash_S y$ or there are $\mathtt{z}: \mathtt{1} \to |T|$ and $z \in |\!|T|\!|$ such that $\mathtt{x}=\mathtt{i_1}\mathtt{z}$, $x=(1, z)$ and $\mathtt{z} \Vdash_S z$. It is easy to see that $S + T$ is a $\mathcal{C}$-set. Moreover, the set injection functions are induced by the injection maps in $\mathcal{C}$. Therefore, we can consider them as $\mathcal{C}$-set maps. The data of $S + T$ together with the injections is the binary coproduct of $S$ and $T$.
Finally, for the exponential $[S, T]$, consider the tuple 
\[
[S, T]=([|S|, |T|], \mathrm{Hom}_{\mathcal{C}\!-\!\mathbf{Set}}(S, T),  \Vdash_{[S, T]}),
\]
where $\mathtt{f} \Vdash_{[S, T]} f$ iff $\mathtt{x} \Vdash_S x$ implies $\mathtt{f} \cdot \mathtt{x} \Vdash_T f(x)$, for any $\mathtt{x}: \mathtt{1} \to |S|$ and any $x \in |\!|S|\!|$. It is easy to see that $[S, T]$ is a $\mathcal{C}$-set. Moreover, the usual evaluation map is induced by the evaluation map in $\mathcal{C}$.  Therefore, we can consider it as a $\mathcal{C}$-set map. The data of $[S, T]$ together with the evaluation map is the exponentiation of $T$ by $S$.

With a suitable notion of constructed sets in place, we can now extend the BHK interpretation to higher-order languages.

\begin{dfn}
Let $\mathcal{L}$ be a higher-order language and $\mathcal{C}$ be a $\mathrm{BCC}$. By a \emph{$\mathcal{C}$-interpretation} of $\mathcal{L}$, we mean a tuple 
\[
\mathfrak{I}=(\{M^{\sigma}\}_{\sigma \in T(\mathcal{L})}, \{f^{\mathfrak{I}}\}_{f \in F(\mathcal{L})}, \{P^{\mathfrak{I}}\}_{P \in P(\mathcal{L})}),
\]
where $M^{\sigma}$ is a $\mathcal{C}$-set such that $|\!|M^{\sigma}|\!| \neq \varnothing$ and $M^{\sigma \times \tau}=M^{\sigma} \times M^{\tau}$ and $M^{\sigma \to \tau}=[M^{\sigma}, M^{\tau}]$, for any types $\sigma$ and $\tau$, $f^{\mathfrak{I}}: M^{\sigma_1} \times  \cdots \times M^{\sigma_n} \to M^{\tau}$ is a $\mathcal{C}$-set map, for any $n$-ary function symbol $f(x_1^{\sigma_1}, \cdots, x_n^{\sigma_n})$ of type $\tau$ and $P^{\mathfrak{I}}=(|P|, R_P)$, where $|P|$ is an object in $\mathcal{C}$ and $R_P \subseteq \mathrm{Hom}_{\mathcal{C}}(\mathtt{1}, |P|) \times |\!|M^{\sigma_1}|\!| \times \cdots \times |\!|M^{\sigma_n}|\!|$ is a relation, for any $n$-ary predicate symbol $P(x_1^{\sigma_1}, \cdots, x_n^{\sigma_n})$.
\end{dfn}

The $\mathcal{C}$-interpretation $\mathfrak{I}$ assigns a $\mathcal{C}$-set map to any function symbol of the language. We can extend the assignment to all terms of the language. However, let us first extend the language $\mathcal{L}$ by adding a constant of type $\sigma$, for any $a \in |\!|M^{\sigma}|\!|$. This is a usual technique to have a name for any element in the domain. For simplicity, we denote this constant by $a$ itself and call the extended language $\mathcal{L}(\mathfrak{I})$. Now, we interpret any $\mathcal{L}(\mathfrak{I})$-term $t(\bar{x})=t(x^{\sigma_1}, \ldots, x^{\sigma_n})$ of type $\tau$ as a $\mathcal{C}$-set map $t(\bar{x})^{\mathfrak{I}}: M^{\sigma_1} \times \cdots \times M^{\sigma_n} \to M^{\tau}$ using the interpretation of the function symbols and composition. For the new constants, we interpret the constant $a \in |\!|M^{\sigma}|\!|$ as the map $1 \to M^{\sigma}$ that picks $a$ from $|\!|M^{\sigma}|\!|$. The next step is extending the $\mathcal{C}$-interpretation $\mathfrak{I}$ to all \emph{sentences} in the language $\mathcal{L}(\mathfrak{I})$. First, assign an object in $\mathcal{C}$ to any $\mathcal{L}(\mathfrak{I})$-\emph{formula} in the following way:
\begin{itemize}
\item[$\bullet$]
$|\top|=\mathtt{1}$, $|\bot|=\mathtt{0}$, $|t=s|=\mathtt{1}$, $|P(t_1, \ldots, t_n)|=|P|$,
\item[$\bullet$]
$|A \wedge B|=|A| \times |B|$,
$|A \vee B|=|A| + |B|$,
$|A \to B|= [|A|, |B|]$,
\item[$\bullet$]
$|\forall x^{\sigma} A(x^{\sigma})|=[|M^{\sigma}|, |A(x^{\sigma})|]$, 
$|\exists x^{\sigma} A(x^{\sigma})|=|M^{\sigma}| \times |A(x^{\sigma})|$.
\end{itemize}

\begin{rem}\label{UniquenessOfProofs}
Note that for any formula $A(\bar{x})$ and any two families of terms $\bar{t}$ and $\bar{s}$ of the appropriate types, we have $|A(\bar{t})| = |A(\bar{s})|$. This demonstrates a restriction in our approach, highlighting that our object-assignment is essentially propositional and does not account for the dependency of the formulas on the variables. To emphasize this fact, we use the fixed name $|A|$ to denote $|A(\bar{t})|$, regardless of the family of terms $\bar{t}$.
\end{rem}

Define the relation $\mathtt{r} \Vdash A$ between the map $ \mathtt{r} \in \mathrm{Hom}_{\mathcal{C}}(\mathtt{1}, |A|)$ and the \emph{$\mathcal{L}(\mathfrak{I})$-sentence} $A$ in the following way:
\begin{itemize}
\item
$!_{\mathtt{1}} \Vdash \top$, for the unique map $!_{\mathtt{1}}: \mathtt{1} \to \mathtt{1}$,
\item
$\mathtt{r} \nVdash \bot$, for any map $\mathtt{r} : \mathtt{1} \to \mathtt{0}$,
\item
$!_{\mathtt{1}} \Vdash t=s$ iff $t^{\mathfrak{I}}=s^{\mathfrak{I}}$, for the unique map $!_{\mathtt{1}}: \mathtt{1} \to \mathtt{1}$,
\item
$\mathtt{r} \Vdash P(t_1, \cdots, t_n)$ iff $(\mathtt{r}, t^{\mathfrak{I}}_1, \ldots, t^{\mathfrak{I}}_n) \in R_P$, for any $\mathtt{r}: \mathtt{1} \to |P|$, where the $\mathcal{C}$-set map $t^{\mathfrak{I}}_i: 1 \to M^{\sigma_i}$ is interpreted as an element in $|\!|M^{\sigma_i}|\!|$,
\item
$\mathtt{r} \Vdash A \wedge B$ iff $\mathtt{p_0r} \Vdash A$ and $\mathtt{p_1r} \Vdash B$, for any $\mathtt{r}: \mathtt{1} \to |A| \times |B|$, 
\item
$\mathtt{r} \Vdash A \vee B$ iff there exists $\mathtt{s}: \mathtt{1} \to |A|$ such that $\mathtt{r}=\mathtt{i_0s}$ and $\mathtt{s} \Vdash A$ or there exists $\mathtt{t}: \mathtt{1} \to |B|$ such that $\mathtt{r}=\mathtt{i_1t}$ and $\mathtt{t} \Vdash B$, for any $\mathtt{r}: \mathtt{1} \to |A| + |B|$, 
\item
$\mathtt{r} \Vdash A \to B$ iff for any $\mathtt{s}: \mathtt{1} \to |A|$, if $\mathtt{s} \Vdash A$ then $\mathtt{r} \cdot \mathtt{s} \Vdash B$,  for any $\mathtt{r}: \mathtt{1} \to [|A|, |B|]$,
\item
$\mathtt{r} \Vdash \forall x^{\sigma} A(x^{\sigma})$ iff for any $\mathtt{a}: \mathtt{1} \to |M^{\sigma}|$ and any $a \in |\!|M^{\sigma}|\!|$, if $\mathtt{a} \Vdash_{M^{\sigma}} a$ then $\mathtt{r} \cdot \mathtt{a} \Vdash A(a)$, for any $\mathtt{r}: \mathtt{1} \to [|M^{\sigma}|, |A|]$, 
\item
$\mathtt{r}  \Vdash \exists x^{\sigma} A(x^{\sigma})$ iff there is $a \in |\!|M^{\sigma}|\!|$ such that $\mathtt{p_0 r} \Vdash_{M^{\sigma}} a$ and $\mathtt{p_1r} \Vdash A(a)$, for any $\mathtt{r}: \mathtt{1} \to |M^{\sigma}| \times |A|$.
\end{itemize}
Let $\bar{x}$ be a \emph{finite} set of variables of types $\sigma_1, \ldots, \sigma_n$. For any finite set of formulas $\Gamma(\bar{x})$, define $|\Gamma|$ as $\prod_{\gamma \in \Gamma}|\gamma|$. We use an arbitrary order on the formulas in $\Gamma$, and since the product is commutative up to an isomorphism, the chosen order is actually immaterial. Then, by $\Gamma(\bar{x}) \Vdash_{\mathfrak{I}} A(\bar{x})$, we mean the existence of a map $\mathtt{r}: |M^{\sigma_1}| \times  \cdots \times |M^{\sigma_n}| \times |\Gamma| \to |A|$ such that $\mathtt{r} \circ \langle \langle \mathtt{a}_i \rangle_{i=1}^n, \mathtt{s} \rangle \Vdash A(\bar{a})$, for any $\{\mathtt{a}_i: \mathtt{1} \to |M^{\sigma_i}|\}_{i=1}^n$, any $\{a_i \in |\!|M^{\sigma_i}|\!|\}_{i=1}^n$ and any $\mathtt{s}: 1 \to |\Gamma|$ such that $\mathtt{a_i} \Vdash_{M^{\sigma_i}} a_i$, for any $1 \leq i \leq n$ and $\mathtt{s} \Vdash \bigwedge \Gamma(\bar{a})$. This $\mathtt{r}$ is called a \emph{realizer} for $\Gamma(\bar{x}) \Vdash_{\mathfrak{I}} A(\bar{x})$. When $\Gamma(\bar{x})$ is empty, $\mathtt{r}$ is called a realizer for $A(\bar{x})$. Again, we used an arbitrary order on the variables in $\bar{x}$ which is immaterial.
For an infinite set $\Gamma(\bar{x})$, by $\Gamma(\bar{x}) \Vdash_{\mathfrak{I}} A(\bar{x})$, we mean the existence of a finite set $\Delta(\bar{x}) \subseteq \Gamma(\bar{x})$ such that $\Delta(\bar{x}) \Vdash_{\mathfrak{I}} A(\bar{x})$. When the $\mathcal{C}$-interpretation $\mathfrak{I}$ is clear from the context, we drop the subscript $\mathfrak{I}$ in $\Vdash_{\mathfrak{I}}$. Finally, by $\Gamma(\bar{x}) \Vdash_{\mathcal{C}} A(\bar{x})$, we mean $\Gamma(\bar{x}) \Vdash_{\mathfrak{I}} A(\bar{x})$, for any $\mathcal{C}$-interpretation $\mathfrak{I}$.

\begin{rem}
The definition of $\Gamma(\bar{x}) \Vdash A(\bar{x})$ can be somewhat ambiguous since it is not immediately clear which variables are considered free in the formulas in $\Gamma(\bar{x}) \cup A(\bar{x})$. Naturally, we would choose $\bar{x}$ to be the set of all free variables in $\Gamma(\bar{x}) \cup A(\bar{x})$. However, there are cases where other variables need to be treated as free. For instance, in the formula $A(x) \wedge B(y)$, the set of free variables is $\{x, y\}$. However, the definition of realizability for this formula requires us to consider $A(x)$ with free variables $\{x, y\}$ rather than just $\{x\}$.
Fortunately, extending the set of free variables does not impact realizability. This is because $|\!|M^{\tau}|\!| \neq \varnothing$ for any type $\tau$, which implies that there is always a map $\mathtt{a}: \mathtt{1} \to |M^{\tau}|$. Thus, using $\mathtt{a}$, the existence of a realizer $\mathtt{r}: |M^{\sigma_1}| \times \cdots \times |M^{\sigma_n}| \times |\Gamma| \to |A|$ is equivalent to the existence of a realizer $\mathtt{s}: |M^{\tau}| \times |M^{\sigma_1}| \times \cdots \times |M^{\sigma_n}| \times |\Gamma| \to |A|$, when the variable of type $\tau$ is a dummy in $A(\bar{x})$.
\end{rem}

\begin{exam}
It is straightforward to verify that the examples of propositional realizability we previously discussed extend to the case where variables are present. For instance, the following examples hold: $[A(\bar{x}) \Vdash A(\bar{x})]$, $[\bot \Vdash A(\bar{x})]$, $[A(\bar{x}) \Vdash \top]$, $[\{A(\bar{x}), B(\bar{x})\} \Vdash A(\bar{x})]$, and $[B(\bar{x}) \Vdash A(\bar{x}) \vee B(\bar{x})]$.
\end{exam}

\begin{exam}
Let $\mathtt{p}: |M^{\sigma}| \times |M^{\sigma}| \times \mathtt{1} \times |A| \to |A|$ be the projection map on the last component. Then $\mathtt{p}$ realizes $(x^{\sigma}=y^{\sigma}), A(x^{\sigma}) \Vdash A(y^{\sigma})$. The reason is that for any $\mathtt{a}, \mathtt{b}: \mathtt{1} \to  |M^{\sigma}|$, any $a, b \in |\!|M^{\sigma}|\!|$, any $\mathtt{r}: \mathtt{1} \to \mathtt{1}$ and any $\mathtt{s}: \mathtt{1} \to |A|$, if $\mathtt{a} \Vdash_{M^{\sigma}} a$, $\mathtt{b} \Vdash_{M^{\sigma}} b$, $\mathtt{r} \Vdash a=b$ and $\mathtt{s} \Vdash A(a)$, we must have $\mathtt{r}=!_{\mathtt{1}}$ and hence $a=b$, by definition. Therefore, $\mathtt{p} \circ \langle \mathtt{a}, \mathtt{b}, \mathtt{r}, \mathtt{s} \rangle= \mathtt{s} \Vdash A(b)$.
\end{exam}

\begin{exam}
Let $t(\bar{x})=t(x_1^{\sigma_1}, \ldots, x_n^{\sigma_n})$ be an $\mathcal{L}(\mathfrak{I})$-term of type $\tau$. The interpretation of $ t(\bar{x})$ is the $\mathcal{C}$-set map $t^{\mathfrak{I}}: M^{\sigma_1} \times \cdots \times M^{\sigma_n} \to M^{\tau}$ and as it is a $\mathcal{C}$-set map, it is induced by a map $\mathtt{t}: |M^{\sigma_1}| \times \cdots \times |M^{\sigma_n}| \to |M^{\tau}|$ in $\mathcal{C}$. Then, for any $\{\mathtt{a}_i: \mathtt{1} \to |M^{\sigma_i}| \}_{i=1}^n$ and $\{a_i \in |\!|M^{\sigma_i}|\!|\}_{i=1}^n$ such that $\mathtt{a}_i \Vdash_{M^{\sigma_i}} a_i$, for any $1 \leq i \leq n $, it is easy to see that $ \mathtt{t}\langle \mathtt{a}_i \rangle_{i=1}^n \Vdash_{M^{\tau}} t^{\mathfrak{I}}(\bar{a})$. This claim holds for any such $\mathtt{t}$.
\end{exam}

\begin{exam}
Let $t(\bar{x})=t(x_1^{\sigma_1}, \ldots, x_n^{\sigma_n})$ be an $\mathcal{L}(\mathfrak{I})$-term of type $\tau$. We claim that $A(t(\bar{x}), \bar{x}) \Vdash \exists y^{\tau} A(y, \bar{x}) $. Using the previous example, let $\mathtt{t}: |M^{\sigma_1}| \times \cdots \times |M^{\sigma_n}| \to |M^{\tau}|$ be a map inducing $t^{\mathfrak{I}}$. Now, define the map $\mathtt{r}=\langle \mathtt{t} \mathtt{p}, \mathtt{q} \rangle : |M^{\sigma_1}| \times \cdots \times |M^{\sigma_n}| \times |A| \to |M^{\tau}| \times |A|$, where $\mathtt{p}$ is the projection on $|M^{\sigma_1}| \times \cdots \times |M^{\sigma_n}|$ and $\mathtt{q}$ is the projection on $|A|$. Then, $\mathtt{r}$
realizes $A(t(\bar{x}), \bar{x}) \Vdash \exists y^{\tau} A(y, \bar{x}) $. To prove that, let $\{\mathtt{a}_i: \mathtt{1} \to |M^{\sigma_i}| \}_{i=1}^n$, $\{a_i \in |\!|M^{\sigma_i}|\!|\}_{i=1}^n$, and $\mathtt{s}: \mathtt{1} \to |A|$ such that $\mathtt{a}_i \Vdash_{M^{\sigma_i}} a_i$, for any $1 \leq i \leq n $ and $\mathtt{s} \Vdash A(t(\bar{a}), \bar{a})$. As $ \mathtt{t}\langle \mathtt{a}_i \rangle_{i=1}^n \Vdash_{M^{\tau}} t^{\mathfrak{I}}(\bar{a})$, we reach
$\langle \mathtt{t}\langle \mathtt{a}_i \rangle_{i=1}^n, \mathtt{s} \rangle \Vdash \exists y^{\tau} A(y, \bar{a})$. As
$\mathtt{r} \circ \langle \langle \mathtt{a}_i \rangle_{i=1}^n, \mathtt{s} \rangle=\langle \mathtt{t}\langle \mathtt{a}_i \rangle_{i=1}^n, \mathtt{s} \rangle$, the proof is complete.
\end{exam}

\begin{exam}
If $A(\bar{x}) \Vdash B(\bar{x}, y)$ and $A(\bar{x})$ has no free variable $y$, then $A(\bar{x}) \Vdash \forall y^{\tau} B(\bar{x}, y)$. Let $\mathtt{r}: |M^{\sigma_1}| \times \cdots \times |M^{\sigma_n}| \times |M^{\tau}| \times |A| \to |B|$ be the realizer for $A(\bar{x}) \Vdash B(\bar{x}, y)$. Using the commutativity of the product, we can transform $\mathtt{r}$ to the map $\mathtt{s}: |M^{\sigma_1}| \times \cdots \times |M^{\sigma_n}| \times |A| \times |M^{\tau}| \to |B|$. Then, the map $\lambda_{|M^{\tau}|} \mathtt{s} : |M^{\sigma_1}| \times \cdots \times |M^{\sigma_n}| \times |A| \to [|M^{\tau}|, |B|]$ realizes $A(\bar{x}) \Vdash \forall y^{\tau} B(\bar{x}, y)$. The proof follows a similar method to the one used for implications, as detailed in Example \ref{RealizabilityImplication}.
\end{exam}

\begin{thm}(Soundness) \label{Soundness}
For any set $\Gamma(\bar{x}) \cup \{A(\bar{x})\}$ of formulas, if $\Gamma(\bar{x}) \vdash_{\mathsf{IQC}^{\omega}+\mathsf{EXT}} A(\bar{x})$ then $\Gamma(\bar{x}) \Vdash_{\mathcal{C}} A(\bar{x})$, for any $\mathrm{BCC}$ $\mathcal{C}$.
\end{thm}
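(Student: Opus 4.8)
The plan is to prove soundness by induction on the length of the derivation of $\Gamma(\bar{x}) \vdash_{\mathsf{IQC}^{\omega}+\mathsf{EXT}} A(\bar{x})$. The induction has two layers: first, the logical rules of multi-sorted intuitionistic first-order logic with equality; second, the two defining axiom schemas for $\mathbf{p}, \mathbf{p_0}, \mathbf{p_1}$ and for $\lambda$/application, plus the axiom $\mathsf{EXT}$. For the propositional logical rules (the rules for $\wedge, \vee, \to, \top, \bot$), the required realizers have already been exhibited in Theorem \ref{PropositionalRealizabilitySoundness} and the preceding examples; the only new ingredient is to carry along the extra context of free variables $\bar{x}$, which is routine since for each such rule the realizer map in $\mathcal{C}$ is simply precomposed with the projection discarding the $|M^{\sigma_i}|$ factors or reusing them unchanged. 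For the quantifier rules ($\forall$-introduction, $\forall$-elimination, $\exists$-introduction, $\exists$-elimination) and the equality rules (reflexivity, symmetry, transitivity, and substitution/replacement), the realizers are given in the examples immediately above the theorem: the substitution example handles replacement, the term-interpretation examples handle $\exists$-introduction, and the last two examples handle $\forall$-introduction and (dually) $\forall$-elimination via $\mathrm{ev}$. Thus each logical rule is discharged by composing and $\lambda$-abstracting the realizers supplied by the induction hypotheses, using the $\mathrm{BC}$-structure of $\mathcal{C}$ exactly as in the propositional case.

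Next I would verify the non-logical defining axioms. For the pairing axioms $\mathbf{p}(\mathbf{p_0}(z), \mathbf{p_1}(z)) = z$, $\mathbf{p_0}(\mathbf{p}(x,y)) = x$, $\mathbf{p_1}(\mathbf{p}(x,y)) = y$, note that $|t = s| = \mathtt{1}$ and that $!_{\mathtt{1}} \Vdash t = s$ iff $t^{\mathfrak{I}} = s^{\mathfrak{I}}$ as $\mathcal{C}$-set maps; since $M^{\sigma \times \tau}$ is defined as $M^{\sigma} \times M^{\tau}$ with the product $\mathcal{C}$-set structure, and $\mathbf{p}^{\mathfrak{I}}, \mathbf{p_0}^{\mathfrak{I}}, \mathbf{p_1}^{\mathfrak{I}}$ are interpreted by the pairing and projection maps of $\mathcal{C}$, these equalities hold on the nose by the universal property of the product, so the constant map picking $!_{\mathtt{1}}$ realizes each. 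For the $\beta$-axiom $(\lambda x^{\sigma}.t(x^{\sigma}))y^{\sigma} = t(y^{\sigma})$, the point is that $M^{\sigma \to \tau} = [M^{\sigma}, M^{\tau}]$ carries the exponential $\mathcal{C}$-set structure, and the term $\lambda x^{\sigma}.t(x^{\sigma})$ is interpreted using $\lambda_{|M^{\sigma}|}$ applied to the map inducing $t^{\mathfrak{I}}$; the identity $(\lambda f)\cdot a = f \circ a$ from Remark \ref{HighlevelExp} then gives the required equality of $\mathcal{C}$-set maps, whence again the constant $!_{\mathtt{1}}$-map realizes it. For $\mathsf{EXT}$, the clause $\mathtt{r} \Vdash \forall x^{\sigma}(fx = gx) \to (f = g)$ unwinds to: given a realizer forcing $fx^{\mathfrak{I}} = gx^{\mathfrak{I}}$ for every element of $|\!|M^{\sigma}|\!|$, we must conclude $f^{\mathfrak{I}} = g^{\mathfrak{I}}$ as elements of $|\!|[M^{\sigma}, M^{\tau}]|\!| = \mathrm{Hom}_{\mathcal{C}-\mathbf{Set}}(M^{\sigma}, M^{\tau})$, which are honest functions $|\!|M^{\sigma}|\!| \to |\!|M^{\tau}|\!|$; two such functions agreeing pointwise are equal, so the constant $!_{\mathtt{1}}$-map works. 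Here I would be careful that "$fx = gx$ realized for all $x$" really does deliver pointwise equality of the underlying functions and not merely of their $\mathcal{C}$-realizers — this is exactly where the definition of $\mathcal{C}$-set maps as functions (not mere spans) is used.

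The main obstacle I expect is bookkeeping rather than conceptual: managing the variable contexts uniformly across all rules, in particular ensuring that when a rule changes the free-variable context (e.g. $\forall$-introduction discharging a variable, or the substitution rule instantiating a term for a variable) the realizer transformations in $\mathcal{C}$ are well-defined and compose correctly, and that the nonemptiness condition $|\!|M^{\tau}|\!| \neq \varnothing$ (and hence the existence of a map $\mathtt{1} \to |M^{\tau}|$) is invoked wherever a dummy variable must be supplied, as flagged in the remark following the realizability definition. A secondary subtlety is the treatment of the extended language $\mathcal{L}(\mathfrak{I})$ with constants for domain elements: one must check that the soundness statement for open formulas over $\mathcal{L}$ reduces, via instantiation of the $\bar{x}$ by such constants, to the closed-sentence realizability relation, and that term interpretation commutes with substitution, i.e. $t(s/x)^{\mathfrak{I}}$ equals the composite of $t^{\mathfrak{I}}$ with $s^{\mathfrak{I}}$ — a standard substitution lemma that should be stated and used but whose proof is a straightforward induction on terms.
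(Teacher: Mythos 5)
Your proposal is correct and follows exactly the route the paper takes: induction on the length of the derivation, discharging the propositional rules via Theorem \ref{PropositionalRealizabilitySoundness} and the quantifier/equality rules via the preceding examples. The paper's own proof is only a sketch that leaves the remaining cases (the defining axioms for pairing and $\lambda$, and $\mathsf{EXT}$) as an exercise, and your treatment of those cases, including the observation that $\mathcal{C}$-set maps being honest functions is what makes $\mathsf{EXT}$ hold, fills in precisely the intended details.
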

\begin{proof}
The proof is by induction on the length of the proof of $\Gamma(\bar{x}) \vdash_{\mathsf{IQC^{\omega}+EXT}} A(\bar{x})$. We have already realized the propositional rules as shown in Theorem \ref{PropositionalRealizabilitySoundness}. For the quantifier rules, as well as the axioms for equality and $\mathsf{EXT}$, some specific instances have been addressed in the examples above. The remaining cases are left as an exercise for the reader.
\end{proof}

Since our notion of realizability is a \emph{special case} of the semantics for higher-order languages, we do not anticipate completeness. The following example illustrates a concrete counterexample.

\begin{exam}(\emph{Incompleteness of realizability})
For any sentence $A$, any formula $B(x^{\sigma})$ and any $\mathcal{C}$-interpretation $\mathfrak{I}$, we show that $(A \to \exists x^{\sigma} B(x)) \Vdash_{\mathfrak{I}} \exists x^{\sigma} (A \to B(x))$. Notice that $(A \to \exists x^{\sigma} B(x)) \nvdash \exists x^{\sigma} (A \to B(x))$ in intuitionistic logic. This provides a counterexample to the completeness of our realizability notion. To prove the realizability part, note that $|A \to \exists x^{\sigma} B(x)|=[|A|, |M^{\sigma}| \times |B|]$ and $| \exists x^{\sigma} (A \to B(x))|=|M^{\sigma}| \times [|A|, |B|]$.
There are two cases to consider. Either $\Vdash_{\mathfrak{I}} A$ or $\nVdash_{\mathfrak{I}} A$. In the first case, let $\mathtt{r} \Vdash_{\mathfrak{I}} A$. Then, define 
\[\small \mathtt{F_0}=\begin{tikzcd}
	{[|A|, |M^{\sigma}| \times |B|]} && {[|A|, |M^{\sigma}| \times |B|] \times |A|} && {|M^{\sigma}|}
	\arrow["{\langle \mathtt{id}, \mathtt{r} \circ ! \rangle}", from=1-1, to=1-3]
	\arrow["{\mathtt{p_0 \circ ev}}", from=1-3, to=1-5]
\end{tikzcd}\]
\[\small \mathtt{F_1}=\begin{tikzcd}
	{[|A|, |M^{\sigma}| \times |B|] \times |A|} && {[|A|, |M^{\sigma}| \times |B|] \times |A|} && {|B|}
	\arrow["{\mathtt{id} \times (\mathtt{r} \circ !)}", from=1-1, to=1-3]
	\arrow["{\mathtt{p_1 \circ ev}}", from=1-3, to=1-5]
\end{tikzcd}\]
We claim that $\mathtt{F}=\langle \mathtt{F_0}, \lambda_{|A|}\mathtt{F_1} \rangle $ realizes  $(A \to \exists x^{\sigma} B(x)) \Vdash \exists x^{\sigma} (A \to B(x))$. Let $\mathtt{s} \Vdash A \to \exists x^{\sigma} B(x)$. Then, $\mathtt{s} \cdot \mathtt{r} \Vdash \exists x^{\sigma} B(x)$ which implies the existence of $a \in |\!|M^{\sigma}|\!|$ such that $\mathtt{p_0}(\mathtt{s} \cdot \mathtt{r}) \Vdash_{M^{\sigma}} a$ and $\mathtt{p_1}(\mathtt{s} \cdot \mathtt{r}) \Vdash B(a)$. Now, we have to show that $\mathtt{F} \circ \mathtt{s} \Vdash \exists x^{\sigma}(A \to B(x))$. First, notice that $\mathtt{p_0}(\mathtt{F} \circ \mathtt{s})=\mathtt{F_0} \circ \mathtt{s}=\mathtt{p_0}(\mathtt{s} \cdot \mathtt{r}) \Vdash_{M^{\sigma}} a$. Second, we want to show that $\mathtt{p_1}(\mathtt{F} \circ \mathtt{s}) \Vdash A \to B(a)$. Let $\mathtt{t} \Vdash A$. Then, $\mathtt{p_1}(\mathtt{F} \circ \mathtt{s}) \cdot \mathtt{t}=\mathtt{p_1}(\mathtt{s} \cdot \mathtt{r}) \Vdash B(a)$ which completes the proof.

In the second case, we assume $\nVdash_{\mathfrak{I}} A$. As $|\!|M^{\sigma}|\!| \neq \varnothing$, we can pick a map $\mathtt{a}: \mathtt{1} \to |M^{\sigma}|$ and an element $a \in |\!|M^{\sigma}|\!|$ such that $\mathtt{a} \Vdash_{M^{\sigma}} a$. Define $\mathtt{F_0}=\mathtt{a} \circ !: [|A|, |M^{\sigma}| \times |B|] \to |M^{\sigma}|$ and $\mathtt{F_1}=\mathtt{p_1 \circ ev}: [|A|, |M^{\sigma}| \times |B|] \times |A| \to |B|$. We claim that $\mathtt{F}=\langle \mathtt{F_0}, \lambda_{|A|}\mathtt{F_1} \rangle $ realizes  $(A \to \exists x^{\sigma} B(x)) \Vdash \exists x^{\sigma} (A \to B(x))$. Let $\mathtt{s} \Vdash A \to \exists x^{\sigma} B(x)$. First, notice that $\mathtt{p_0}(\mathtt{F} \circ \mathtt{s})=\mathtt{F_0} \circ \mathtt{s}=\mathtt{a} \Vdash_{M^{\sigma}} a$. Second, we have $\mathtt{p_1}(\mathtt{F} \circ \mathtt{s}) \Vdash A \to B(a)$ as there is no $\mathtt{t} \Vdash A$. 
\end{exam}

\subsection{Projective $\mathcal{C}$-sets and the Axiom of Choice}
In this subsection, we investigate the situations where the axiom of choice is realizable. To do this, we first need to introduce projective $\mathcal{C}$-sets.
\begin{dfn}
A $\mathcal{C}$-set $S=(|S|, |\!|S|\!|, \Vdash_S)$ is called \emph{projective} if $\mathtt{x_0} \Vdash_S x$ and $\mathtt{x}_1 \Vdash_S x$ implies $\mathtt{x_0}=\mathtt{x_1}$, for any $\mathtt{x_0}, \mathtt{x_1}: \mathtt{1} \to |S|$ and any $x \in |\!|S|\!|$.
\end{dfn}

\begin{thm}\label{AxiomOfChoice}
Let $\mathfrak{I}$ be a $\mathcal{C}$-interpretation that maps the type $\sigma$ to a projective $\mathcal{C}$-set. Then, the sentence 
$\forall x^\sigma \exists y^\tau A(x, y) \to \exists f^{\sigma \to \tau} \forall x^\sigma A(x, fx)$ is realizable.
\end{thm}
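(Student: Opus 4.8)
The goal is to produce a single realizer map in $\mathcal{C}$ for the implication $\forall x^\sigma \exists y^\tau A(x,y) \to \exists f^{\sigma \to \tau} \forall x^\sigma A(x, fx)$. By the clause for implication, it suffices to build a map $\mathtt{F} : |A| \to |M^{\sigma \to \tau}| \times |A'|$, where I write $|A| = [\,|M^\sigma|, |M^\tau| \times |A(x,y)|\,]$ for the antecedent and the succedent has interpretation $[\,|M^\sigma|, |M^\tau|\,] \times [\,|M^\sigma|, |A(x, fx)|\,]$ (using Remark \ref{UniquenessOfProofs} to suppress the dependence of $|A|$ on the terms, so $|A(x,y)| = |A(x,fx)| =: |A_0|$). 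So concretely I must exhibit, from a realizer $\mathtt{r} : \mathtt{1} \to [\,|M^\sigma|, |M^\tau| \times |A_0|\,]$ of $\forall x \exists y\, A(x,y)$, a realizer of the conclusion — and do this uniformly, i.e.\ as a single morphism in $\mathcal{C}$ applied via composition.

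\textbf{Key steps.} First I would unpack what a realizer $\mathtt{r}$ of $\forall x^\sigma \exists y^\tau A(x,y)$ gives: for every $\mathtt{a} : \mathtt{1} \to |M^\sigma|$ with $\mathtt{a} \Vdash_{M^\sigma} a$, we have $\mathtt{r} \cdot \mathtt{a} \Vdash \exists y^\tau A(a, y)$, hence there is $b \in |\!|M^\tau|\!|$ with $\mathtt{p_0}(\mathtt{r} \cdot \mathtt{a}) \Vdash_{M^\tau} b$ and $\mathtt{p_1}(\mathtt{r} \cdot \mathtt{a}) \Vdash A(a, b)$. Second, I extract the candidate function: define $\mathtt{g} := \mathtt{p_0} \circ \mathtt{r} : |M^\sigma| \to |M^\tau|$ (after currying/uncurrying via the $\mathrm{CC}$-structure), and observe that $\mathtt{g}$ realizes a $\mathcal{C}$-set map $g : M^\sigma \to M^\tau$ — this is exactly where I use that $M^\sigma$ is \emph{projective}: since each $a$ has a \emph{unique} realizer $\mathtt{a}$, the assignment $a \mapsto b$ (with $b$ the element realized by $\mathtt{p_0}(\mathtt{r}\cdot\mathtt{a}) = \mathtt{g}\cdot\mathtt{a}$) is well-defined as a function $|\!|M^\sigma|\!| \to |\!|M^\tau|\!|$, and $\mathtt{g}$ induces it, so $g \in |\!|M^{\sigma\to\tau}|\!|$. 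Third, I note that the first component of my realizer $\mathtt{F}$ should be the map $\mathtt{1} \to [\,|M^\sigma|, |M^\tau|\,]$ picking out $g$, which is just $\lambda_{|M^\sigma|}$ applied to a suitable rearrangement of $\mathtt{p_0}\circ\mathtt{ev}$, precomposed with the input; and the second component, realizing $\forall x^\sigma A(x, gx)$, should, on input $\mathtt{a}$, return $\mathtt{p_1}(\mathtt{r}\cdot\mathtt{a})$, which realizes $A(a, b) = A(a, g(a))$ precisely because $g(a) = b$ by construction. Fourth, I would assemble $\mathtt{F} = \langle \mathtt{F_0}, \mathtt{F_1} \rangle$ by writing $\mathtt{F_0}$ and $\mathtt{F_1}$ as explicit composites of $\mathtt{ev}$, $\mathtt{p_0}$, $\mathtt{p_1}$, $\langle-,-\rangle$ and $\lambda$ (following the style of the composites $\mathtt{F_0}, \mathtt{F_1}$ in the incompleteness example), and verify $\mathtt{F}$ realizes the implication by chasing the definitions: given $\mathtt{s} \Vdash \forall x\exists y\,A(x,y)$, show $\mathtt{p_0}(\mathtt{F}\circ\mathtt{s})$ realizes the function $g_{\mathtt{s}}$ determined by $\mathtt{s}$ and that for any $\mathtt{a}\Vdash_{M^\sigma} a$, $(\mathtt{p_1}(\mathtt{F}\circ\mathtt{s}))\cdot\mathtt{a} = \mathtt{p_1}(\mathtt{s}\cdot\mathtt{a}) \Vdash A(a, g_{\mathtt{s}}(a))$.

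\textbf{Main obstacle.} The routine part is the diagram-chasing to write $\mathtt{F_0}, \mathtt{F_1}$ correctly and check the equalities $\mathtt{p_0}(\mathtt{F}\circ\mathtt{s}) = \ldots$ via the $\beta$-rules $(\lambda f)\cdot a = f\circ a$ etc.\ from Remark \ref{HighlevelExp}. The genuinely delicate point — and the one I expect to be the crux — is the well-definedness of the induced $\mathcal{C}$-set map $g : M^\sigma \to M^\tau$: without projectivity of $M^\sigma$, the element $b$ realized by $\mathtt{p_0}(\mathtt{r}\cdot\mathtt{a})$ could depend on the choice of realizer $\mathtt{a}$ of $a$ and not just on $a$, so $\mathtt{g}$ would not induce a function on $|\!|M^\sigma|\!|$ at all, and the first component of the would-be realizer would fail to land in $|\!|M^{\sigma\to\tau}|\!|$. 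Projectivity is precisely the hypothesis that collapses this ambiguity, matching the informal discussion before Subsection \ref{SecArithmetic} that $\mathsf{AC}$ is validated when ``each element has a unique construction.'' I should also be slightly careful that $\mathtt{r}\cdot\mathtt{a}$ realizing $\exists y\, A(a,y)$ only \emph{asserts existence} of such a $b$; to get a function I take the $b$ to be \emph{the} element realized by $\mathtt{p_0}(\mathtt{r}\cdot\mathtt{a})$, which is itself unique only up to the realizability relation of $M^\tau$ — but since a $\mathcal{C}$-set map is defined by the requirement $\mathtt{x}\Vdash x \Rightarrow \mathtt{f}\mathtt{x}\Vdash f(x)$ rather than by uniqueness on the target, this causes no trouble, and I would remark on this to keep the argument clean.
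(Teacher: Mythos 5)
Your proposal is correct and follows essentially the same route as the paper's proof: the realizer is assembled as $\langle \lambda_{|M^\sigma|}(\mathtt{p_0}\circ\mathtt{ev}), \lambda_{|M^\sigma|}(\mathtt{p_1}\circ\mathtt{ev})\rangle$, and projectivity of $M^\sigma$ is used exactly where you place it, namely to make the assignment $a \mapsto b$ independent of the realizer of $a$ so that it defines an element of $|\!|M^{\sigma\to\tau}|\!|$ realized by the first component. Your closing remark about $b$ being determined only up to the realizability relation of $M^\tau$ is a fair point of care; the paper handles it the same way, by choosing one such $b$ for each $a$.
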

\begin{proof}
Note that $|\forall x^\sigma \exists y^\tau A(x, y)|=[|M^\sigma|, |M^\tau| \times |A|]$ and $|\exists f^{\sigma \to \tau} \forall x^\sigma A(x, fx)|=|M^{\sigma \to \tau}| \times [|M^\sigma|, |A|]$. Consider the maps $\mathtt{F_0}$ and $\mathtt{F_1}$ defined as below:
\[\mathtt{F_0}=\begin{tikzcd}[ampersand replacement=\&]
	{[|M^\sigma|, |M^\tau| \times |A|] \times |M^{\sigma}|} \&\& {|M^\tau| \times |A|} \&\& {|M^{\tau}|}
	\arrow["{\mathtt{ev}}", from=1-1, to=1-3]
	\arrow["{\mathtt{p_0}}", from=1-3, to=1-5]
\end{tikzcd}\]
\[\mathtt{F_1}=\begin{tikzcd}[ampersand replacement=\&]
	{[|M^\sigma|, |M^\tau| \times |A|] \times |M^{\sigma}|} \&\& {|M^\tau| \times |A|} \&\& {|A|}
	\arrow["{\mathtt{ev}}", from=1-1, to=1-3]
	\arrow["{\mathtt{p_1}}", from=1-3, to=1-5]
\end{tikzcd}\]
We claim that the map 
\[
\mathtt{F}=\langle \lambda_{|M^\sigma|} \mathtt{F_0}, \lambda_{|M^\sigma|} \mathtt{F_1} \rangle : [|M^\sigma|, |M^\tau| \times |A|] \to |M^{\sigma \to \tau}| \times [|M^{\sigma}|, |A|]
\]
realizes 
\[
\forall x^\sigma \exists y^\tau A(x, y) \Vdash \exists f^{\sigma \to \tau} \forall x^\sigma A(x, fx).
\]
Let $\mathtt{r} \Vdash \forall x^\sigma \exists y^\tau A(x, y)$. As $M^{\sigma}$ is projective, for any $a \in |\!|M^{\sigma}|\!|$, there is a unique $\mathtt{a}_0: 1 \to 
|M^{\sigma}|$ such that $\mathtt{a}_0 \Vdash_{M^{\sigma}} a$. Hence, we have $\mathtt{r} \cdot \mathtt{a}_0 \Vdash \exists y^{\tau} A(a, y)$ which implies the existence of $b \in |\!|M^{\tau}|\!|$ such that $\mathtt{p_0}(\mathtt{r} \cdot \mathtt{a}_0) \Vdash_{M^{\tau}} b$ and $\mathtt{p_1}(\mathtt{r} \cdot \mathtt{a}_0) \Vdash A(a, b)$. For any $a \in |\!|M^{\sigma}|\!|$, choose such a $b \in |\!|M^{\tau}|\!|$ and define $g: |\!|M^{\sigma}|\!| \to |\!|M^{\tau}|\!|$ by setting $g(a)=b$. We claim that $\mathtt{p_0(F \circ r)} \Vdash_{[M^{\sigma}, M^{\tau}]} g$. For the proof, let $\mathtt{a} \Vdash_{M^{\sigma}} a$. By the projectivity of $M^{\sigma}$, we have $\mathtt{a}=\mathtt{a}_0$. As $\mathtt{p_0(F \circ r)} \cdot \mathtt{a}=((\lambda_{|M^\sigma|} \mathtt{F_0}) \circ \mathtt{r}) \cdot \mathtt{a}_0=\mathtt{F_0} \circ \langle \mathtt{r}, \mathtt{a_0} \rangle=\mathtt{p_0}(\mathtt{r} \cdot \mathtt{a}_0)$ and $\mathtt{p_0}(\mathtt{r} \cdot \mathtt{a}_0) \Vdash_{M^{\tau}} b=g(a)$, we reach $\mathtt{p_0(F \circ r)} \cdot \mathtt{a} \Vdash_{M^{\tau}} g(a)$. Hence, $\mathtt{p_0(F \circ r)} \Vdash_{[M^{\sigma}, M^{\tau}]} g$. Similarly, we can prove that $\mathtt{p_1(F \circ r)} \Vdash \forall x^\sigma A(x, gx)$. Therefore, $\mathtt{F} \circ \mathtt{r} \Vdash \exists f^{\sigma \to \tau} \forall x^\sigma A(x, fx)$.
\end{proof}

Unfortunately, the exponentiation of projective $\mathcal{C}$-sets is not necessarily projective. Therefore, even if the $\mathcal{C}$-interpretation maps all basic types to projective $\mathcal{C}$-sets, this does not guarantee that all $M^{\sigma}$'s are projective. However, in certain special cases, we can ensure that all $M^{\sigma}$ are projective, which in turn implies that the axiom of choice is realizable. To explain this, we need two definitions.

\begin{dfn}
A category $\mathcal{C}$ with a terminal object $1$ is called \emph{well-pointed} if, for any two maps $f, g: A \to B$, having $f \circ h = g \circ h$ for any $h : 1 \to A$ implies $f = g$.
\end{dfn}

\begin{exam}
The categories $\mathbf{Set}$ and $\mathbf{Top}$ are well-pointed while $\mathbf{Set}^{\mathbf{2}}$ is not. For the latter claim, consider the inclusion function $i: \{0\} \to \{0, 1\}$ as an object in $\mathbf{Set}^{\mathbf{2}}$ and the map $\alpha: i \to i$:
\[\begin{tikzcd}[ampersand replacement=\&]
	{\{0, 1\}} \&\& {\{0, 1\}} \\
	\\
	{\{0\}} \&\& {\{0\}}
	\arrow["{\alpha_1}", from=1-1, to=1-3]
	\arrow["i", from=3-1, to=1-1]
	\arrow["{\alpha_0}"', from=3-1, to=3-3]
	\arrow["i"', from=3-3, to=1-3]
\end{tikzcd}\]
defined by $\alpha_0=id_{\{0\}}$ and $\alpha_1$ as the constant zero function. It is easy to see that the only map from the terminal object $1=\{*\} \to \{*\}$ to $i$ is:
\[\begin{tikzcd}[ampersand replacement=\&]
	{\{*\}} \&\& {\{0, 1\}} \\
	\\
	{\{*\}} \&\& {\{0\}}
	\arrow["{\beta_1}", from=1-1, to=1-3]
	\arrow["{id_{\{*\}}}", from=3-1, to=1-1]
	\arrow["{\beta_0}"', from=3-1, to=3-3]
	\arrow["i"', from=3-3, to=1-3]
\end{tikzcd}\]
where $\beta_0(*)=\beta_1(*)=0$. As $\alpha \circ \beta =id_i \circ \beta$ and $\alpha \neq id_i$, the category $\mathbf{Set}^{\mathbf{2}}$ is not well-pointed.
\end{exam}

\begin{rem}\label{AnotherWellpointed}
Let $\mathcal{C}$ be a well-pointed $\mathrm{CCC}$ and $f, g: 1 \to [A,B]$ be two maps. Then, having $f \cdot h = g \cdot h$, for any $h : 1 \to A$, implies $f = g$. To prove this claim, as any map $k: 1 \to [A, B]$ is in the form $\lambda_{A}k'$, for some $k': A \to B$, it is enough to pick $f', g': A \to B$ such that $f=\lambda_{A}f'$ and $g=\lambda_{A}g'$ and show that $f'=g'$. Now, let $h : 1 \to A$. Then, $f \cdot h=(\lambda_{A}f') \cdot h=f' \circ h$. Similarly, $g \cdot h=g' \circ h$. As $f \cdot h = g \cdot h$, we have $f' \circ h = g' \circ h$, for any $h: 1 \to A$. As $\mathcal{C}$ is well-pointed, $f'=g'$ which implies $f=g$.
\end{rem}

\begin{dfn}
A $\mathcal{C}$-set $S=(|S|, |\!|S|\!|, \Vdash_S)$ is called \emph{total} if for any $\mathtt{x}: \mathtt{1} \to |S|$ there is $x \in |\!|S|\!|$ such that $\mathtt{x} \Vdash_S x$.  
\end{dfn}

\begin{lem}\label{TotalProjectiveLemma}
Let $S$ and $T$ be $\mathcal{C}$-sets. Then:
\begin{description}
\item[$(i)$]
If $S$ and $T$ are projective (resp. total), then $S \times T$ is also projective (resp. total).
    \item[$(ii)$]
If $\mathcal{C}$ is well-pointed, $S$ is total and $T$ is projective, $[S, T]$ is projective.  
  \item[$(iii)$] 
If $S$ is projective and $T$ is total, $[S, T]$ is total.
\end{description}
\end{lem}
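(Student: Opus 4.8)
The statement is Lemma \ref{TotalProjectiveLemma}, which records three closure properties of projective and total $\mathcal{C}$-sets. All three parts are direct unwinding of the definitions together with the explicit descriptions of products and exponentials in $\mathcal{C}\!\!-\!\mathbf{Set}$ given just before the lemma, so the plan is simply to do each part in turn.

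For part $(i)$, recall that $|S \times T| = |S| \times |T|$, $|\!|S \times T|\!| = |\!|S|\!| \times |\!|T|\!|$, and $\mathtt{x} \Vdash_{S \times T} x$ iff $\mathtt{p_0 x} \Vdash_S p_0(x)$ and $\mathtt{p_1 x} \Vdash_T p_1(x)$. For projectivity, suppose $\mathtt{x_0} \Vdash_{S\times T} x$ and $\mathtt{x_1} \Vdash_{S \times T} x$. Then $\mathtt{p_0 x_0} \Vdash_S p_0(x)$ and $\mathtt{p_0 x_1} \Vdash_S p_0(x)$, so projectivity of $S$ gives $\mathtt{p_0 x_0} = \mathtt{p_0 x_1}$; similarly $\mathtt{p_1 x_0} = \mathtt{p_1 x_1}$; hence by the uniqueness clause in the definition of product (Remark \ref{equivalentUniquenessProduct}), $\mathtt{x_0} = \langle \mathtt{p_0 x_0}, \mathtt{p_1 x_0}\rangle = \langle \mathtt{p_0 x_1}, \mathtt{p_1 x_1}\rangle = \mathtt{x_1}$. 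For totality, given $\mathtt{x}: \mathtt{1} \to |S| \times |T|$, totality of $S$ and $T$ applied to $\mathtt{p_0 x}$ and $\mathtt{p_1 x}$ yields $x_S \in |\!|S|\!|$ and $x_T \in |\!|T|\!|$ realized by these components, and then $(x_S, x_T)$ is realized by $\mathtt{x}$.

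For part $(iii)$, $|[S,T]| = [|S|, |T|]$, $|\!|[S,T]|\!| = \mathrm{Hom}_{\mathcal{C}\!-\!\mathbf{Set}}(S, T)$, and $\mathtt{f} \Vdash_{[S,T]} f$ iff $\mathtt{x} \Vdash_S x$ implies $\mathtt{f}\cdot\mathtt{x} \Vdash_T f(x)$. Given $\mathtt{f}: \mathtt{1} \to [|S|,|T|]$, I would define a candidate function $f: |\!|S|\!| \to |\!|T|\!|$ as follows: for $x \in |\!|S|\!|$, pick some $\mathtt{x} \Vdash_S x$ (which exists since $S$ is a $\mathcal{C}$-set), and use totality of $T$ to get an element of $|\!|T|\!|$ realized by $\mathtt{f}\cdot\mathtt{x}$; set $f(x)$ to be this element. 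One must then check $f$ is a genuine $\mathcal{C}$-set map realized by $\mathtt{f}$, i.e. that for \emph{every} $\mathtt{x'} \Vdash_S x$ we have $\mathtt{f}\cdot\mathtt{x'} \Vdash_T f(x)$. Here projectivity of $S$ is exactly what is needed: $\mathtt{x'} = \mathtt{x}$, so $\mathtt{f}\cdot\mathtt{x'} = \mathtt{f}\cdot\mathtt{x} \Vdash_T f(x)$ by construction. Thus $\mathtt{f} \Vdash_{[S,T]} f$ and $[S,T]$ is total.

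Part $(ii)$ is the one requiring the extra hypotheses, and it is the main point of interest. Suppose $\mathtt{f_0} \Vdash_{[S,T]} f$ and $\mathtt{f_1} \Vdash_{[S,T]} f$, with $\mathtt{f_0}, \mathtt{f_1}: \mathtt{1} \to [|S|, |T|]$; I want $\mathtt{f_0} = \mathtt{f_1}$. By Remark \ref{AnotherWellpointed} (applicable since $\mathcal{C}$ is a well-pointed $\mathrm{CCC}$), it suffices to show $\mathtt{f_0}\cdot\mathtt{h} = \mathtt{f_1}\cdot\mathtt{h}$ for every $\mathtt{h}: \mathtt{1} \to |S|$. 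Fix such an $\mathtt{h}$; by totality of $S$ there is $x \in |\!|S|\!|$ with $\mathtt{h} \Vdash_S x$. Then $\mathtt{f_0}\cdot\mathtt{h} \Vdash_T f(x)$ and $\mathtt{f_1}\cdot\mathtt{h} \Vdash_T f(x)$, so projectivity of $T$ forces $\mathtt{f_0}\cdot\mathtt{h} = \mathtt{f_1}\cdot\mathtt{h}$. Hence $\mathtt{f_0} = \mathtt{f_1}$ and $[S,T]$ is projective. The only real subtlety — and the reason this is stated as a separate lemma rather than folded into a remark — is identifying precisely why each hypothesis is indispensable: totality of $S$ is what supplies, for an arbitrary point $\mathtt{h}$ of $|S|$, an element of $|\!|S|\!|$ it realizes, and well-pointedness (via Remark \ref{AnotherWellpointed}) is what lets us deduce equality of the two global elements $\mathtt{f_0}, \mathtt{f_1}$ of the exponential from their agreement on all points of $|S|$. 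I expect no genuine obstacle; the work is purely bookkeeping with the definitions, and the cleanest presentation simply does $(i)$, then $(iii)$, then $(ii)$, citing Remarks \ref{equivalentUniquenessProduct} and \ref{AnotherWellpointed} where needed.
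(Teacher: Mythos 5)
Your proof is correct and follows essentially the same route as the paper's: part $(ii)$ via Remark \ref{AnotherWellpointed} combined with totality of $S$ and projectivity of $T$, and part $(iii)$ by choosing a value in $|\!|T|\!|$ realized by $\mathtt{f}\cdot\mathtt{x}$ and using projectivity of $S$ to verify the realization condition. The only difference is cosmetic: you spell out part $(i)$ (which the paper dismisses as easy) and reorder the parts.
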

\begin{proof}
$(i)$ is easy to prove. For $(ii)$, let $\mathtt{f} \Vdash_{[S, T]} h$, $\mathtt{g} \Vdash_{[S, T]} h$, and $\mathtt{x}: \mathtt{1} \to |S|$ be an arbitrary map. As $S$ is total, there is an element $x \in |\!|S|\!|$ such that $\mathtt{x} \Vdash_S x$. Hence, we have $\mathtt{f}\cdot \mathtt{x} \Vdash_T h(x)$ and $\mathtt{g}\cdot \mathtt{x} \Vdash_T h(x)$. As $T$ is projective, we have $\mathtt{f}\cdot \mathtt{x}=\mathtt{g}\cdot \mathtt{x}$.  As $\mathtt{x}: \mathtt{1} \to |S|$ is arbitrary and $\mathcal{C}$ is well-pointed, we have $\mathtt{f}=\mathtt{g}$, by Remark \ref{AnotherWellpointed}. For $(iii)$, let $\mathtt{f}: \mathtt{1} \to [|S|, |T|]$. We define a function $f: |\!|S|\!| \to |\!|T|\!|$ such that $\mathtt{f} \Vdash_{[S, T]} f$. As $S$ is projective, for any $x \in |\!|S|\!|$, there is a unique $\mathtt{x}: \mathtt{1} \to |S|$ such that $\mathtt{x} \Vdash_S x$. Hence, $\mathtt{f} \cdot \mathtt{x}: \mathtt{1} \to |T|$. As $T$ is total, there is an element $y \in |\!|T|\!|$ such that $\mathtt{f} \cdot \mathtt{x} \Vdash_T y$. Choose such a $y$ and define $f(x)=y$. To prove $\mathtt{f} \Vdash_{[S, T]} f$, let $\mathtt{z} \Vdash_S x$. As $S$ is projective, $\mathtt{z}=\mathtt{x}$. Then, by definition, $f(x)$ is chosen such that $\mathtt{f} \cdot \mathtt{z}=\mathtt{f} \cdot \mathtt{x} \Vdash_T f(x)$. 
\end{proof}

Using Lemma \ref{TotalProjectiveLemma}, if $\mathcal{C}$ is well-pointed and the $\mathcal{C}$-interpretation $\mathfrak{I}$ maps all basic types to total projective $\mathcal{C}$-sets, then $\mathfrak{I}$ maps all types to total projective $\mathcal{C}$-sets. Consequently, the axiom of choice is realizable for any such $\mathcal{C}$-interpretation. We will use this fact later in Section \ref{SecRealizabilityForArith}.

\section{Realizability for Arithmetical Theories} \label{SecRealizabilityForArith}

In this section, we intend to apply the realizability machinery to the arithmetical theories introduced in Section \ref{SectionTheories}. To achieve this, we need two additional components. First, we must introduce two new $\mathrm{BCC}$'s to capture the worlds of computable and continuous constructions that we informally presented in Section \ref{SectionTheories}. Second, we need a categorical way of representing the set of natural numbers using a categorical gadget called the \emph{natural numbers object}. These two components will be covered in Subsections \ref{SubsecRecConWorlds} and \ref{SubsecNNO}, respectively. Then, in Subsection \ref{SubsecRealForArithmetic}, we will use the previously introduced $\mathrm{BCC}$'s to prove some arithmetical consistencies and to extract information from the arithmetical proofs.

\subsection{A Recursive and a Continuous World} \label{SubsecRecConWorlds}

In this subsection, we will introduce two categories to formalize the previously mentioned computable and continuous worlds. For the latter, the canonical candidate is the category $\mathbf{Top}$. However, $\mathbf{Top}$ does not have all the exponentials, making it somewhat cumbersome to work with. To achieve a $\mathrm{BCC}$ with a continuous nature similar to that of $\mathbf{Top}$, we need to extend topological spaces by incorporating their own local equality relations.

\begin{dfn}
An \emph{equilogical space} is a pair $(X, \sim_X)$, where $X$ is a topological space and $\sim_X$ is an equivalence relation on $X$. One can read the equilogical space $(X, \sim_X)$ as a proposition whose finitary proofs are stored in $X$ and considered up to the equivalence $\sim_X$.
An \emph{equilogical map} $f: (X, \sim_X) \to (Y, \sim_Y)$ is a continuous function $f:X \to Y$ preserving the equivalence relation, i.e., if $x \sim_X y$ then $f(x) \sim_Y f(y)$, for any $x, y \in X$.
Two equilogical maps $f, g: (X, \sim_X) \to (Y, \sim_Y)$ are considered equivalent, denoted by $f \sim g$, if $f(x) \sim_Y g(x)$, for any $x \in X$. The relation $\sim$ on the equilogical maps from $(X, \sim_X)$ to $(Y, \sim_Y)$ is an equivalence relation, respecting the composition of functions, i.e., if $f_1 \sim f_2$ and $g_1 \sim g_2$, then $f_1 \circ g_1 \sim f_2 \circ g_2$, for any $f_1, f_2: (Y, \sim_Y) \to (Z, \sim_Z)$ and $g_1, g_2: (X, \sim_X) \to (Y, \sim_Y)$.
The collection of equilogical spaces and equilogical maps (up to the equivalence) with the evident composition and identity constitutes a category, denoted by $\mathbf{Equ}$. 
\end{dfn}

\begin{phil}
An equilogical space should be understood as a constructive space, where both the set of elements and their equality are primitive notions. In constructive mathematics, every mathematical entity is a mental construction, and thus, there is no universal notion of equality applicable everywhere. One must define the corresponding equality locally for each constructed entity. Consequently, an equilogical map is simply a constructive continuous function that preserves the local identities as expected. The equivalence between equilogical maps serves as an \emph{extensionality} criterion, ensuring that a map is uniquely determined by its values.
\end{phil}

The category $\mathbf{Equ}$ is a $\mathrm{BCC}$. The terminal and the initial objects are $(\{*\}, =_{\{*\}})$ and $(\varnothing, =_{\varnothing})$, respectively. 
The product of $(X, \sim_X)$ and $(Y, \sim_Y)$ is the pair $(X \times Y, \sim_{X \times Y})$, equipped with the usual projections, where $X \times 
Y$ is the cartesian product with the product topology, and $(x_1, y_1) \sim_{X \times Y} (x_2, y_2)$ iff $x_1 \sim_X x_2$ and $y_1 \sim_Y y_2$. Note that the projection maps are continuous and respect the equivalence relation, thus they live in $\mathbf{Equ}$. The coproduct of $(X, \sim_X)$ and $(Y, \sim_Y)$ is $(X+Y, \sim_{X+Y})$, equipped with the usual injections, where $X+Y$ is the disjoint union of $X$ and $Y$ with its canonical topology and $(0, x_1) \sim_{X+Y} (0, x_2)$ iff $x_1 \sim_X x_2$ and $(1, y_1) \sim_{X+Y} (1, y_2)$ iff $y_1 \sim_Y y_2$. Note that the injections are continuous and respect the equivalence relation, thus they live in $\mathbf{Equ}$. 

For the exponential objects in $\mathbf{Equ}$, their detailed construction is unfortunately beyond the scope of this chapter and the interested reader can refer to \cite{Scot,Bauer} for more information. The only property of the construction we need here is the fact that the exponentials in $\mathbf{Top}$ (if they exist) remain intact by moving to $\mathbf{Equ}$. More precisely, define the functor $Eq: \mathbf{Top} \to \mathbf{Equ}$ mapping the space $X$ to the equilogical space $(X, =_X)$ and the function $f: X \to Y$ to the class of itself in $\mathbf{Equ}$. Then, if the exponential $[X, Y]$ with the usual set-theoretical evaluation map exists in $\mathbf{Top}$, then the exponential $[(X,=_X), (Y, =_Y)]$ is just $([X, Y], =_{[X, Y]})$ with the same evaluation map. For instance, the exponential $[(\mathbb{N}, =), (\mathbb{N}, =)]$ in $\mathbf{Equ}$ is just $(\mathbb{N}^{\mathbb{N}}, =)$ with the usual evaluation map.\\

The second category formalizes the computable world and is defined similarly to $\mathbf{Equ}$. However, in this category, spaces and continuous functions are replaced with subsets of $\mathbb{N}$ and computable functions, respectively.

\begin{exam}
Consider the collection of pairs $(A, \sim_A)$, where $A$ is a subset (not necessarily decidable or computably enumerable) of $\mathbb{N}$ and $\sim_A$ is an equivalence relation on $A$. An \emph{equivariant map} $f: (A, \sim_A) \to (B, \sim_B)$ is a \emph{partial} computable function $f: \mathbb{N} \rightharpoonup \mathbb{N}$ that maps the elements of $A$ to the elements of $B$ while preserving the equivalence relation. Two equivariant maps are considered equivalent if they induce the same function on the equivalence classes.
We can interpret the pair $(A, \sim_A)$ as representing the set of all \emph{finitary proofs} of a proposition $\phi_A$ assigned to $A$, with $\sim_A$ capturing the equivalence between these proofs. Thus, an equivariant map is viewed as an \emph{algorithmic} procedure that transforms proofs of $\phi_A$ into proofs of $\phi_B$, respecting the equivalences among these proofs.
The collection of all pairs $(A, \sim_A)$ as the objects and equivariant maps (up to the equivalence) as the morphisms together with the evident composition and identity constitutes a category, denoted by $\mathbf{Rec}$.
\end{exam}

The category $\mathbf{Rec}$ is a $\mathrm{BCC}$. Similar to $\mathbf{Equ}$, its terminal and initial objects are $(\{*\}, =_{\{*\}})$ and $(\varnothing, =_{\varnothing})$, respectively. The prodcuts and coproducts are defined similar to $\mathbf{Equ}$. However, as in $\mathbf{Rec}$, we restrict ourselves to the subsets of $\mathbb{N}$, we need to encode the cartesian product and the disjoint union of two subsets of $\mathbb{N}$ by another subset. More precisely, in $\mathbf{Rec}$, the product of $(A, \sim_A)$ and $(B, \sim_B)$ is the pair $(C, \sim_C)$, together with the maps $p_0: C \to A$ and $p_1: C \to B$, where $C=\{2^a(2b+1) \mid a \in A, b \in B\}$, $2^a(2b+1) \sim_C 2^c(2d+1)$ iff $a \sim_A c$ and $b \sim_B d$, and $p_0$ and $p_1$ are defined by $p_0(2^n(2m+1))=n$ and $p_1(2^n(2m+1))=m$. Notice that both $p_0$ and $p_1$ are computable functions and hence live in $\mathbf{Rec}$.
Similarly, for the coproduct of $(A, \sim_A)$ and $(B, \sim_B)$, we use $(C, \sim_C)$, together with the maps $i_0: A \to C$ and $i_1: B \to C$, where 
\[
C=\{2^a(2b+1) \mid (a=0 \; \text{and} \;  b \in A) \, \text{or} \; (a=1 \, \text{and} \; b \in B)\},
\]
$2^a(2b+1) \sim_C 2^c(2d+1)$ if ($a=c=0$ and $b \sim_A d$) or ($a=c=1$ and $b \sim_B d$) and $i_0$ and $i_1$ are defined by  $i_0(n)=2^0(2n+1)=2n+1$ and $i_1(n)=2(2n+1)$. Notice that both $i_0$ and $i_1$ are computable functions and hence live in $\mathbf{Rec}$.

For the exponential object $[(A, \sim_A), (B, \sim_B)]$, we use $(C, \sim_C)$, together with the map $ev: C \times A \to B$, where 
\[
C=\{e \in \mathbb{N} \mid \forall n \in A \; (e \cdot n \in B) \, \& \, \forall mn \in A \, (m \sim_A n \to e \cdot m \sim_B e \cdot n)\},
\] 
$e \sim_C f$ iff $e \cdot n \sim_B f \cdot n$, for any $n \in A$ and $ev(2^e(2a+1))=e \cdot a$, where $e \cdot a$ is the output of a universal machine applying the code $e$ of a Turing machine to the input $a$. Recall that for any partial computable map $f: \mathbb{N} \rightharpoonup \mathbb{N}$, by the $S_{mn}$-theorem, if $e$ is one of the codes for the function $(m, n) \mapsto f((2^m(2n+1))$, we have $S(e, m) \cdot n=f((2^m(2n+1))$, for any $m, n \in \mathbb{N}$. Now, for any class $[f]: (D, \sim_D) \times (A, \sim_A) \to (B, \sim_B)$, it is enough to define $\lambda_A [f]: (D, \sim_D) \to (C, \sim_C)$ as the class of the computable function $d \mapsto S(e, d)$. One can verify that this map is well-defined, equivariant, and meets the required property in the definition of the exponential object. Moreover, it is unique with respect to this property.

\subsection{Natural Numbers Object} \label{SubsecNNO}
In this subsection, we will introduce a categorical version of the set of natural numbers and represent numbers and numeral functions, i.e., the multi-variable functions over the natural numbers, by this categorical gadget. 
\begin{dfn}(\emph{Natural numbers object})
Let $\mathcal{C}$ be a $\mathrm{CCC}$. A \emph{natural numbers object} (an $\mathrm{NNO}$, for short) in $\mathcal{C}$ is an object $N$ together with maps $Z: 1 \to N$ and $s: N \to N$ such that for any object $A$ and any maps $a: 1 \to A$ and $f: A \to A$, there exists a unique map $g: N \to A$ such that the following diagram commutes:
\[\begin{tikzcd}
	1 && N && N \\
	\\
	&& A && A
	\arrow["s", from=1-3, to=1-5]
	\arrow["f"', from=3-3, to=3-5]
	\arrow["g", dashed, from=1-5, to=3-5]
	\arrow["g"', dashed, from=1-3, to=3-3]
	\arrow["Z", from=1-1, to=1-3]
	\arrow["a"', from=1-1, to=3-3]
\end{tikzcd}\]
It is easy to see that any two natural numbers objects in a category are canonically isomorphic. For any maps $a: 1 \to A$ and $f: A \to A$, we denote the unique $g$ by $R(f, a)$. Therefore, we have $R(f, a)\circ Z=a$ and $R(f, a) \circ s=f \circ R(f, a)$. Intuitively, the map $R(f, a): N \to A$ is the map defined by iterating $f$ over the initial values $a$ and the equations are just the description of this recursive definition.
\end{dfn}

\begin{rem}\label{equivalentUniquenessNNO}
Despite what we had for products, coproducts and expoentials, it is not clear how to rewrite the uniqueness condition for an $\mathrm{NNO}$ as an equality. This can be problematic in constructing the free $\mathrm{CCC}$ with an $\mathrm{NNO}$. We will come back to this point later. 
\end{rem}

\begin{rem}
Notice that the universal definition of the natural numbers object roughly states that the diagram
\[\begin{tikzcd}[ampersand replacement=\&]
	1 \&\& N \&\& N
	\arrow["Z", from=1-1, to=1-3]
	\arrow["s", from=1-3, to=1-5]
\end{tikzcd}\]
is the ``least" between all such diagrams with the fixed node $1$.
\end{rem}

\begin{exam}
In $\mathbf{Set}$, the set $\mathbb{N}$ of natural numbers together with the morphism $Z: \{*\} \to \mathbb{N}$ mapping $*$ to $0$ and $s: \mathbb{N} \to \mathbb{N}$ mapping any number to its successor is an $\mathrm{NNO}$.
In a $\mathrm{CCC}$ poset $(P, \leq)$, the $\mathrm{NNO}$ is the terminal $1$ with the canonical map $!: 1 \to 1$ as both $Z$ and $s$.
In both $\mathbf{Set}^{\mathbb{N}}$ and $\mathbf{Set}^{\mathbb{Z}}$, the pair $(\mathbb{N}, id_{\mathbb{N}})$ together with the same maps as before is an $\mathrm{NNO}$. Note that both the maps $Z$ and $s$ preserve the identity and hence are equivariant. Moreover, note that for any $a: (\{*\}, id_{\{*\}}) \to (A, \sigma_A)$ and $f: (A, \sigma_A) \to (A, \sigma_A)$, it is easy to see that $\sigma_A(f^n(a(*)))=f^n(a(*))$. Therefore, the map $g: \mathbb{N} \to A$ defined by $g(n)=f^n(a(*))$ is actually a map from the dynamic system $(\mathbb{N}, id_{\mathbb{N}})$ to the dynamic system $(A, \sigma_A)$. It is clear that this $g$ is the unique $R(f, a)$ we are looking for. More generally, the constant functor $\Delta_{\mathbb{N}}: \mathcal{C} \to \mathbf{Set}$ together with $\Delta_Z$ and $\Delta_s$ is an NNO in $\mathbf{Set}^{\mathcal{C}}$.
In $\mathbf{Equ}$, the pair $(\mathbb{N}, =_{\mathbb{N}})$ along with the set functions $Z$ and $s$ is an $\mathrm{NNO}$, where $\mathbb{N}$ is the discrete space of natural numbers. Note that both $Z$ and $s$ are 
continuous and equivariant as they preserve the equivalence relation. Moreover, for any $[x]: (\{*\}, =_{\{*\}}) \to (X, \sim_X)$ and $[f]: (X, \sim_X) \to (X, \sim_X)$, it is easy to see that the class of the map $g: \mathbb{N} \to X$ defined by $g(n)=f^n(a(*))$ is independent of the choice of $f$ and $a$. The map $g$ is clearly a continuous equivariant map from $(\mathbb{N}, =_{\mathbb{N}})$ to $(X, \sim_X)$. It is clear that the class of $g$ is the unique $R(f, a)$ we are looking for. The same construction applies to $\mathbf{Rec}$, with the important point being that the class of all computable functions is closed under primitive recursion, allowing the definition of $g$ from $a$ and $f$.
\end{exam}

For clarity in presenting some arguments, it is useful to consider the category $\mathbf{FinOrd}$, which consists of natural numbers as objects and functions between finite sets as morphisms. Specifically, an object in $\mathbf{FinOrd}$ is a natural number $m \in \mathbb{N}$, and a morphism from $m$ to $n$ is a function from the set $\{0, 1, \ldots, m-1\}$ to the set $\{0, 1, \ldots, n-1\}$. The identity and composition of morphisms are defined in the standard way.
The category $\mathbf{FinOrd}$ is a $\mathrm{BCC}$, with the terminal object being $1$, the initial object being $0$, and the product, coproduct, and exponential operations corresponding to the usual operations on natural numbers. Essentially, $\mathbf{FinOrd}$ provides a simpler version of the category $\mathbf{FinSet}$, where we work with the cardinalities of finite sets rather than the sets themselves.
In fact, by choosing an arbitrary bijection $\alpha_A: A \to \{0, 1, \ldots, Card(A)-1\}$ for any finite set $A$, we can define a functor $Card: \mathbf{FinSet} \to \mathbf{FinOrd}$. This functor maps a finite set $A$ to its cardinality $Card(A)$ and a function $f: A \to B$ to the function $\alpha_B \circ f \circ \alpha_A^{-1}: \{0, 1, \ldots, Card(A)-1\} \to \{0, 1, \ldots, Card(B)-1\}$. The functor $Card$ is a $\mathrm{BC}$-functor.

\begin{exam}(\textit{Non-existence of the $\mathrm{NNO}$}) \label{NonExistenceNNOFinSet}
The category $\mathbf{FinOrd}$ has no $\mathrm{NNO}$. For the sake of contradiction, let the number $n$ together with the maps $Z: \{0\} \to \{0, \ldots, n-1\}$ and $s: \{0, \ldots, n-1\} \to \{0, \ldots, n-1\}$ be an $\mathrm{NNO}$. Then, as $\{0, \ldots, n-1\}$ is finite, there are distinct natural numbers $p < q \in \mathbb{N}$ such that $s^p Z=s^q Z$. Now, consider the number $q+1$ as an object in $\mathbf{FinOrd}$ and the maps $a: \{0\} \to \{0, \ldots, q\}$ picking the element $0$ in $\{0, \ldots, q\}$ and $f: \{0, \ldots, q\} \to \{0, \ldots, q\}$ defined as the successor function, except that for $q$, we define $f(q)=0$. Then, as $(n, Z, s)$ is an $\mathrm{NNO}$, there is a map $g: n \to q+1$ such that $gs^kZ(0)=k$, for any natural number $k \leq q$. Therefore, as $gs^pZ=gs^qZ$, we have $p=q$ which is impossible. Note that we only used the existence of the map $R(f, a)$ to reach a contradiction and not its uniqueness.
\end{exam}

\begin{exam}\label{NonExistenceOfNNONJ}
Let $\mathcal{C}$ be a $\mathrm{BCC}$ such that there is a $\mathrm{BC}$-functor $F: \mathcal{C} \to \mathbf{FinOrd}$.
We claim that $\mathcal{C}$ has no $\mathrm{NNO}$.
First, note that $F$ is surjective on objects, because $F$ maps $\sum_{i=0}^{n-1} 1$ in $\mathcal{C}$ to $n$ in $\mathbf{FinOrd}$. Moreover, $F$
is a full functor as for any map $f: \{0, \ldots, m-1\} \to \{0, \ldots, n-1\}$ in $\mathbf{FinOrd}$, there is a map $g: \sum_{i=0}^{m-1} 1 \to \sum_{j=0}^{n-1} 1$ in $\mathcal{C}$ that uses the injections to mimic the behavior of $f$ and hence $F(g)=f$.
Using these two facts, it is easy to see that if $(N, Z, s)$ is an $\mathrm{NNO}$ in $\mathcal{C}$, then $(F(N), F(Z), F(s))$ in $\mathbf{FinOrd}$ has all the properties of an $\mathrm{NNO}$ except probably its uniqueness. However, the existence of such data is impossible as observed in Example \ref{NonExistenceNNOFinSet}. Therefore, $\mathcal{C}$ has no $\mathrm{NNO}$. One can use this observation to prove that the categories $\mathbf{FinSet}$ and $\mathbf{NJ}$ have no $\mathrm{NNO}$. For the former, use the $\mathrm{BC}$-functor $Card: \mathbf{FinSet} \to \mathbf{FinOrd}$. For the latter, by mapping the atoms $p_i$ to some numbers, we can use Theorem \ref{Freness} to find a $\mathrm{BC}$-functor $F: \mathbf{NJ} \to \mathbf{FinOrd}$. 
\end{exam}

In any $\mathrm{CCC}$ equipped with an $\mathrm{NNO}$ $(N,Z,s)$, there is a standard way to represent natural numbers and \emph{some} of numeral functions. For the former, the natural number $n$ is represented by the map $\bar{n}=s^n\circ Z:1 \to N$, iterating the successor map for $n$ many times. For the latter, we have:
\begin{dfn}\label{DefRepresentableFunctions}
Let $\mathcal{C}$ be a CCC with the NNO $(N, Z, s)$. A numeral function $f: \mathbb{N}^k \to \mathbb{N}$ is called \emph{representable} in $\mathcal{C}$ if there is a map $F: N^k \to N$ in $\mathcal{C}$ such that for any $(n_1, \ldots, n_k) \in \mathbb{N}^k$, we have
\[\begin{tikzcd}
	1 &&& {N^k} \\
	\\
	\\
	N
	\arrow["{\langle \bar{n}_1, \ldots, \bar{n}_k \rangle}", from=1-1, to=1-4]
	\arrow["F", from=1-4, to=4-1]
	\arrow["{\overline{f(n_1, \ldots, n_k)}}"', from=1-1, to=4-1]
\end{tikzcd}\]
\end{dfn}

\begin{exam}\label{AllBasicFunctions}
Let $\mathcal{C}$ be a $\mathrm{CCC}$ with the $\mathrm{NNO}$ $(N, Z, s)$. It is clear that the function $c_0: \mathbb{N}^0 \to \mathbb{N}$ mapping its one and only input to zero is representable by the map $Z: 1 \to N$. Similarly, the successor function is representable by the map $s: N \to N$. The projection function $I^k_i: \mathbb{N}^k \to \mathbb{N}$ mapping a $k$-tuple to its $i$-th argument is representable by the projection map $p_i: \prod_{i=0}^{k-1} N \to N$. For a less trivial example, consider the map $f: N \to N$ as $f=R(s \circ s, Z)$:
\[\begin{tikzcd}[ampersand replacement=\&]
	1 \&\& N \&\& N \&\& N \\
	\\
	\&\& N \&\& N \&\& N
	\arrow["Z", from=1-1, to=1-3]
	\arrow["Z"', from=1-1, to=3-3]
	\arrow["f", dashed, from=1-3, to=3-3]
	\arrow["s", from=1-5, to=1-7]
	\arrow["{s \circ s}"', from=3-5, to=3-7]
	\arrow["f", dashed, from=1-7, to=3-7]
	\arrow["f"', dashed, from=1-5, to=3-5]
\end{tikzcd}\]
It is easy to see that for any $n \in \mathbb{N}$, the following diagram commutes:
\[\begin{tikzcd}
	1 && N && N \\
	\\
	&& N && N
	\arrow["{s^n}", from=1-3, to=1-5]
	\arrow["{s^{2n}}"', from=3-3, to=3-5]
	\arrow["f", dashed, from=1-5, to=3-5]
	\arrow["f"', dashed, from=1-3, to=3-3]
	\arrow["Z", from=1-1, to=1-3]
	\arrow["Z"', from=1-1, to=3-3]
\end{tikzcd}\]
Therefore, the map $f: N \to N$ represents the numeral function $d: \mathbb{N} \to \mathbb{N}$ defined by $d(n)=2n$.
\end{exam}

\begin{exam}\label{ExampleOfRepresentables}
In $\mathbf{Set}$, $\mathbf{Set}^{\mathcal{C}}$, and $\mathbf{Equ}$, all numeral functions are representable. 
In $\mathbf{Rec}$, however, the representable functions are the total computable functions.
\end{exam}

In $\mathbf{Set}$, we often use the inductive nature of $\mathbb{N}$ to define new functions from the existing ones by what is called \emph{primitive recursion}. More precisely, given functions $g: X \to A$ and $h: \mathbb{N} \times X \times A \to A$, for some sets $X$ and $A$, we can use recursion on $n$ to define a function $f: \mathbb{N} \times X \to A$ satisfying $f(0, x) = g(x)$ and $f(n+1, x) = h(n, x, f(n, x))$. A similar situation occurs for any $\mathrm{NNO}$ in any $\mathrm{CCC}$. Here, the inductive nature of $\mathbb{N}$ is mimicked by the universality of the $\mathrm{NNO}$.

\begin{lem}(Primitive recursion) \label{PrimitiveRecursion}
Let $\mathcal{C}$ be a $\mathrm{CCC}$ with the $\mathrm{NNO}$ $(N, Z, s)$. Then, for any map $g: X \to A$ and any $h: N \times X \times A \to A$, there exists a unique $f: N \times X \to A$ such that:
\[\begin{tikzcd}
	X && {N \times X} & {N \times X} && {N \times X} \\
	\\
	&& A & {N \times X \times A} && A
	\arrow["{s \times id_X}", from=1-4, to=1-6]
	\arrow["h"', from=3-4, to=3-6]
	\arrow["f", dashed, from=1-6, to=3-6]
	\arrow["{\langle id_{N \times X}, f \rangle}"', dashed, from=1-4, to=3-4]
	\arrow["{\langle Z \circ !, id_X \rangle}", from=1-1, to=1-3]
	\arrow["g"', from=1-1, to=3-3]
	\arrow["f", dashed, from=1-3, to=3-3]
\end{tikzcd}\]
\end{lem}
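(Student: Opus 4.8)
The statement is the standard fact that a natural numbers object, together with the cartesian closed structure, supports parametrized primitive recursion. The plan is to reduce primitive recursion with parameters to the bare universal property of the $\mathrm{NNO}$ by exponentiating. Concretely, given $g\colon X\to A$ and $h\colon N\times X\times A\to A$, I would first build an auxiliary endomap on the object $B=[X,A]$ of ``partial histories'', namely a map $\widehat{h}\colon B\to B$ together with a point $\widehat{g}\colon 1\to B$, and then apply the $\mathrm{NNO}$ property to obtain $R(\widehat{h},\widehat{g})\colon N\to[X,A]$. Transposing this back across the adjunction $\mathrm{Hom}(N\times X,A)\cong\mathrm{Hom}(N,[X,A])$ will yield the desired $f\colon N\times X\to A$.

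Here are the steps in order. First, set $\widehat{g}=\lambda_X g\colon 1\to[X,A]$, the name of $g$. Second, construct $\widehat{h}\colon [X,A]\to[X,A]$: starting from $ev\colon [X,A]\times X\to A$ and the identity on $N\times X$, one forms the canonical map $[X,A]\times N\times X\to N\times X\times A$ sending $(\varphi,n,x)$ to $(n,x,ev(\varphi,x))$ informally, composes with $h$ to get a map $[X,A]\times N\times X\to A$, and transposes; but since we want $\widehat h$ to be an endomap of $[X,A]$ independent of $n$, the cleanest route is to keep $N$ as a parameter and instead apply the $\mathrm{NNO}$ property in the Kleisli-like form. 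Actually the slicker and more standard trick is: define on $[X,A]$ the endomap obtained from $h$ by ``shifting'', using the successor only implicitly; but to keep the recursion parameter $n$ visible one should work with the object $[X, N\times A]$ or, more simply, take the classical approach of recursing on the pair $(n,a)$. So the concrete plan is to form $A'=N\times A$, the point $a'_0=\langle Z,\ g(-)\rangle$ suitably transposed, i.e. $\lambda_X\langle Z\circ{!},g\rangle\colon 1\to[X,N\times A]$, and the endomap $k\colon [X,N\times A]\to[X,N\times A]$ which on a ``function'' $\varphi\colon X\to N\times A$ produces $x\mapsto (s(p_0\varphi x),\, h(p_0\varphi x, x, p_1\varphi x))$ — this is assembled from $ev$, projections, $s$, $h$, and $\lambda_X$ using only the $\mathrm{CCC}$ structure. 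Third, apply the $\mathrm{NNO}$ universal property to $(a'_0,k)$ to get $R(k,a'_0)\colon N\to[X,N\times A]$; transpose to $F\colon N\times X\to N\times A$; check by the two defining equations of $R(k,a'_0)$ (precompose with $Z$ and with $s$) that $p_0\circ F=\mathrm{pr}_N$ (a small induction-style argument using uniqueness of maps into $N$, or rather using the $\mathrm{NNO}$ uniqueness directly); and then define $f=p_1\circ F\colon N\times X\to A$. Fourth, verify the two commuting triangles in the statement: $f\circ\langle Z\circ{!},id_X\rangle=g$ and $f\circ(s\times id_X)=h\circ\langle id_{N\times X},f\rangle$, each by unwinding the defining equations of $R(k,a'_0)$ and using $\beta$-conversion $ev(\lambda_X\psi\times id_X)=\psi$ together with the commutativity and associativity isomorphisms of Theorem~\ref{ComAsso}. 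Fifth, prove uniqueness of $f$: if $f'$ also satisfies the two equations, show $F'=\langle \mathrm{pr}_N\circ(\text{something}),\,f'\rangle$ transposes to a map $N\to[X,N\times A]$ equalizing $k$ and $a'_0$ in the sense of the $\mathrm{NNO}$ diagram, hence equals $R(k,a'_0)$ by the uniqueness clause, hence $f'=f$.

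The part that needs the most care is the bookkeeping of the closed structure in step three: packaging the ``one-step'' operation as a genuine endomorphism $k\colon [X,N\times A]\to [X,N\times A]$ requires chaining the adjunction isomorphism $\gamma$ with the product reassociations, and one must be careful that the transpose $F$ of $R(k,a'_0)$ really satisfies $p_0\circ F=\pi_N$ where $\pi_N\colon N\times X\to N$ is the projection — this is itself a small instance of the $\mathrm{NNO}$ uniqueness (both $p_0\circ F$ and $\pi_N$, when transposed to $N\to[X,N]$, equalize the successor-induced endomap and the constant $Z$). The remaining verifications, the two triangles and uniqueness, are then routine diagram chases of exactly the flavour already carried out for products and exponentials earlier in the section, so I would state them and leave the full calculation to the reader, in keeping with the style of the chapter. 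Note that, as in Remark~\ref{equivalentUniquenessNNO} and Example~\ref{NonExistenceNNOFinSet}, only the \emph{existence} half of the $\mathrm{NNO}$ property is used to produce $f$; the \emph{uniqueness} half is used precisely once more, in step five, to pin $f$ down.
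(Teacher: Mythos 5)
Your proposal is correct and follows essentially the same route as the paper: absorb the parameter $X$ via the exponential, pair with $N$ to carry the recursion counter (since $h$ depends on $n$), apply the bare $\mathrm{NNO}$ property, and transpose back. The only cosmetic difference is that the paper recurses on $N \times A^X$ rather than your $[X, N \times A]$, so the verification that the first component reproduces the counter --- which you correctly flag as a small application of the $\mathrm{NNO}$ uniqueness clause --- is the same auxiliary step in either packaging.
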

\begin{proof}
Define $G: 1 \to (N \times A^X)$ as $\langle Z, \lambda_X g \rangle$ and $H': N \times A^X \times X \to A$ as $h \langle p_0, p_2, ev \langle p_1, p_2 \rangle \rangle$ and set 
$H: (N \times A^X) \to (N \times A^X)$ as $\langle sp_0, \lambda_X H' \rangle$.
By the universality of the $\mathrm{NNO}$, there exists a unique $F: N \to N \times A^X$ such that:
\[\begin{tikzcd}[ampersand replacement=\&]
	1 \&\& N \& N \&\& N \\
	\\
	\&\& {N \times A^X} \& {N \times A^X} \&\& {N \times A^X}
	\arrow["s", from=1-4, to=1-6]
	\arrow["H"', from=3-4, to=3-6]
	\arrow["F", dashed, from=1-6, to=3-6]
	\arrow["F"', dashed, from=1-4, to=3-4]
	\arrow["Z", from=1-1, to=1-3]
	\arrow["G"', from=1-1, to=3-3]
	\arrow["F", dashed, from=1-3, to=3-3]
\end{tikzcd}\]
Define $f: N \times X \to A$ by $ev \langle p^{N, A^X}_1Fp^{N, X}_0, p^{N, X}_1 \rangle $. It is easy to see that this $f$ satisfies the claimed property and it is unique in that property.
\end{proof}

\begin{rem}
Note that in the above proof, we used exponentiation to manage the parameter object $X$. If the category is cartesian but not cartesian closed, one must modify the definition of the $\mathrm{NNO}$ to a stronger form, known as a \emph{parametrized $\mathrm{NNO}$}, which directly incorporates the parameter $X$ into the definition.
\end{rem}

\begin{thm}
All primitive recursive functions are representable in any $\mathrm{CCC}$ with an $\mathrm{NNO}$.
\end{thm}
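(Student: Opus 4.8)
The plan is to proceed by induction on the construction of a primitive recursive function, following exactly the closure conditions in the definition of the class of primitive recursive functions given in the preliminaries: the basic functions $Z$, $s$, and $I^i_k$, closure under composition, and closure under primitive recursion. The base cases are already handled in Example \ref{AllBasicFunctions}: the zero function is represented by $Z : 1 \to N$, the successor by $s : N \to N$, and the projection $I^i_k$ by the categorical projection $p_i : \prod_{j=0}^{k-1} N \to N$. So the real content is the two closure steps, and the main tool is the primitive recursion lemma, Lemma \ref{PrimitiveRecursion}, which transfers set-theoretic primitive recursion to any $\mathrm{CCC}$ with an $\mathrm{NNO}$.

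First I would do the composition case. Suppose $f = g \circ (h_1, \ldots, h_m)$ where $g : \mathbb{N}^m \to \mathbb{N}$ and each $h_i : \mathbb{N}^k \to \mathbb{N}$ are primitive recursive, and by induction hypothesis they are represented by maps $G : N^m \to N$ and $H_i : N^k \to N$ in $\mathcal{C}$. Then I claim $F = G \circ \langle H_1, \ldots, H_m \rangle : N^k \to N$ represents $f$. The verification is a diagram chase: given $(n_1, \ldots, n_k) \in \mathbb{N}^k$, precompose with $\langle \bar{n}_1, \ldots, \bar{n}_k \rangle : 1 \to N^k$; each $H_i$ sends this to $\overline{h_i(n_1, \ldots, n_k)}$ by the induction hypothesis, hence the tuple map $\langle H_1, \ldots, H_m \rangle$ sends $\langle \bar n_1, \ldots, \bar n_k\rangle$ to $\langle \overline{h_1(\bar n)}, \ldots, \overline{h_m(\bar n)}\rangle$, and then $G$ sends this to $\overline{g(h_1(\bar n), \ldots, h_m(\bar n))} = \overline{f(\bar n)}$, again by the induction hypothesis. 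This uses only the universal property of products and the definition of representability.

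Next, the primitive recursion case. Suppose $f : \mathbb{N} \times \mathbb{N}^k \to \mathbb{N}$ is defined from primitive recursive $g : \mathbb{N}^k \to \mathbb{N}$ and $h : \mathbb{N} \times \mathbb{N}^k \times \mathbb{N} \to \mathbb{N}$ by the recursion scheme $f(0,\bar x) = g(\bar x)$, $f(y+1,\bar x) = h(y, \bar x, f(y,\bar x))$. By the induction hypothesis we have representing maps $G : N^k \to N$ and $H : N \times N^k \times N \to N$. Applying Lemma \ref{PrimitiveRecursion} with $X = N^k$, $A = N$, the map $g$ replaced by $G$ and the map $h$ replaced by $H$, we obtain a unique $F : N \times N^k \to N$ satisfying $F \circ \langle Z \circ {!}, id_{N^k}\rangle = G$ and $F \circ (s \times id_{N^k}) = H \circ \langle id_{N \times N^k}, F\rangle$. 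I would then verify by induction on $y$ (externally, in $\mathbb{N}$) that $F$ represents $f$: the base case $y = 0$ says $F \circ \langle \bar 0, \bar n \rangle = F \circ \langle Z, \bar n\rangle = G \circ \langle \bar n \rangle = \overline{g(\bar n)} = \overline{f(0,\bar n)}$, using the first equation of the lemma and that $G$ represents $g$; the inductive step computes $F \circ \langle \overline{y+1}, \bar n\rangle = F \circ (s \times id) \circ \langle \bar y, \bar n\rangle = H \circ \langle id, F\rangle \circ \langle \bar y, \bar n\rangle = H \circ \langle \bar y, \bar n, F\langle \bar y, \bar n\rangle\rangle = H \circ \langle \bar y, \bar n, \overline{f(y,\bar n)}\rangle = \overline{h(y,\bar n, f(y,\bar n))} = \overline{f(y+1,\bar n)}$, using the second equation of the lemma, the induction hypothesis on $y$, and that $H$ represents $h$.

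The step I expect to require the most care is not any single diagram chase but the bookkeeping of the parameter object $X = N^k$ and the "interleaved" variable order: the function defined in the preliminaries has the recursion variable $y$ in the \emph{first} coordinate, whereas Lemma \ref{PrimitiveRecursion} is stated with the $\mathrm{NNO}$ coordinate first and the parameter coordinate second, so the shapes already match — but one must be careful that $\overline{(y,\bar n)}$ as a map $1 \to N \times N^k$ really equals $\langle \bar y, \overline{n_1}, \ldots, \overline{n_k}\rangle$ and commute the relevant tuples past $s \times id_{N^k}$ correctly. There is also a subtlety that representability as in Definition \ref{DefRepresentableFunctions} is about maps out of the terminal object $1$, so all the verifications happen "pointwise at $1$" and no well-pointedness is needed; the uniqueness of $F$ from the lemma is what guarantees we may speak of \emph{the} representing map but is not actually needed for the representability claim itself (just as in Example \ref{NonExistenceNNOFinSet}, only existence of the recursor is used). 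Modulo these routine but fiddly identifications, the induction goes through cleanly, completing the proof.
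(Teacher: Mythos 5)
Your proposal is correct and follows exactly the route the paper takes: base cases from Example \ref{AllBasicFunctions}, closure under composition by a direct diagram chase with the product structure, and closure under primitive recursion via Lemma \ref{PrimitiveRecursion} followed by an external induction on the recursion variable. The paper's own proof is only a three-sentence sketch citing the same example and lemma, so your write-up is simply a fully detailed version of the intended argument (and your observations that well-pointedness and the uniqueness clause of the lemma are not needed are both accurate).
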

\begin{proof}
As observed in Example \ref{AllBasicFunctions}, all basic functions are clearly representable. It is also easy to see that the composition of representable functions is representable. For primitive recursion, it is sufficient to use Lemma \ref{PrimitiveRecursion}.
\end{proof}

It is clear that in a poset $\mathrm{CCC}$, all numbers are represented with the map $!: 1 \to 1$. This situation is annoying as we expect to represent numbers in an injective way. The next lemma shows that this poset case is actually the only problematic case.

\begin{lem}\label{UniqunessofNumeralRepresentation}
Let $\mathcal{C}$ be a non-preorder $\mathrm{CCC}$ with an $\mathrm{NNO}$. If $\bar{m}=\bar{n}$, then $m=n$, for any $m, n \in \mathbb{N}$.
\end{lem}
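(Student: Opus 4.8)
The plan is to mimic the strategy used in Example \ref{NonExistenceOfNNONJ} and Lemma \ref{FaithfulIntoNonpreorder}: produce a faithful structure-preserving functor from a concrete category where distinct numerals are visibly distinct, and then transport the inequality back. Concretely, since $\mathcal{C}$ is a non-preorder $\mathrm{CCC}$, by (the $\mathrm{CCC}$ analogue of) Lemma \ref{FaithfulIntoNonpreorder} — or directly by the argument there restricted to the $\mathrm{CC}$-structure — there is a faithful $\mathrm{CC}$-functor $F: \mathbf{FinSet} \to \mathcal{C}$ sending a finite set $A$ to $\sum_{a\in A} 1$. The idea is that $F$ cannot create the equality $\bar m = \bar n$ out of nowhere, so it suffices to exhibit, inside $\mathbf{FinSet}$ (or rather inside a category that $F$ already sees faithfully), two maps that play the role of the numerals $\bar m$ and $\bar n$ and that are distinct whenever $m\neq n$.

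First I would make precise how the numerals $\bar n = s^n \circ Z : 1 \to N$ interact with a $\mathrm{CC}$-functor. The subtlety, flagged in Remark \ref{equivalentUniquenessNNO}, is that an $\mathrm{NNO}$ is not a ``$\mathrm{CC}$-structure'' in the sense of the preservation definitions, so $F$ need not preserve $(N,Z,s)$. Hence I cannot simply say ``$F(\bar m) = \overline{F(N)\text{-numeral}}$.'' Instead I would use the universal property of the $\mathrm{NNO}$ in $\mathcal{C}$ itself. Suppose $\bar m = \bar n$ in $\mathcal{C}$ with $m < n$; I want a contradiction with non-preorderedness. Pick any object $A$ of $\mathcal{C}$ together with maps $a: 1 \to A$ and an endomorphism $f: A\to A$ such that the iterates $f^k \circ a : 1 \to A$, for $k \le n$, are pairwise distinct — this is exactly where the hypothesis that $\mathcal{C}$ is not a preorder is consumed, since in a preorder all parallel maps coincide and no such separating data exists. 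Concretely one can take $A = \sum_{i=0}^{n} 1 = F(\{0,\dots,n\})$, $a = i_0 : 1 \to A$ the first injection, and $f: A \to A$ the map built from the injections that sends the $i$-th summand to the $(i+1)$-th for $i<n$ and the $n$-th summand to the $0$-th (the ``cyclic successor''), exactly as in Example \ref{NonExistenceNNOFinSet} but now living in $\mathcal{C}$. By Corollary \ref{AllInjectionsAreDifferent} the maps $i_0,\dots,i_n : 1 \to A$ are pairwise distinct, and one checks $f^k \circ a = i_k$ for $k \le n$, so these $n+1$ maps are distinct.

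Next, by the universal property of the $\mathrm{NNO}$ applied to $a$ and $f$, there is a unique $g = R(f,a) : N \to A$ with $g\circ Z = a$ and $g \circ s = f \circ g$; an easy induction on $k$ gives $g \circ \bar k = g \circ s^k \circ Z = f^k \circ a = i_k$ for all $k \le n$. Now post-compose the assumed equality $\bar m = \bar n$ with $g$: we get $i_m = g\circ \bar m = g \circ \bar n = i_n$, contradicting the distinctness of the injections (here $m\neq n$ and both $\le n$). Therefore $\bar m = \bar n$ forces $m = n$, which is the claim.

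The main obstacle, and the step I would be most careful about, is the passage ``$\mathcal{C}$ non-preorder $\Rightarrow$ the iterates $f^k\circ a$ are separated'': one must verify that Corollary \ref{AllInjectionsAreDifferent} (and behind it Lemma \ref{DisjointnessInBCC}) genuinely applies in the $\mathrm{CCC}$ setting with only the $\mathrm{CC}$-structure plus coproducts available — but a non-preorder $\mathrm{CCC}$ need not have coproducts at all. So in fact the cleaner route avoids $\sum 1$: take \emph{any} two distinct parallel maps $k,l : E \to F$ in $\mathcal{C}$ (these exist since $\mathcal{C}$ is not a preorder), let $A = [E,F]$, $a = \lambda_E k : 1 \to [E,F]$, and choose $f : [E,F] \to [E,F]$ so that $f^k \circ a$ are distinct — e.g.\ using that $\lambda_E k \neq \lambda_E l$ one can engineer such an $f$, or more simply argue by a counting/pigeonhole-free direct construction. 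Getting a uniformly-working separating endomorphism $f$ with all $n+1$ iterates distinct, using nothing beyond the $\mathrm{CC}$-structure and the single inequality $k \neq l$, is the delicate point; once that is in hand, the $\mathrm{NNO}$ universal property finishes the argument immediately. I would therefore spend the bulk of the write-up on constructing this $(A,a,f)$ and verifying $f^j\circ a \neq f^{j'}\circ a$ for $j\neq j' \le n$, and treat the rest as routine.
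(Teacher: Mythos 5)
Your overall framework is sound --- using $R(f,a):N\to A$ to send $\bar{k}$ to $f^k\circ a$ and then reading off a contradiction is exactly the right use of the $\mathrm{NNO}$, and you were right to abandon the $\mathbf{FinSet}$/coproduct route (a non-preorder $\mathrm{CCC}$ need not have coproducts, and a $\mathrm{CC}$-functor need not preserve the $\mathrm{NNO}$). But the proof has a genuine gap, and you have correctly located it yourself: everything hinges on producing, from the single hypothesis that $\mathcal{C}$ is not a preorder, an object $A$, a point $a:1\to A$ and an endomorphism $f:A\to A$ whose iterates $f^0\circ a,\dots,f^n\circ a$ are pairwise distinct. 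You never construct this. The candidate you float --- $A=[E,F]$ with $a=\lambda_E k$ for some $k\neq l:E\to F$ --- only hands you \emph{two} distinct global points of $[E,F]$, and there is no evident endomorphism of $[E,F]$ that strings $n+1$ distinct points along an orbit; "one can engineer such an $f$" is precisely the claim that needs proof. (The gap is fillable --- e.g.\ take $A=\prod_{i=0}^{n}[E,F]$, let $a$ be the tuple with $\lambda_E k$ in slot $0$ and $\lambda_E l$ elsewhere, and let $f$ be the cyclic shift of coordinates built from projections and pairing; then $f^j\circ a$ has the distinguished entry in slot $j$ and these are pairwise distinct --- but that construction, or something like it, has to actually appear.)

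The paper sidesteps the whole difficulty with a trick you did not consider: the predecessor function is primitive recursive, hence representable by some $pd:N\to N$ in any $\mathrm{CCC}$ with an $\mathrm{NNO}$, and composing $\bar{m}=\bar{n}$ (say $m>n$) with $pd$ $n$ times reduces the problem to showing $\bar{k}\neq\bar{0}$ for $k>0$. For that, only two distinct values are needed, so one takes $f\neq g:A\to B$ and applies the $\mathrm{NNO}$ universal property with initial value $\lambda f:1\to[A,B]$ and the \emph{constant} step map $(\lambda g)\circ\,!:[A,B]\to[A,B]$; the resulting $h:N\to[A,B]$ satisfies $h\circ\bar{0}=\lambda f$ and $h\circ\bar{k}=\lambda g$ for every $k>0$, so $\bar{k}=\bar{0}$ would force $f=g$. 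This needs no orbit of length $n+1$ and no delicate separating endomorphism. I recommend either adopting that reduction or completing your route with an explicit $(A,a,f)$ as above; as written, the argument is not yet a proof.
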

\begin{proof}
Let $pd: N \to N$ be a representation for the predecessor function in $\mathcal{C}$ and assume $\bar{m}=\bar{n}$, for some $m > n$. By composing $pd$ with $\bar{m}$ and $\bar{n}$ for $n$ many times, we have $\overline{m-n}=\bar{0}$. Therefore, it is enough to prove that $\bar{k} \neq \bar{0}$, for any $k \neq 0$. Let $f, g: A \to B$ be two different maps between two objects in $\mathcal{C}$. Define $h: N \to [A, B]$ such that:
\[\begin{tikzcd}
	1 && N && N \\
	\\
	&& {[A, B]} && {[A, B]}
	\arrow["s", from=1-3, to=1-5]
	\arrow["{(\lambda g) \circ !}"', from=3-3, to=3-5]
	\arrow["h"', dashed, from=1-3, to=3-3]
	\arrow["Z", from=1-1, to=1-3]
	\arrow["{\lambda f}"', from=1-1, to=3-3]
	\arrow["h", dashed, from=1-5, to=3-5]
\end{tikzcd}\]
It is easy to see that $h \circ \bar{0}=\lambda f$ while $h \circ \bar{k}=\lambda g$, for any $k > 0$. Therefore, if $\bar{k}=\bar{0}$, by composition with $h$, we reach $\lambda f=\lambda g$ which implies $f=g$. This is a contradiction.
\end{proof}

In a similar way to what we had for $\mathbf{NJ}$, it is also possible to come up with the ``least" $\mathrm{CCC}$ with an $\mathrm{NNO}$, where every object is constructed from $N$ and $1$ by using product and exponentiation, and any morphism is constructed from the very basic morphisms such as $!$, $p_0$, $p_1$, $ev$, $Z$, $s$, and the operations $\langle -, - \rangle$, $\lambda$, and $R$. To state a formal version of this claim, we need the following definition:

\begin{dfn}
Let $\mathcal{C}$ and $\mathcal{D}$ be two $\mathrm{CCC}$'s and $F: \mathcal{C} \to \mathcal{D}$ be a $\mathrm{CC}$-functor. We say that $F$ \emph{preserves the $\mathrm{NNO}$}, if for any $\mathrm{NNO}$
\[\begin{tikzcd}[ampersand replacement=\&]
	1 \&\& N \&\& N
	\arrow["Z", from=1-1, to=1-3]
	\arrow["s", from=1-3, to=1-5]
\end{tikzcd}\]
in $\mathcal{C}$, the diagram
\[\begin{tikzcd}[ampersand replacement=\&]
	{F(1)} \&\& {F(N)} \&\& {F(N)}
	\arrow["{F(Z)}", from=1-1, to=1-3]
	\arrow["{F(s)}", from=1-3, to=1-5]
\end{tikzcd}\] 
is an $\mathrm{NNO}$ in $\mathcal{D}$.
\end{dfn}

\begin{thm}(Free $\mathrm{CCC}$ with an $\mathrm{NNO}$)
There exists a free $\mathrm{CCC}$ with an $\mathrm{NNO}$, i.e., a $\mathrm{CCC}$ $\mathcal{C}$ with an $\mathrm{NNO}$ such that for any $\mathrm{CCC}$ $\mathcal{D}$ with an $\mathrm{NNO}$, there is a unique (up to isomorphism) $\mathrm{CC}$-functor $F: \mathcal{C} \to \mathcal{D}$, preserving the $\mathrm{NNO}$. We denote this $\mathrm{CCC}$ by $\Lambda_N$.
\end{thm}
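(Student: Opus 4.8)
The plan is to mimic the construction of $\mathbf{NJ}$ as the free $\mathrm{BCC}$ from Theorem \ref{Freness}, adding a natural numbers object and handling the one genuinely new difficulty: the uniqueness clause in the definition of an $\mathrm{NNO}$ cannot be phrased as an equality between named morphisms (Remark \ref{equivalentUniquenessNNO}), so the naive ``generate terms freely and quotient by the obvious equations'' recipe does not immediately apply. First I would build a syntactic category $\Lambda_N$ explicitly: the objects are the formal types generated from $1$ and $N$ by $\times$ and $[-,-]$; the raw morphisms are formal terms built from the basic arrows $!$, $p_0$, $p_1$, $ev$, $Z$, $s$, identities, and the term-forming operations composition, $\langle -,-\rangle$, $\lambda$, and $R$, where $R(f,a) : N \to A$ is introduced whenever $a : 1 \to A$ and $f : A \to A$. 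Then one imposes the least congruence (respecting all the term-formers) generated by: the $\mathrm{CCC}$ equations already used for $\mathbf{NJ}$ (associativity/identity of composition, the $\beta\eta$-laws for $\times$, $1$, and $[-,-]$ as in Remarks \ref{equivalentUniquenessProduct}, \ref{equivalentUniquenessExp} and Theorem \ref{ComAsso}), together with the two $\mathrm{NNO}$ equations $R(f,a)\circ Z = a$ and $R(f,a)\circ s = f\circ R(f,a)$. Call the resulting quotient category $\Lambda_N$; it is a $\mathrm{CCC}$ by exactly the argument establishing that $\mathbf{NJ}$ is one.

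The main obstacle is proving that $(N, Z, s)$ is actually an $\mathrm{NNO}$ in $\Lambda_N$, i.e.\ that $R(f,a)$ is the \emph{unique} arrow satisfying the recursion equations, even though uniqueness was not imposed as a generating equation. The strategy here is the standard one for initial-algebra-style universal properties: suppose $g : N \to A$ satisfies $g\circ Z = a$ and $g\circ s = f\circ g$; one must show $g = R(f,a)$ is forced by the congruence. The clean way is to observe that in \emph{any} $\mathrm{CCC}$ with chosen data $(N,Z,s,R)$ satisfying only the two equations, uniqueness of $R(f,a)$ follows automatically \emph{provided} one additionally has the ``parametrised'' strength, but in the closed setting Lemma \ref{PrimitiveRecursion} shows the parametrised version is derivable from the unparametrised one — so I would instead close the loop differently: I will prove uniqueness holds in $\Lambda_N$ by a normalisation/confluence argument on the term calculus (reduce every closed term $N \to A$ to a canonical form built from a single application of $R$), or, more economically, by appealing to soundness into $\mathbf{Set}$: since $\mathbf{Set}$ has a genuine $\mathrm{NNO}$ and there is an interpretation functor $\Lambda_N \to \mathbf{Set}$, this does \emph{not} by itself give uniqueness in $\Lambda_N$, so the honest route really is the syntactic normalisation. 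I expect this to be where the real work lies, and I would cite \cite{LambekScott} for the detailed term-rewriting argument rather than reproduce it.

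With $\Lambda_N$ in hand as a $\mathrm{CCC}$ with an $\mathrm{NNO}$, the freeness statement is then proved just as in Theorem \ref{Freness}. Given any $\mathrm{CCC}$ $\mathcal{D}$ with an $\mathrm{NNO}$, fix a $\mathrm{CC}$-structure and a chosen $\mathrm{NNO}$ $(N_{\mathcal{D}}, Z_{\mathcal{D}}, s_{\mathcal{D}})$ on $\mathcal{D}$. Define $F : \Lambda_N \to \mathcal{D}$ by recursion on types ($F(1) = 1_{\mathcal{D}}$, $F(N) = N_{\mathcal{D}}$, $F(\sigma\times\tau) = F\sigma\times F\tau$, $F[\sigma,\tau] = [F\sigma, F\tau]$) and on terms, sending each basic arrow to its counterpart in $\mathcal{D}$ and each term-former to the corresponding operation; in particular $F(R(f,a)) := R_{\mathcal{D}}(F f, F a)$, which is well-defined precisely because $(N_{\mathcal{D}}, Z_{\mathcal{D}}, s_{\mathcal{D}})$ is an $\mathrm{NNO}$. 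One checks $F$ respects the generating congruence — the $\mathrm{CCC}$ equations because $\mathcal{D}$ is a $\mathrm{CCC}$, and the two $\mathrm{NNO}$ equations because $R_{\mathcal{D}}$ satisfies them — so $F$ descends to $\Lambda_N$, is a $\mathrm{CC}$-functor by construction, and preserves the $\mathrm{NNO}$ since it sends $(N,Z,s)$ to $(N_{\mathcal{D}}, Z_{\mathcal{D}}, s_{\mathcal{D}})$, which is an $\mathrm{NNO}$ in $\mathcal{D}$.

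Finally, uniqueness up to isomorphism: any $\mathrm{CC}$-functor $G : \Lambda_N \to \mathcal{D}$ preserving the $\mathrm{NNO}$ must send $1$, $N$, products and exponentials to (objects canonically isomorphic to) the chosen structure on $\mathcal{D}$, must send $Z$, $s$ to $Z_{\mathcal{D}}$, $s_{\mathcal{D}}$ up to these isomorphisms, and — crucially — must send $R(f,a)$ to a morphism satisfying the two recursion equations, hence to $R_{\mathcal{D}}(Gf, Ga)$ \emph{by the uniqueness part of the $\mathrm{NNO}$ universal property in $\mathcal{D}$}. Since every object of $\Lambda_N$ is generated from $1$ and $N$ and every morphism from the basic arrows and term-formers, $G$ is thereby determined up to a natural isomorphism, exactly as $F$ is; this gives $G \cong F$. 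I would note that this last step reuses verbatim the argument pattern from the proof of Theorem \ref{Freness}, with the single addition of the $\mathrm{NNO}$ clause, and leave the routine diagram-chases to the reader. Thus the free $\mathrm{CCC}$ with an $\mathrm{NNO}$ exists and is $\Lambda_N$.
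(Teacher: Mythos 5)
Your proposal is correct and follows essentially the same route as the paper: the paper's (very terse) proof likewise notes that the uniqueness clause of an $\mathrm{NNO}$ is not equational, constructs the free $\mathrm{CCC}$ equipped only with the equationally presentable ``weak'' $\mathrm{NNO}$ (exactly your two recursion equations $R(f,a)\circ Z = a$ and $R(f,a)\circ s = f\circ R(f,a)$), and then asserts that the weak $\mathrm{NNO}$ is a genuine $\mathrm{NNO}$ in that free category, deferring the hard syntactic/normalisation argument to the literature just as you do. Your brief detour about deriving uniqueness from parametrised strength is a dead end, but you correctly abandon it and land on the same strategy the paper cites.
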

\begin{proof}
As the conditions of being an $\mathrm{NNO}$ go beyond simple equalities, we cannot apply the usual method of generating all required objects and maps and then using a quotient to form the equivalence relation generated by the equalities. To address this, we should weaken the notion of an $\mathrm{NNO}$ to something representable by equalities, construct the free $\mathrm{CCC}$ with that weak $\mathrm{NNO}$, and then demonstrate that the weak $\mathrm{NNO}$ is actually an $\mathrm{NNO}$ in that category. For more details on this strategy, see \cite{LambekNNO}.
\end{proof}

\begin{rem}\label{LambdaNIsNotPreorder}
The category $\Lambda_N$ is not a preorder. To prove this, we assume otherwise. Thus, the two maps $Z: 1 \to N$ and $sZ: 1 \to N$ must be equal. Use the freeness of $\Lambda_N$ to get a $\mathrm{CC}$-functor $F: \Lambda_N \to \mathbf{Set}$ preserving the $\mathrm{NNO}$. Hence, $F(N) \cong \mathbb{N}$. As $Z = sZ$ in $\Lambda_N$, we have $F(Z) = F(s)F(Z): \{*\} \to F(N)$, which implies $0 = 1$ in $\mathbb{N}$, leading to a contradiction. Hence, $\Lambda_N$ is not a preorder.
\end{rem}

\begin{dfn}
A numerical function is called a \emph{primitive recursive functional} if it is representable in $\Lambda_N$.
\end{dfn}

It is reasonable to claim that the numerical primitive recursive functionals are exactly the functions that are representable in \emph{all} $\mathrm{CCC}$'s with an $\mathrm{NNO}$, simply because $\Lambda_N$ is the ``smallest" of such categories. This claim can be proved using the following lemma:

\begin{lem}\label{TransferOfRepresentability}
Let $\mathcal{C}$ and $\mathcal{D}$ be two $\mathrm{CCC}$'s with an $\mathrm{NNO}$ and $G: \mathcal{C} \to \mathcal{D}$ is a $\mathrm{CC}$-functor preserving the $\mathrm{NNO}$. Then, if the map $F: N_{\mathcal{C}}^k \to N_{\mathcal{C}}$ in $\mathcal{C}$ represents the function $f: \mathbb{N}^k \to \mathbb{N}$, then $G(F):N_{\mathcal{D}}^k \to N_{\mathcal{D}}$ in $\mathcal{D}$ also represents $f$. 
\end{lem}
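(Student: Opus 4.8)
The plan is to unwind the definition of representability (Definition \ref{DefRepresentableFunctions}) on both sides and use the fact that a $\mathrm{CC}$-functor preserving the $\mathrm{NNO}$ commutes with all the relevant constructions. Concretely, since $F : N_{\mathcal{C}}^k \to N_{\mathcal{C}}$ represents $f$, we have, for every $(n_1,\ldots,n_k) \in \mathbb{N}^k$, the equation $F \circ \langle \bar{n}_1,\ldots,\bar{n}_k \rangle = \overline{f(n_1,\ldots,n_k)}$ in $\mathcal{C}$, where $\bar{n} = s_{\mathcal{C}}^{n} \circ Z_{\mathcal{C}}$ and the pairing is taken with respect to the product $N_{\mathcal{C}}^k$. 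Applying $G$ to this equation and using functoriality, it suffices to show that $G$ sends each of the three ingredients on the left-hand side to the corresponding ingredient built from $N_{\mathcal{D}}$: (i) $G(N_{\mathcal{C}}^k) \cong N_{\mathcal{D}}^k$ compatibly with projections, (ii) $G(\bar{n}) = \bar{n}$ as maps $1_{\mathcal{D}} \to N_{\mathcal{D}}$ (identifying $G(1_{\mathcal{C}})$ with $1_{\mathcal{D}}$), and (iii) $G(\overline{f(n_1,\ldots,n_k)}) = \overline{f(n_1,\ldots,n_k)}$, which is the special case $k=1$ of (ii) together with the trivial case $k=0$ (a constant).

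The key steps, in order, are as follows. First I would fix $\mathrm{CC}$-structures and $\mathrm{NNO}$'s on $\mathcal{C}$ and $\mathcal{D}$ and record that, because $G$ preserves products and the terminal object, the canonical comparison maps $G(1_{\mathcal{C}}) \to 1_{\mathcal{D}}$ and $G(A \times B) \to G(A) \times G(B)$ are isomorphisms; iterating the latter gives $G(N_{\mathcal{C}}^k) \cong N_{\mathcal{D}}^k$ carrying $G(p_i)$ to $p_i$ and $G(\langle g_1,\ldots,g_k\rangle)$ to $\langle G(g_1),\ldots,G(g_k)\rangle$ (up to these canonical isomorphisms). Second, since $G$ preserves the $\mathrm{NNO}$, the triple $(G(N_{\mathcal{C}}), G(Z_{\mathcal{C}}), G(s_{\mathcal{C}}))$ is an $\mathrm{NNO}$ in $\mathcal{D}$, hence canonically isomorphic to $(N_{\mathcal{D}}, Z_{\mathcal{D}}, s_{\mathcal{D}})$ via the unique $\mathrm{NNO}$-isomorphism $\theta$, which by definition satisfies $\theta \circ G(Z_{\mathcal{C}}) = Z_{\mathcal{D}}$ and $\theta \circ G(s_{\mathcal{C}}) = s_{\mathcal{D}} \circ \theta$. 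From these two identities an easy induction on $n$ gives $\theta \circ G(\bar{n}) = \theta \circ G(s_{\mathcal{C}}^n Z_{\mathcal{C}}) = s_{\mathcal{D}}^n Z_{\mathcal{D}} = \bar{n}$, establishing (ii) and (iii). Third, I would apply $G$ to the defining equation for $F$, substitute the identifications from the first two steps, and read off that $G(F) \circ \langle \bar{n}_1,\ldots,\bar{n}_k\rangle = \overline{f(n_1,\ldots,n_k)}$ in $\mathcal{D}$, which is exactly the statement that $G(F)$ represents $f$.

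I do not expect a genuine obstacle here; the lemma is essentially bookkeeping about how a structure-preserving functor interacts with canonical comparison maps. The one point requiring a little care is notational honesty: the equalities $G(N_{\mathcal{C}}^k) = N_{\mathcal{D}}^k$ and $G(\bar n) = \bar n$ hold only up to the canonical isomorphisms of the product structure and of the $\mathrm{NNO}$, so the cleanest write-up either inserts these isomorphisms explicitly or — more in the spirit of the rest of the chapter — works with a fixed $\mathrm{CC}$-structure-with-$\mathrm{NNO}$ and invokes the remark, already used repeatedly, that all such structures and comparison maps are canonically isomorphic, so that one is free to suppress them. Either way the inductive step $\theta \circ G(\bar n) = \bar n$ is the technical heart, and it follows immediately from the two commutation identities defining $\theta$ as an $\mathrm{NNO}$-morphism.
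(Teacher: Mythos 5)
Your proposal is correct and follows essentially the same route as the paper's proof: apply $G$ to the defining equation $F \circ \langle \bar{n}_1,\ldots,\bar{n}_k\rangle = \overline{f(n_1,\ldots,n_k)}$ and use that a $\mathrm{CC}$-functor preserving the $\mathrm{NNO}$ sends $\bar{n}_{\mathcal{C}}$ to $\bar{n}_{\mathcal{D}}$. The paper states this in three lines and suppresses the canonical isomorphisms entirely; your write-up merely makes that bookkeeping (the comparison isomorphisms for products and the induction on $n$ for $\theta \circ G(\bar n) = \bar n$) explicit.
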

\begin{proof}
As $F$ represents $f$, for any natural number $n \in \mathbb{N}$, we have $F \circ \bar{n}_{\mathcal{C}}=\overline{f(n)}_{\mathcal{C}}$. As $G$ is a CC-functor respecting the $\mathrm{NNO}$, we have $G(\bar{n}_{\mathcal{C}})=\bar{n}_{\mathcal{D}}$. Therefore, by applying $G$ on $F \circ \bar{n}_{\mathcal{C}}=\overline{f(n)}_{\mathcal{C}}$, we get $G(F) \circ \bar{n}_{\mathcal{D}}=\overline{f(n)}_{\mathcal{D}}$.
\end{proof}

\begin{cor}\label{RepresentableEverywhere}
A numerical function is a primitive recursive functional iff it is representable in all $\mathrm{CCC}$'s with an $\mathrm{NNO}$.
\end{cor}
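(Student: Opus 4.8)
The plan is to prove the two directions of the ``iff'' separately, with the forward direction being essentially immediate and the backward direction requiring the freeness of $\Lambda_N$ together with Lemma \ref{TransferOfRepresentability}.

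For the forward direction, suppose $f: \mathbb{N}^k \to \mathbb{N}$ is a primitive recursive functional, i.e., it is representable in $\Lambda_N$ by some map $F: N_{\Lambda_N}^k \to N_{\Lambda_N}$. Let $\mathcal{D}$ be an arbitrary $\mathrm{CCC}$ with an $\mathrm{NNO}$. By the universal property of $\Lambda_N$ as the free $\mathrm{CCC}$ with an $\mathrm{NNO}$, there is a $\mathrm{CC}$-functor $G: \Lambda_N \to \mathcal{D}$ that preserves the $\mathrm{NNO}$. Then Lemma \ref{TransferOfRepresentability} applies directly: $G(F): N_{\mathcal{D}}^k \to N_{\mathcal{D}}$ represents $f$ in $\mathcal{D}$. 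Since $\mathcal{D}$ was arbitrary, $f$ is representable in every $\mathrm{CCC}$ with an $\mathrm{NNO}$.

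For the backward direction, suppose $f$ is representable in \emph{all} $\mathrm{CCC}$'s with an $\mathrm{NNO}$. In particular, taking the category to be $\Lambda_N$ itself — which is a $\mathrm{CCC}$ with an $\mathrm{NNO}$ — we conclude that $f$ is representable in $\Lambda_N$, which is exactly the definition of being a primitive recursive functional. This direction is a one-line observation once we note that $\Lambda_N$ is itself a member of the class quantified over.

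I do not anticipate a genuine obstacle here; the theorem is a routine consequence of freeness. The only point requiring a small amount of care is confirming that $G$ in the forward direction indeed preserves the $\mathrm{NNO}$ — but this is built into the statement of the universal property of $\Lambda_N$, so nothing new is needed. One might also remark, for the reader's comfort, that $\bar{n}_{\Lambda_N}$ and $\bar{n}_{\mathcal{D}}$ match under $G$ precisely because $G$ sends $Z, s$ to $Z, s$, which is exactly what ``preserving the $\mathrm{NNO}$'' delivers and what Lemma \ref{TransferOfRepresentability} already exploits. Thus the proof is essentially the two short paragraphs above, and no nontrivial computation is involved.
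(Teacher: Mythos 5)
Your proof is correct and follows exactly the paper's argument: the backward direction is the observation that $\Lambda_N$ is itself a $\mathrm{CCC}$ with an $\mathrm{NNO}$, and the forward direction invokes the unique $\mathrm{NNO}$-preserving $\mathrm{CC}$-functor from $\Lambda_N$ supplied by freeness together with Lemma \ref{TransferOfRepresentability}. Nothing is missing.
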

\begin{proof}
One direction is clear as $\Lambda_N$ is itself a $\mathrm{CCC}$ with an $\mathrm{NNO}$. For the other direction, let $\mathcal{C}$ be a $\mathrm{CCC}$ with an $\mathrm{NNO}$. Define $G$ as the unique $\mathrm{CC}$-functor from $\Lambda_N$ to $\mathcal{C}$ that preserves the $\mathrm{NNO}$. By Lemma \ref{TransferOfRepresentability}, it follows that if a function is representable in $\Lambda_N$, it is also representable in $\mathcal{C}$.

\end{proof}

In the remainder of this subsection, we aim to evaluate the computational power of numerical primitive recursive functionals. To begin, we demonstrate that all such functionals are computable.

\begin{thm}\label{FunctionalsAreRecursive}
All numerical primitive recursive functionals are recursive.   
\end{thm}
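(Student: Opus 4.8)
The plan is to interpret the free category $\Lambda_N$ inside a concrete, manifestly computable $\mathrm{CCC}$ with an $\mathrm{NNO}$, and then transport representability along that interpretation using Lemma \ref{TransferOfRepresentability}. The natural target is the category $\mathbf{Rec}$ introduced in Subsection \ref{SubsecRecConWorlds}, which is a $\mathrm{BCC}$, hence in particular a $\mathrm{CCC}$, and which carries an $\mathrm{NNO}$, namely $(\mathbb{N}, =_{\mathbb{N}})$ together with the usual $Z$ and $s$, since the class of computable functions is closed under primitive recursion. By the universal property of $\Lambda_N$, there is a unique $\mathrm{CC}$-functor $G\colon \Lambda_N \to \mathbf{Rec}$ preserving the $\mathrm{NNO}$.

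Now suppose $f\colon \mathbb{N}^k \to \mathbb{N}$ is a primitive recursive functional, i.e.\ it is represented by some map $F\colon N^k \to N$ in $\Lambda_N$. Applying $G$ and invoking Lemma \ref{TransferOfRepresentability}, the map $G(F)\colon (\mathbb{N},=)^k \to (\mathbb{N},=)$ in $\mathbf{Rec}$ represents $f$ as well. The key point is then to read off what ``representation in $\mathbf{Rec}$'' concretely says: a morphism $(\mathbb{N},=)^k \to (\mathbb{N},=)$ in $\mathbf{Rec}$ is (the equivalence class of) a partial computable function on the appropriate pairing codes; one must identify the coding of $N^k$ in $\mathbf{Rec}$ (iterated application of the product construction $C = \{2^a(2b+1)\}$) and check that under this coding the numeral $\bar n$ in $\mathbf{Rec}$ is just the code of $n$ itself (using Lemma \ref{UniqunessofNumeralRepresentation} together with the fact that $\mathbf{Rec}$ is not a preorder to know the numerals are pairwise distinct). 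The defining diagram for representability, $G(F)\circ \langle \bar n_1,\dots,\bar n_k\rangle = \overline{f(n_1,\dots,n_k)}$, then literally says that the partial computable function underlying $G(F)$, when fed the code of $(n_1,\dots,n_k)$, outputs the code of $f(n_1,\dots,n_k)$, for every tuple. Since the underlying function is computable and total on all the relevant inputs, $f$ is recursive.

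The main obstacle is the bookkeeping around the coding: one needs to verify that the product and the numerals in $\mathbf{Rec}$ behave as expected under composition, so that evaluating $G(F)$ at the representing maps of $n_1,\dots,n_k$ really amounts to feeding the pairing code of the tuple to a single computable function. This is routine given the explicit description of the $\mathrm{BC}$-structure of $\mathbf{Rec}$ and the $S^m_n$-theorem, but it is the place where care is required. An alternative, perhaps cleaner, route is to avoid the coding subtlety by instead using $\mathbf{Set}$ as an auxiliary target to pin down that $G(F)$ computes the right set-theoretic function, and $\mathbf{Rec}$ only to witness computability; but since Example \ref{ExampleOfRepresentables} already records that the representable functions in $\mathbf{Rec}$ are exactly the total computable functions, the single-functor argument through $\mathbf{Rec}$ suffices and is the shortest path.
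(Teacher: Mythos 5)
Your proposal is correct and is essentially the paper's own argument: the paper proves this by citing Corollary \ref{RepresentableEverywhere} (which is exactly your step of pushing the representing map along the unique $\mathrm{NNO}$-preserving $\mathrm{CC}$-functor $\Lambda_N \to \mathbf{Rec}$ via Lemma \ref{TransferOfRepresentability}) and then Example \ref{ExampleOfRepresentables}. The only difference is that you unpack the coding bookkeeping in $\mathbf{Rec}$ that the paper leaves implicit in that example, which is a reasonable (and routine) elaboration rather than a different route.
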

\begin{proof}
Let $f: \mathbb{N}^k \to \mathbb{N}$ be a primitive recursive functional. Then, by Corollary \ref{RepresentableEverywhere}, $f$ is representable in $\mathbf{Rec}$. Hence, it is recursive, by Example \ref{ExampleOfRepresentables}. 
\end{proof}

\begin{rem}
Having established Theorem \ref{FunctionalsAreRecursive}, one might assume that all total recursive functions are representable in $\Lambda_N$. However, this is not the case. The reason is tied to the well-known informal argument that no formal system can capture all total recursive functions. For a formal proof, see \cite{LambekScott}.
\end{rem}

We have shown that all primitive recursive functions are representable in any $\mathrm{CCC}$ with an $\mathrm{NNO}$. Consequently, these functions are all primitive recursive functionals. However, the power of $\Lambda_N$ extends well beyond that of primitive recursion. This is because $\Lambda_N$ supports primitive recursion not only on numerical functions but also on functionals.
For example, consider the Ackermann function $A(m, n)$, defined by $A(0, n)=n+1$ and $A(m+1, n)=A_m^{n+1}(1)$, where $A_m$ is just the function $A(m, -): \mathbb{N} \to \mathbb{N}$. This function is \emph{not} primitive recursive. However, it is a primitive recursive functional as one can use a higher-order recursion on $m$ to define $A_m$ using the equations $A_0 = s$ and $A_{m+1} = \lambda n. A_m^{n+1}(1)$.

To formally prove the representability of the Ackermann function in $\Lambda_N$, we first consider the function $iter: \mathbb{N} \times \mathbb{N}^{\mathbb{N}} \to \mathbb{N}$ that maps $(n, f)$ to $f^{n+1}(1)$. Using the recursive definition of $iter$, which is given by the equations $iter(0, f)=f(1)$ and $iter(n+1, f)=f(iter(n, f))$, and applying Lemma \ref{PrimitiveRecursion}, we can construct a map $i: N \times N^N \to N$ in $\Lambda_N$ that simulates the behavior of $iter$. More precisely, set $X=N^N$ and $A=N$ in Lemma \ref{PrimitiveRecursion} and define $g: N^N \to N$ by $ev \langle id_{N^N}, \bar{1} \circ ! \rangle$ and $h: N \times N^N \times N \to N$ by $ev \langle p_1, p_2 \rangle$. By Lemma \ref{PrimitiveRecursion}, there is a map $i: N \times N^N \to N$ such that:
\[\begin{tikzcd}
	{N^N} && {N \times N^N} && {N \times N^N} && {N \times N^N} \\
	\\
	&& N && {N \times N^N \times N} && N
	\arrow["{\langle Z \circ !, id_{N^N} \rangle}", from=1-1, to=1-3]
	\arrow["{ev \langle id_{N^N}, \bar{1} \circ !\rangle}"', from=1-1, to=3-3]
	\arrow["i", dashed, from=1-3, to=3-3]
	\arrow["{s \times id_{N^N}}", from=1-5, to=1-7]
	\arrow["{\langle id_{N \times N^N}, i \rangle}"', dashed, from=1-5, to=3-5]
	\arrow["i", dashed, from=1-7, to=3-7]
	\arrow["{ev \langle p_1, p_2 \rangle}"', from=3-5, to=3-7]
\end{tikzcd}\]
Now, define $F: N \to N^N$ as a map induced by the universality of the $\mathrm{NNO}$:
\[\begin{tikzcd}
	1 && N &&& N \\
	\\
	&& {N^N} &&& {N^N}
	\arrow["Z", from=1-1, to=1-3]
	\arrow["{\lambda_N s}"', from=1-1, to=3-3]
	\arrow["s", from=1-3, to=1-6]
	\arrow["F"', dashed, from=1-3, to=3-3]
	\arrow["F", dashed, from=1-6, to=3-6]
	\arrow["{\lambda_N \, (i \circ (p_1 \times p_0))}"', from=3-3, to=3-6]
\end{tikzcd}\]
It is easy to see that $F: N \to N^N$ mimics the definition of $A_{-}$. Therefore, the map $ev \langle F, id_N \rangle: N \times N \to N$ represents the Ackermann function in $\Lambda_N$.

\subsection{Realizability} \label{SubsecRealForArithmetic}
From the introduction of Subsection \ref{SecArithmetic}, recall that the language of arithmetic does not include the connectives $\vee$ and $\bot$. Therefore, it suffices to work with $\mathrm{CCC}$'s rather than $\mathrm{BCC}$'s for its interpretation. Furthermore, since we aim to realize arithmetical proofs, it is natural to assume that the $\mathrm{CCC}$ has an $\mathrm{NNO}$. Thus, in this subsection, we assume that $\mathcal{C}$ is a $\mathrm{CCC}$ equipped with the $\mathrm{NNO}$  $(\mathtt{N}, \mathtt{Z}, \mathtt{s})$. 
For any such $\mathcal{C}$, there is a canonical $\mathcal{C}$-interpretation called the \emph{standard $\mathcal{C}$-interpretation} that captures the intended meaning of the arithmetical proofs. Formally, by the \emph{standard $\mathcal{C}$-interpretation} $\mathfrak{S}$ of the language of $\mathsf{HA}^{\omega}$, we mean a $\mathcal{C}$-interpretation that maps:
\begin{itemize}
    \item 
the basic type $N$ to the $\mathcal{C}$-set $N=(\mathtt{N}, \mathbb{N}, \Vdash_N)$, where $\mathtt{n} \Vdash_N n$, for any natural number $n \in \mathbb{N}$ in which $\mathtt{n}: \mathtt{1} \to \mathtt{N}$ is the representation of the natural number $n$,
    \item 
the function symbol $0$ to the $\mathcal{C}$-set map $0^{\mathfrak{S}}: 1 \to N$ picking the element $0 \in \mathbb{N}$. Note that this map is realized by $\mathtt{Z}: \mathtt{1} \to \mathtt{N}$,
    \item 
the function symbol $S(x^N)$ of type $N$ to the $\mathcal{C}$-set map $S^{\mathfrak{S}}: N \to N$ defined by $S^{\mathfrak{S}}(n)=n+1$. Note that this map is realized by $\mathtt{s}: \mathtt{N} \to \mathtt{N}$,  
\item 
the function symbol $\mathbf{R}^{\sigma}(x^{\sigma}, y^{N \times \sigma \to \sigma}, z^{N})$ of type $\sigma$ to the $\mathcal{C}$-set map $(\mathbf{R}^{\sigma})^{\mathfrak{S}}: M^{\sigma} \times [N \times M^{\sigma}, M^{\sigma}] \times N \to M^{\sigma}$ defined recursively by 
\[
(\mathbf{R}^{\sigma})^{\mathfrak{S}}(a, f, 0)=a \quad \text{and} \quad (\mathbf{R}^{\sigma})^{\mathfrak{S}}(a, f, n+1)=f(n, (\mathbf{R}^{\sigma})^{\mathfrak{S}}(a, f, n)).
\]
Using Lemma \ref{PrimitiveRecursion}, it is easy to see that this map is realized by a map in $\mathcal{C}$ constructed via a primitive recursion. 
\end{itemize}

If the basic type $2$ is also present in the language and the coproduct $\mathtt{2}=\mathtt{1}+\mathtt{1}$ exists in $\mathcal{C}$, the standard $\mathcal{C}$-interpretation must map:
\begin{itemize}
    \item 
the type $2$ to the $\mathcal{C}$-set $(\mathtt{2}, \{0, 1\}, \Vdash_2)$, where $\mathtt{i_0} \Vdash_2 0$ and $\mathtt{i_1} \Vdash_2 1$,
    \item 
the constants $0_2$ and $1_2$ of type $2$ to the $\mathcal{C}$-set maps $0_2^{\mathfrak{S}}: 1 \to 2$ and $1_2^{\mathfrak{S}}: 1 \to 2$ picking the elements $0$ and $1$ in $\{0, 1\}$, respectively. Note that these functions are realized by the maps $\mathtt{i_0}: \mathtt{1} \to \mathtt{2}$ and $\mathtt{i_1}: \mathtt{1} \to \mathtt{2}$, respectively, 
    \item 
the function symbol $c^{\sigma}(x^{\sigma}, y^{\sigma}, z^2)$ to the $\mathcal{C}$-set map $(c^{\sigma})^{\mathfrak{S}}: M^{\sigma} \times M^{\sigma} \times 2 \to M^{\sigma}$ defined by $(c^{\sigma})^{\mathfrak{S}}(a, b, 0)=a$ and $(c^{\sigma})^{\mathfrak{S}}(a, b, 1)=b$. It is easy to see that the function $(c^{\sigma})^{\mathfrak{S}}$ is realized by a map in $\mathcal{C}$. 
\end{itemize}

In the rest of this subsection, we will use only the standard $\mathcal{C}$-interpretation for realizability. Thus, we will omit the subscript $\mathfrak{S}$ in $\Vdash_{\mathfrak{S}}$ everywhere. Additionally, by a sentence, we always mean an $\mathcal{L}(\mathfrak{S})$-sentence.

\begin{rem}\label{RemarksOnReal}
Here are some remarks. First, as the language does not have the connective $\bot$, one can use induction to prove that for any sentence $A$, the set $\mathrm{Hom}_{\mathcal{C}}(\mathtt{1}, |A|)$ is non-empty. Second, for any $\exists$-free sentence, i.e., a sentence $A$ with no existential quantifiers, the object $|A|$ is always a terminal object. Therefore, there is only one map in $\mathrm{Hom}_{\mathcal{C}}(\mathtt{1}, |A|)$. Third, as we interpreted $\bot$ as $S(0)=_N 0$ and $|S(0)=_N 0|=\mathtt{1}$, we have $|\neg B|=|B \to \bot|=[|B|, \mathtt{1}] \cong \mathtt{1}$. Therefore, for any negative sentence $A$, we also have $|A| \cong \mathtt{1}$. One can read this property as an incarnation of the usual intuition that these sentences have no computational content because such sentences can have at most one realizer, encoding whether they are provable or not.
\end{rem}

\begin{exam}\label{RealizabilityForNegation}
(\emph{Realizability for negative formulas})  
For any formula $A(x^{\sigma})$, we have $\Vdash \neg A(x^{\sigma})$ iff  $\nVdash A(a)$, for any $a \in |\!|M^{\sigma}|\!|$. For the forward direction, let $\mathtt{r} \Vdash \neg A(x^{\sigma})$ and $a \in |\!|M^{\sigma}|\!|$. There is $\mathtt{a}: \mathtt{1} \to |M^{\sigma}|$ such that $\mathtt{a} \Vdash_{M^{\sigma}} a$. Hence, $\mathtt{r} \circ \mathtt{a} \Vdash \neg A(a)$. If $\mathtt{s} \Vdash A(a)$, we reach $(\mathtt{r} \circ \mathtt{a}) \cdot \mathtt{s} \Vdash S(0)=0$ which implies $S(0)=0$. Hence, $\nVdash A(a)$. For the backward direction, assume $\nVdash A(a)$, for any $a \in |\!|M^{\sigma}|\!|$. Consider $! : |M^{\sigma}| \to |\neg A| \cong \mathtt{1}$. We claim that $! \Vdash \neg A(x^{\sigma})$. To prove that, we have to show that for any $\mathtt{a}: \mathtt{1} \to |M^{\sigma}|$, any $a \in |\!|M^{\sigma}|\!|$ and any $\mathtt{s}: \mathtt{1} \to |A|$, if $\mathtt{s} \Vdash A(a)$ then $(! \circ \mathtt{a}) \cdot \mathtt{s} \Vdash S(0)=_N 0$. As there is no $\mathtt{s}: \mathtt{1} \to |A|$ satisfying $\mathtt{s} \Vdash A(a)$, we get $! \Vdash \neg A(x^{\sigma})$ which complete the proof. A similar thing is true for $\Vdash A(\bar{x})$ for a set $\bar{x}$ of variables. Especially, if $A$ is a \emph{sentence}, then  $\Vdash \neg A$ iff $\nVdash A$. 
\end{exam}

\begin{rem}\label{OnRealizabilityOfDoubleNeg}(\emph{Double negation as a collapsing operator})
Using Example \ref{RealizabilityForNegation}, one can see that $\Vdash \neg \neg A(x^{\sigma})$ iff $\Vdash A(a)$, for any $a \in |\!|M^{\sigma}|\!|$. A similar thing is true for $\Vdash \neg \neg A(\bar{x})$ for a set $\bar{x}$ of variables. Notice the difference between $\Vdash \neg \neg A(x^{\sigma})$ and $\Vdash A(x^{\sigma})$. In the latter, one must come up with a map $\mathtt{r}: |M^{\sigma}| \to |A|$ such that for any $\mathtt{a}:  \mathtt{1} \to |M^{\sigma}|$ and any $a \in |\!|M^{\sigma}|\!|$ satisfying $\mathtt{a} \Vdash_{M^{\sigma}} a$, we have $\mathtt{r} \circ \mathtt{a} \Vdash A(a)$. This simply means the existence of a \emph{uniform} way to find a realizer for $A(a)$ for $a \in |\!|M^{\sigma}|\!|$. In this sense, one can say that $\Vdash \neg \neg A(x^{\sigma})$ is actually the non-uniform version of $\Vdash A(x^{\sigma})$, where we only care about the existence of the realizer and not its uniform construction. This non-uniformity is exactly what we get from the double negation operator. We discussed this role of double negation before when we talked about the axioms $\mathsf{CT}$ and $\mathsf{WC\!-\!N}$ in Section \ref{SectionTheories}.
\end{rem}

\begin{exam}\label{RealizabilityForDisjunction}
(\emph{Realizability for disjunctive formulas}) $\Vdash A(x^{\sigma}) \vee B(x^{\sigma})$ iff there are maps $\mathtt{r}: |M^{\sigma}| \to \mathtt{N}$, $\mathtt{s}: |M^{\sigma}| \to |A|$ and $\mathtt{t}: |M^{\sigma}| \to |B|$ such that for any $\mathtt{a} : 1 \to |M^{\sigma}|$ and any $a \in |\!|M^{\sigma}|\!|$, if $\mathtt{a} \Vdash_{M^{\sigma}} a$, either $\mathtt{r} \circ \mathtt{a}=\mathtt{0}$ and $\mathtt{s} \circ \mathtt{a} \Vdash A(a)$ or $\mathtt{r} \circ \mathtt{a} = \mathtt{n}$, for some $n \geq 1$ and $\mathtt{t} \circ \mathtt{a} \Vdash B(a)$. We only prove the forward direction and leave the rest to the reader. To prove it, substitute the formula $A(x^{\sigma}) \vee B(x^{\sigma})$ with its definition $\exists y^N [(y=0 \to A(x^{\sigma})) \wedge (y \neq 0 \to B(x^{\sigma}))]$ and let $\mathtt{u}$ be its realizer. Therefore, for any $\mathtt{a} : 1 \to |M^{\sigma}|$ and any $a \in |\!|M^{\sigma}|\!|$, if $\mathtt{a} \Vdash_{M^{\sigma}} a$, there exists $n \in \mathbb{N}$ such that $\mathtt{p_0(u \circ a)}\Vdash_N n$ and  $\mathtt{p_1(u \circ a)} \Vdash  [(n=0 \to A(a)) \wedge (n \neq 0 \to B(a))]$.
There are two cases. Either $n=0$ or $n \neq 0$. In the first case, as $\mathtt{p_0(u \circ a)} \Vdash_N n$, we have $\mathtt{p_0(u \circ a)}=\mathtt{0}$. Moreover, $\mathtt{p_0p_1(u \circ a)} \Vdash (n=0 \to A(a))$. As $n=0$, we have $! \Vdash n=0$. Hence, $\mathtt{p_0p_1(u \circ a)} \cdot ! \Vdash A(a)$. In the second case, as $\mathtt{p_0(u \circ a)} \Vdash_N n$ and $n \neq 0$, we have $\mathtt{p_0(u \circ a)} = \mathtt{n}$, for some $n \geq 1$. Moreover, $\mathtt{p_1p_1(u \circ a)} \Vdash (n \neq 0 \to B(a))$. As $n \neq 0$, we have $ \nVdash n = 0$ which implies $! \Vdash n \neq 0$, by Example \ref{RealizabilityForNegation}. Hence, $\mathtt{p_1p_1(u \circ a)} \cdot ! \Vdash B(a)$. Now, define $\mathtt{r}=\mathtt{p_0u}$, $\mathtt{s}=\mathtt{ev \langle p_0p_1u, ! \rangle}$ and $\mathtt{t}=\mathtt{ev \langle p_1p_1u, ! \rangle}$. Then, as $\mathtt{r \circ a}=\mathtt{p_0(u \circ a)}$, $\mathtt{s \circ a}=\mathtt{(p_0p_1(u \circ a)}) \cdot !$, and $\mathtt{t \circ a}=(\mathtt{p_1p_1(u \circ a)}) \cdot !$, it is clear that either $\mathtt{r} \circ \mathtt{a}=\mathtt{0}$ and $\mathtt{s} \circ \mathtt{a} \Vdash A(a)$ or $\mathtt{r} \circ \mathtt{a} = \mathtt{n}$ for some $n \geq 1$ and $\mathtt{t} \circ \mathtt{a} \Vdash B(a)$. A similar thing is true for $\Vdash A(\bar{x}) \vee B(\bar{x})$ for a set $\bar{x}$ of variables. 
\end{exam}

\begin{exam}(\emph{Incompleteness of realizability})
For any \emph{sentence} $A$, the intuitionistically invalid sentence $A \vee \neg A$ is realizable. To prove, note that there are two cases to consider. Either $\Vdash A$ or $\nVdash A$. In the first case, define $\mathtt{r}=\mathtt{0}$ and set $\mathtt{s}: \mathtt{1} \to |A|$ as one of the realizers of $A$. In the second case, set $\mathtt{r}=\mathtt{1}$ and $\mathtt{t}=!: \mathtt{1} \to |\neg A| \cong \mathtt{1}$. By Example \ref{RealizabilityForNegation}, as $\nVdash A$, we have $\mathtt{t} \Vdash \neg A$. Finally, by Example \ref{RealizabilityForDisjunction}, we have $\Vdash A \vee \neg A$. 
\end{exam}

So far, we have investigated the behavior of realizability for negative and disjunctive formulas. In the next lemma, we connect the realizability to the validity in the standard model $\mathfrak{N}$. First, we need a definition. In the language of arithmetic, a type is called a \emph{$0$-type} if it is only constructed from the basic type $N$ using $\times$ but not $\to$. It is called a \emph{$1$-type}, if it is either a $0$-type or it is in the form $\sigma \to \tau$, for two $0$-types $\sigma$ and $\tau$. A formula is called a \emph{$0$-formula} (resp. \emph{$1$-formula}) if all terms in it (including variables, constants and function symbols) are of $0$-types (resp. $1$-types). A formula is called \emph{standard} if $|\!|M^{\sigma}|\!|=\mathfrak{N}^{\sigma}$, for any type $\sigma$ occurring in $A$ as the type of a term in it. For instance, any $0$-formula is a standard formula. To have another example, as the exponentiation $[K, L]$ exists in $\mathbf{Top}$, for any finite powers $K$ and $L$ of $\mathbb{N}$, the exponentiation of $[(K, =), (L, =)]$ in $\mathbf{Equ}$ is $([K, L], =)$. Additionally, as $K$ is discrete, all functions from $K$ to $L$ are continuous. Hence, if $\mathcal{C}=\mathbf{Equ}$, for any $1$-type $\sigma$, we have $|\!|M^{\sigma}|\!|=\mathfrak{N}^{\sigma}$. Therefore, all $1$-formulas are standard, when $\mathcal{C}=\mathbf{Equ}$.

\begin{rem}\label{RealizibalityForEqu}
Note that for $\mathcal{C}=\mathbf{Equ}$ and any $1$-types $\sigma$ and $\tau$, any $f \in |\!|M^{\sigma \to \tau}|\!|$ is actually a continuous function $f: \mathfrak{N}^{\sigma} \to \mathfrak{N}^{\tau}$ realized by itself as a map in $\mathbf{Equ}$. Therefore, we can write $f \Vdash_{M^{\sigma \to \tau}} f$.   
\end{rem}

\begin{lem}(Realizability vs truth) \label{TtuthLemma} 
Let $A$ be an $\exists$-free standard sentence. Then,
$\Vdash A$ iff $\mathfrak{N} \vDash A$. Especially, for any $\exists$-free $0$-sentence $A$, we have $\Vdash A$ iff $\mathfrak{N} \vDash A$.
\end{lem}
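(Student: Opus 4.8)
The plan is to prove the equivalence $\Vdash A \iff \mathfrak{N} \vDash A$ for $\exists$-free standard sentences $A$ by induction on the structure of $A$, using the fact that $\exists$-freeness means all the exponential/terminal-object peculiarities of realizability become trivial. The key structural observation, already recorded in Remark \ref{RemarksOnReal}, is that for an $\exists$-free sentence $A$ the object $|A|$ is (isomorphic to) the terminal object $\mathtt{1}$, so there is exactly one candidate realizer, and the question ``is $A$ realized?'' degenerates to a yes/no question that should track truth in $\mathfrak{N}$. Since the arithmetical language has no $\vee$ and no $\bot$ (with $\bot$ abbreviating $S(0)=_N 0$ and $\vee$ defined via an existential as in Subsection \ref{SecArithmetic}), the $\exists$-free fragment is built only from atomic equations $t=_N s$, conjunctions, implications, and universal quantifiers $\forall x^\sigma$; these are exactly the four cases of the induction.

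First I would handle the base case. For an atomic sentence $t =_\sigma s$ we have $|t=_\sigma s| = \mathtt{1}$, and by definition $!_{\mathtt 1} \Vdash t=_\sigma s$ iff $t^{\mathfrak S} = s^{\mathfrak I}$ as $\mathcal{C}$-set maps $1 \to M^\sigma$, which (interpreting such maps as elements of $|\!|M^\sigma|\!|$, and using that $A$ is standard so $|\!|M^\sigma|\!| = \mathfrak{N}^\sigma$) is exactly the statement that $t$ and $s$ denote the same element of the standard model, i.e.\ $\mathfrak{N} \vDash t=_\sigma s$. Here one must check that the standard $\mathcal{C}$-interpretation of the arithmetical function symbols ($0$, $S$, $\mathbf{R}^\sigma$, and the application/pairing operations) evaluates closed terms to the same elements that the standard model does; this is a routine unwinding of the recursive definitions of $0^{\mathfrak S}$, $S^{\mathfrak S}$, $(\mathbf{R}^\sigma)^{\mathfrak S}$ given above, combined with the fact that $\mathtt{n} \Vdash_N n$ picks out $n$, so I would state it as a preliminary sublemma and leave the verification to the reader. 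For the conjunction case, $|A \wedge B| = |A| \times |B| \cong \mathtt{1}$, and $\mathtt{r} \Vdash A \wedge B$ iff $\mathtt{p_0 r} \Vdash A$ and $\mathtt{p_1 r} \Vdash B$; since $A$ and $B$ are again $\exists$-free standard sentences, the induction hypothesis gives $\Vdash A \wedge B$ iff ($\mathfrak N \vDash A$ and $\mathfrak N \vDash B$) iff $\mathfrak N \vDash A \wedge B$. The implication case is similar: $\mathtt{r} \Vdash A \to B$ iff for every $\mathtt{s} \Vdash A$ we have $\mathtt{r}\cdot\mathtt{s} \Vdash B$; using Remark \ref{RemarksOnReal} that $\mathrm{Hom}_{\mathcal C}(\mathtt 1, |A|)$ is nonempty (so ``$\exists \mathtt{s} \Vdash A$'' is governed purely by whether $A$ is realized, not by emptiness of $|A|$), this reduces to ``if $\Vdash A$ then $\Vdash B$'', and the induction hypothesis converts this to ``if $\mathfrak N \vDash A$ then $\mathfrak N \vDash B$'', i.e.\ $\mathfrak N \vDash A \to B$.

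The main obstacle will be the universal quantifier case $A \equiv \forall x^\sigma B(x^\sigma)$. Here $|A| = [|M^\sigma|, |B|]$ with $|B| \cong \mathtt 1$, and $\mathtt{r} \Vdash \forall x^\sigma B(x^\sigma)$ iff for every $\mathtt{a} \Vdash_{M^\sigma} a$ we have $\mathtt{r}\cdot\mathtt{a} \Vdash B(a)$. The delicate point is that we need to pass from ``$B(a)$ is realized for every $a \in |\!|M^\sigma|\!|$, witnessed by a single uniform map $\mathtt{r}$'' to ``$\mathfrak N \vDash B(a)$ for every $a$'' and back. The forward direction is easy: given $\mathtt{r}$, for each $a$ pick $\mathtt{a} \Vdash_{M^\sigma} a$ (possible by the $\mathcal C$-set surjectivity condition), get $\mathtt{r}\cdot\mathtt{a} \Vdash B(a)$, and apply the induction hypothesis to the $\exists$-free standard sentence $B(a)$ (here ``standard'' is inherited because $A$ standard forces $|\!|M^\tau|\!| = \mathfrak N^\tau$ for all types $\tau$ occurring in $B$, and $\sigma$ occurs in $A$). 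The converse requires producing the uniform realizer $\mathtt{r}\colon |M^\sigma| \to |B| \cong \mathtt 1$ from the mere fact that $\mathfrak N \vDash B(a)$ for all $a$ — but since $|B| \cong \mathtt 1$ is terminal, there is a \emph{unique} map $!\colon |M^\sigma| \to |B|$, and I must check that this $!$ actually realizes $\forall x^\sigma B(x^\sigma)$, i.e.\ that $! \cdot \mathtt{a} \Vdash B(a)$ for each $a$; because $|B|\cong\mathtt 1$ this amounts to checking that the unique element of $\mathrm{Hom}_{\mathcal C}(\mathtt 1,|B|)$ realizes $B(a)$, which by the induction hypothesis follows from $\mathfrak N \vDash B(a)$. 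So the crux is really the bookkeeping that for $\exists$-free formulas ``being realized'' is a property, not extra structure, so uniformity is free. I would make this precise once, as a remark that for $\exists$-free $C$ the relation $\mathtt{r}\Vdash C$ holds for the unique $\mathtt r\colon\mathtt1\to|C|$ iff it holds for some such $\mathtt r$, and then the quantifier step goes through. Finally, the ``especially'' clause is immediate since every $0$-sentence is standard by definition (as noted in the text, $|\!|M^\sigma|\!| = \mathfrak N^\sigma$ for $0$-types), so the general statement specializes directly.
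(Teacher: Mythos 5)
Your proof is correct and follows essentially the same route as the paper's: induction on the structure of the ($\exists$-free) sentence, exploiting the observation from Remark \ref{RemarksOnReal} that $|A|$ is terminal so the candidate realizer is unique and realizability degenerates to a property that can be matched against truth in $\mathfrak{N}$ case by case, with the quantifier and implication steps handled exactly as you describe. Your extra care about closed-term evaluation agreeing with the standard model in the atomic case is a point the paper absorbs into ``by definition,'' but it does not change the argument.
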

\begin{proof}
First, recall from Remark \ref{RemarksOnReal} that for any $\exists$-free sentence $A$, there is only one map in $\mathrm{Hom}_{\mathcal{C}}(\mathtt{1}, |A|)$ that we denote by $\theta_A$. Therefore, we have to show that $\theta_A \Vdash A$ iff $\mathfrak{N} \vDash A$. The proof is by induction on $A$. For the atomic formulas, the claim holds by definition and the fact that for any type in $A$ we have $|\!|M^{\sigma}|\!|=\mathfrak{N}^{\sigma}$. The conjunction case is easy. For the implication $A=B \to C$, if $\theta_{B \to C} \Vdash B \to C$ and $\mathfrak{N} \vDash B$, then by the induction hypothesis, $\theta_B \Vdash B$. Therefore, by definition, $\theta_{B \to C} \cdot \theta_B \Vdash C$. As there is just one map in $\mathrm{Hom}(\mathtt{1}, |C|)$, we have $\theta_{B \to C} \cdot \theta_B =\theta_C$. Hence, $\theta_C \Vdash C$. Again, by the induction hypothesis, $\mathfrak{N} \vDash C$.  Therefore, $\mathfrak{N} \vDash B \to C$. For the other direction, assume $\mathfrak{N} \vDash B \to C$. To prove $\theta_{B \to C} \Vdash B \to C$, we must show that $\theta_{B \to C} \cdot \mathtt{r} \Vdash C$, for any $\mathtt{r} \Vdash B$. Since, there is only one map in $\mathrm{Hom}(\mathtt{1}, |B|)$, we have $\mathtt{r}=\theta_B$. Since $\theta_B \Vdash B$, by the induction hypothesis $\mathfrak{N} \vDash B$. As $\mathfrak{N} \vDash B \to C$, we have $\mathfrak{N} \vDash C$. By the induction hypothesis, $\theta_C \Vdash C$. As there is only one map in $\mathrm{Hom}(\mathtt{1}, |C|)$, we have $\theta_{B \to C} \cdot \mathtt{r}=\theta_C$. Hence, $\theta_{B \to C} \cdot \mathtt{r}=\theta_C \Vdash C$. Therefore, $\theta_{B \to C} \Vdash B \to C$. For the universal quantifier $A=\forall x^{\sigma} B(x)$, if $\theta_{\forall x^{\sigma} B(x)} \Vdash \forall x^{\sigma} B(x)$, then for any $a \in |\!|M^{\sigma}|\!|=\mathfrak{N}^{\sigma}$, pick $\mathtt{a}: 1 \to |M^{\sigma}|$ such that $\mathtt{a} \Vdash_{M^{\sigma}} a$. Then, $\theta_{\forall x^{\sigma} B(x)} \cdot \mathtt{a} \Vdash B(a)$. By the induction hypothesis, $\mathfrak{N} \vDash B(a)$, for any $a \in \mathfrak{N}^{\sigma}$. Hence, $\mathfrak{N} \vDash \forall x^{\sigma} B(x)$. Conversely, if $\mathfrak{N} \vDash \forall x^{\sigma} B(x)$ then $\mathfrak{N} \vDash B(a)$, for any $a \in \mathfrak{N}^{\sigma}=|\!|M^{\sigma}|\!|$. By the induction hypothesis $\theta_{B(a)} \Vdash B(a)$. Now, similar to the argument for the implication, we have $\theta_{\forall x^{\sigma} B(x)} \Vdash \forall x^{\sigma} B(x)$. For the second part, note that if $A$ is a $0$-sentence, all types $\sigma$ occurring in $A$ are in the form $N \times N \times \ldots \times N$. Hence, $|\!|M^{\sigma}|\!|=\mathbb{N}^k$. 
\end{proof}

\begin{exam}(\emph{Non-classicality of realizability})\label{NonClassRealizability}
We saw that $A \vee \neg A$ is always realizable, for any sentence $A$. In this example, we show that $A(x) \vee \neg A(x)$ is \emph{not} necessarily realizable if $A(x)$ is a formula with free variables. For instance, let $A(x)=\forall w^Ny^N \neg T(x^N, x^N, w^N, y^N)$ stating that if the code $x$ of a Turing machine applies to $x$ itself, the computation does not halt. Set $\mathcal{C}=\mathbf{Rec}$.
We claim that $ \nVdash A(x) \vee \neg A(x)$.
To see why, assume $ \Vdash A(x) \vee \neg A(x)$. By Example \ref{RealizabilityForDisjunction}, there is a map $\mathtt{r}: \mathtt{N} \to \mathtt{N} $ in $\mathbf{Rec}$ such that for any natural number $n \in \mathbb{N}$, either $\mathtt{r} \circ \mathtt{n}=\mathtt{0}$ and $\Vdash A(n)$ or $\mathtt{r} \circ \mathtt{n}=\mathtt{m}$, for some $m \geq 1$ and $\Vdash \neg A(n)$. As both $A(n)$ and $\neg A(n)$ are $\exists$-free $0$-sentences, by Lemma \ref{TtuthLemma}, realizability coincides with the truth. Therefore, as $\mathtt{m} \neq \mathtt{0}$, for any $m \geq 1$, deciding $\mathtt{r} \circ \mathtt{n}=\mathtt{0}$ also decides the truth of $A(n)$ which is the halting problem for the input $n$. As $\mathtt{r}: \mathtt{N} \to \mathtt{N}$ is a map in $\mathbf{Rec}$, it is a total computable function. Hence, $\mathtt{r} \circ \mathtt{n}=\mathtt{0}$ is a decidable predicate for the input $n$. This implies that the halting problem is also decidable which is a contradiction. Hence, $ \nVdash A(x) \vee \neg A(x)$.
\end{exam}

\begin{exam}(\emph{Undecidability of equality})
Let $x$ and $y$ be two variables of type $N \to N$. We show that $(x=y) \vee (x \neq y)$ is not necessarily realizable. Set $\mathcal{C}=\mathbf{Equ}$ and assume $\Vdash (x=y) \vee (x \neq y)$. Then, by Example \ref{RealizabilityForDisjunction}, there is a map $\mathtt{r}: [\mathtt{N}, \mathtt{N}] \times [\mathtt{N}, \mathtt{N}] \to \mathtt{N}$ in $\mathbf{Equ}$ such that 
for any functions $\alpha, \beta: \mathbb{N} \to \mathbb{N}$, if we also read them as maps in $\mathbf{Equ}$ realizing themselves, either $\mathtt{r} \circ \langle \alpha, \beta \rangle=\mathtt{0}$ and $\Vdash \alpha =\beta$ or $\mathtt{r} \circ \langle \alpha, \beta \rangle=\mathtt{m}$, for some $m \geq 1$ and $\Vdash \alpha \neq \beta$.
As both $\alpha=\beta$ and $\alpha \neq \beta$ are $\exists$-free $1$-sentences, by Lemma \ref{TtuthLemma}, realizability coincides with the truth. Therefore, as $\mathtt{m} \neq \mathtt{0}$, for any $m \geq 1$, deciding  $\mathtt{r} \circ \langle \alpha, \beta \rangle=\mathtt{0}$ also decides the equality $\alpha = \beta$. However, $\mathtt{r}: [\mathtt{N}, \mathtt{N}] \times [\mathtt{N}, \mathtt{N}] \to \mathtt{N}$ is a map in $\mathbf{Equ}$. Hence, it is a continuous function and as $\{0\}$ is open in the discrete topology of $\mathbb{N}$, the subset $\Delta=\{(\alpha, \alpha) \mid \alpha: \mathbb{N} \to \mathbb{N}\}$ must be open in $\mathbb{N}^{\mathbb{N}} \times \mathbb{N}^{\mathbb{N}}$. Let $0^{\omega}$ be the constant zero function on $\mathbb{N}$. As $\Delta$ is open, there is a basic open neighbourhood $U=\prod_{i=0}^{\omega} U_i \subseteq \Delta$ around $(0^{\omega}, 0^{\omega})$. As $U_i=\mathbb{N}$ except for finitely many $i$'s, $U$ always intersects with $\mathbb{N}^{\mathbb{N}}\times \mathbb{N}^{\mathbb{N}}-\Delta$ which is a contradiction. Therefore, $\Delta$ is not open and hence $\nVdash (x=y) \vee (x \neq y)$. 
\end{exam}

So far, we have seen some examples of (un)realizability of arithmetical formulas. Next theorem shows that any provable sentence in $\mathsf{HA^{\omega}+EXT}$ is realizable. To extend the same claim to cover the axiom of choice, we need the following definition:

\begin{dfn}
The $\mathrm{NNO}$ $(\mathtt{N}, \mathtt{Z}, \mathtt{s})$ in a $\mathrm{CCC}$ is called \emph{total} if for any map $\mathtt{a}: \mathtt{1} \to \mathtt{N}$, there is a natural number $n \in \mathbb{N}$ such that $\mathtt{a}=\mathtt{n}$.
\end{dfn}

For instance, the $\mathrm{NNO}$ in the categories $\mathbf{Set}$, $\mathbf{Rec}$ and $\mathbf{Equ}$ is total.

\begin{thm}\label{ArithmeticalSoundness}
Let $\mathcal{C}$ be a $\mathrm{CCC}$ with an $\mathrm{NNO}$. If $\mathsf{HA^{\omega}+EXT} \vdash A$, then $\Vdash A$.
If $\mathtt{2}$ exists in $\mathcal{C}$, the same also holds for $\mathsf{HA^{\omega}_2+EXT}$. If $\mathcal{C}$ is also well-pointed non-preorder category with the total $\mathrm{NNO}$, we have a similar claim for $\mathsf{HA^{\omega}+EXT+AC}$. If $M^\mathtt{2}$ is also total, we have the same for $\mathsf{HA^{\omega}_2+EXT+AC}$.
\end{thm}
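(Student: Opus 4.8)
The plan is to prove each clause by induction on derivations in the relevant theory, building on the soundness results already established. For the base case $\mathsf{HA}^\omega+\mathsf{EXT}$, I would invoke Theorem \ref{Soundness}, which already gives $\Vdash_{\mathcal{C}} A$ for anything provable in $\mathsf{IQC}^\omega+\mathsf{EXT}$; what remains is to exhibit realizers for the non-logical axioms of $\mathsf{HA}^\omega$, namely the two defining equations for $\mathbf{R}^\sigma$, the two Peano axioms $S(0)\neq 0$ and $S(x)=S(y)\to x=y$, and the induction schema. The recursor equations are realized because the standard $\mathcal{C}$-interpretation maps $\mathbf{R}^\sigma$ to a $\mathcal{C}$-set map built by primitive recursion (Lemma \ref{PrimitiveRecursion}), so both sides literally denote the same map and the trivial realizer $!_{\mathtt{1}}$ works on $|t=s|=\mathtt{1}$. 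For $S(0)\neq 0$, I would use Example \ref{RealizabilityForNegation}: it suffices that $\nVdash S(0)=0$, which holds since $0^{\mathfrak{S}}$ and $(S\circ 0)^{\mathfrak{S}}$ are distinct elements $0,1\in\mathbb{N}$. Injectivity of $S$ is realized by a map reading off the predecessor, again reducing to an equality of naturals. For induction, given realizers for $A(0)$ and for $\forall x^N(A(x)\to A(S(x)))$, I would use the $\mathrm{NNO}$ recursor in $\mathcal{C}$ to iterate the step-realizer, obtaining a map $\mathtt{N}\to|A|$ that realizes $\forall x^N A(x)$; the key computation is that $\mathtt{r}$ applied to $\mathtt{n}=\mathtt{s}^n\mathtt{Z}$ yields a realizer of $A(n)$ by an external induction on $n$, using the recursor equations $R(f,a)\circ\mathtt{Z}=a$ and $R(f,a)\circ\mathtt{s}=f\circ R(f,a)$.

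For $\mathsf{HA}^\omega_2+\mathsf{EXT}$ when $\mathtt{2}=\mathtt{1}+\mathtt{1}$ exists in $\mathcal{C}$, the argument is the same plus realizers for the three axioms governing $c^\sigma$: the equations $c^\sigma(x,y,0_2)=x$, $c^\sigma(x,y,1_2)=y$, and $c^\sigma(w0_2,w1_2,z)=wz$. The first two hold by the definition of $(c^\sigma)^{\mathfrak{S}}$ and are realized trivially on $\mathtt{1}$; the third uses the coproduct structure — any $\mathtt{z}:\mathtt{1}\to\mathtt{2}$ factors through $\mathtt{i}_0$ or $\mathtt{i}_1$ since $M^2$ has elements $\{0,1\}$, and in either case both sides agree. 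Here one must also check the induction schema over the extended language, but this is identical to the previous case since the recursor on $\mathtt{N}$ does not interact with the new type.

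For $\mathsf{HA}^\omega+\mathsf{EXT}+\mathsf{AC}$ with $\mathcal{C}$ well-pointed, non-preorder, and with total $\mathrm{NNO}$, I would additionally need to realize the axiom of choice. By Theorem \ref{AxiomOfChoice} this follows once the $\mathcal{C}$-set interpreting the source type is projective. The plan is to show that under the standard interpretation every $M^\sigma$ is total and projective: the base case $M^N=(\mathtt{N},\mathbb{N},\Vdash_N)$ is total because the $\mathrm{NNO}$ is total (every $\mathtt{a}:\mathtt{1}\to\mathtt{N}$ equals some $\mathtt{n}$) and projective because, by Lemma \ref{UniqunessofNumeralRepresentation}, $\bar m=\bar n$ forces $m=n$ in a non-preorder $\mathrm{CCC}$ with an $\mathrm{NNO}$, so each $n\in\mathbb{N}$ has a unique realizer $\mathtt{n}$. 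Then Lemma \ref{TotalProjectiveLemma}, with $\mathcal{C}$ well-pointed, propagates totality and projectivity through $\times$ and $[-,-]$, so all $M^\sigma$ are total and projective, and Theorem \ref{AxiomOfChoice} applies to every instance of $\mathsf{AC}$. For $\mathsf{HA}^\omega_2+\mathsf{EXT}+\mathsf{AC}$, the only extra hypothesis is that $M^{\mathtt{2}}$ be total; projectivity of $M^2=(\mathtt{2},\{0,1\},\Vdash_2)$ comes for free since $\mathtt{i}_0\neq\mathtt{i}_1$ in a non-preorder category (Corollary \ref{AllInjectionsAreDifferent}), and then Lemma \ref{TotalProjectiveLemma} again closes $\times$ and $[-,-]$ under both properties.

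The main obstacle I anticipate is the induction schema: one must carefully set up the realizer as an $\mathrm{NNO}$-recursor whose ``state'' object is $|A|$ itself (or, handling the parameter context $\bar x$, an exponential $[|M^{\bar\sigma}|,|A|]$ as in Lemma \ref{PrimitiveRecursion}), and then verify by external induction on the numeral $n$ that feeding $\mathtt{n}$ into this recursor produces a genuine realizer of $A(n)$ — this requires threading the step-realizer's behavior correctly through the recursor equations and using that $\mathtt{n}\Vdash_N n$ uniquely. A secondary subtlety is ensuring that totality of the $\mathrm{NNO}$ is genuinely needed only for $\mathsf{AC}$ (via Theorem \ref{AxiomOfChoice} and Lemma \ref{TotalProjectiveLemma}) and not silently assumed in the $\mathsf{EXT}$-only cases, and checking that well-pointedness enters exactly at the exponential step of Lemma \ref{TotalProjectiveLemma}$(ii)$. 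Everything else is bookkeeping with the explicit realizers constructed in the examples of Section \ref{SecRealizability}.
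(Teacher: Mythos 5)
Your proposal is correct and follows essentially the same route as the paper: Theorem \ref{Soundness} for the logical part, triviality/truth of the $\exists$-free $0$-sentences for the recursor and Peano axioms, a realizer for induction built via Lemma \ref{PrimitiveRecursion} with the premise object as parameter and verified by an external induction on the numeral $n$, and the totality--projectivity chain through Lemma \ref{UniqunessofNumeralRepresentation}, Lemma \ref{TotalProjectiveLemma}, and Theorem \ref{AxiomOfChoice} for the $\mathsf{AC}$ clauses. The only cosmetic divergences are that the paper dispatches $S(0)\neq 0$ and injectivity of $S$ via Lemma \ref{TtuthLemma} rather than Example \ref{RealizabilityForNegation}, and that projectivity of $M^{\mathtt{2}}$ is immediate from the definition of $\Vdash_2$ (each of $0,1$ has exactly one declared realizer) rather than from distinctness of the injections.
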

\begin{proof}
Using Theorem \ref{Soundness}, the logical part is realizable. The only thing to prove is the realizability of the non-logical axioms of the theories. The realizability of the defining axioms for the function symbol $\mathbf{R}^{\sigma}$ is easy to check. For $S(0) \neq 0$ and $\forall x^N (S(x)=S(y) \to (x=y))$, as they are $0$-sentences, their truth implies their realizability, by Lemma \ref{TtuthLemma}. For the induction 
\[
A(0) \wedge \forall x^N (A(x) \to A(Sx)) \to \forall x^N A(x),
\]
set $X=|A| \times [N, [|A|, |A|]]$. It is enough to find a map $\mathtt{f}: N \times X \to |A|$ such that for any $\mathtt{r}: 1 \to X$ and any $n \in \mathbb{N}$, if $\mathtt{r} \Vdash A(0) \wedge \forall x^N (A(x) \to A(Sx))$, then $\mathtt{f} \circ \langle \mathtt{n}, \mathtt{r} \rangle \Vdash A(n)$.
Consider $\mathtt{p_0}: X \to |A|$ and $\mathtt{p_1}: X \to [N, [|A|, |A|]]$ as the projection maps
and $\mathtt{e}: N \times [N, [|A|, |A|]] \times |A| \to |A|$ as the canonical evaluation map.
Then, define $\mathtt{h}: N \times X \times |A| \to |A|$ as: 
\[\begin{tikzcd}[ampersand replacement=\&]
	{N \times X \times |A|} \&\&\& {N \times [N, [|A|, |A|]] \times |A|} \&\& {|A|}
	\arrow["{\mathtt{id}_N \times \mathtt{p_1} \times \mathtt{id}_{|A|}}"', from=1-1, to=1-4]
	\arrow["{\mathtt{e}}"', from=1-4, to=1-6]
\end{tikzcd}\]
Using Lemma \ref{PrimitiveRecursion}, there is a map $\mathtt{f}: N \times X \to |A|$ such that
\[\begin{tikzcd}[ampersand replacement=\&]
	X \&\& {N \times X} \& {N \times X} \&\& {N \times X} \\
	\\
	\&\& {|A|} \& {N \times X \times |A|} \&\& {|A|}
	\arrow["{\langle \mathtt{Z} \circ !, \mathtt{id}_X \rangle}", from=1-1, to=1-3]
	\arrow["{\mathtt{p_0}}"', from=1-1, to=3-3]
	\arrow["{\mathtt{f}}", dashed, from=1-3, to=3-3]
	\arrow["{\mathtt{s} \times \mathtt{id}_X}", from=1-4, to=1-6]
	\arrow["{\langle \mathtt{id}_{N \times X}, \mathtt{f} \rangle}"', dashed, from=1-4, to=3-4]
	\arrow["{\mathtt{f}}", dashed, from=1-6, to=3-6]
	\arrow["{\mathtt{h}}"', from=3-4, to=3-6]
\end{tikzcd}\]
We claim that this $\mathtt{f}$ works. First, for any map $\mathtt{r}: 1 \to X$ and any number $n \in \mathbb{N}$, note that 
$\mathtt{f} \circ \langle \mathtt{0}, \mathtt{r} \rangle=\mathtt{p_0r}$ and $\mathtt{f} \circ \langle \mathtt{n+1}, \mathtt{r} \rangle=(\mathtt{p_1r} \cdot \mathtt{n}) \cdot \mathtt{f}(\mathtt{n}, \mathtt{r})$. Now, if $\mathtt{r} \Vdash A(0) \wedge \forall x (A(x) \to A(S(x)))$, then by induction on natural numbers, it is easy to see that $\mathtt{f} \circ \langle \mathtt{n}, \mathtt{r} \rangle \Vdash A(n)$, for any natural number $n \in \mathbb{N}$. 

For the second part, as the $\mathrm{NNO}$ is total, the $\mathcal{C}$-set $N$ is total. As $\mathcal{C}$ is not a preordered set, the $\mathcal{C}$-set $N$ is projective, by Lemma \ref{UniqunessofNumeralRepresentation}. Now, as $\mathcal{C}$ is well-pointed, the standard $\mathcal{C}$-interpretation maps all types to projective total $\mathcal{C}$-sets, by Lemma \ref{TotalProjectiveLemma}. Hence, by Theorem \ref{AxiomOfChoice}, the axiom of choice is realized.
The case involving the type $2$ is easy. In the second case, it is enough to use an argument similar to the proof of part $(iii)$ of Lemma \ref{DisjointnessInBCC} to show that $M^\mathtt{2}$ is projective. The rest is similar to the previous case.
\end{proof}

Using Theorem \ref{ArithmeticalSoundness}, one can easily establish some consistency and unprovablity statements. First, we show that the negation of some classical tautologies are consistent with $\mathsf{HA}^{\omega}+\mathsf{EXT+AC}$.

\begin{cor}\label{FailureOfExcludedMiddle}
There are $0$-formulas $A(x)$ and $B(x)$ such that $\neg \forall x^N (A(x) \vee \neg A(x))$ and $\neg[\forall x^N \neg \neg B(x) \to \neg \neg \forall x^N B(x)]$ are consistent with $\mathsf{HA}^{\omega} + \mathsf{EXT+AC}$.
\end{cor}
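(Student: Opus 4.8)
The plan is to run the recursive realizability interpretation of Theorem~\ref{ArithmeticalSoundness} with $\mathcal{C}=\mathbf{Rec}$ and to choose the two formulas so that their negations are realizable over $\mathbf{Rec}$ while the whole theory $\mathsf{HA}^{\omega}+\mathsf{EXT}+\mathsf{AC}$ stays realizable. First I would check that $\mathbf{Rec}$ meets the hypotheses of Theorem~\ref{ArithmeticalSoundness}: it is a $\mathrm{CCC}$ with an $\mathrm{NNO}$ (constructed in Subsection~\ref{SubsecRecConWorlds}), its $\mathrm{NNO}$ is total, it is not a preordered set (for instance the two injections $\mathtt{1}\to\mathtt{1}+\mathtt{1}$ are distinct computable functions), and it is well-pointed — this last point is routine, since for each $a\in A$ the constant computable map $\mathtt{1}\to(A,\sim_A)$ realizing $[a]$ separates any two parallel equivariant maps that disagree at $a$. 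Hence every theorem of $\mathsf{HA}^{\omega}+\mathsf{EXT}+\mathsf{AC}$ is realizable over $\mathbf{Rec}$. The consistency step is then standard: if $\mathsf{HA}^{\omega}+\mathsf{EXT}+\mathsf{AC}+\varphi$ derived $\bot$, then combining a realizer of the theory with a realizer of $\varphi$ would realize the sentence $\bot=(S(0)=_N 0)$; but $\nVdash S(0)=_N 0$, since $\bar{1}\neq\bar{0}$ in $\mathbf{Rec}$ by Lemma~\ref{UniqunessofNumeralRepresentation} ($\mathbf{Rec}$ being a non-preorder $\mathrm{CCC}$ with an $\mathrm{NNO}$). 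So it suffices to realize over $\mathbf{Rec}$ the two negations in the statement.

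For the first negation, take $A(x)\equiv\forall w^{N}y^{N}\,\neg T(x,x,w,y)$, the $\exists$-free $0$-formula of Example~\ref{NonClassRealizability}. Currying the realizer of a universal statement shows that $\Vdash\forall x^{N}(A(x)\vee\neg A(x))$ is equivalent to $\Vdash A(x)\vee\neg A(x)$ in the free-variable sense, and the latter fails over $\mathbf{Rec}$ by Example~\ref{NonClassRealizability}, because a realizer would decide the halting problem. Since $\forall x^{N}(A(x)\vee\neg A(x))$ is a sentence, the sentence form of Example~\ref{RealizabilityForNegation} ($\Vdash\neg E$ iff $\nVdash E$) gives $\Vdash\neg\forall x^{N}(A(x)\vee\neg A(x))$, which is what we want.

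For the second negation I would take $B(x)\equiv A(x)\vee\neg A(x)$, again a $0$-formula. Then $\forall x^{N}\neg\neg B(x)$ is intuitionistically provable, hence realizable over $\mathbf{Rec}$ by Theorem~\ref{ArithmeticalSoundness}. Writing $D$ for $\forall x^{N}\neg\neg B(x)\to\neg\neg\forall x^{N}B(x)$, the objects $|\forall x^{N}\neg\neg B(x)|$, $|\neg\neg\forall x^{N}B(x)|$, and hence $|D|$, are all terminal, so a realizer of $D$ would send the (existing) realizer of its premise to a realizer of $\neg\neg\forall x^{N}B(x)$; but by the sentence forms of Example~\ref{RealizabilityForNegation} and Remark~\ref{OnRealizabilityOfDoubleNeg}, $\Vdash\neg\neg\forall x^{N}B(x)$ is equivalent to $\Vdash\forall x^{N}B(x)$, i.e. to $\Vdash\forall x^{N}(A(x)\vee\neg A(x))$, which was shown false above. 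Thus $\nVdash D$, and Example~\ref{RealizabilityForNegation} again yields $\Vdash\neg D$. Alternatively, one may take $B(x)\equiv\exists w^{N}R(x,w)$ for a primitive recursive $R$ that is total on $\mathbb{N}$ but admits no recursive choice function (such $R$ exist by a standard recursion-theoretic argument); then $\forall x^{N}\neg\neg B(x)$ is realizable because $B$ is true everywhere, while $\forall x^{N}B(x)$ is not, since a realizer would compute such a choice function.

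I expect the main obstacle here to be bookkeeping rather than a hard construction: one must confirm that the chosen $A$ and $B$ are genuinely $0$-formulas once $\vee$ and $\neg$ are unfolded according to the official definitions, and keep track of the repeated use of the facts that negated and negative sentences have terminal interpretation objects and that realizability of $\neg E$ and $\neg\neg E$ for sentences $E$ reduces to (non-)realizability of $E$. The only genuinely recursion-theoretic ingredient — undecidability of the halting problem — has already been packaged into Example~\ref{NonClassRealizability}, so no new combinatorial work is required.
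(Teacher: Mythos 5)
Your proof is correct and follows essentially the same route as the paper: it uses the same $A(x)=\forall w^N y^N\,\neg T(x,x,w,y)$ and $B(x)=A(x)\vee\neg A(x)$, realizes the theory over $\mathbf{Rec}$ via Theorem \ref{ArithmeticalSoundness}, and derives both negations from the non-realizability of $\forall x^N(A(x)\vee\neg A(x))$ in Example \ref{NonClassRealizability} together with Example \ref{RealizabilityForNegation} and Remark \ref{OnRealizabilityOfDoubleNeg}. The extra details you supply (well-pointedness of $\mathbf{Rec}$, the explicit consistency step via $\nVdash S(0)=_N 0$, and the alternative choice of $B$) are fine but not needed beyond what the paper already records.
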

\begin{proof}
Set $A(x)=\forall w^N y^N \neg T(x, x, w, y)$ and $B(x)=(A(x) \vee \neg A(x))$. As $\mathbf{Rec}$ is a well-pointed non-preorder category with the total $\mathrm{NNO}$, we have the soundness of $\mathsf{HA}^{\omega} + \mathsf{EXT+AC}$. Moreover, in Example \ref{NonClassRealizability}, we showed that $\forall x^N (A(x) \vee \neg A(x))$ is not realizable for the standard $\mathbf{Rec}$-interpretation. Hence, by Example \ref{RealizabilityForNegation}, we reach $\Vdash \neg \forall x^N (A(x) \vee \neg A(x))$. For the other part, we claim that $\nVdash \forall x^N \neg \neg B(x) \to \neg \neg \forall x^N B(x)$. Assume otherwise. Then, as $\forall x^N \neg \neg B(x)$ is an intuitionistic tautology, we have $\Vdash \forall x^N \neg \neg B(x)$. Therefore, we must have $\Vdash \neg \neg \forall x^N B(x)$ which is equivalent to $\Vdash \forall x^N B(x)$. However, we showed that $\nVdash \forall x^N (A(x) \vee \neg A(x))$.
\end{proof}

The next consistency statement is the consistency of Church-Turing thesis:
\begin{cor}\label{ConsistencyOfCT}
The theory $\mathsf{HA^{\omega}+EXT+AC+CT_{\neg \neg}}$ is consistent. 
\end{cor}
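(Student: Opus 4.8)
The plan is to realize $\mathsf{HA^{\omega}+EXT+AC}$ in the recursive world $\mathbf{Rec}$ and to verify separately that the weakened Church-Turing thesis $\mathsf{CT}_{\neg\neg}$ is realizable under the standard $\mathbf{Rec}$-interpretation. For the first part I would invoke Theorem \ref{ArithmeticalSoundness}: the category $\mathbf{Rec}$ is a $\mathrm{CCC}$ with an $\mathrm{NNO}$, it is well-pointed, it is not a preorder, and its $\mathrm{NNO}$ $(\mathtt{N},\mathtt{Z},\mathtt{s})$ is total, so every theorem of $\mathsf{HA^{\omega}+EXT+AC}$ is realized. Since the language lacks $\bot$, consistency will follow once I exhibit a \emph{non}-realizable sentence, or more directly from the fact that the two theories combined remain sound: if the combined theory proved $S(0)=0$, then $\Vdash S(0)=0$ would hold, forcing $\mathtt{s}\circ\mathtt{Z}=\mathtt{Z}$ in $\mathbf{Rec}$, which is false (the successor and zero maps on $\mathbb{N}$ are distinct). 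So the whole argument reduces to checking $\Vdash \mathsf{CT}_{\neg\neg}$.

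Next I would unwind what realizing $\mathsf{CT}_{\neg\neg}$ demands. Writing $\mathsf{CT}_{\neg\neg}$ as $\forall f^{N\to N}\,\neg\neg\, C(f)$ where $C(f)$ is the $\exists$-free-modulo-one-$\exists$ statement $\exists e^N\forall x^N\exists w^N\exists y^N[T(e,x,w,y)\wedge fx=y]$, by Remark \ref{OnRealizabilityOfDoubleNeg} it suffices to show that for every $\varphi\in|\!|M^{N\to N}|\!|$ we have $\Vdash C(\varphi)$; here the double negation is precisely the device that lets me assert the \emph{mere} existence of a code without producing it uniformly. Now in $\mathbf{Rec}$ the exponential $[N,N]=M^{N\to N}$ has underlying set exactly the \emph{total recursive} functions $\varphi:\mathbb{N}\to\mathbb{N}$ (cf. Example \ref{ExampleOfRepresentables} and the construction of exponentials in $\mathbf{Rec}$), and such a $\varphi$ is realized by any code $e$ of a Turing machine computing it, i.e. $\mathtt{e}\Vdash_{M^{N\to N}}\varphi$ when $e\cdot n=\varphi(n)$ for all $n$. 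Then I need a realizer $\mathtt{r}:[\mathtt{N},\mathtt{N}]\to|C|$ that, given such a code $\mathtt{e}$, produces a realizer of $C(\varphi)$: it should return (the pair consisting of) $e$ itself as the witness for the outer existential, together with a realizer of $\forall x^N\exists w^N\exists y^N[T(e,x,w,y)\wedge fx=y]$. The latter is realizable by the map computing, on input $n$, the halting computation $w$ of machine $e$ on $n$ and its output $y=\varphi(n)$ — this is available because $\varphi$ is total, so $e$ halts on every input, and $T$ is a decidable ($\exists$-free $0$-) predicate, whose truth coincides with its realizability by Lemma \ref{TtuthLemma}. The one subtlety is that $\mathtt{r}$ must be a genuine morphism of $\mathbf{Rec}$, i.e. tracked by a partial recursive function: the passage $e\mapsto e$ is the identity and the passage $e\mapsto(\text{the function }n\mapsto\langle w,y\rangle)$ is obtained by an $S^m_n$-style application packaging the universal machine, so $\mathtt{r}$ is indeed recursive.

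The main obstacle — and the reason $\mathsf{CT}$ had to be weakened to $\mathsf{CT}_{\neg\neg}$ in the first place — is the uniformity issue hidden in ``produce a code $e$ from a construction of $f$.'' In $\mathbf{Rec}$ a total recursive $\varphi$ is typically realized by \emph{many} different codes, and there is no recursive way to pick out, say, a canonical one; but because we only need to realize $\neg\neg C(\varphi)$, Remark \ref{OnRealizabilityOfDoubleNeg} tells us we merely need $\Vdash C(\varphi)$ for each $\varphi$ individually, with no demand that the witnessing data be assembled uniformly in a realizer of $\varphi$ — in fact any code of $\varphi$ will do as witness, so even the individual statement $C(\varphi)$ is realized outright here. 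I would therefore spell out the two cases-free argument of Remark \ref{OnRealizabilityOfDoubleNeg} carefully, confirm that $|M^{N\to N}|$'s elements in $\mathbf{Rec}$ are exactly the total recursive functions (so that every such $\varphi$ has at least one realizing code, which is what makes $C(\varphi)$ realizable), and check that the auxiliary map extracting $(w,y)$ from $(e,n)$ is tracked by a partial recursive function via Kleene's $S^m_n$ theorem. Combining $\Vdash\mathsf{CT}_{\neg\neg}$ with the soundness of $\mathsf{HA^{\omega}+EXT+AC}$ from Theorem \ref{ArithmeticalSoundness} and the observation that $\nVdash S(0)=0$ yields the consistency of $\mathsf{HA^{\omega}+EXT+AC+CT_{\neg\neg}}$.
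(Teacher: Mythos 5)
Your proposal is correct and follows essentially the same route as the paper: set $\mathcal{C}=\mathbf{Rec}$, invoke Theorem \ref{ArithmeticalSoundness} for $\mathsf{HA^{\omega}+EXT+AC}$, note that $|\!|M^{N\to N}|\!|$ consists of the total recursive functions, and use Remark \ref{OnRealizabilityOfDoubleNeg} to reduce $\mathsf{CT}_{\neg\neg}$ to realizing the code-existence statement for each such function individually via one of its codes. Your additional details (the explicit non-realizability of $S(0)=0$ and the $S^m_n$-tracking of the inner realizer) only flesh out steps the paper leaves implicit.
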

\begin{proof}
Set $\mathcal{C}=\mathbf{Rec}$. As $\mathbf{Rec}$ is a well-pointed non-preorder category with the total $\mathrm{NNO}$, using Theorem \ref{ArithmeticalSoundness}, it is enough to show that
\[
\mathsf{CT}_{\neg \neg}: \quad \quad \forall f^{N \to N} \neg \neg \exists e^N \forall x^N \exists w^N \exists y^N [T(e, x, w, y) \wedge (fx=y)],
\]
is realizable. For that purpose, first notice that the elements in $|\!|[N, N]|\!|$ are just total recursive functions. Now, using Remark \ref{OnRealizabilityOfDoubleNeg}, it is enough to show that for any total recursive function $g: \mathbb{N} \to \mathbb{N}$, the statement 
\[
\exists e^N \forall x^N \exists w^N \exists y^N [T(e, x, w, y) \wedge (gx=y)],
\]
is realizable. As $g$ is computable, it has a code $m \in \mathbb{N}$. It is easy to show that $\Vdash \forall x^N \exists w^N \exists y^N [T(m, x, w, y) \wedge (gx=y)]$.
\end{proof}

\begin{rem}
As explained before, to prove the consistency of the anti-classical theory $\mathsf{HA^{\omega}+EXT+AC+CT_{\neg \neg}}$, one cannot use the usual classical image. However, as observed in Corollary \ref{ConsistencyOfCT}, one can take an alternative world, i.e., the computable world $\mathbf{Rec}$ and interpret arithmetical proofs in that world. In fact, the possibility of realizing Church-Turing thesis was quite expected in such a world as every ``function" in that world is computable. One can go even further to say that the theory $\mathsf{HA^{\omega}+EXT+AC+CT_{\neg \neg}}$ with its Church-Turing thesis is actually a device to describe such computable worlds.
\end{rem}

To have a similar consistency result for $\mathsf{HA^{\omega}+EXT+AC+WC\!-\!N_{\neg \neg}}$, we need the following lemma:

\begin{lem} \label{OnTheExistenceOftheModule}
\begin{description}
\item[$(i)$]
There exists a function $M: [\mathbb{N}^\mathbb{N}, \mathbb{N}] \times \mathbb{N}^\mathbb{N} \to \mathbb{N}$ that reads continuous functions $f: \mathbb{N}^\mathbb{N} \to \mathbb{N}$ and $\alpha: \mathbb{N} \to \mathbb{N}$ and returns a number $M(f, \alpha)$ such that for any $\beta: \mathbb{N} \to \mathbb{N}$, if $\alpha=_{M(f, \alpha)} \beta$, then $f(\alpha)=f(\beta)$. 
\item[$(ii)$]
There exists a \emph{continuous} function $M: [2^\mathbb{N}, \mathbb{N}] \to \mathbb{N}$ that reads a continuous function $f: 2^\mathbb{N} \to \mathbb{N}$ and returns a number $M(f)$ such that for any $\alpha, \beta: \mathbb{N} \to 2$, if $\alpha=_{M(f)} \beta$, then $f(\alpha)=f(\beta)$.
\end{description}
\end{lem}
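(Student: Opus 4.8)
The plan is to prove the two parts using the standard facts about continuity of maps on $\mathbb{N}^{\mathbb{N}}$ and $2^{\mathbb{N}}$, together with compactness of $2^{\mathbb{N}}$ for part $(ii)$. For part $(i)$, recall that $\mathbb{N}$ is discrete and locally compact, so the exponential $[\mathbb{N}^{\mathbb{N}}, \mathbb{N}]$ exists in $\mathbf{Top}$ (cf.\ Example \ref{TopAndExp}), and a continuous $f : \mathbb{N}^{\mathbb{N}} \to \mathbb{N}$ is exactly a function whose value at $\alpha$ depends on only finitely many coordinates of $\alpha$; concretely, since $\{f(\alpha)\}$ is open and its preimage is open, there is a basic open box $U = \prod_i U_i$ around $\alpha$ contained in $f^{-1}(\{f(\alpha)\})$, and $U_i = \mathbb{N}$ for all but finitely many $i$. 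I would define $M(f,\alpha)$ to be the least $n$ such that the cylinder $\{\beta : \alpha =_n \beta\}$ is contained in $f^{-1}(\{f(\alpha)\})$; such an $n$ exists by the box description above (take $n$ larger than the largest index where $U_i \neq \mathbb{N}$). Then by construction, $\alpha =_{M(f,\alpha)} \beta$ forces $f(\beta) = f(\alpha)$. No continuity of $M$ itself is claimed in $(i)$, so there is nothing further to check: $M$ is simply a set-theoretic function on $[\mathbb{N}^{\mathbb{N}},\mathbb{N}] \times \mathbb{N}^{\mathbb{N}}$, well-defined by the least-number principle.

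For part $(ii)$, the space $2^{\mathbb{N}} = \{0,1\}^{\mathbb{N}}$ is compact and Hausdorff, and a continuous $f : 2^{\mathbb{N}} \to \mathbb{N}$ is locally constant. The key observation is that, because $2^{\mathbb{N}}$ is compact, such an $f$ is \emph{globally} determined by a finite initial segment: the cylinders $\{\beta : \beta =_n \gamma\}$ for varying $\gamma$ and fixed large $n$ form a finite open cover refining the partition into fibers of $f$, so there is a \emph{single} number $N$ such that whenever $\alpha =_N \beta$ we have $f(\alpha) = f(\beta)$. I would define $M(f)$ to be the least such $N$; it exists by compactness plus the least-number principle. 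The remaining, and genuinely nontrivial, task is to verify that $M : [2^{\mathbb{N}}, \mathbb{N}] \to \mathbb{N}$ is itself \emph{continuous}, i.e.\ locally constant on $[2^{\mathbb{N}},\mathbb{N}]$ equipped with the compact-open topology (which here coincides with the topology described in Example \ref{TopAndExp}). For this I would argue that if $g$ is close to $f$ in the compact-open topology — specifically if $g$ agrees with $f$ on all cylinders of length $M(f)$, a condition that carves out a basic open neighborhood of $f$ — then $M(g) = M(f)$: such a $g$ certainly respects the same modulus $M(f)$, giving $M(g) \le M(f)$, and conversely $f$ respects $M(g)$ by the same agreement, giving $M(f) \le M(g)$. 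Hence $M$ is constant on a neighborhood of each $f$, so continuous.

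The main obstacle is the continuity of $M$ in part $(ii)$: one must pin down precisely which basic open sets of the function space $[2^{\mathbb{N}}, \mathbb{N}]$ to use and check that the neighborhood "$g$ agrees with $f$ on all length-$M(f)$ cylinders" is genuinely open there. This amounts to unwinding the subbasis $O_{K,V} = \{h : h[K] \subseteq V\}$ for compact $K \subseteq 2^{\mathbb{N}}$ and open $V \subseteq \mathbb{N}$: taking $K$ to range over the (finitely many, clopen, hence compact) length-$M(f)$ cylinders and $V = \{f(\alpha)\}$ for a representative $\alpha$ in each, the finite intersection of the corresponding $O_{K,V}$ is exactly the desired neighborhood of $f$. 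Once this identification is made, the two-sided inequality $M(f) = M(g)$ argued above closes the proof. I would also remark that the compactness of $2^{\mathbb{N}}$ is essential and genuinely used — it is what upgrades the pointwise moduli of part $(i)$ to a uniform modulus and simultaneously makes the modulus function continuous, whereas for $\mathbb{N}^{\mathbb{N}}$, which is not compact, no such uniform or continuous modulus exists, which is exactly why part $(i)$ only asserts existence of a (possibly discontinuous) $M$ with a dependence on $\alpha$.
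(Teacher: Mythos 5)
Your proof is correct and follows essentially the same route as the paper: part $(i)$ via the basic open box around $\alpha$ inside $f^{-1}(\{f(\alpha)\})$ together with the least-number principle, and part $(ii)$ via compactness of $2^{\mathbb{N}}$ to upgrade the pointwise moduli to a uniform one. The only cosmetic difference is in the continuity check of part $(ii)$: you exhibit around each $f$ a finite intersection of subbasic opens $O_{K_w,\{f(\alpha_w)\}}$ on which $M$ is constant (a neighbourhood which is in fact the singleton $\{f\}$), whereas the paper verifies directly that each preimage $M^{-1}(m)$ is open; both arguments rest on the same compact-open subbasis and the compactness of the cylinders $K_w$.
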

\begin{proof}
First, recall from Example \ref{TopAndExp} that the function space $\mathbb{N}^{\mathbb{N}}$ as a topological space is the same as the product space $\mathbb{N}^{\omega}$. Therefore, we work with the latter as it is easier to work with. For $(i)$, for any $f$ and $\alpha$, as $f$ is continuous, $f^{-1}(\{f(\alpha)\})$ must be open. Hence, $\alpha \in f^{-1}(\{f(\alpha)\})$ has a neighbourhood in the form $\{\alpha_0\} \times \cdots \times \{\alpha_{n-1}\} \times \mathbb{N}^{\omega}$  around $\alpha$ in $f^{-1}(f(\alpha))$, where $\alpha_i \in \mathbb{N}$ is the $i$-th argument of $\alpha$. Therefore, for any $\beta \in \mathbb{N}^{\mathbb{N}}$, if $\alpha=_n \beta$, we have $f(\alpha)=f(\beta)$. Define $M(f, \alpha)$ as the least such number $n$.
By definition, it is clear that $M(f, \alpha)$ has the required property. 

For $(ii)$, for any finite string $w=w_0w_1 \ldots w_n$ of zeros and ones, define $K_{w}=\{w_0\} \times \cdots \times \{w_n\} \times \{0, 1\}^{\omega}$ as a subset of $2^\mathbb{N}$ and notice that it is both open and compact. Now, as the first step, we prove that for any $f: 2^{\mathbb{N}} \to \mathbb{N}$, there is at least one $m \in \mathbb{N}$ such that
for any $\alpha, \beta \in 2^\mathbb{N}$, if $\alpha=_m \beta$, then $f(\alpha)=f(\beta)$. Note that $2^{\mathbb{N}}$ is a compact space and as $f$ is continuous, $I=f[2^{\mathbb{N}}]$ is a compact subspace of the discrete space $\mathbb{N}$. Hence, it is finite. Moreover, as $\{k\}$ is both open and closed, then so is $f^{-1}(k)$, for any $k \in I$. Therefore, $f^{-1}(k)$ is compact. As it is open, it is a union of opens in the form $K_w$, for some finite strings $w$. By compactness, we can assume that the number of $w$'s is finite. Hence, $f^{-1}(k)=\bigcup_{i=1}^{n_k} K_{w_{k, i}}$. Pick $m$ as the maximum of the lengths of all $w_{k, i}$'s, for all $k \in I$ and $i \leq n_k$. Therefore, it is clear that if $\alpha =_m \beta$, then $\alpha \in K_{w_{k, i}}$ iff $\beta \in K_{w_{k, i}}$ which implies $\alpha \in f^{-1}(k)$ iff $\beta \in f^{-1}(k)$ and hence $f(\alpha)=f(\beta)$. 
This completes the proof of claim.
Now, define $M(f)$ as the least such number $m$. By definition, it is clear that $M(f)$ has the claimed property. The only thing to prove is the continuity of $M$. As $M(f)$ is the least possible number with the property
\[
\forall \alpha \beta \in 2^{\mathbb{N}} \, [\alpha=_m \beta \to f(\alpha)=f(\beta)],
\]
it is easy to see that
$M^{-1}(m)$ is the set of $f: 2^{\mathbb{N}} \to \mathbb{N}$ such that $f$ is constant over $K_w$, for all $w \in  \{0, 1\}^m$ but there are distinct $\alpha, \beta \in 2^{\mathbb{N}}$ such that $\alpha=_{m-1} \beta$, while $f(\alpha) \neq f(\beta)$. Therefore, one can represent $M^{-1}(m)$ as the intersection of 
\[
\bigcup_{n \in \mathbb{N}} \bigcap_{w \in \{0, 1\}^m} \{f: 2^{\mathbb{N}} \to \mathbb{N} \mid f[K_w]=\{n\} \}
\]
and
\[
\bigcup_{w \in \{0, 1\}^{m-1}} \bigcup_{\alpha \neq \beta \in K_{w}} \, \bigcup_{n \neq n' \in \mathbb{N}} [\{f: 2^{\mathbb{N}} \to \mathbb{N} \mid f(\alpha)=n\} \cap \{f: 2^{\mathbb{N}} \to \mathbb{N} \mid f(\beta)=n'\}].
\]
Note that the topology of $[2^{\mathbb{N}}, \mathbb{N}]$ is the usual compact-open topology, as observed in Example \ref{TopAndExp}. Therefore,
using the continuity of the evaluation function and the compactness of $K_w$, it is easy to prove that $M^{-1}(m)$ is open. Hence, $M$ is continuous.
\end{proof}

\begin{cor} The theory
$\mathsf{HA_2^{\omega}+EXT+AC}$ is consistent with the axioms $\WCN_{\neg\neg}$ and
$\UWCN$, where
\[
\UWCN: \quad \forall f^{2^N \to N} \exists x^N \forall \alpha^{N\to 2} \beta^{N \to 2} (\alpha=_{x} \beta \to f\alpha=f\beta),
\]
is the \emph{uniform continuity principle}, stating that for any map $f: 2^N \to N$, the value of $f$ only depends on finitely many of the values of its input. 
\end{cor}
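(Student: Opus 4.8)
The plan is to instantiate the realizability machinery with $\mathcal{C} = \mathbf{Equ}$ and the standard $\mathbf{Equ}$-interpretation, and to verify that both $\WCN_{\neg\neg}$ and $\UWCN$ are realizable over this interpretation. Since $\mathbf{Equ}$ is a well-pointed non-preorder $\mathrm{CCC}$ with a total $\mathrm{NNO}$, and (as noted in the excerpt) the coproduct $\mathtt{2} = \mathtt{1} + \mathtt{1}$ exists in $\mathbf{Equ}$ with $M^{\mathtt{2}}$ total, Theorem~\ref{ArithmeticalSoundness} gives soundness of $\mathsf{HA}_2^{\omega} + \mathsf{EXT} + \mathsf{AC}$ over $\mathbf{Equ}$. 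Thus it remains only to realize the two continuity axioms, and then consistency follows: if $\mathsf{HA}_2^{\omega} + \mathsf{EXT} + \mathsf{AC} + \WCN_{\neg\neg} + \UWCN$ proved $\bot = (S(0) =_N 0)$, then $\bot$ would be realizable, contradicting Lemma~\ref{TtuthLemma} (as $\mathfrak{N} \not\vDash S(0) =_N 0$).

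For $\UWCN$, recall that $\UWCN$ has the form $\forall f^{2^N \to N} \exists x^N \forall \alpha^{N \to 2} \beta^{N \to 2}(\alpha =_x \beta \to f\alpha = f\beta)$. Using Remark~\ref{RealizibalityForEqu} and the fact that when $\mathcal{C} = \mathbf{Equ}$ every $1$-formula is standard, the elements of $|\!|M^{2^N \to N}|\!|$ are exactly the continuous functions $f \colon 2^{\mathbb{N}} \to \mathbb{N}$, each realized by itself. I would take the realizer to be built from the \emph{continuous} module function $M \colon [2^{\mathbb{N}}, \mathbb{N}] \to \mathbb{N}$ provided by part $(ii)$ of Lemma~\ref{OnTheExistenceOftheModule}: since $M$ is continuous, it induces a $\mathbf{Equ}$-map $[\mathtt{N},\mathtt{2}] \to \mathtt{N}$ (here $[2^{\mathbb{N}}, \mathbb{N}]$ carries the compact-open topology, which coincides with the exponential object in $\mathbf{Equ}$ on these standard objects), and $M(f)$ is by construction a continuity modulus for $f$ on $2^{\mathbb{N}}$. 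The inner part $\forall \alpha \beta(\alpha =_x \beta \to f\alpha = f\beta)$ is an $\exists$-free $1$-sentence once $f$ and $x$ are fixed, so by Lemma~\ref{TtuthLemma} its realizability reduces to its truth in $\mathfrak{N}$, which is exactly the defining property of $M(f)$. Packaging these via the $\lambda$-abstraction and pairing maps (as in Example~\ref{RealizabilityForDisjunction} and the existential-quantifier clause) yields a realizer, and the uniformity in $f$ is exactly the continuity of $M$.

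For $\WCN_{\neg\neg}$, which reads $\forall f^{N^N \to N} \forall \alpha^{N \to N} \neg\neg \exists x^N \forall \beta^{N \to N}(\alpha =_x \beta \to f\alpha = f\beta)$, I would use Remark~\ref{OnRealizabilityOfDoubleNeg}: realizing the $\neg\neg$-statement amounts to showing that for every continuous $f \colon \mathbb{N}^{\mathbb{N}} \to \mathbb{N}$ and every $\alpha \colon \mathbb{N} \to \mathbb{N}$ (these being the elements of the relevant $1$-types), the sentence $\exists x^N \forall \beta^{N \to N}(\alpha =_x \beta \to f\alpha = f\beta)$ is realizable, but \emph{without} needing a uniform-in-$(f,\alpha)$ construction of $x$. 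Here part $(i)$ of Lemma~\ref{OnTheExistenceOftheModule} supplies, for each such $f$ and $\alpha$, a concrete number $M(f,\alpha)$ with the modulus property, so one simply takes $x := \overline{M(f,\alpha)}$, pairs it with the unique realizer of the resulting $\exists$-free inner sentence (true in $\mathfrak{N}$, hence realizable by Lemma~\ref{TtuthLemma}), and then applies the double-negation collapse. Note this is exactly the point where the $\neg\neg$ is essential: the module function of part $(i)$ need not be continuous, so no realizer uniform in $f$ and $\alpha$ is available, which is why only the truncated axiom $\WCN_{\neg\neg}$, not $\WCN$ itself, can be handled this way.

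The main obstacle I anticipate is bookkeeping rather than conceptual: carefully checking that the standard $\mathbf{Equ}$-interpretation does assign the $1$-types $N^N \to N$ and $2^N \to N$ to the $\mathbf{Equ}$-objects $([\mathbb{N}^{\mathbb{N}}, \mathbb{N}], =)$ and $([2^{\mathbb{N}}, \mathbb{N}], =)$ with the usual (compact-open) topology — this uses that $\mathbb{N}$ and $2^{\mathbb{N}}$ are core-compact (indeed locally compact Hausdorff, cf.\ Example~\ref{TopAndExp}) so the topological exponentials exist and survive the passage to $\mathbf{Equ}$, and that for these standard objects realizability of $\exists$-free sentences reduces to truth (Lemma~\ref{TtuthLemma}, using that all the types involved are $1$-types hence standard when $\mathcal{C} = \mathbf{Equ}$). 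A secondary point to verify is that $M^{\mathtt 2}$ is total and projective so that Theorem~\ref{ArithmeticalSoundness} indeed covers $\mathsf{HA}_2^{\omega} + \mathsf{EXT} + \mathsf{AC}$; this is immediate since $M^{\mathtt 2} = (\mathtt{2}, \{0,1\}, \Vdash_2)$ is finite and discrete. Once these identifications are in place, the two realizability verifications are short, and consistency follows as above.
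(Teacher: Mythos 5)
Your proposal is correct and follows essentially the same route as the paper: set $\mathcal{C}=\mathbf{Equ}$, invoke Theorem \ref{ArithmeticalSoundness} for $\mathsf{HA}_2^{\omega}+\mathsf{EXT}+\mathsf{AC}$, realize $\WCN_{\neg\neg}$ via part $(i)$ of Lemma \ref{OnTheExistenceOftheModule} together with the double-negation collapse of Remark \ref{OnRealizabilityOfDoubleNeg}, realize $\UWCN$ via the continuous module of part $(ii)$, and reduce the $\exists$-free inner sentences to truth by Lemma \ref{TtuthLemma}; you even correctly isolate why the truncation is needed for $\WCN$ but not for $\UWCN$. The only slip is typographical: the continuous $M$ of part $(ii)$ induces a map $[[\mathtt{N},\mathtt{2}],\mathtt{N}]\to\mathtt{N}$ in $\mathbf{Equ}$, not $[\mathtt{N},\mathtt{2}]\to\mathtt{N}$ as written.
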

\begin{proof}
Set $\mathcal{C}=\mathbf{Equ}$. Note that $\mathcal{C}$ is a non-preorder well-pointed category and its $\mathrm{NNO}$ is total. Moreover, $M^{\mathtt{2}}$ is total. Therefore, it is enough to show that both $\WCN_{\neg \neg}$ and $\UWCN$ are realizable. For $\WCN_{\neg \neg}$, using Remark \ref{OnRealizabilityOfDoubleNeg}, it is enough to show that for any continuous functions $f: \mathbb{N}^{\mathbb{N}} \to \mathbb{N}$ and any function $\alpha:\mathbb{N} \to \mathbb{N}$, the statement $\exists x^N \forall \beta^{N \to N} [\alpha=_x \beta \to f\alpha = f\beta]$ is realizable. Using Lemma \ref{OnTheExistenceOftheModule}, we know that $M(f, \alpha)$ provides the witness for $x$. The sentence $\forall \beta^{N \to N} [\alpha=_{M(f, \alpha)} \beta \to f\alpha = f\beta]$ is a true $\exists$-free $1$-sentence. Hence, its truth implies its realizability. For $(ii)$, it is enough to use the function $M$ introduced in Lemma \ref{OnTheExistenceOftheModule} for $2^{\mathbb{N}}$. Note that $M$ is continuous.
\end{proof}

\begin{rem}
Similar to the case of Church-Turing thesis, to prove the consistency of the anti-classical theory $\mathsf{HA^{\omega}+EXT+AC+WC-N_{\neg \neg}}$, we can interpret arithmetical proofs in an alternative world, i.e., the continuous world $\mathbf{Equ}$. Again, the interpretation was quite expected and one can say that the theory with its continuity principle is actually a device to describe such continuous worlds.
\end{rem}

In the rest of this subsection, we use the realizability machinery to extract some computational information from the arithmetical proofs. First, recall the notion of representable functions defined in Definition \ref{DefRepresentableFunctions}. Then, we have: 

\begin{thm}(Function extraction) \label{FunctionExtraction}
Let $A(x, y)$ be an $\exists$-free $0$-formula and $\mathsf{HA^{\omega}+EXT} \vdash \forall x^N \exists y^{N} A(x,y)$. Then, for any non-preorder $\mathrm{CCC}$ $\mathcal{C}$ with an $\mathrm{NNO}$, there exists a function $f: \mathbb{N} \to \mathbb{N}$ representable in $\mathcal{C}$ such that $\mathfrak{N} \vDash \forall x^N A(x,f(x))$. The same also holds for $\mathsf{HA}$ and if $A(x, y)$ is bounded, for $\mathsf{PA}$.
\end{thm}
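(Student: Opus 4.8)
The plan is to run the realizability soundness theorem and then use the truth lemma to convert a realizer into an actual function. First, since $\mathsf{HA}^\omega + \mathsf{EXT} \vdash \forall x^N \exists y^N A(x,y)$ and $\mathcal{C}$ is a non-preorder $\mathrm{CCC}$ with an $\mathrm{NNO}$, Theorem~\ref{ArithmeticalSoundness} (the logical part, which needs only $\mathsf{HA}^\omega + \mathsf{EXT}$) gives a realizer $\mathtt{r} \colon \mathtt{1} \to |\forall x^N \exists y^N A(x,y)|$ under the standard $\mathcal{C}$-interpretation. Unwinding the definition of realizability for the outer universal quantifier, $|\forall x^N \exists y^N A(x,y)| = [\mathtt{N}, \mathtt{N} \times |A|]$, and for any $n \in \mathbb{N}$ with $\mathtt{n} \colon \mathtt{1} \to \mathtt{N}$ the standard numeral, we get $\mathtt{r} \cdot \mathtt{n} \Vdash \exists y^N A(\bar n, y)$. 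Expanding the existential clause, there is a natural number $m_n \in \mathbb{N}$ with $\mathtt{p_0}(\mathtt{r}\cdot\mathtt{n}) \Vdash_N m_n$, hence (by the definition of the $\mathcal{C}$-set $N$) $\mathtt{p_0}(\mathtt{r}\cdot\mathtt{n}) = \overline{m_n}$, and $\mathtt{p_1}(\mathtt{r}\cdot\mathtt{n}) \Vdash A(\bar n, \overline{m_n})$.

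The next step is to package this into a representable function. Consider the map $\mathtt{F} = \mathtt{p_0} \circ \mathtt{ev} \circ \langle \mathtt{r} \circ {!}, \mathtt{id}_{\mathtt{N}} \rangle \colon \mathtt{N} \to \mathtt{N}$; concretely, $\mathtt{F} \circ \mathtt{n} = \mathtt{p_0}(\mathtt{r} \cdot \mathtt{n}) = \overline{m_n}$ for every $n$. Define $f \colon \mathbb{N} \to \mathbb{N}$ by $f(n) = m_n$. Then by construction $\mathtt{F} \circ \overline{n} = \overline{f(n)}$ for all $n$, so $f$ is representable in $\mathcal{C}$ via $\mathtt{F}$ in the sense of Definition~\ref{DefRepresentableFunctions} (the uniqueness of numeral representation from Lemma~\ref{UniqunessofNumeralRepresentation} is what makes $f$ well-defined, using that $\mathcal{C}$ is not a preorder). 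It remains to check $\mathfrak{N} \vDash \forall x^N A(x, f(x))$. Fix $n$; we have $\mathtt{p_1}(\mathtt{r}\cdot\mathtt{n}) \Vdash A(\bar n, \overline{f(n)})$, and since $A(\bar n, \overline{f(n)})$ is an $\exists$-free $0$-sentence, Lemma~\ref{TtuthLemma} (Realizability vs.\ truth) gives $\mathfrak{N} \vDash A(\bar n, \overline{f(n)})$, i.e.\ $A(n, f(n))$ holds in the standard model. As $n$ was arbitrary, $\mathfrak{N} \vDash \forall x^N A(x, f(x))$.

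For the final clause about $\mathsf{HA}$ and $\mathsf{PA}$: $\mathsf{HA}$ embeds into $\mathsf{HA}^\omega$ (as noted at the end of Subsection~\ref{SecArithmetic}), so $\mathsf{HA} \vdash \forall x \exists y\, A(x,y)$ implies $\mathsf{HA}^\omega + \mathsf{EXT} \vdash \forall x^N \exists y^N A(x,y)$ and the above applies. For $\mathsf{PA}$ with $A(x,y)$ bounded: the formula $\forall x \exists y\, A(x,y)$ is of the conservativity form mentioned in Subsection~\ref{SecArithmetic} ($\mathsf{PA}$ is conservative over $\mathsf{HA}$ for $\forall \bar x \exists \bar y\, A$ with $A$ bounded), so $\mathsf{PA} \vdash \forall x \exists y\, A(x,y)$ yields $\mathsf{HA} \vdash \forall x \exists y\, A(x,y)$, and one reduces to the previous case; one must first observe that a bounded formula is $\mathsf{HA}$-equivalent to an $\exists$-free one (replace $\exists z \le t\, B$ by $\neg \forall z \le t\, \neg B$, valid since bounded formulas are decidable in $\mathsf{HA}$), so that the hypothesis of the $\exists$-free case is met. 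The main obstacle is not any single hard step but making sure the bookkeeping around the standard $\mathcal{C}$-interpretation is airtight — in particular that $\mathtt{p_0}(\mathtt{r}\cdot\mathtt{n})$ really is a standard numeral (this is exactly the content of $\Vdash_N$ for the $\mathcal{C}$-set $N$ together with Lemma~\ref{UniqunessofNumeralRepresentation}), and that the passage from $\exists$-free $0$-sentence realizability to truth genuinely only needs what Lemma~\ref{TtuthLemma} supplies (no choice, no well-pointedness, no totality of the $\mathrm{NNO}$), so that the statement holds for all non-preorder $\mathrm{CCC}$s with an $\mathrm{NNO}$ as claimed.
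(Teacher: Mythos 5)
Your proposal is correct and follows essentially the same route as the paper's proof: apply Theorem~\ref{ArithmeticalSoundness} to get a realizer $\mathtt{r}$, unwind the clauses for $\forall$ and $\exists$ to extract $m_n$ from $\mathtt{p_0}(\mathtt{r}\cdot\mathtt{n})$, use the non-preorder hypothesis (via Lemma~\ref{UniqunessofNumeralRepresentation}) to make $f$ well-defined and represented by $\mathtt{p_0}\circ(\mathtt{r}\cdot\mathtt{id_N})$, invoke Lemma~\ref{TtuthLemma} for truth in $\mathfrak{N}$, and reduce the $\mathsf{HA}$ and $\mathsf{PA}$ cases by the embedding into $\mathsf{HA}^{\omega}$, conservativity, and the $\exists$-free rewriting of bounded formulas. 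Your explicit attention to why $\mathtt{p_0}(\mathtt{r}\cdot\mathtt{n})$ is a standard numeral and why no well-pointedness or totality is needed is a welcome clarification of points the paper leaves implicit.
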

\begin{proof}
For the first part, notice that $|\forall x^N \exists y^N A(x,y)|=[\mathtt{N}, \mathtt{N} \times |A|]$. Therefore, by Theorem \ref{ArithmeticalSoundness}, there is a map 
$
\mathtt{r}: \mathtt{1} \to [\mathtt{N}, \mathtt{N} \times |A|]
$
such that $\mathtt{r} \Vdash \forall x^N \exists y^N A(x,y)$. Therefore, for any natural number $n \in \mathbb{N}$, as $\mathtt{n} \Vdash n$, we have $\mathtt{r} \cdot \mathtt{n} \Vdash \exists y^N A(n, y)$ which means that there is a natural number $m \in \mathbb{N}$ such that $\mathtt{p_0(\mathtt{r} \cdot n)} \Vdash_{N} m$ and $\mathtt{p_1(r \cdot n)} \Vdash A(n, m)$. As $A(n, m)$ is an $\exists$-free $0$-sentence, by Lemma \ref{TtuthLemma}, we have $\mathfrak{N} \vDash A(n, m)$.
As $\mathcal{C}$ is non-preorder, the number $m \in \mathbb{N}$ is uniquely determined by $\mathtt{p_0(\mathtt{r} \cdot n)}$. Define the function $f: \mathbb{N} \to \mathbb{N}$ by $f(n)=m$. It is clear that $A(n, f(n))$ holds in the standard model, for any $n \in \mathbb{N}$. The only thing to note is that the function $f: \mathbb{N} \to \mathbb{N}$ is representable by the map $\mathtt{p_0 \circ (r \cdot id_N)}$.
For the second part, it is enough to embed $\mathsf{HA}$ into $\mathsf{HA}^{\omega}$ and note that any formula in the language of $\mathsf{HA}$ is translated into a $0$-formula as its terms are all of type $N$. For $\mathsf{PA}$, use the
conservativity of $\mathsf{PA}$ over $\mathsf{HA}$ for the formulas in the form $\forall x \exists y A(x, y)$, where $A$ is bounded and the fact that any bounded formula is equivalent to an $\exists$-free formula in $\mathsf{HA}$.
\end{proof}

\begin{cor}
If $\mathsf{HA^{\omega}+EXT} \vdash \forall x^N \exists y^N A(x,y)$, for an $\exists$-free $0$-formula $A(x, y)$, then there exists a recursive function $f: \mathbb{N} \to \mathbb{N}$ such that $\mathfrak{N} \vDash \forall x^N A(x,f(x))$. The same also holds for $\mathsf{HA}$.
\end{cor}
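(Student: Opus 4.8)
The plan is to obtain this corollary as an immediate instance of Theorem~\ref{FunctionExtraction} by choosing a suitable concrete category $\mathcal{C}$. Recall that Theorem~\ref{FunctionExtraction} says: if $\mathsf{HA^{\omega}+EXT} \vdash \forall x^N \exists y^{N} A(x,y)$ for an $\exists$-free $0$-formula $A(x,y)$, then for \emph{any} non-preorder $\mathrm{CCC}$ $\mathcal{C}$ with an $\mathrm{NNO}$ there is a function $f:\mathbb{N}\to\mathbb{N}$ representable in $\mathcal{C}$ such that $\mathfrak{N}\vDash \forall x^N A(x,f(x))$; moreover the same holds with $\mathsf{HA}$ in place of $\mathsf{HA^{\omega}+EXT}$. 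So the entire content to be added here is: find a $\mathcal{C}$ in which ``representable'' forces ``recursive.''

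First I would take $\mathcal{C}=\mathbf{Rec}$. I need to check the two hypotheses of Theorem~\ref{FunctionExtraction}. That $\mathbf{Rec}$ is a $\mathrm{CCC}$ (indeed a $\mathrm{BCC}$) with an $\mathrm{NNO}$ $(\mathtt{N},\mathtt{Z},\mathtt{s})$ is recorded in Subsection~\ref{SubsecRecConWorlds} and Subsection~\ref{SubsecNNO}. That $\mathbf{Rec}$ is not a preordered set is clear: there are two distinct parallel maps in it — for instance the two projections $\mathtt{p}_0,\mathtt{p}_1 \colon \mathtt{N}\times\mathtt{N}\to\mathtt{N}$ induced by distinct computable functions — equivalently, one may invoke Lemma~\ref{UniqunessofNumeralRepresentation} together with the fact that the numerals $\bar 0,\bar 1$ in $\mathbf{Rec}$ are distinct (the realizer side counts the actual elements of $\mathbb{N}$). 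With both hypotheses verified, Theorem~\ref{FunctionExtraction} applied to $\mathcal{C}=\mathbf{Rec}$ yields a function $f:\mathbb{N}\to\mathbb{N}$ that is representable in $\mathbf{Rec}$ and satisfies $\mathfrak{N}\vDash\forall x^N A(x,f(x))$.

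It then remains to observe that a function representable in $\mathbf{Rec}$ is recursive. This is exactly the content of Example~\ref{ExampleOfRepresentables}, which states that the functions representable in $\mathbf{Rec}$ are precisely the total computable functions; alternatively it follows from the construction of the $\mathrm{NNO}$ and exponentials in $\mathbf{Rec}$, since a representing map $F\colon \mathtt{N}\to\mathtt{N}$ is a morphism of $\mathbf{Rec}$, hence (the class of) a partial computable function that is total on $\mathbb{N}$ and whose value on the numeral code of $n$ is the numeral code of $f(n)$, so $f$ is recursive. Hence $f$ is a recursive function with $\mathfrak{N}\vDash\forall x^N A(x,f(x))$, which is the assertion for $\mathsf{HA^{\omega}+EXT}$. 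For $\mathsf{HA}$, one simply reuses the ``same also holds for $\mathsf{HA}$'' clause of Theorem~\ref{FunctionExtraction}, applied again with $\mathcal{C}=\mathbf{Rec}$, and repeats the last observation.

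There is no serious obstacle: the only thing one genuinely has to be careful about is the ``non-preorder'' hypothesis, since the whole extraction argument (and the uniqueness of the extracted value $m$ from its realizer) relies on it; but this is immediate for $\mathbf{Rec}$. I would state the proof in two or three sentences: set $\mathcal{C}=\mathbf{Rec}$, note it is a non-preorder $\mathrm{CCC}$ with an $\mathrm{NNO}$, apply Theorem~\ref{FunctionExtraction}, and conclude by Example~\ref{ExampleOfRepresentables} that the representable $f$ is recursive; the $\mathsf{HA}$ case is identical.
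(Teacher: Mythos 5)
Your proposal is correct and follows exactly the paper's own argument: the paper's proof is simply ``Set $\mathcal{C}=\mathbf{Rec}$ in Theorem \ref{FunctionExtraction} and recall that only recursive functions are representable in $\mathbf{Rec}$.'' Your additional verification that $\mathbf{Rec}$ is a non-preorder $\mathrm{CCC}$ with an $\mathrm{NNO}$ is a correct (and welcome) elaboration of hypotheses the paper leaves implicit.
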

\begin{proof}
Set $\mathcal{C}=\mathbf{Rec}$ in Theorem \ref{FunctionExtraction} and recall that only recursive functions are representable in $\mathbf{Rec}$.
\end{proof}

To find the strongest possible application of Theorem \ref{FunctionExtraction}, we must work with the ``least" possible $\mathrm{CCC}$ with an $\mathrm{NNO}$, i.e., the free category $\Lambda_N$.

\begin{cor}\label{FunctionsOfHA}
If $\mathsf{HA^{\omega}+EXT} \vdash \forall x^N \exists y^N A(x,y)$, for an $\exists$-free $0$-formula, then there exists a primitive recursive functional $f: \mathbb{N} \to \mathbb{N}$ such that $\mathfrak{N} \vDash \forall x^N A(x,f(x))$. The same also holds for $\mathsf{HA}$ and if $A(x, y)$ is bounded, for $\mathsf{PA}$.
\end{cor}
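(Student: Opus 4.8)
The plan is to specialize Theorem \ref{FunctionExtraction} (Function extraction) to the case $\mathcal{C} = \Lambda_N$, the free $\mathrm{CCC}$ with an $\mathrm{NNO}$, and read off the conclusion using the definition of primitive recursive functional. First I would recall that by Remark \ref{LambdaNIsNotPreorder}, the category $\Lambda_N$ is a non-preorder $\mathrm{CCC}$ equipped with an $\mathrm{NNO}$, so it is a legitimate instance to plug into Theorem \ref{FunctionExtraction}. Applying that theorem to the hypothesis $\mathsf{HA^{\omega}+EXT} \vdash \forall x^N \exists y^N A(x,y)$ for the $\exists$-free $0$-formula $A(x,y)$, we obtain a function $f : \mathbb{N} \to \mathbb{N}$ that is representable in $\Lambda_N$ and satisfies $\mathfrak{N} \vDash \forall x^N A(x, f(x))$.

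The second step is purely a matter of unwinding terminology: by the \textbf{Definition} immediately preceding Lemma \ref{TransferOfRepresentability}, a numerical function is called a \emph{primitive recursive functional} precisely when it is representable in $\Lambda_N$. Hence the $f$ produced above is, by definition, a primitive recursive functional, and this gives the first assertion of the corollary. (One could equivalently invoke Corollary \ref{RepresentableEverywhere}, which says a numerical function is a primitive recursive functional iff it is representable in every $\mathrm{CCC}$ with an $\mathrm{NNO}$; combined with Theorem \ref{FunctionExtraction} applied uniformly, this yields the same $f$.)

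For the remaining claims about $\mathsf{HA}$ and $\mathsf{PA}$, I would simply transport the second part of Theorem \ref{FunctionExtraction}: the embedding of $\mathsf{HA}$ into $\mathsf{HA}^{\omega}$ sends every $\mathsf{HA}$-formula to a $0$-formula (all its terms have type $N$), so a proof in $\mathsf{HA}$ of $\forall x^N \exists y^N A(x,y)$ yields a proof in $\mathsf{HA}^{\omega}+\mathsf{EXT}$ and the first part applies. For $\mathsf{PA}$, one uses the stated conservativity of $\mathsf{PA}$ over $\mathsf{HA}$ for formulas of the shape $\forall x \exists y\, A(x,y)$ with $A$ bounded, together with the fact (recalled in Subsection \ref{SecArithmetic}) that any bounded formula is $\mathsf{HA}$-equivalent to an $\exists$-free formula; this reduces the $\mathsf{PA}$ case to the $\mathsf{HA}$ case already handled.

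There is essentially no genuine obstacle here, since this is a corollary that merely instantiates a theorem and applies a definition; the only point requiring a moment's care is confirming that $\Lambda_N$ satisfies the non-preorder hypothesis of Theorem \ref{FunctionExtraction}, which is exactly the content of Remark \ref{LambdaNIsNotPreorder}, and that the free-ness of $\Lambda_N$ is what makes the extracted function land in the smallest possible class of representable functions, namely the primitive recursive functionals. The heavy lifting — the soundness of realizability for $\mathsf{HA^{\omega}+EXT}$ and the truth lemma comparing realizability with satisfaction in $\mathfrak{N}$ — has already been carried out in Theorem \ref{ArithmeticalSoundness} and Lemma \ref{TtuthLemma}, which are the engines behind Theorem \ref{FunctionExtraction}.
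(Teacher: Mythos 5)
Your proposal is correct and follows exactly the paper's own argument: instantiate Theorem \ref{FunctionExtraction} with $\mathcal{C}=\Lambda_N$, use Remark \ref{LambdaNIsNotPreorder} to check the non-preorder hypothesis, and conclude via the definition of primitive recursive functional as a function representable in $\Lambda_N$. The treatment of the $\mathsf{HA}$ and $\mathsf{PA}$ cases likewise matches the paper's reduction through the second part of Theorem \ref{FunctionExtraction}.
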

\begin{proof}
Set $\mathcal{C}=\Lambda_N$ in Theorem \ref{FunctionExtraction}. By Remark \ref{LambdaNIsNotPreorder}, $\Lambda_N$ is not a preorder. Therefore, it is enough to recall that the numeral primitive recursive functionals are by definition the functions representable in the category $\Lambda_N$.  
\end{proof}

Finally, we can apply the extraction technique to the higher functionals:

\begin{cor}
If $\mathsf{HA^{\omega}+EXT+AC} \vdash \forall x^{N \to N} \exists y^N A(x, y)$, for an $\exists$-free $1$-formula $A(x, y)$, then there exists a continuous function $f: \mathbb{N}^{\mathbb{N}} \to \mathbb{N}$ such that $\mathfrak{N} \vDash A(\alpha,f(\alpha))$, for any function $
\alpha : \mathbb{N} \to \mathbb{N}$.
\end{cor}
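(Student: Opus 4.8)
The plan is to instantiate the general function-extraction theorem, Theorem \ref{FunctionExtraction}'s companion for higher types, by choosing $\mathcal{C} = \mathbf{Equ}$, the category of equilogical spaces. The key observation is that $\mathbf{Equ}$ is a non-preorder, well-pointed $\mathrm{CCC}$ with a total $\mathrm{NNO}$, so by Theorem \ref{ArithmeticalSoundness} the theory $\mathsf{HA^{\omega}+EXT+AC}$ is sound with respect to the standard $\mathbf{Equ}$-interpretation. Thus from $\mathsf{HA^{\omega}+EXT+AC} \vdash \forall x^{N \to N} \exists y^N A(x, y)$ we obtain a realizer $\mathtt{r} : \mathtt{1} \to [[\mathtt{N}, \mathtt{N}], \mathtt{N} \times |A|]$ in $\mathbf{Equ}$ with $\mathtt{r} \Vdash \forall x^{N \to N} \exists y^N A(x, y)$.

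Next I would unwind what this realizer gives us. Since the type $N \to N$ is a $1$-type and $\mathcal{C} = \mathbf{Equ}$, Remark \ref{RealizibalityForEqu} tells us that $|\!|M^{N \to N}|\!| = \mathfrak{N}^{N \to N}$ consists precisely of the continuous functions $\alpha : \mathbb{N} \to \mathbb{N}$, each realized by itself (viewed as a map in $\mathbf{Equ}$), so we may write $\alpha \Vdash_{M^{N\to N}} \alpha$. For each such $\alpha$, applying the realizer gives $\mathtt{r} \cdot \alpha \Vdash \exists y^N A(\alpha, y)$, which unpacks to: there exists $m \in \mathbb{N}$ with $\mathtt{p_0}(\mathtt{r} \cdot \alpha) \Vdash_N m$ and $\mathtt{p_1}(\mathtt{r} \cdot \alpha) \Vdash A(\alpha, m)$. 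Because $A(x,y)$ is a $1$-formula and the relevant types are standard in $\mathbf{Equ}$, the sentence $A(\alpha, m)$ is an $\exists$-free standard sentence, so by Lemma \ref{TtuthLemma} we get $\mathfrak{N} \vDash A(\alpha, m)$. Since $\mathbf{Equ}$ is not a preorder, $m$ is determined uniquely by $\mathtt{p_0}(\mathtt{r}\cdot\alpha)$ via Lemma \ref{UniqunessofNumeralRepresentation}, so we may define $f : \mathbb{N}^{\mathbb{N}} \to \mathbb{N}$ by $f(\alpha) = m$; by construction $\mathfrak{N} \vDash A(\alpha, f(\alpha))$ for every continuous $\alpha$.

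The remaining — and I expect the main — obstacle is to argue that $f$ is itself continuous, and that it extends to all of $\mathbb{N}^{\mathbb{N}}$ (not merely the continuous functions, though in the Baire space every function $\mathbb{N} \to \mathbb{N}$ is automatically continuous for the discrete topology on $\mathbb{N}$, so this latter point is vacuous). For continuity, the point is that $f$ is \emph{represented} by the $\mathbf{Equ}$-map $\mathtt{p_0} \circ (\mathtt{r} \cdot \mathtt{id}_{[\mathtt{N},\mathtt{N}]}) : [\mathtt{N},\mathtt{N}] \to \mathtt{N}$, exactly as in the proof of Theorem \ref{FunctionExtraction}: tracking through the $\mathbf{Equ}$-structure, this map computes $\alpha \mapsto \mathtt{p_0}(\mathtt{r}\cdot\alpha)$, whose associated numeral is $f(\alpha)$. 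Since morphisms of $\mathbf{Equ}$ out of $([\mathbb{N},\mathbb{N}], =) = ([\mathbb{N}^{\mathbb{N}}], =)$ are equivalence classes of continuous functions, and the $\mathrm{NNO}$ of $\mathbf{Equ}$ is $(\mathbb{N}, =_{\mathbb{N}})$ with $\mathbb{N}$ discrete, the function $\alpha \mapsto f(\alpha)$ is continuous as a map $\mathbb{N}^{\mathbb{N}} \to \mathbb{N}$. Here one must be slightly careful that the exponential $[N, N] = (\mathbb{N}^{\mathbb{N}}, =)$ in $\mathbf{Equ}$ really carries the Baire topology — this is the fact, quoted in the excerpt's discussion of $\mathbf{Equ}$, that exponentials existing in $\mathbf{Top}$ (such as $[\mathbb{N}, \mathbb{N}] = \mathbb{N}^{\mathbb{N}}$, since the discrete $\mathbb{N}$ is locally compact Hausdorff) survive intact under $Eq : \mathbf{Top} \to \mathbf{Equ}$ — so the underlying set of the domain of the representing map is genuinely Baire space with its product topology, and a representing $\mathbf{Equ}$-map is genuinely continuous. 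This completes the argument.
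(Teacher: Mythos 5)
Your proposal is correct and follows essentially the same route as the paper's proof: instantiate Theorem \ref{ArithmeticalSoundness} at $\mathcal{C}=\mathbf{Equ}$, unwind the realizer of $\forall x^{N\to N}\exists y^N A(x,y)$ pointwise, use non-preorderedness for well-definedness of $f$ and Lemma \ref{TtuthLemma} for truth in $\mathfrak{N}$, and read off continuity from the fact that $f$ is realized by an $\mathbf{Equ}$-map out of $(\mathbb{N}^{\mathbb{N}},=)$. Your extra care about the exponential $[N,N]$ carrying the Baire topology and about every $\alpha:\mathbb{N}\to\mathbb{N}$ being automatically continuous only makes explicit what the paper leaves implicit.
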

\begin{proof}
As $\mathbf{Equ}$ is a well-pointed non-preorder with the total $\mathrm{NNO}$, in Theorem \ref{ArithmeticalSoundness}, set $\mathcal{C}=\mathbf{Equ}$. Notice that $|\forall x^{N \to N} \exists y^N A(x,y)|=[\mathtt{N}^{\mathtt{N}}, \mathtt{N} \times |A|]$. Therefore, there is a map $\mathtt{r}: \mathtt{1} \to  [\mathtt{N}^{\mathtt{N}}, \mathtt{N} \times |A|]$ such that $\mathtt{r} \Vdash \forall x^{N \to N} \exists y^N A(x,y)$. By definition, for any function $\alpha: \mathbb{N} \to \mathbb{N}$, we have $\mathtt{r} \cdot \alpha \Vdash \exists y^N A(\alpha, y)$ which means that there is a natural number $m \in \mathbb{N}$ such that $\mathtt{p_0}(\mathtt{r} \cdot \alpha) \Vdash_N m$ and $\mathtt{p_1}(\mathtt{r} \cdot \alpha) \Vdash A(\alpha, m)$. As $\mathbf{Equ}$ is not a preordered set, the value of $m$ is uniquely determined by $\mathtt{p_0}(\mathtt{r} \cdot \alpha)$.  Define $f: \mathbb{N}^{\mathbb{N}} \to \mathbb{N} $ by $f(\alpha)=m$. Note that $f$ is a continuous map, as it is realized by $\mathtt{p_0}(\mathtt{r} \cdot \alpha)$ living in $\mathbf{Equ}$. As $A(\alpha, f(\alpha))$ is an $\exists$-free $1$-formula and hence standard, by Lemma \ref{TtuthLemma}, we have $\mathfrak{N} \vDash A(\alpha, f(\alpha))$, for any function $\alpha: \mathbb{N} \to \mathbb{N}$.
\end{proof}

\bibliographystyle{plain}
\bibliography{main}

\begin{thebibliography}{100}

\bibitem{abramsky2004categorical}
Samson Abramsky and Bob Coecke.
\newblock A categorical semantics of quantum protocols.
\newblock In {\em Proceedings of the 19th Annual IEEE Symposium on Logic in
  Computer Science, 2004.}, pages 415--425. IEEE, 2004.

\bibitem{abramsky2009categorical}
Samson Abramsky and Bob Coecke.
\newblock Categorical quantum mechanics.
\newblock {\em Handbook of quantum logic and quantum structures}, 2:261--325,
  2009.

\bibitem{PropAsType}
Steven Awodey and Andrej Bauer.
\newblock Propositions as [types].
\newblock {\em J. Logic Comput.}, 14(4):447--471, 2004.

\bibitem{barII}
Michael Barr.
\newblock {\em {$\ast$}-autonomous categories}, volume 752 of {\em Lecture
  Notes in Mathematics}.
\newblock Springer, Berlin, 1979.

\bibitem{bar}
Michael Barr.
\newblock {$*$}-autonomous categories and linear logic.
\newblock {\em Math. Structures Comput. Sci.}, 1(2):159--178, 1991.

\bibitem{Bauer}
Andrej Bauer.
\newblock {\em The realizability approach to computable analysis and topology}.
\newblock PhD thesis, Carnegie Mellon University, 2000.

\bibitem{bauer2}
Andrej Bauer.
\newblock Realizability as the connection between computable and constructive
  mathematics.
\newblock In {\em Proceedings of CCA}, 2005.

\bibitem{Scot}
Andrej Bauer, Lars Birkedal, and Dana~S. Scott.
\newblock Equilogical spaces.
\newblock {\em Theoret. Comput. Sci.}, 315(1):35--59, 2004.

\bibitem{HylandClassical}
Gianluigi Bellin, Martin Hyland, Edmund Robinson, and Christian Urban.
\newblock Categorical proof theory of classical propositional calculus.
\newblock {\em Theoret. Comput. Sci.}, 364(2):146--165, 2006.

\bibitem{Ben}
J.~B\'{e}nabou.
\newblock Logique cat\'{e}gorique, a series of lectures presented at the
  s\'{e}minaire de math\'{e}matiques sup\'{e}rieur, montreal, 1974.

\bibitem{1}
Nick Benton, Gavin Bierman, Valeria de~Paiva, and Martin Hyland.
\newblock Linear {$\lambda$}-calculus and categorical models revisited.
\newblock In {\em Computer science logic ({S}an {M}iniato, 1992)}, volume 702
  of {\em Lecture Notes in Comput. Sci.}, pages 61--84. Springer, Berlin, 1993.

\bibitem{2}
Nick Benton, Gavin Bierman, Valeria de~Paiva, and Martin Hyland.
\newblock A term calculus for intuitionistic linear logic.
\newblock In {\em Typed lambda calculi and applications ({U}trecht, 1993)},
  volume 664 of {\em Lecture Notes in Comput. Sci.}, pages 75--90. Springer,
  Berlin, 1993.

\bibitem{Nick}
Guram Bezhanishvili, Nick Bezhanishvili, Joel Lucero-Bryan, and Jan van Mill.
\newblock A new proof of the mckinsey--tarski theorem.
\newblock {\em Studia Logica}, 106:1291--1311, 2018.

\bibitem{3}
Gavin~M Bierman.
\newblock {\em On intuitionistic linear logic}.
\newblock PhD thesis, University of Cambridge, Computer Laboratory, 1994.

\bibitem{4}
Gavin~M Bierman.
\newblock What is a categorical model of intuitionistic linear logic?
\newblock In {\em Typed Lambda Calculi and Applications: Second International
  Conference on Typed Lambda Calculi and Applications, TLCA'95 Edinburgh,
  United Kingdom, April 10--12, 1995 Proceedings 2}, pages 78--93. Springer,
  1995.

\bibitem{LarsThesis}
Lars Birkedal.
\newblock {\em Developing Theories of Types and Computability via
  Realizability}.
\newblock PhD thesis, Carnegie Mellon University Pittsburgh, PA, 1999.

\bibitem{Lars}
Lars Birkedal.
\newblock A general notion of realizability.
\newblock {\em Bull. Symbolic Logic}, 8(2):266--282, 2002.

\bibitem{HandbookII}
Francis Borceux.
\newblock {\em Handbook of categorical algebra. 2}, volume~51 of {\em
  Encyclopedia of Mathematics and its Applications}.
\newblock Cambridge University Press, Cambridge, 1994.

\bibitem{qBrouwerCamLec}
Luitzen Egbertus~Jan Brouwer.
\newblock {\em Brouwer's {C}ambridge lectures on intuitionism}.
\newblock Cambridge University Press, Cambridge-New York, 1981.

\bibitem{Chagrov}
Alexander Chagrov and Michael Zakharyaschev.
\newblock {\em Modal logic}, volume~35 of {\em Oxford Logic Guides}.
\newblock The Clarendon Press, Oxford University Press, New York, 1997.

\bibitem{Seely}
J.~Robin~B. Cockett and Robert~A.G. Seely.
\newblock Weakly distributive categories.
\newblock {\em Journal of Pure and Applied Algebra}, 114(2):133--173, 1997.

\bibitem{Crole}
Roy~L. Crole.
\newblock {\em Categories for types}.
\newblock Cambridge Mathematical Textbooks. Cambridge University Press,
  Cambridge, 1993.

\bibitem{dePaiva}
Valeria Correa~Vaz De~Paiva.
\newblock The {D}ialectica categories.
\newblock In {\em Categories in computer science and logic ({B}oulder, {CO},
  1987)}, volume~92 of {\em Contemp. Math.}, pages 47--62. Amer. Math. Soc.,
  Providence, RI, 1989.

\bibitem{IsomorphismOfTypes}
Roberto Di~Cosmo.
\newblock {\em Isomorphisms of types: from {$\lambda$}-calculus to information
  retrieval and language design}.
\newblock Progress in Theoretical Computer Science. Birkh\"{a}user Boston,
  Inc., Boston, MA, 1995.

\bibitem{IsoSurvey}
Roberto Di~Cosmo.
\newblock A short survey of isomorphisms of types.
\newblock {\em Math. Structures Comput. Sci.}, 15(5):825--838, 2005.

\bibitem{DiCosmoDufour}
Roberto Di~Cosmo and Thomas Dufour.
\newblock The equational theory of {$\langle{\Bbb
  N},0,1,+,\times,\uparrow\rangle$} is decidable, but not finitely
  axiomatisable.
\newblock In {\em Logic for programming, artificial intelligence, and
  reasoning}, volume 3452 of {\em Lecture Notes in Comput. Sci.}, pages
  240--256. Springer, Berlin, 2005.

\bibitem{Tarski}
John Doner and Alfred Tarski.
\newblock An extended arithmetic of ordinal numbers.
\newblock {\em Fund. Math.}, 65:95--127, 1969.

\bibitem{Identity}
Kosta Do\v{s}en.
\newblock Identity of proofs based on normalization and generality.
\newblock {\em Bull. Symbolic Logic}, 9(4):477--503, 2003.

\bibitem{PrologueDosen}
Kosta Do\v{s}en.
\newblock A prologue to the theory of deduction.
\newblock In {\em The logica yearbook 2010}, pages 65--80. Coll. Publ., London,
  2011.

\bibitem{dosen2013cut}
Kosta Do\v{s}en.
\newblock {\em Cut elimination in categories}, volume~6.
\newblock Springer Science \& Business Media, 2013.

\bibitem{DosenMax}
Kosta Do\v{s}en and Zoran Petri\'c.
\newblock The maximality of the typed lambda calculus and of {C}artesian closed
  categories.
\newblock {\em Publ. Inst. Math. (Beograd) (N.S.)}, 68(82):1--19, 2000.

\bibitem{DosenBook}
Kosta Do\v{s}en and Zoran Petri{\'c}.
\newblock {\em Proof-theoretical coherence}.
\newblock King's College Publications, 2004.

\bibitem{Fiore}
Marcelo Fiore, Roberto Di~Cosmo, and Vincent Balat.
\newblock Remarks on isomorphisms in typed lambda calculi with empty and sum
  types.
\newblock {\em Ann. Pure Appl. Logic}, 141(1-2):35--50, 2006.

\bibitem{Fri}
Harvey Friedman.
\newblock Equality between functionals.
\newblock In {\em Logic Colloquium: Symposium on Logic Held at Boston,
  1972--73}, pages 22--37. Springer, 1975.

\bibitem{fuhrmann2004geometry}
Carsten Fuhrmann and David Pym.
\newblock On the geometry of interaction for classical logic.
\newblock In {\em Proceedings of the 19th Annual IEEE Symposium on Logic in
  Computer Science, 2004.}, pages 211--220. IEEE, 2004.

\bibitem{fuhrmann2006order}
Carsten F{\"u}hrmann and David Pym.
\newblock Order-enriched categorical models of the classical sequent calculus.
\newblock {\em Journal of Pure and Applied Algebra}, 204(1):21--78, 2006.

\bibitem{fuhrmann2007categorical}
Carsten F{\"u}hrmann and David Pym.
\newblock On categorical models of classical logic and the geometry of
  interaction.
\newblock {\em Mathematical Structures in Computer Science}, 17(5):957--1027,
  2007.

\bibitem{ProofandTypes}
Jean-Yves Girard, Paul Taylor, and Yves Lafont.
\newblock {\em Proofs and types}, volume~7.
\newblock Cambridge university press Cambridge, 1989.

\bibitem{Goodman}
Nicolas~D. Goodman.
\newblock Relativized realizability in intuitionistic arithmetic of all finite
  types.
\newblock {\em J. Symbolic Logic}, 43(1):23--44, 1978.

\bibitem{GurevichFiniteModel}
R.~Gurevi\v{c}.
\newblock Equational theory of positive numbers with exponentiation.
\newblock {\em Proc. Amer. Math. Soc.}, 94(1):135--141, 1985.

\bibitem{Makkai}
Victor Harnik and Michael Makkai.
\newblock Lambek's categorical proof theory and {L}\"{a}uchli's abstract
  realizability.
\newblock {\em J. Symbolic Logic}, 57(1):200--230, 1992.

\bibitem{heunenvicary}
Chris Heunen and Jamie Vicary.
\newblock {\em Categories for Quantum Theory: an introduction}.
\newblock Oxford University Press, 2019.

\bibitem{hofmannI}
Martin Hofmann.
\newblock Sound and complete axiomatisations of call-by-value control
  operators.
\newblock {\em Mathematical Structures in Computer Science}, 5(4):461--482,
  1995.

\bibitem{HofmannSeman}
Martin Hofmann.
\newblock Syntax and semantics of dependent types.
\newblock In {\em Semantics and logics of computation ({C}ambridge, 1995)},
  volume~14 of {\em Publ. Newton Inst.}, pages 79--130. Cambridge Univ. Press,
  Cambridge, 1997.

\bibitem{Hofmann}
Martin Hofmann and Thomas Streicher.
\newblock Completeness of continuation models for {$\lambda_\mu$}-calculus.
\newblock volume 179, pages 332--355. 2002.
\newblock LICS'97 (Warsaw).

\bibitem{Escardo}
Mart\'{\i}n H\"{o}tzel~Escard\'{o} and Chuangjie Xu.
\newblock The inconsistency of a {B}rouwerian continuity principle with the
  {C}urry-{H}oward interpretation.
\newblock In {\em 13th {I}nternational {C}onference on {T}yped {L}ambda
  {C}alculi and {A}pplications}, volume~38 of {\em LIPIcs. Leibniz Int. Proc.
  Inform.}, pages 153--164. Schloss Dagstuhl. Leibniz-Zent. Inform., Wadern,
  2015.

\bibitem{howard}
William~A Howard et~al.
\newblock The formulae-as-types notion of construction.
\newblock {\em To HB Curry: essays on combinatory logic, lambda calculus and
  formalism}, 44:479--490, 1980.

\bibitem{EffHyland}
Martin Hyland.
\newblock The effective topos.
\newblock In {\em The {L}.{E}.{J}. {B}rouwer {C}entenary {S}ymposium
  ({N}oordwijkerhout, 1981)}, volume 110 of {\em Stud. Logic Found. Math.},
  pages 165--216. North-Holland, Amsterdam-New York, 1982.

\bibitem{hyland2002proof}
Martin Hyland.
\newblock Proof theory in the abstract.
\newblock {\em Annals of pure and applied logic}, 114(1-3):43--78, 2002.

\bibitem{hyland2004abstract}
Martin Hyland.
\newblock Abstract interpretation of proofs: Classical propositional calculus.
\newblock In {\em Computer Science Logic: 18th International Workshop, CSL
  2004, 13th Annual Conference of the EACSL, Karpacz, Poland, September 20-24,
  2004. Proceedings 18}, pages 6--21. Springer, 2004.

\bibitem{Bart}
Bart Jacobs.
\newblock {\em Categorical logic and type theory}, volume 141 of {\em Studies
  in Logic and the Foundations of Mathematics}.
\newblock North-Holland Publishing Co., Amsterdam, 1999.

\bibitem{Moerdijk}
Andre. Joyal and Ieke Moerdijk.
\newblock Toposes as homotopy groupoids.
\newblock {\em Adv. Math.}, 80(1):22--38, 1990.

\bibitem{81}
Gregory~M Kelly and Miguel~L Laplaza.
\newblock Coherence for compact closed categories.
\newblock {\em Journal of pure and applied algebra}, 19:193--213, 1980.

\bibitem{Kleene}
Stephen~Cole Kleene.
\newblock On the interpretation of intuitionistic number theory.
\newblock {\em The journal of symbolic logic}, 10(4):109--124, 1945.

\bibitem{SDG}
Anders Kock.
\newblock {\em Synthetic differential geometry}, volume 333 of {\em London
  Mathematical Society Lecture Note Series}.
\newblock Cambridge University Press, Cambridge, second edition, 2006.

\bibitem{LamI}
Joachim Lambek.
\newblock Deductive systems and categories. {I}. {S}yntactic calculus and
  residuated categories.
\newblock {\em Math. Systems Theory}, 2:287--318, 1968.

\bibitem{LamII}
Joachim Lambek.
\newblock Deductive systems and categories. {II}. {S}tandard constructions and
  closed categories.
\newblock In {\em Category {T}heory, {H}omology {T}heory and their
  {A}pplications, {I} ({B}attelle {I}nstitute {C}onference, {S}eattle, {W}ash.,
  1968, {V}ol. {O}ne)}, pages 76--122. Springer, Berlin, 1969.

\bibitem{LamIII}
Joachim Lambek.
\newblock Deductive systems and categories. {III}. {C}artesian closed
  categories, intuitionist propositional calculus, and combinatory logic.
\newblock In {\em Toposes, algebraic geometry and logic ({C}onf., {D}alhousie
  {U}niv., {H}alifax, {N}.{S}., 1971)}, Lecture Notes in Math., Vol. 274, pages
  57--82. Springer, Berlin, 1972.

\bibitem{LambekNNO}
Joachim Lambek.
\newblock An equational variant of {L}awvere's natural numbers object.
\newblock volume 154, pages 265--272. 2000.
\newblock Category theory and its applications (Montreal, QC, 1997).

\bibitem{LambekScott}
Joachim Lambek and Philip~J Scott.
\newblock {\em Introduction to higher-order categorical logic}, volume~7.
\newblock Cambridge University Press, 1988.

\bibitem{Lach}
Hans L\"{a}uchli.
\newblock An abstract notion of realizability for which intuitionistic
  predicate calculus is complete.
\newblock In {\em Intuitionism and {P}roof {T}heory ({P}roc. {C}onf.,
  {B}uffalo, {N}.{Y}., 1968)}, pages 227--234. North-Holland, Amsterdam, 1970.

\bibitem{EqLawvere}
F.~William Lawvere.
\newblock Equality in hyperdoctrines and comprehension schema as an adjoint
  functor.
\newblock In {\em Applications of {C}ategorical {A}lgebra ({P}roc. {S}ympos.
  {P}ure {M}ath., {V}ol. {XVII}, {N}ew {Y}ork, 1968)}, volume XVII of {\em
  Proc. Sympos. Pure Math.}, pages 1--14. Amer. Math. Soc., Providence, RI,
  1970.

\bibitem{Lawvere}
F.~William Lawvere.
\newblock Adjoints in and among bicategories.
\newblock In {\em Logic and algebra ({P}ontignano, 1994)}, volume 180 of {\em
  Lecture Notes in Pure and Appl. Math.}, pages 181--189. Dekker, New York,
  1996.

\bibitem{Lawverethesis}
F.~William Lawvere.
\newblock Functorial semantics of algebraic theories and some algebraic
  problems in the context of functorial semantics of algebraic theories.
\newblock {\em Repr. Theory Appl. Categ.}, (5):1--121, 2004.

\bibitem{AdjLawvere}
F.~William Lawvere.
\newblock Adjointness in foundations.
\newblock {\em Repr. Theory Appl. Categ.}, (16):1--16, 2006.

\bibitem{99}
Saunders Mac~Lane.
\newblock Natural associativity and commutativity.
\newblock {\em Rice Univ. Stud.}, 49(4):28--46, 1963.

\bibitem{101}
Saunders Mac~Lane.
\newblock Why commutative diagrams coincide with equivalent proofs.
\newblock In {\em Algebraists' homage: papers in ring theory and related topics
  ({N}ew {H}aven, {C}onn., 1981)}, volume~13 of {\em Contemp. Math.}, pages
  387--401. Amer. Math. Soc., Providence, RI, 1982.

\bibitem{MacLane}
Saunders Mac~Lane.
\newblock Despite physicists, proof is essential in mathematics.
\newblock volume 111, pages 147--154. 1997.

\bibitem{CWM}
Saunders Mac~Lane.
\newblock {\em Categories for the working mathematician}, volume~5 of {\em
  Graduate Texts in Mathematics}.
\newblock Springer-Verlag, New York, second edition, 1998.

\bibitem{MacLaneMoerjdijk}
Saunders Mac~Lane and Ieke Moerdijk.
\newblock {\em Sheaves in geometry and logic}.
\newblock Universitext. Springer-Verlag, New York, 1994.
\newblock A first introduction to topos theory, Corrected reprint of the 1992
  edition.

\bibitem{Macintyre}
Angus Macintyre.
\newblock The laws of exponentiation.
\newblock In {\em Model theory and arithmetic ({P}aris, 1979--1980)}, volume
  890 of {\em Lecture Notes in Math.}, pages 185--197. Springer, Berlin-New
  York, 1981.

\bibitem{M1}
Michael Makkai.
\newblock The fibrational formulation of intuitionistic predicate logic {${\rm
  I}$}: completeness according to {G}\"odel, {K}ripke, and {L}\"auchli. {I}.
\newblock {\em Notre Dame J. Formal Logic}, 34(3):334--377, 1993.

\bibitem{M2}
Michael Makkai.
\newblock The fibrational formulation of intuitionistic predicate logic {${\rm
  I}$}: completeness according to {G}\"odel, {K}ripke, and {L}\"auchli. {II}.
\newblock {\em Notre Dame J. Formal Logic}, 34(4):471--498, 1993.

\bibitem{martinlof}
Per Martin-L{\"o}f and Giovanni Sambin.
\newblock {\em Intuitionistic type theory}, volume~9.
\newblock Bibliopolis Naples, 1984.

\bibitem{mckinley2006thesis}
Richard McKinley.
\newblock {\em Categorical models of first-order classical proofs}.
\newblock PhD thesis, University of Bath, United Kingdom, 2006.

\bibitem{McTarski}
John Charles~Chenoweth McKinsey and Alfred Tarski.
\newblock The algebra of topology.
\newblock {\em Ann. of Math. (2)}, 45:141--191, 1944.

\bibitem{menni2000exact}
Matias Menni.
\newblock {\em Exact completions and toposes}.
\newblock PhD thesis, University of Edinburgh, 2000.

\bibitem{ong}
CHL Ong.
\newblock A semantic view of classical proofs: Type-theoretic.
\newblock {\em Categorical, and Denotational Characterizations, Linear Logic},
  96, 1996.

\bibitem{BHK}
Erik Palmgren.
\newblock A categorical version of the {B}rouwer-{H}eyting-{K}olmogorov
  interpretation.
\newblock {\em Math. Structures Comput. Sci.}, 14(1):57--72, 2004.

\bibitem{parigot1992lambdamu}
Michel Parigot.
\newblock $\lambda$$\mu$-calculus: an algorithmic interpretation of classical
  natural deduction.
\newblock In {\em Logic Programming and Automated Reasoning: International
  Conference LPAR'92 St. Petersburg, Russia, July 15--20, 1992 Proceedings 3},
  pages 190--201. Springer, 1992.

\bibitem{P1}
Gordon~D. Plotkin.
\newblock Lambda-definability and logical relations.
\newblock Technical Report SAIRM- 4, School of Artificial Intelligence,
  University of Edinburgh, 1973.

\bibitem{P2}
Gordon~D. Plotkin.
\newblock Lambda-definability in the full type hierarchy.
\newblock In {\em To {H}. {B}. {C}urry: essays on combinatory logic, lambda
  calculus and formalism}, pages 363--373. Academic Press, London-New York,
  1980.

\bibitem{IdeaPrawitz}
Dag Prawitz.
\newblock Ideas and results in proof theory.
\newblock In {\em Proceedings of the {S}econd {S}candinavian {L}ogic
  {S}ymposium ({U}niv. {O}slo, {O}slo, 1970)}, Studies in Logic and the
  Foundations of Mathematics, Vol. 63, pages 235--307. North-Holland,
  Amsterdam, 1971.

\bibitem{GenPrawitz}
Dag Prawitz.
\newblock On the idea of a general proof theory.
\newblock {\em Synthese}, 27:63--77, 1974.

\bibitem{HoTT}
The Univalent~Foundations Program.
\newblock {\em Homotopy type theory---univalent foundations of mathematics}.
\newblock The Univalent Foundations Program, Princeton, NJ; Institute for
  Advanced Study (IAS), Princeton, NJ, 2013.

\bibitem{pym2001semantics}
David Pym and Eike Ritter.
\newblock On the semantics of classical disjunction.
\newblock {\em Journal of Pure and Applied Algebra}, 159(2-3):315--338, 2001.

\bibitem{pym2014proof}
David Pym, Eike Ritter, and Edmund Robinson.
\newblock A proof-theoretic analysis of the classical propositional matrix
  method.
\newblock {\em Journal of Logic and Computation}, 24(1):283--301, 2014.

\bibitem{RasSik}
Helena Rasiowa and Roman Sikorski.
\newblock {\em The mathematics of metamathematics}.
\newblock Monografie Matematyczne, Tom 41. Pa\'{n}stwowe Wydawnictwo Naukowe,
  Warsaw, 1963.

\bibitem{robinson}
Edmund Robinson.
\newblock Proof nets for classical logic.
\newblock {\em Journal of Logic and Computation}, 13(5):777--797, 2003.

\bibitem{schererthesis}
Gabriel Scherer.
\newblock {\em Which types have a unique inhabitant?}
\newblock PhD thesis, Universit{\'e} Paris-Diderot, 2016.

\bibitem{scherer}
Gabriel Scherer.
\newblock Deciding equivalence with sums and the empty type.
\newblock {\em ACM SIGPLAN Notices}, 52(1):374--386, 2017.

\bibitem{seelythesis}
Robert A.~G. Seely.
\newblock {\em Hyperdoctrines and natural deduction}.
\newblock PhD thesis, University of Cambridge, 1977.

\bibitem{Seely2}
Robert A.~G. Seely.
\newblock Hyperdoctrines, natural deduction and the {B}eck condition.
\newblock {\em Z. Math. Logik Grundlag. Math.}, 29(6):505--542, 1983.

\bibitem{SeelyLinear}
Robert A.~G. Seely.
\newblock Linear logic, {$*$}-autonomous categories and cofree coalgebras.
\newblock In {\em Categories in computer science and logic ({B}oulder, {CO},
  1987)}, volume~92 of {\em Contemp. Math.}, pages 371--382. Amer. Math. Soc.,
  Providence, RI, 1989.

\bibitem{Selinger}
Peter Selinger.
\newblock Control categories and duality: on the categorical semantics of the
  lambda-mu calculus.
\newblock {\em Math. Structures Comput. Sci.}, 11(2):207--260, 2001.

\bibitem{Simpson}
Alex Simpson.
\newblock Categorical completeness results for the simply-typed
  lambda-calculus.
\newblock In {\em Typed lambda calculi and applications ({E}dinburgh, 1995)},
  volume 902 of {\em Lecture Notes in Comput. Sci.}, pages 414--427. Springer,
  Berlin, 1995.

\bibitem{SimpOnline}
Alex Simpson.
\newblock {TLCA} list of problems, {P}roblem 15, 1995.

\bibitem{LocaleSimpson}
Alex Simpson.
\newblock Measure, randomness and sublocales.
\newblock {\em Ann. Pure Appl. Logic}, 163(11):1642--1659, 2012.

\bibitem{Soloviev}
S.~V. Solovev.
\newblock The category of finite sets and {C}artesian closed categories.
\newblock {\em Zap. Nauchn. Sem. Leningrad. Otdel. Mat. Inst. Steklov. (LOMI)},
  105:174--194, 200, 1981.

\bibitem{LecCurryHow}
Morten~Heine S{\o}rensen and Pawel Urzyczyn.
\newblock {\em Lectures on the Curry-Howard isomorphism}.
\newblock Elsevier, 2006.

\bibitem{Lutz}
Lutz Stra{\ss}burger.
\newblock {\em Towards a theory of proofs of classical logic}.
\newblock PhD thesis, Universit{\'e} Paris-Diderot-Paris VII, 2011.

\bibitem{Szabo}
Manfred~Egon Szabo.
\newblock {\em Algebra of proofs}, volume~88 of {\em Studies in Logic and the
  Foundations of Mathematics}.
\newblock North-Holland Publishing Co., Amsterdam-New York, 1978.

\bibitem{Taylor}
Paul Taylor.
\newblock {\em Practical foundations of mathematics}, volume~59.
\newblock Cambridge University Press, 1999.

\bibitem{RealTr}
Anne~Sjerp Troelstra.
\newblock Realizability.
\newblock In {\em Handbook of proof theory}, volume 137 of {\em Stud. Logic
  Found. Math.}, pages 407--473. North-Holland, Amsterdam, 1998.

\bibitem{basicproof}
Anne~Sjerp Troelstra and Helmut Schwichtenberg.
\newblock {\em Basic proof theory}.
\newblock Number~43. Cambridge University Press, 2000.

\bibitem{vanDalen}
Anne~Sjerp Troelstra and Dirk Van~Dalen.
\newblock {\em Constructivism in mathematics. {V}ol. {I}}, volume 121 of {\em
  Studies in Logic and the Foundations of Mathematics}.
\newblock North-Holland Publishing Co., Amsterdam, 1988.

\bibitem{vanDalenII}
Anne~Sjerp Troelstra and Dirk Van~Dalen.
\newblock {\em Constructivism in mathematics. {V}ol. {II}}, volume 123 of {\em
  Studies in Logic and the Foundations of Mathematics}.
\newblock North-Holland Publishing Co., Amsterdam, 1988.

\bibitem{JaapBook}
Jaap van Oosten.
\newblock {\em Realizability: an introduction to its categorical side}, volume
  152 of {\em Studies in Logic and the Foundations of Mathematics}.
\newblock Elsevier B. V., Amsterdam, 2008.

\bibitem{vanStigt}
Walter~P. van Stigt.
\newblock {\em Brouwer's intuitionism}, volume~2 of {\em Studies in the History
  and Philosophy of Mathematics}.
\newblock North-Holland Publishing Co., Amsterdam, 1990.

\bibitem{Cubric}
Djordje \v{C}ubri\'{c}.
\newblock Embedding of a free {C}artesian-closed category into the category of
  sets.
\newblock {\em J. Pure Appl. Algebra}, 126(1-3):121--147, 1998.

\bibitem{Wilkie}
Alex~J. Wilkie.
\newblock On exponentiation---a solution to {T}arski's high school algebra
  problem.
\newblock In {\em Connections between model theory and algebraic and analytic
  geometry}, volume~6 of {\em Quad. Mat.}, pages 107--129. Dept. Math., Seconda
  Univ. Napoli, Caserta, 2000.

\end{thebibliography}

\end{document}